\theoremstyle{plain}
\newtheorem{theorem}{Theorem}[section]
\newtheorem{lemma}[theorem]{Lemma}
\newtheorem{proposition}[theorem]{Proposition}
\newtheorem{corollary}[theorem]{Corollary}
\newtheorem{definition}[theorem]{Definition} \theoremstyle{definition}
\newtheorem{example}[theorem]{Example}
\newtheorem{remark}[theorem]{Remark}
\newcommand{\lie}[1]{\mathfrak{#1}}
\newcommand{\duer}{\mathbin{\raisebox{3pt}{\varhexstar}\kern-3.70pt{\rule{0.15pt}{4pt}}}\,}
\newcommand{\E}{{\mathbb{E}}}
\newcommand{\R}{\mathbb{R}} 
\newcommand{\inv}{^{-1}}
\newcommand{\N}{\mathbb{N}} 
\newcommand{\mx}{\mathfrak{X}} 
\newcommand{\dr}{\mathbf{d}}
\newcommand{\ldr}[1]{{{\pounds}}_{#1}}
\newcommand{\ip}[1]{{\mathbf{i}}_{#1}}
\newcommand{\an}[1]{\arrowvert_{#1}} 
\newcommand{\lb}{\llbracket} 
\newcommand{\rb}{\rrbracket}
\newcommand{\Beta}{\boldsymbol{\beta}}
\DeclareMathOperator{\dom}{Dom}
\DeclareMathOperator{\pr}{pr}
\DeclareMathOperator{\Hom}{Hom}
\DeclareMathOperator{\Id}{Id}
\DeclareMathOperator{\Jac}{Jac}
\DeclareMathOperator{\der}{Der}
\newcommand{\nsp}[2]  {%
\langle\mspace{-6.8mu}%
\langle\mspace{-6.8mu}%
\langle\mspace{-6.8mu}%
\langle\mspace{-6.8mu}%
\langle\mspace{-6.8mu}%
\langle\mspace{-6.8mu}%
\langle{#1,\,}{#2}%
\rangle%
\mspace{-6.8mu}\rangle%
\mspace{-6.8mu}\rangle%
\mspace{-6.8mu}\rangle%
\mspace{-6.8mu}\rangle%
\mspace{-6.8mu}\rangle%
\mspace{-6.8mu}\rangle}
\begin{document}
\title{N-manifolds of degree 2 and  metric double vector bundles.}


\author{M. Jotz Lean} \address{School of Mathematics and Statistics,
  The University of Sheffield.}  \email{M.Jotz-Lean@sheffield.ac.uk}
\thanks{Research partially supported by a \emph{fellowship for
    prospective researchers (PBELP2\_137534) of the Swiss NSF} for a
  postdoctoral stay at UC Berkeley.}
\subjclass[2010]{Primary: 53B05, 
  53D18; 
  Secondary:
  53D17. 
}

\begin{abstract}
  This paper shows the equivalence of the categories of $N$-manifolds
  of degree $2$ with the category of double vector bundles endowed
  with a linear metric.

  Split Poisson $N$-manifolds of degree $2$ are shown to be equivalent
  to \emph{self-dual representations up to homotopy}. As a
  consequence, the equivalence above induces an equivalence between so
  called \emph{metric VB-algebroids} and Poisson $N$-manifolds of
  degree $2$.

  Then a new, simple description of split Lie $2$-algebroids is given,
  as well as their ``duals'', the \emph{Dorfman $2$-representations}.
  We show that Dorfman $2$-representations are equivalent in a simple
  manner to \emph{Lagrangian splittings} of VB-Courant
  algebroids. This yields the equivalence of the categories of Lie
  $2$-algebroids and of VB-Courant algebroids.  We give several
  natural classes, some of them new, of examples of split Lie
  $2$-algebroids and of the corresponding VB-Courant algebroids.

  We then show that a split Poisson Lie $2$-algebroid is equivalent to
  the ``matched pair'' of a Dorfman $2$-representation with a
  self-dual representation up to homotopy.  We deduce a new proof of
  the equivalence of categories of LA-Courant algebroids and Poisson
  Lie $2$-algebroids.  We show that the core of an LA-Courant
  algebroid inherits naturally the structure of a degenerate Courant
  algebroid, as a kind of bicrossproduct of the Poisson bracket and
  the Dorfman connection defined by a Lagrangian splitting.  This
  yields a new formula to retrieve in a direct manner the Courant
  algebroid found by Roytenberg to correspond to a symplectic Lie
  $2$-algebroid.

  Finally we study VB- and LA-Dirac structures in VB- and LA-Courant
  algebroids. As an application, we extend Li-Bland's results on
  pseudo-Dirac structures and we construct a Manin pair associated to
  an LA-Dirac structure.
\end{abstract}
\maketitle

\newpage

\tableofcontents

\section{Introduction}

Lie and Courant algebroids are differential geometric objects that are
useful in encoding infinitesimal symmetry. Lie algebroids generalise
tangent bundles and Lie algebras. They were originally defined by Jean
Pradines \cite{Pradines67} as the infinitesimal counterpart of Lie
groupoids, just as Lie algebras describe Lie groups
infinitesimally. The integration of Lie algebroids to Lie groupoids
has only been fully understood around ten years ago, in seminal papers
of Crainic and Fernandes \cite{CrFe05} and Cattaneo and Felder
\cite{CaFe01}.

The standard Courant algebroid over a manifold was discovered in the
late eighties by (Ted) Courant in his study of Dirac structures
\cite{Courant90a}. He had earlier discovered Dirac structures with his
adviser Weinstein \cite{CoWe88}; Dirac structures arose from Dirac's
theory of constraints and encompass Poisson structures, closed
two-forms, and regular foliations \cite{Courant90a}.  A few years
later, Liu, Weinstein and Xu defined general Courant algebroids and
showed that the ``bicrossproducts'' of Lie bialgebroids are examples of
Courant algebroids \cite{LiWeXu97}. The standard Courant algebroids
over smooth manifolds turned out to be crucial in Hitchin's
definition of generalised complex structures \cite{Hitchin03}, which
were then further developed by his students Cavalcanti and Gualtieri
\cite{Gualtieri03}. Severa described in \cite{Severa05} how the
standard Courant algebroid describes symmetries of variational
problems.

Despite a great and growing interest in Courant algebroids, still
little is known about them. The two main goals in their study are now
their integration, and understanding their representation theory.  The
main tool for investigations in that direction is the well-known, but
rather complicated, equivalence between Courant algebroids and
symplectic Lie $2$-algebroids. One of the features of this paper is a
thorough explanation of this equivalence, in a very simple and
innovative manner.

\subsubsection*{Lie $1$- and Lie $2$-algebroids.}
For clarity let us recall a few definitions.  
An \textbf{$\N$-graded manifold} $\mathcal M$ of degree $n$ and
dimension $(p; r_1,\ldots, r_n)$ is a smooth $p$-dimensional manifold
$M$ endowed with a sheaf $C^\infty(\mathcal M)$ of $\N$-graded
commutative associative unital $\R$-algebras, which can locally be written as
\[C^\infty_U(M)[\xi_1^{1},\ldots,\xi_1^{r_1},\xi_2^1,\ldots,\xi_2^{r_2},\ldots,\xi_n^1,\ldots,\xi_n^{r_n}]\]
with $\xi_i^j$ of degree $i$. 
For instance, an $\N$-graded manifold of degree $1$ is just a locally
finitely generated sheaf of $C^\infty(M)$-modules, hence canonicaly
isomorphic to the set of sections of a vector bundle:
$C^\infty(\mathcal M)=\Gamma(\wedge^{\bullet} E^*)$ for a vector
bundle $E\to M$.  On a local chart $U$ of $M$ trivialising $E$, we
have coordinates $x_1,\ldots, x_n$, i.e.~functions of degree $0$ on
$\mathcal M$. We also have local basis sections $e_1,\ldots,e_r$ of
$E$ and the dual local basis sections $\xi_1,\ldots,\xi_r$ of $E^*$;
which are seen as functions of degree $1$ on $\mathcal
M$.  

A vector field of degree $j$ on an $\N$-graded manifold is a graded
derivation that increases the degree by $j$. 
An $NQ$-manifold of degree $1$ is an $\N$-graded manifold
$\mathcal M$ of degree $1$, with a vector field $\mathcal Q$ of degree
$1$ that commutes with itself; $[\mathcal Q,\mathcal Q]=0$. The vector
field $\mathcal Q$ is then called a \textbf{homological vector field}.  Take
again an $N$-manifold of degree $1$, i.e.~$C^\infty(\mathcal
M)=\Gamma(\wedge^{\bullet} E^*)$ for a vector bundle $E$ over $M$ and take a
trivialising chart $U\subseteq M$ for $E$.  The vector fields
$\partial_{x_i}$ have degree $0$ and the vector fields
$\partial_{\xi_i}$ have degree $-1$. 
Assume that $E$ has a Lie algebroid structure with anchor $\rho\colon
E\to TM$ and bracket $[\cdot\,,\cdot]$, and define $\mathcal Q$ on
$\mathcal M$ by
\begin{equation}\label{Q_1_in_coor}
\mathcal Q\an{U}=\sum_{i=1}^n\sum_{j=1}^r \rho(e_j)(x_i)\xi_j\partial_{x_i}
-\sum_{i<j}^r\sum_{k=1}^r\langle [e_i,e_j], \xi_k\rangle \xi_i\xi_j\partial_{\xi_k}.
\end{equation}
A quick degree count shows that $\mathcal Q$ has degree $1$.  Let us
check that $\mathcal Q\circ\mathcal Q=0$, i.e.~that $\mathcal Q$ is a
homological vector field. First note that $f\in C^\infty(U)$ is sent
by $\mathcal Q$ to $\mathcal Q(f)
= \sum_{j=1}^r
\rho(e_j)(f)\xi_j=\rho^*\dr f$.  Then \begin{equation*}
\begin{split}
  \mathcal Q^2(f)&= \mathcal Q\left(\sum_{j=1}^r
    \rho(e_j)(f)\xi_j\right)\\
&= \sum_{j=1}^r\rho^*\dr(\rho(e_j)(f))\xi_j-\sum_{j=1}^r
  \rho(e_j)(f)\sum_{s<t}\langle [e_s,e_t], \xi_j\rangle \xi_s\xi_t\\
  &=\sum_{t=1}^r\sum_{s=1}^r\rho(e_s)\rho(e_t)(f)\xi_s\xi_t-\sum_{s<t}\rho[e_s,e_t](f) \xi_s\xi_t
\end{split}\end{equation*}
which vanishes since $\rho[e_s,e_t]=[\rho(e_s),\rho(e_t)]$. Then we
find that $\mathcal Q(\xi_k)$ equals $-\sum_{i<j}^r\langle [e_i,e_j],
\xi_k\rangle \xi_i\xi_j$. As an element of $\Gamma(\wedge^2E^*)$, this
is $\dr_E\xi_k$, where $\dr_E$ is the operator defined on
$\Gamma(\wedge^\bullet E^*)$ by the Lie algebroid structure on $E$.  A
similar computation as the one above shows that the Jacobi identity
implies $\mathcal Q^2(\xi_k)=0$ for all $k$. Thus we have found that a
Lie algebroid structure on $E$ defines a homological vector field
$\mathcal Q=\dr_E$ on the corresponding N-manifold of degree $1$.

Conversely, any homological vector field $\mathcal Q$ on a degree $1$
graded manifold can be written as
$\sum_{ij}f^{ij}\epsilon_i\partial_{x_j}+\sum_{ijk}g^{ijk}\epsilon_i\epsilon_j\partial_{\epsilon_k}$
with smooth functions $f^{ij}$, $g^{ijk}\in C^\infty(U)$.  Setting
$f^{ij}=\rho(e_j)(x_i)$ and $g^{ijk}=-\langle [e_i,e_j], \xi_k\rangle
$ and extending using the Leibniz identities \emph{defines} then
locally a Lie algebroid structure on $E\an{U}$, which can further be
checked to be global since $\mathcal Q$ does not depend on the local
coordinates.  Hence, $NQ$-manifolds of degree $1$ are equivalent to
Lie algebroids.  This result, due to Arkady Vaintrob
\cite{Vaintrob97}, is the reason why $NQ$-manifolds of degree $1$ are
called Lie $1$-algebroids and $NQ$-manifolds of degree $n\geq 1$ are
called Lie $n$-algebroids.
Let us describe yet another way to get the Lie algebroid structure
from the homological vector field $\mathcal Q$.  A study of Equation
\eqref{Q_1_in_coor} shows that $[\mathcal Q, e](f)=\rho(e)(f)$ and
$[[\mathcal Q, e], e']=[e,e']$, if $e\in\Gamma(E)$ is identified with
the graded vector field $e$ of degree $-1$ that sends
$\xi\in\Gamma(E^*)$ to $\langle e, \xi\rangle $ and $f\in C^\infty(M)$
to $0$. The Lie algebroid structure is hence \emph{derived} from the
homological vector field.

\medskip

We now turn to the case of degree $2$ N-manifolds, and in
particular the one of Lie $2$-algebroids. The equivalence between
\emph{symplectic} degree $2$ N-manifolds and vector bundles
with a metric, and the one between \emph{symplectic} Lie
$2$-algebroids and Courant algebroids are due to Dmitry Roytenberg
\cite{Roytenberg02}.  Li-Bland established in his thesis
correspondences between Lie $2$-algebroids and
VB-Courant algebroids and between \emph{Poisson} Lie
$2$-algebroids and LA-Courant algebroids \cite{Li-Bland12}.

These correspondence are full of insight, but are not explained in the
literature as concretely as the one of NQ-manifolds of degree $1$ with
Lie algebroids. This paper geometrises N-manifolds of degree $2$: We
find an equivalence between the category of N-manifolds of degree $2$
and double vector bundles endowed with a linear metric. Then we deduce
correspondences between geometric structures on both sides.


\subsubsection*{Metric double vector bundles}
A double vector bundle is a commutative square
\begin{equation*}
\begin{xy}
\xymatrix{
D \ar[r]^{\pi_B}\ar[d]_{\pi_A}& B\ar[d]^{q_B}\\
A\ar[r]_{q_A} & M}
\end{xy}
\end{equation*}
of vector bundles such that the structure maps of the vertical bundles
define morphisms of the horizontal bundles. Each of the vector bundles
$D\to A$ and $D\to B$ has very useful sections to work with; the
\emph{linear} and the \emph{core sections}.
Take a triple $A,B,C$ of vector bundles over a smooth manifold
$M$. Then the fibre-product $A\times_M B\times_M C$ has a vector
bundle structure over $A$ given by
$(a,b,c)+_A(a,b',c')=(a,b+b',c+c')$, and
similarly a vector bundle structure over $B$. This defines a
commutative square as above and thus a double vector bundle structure
on $A\times_M B\times_M C\to B$.  This type of double vector bundle is
called \emph{decomposed}. Any double vector bundle is non-canonically
isomorphic to a decomposed one.  A \textbf{VB-Courant algebroid} is a double
vector bundle $(D,B,A,M)$  with a Courant algebroid structure
on $D\to B$ that is linear, i.e.~compatible with the other vector
bundle structure on $D$. An \textbf{LA-Courant algebroid} is a VB-Courant
algebroid with an additional Lie algebroid structure on $D\to A$ that
is linear, i.e.~compatible with the vector bundle structure on $D\to
B$, and that is also compatible with the Courant algebroid structure
in a sense that is explained in detail in this paper.

Up to now, the homological vector fields and the symplectic and
Poisson structures corresponding to Courant algebroids and VB-Courant
algebroids have never been written in coordinates as we do it in
\eqref{Q_1_in_coor} for Lie $1$-algebroids, and even the construction
of these structures as derived structures from the homological vector
field was not straightforward in practice. We remedy to this by giving
a simple description of the VB-Courant algebroid that is defined by a
Lie $2$-algebroid, and vice-versa. More precisely, this paper explains
in detail how to construct a \emph{decomposed} VB-Courant algebroid
from a \emph{split} Lie $2$-algebroid.  In order to do this, we give a
simplified definition of split Lie $2$-algebroids.

Our main result, advertised in the title of this paper, is an explicit
equivalence between the category of degree $2$ N-manifolds with a
category of double vector bundles with a linear non-degenerate pairing
over one of their sides.  We call the latter \textbf{metric double vector
bundles}.  From this theorem follow many enlightening results on
geometric structures on degree $2$ N-manifolds and on their
counterparts on metric double vector bundles, reflecting 
equivalences between the category of Poisson degree $2$
N-manifolds and the category of metric VB-algebroids, between the
category of Lie $2$-algebroids and the category of VB-Courant
algebroids, and between the category of Poisson Lie $2$-algebroids and
the category of LA-Courant algebroids.

\subsubsection*{Self-dual representations up to homotopy, Dorfman $2$-representations, etc.}
We prove along the way that split Poisson N-manifolds of degree $2$
are equivalent to \emph{self-dual $2$-term representations up to
  homotopy}, that split Lie $2$-algebroids are dual to \emph{Dorfman
  $2$-representations}, which resemble very much $2$-term
representations up to homotopy and demonstrate how Lie $2$-algebroids
can really be understood as Lie algebroids up to homotopy. We then
find a new way of checking if a Poisson bracket of degree $-2$ on an
N-manifold of degree $2$ is invariant under a homological vector
field, making the N-manifold into a Poisson Lie $2$-algebroid. In any
splitting of the underlying $N$-manifold, the self-dual representation
up to homotopy and the Dorfman $2$-representation defined by the
Poisson structure and the homological vector field, respectively, have
to form a matched pair, the definition of which resembles the one of
matched pairs of $2$-term representations up to homotopy
\cite{GrJoMaMe14}. This provides on the classical side a new and much
simpler way of defining LA-Courant algebroids, and using the result in
\cite{GrJoMaMe14}, shows immediately that LA-Dirac structures in
LA-Courant algebroids are double Lie algebroids.

We exhibit several new examples of (Poisson) Lie $2$-algebroids, and
show in particular that the bicrossproduct of a matched pair of
$2$-term representations up to homotopy is a split Lie $2$-algebroid
(see Theorem \ref{double_2_rep}), just as the bicrossproduct of a
matched pair of Lie algebroid representations is a Lie algebroid.
This result is independently interesting, because it finally unifies
in a natural framework the two notions of double of a matched pair of
representations.  A matched pair of representations of two Lie
algebroids $A$ and $B$ over $M$ defines a Lie algebroid structure on
$A\oplus B$ \cite{Mokri97}, sometimes called the double of the matched
pair, but called here the bicrossproduct of the matched pair.  The
matched pair defines also a double Lie algebroid $A\times B$ with
sides $A$ and $B$ and trivial core. This double Lie algebroid is
called the double of the matched pair.  Similarly, we know now that a
matched pair of $2$-term representations up to homotopy has a split
Lie $2$-algebroid as bicrossproduct, and a decomposed double Lie
algebroid as double. The split Lie $2$-algebroid is exactly the
N-geometric counterpart of the VB-Courant algebroid that is equivalent
to the double Lie algebroid.  The case of matched pair of
representations and their bicrossproducts and doubles are in fact a
special (degenerate) case of this more general equivalence; namely the
one of a linear Courant algebroid over a trivial base.

We find as well that a matched pair of $2$-term representations up to
homotopy defines in a less conventional, but still very natural manner
two Poisson Lie $2$-algebroids.

In Figure \ref{fig:classic} is a table of all the classical
differential geometric objects that we encounter in this paper.  The
ordinary arrows are the obvious forgetful functors. The snake arrows
correspond to constructions in \S\ref{VB_courant_from_double} (and
\S\ref{matched_pair_2_rep_sec}) and
\S\ref{Poi_lie_2_def_by_matched_ruth}. We do not discuss these arrows
as functors, but it would be a good exercise to do
so.  Note that the triangle on the
bottom left does not commute; we exhibit two different constructions
of a VB-Courant algebroid and of an LA-Courant algebroid from a double
Lie algebroid.  The hooked arrows are embeddings. In Figure \ref{fig:super} is a very similar table,
with the N-geometric counterparts of the objects in Figure
\ref{fig:classic}.

\begin{figure}[h]
\caption{Diagrammatic table of the (classical, double) geometric
  objects in this paper. }\label{fig:classic}
\includegraphics[scale=0.8]{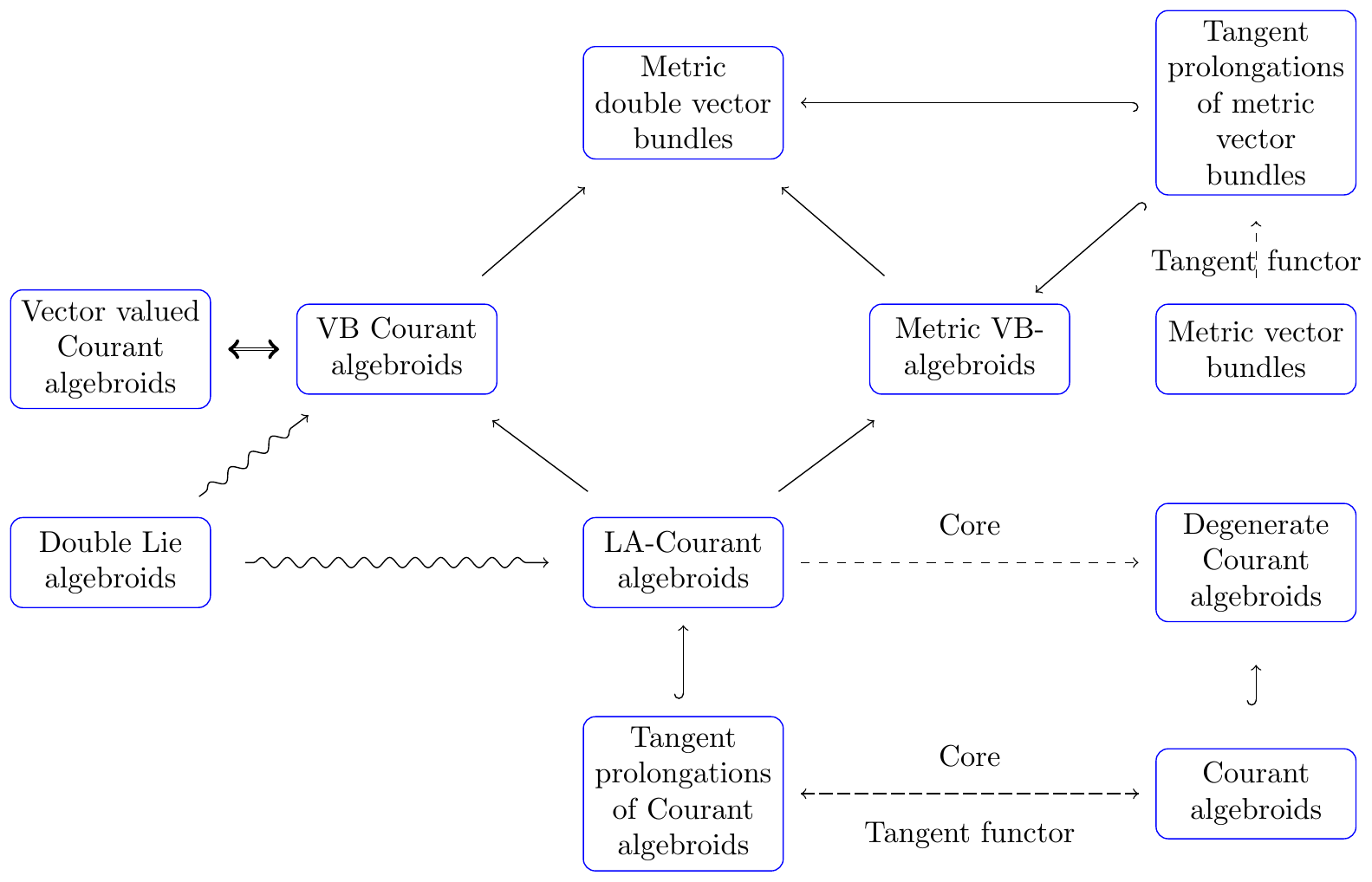}
\end{figure}

\begin{figure}
\caption{Diagrammatic table of the supergeometric objects in this
  paper. }\label{fig:super}
\includegraphics[scale=0.8]{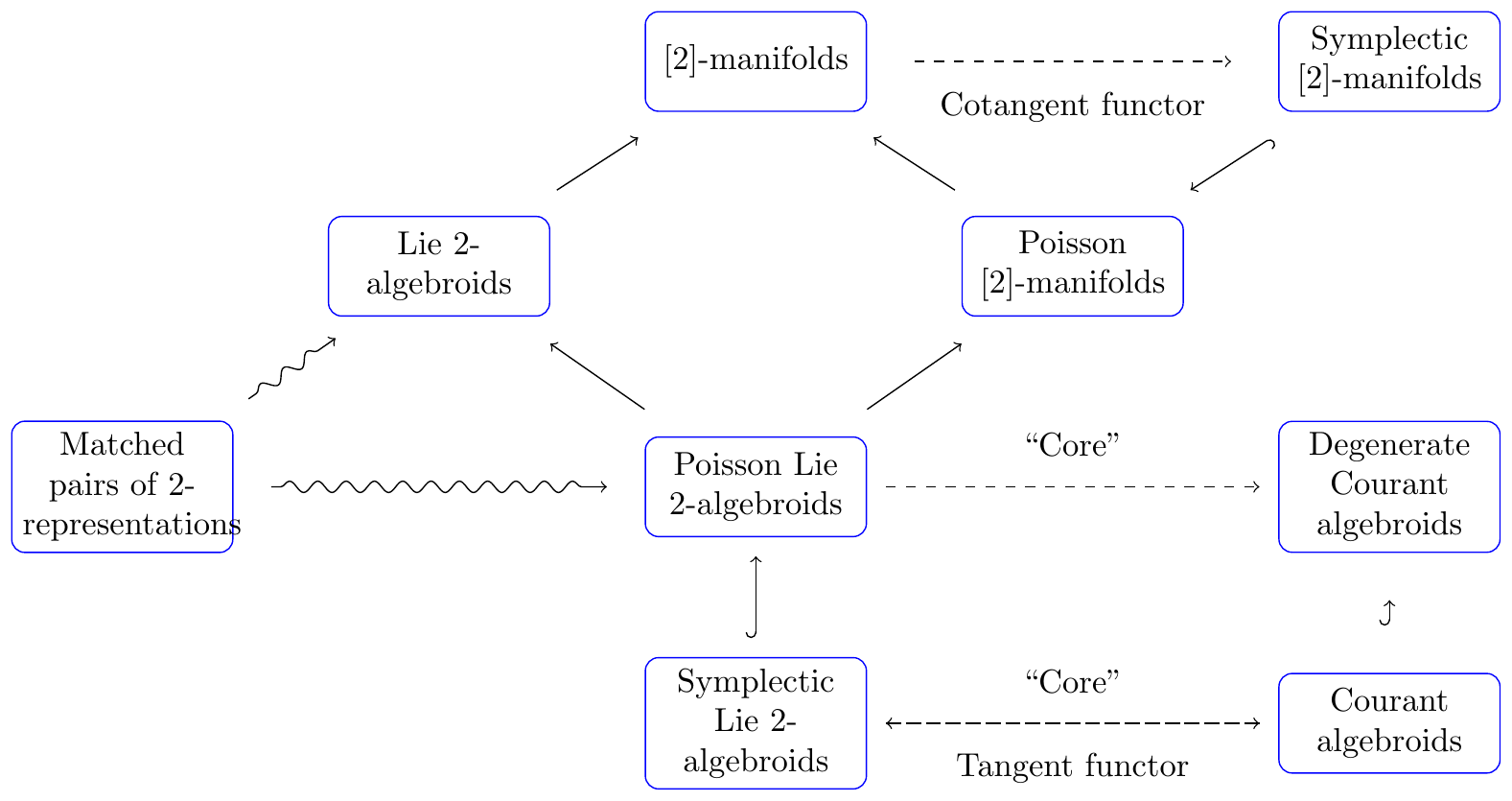}
\end{figure}

The range of applications of our results culminates in Theorem
\ref{culminate} with a new method exhibiting explicitly the Courant
algebroid structure arising from a symplectic Lie $2$-algebroid, and
thus contradicting the common wisdom that the Courant bracket can only
be obtained as a derived bracket from a symplectic Lie $2$-algebroid.
The core of the LA-Courant algebroid corresponding to a Poisson Lie
$2$-algebroid inherits the structure of a degenerate Courant
algebroid, which looks like the correction of a Dorfman connection
(one of the ingredients of the Dorfman $2$-representation) by the
Poisson bracket. In the symplectic case, the Courant algebroid is
non-degenerate, hence a Courant algebroid in the ordinary sense, and
exactly the one that is found following Roytenberg's approach to be
equivalent to the symplectic Lie $2$-algebroid. Unsurprisingly, the
ambient LA-Courant algebroid is nothing else than the tangent double
of the Courant algebroid.

This explains what might seem at first sight confusing in comparing
Li-Bland's work with Roytenberg's results. As symplectic manifolds are
special (non-degenerate) Poisson manifolds, symplectic Lie
$2$-algebroids form a special class of Poisson Lie
$2$-algebroids. Hence, the ``classical'' counterparts of symplectic
Lie $2$-algebroids should form a subcategory of the classical
counterparts of Poisson Lie $2$-algebroids.  We adopt the view (and
prove) that symplectic Lie $2$-algebroids are in fact equivalent to
tangent doubles of Courant algebroids, and that the Courant algebroids
are in fact the core structures emanating from those.

\subsubsection*{Original motivation}
Let us explain in more detail our methodology and our original
motivation.  A VB-Lie algebroid is a double vector bundle with one
side $D\to B$ endowed with a Lie algebroid bracket and an anchor that
are \emph{linear}. The side $A\to M$ then inherits a natural Lie
algebroid structure. Take for simplicity a decomposed VB-algebroid
$(A\times_M B\times_M C\to B, A\to M)$.  Here, linearity of the
bracket means that the bracket of two core sections\footnote{The
  sections of $A$, $B$ and $C\to M$ define very useful sections of the
  double vector bundle $A\times_M B\times_M C$: a section
  $a\in\Gamma(A)$ defines a linear section $a^l$ of $A\times_M
  B\times_M C\to B$: $a^l(b_m)=(a(m),b_m,0^C_m)$. The map $\tilde
  a\colon B\to A\times_M B\times_M C$ is then a vector bundle morphism
  over $a\colon M \to A$.  A section $c\in\Gamma(C)$ defines a core
  section $c^\dagger$ of $A\times_M B\times_M C\to B$:
  $c^\dagger(b_m)=(0^A_m,b_m,c(m))$. For
  $\phi\in\Gamma(\operatorname{Hom}(B,C))$, the linear section
  $\widetilde{\phi}$ sends $b_m$ to
  $(0^A_m,b_m,\phi(b_m))$.} is zero, the bracket of a core
section with a linear section is a core section, and the bracket of
two linear sections is again linear. In formulae
\begin{equation*} [c_1^\dagger, c_2^\dagger]=0, \quad [a^l,
  c^\dagger]=\nabla_ac^\dagger, \quad [a_1^l,
  a_2^l]=[a_1,a_2]^l-\widetilde{R(a_1,a_2)},
\end{equation*}
defining so a linear connection
$\nabla\colon\Gamma(A)\times\Gamma(C)\to\Gamma(C)$ and a vector valued
two-form $R\in\Omega^2(A,\operatorname{Hom}(B,C))$.
Linearity of the anchor means that the anchor of a linear section is a
linear vector field on the base $B$, and that the anchor of a core
section is a vertical vector field on $B$.  In formulae again
\begin{equation*} \rho_D(c^\dagger)=(\partial c)^\uparrow, \qquad 
\rho_D(a^l)=\widehat{\nabla_a},
\end{equation*}
defining so a linear connection $\nabla\colon
\Gamma(A)\times\Gamma(B)\to\Gamma(B)$ and a vector bundle morphism
$\partial\colon C\to B$. Gracia-Saz and Mehta prove in \cite{GrMe10a}
that the two connections, the bundle map and the vector-valued
$2$-form are the ingredients of a super-representation, also known as
$2$-term representation up to homotopy.

The definition of a VB-Courant algebroid is very similar to the one of
VB-algebroids.  The Courant bracket, the anchor and the non-degenerate
pairing all have to be linear. In 2012 we proved that the standard
Courant algebroid over a vector bundle can be decomposed into a
connection, a Dorfman connection, a curvature term and a vector bundle
map, in a manner that resembles very much the one in
\cite{GrMe10a}. There our original motivation was to prove that, as
linear splittings of the tangent space $TE$ of a vector bundle $E$ are
equivalent to linear connections on the vector bundle, linear
splittings of the Pontryagin bundle $TE\oplus T^*E$ over $E$ are
equivalent to a certain class of Dorfman connections \cite{Jotz13a}.

It was then very natural to show that this was in fact a very special
case of a general decomposition of VB-Courant algebroids, in the
spirit of Gracia-Saz and Mehta's result. For very long we worked with
general splittings of VB-Courant algebroids, exhibiting very promising
formulae and objects looking like the ``Courant counterpart'' to
$2$-term representations up to homotopy. Nonetheless these objects
were hard to grasp conceptually and to relate to Lie $2$-algebroids,
which were already known to correspond to VB-Courant algebroids. It
was only when we implemented the idea of working only with maximally
isotropic horizontal spaces in the metric double vector bundles
underlying VB-Courant algebroids, that the geometric objects that we
had found simplified to what could be called Lie derivatives up to
homotopy, or Lie $2$-derivatives, in duality to Lie
$2$-algebroids. For simplicity we called them Dorfman
$2$-representations, because Dorfman representations (skew-symmetric
and flat Dorfman connections) are exactly the Lie derivatives in
duality with Lie algebroids \cite{Jotz13a}. The next main consequence
of this new method was our discovery of the equivalence of metric
double vector bundles with N-manifolds of degree $2$, which is now the
core of this paper.

\subsection*{Outline, main results and applications}
This paper is organised as follows.
\paragraph{\textbf{Section \ref{sect:b+d}}} 
We start by quickly recalling how vector bundle morphisms are
equivalent to morphisms of the sheaves of sections of the dual
bundles.  We then discuss in detail the necessary background on double
vector bundles and their dualisation and splittings. We recall how
linear splittings of VB-algebroids induce $2$-term representations up
to homotopy \cite{GrMe10a} and how matched pairs of $2$-term
representations up to homotopy correspond in this manner to linear
splittings of double Lie algebroids \cite{GrJoMaMe14}.

\paragraph{\textbf{Section \ref{sec:metDVB}}}
In this section we recall the definition of $\N$-graded manifolds
(N-manifolds in short) and we recall the equivalence of $N$-manifolds
of degree $1$ with vector bundles.  Then we define metric double
vector bundles and their Lagrangian splittings, the existence of which we
prove. We describe the morphisms in the category of metric double
vector bundles, and we construct an equivalence of this category with the
category of $N$-manifolds of degree $2$.

\paragraph{\textbf{Section \ref{sec:Poisson}}}
We study Poisson structures of degree $-2$ on $N$-manifolds of degree
$2$.  We show how a Poisson structure of degree $-2$ on a split
$N$-manifold of degree $2$ is equivalent to a $2$-term representation
up to homotopy that is dual to itself. Then we give the geometrisation
of Poisson N-manifolds of degree $2$; namely linear Lie algebroids
structures on metric double vector bundles, that are compatible with
the metric.  We prove that the equivalence of categories established
in the previous section induces an equivalence of the category of
Poisson N-manifolds of degree $2$ with the category of metric
VB-algebroids. Finally, we discuss some examples of Poisson
N-manifolds of degree $2$, and the corresponding metric
VB-algebroids. We discuss in detail symplectic N-manifolds of degree
$2$, and how they correspond to tangent doubles of Euclidean vector
bundles.

\paragraph{\textbf{Section \ref{sec:split_lie_2}}}
In this section we start by recalling necessary background on Courant
algebroids, Dirac structures and Dorfman connections.  Then we
formulate in our own manner Sheng and Zhu's definition of split Lie
$2$-algebroids \cite{ShZh14}, before dualising it and obtaining the
notion of Dorfman $2$-representation. Then we write in coordinates the
homological vector field corresponding to a split Lie $2$-algebroid,
showing where the components of the Dorfman $2$-representation (or
equivalently of the split Lie $2$-algebroid) appear. In Section
\ref{examples_split_lie_2}, we give several classes of examples of
split Lie $2$-algebroids, introducing in particular the standard split
Lie $2$-algebroids defined by a vector bundle, and the bicrossproduct
Lie $2$-algebroid of a matched pair of $2$-term representations up to
homotopy.  Finally we describe morphisms of split Lie $2$-algebroids.

\paragraph{\textbf{Section \ref{sec:VB_cour}}}
In this section we give the definition of VB-Courant algebroids
\cite{Li-Bland12} and we relate Dorfman $2$-representations with
Lagrangian splittings of VB-Courant algebroids, in the spirit of
Gracia-Saz and Mehta's description of split VB-algebroids via $2$-term
representations up to homotopy \cite{GrMe10a}. Then we describe the
VB-Courant algebroids corresponding to the examples of split Lie
$2$-algebroids found in the previous section, and we prove that the
equivalence of categories established in Section \ref{sec:metDVB}
induces an equivalence of the category of VB-Courant algebroids
with the category of Lie $2$-algebroids.

\paragraph{\textbf{Section \ref{sec:LA-Cour}}}
We define the matching of self-adjoint $2$-representations with Dorfman
$2$-re\-pre\-sen\-ta\-tions. Then we show how split Poisson Lie
$2$-algebroids define such matched pairs, and how decomposed
LA-Courant algebroids define such matched pairs. We use this to deduce
an equivalence of the categories of Poisson Lie $2$-algebroids and
LA-Courant algebroids. Then we describe examples of LA-Courant
algebroids and Poisson Lie $2$-algebroids.

Next we focus on the core of an LA-Courant algebroid; we prove that it
inherits a natural structure of degenerate Courant algebroid
(``natural'' in the sense that this structure does not depend on any
Lagrangian splitting).  Symplectic Lie $2$-algebroids correspond via
the equivalence above to the tangent doubles of ordinary Courant
algebroids. The core structure in this class of LA-Courant algebroids
is just the underlying ordinary Courant algebroid. Hence, the Courant
bracket defined by a symplectic Lie $2$-algebroid can be recovered,
after any choice of splitting, as a kind of correction of a Dorfman
connection by the Poisson bracket.

\paragraph{\textbf{Section \ref{sec:dirac}}}
In the last section we discuss VB-Dirac structures (with support) in
VB-Courant algebroids, maximally isotropic subalgebroids in metric
VB-algebroids, and LA-Dirac structures (with support) in LA-Courant
algebroids.  We find in each case, after the choice of an adequate
splitting, conditions on the sides and core of the double subbundles
with the (matching) Dorfman $2$-representation and self-adjoint
$2$-representation for the double subbundle to be a VB-Dirac
structure, maximally isotropic subalgebroid or LA-Dirac structure. We
find that the integrability of a wide VB-Dirac structure is completely
encoded in the restriction to its side of the dull bracket defined by
any splitting adapted to the Dirac structure.  We prove that LA-Dirac
structures are automatically double Lie algebroids.

Next we explain in the language developed in this paper the notion of
pseudo-Dirac connection defined by Li-Bland in \cite{Li-Bland14}, and
we explain the equivalence that he finds between pseudo-Dirac
connections and VB-Dirac structures in tangent doubles of Courant
algebroids. We extend his result to an equivalence of ``quadratic''
pseudo-Dirac connections with LA-Dirac structures in the tangent
double of a Courant algebroid.

Finally we prove that an LA-Dirac structure defines a Manin pair over
its double base. The construction of the Courant algebroid resembles a
semi-direct product of a Lie algebroid with the degenerate Courant
algebroid on the core of the ambient LA-Courant algebroid.  These
Manin pairs will be the subject of future studies of the author.  In
particular, we will relate them to the infinitesimal description of
Dirac groupoids in \cite{LiSe11}.

\paragraph{\textbf{Appendices}}
To increase the readability of this paper, we give in the appendix the
proofs of its three most technical theorems.  In Section
\ref{appendix_proof_of_main} we prove the theorem describing
decomposed VB-Courant algebroids via Dorfman $2$-representations.  In
Section \ref{appendix_LACA} we prove that the Dorfman
$2$-representations and the self-adjoint $2$-representations encoding
the two sides of a split LA-Courant algebroid form a matched pair, and
vice-versa. This proof is very long and very technical, but contains
some interesting constructions.  In Section \ref{proof_of_manin_ap} we
prove the theorem on the Manin pair associated to an LA-Dirac
structure.

\subsection*{Acknowledgement}
The author warmly thanks Chenchang Zhu for giving her a necessary
insight at the origin of her interest in Lie $2$-algebroids, and Alan
Weinstein, David Li-Bland, Rajan Mehta and Dmitry Roytenberg for
interesting conversations. Thanks go also to Yunhe Sheng for his help
on a technical detail in his and Zhu's definition of a split Lie
$n$-algebroid. Finally the author thanks Rohan Jotz Lean for many very
useful comments on earlier versions of this work.

\subsection*{Notation and conventions}

We write $p_M\colon TM\to M$, $q_E\colon E\to M$ for vector bundle
mapsand $\pi_A\colon D\to A$ and $\pi_B\colon D\to B$ for the two
vector bundle projections of a double vector bundle.  For a vector
bundle $Q\to M$ we often identify without further mentioning the
vector bundle $(Q^*)^*$ with $Q$ via the canonical isomorphism. We
write $\langle\cdot\,,\cdot\rangle$ or $\langle\cdot\,,\cdot\rangle_M$
the canonical pairing of a vector bundle with its dual;
i.e.~$\langle a_m,\alpha_m\rangle=\alpha_m(a_m)$ for $a_m\in A$ and
$\alpha_m\in A^*$. We will use many different pairings; in general,
which pairing will be used will be clear from its arguments.  Given a
section $\xi$ of $E^*$, we will always write $\ell_\xi\colon E\to \R$
for the linear function associated to it, i.e. the function defined by
$e_m\mapsto \langle \xi(m), e_m\rangle$ for all $e_m\in E$.

Let $M$ be a smooth manifold. We will denote by $\mx(M)$ and
$\Omega^1(M)$ the spaces of (local) smooth sections of the tangent and
the cotangent bundle, respectively. For an arbitrary vector bundle
$E\to M$, the space of (local) sections of $E$ will be written
$\Gamma(E)$.  Let $f\colon M\to N$ be a smooth map between two smooth
manifolds $M$ and $N$.  Then two vector fields $X\in\mx(M)$ and
$Y\in\mx(N)$ are said to be \textbf{$f$-related} if $Tf\circ X=Y\circ
f$ on $\dom(X)\cap f\inv(\dom(Y))$.  We write then $X\sim_f Y$.

We write ``$[n]$-manifold'' for ``N-manifolds of degree $n$''. This
notation is to avoid confusions with $n$-manifolds, which are usually
understood as smooth manifolds of \emph{dimension} $n$.  We will say
$2$-representations for $2$-term representations up to homotopy.

\section{Background and definitions on double vector bundles and VB-algebroids}
\label{sect:b+d}
We collect in this section background notions on ordinary vector
bundles and their morphisms, on double vector bundles and their linear
splittings and dualisations, on VB-algebroids and double Lie
algebroids, and on $2$-term representations up to homotopy and how
they encode split VB-algebroids. Further references will be given
throughout the text.

\subsection{Vector bundles and morphisms}\label{usual_VB_morphisms}
Let $A\to M$ and $B\to N$ be vector bundles and $\omega\colon A\to B$
a morphism of vector bundles over a smooth map $\omega_0\colon M\to
N$.  First we set a few notations. We will say that $a\in\Gamma_M(A)$
is $\omega$-related to $b\in\Gamma_N(B)$ if
\[ \omega(a(m))=b(\omega_0(m))
\]
for all $m\in M$. We will then write $a\sim_\omega b$.

We will write $\omega_0^*B\to M$ for the pullback of $B$ under
$\omega_0$; for $m\in M$, elements of $(\omega_0^*B)(m)$ are pairs
$(m,b_{\omega_0(m)})$ with $b_{\omega_0(m)}\in B(\omega_0(m))$. The
morphism $\omega$ induces then a morphism $\omega^!\colon A\to
\omega_0^*B$, $\omega^!(a_m)=(m,\omega(a_m))$ over the identity on
$M$. For a section $b\in\Gamma_N(B)$, we get in a similar manner a
section $\omega_0^!b\in\Gamma_M(\omega_0^*B)$; defined by
$(\omega_0^!b)(m)=(m,b(\omega_0(m)))$ for all $m\in M$.  We have then
$\omega^\star(b)=(\omega^!)^*( \omega_0^!b)$ for all
$b\in\Gamma_N(B)$.

 The dual of a morphism
$\omega\colon A\to B$ over $\omega_0\colon M\to N$ is in general not a morphism
of vector bundles, but a relation $R_{\omega^*}\subseteq A^*\times
B^*$ defined as follows:
\[
R_{\omega^*}=\{(\omega_m^*\beta_{\omega_0(m)},\beta_{\omega_0(m)})\mid
m\in M, \beta_{\omega_0(m)}\in B^*_{\omega_0(m)}\},
\]
where $\omega_m\colon A_m\to B_{\omega_0(m)}$ is the morphism of
vector spaces.  This relation induces a morphism $\omega^\star$ of
modules over $\omega_0^*\colon C^\infty(N)\to C^\infty(M)$:
\begin{equation}\label{dual_of_VB_map}
\omega^\star\colon \Gamma_N(B^*)\to \Gamma_M(A^*),
\qquad  \omega^\star(\beta)(m)=\omega_m^*\beta_{\omega_0(m)}
\end{equation}
for all $\beta\in\Gamma_N(B^*)$ and $m\in M$. That is,
$(\omega^\star(\beta)(m), \beta(\omega_0(m)))\in R_{\omega^*}$ for all
$\beta\in\Gamma_N(B^*)$ and $m\in M$.  We prove the following lemma.
\begin{lemma}\label{bundlemap_eq_to_morphism}
  The map $\cdot^\star$, that sends a morphism of vector bundles
  $\omega\colon A\to B$ over $\omega_0\colon M\to N$ to the morphism
  $\omega^\star\colon\Gamma_N(B^*)\to\Gamma_M(A^*)$ of modules over
  $\omega_0^*\colon C^\infty(N)\to C^\infty(M)$, is a bijection.
\end{lemma}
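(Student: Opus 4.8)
The plan is to show that $\cdot^\star$ is a bijection by exhibiting an explicit inverse, constructing a vector bundle morphism from a given morphism of modules. The key point is that a morphism $\Phi\colon \Gamma_N(B^*)\to\Gamma_M(A^*)$ over $\omega_0^*\colon C^\infty(N)\to C^\infty(M)$ — meaning $\Phi(f\cdot\beta) = \omega_0^*(f)\cdot\Phi(\beta)$ for $f\in C^\infty(N)$, $\beta\in\Gamma_N(B^*)$ — is necessarily local (its value at $m$ depends only on the germ, in fact the value, of $\beta$ at $\omega_0(m)$), so it defines for each $m\in M$ a linear map $B^*_{\omega_0(m)}\to A^*_m$, whose dual is a linear map $A_m\to B_{\omega_0(m)}$; collecting these gives the candidate bundle map $\omega\colon A\to B$ over $\omega_0$.

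**First I would** prove injectivity of $\cdot^\star$: this is immediate, since $\omega^\star(\beta)(m) = \omega_m^*\beta_{\omega_0(m)}$ determines each dual map $\omega_m^*$ (as $\beta$ ranges over sections, $\beta_{\omega_0(m)}$ ranges over all of $B^*_{\omega_0(m)}$, using that sections of $B^*$ take arbitrary prescribed values at a point), hence determines each $\omega_m$, hence determines $\omega$. **Next I would** prove surjectivity. Given $\Phi$ as above, I first establish locality: if $\beta\in\Gamma_N(B^*)$ vanishes in a neighbourhood of $\omega_0(m)$, then $\Phi(\beta)(m)=0$. This follows by a standard bump-function argument — write $\beta = g\cdot\beta$ near $\omega_0(m)$ with $g\in C^\infty(N)$ supported away from $\omega_0(m)$ and equal to $1$ on the support of $\beta$ there, whence $\Phi(\beta)(m) = \omega_0^*(g)(m)\Phi(\beta)(m) = g(\omega_0(m))\Phi(\beta)(m) = 0$; a partition-of-unity refinement handles the general case and shows that $\Phi(\beta)(m)$ depends only on $\beta(\omega_0(m))$. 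Thus there is a well-defined linear map $\psi_m\colon B^*_{\omega_0(m)}\to A^*_m$ with $\Phi(\beta)(m) = \psi_m(\beta(\omega_0(m)))$. Dualising, using the canonical identification $(A^*_m)^* \cong A_m$, set $\omega_m := \psi_m^*\colon A_m\to B_{\omega_0(m)}$, and $\omega(a_m) := \omega_m(a_m)$.

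**Then I would** check that $\omega$ is smooth and a vector bundle morphism over $\omega_0$: smoothness follows because in local trivialisations the matrix entries of $\omega_m$ are, up to the identification, the components of $\Phi$ applied to constant local frame sections of $B^*$, which are smooth functions of $m$ by construction of $\Phi$; linearity fibrewise is built in. Finally one verifies $\omega^\star = \Phi$: by definition $\omega^\star(\beta)(m) = \omega_m^*\beta_{\omega_0(m)} = \psi_m(\beta(\omega_0(m))) = \Phi(\beta)(m)$. This closes the argument.

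**The main obstacle** — though a routine one — is the locality step: one must be careful that $\Phi$ being only a module morphism over $\omega_0^*$ (not over $\mathrm{id}$) still forces locality, which it does precisely because $\omega_0^*(g)(m) = g(\omega_0(m))$, so a bump function at $\omega_0(m)$ upstairs pulls back to one vanishing at $m$ downstairs. A secondary point worth stating carefully is that sections of $B^*$ realise arbitrary values at a given point, so that the fibrewise maps $\psi_m$ are genuinely determined and genuinely defined on all of $B^*_{\omega_0(m)}$; this is where one uses that $B^*$ is an honest vector bundle over $N$.
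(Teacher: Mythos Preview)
Your proposal is correct and follows essentially the same approach as the paper: both construct the inverse by showing that a module morphism $\Phi$ over $\omega_0^*$ is pointwise---i.e.\ $\Phi(\beta)(m)$ depends only on $\beta(\omega_0(m))$---and then dualise the resulting fibrewise linear maps $B^*_{\omega_0(m)}\to A^*_m$ to obtain $\omega_m\colon A_m\to B_{\omega_0(m)}$, checking smoothness via local frames. The only cosmetic difference is that the paper argues pointwise dependence in one step (writing a section vanishing at a point as $f\cdot\beta'$ with $f$ vanishing there, which strictly speaking should be a finite sum over a local frame), whereas you first pass through germs via a bump function and then to values; both are standard and equivalent.
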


\begin{proof}
  First we have to check that $\omega^\star$ is well-defined, that is,
  that the image under $\omega^\star$ of a smooth section of $B^*$ is
  again smooth. Consider the pullback of $B$ under $\omega_0$,
  i.e.~the vector bundle $\omega_0^*B\to M$. The morphism $\omega$ of
  vector bundles induces a morphism $\omega^!\colon A\to \omega_0^*B$
  of smooth vector bundles over the identity on $M$: $\omega^!$ is
  defined by $\omega^!(a_m)=(m,\omega(a_m))$ for all $a_m$ in the
  fiber of $A$ over $m\in M$. Now we have
  $(\omega_0^*B)^*=\omega_0^*B^*$ and for each section
  $\beta\in\Gamma_N(B^*)$, we define
  $\beta^!\in\Gamma_M(\omega_0^*B^*)$ by
  $\beta^!(m)=(m,\beta(\omega_0(m)))$. The smoothness of
  $\omega^\star(\beta)$ follows from the equality
  $\omega^\star(\beta)=(\omega^!)^*\beta^!$: for each $m\in M$, and
  each $a_m\in A_m$, we have
\begin{align*}
  \langle((\omega^!)^*\beta^!)(m), a_m\rangle&=\langle\beta^!(m),
  \omega^!(a_m)\rangle
  =\langle(m,\beta(\omega_0(m))),(m,\omega(a_m))\rangle\\
  &=\langle\beta(\omega_0(m)),
  \omega(a_m)\rangle=\langle\omega^\star(\beta)(m),a_m\rangle.
\end{align*}

The map $\omega^\star$ is obviously a morphism of modules over
$\omega_0^*$: for $\beta\in\Gamma(B^*)$ and $f\in C^\infty(N)$, we
find
\[\omega^\star(f\beta)=\omega_0^*f\,\omega^\star(\beta).
\]

Next we need to show that a morphism $\mu^\star\colon
\Gamma_N(B^*)\to\Gamma_M(A^*)$ of modules over $\mu_0^*\colon
C^\infty(N)\to C^\infty(M)$, for $\mu_0\colon M\to N$ smooth, induces
a morphism $A\to B$ of vector bundles over $\mu_0\colon M\to N$.
Choose $a_m$ in the fiber of $A$ over $m$ and define $\mu(a_m)\in
B_{\mu_0(m)}$ by
\[\langle \beta(\mu_0(m)), \mu(a_m)\rangle=\langle\mu^\star(\beta)(m), a_m\rangle 
\]
for all $\beta\in \Gamma(B^*)$. To prove that $\mu$ is a vector bundle
morphism\footnote{The smoothness of $\mu$ is seen as follows: let
  $b_1,\ldots,b_n$ be local basis fields for $B$ and let
  $\beta_1,\ldots,\beta_n$ be the dual basis fields. Then for eah
  $a_m\in A$, $\mu(a_m)$ can be written
  $\sum_{i=1}^n\langle\mu(a_m),\beta_i(\mu_0(m))\rangle
  b_i(\mu_0(m))$.  Since each
  $\langle\mu(a_m),\beta_i(\mu_0(m))\rangle$ equals
  $\ell_{\mu^\star(\beta_i)}(a_m)$, we find that locally, $\mu$ can be
  written $\mu=\sum_{i=1}^n\ell_{\mu^\star(\beta_i)}\cdot
  (b_i\circ\mu_0\circ q_A)$. }, we need to check that $\langle
\beta(\mu_0(m)), \mu(a_m)\rangle$ only depends on the value of $\beta$
at $\mu_0(m)$, or in other words, that if $\beta$ vanishes at
$\mu_0(m)$, then $\langle \beta(\mu_0(m)), \mu(a_m)\rangle=0$. If
$\beta$ vanishes at $\mu_0(m)$, then $\beta$ can be written as $f\cdot
\beta'$ with $\beta'\in \Gamma(B^*)$ and $f\in C^\infty(N)$ with
$f(\mu_0(m))=0$.  But then
\[\langle \beta(\mu_0(m)), \mu(a_m)\rangle=\langle f(\mu_0(m))\mu^\star(\beta')(m), a_m\rangle=0.
\]
The morphism $\mu$ of vector bundles clearly induces $\mu^\star$ on
the sets of sections of the duals, and vice-versa.
\end{proof}
\medskip

\subsection{Double vector bundles}
We briefly recall the definitions of double vector bundles, of their
\emph{linear} and \emph{core} sections, and of their \emph{linear
  splittings} and \emph{lifts}. We refer to
\cite{Pradines77,Mackenzie05,GrMe10a} for more details.

A \textbf{double vector bundle} $(D;A,B;M)$ is a commutative square
\begin{equation*}
\begin{xy}
\xymatrix{
D \ar[r]^{\pi_B}\ar[d]_{\pi_A}& B\ar[d]^{q_B}\\
A\ar[r]_{q_A} & M}
\end{xy}
\end{equation*}
satisfying the following four conditions:
\begin{enumerate}
\item all four sides are vector bundles;
\item $\pi_B$ is a vector bundle morphism over $q_A$;
\item $+_B: D\times_B D \rightarrow D$ is a vector bundle
morphism over $+: A\times_M A \rightarrow A$, where $+_B$ is
the addition map for the vector bundle $D\rightarrow B$, and 
\item the scalar multiplication $\R\times D\to D$ in the bundle $D\to
  B$ is a vector bundle morphism over the scalar multiplication
  $\R\times A\to A$.
\end{enumerate}
The corresponding statements for the operations in the bundle $D\to A$
follow.  Note that the condition that each addition in $D$ is a
morphism with respect to the other is exactly
\begin{equation}\label{add_add} (d_1+_Ad_2)+_B(d_3+_Ad_4)=(d_1+_Bd_3)+_A(d_2+_Bd_4)
\end{equation}
for $d_1,d_2,d_3,d_4\in D$ with $\pi_A(d_1)=\pi_A(d_2)$,
$\pi_A(d_3)=\pi_A(d_4)$ and $\pi_B(d_1)=\pi_B(d_3)$,
$\pi_B(d_2)=\pi_B(d_4)$.

Given a double vector bundle $(D; A, B; M)$, the vector bundles $A$
and $B$ are called the \textbf{side bundles}. The \textbf{core} $C$ of
a double vector bundle is the intersection of the kernels of $\pi_A$
and of $\pi_B$. It has a natural vector bundle structure over
$M$, 
the projection of which we call $q_C\colon C \rightarrow M$. The
inclusion $C \hookrightarrow D$ is usually denoted by
$C_m \ni c \longmapsto \overline{c} \in \pi_A^{-1}(0^A_m) \cap \pi_B^{-1}(0^B_m)$.

Given a double vector bundle $(D;A,B;M)$, the space of sections
$\Gamma_B(D)$ is generated as a $C^{\infty}(B)$-module by two
distinguished classes of sections (see \cite{Mackenzie11}), the
\emph{linear} and the \emph{core sections} which we now describe.  For
a smooth section $c\colon M \rightarrow C$, the corresponding
\textbf{core section} $c^\dagger\colon B \rightarrow D$ is defined as
\begin{equation}\label{core_section}
c^\dagger(b_m) = \tilde{0}_{\vphantom{1}_{b_m}} +_A \overline{c(m)}, \,\, m \in M, \, b_m \in B_m.
\end{equation}
We denote the corresponding core section $A\to D$ by $c^\dagger$ also,
relying on the argument to distinguish between them. The space of core
sections of $D$ over $B$ will be written $\Gamma_B^c(D)$.

A section $\xi\in \Gamma_B(D)$ is called \textbf{linear} if $\xi\colon
B \rightarrow D$ is a bundle morphism from $B \rightarrow M$ to $D
\rightarrow A$ over a section $a\in\Gamma(A)$.  The space of linear
sections of $D$ over $B$ is denoted by $\Gamma^\ell_B(D)$. A section
$\psi\in \Gamma(B^*\otimes C)$ defines a linear section
$\tilde{\psi}\colon B\to D$ over the zero section $0^A\colon M\to A$
by
\begin{equation}\label{core_morf}
\widetilde{\psi}(b_m) = \tilde{0}_{b_m}+_A \overline{\psi(b_m)}
\end{equation}
for all $b_m\in B$.  We call $\widetilde{\psi}$ a \textbf{core-linear
  section}.

\begin{example}\label{trivial_dvb}
  Let $A, \, B, \, C$ be vector bundles over $M$ and consider
  $D=A\times_M B \times_M C$. With the vector bundle structures
  $D=q^{!}_A(B\oplus C) \to A$ and $D=q_B^{!}(A\oplus C) \to B$, one
  finds that $(D; A, B; M)$ is a double vector bundle called the
  \textit{decomposed double vector bundle with sides $A$ and $B$ and
    core $C$}. The core sections are given by
$c^\dagger\colon b_m \mapsto (0^A_m, b_m, c(m))$, where $m \in M$,  $b_m \in
B_m$, $c \in \Gamma(C)$,
and similarly for $c^\dagger\colon A\to D$.  The space of linear
sections $\Gamma^\ell_B(D)$ is naturally identified with
$\Gamma(A)\oplus \Gamma(B^*\otimes C)$ via
$$
(a, \psi): b_m \mapsto (a(m), b_m, \psi(b_m)), \text{ where } \psi \in
\Gamma(B^*\otimes C), \, a\in \Gamma(A).
$$

In particular, the fibered product $A\times_M B$ is a double vector
bundle over the sides $A$ and $B$ and its core is the trivial bundle
over $M$.
\end{example}

\subsubsection{Linear splittings and lifts}
\label{subsub:lsl}
A \textbf{linear splitting} of $(D; A, B; M)$ is an injective morphism
of double vector bundles $\Sigma\colon A\times_M B\hookrightarrow D$
over the identity on the sides $A$ and $B$.  That every double vector
bundle admits local linear splittings was proved by \cite{GrRo09}.
Local linear splittings are equivalent to double vector bundle
charts. Pradines originally defined double vector bundles as
topological spaces with an atlas of double vector bundle charts
\cite{Pradines74a} (see Definition \ref{double_atlas}). Using a
partition of unity, he proved that (provided the double base is a
smooth manifold) this implies the existence of a global double
splitting \cite{Pradines77}. Hence, any double vector bundle in the
sense of our definition admits a (global) linear splitting.

Note that a linear splitting of $D$ is equivalent to a
\textbf{decomposition} of $D$, i.e.~an isomorphism $\mathbb I\colon
A\times_MB\times_MC\to D$ of double vector bundles over the identities
on the sides and inducing the identity on the core. Given a linear
splitting $\Sigma$, the corresponding decomposition $\mathbb I$ is
given by $\mathbb I(a_m,b_m,c_m)=\Sigma(a_m,b_m)+_B(\tilde
0_{\vphantom{1}_{b_m}} +_A \overline{c_m})$.  Given a decomposition
$\mathbb I$, the corresponding linear splitting $\Sigma$ is given by
$\Sigma(a_m,b_m)=\mathbb I(a_m,b_m,0^C_m)$.

\medskip

A linear splitting $\Sigma$ of $D$ is also equivalent to a splitting
$\sigma_A$ of the short exact sequence of $C^\infty(M)$-modules
\begin{equation}\label{fat_seq_gamma}
0 \longrightarrow \Gamma(B^*\otimes C) \hookrightarrow \Gamma^\ell_B(D) 
\longrightarrow \Gamma(A) \longrightarrow 0,
\end{equation}
where the third map is the map that sends a linear section $(\xi,a)$
to its base section $a\in\Gamma(A)$.  The splitting $\sigma_A$ will be
called a \textbf{horizontal lift} or simply a \textbf{lift}. Given
$\Sigma$, the horizontal lift $\sigma_A\colon \Gamma(A)\to
\Gamma_B^\ell(D)$ is given by $\sigma_A(a)(b_m)=\Sigma(a(m), b_m)$ for
all $a\in\Gamma(A)$ and $b_m\in B$.  By the symmetry of a linear
splitting, we find that a lift $\sigma_A\colon
\Gamma(A)\to\Gamma_B^\ell(D)$ is equivalent to a lift $\sigma_B\colon
\Gamma(B)\to \Gamma_A^\ell(D)$.  Given a lift
$\sigma_A\colon\Gamma(A)\to\Gamma^\ell_B(D)$, the corresponding lift
$\sigma_B\colon\Gamma(B)\to\Gamma^\ell_A(D)$ is given by
$\sigma_B(b)(a(m))=\sigma_A(a)(b(m))$ for all $a\in\Gamma(A)$,
$b\in\Gamma(B)$.

Note finally that two linear splittings $\Sigma^1,\Sigma^2\colon
A\times_MB\to D$ differ by a section $\phi_{12}$ of $A^*\otimes
B^*\otimes C\simeq \operatorname{Hom}(A,B^*\otimes C)\simeq
\operatorname{Hom}(B,A^*\otimes C)$ in the following sense.  For each
$a\in\Gamma(A)$ the difference $\sigma_A^1(a)-_B\sigma_A^2(a)$ of
horizontal lifts is the core-linear section defined by
$\phi_{12}(a)\in\Gamma(B^*\otimes C)$. By symmetry,
$\sigma_B^1(b)-_A\sigma_B^2(b)=\widetilde{\phi_{12}(b)}$ for each
$b\in\Gamma(B)$.

\medskip

The space of linear sections is a locally free $C^{\infty}(M)$-module
(this follows from the existence of local splittings). Hence, there is
a vector bundle $\widehat{A}$ over $M$ such that $\Gamma^l_B(D)$ is
isomorphic to $\Gamma(\widehat{A})$ as $C^{\infty}(M)$-modules. The
vector bundle $\widehat{A}$ is sometimes called the \textbf{fat vector
  bundle} defined by $\Gamma^l_B(D)$. 
The short exact sequence \eqref{fat_seq_gamma} induces a short exact
sequence of vector bundles
\begin{equation}\label{fat_seq}
0 \longrightarrow B^*\otimes C \hookrightarrow \widehat{A} \longrightarrow A \longrightarrow 0.
\end{equation}

\subsubsection{The tangent double of a vector bundle}\label{tangent_double}
Let $q_E\colon E\to M$ be a vector bundle.  Then the tangent bundle
$TE$ has two vector bundle structures; one as the tangent bundle of
the manifold $E$, and the second as a vector bundle over $TM$. The
structure maps of $TE\to TM$ are the derivatives of the structure maps
of $E\to M$. The space $TE$ is a double vector bundle with core bundle
$E \to M$. The map $\bar{}\,\colon E\to p_E^{-1}(0^E)\cap
(Tq_E)^{-1}(0^{TM})$ sends $e_m\in E_m$ to $\bar
e_m=\left.\frac{d}{dt}\right\an{t=0}te_m\in T_{0^E_m}E$.
\begin{equation*}
\begin{xy}
\xymatrix{
TE \ar[d]_{Tq_E}\ar[r]^{p_E}& E\ar[d]^{q_E}\\
 TM\ar[r]_{p_M}& M}
\end{xy}
\end{equation*}

The core vector field corresponding to $e \in \Gamma(E)$ is the
vertical lift $e^{\uparrow}\colon E \to TE$, i.e.~the vector field
with flow $\phi\colon E\times \R\to E$, $\phi_t(e'_m)=e'_m+te(m)$. An
element of $\Gamma^\ell_E(TE)=\mx^\ell(E)$ is called a \textbf{linear
  vector field}. It is well-known (see e.g.~\cite{Mackenzie05}) that a
linear vector field $\xi\in\mx^l(E)$ covering $X\in\mx(M)$ is
equivalent to a derivation $D_\xi^*\colon \Gamma(E^*) \to \Gamma(E^*)$
over $X\in \mx(M)$, and hence to the dual derivation
$D_\xi\colon\Gamma(E)\to\Gamma(E)$ over $X\in \mx(M)$. The precise
correspondence is given by\footnote{Since its flow is a flow of vector
  bundle morphisms, a linear vector field sends linear functions to
  linear functions and pullbacks to pullbacks.}
\begin{equation}\label{ableitungen}
\xi(\ell_{\varepsilon}) 
= \ell_{D_\xi^*(\varepsilon)} \,\,\,\, \text{ and }  \,\,\, \xi(q_E^*f)= q_E^*(X(f))
\end{equation}
for all $\varepsilon\in\Gamma(E^*)$ and $f\in C^\infty(M)$.  Here
$\ell_\varepsilon$ is the linear function $E\to\R$ corresponding to
$\varepsilon$. We will write $\widehat D$ for the linear vector field
in $\mx^l(E)$ corresponding in this manner to a derivation $D$ of
$\Gamma(E)$.  The choice of a linear splitting $\Sigma$ for $(TE; TM,
E; M)$ is equivalent to the choice of a connection on $E$: Since a
linear splitting gives us a linear vector field
$\sigma_{TM}(X)\in\mx^l(E)$ for each $X\in \mx(M)$, we can define
$\nabla\colon \mx(M)\times\Gamma(E)\to \Gamma(E)$ by
$\sigma_{TM}(X)=\widehat{\nabla_X}$ for all $X\in\mx(M)$. Conversely,
a connection $\nabla\colon \mx(M)\times\Gamma(E)\to\Gamma(E)$ defines
a lift $\sigma_{TM}^\nabla$ for $(TE; TM, E; M)$ and a linear
splitting $\Sigma^\nabla\colon TM\times_M E \to TE$.

We recall as well the relation between the connection and the Lie
bracket of vector fields on $E$.  Given $\nabla$, it is easy to see
using the equalities in \eqref{ableitungen} that, writing $\sigma$ for
$\sigma_{TM}^\nabla$:
\begin{equation}\label{Lie_bracket_VF}
  \left[\sigma(X), \sigma(Y)\right]=\sigma[X,Y]-\widetilde{R_\nabla(X,Y)},\qquad
  \left[\sigma(X), e^\uparrow\right]=(\nabla_Xe)^\uparrow,\qquad
  \left[e^\uparrow,e'^\uparrow\right]=0,
\end{equation}
for all $X,Y\in\mx(M)$ and $e,e'\in\Gamma(E)$.  That is, the Lie
bracket of vector fields on $M$ and the connection encode completely
the Lie bracket of vector fields on $E$.

\medskip

Now let us have a quick look at the other structure on the double
vector bundle $TE$. The lift
$\sigma_{E}^\nabla\colon\Gamma(E)\to\Gamma_{TM}^\ell(TE)$ is given by
\begin{equation*}
  \sigma_{E}^\nabla(e)(v_m) = T_me(v_m) +_{TM} (T_m0^E(v_m) -_E \overline{\nabla_{v_m} e}), \,\, v_m \in TM, \, e \in \Gamma(E).
\end{equation*}
Further, for $e\in\Gamma(E)$, the core section $e^\times\in\Gamma_{TM}(TE)$
is given by
\begin{equation}\label{cross_sect_def}
 e^\times(v_m)=T_m0^E(v_m)+_E\left.\frac{d}{dt}\right\an{t=0}te(m).
\end{equation}

\subsubsection{Dualisation and lifts}\label{dual}
Double vector bundles can be dualised in two distinct ways.  We denote
by $D\duer A$ the dual of $D$ as a vector bundle over $A$ and likewise
for $D\duer B$. The dual $D\duer A$ is again a double vector
bundle\footnote{The projection $\pi_{C^*}\colon D\duer A\to C^*$ is
  defined as follows: if $\Phi\in D\duer A$ projects to
  $\pi_A(\Phi)=a_m$, then $\pi_{C^*}(\Phi)\in C^*_m$ is defined by
\[\pi_{C^*}(\Phi)(c_m)=\Phi(0^D_{a_m}+_B\overline{c_m})
\]
for all $c_m\in C_m$. The addition in the fibers of the vector bundle
$D\duer A\to C^*$ is defined as follows: if $\Phi_1$ and $\Phi_2\in
D\duer A$ satisfy
\[\pi_{C^*}(\Phi_1)=\pi_{C^*}(\Phi_2)\qquad \pi_A(\Phi_1)=a^1_m\qquad
\pi_A(\Phi_2)=a^2_m,
\]
then $\Phi_1+_{C^*}\Phi_2$ is defined by
\[(\Phi_1+_{C^*}\Phi_2)(d_1+_Bd_2)=\Phi_1(d_1)+\Phi_2(d_2)
\]
for all $d_1,d_2\in D$ with $\pi_B(d_1)=\pi_B(d_2)$ and
$\pi_A(d_1)=a^1_m$, $\pi_A(d_2)=a^2_m$.  The core element
$\overline{\beta_m}\in D\duer A$ defined by $\beta_m\in B^*$ is
defined by $\overline{\beta_m}(d)=\beta_m(\pi_B(d))$ for all $d\in D$
with $\pi_A(d)=0^A_m$.  By playing with the vector bundle structures
on $D\duer A$ and \eqref{add_add}, one can show that each core element
of $D\duer A$ is of this form. We encourage the reader who is not
familiar with the dualisations of double vector bundles to check this,
and also to find out where the projection to $C^*$ is relevant in the
definition of the addition over $C^*$. See \cite{Mackenzie11}.}, with
side bundles $A$ and $C^*$ and core $B^*$
\cite{Mackenzie99,Mackenzie11}.
$$ 
{\xymatrix{
    D\ar[r]^{\pi_B}\ar[d]_{\pi_A}&   B\ar[d]^{q_{B}}\\
    A\ar[r]_{q_A}                   &  M\\
  }} \qquad\qquad {\xymatrix{
    D\duer A \ar[r]^{\pi_{C^*}}\ar[d]_{\pi_A}&   C^*\ar[d]^{q_{C^*}}\\
    A\ar[r]_{q_{A}}                   &  M\\
  }} \qquad\qquad {\xymatrix{
    D\duer B \ar[r]^{\pi_B}\ar[d]_{\pi_{C^*}}&   B\ar[d]^{q_B}\\
    C^*\ar[r]_{q_{C^*}}                   &  M\\
  }}
$$ 

By dualising again $D\duer A$ over $C^*$, we get
\[\xymatrix{
D\duer A\duer C^* \ar[r]^{\pi_{C^*}}\ar[d]_{\pi_B}&   C^*\ar[d]^{q_{C^*}}      \\
B\ar[r]_{q_{B}}             &  M,\\
}\]
with core $A^*$. In the same manner, we get a double vector bundle
$D\duer B\duer C^*$ with sides $A$ and $C^*$ and core $B^*$.

The vector bundles $D\duer B\to C^*$ and $D\duer A\to C^*$ are, up to
a sign, naturally in duality to each other \cite{Mackenzie05}. The
pairing
\[ \nsp{\cdot\,}{\cdot} \colon (D\duer A)\times_{C^*} (D\duer B)\to
\mathbb R
\]  
is defined as follows: for $\Phi\in D\duer A$ and $\Psi\in D\duer B$
projecting to the same element $\gamma_m$ in $C^*$, choose $d\in D$
with $\pi_A(d)=\pi_A(\Phi)$ and $\pi_B(d)=\pi_B(\Psi)$.  Then $\langle
\Phi, d\rangle_A-\langle \Psi,d\rangle_B$ does not depend on the
choice of $d$ and we set $\nsp{\Phi}{\Psi}=\langle \Phi,
d\rangle_A-\langle \Psi,d\rangle_B$.
 
This implies in particular that $D\duer A$ is canonically (up to a
sign) isomorphic to $D\duer B\duer C^*$ and $D\duer B$ is isomorphic
to $D\duer A\duer C^*$. 

\medskip

Each linear section $\xi\in\Gamma_B(D)$ over $a\in\Gamma(A)$ induces a
linear section $\xi^\sqcap\in \Gamma_{C^*}^\ell(D\duer B\duer C^*)$
over $a$. Namely $\xi$ induces a function $\ell_\xi\colon D\duer
B\to\R$ which is fibrewise linear over $B$ and, using the definition
of the addition in $D\duer B\to C^*$,
it follows that $\ell_\xi$ is also linear over $C^*$. The
corresponding linear section of $D\duer B\duer C^*\to C^*$ is denoted
$\xi^\sqcap$ \cite{Mackenzie11}.  Thus
\begin{equation}
\label{eq:xisqcap-new}
\langle\xi^\sqcap(\gamma), \Phi\rangle _{C^*}
= \ell_\xi(\Phi)
= \langle\Phi,\, \xi(b)\rangle_B
\end{equation}
for $\Phi\in D\duer B$ such that $\pi_B(\Phi)=b$ and
$\pi_{C^*}(\Phi)=\gamma$.

Given a linear splitting $\Sigma\colon A\times_M B\to D$ of $D$, we
get hence a linear splitting $\Sigma^{\star,B}\colon C^*\times_M A\to
D\duer B\duer C^*$, defined by the horizontal lift
$\sigma_{A}^{\star,B}\colon \Gamma(A)\to\Gamma_{C^*}^\ell(D\duer
B\duer C^*)$:
\begin{equation}\label{eq:intermediate_lift}
\sigma_{A}^{\star,B}(a)=(\sigma_A(a))^\sqcap
\end{equation}
for all $a\in\Gamma(A)$.

Now we use the (canonical up to a sign) isomorphism of $D\duer A$ with
$D\duer B\duer C^*$ to construct a canonical linear splitting of
$D\duer A$ given a linear splitting of $D$.  We identify $D\duer A$
with $D\duer B\duer C^*$ using $-\nsp{\cdot\,}{\cdot}$. Thus we define
the horizontal lift $\sigma^\star_A\colon
\Gamma(A)\to\Gamma_{C^*}^\ell(D\duer A)$ by
\begin{equation}\label{iso_sign}
\nsp{\sigma_A^\star(a)}{\cdot} = -\sigma_A^{\star,B}(a)
\end{equation}
for all $a\in\Gamma(A)$.  The choice of sign in \eqref{iso_sign} is
necessary for $\sigma_A^\star(a)$ to be a linear section of $D\duer A$
over $a$ (and not over $-a$).

By (skew-)symmetry, given the lift $\sigma_B\colon\Gamma(B)\to
\Gamma^\ell_A(D)$, we identify $D\duer B$ with $D\duer A\duer C^*$
using $\nsp{\cdot}{\cdot}$ and define the lift $\sigma^\star_B\colon
\Gamma(B)\to\Gamma_{C^*}^\ell(D\duer B)$ by
$\nsp{\sigma_B^\star(b)}{\cdot} = \sigma_B^{\star,A}(b)$ for all
$b\in\Gamma(B)$.  (This time, we do not need the minus sign.)  As a
summary, we have the equations:
\begin{equation}\label{equal_fat}
  \nsp{\sigma_A^\star(a)}{\sigma_B^\star(b)}=0, \qquad
  \nsp{\sigma_A^\star(a)}{\alpha^\dagger} = -q_{C^*}^*\langle\alpha, a\rangle,\qquad
  \nsp{\beta^\dagger}{\sigma_B^\star(b)} = q_{C^*}^*\langle\beta, b\rangle,
\end{equation}
for all $a\in\Gamma(A)$, $b\in\Gamma(B)$, $\alpha\in\Gamma(A^*)$ and
$\beta\in\Gamma(B^*)$.  Furthermore, the following Lemma shows that
the horizontal lift
$\sigma^\star_A\colon\Gamma(A)\to\Gamma_{C^*}^l(D\duer A)$ is very
natural.
\begin{lemma}\label{lemma_dual_splitting}
  Given a horizontal lift $\sigma_A\colon\Gamma(A)\to\Gamma_B^l(D)$,
  the ``dual'' horizontal lift
  $\sigma^\star_A\colon\Gamma(A)\to\Gamma_{C^*}^l(D\duer A)$ can
  alternatively be defined by
\[\langle \sigma_A^\star(a)(\gamma_m),\sigma_A(a)(b_m)\rangle_A=0, \qquad
\langle
\sigma_A^\star(a)(\gamma_m),c^\dagger(a(m))\rangle_A=\langle\gamma_m,
c(m)\rangle
\]
for all $a\in\Gamma(A)$, $c\in\Gamma(C)$, $b_m\in B$ and $\gamma_m\in C^*$.
\end{lemma}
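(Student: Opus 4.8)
The plan is to show that, for each fixed $a(m)\in A$ and $\gamma_m\in C^*_m$, the two displayed equations have exactly one common solution $\Phi$ in the fibre of $D\duer A$ over $(a(m),\gamma_m)$, and then to check that $\sigma_A^\star(a)(\gamma_m)$ is that solution. Since $\sigma_A^\star$ is already known to be a (smooth, linear) lift, this is enough: the Lemma only claims that the two equations may serve as an alternative \emph{definition}.

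First I would bring in the decomposition $\mathbb I\colon A\times_M B\times_M C\to D$ corresponding to the linear splitting $\Sigma$ underlying $\sigma_A$, so that $\sigma_A(a)(b_m)=\mathbb I(a(m),b_m,0^C_m)$ and, by the symmetric form of \eqref{core_section}, $c^\dagger(a(m))=\tilde 0_{a(m)}+_B\overline{c(m)}=\mathbb I(a(m),0^B_m,c(m))$. The restriction of $\mathbb I$ to $\{a(m)\}\times B_m\times C_m$ is a linear isomorphism onto the fibre $\pi_A^{-1}(a(m))$ of $D\to A$, under which $\sigma_A(a)(B_m)$ and $\{c^\dagger(a(m))\mid c_m\in C_m\}$ become complementary coordinate subspaces. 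Hence $\pi_A^{-1}(a(m))$ is their $+_A$-direct sum, and a linear functional on it is determined by its restrictions to the two pieces. Moreover, the formula $\pi_{C^*}(\Phi)(c_m)=\Phi(0^D_{a(m)}+_B\overline{c_m})$ recalled in the footnote on dualisation says exactly $\pi_{C^*}(\Phi)(c_m)=\Phi(c^\dagger(a(m)))$, so the second displayed equation is precisely the requirement that $\pi_{C^*}(\Phi)=\gamma_m$, while the first says that $\Phi$ kills $\sigma_A(a)(B_m)$. Thus the two equations pin down a unique element of $D\duer A$ over $(a(m),\gamma_m)$.

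It then remains to verify that $\Phi:=\sigma_A^\star(a)(\gamma_m)$ satisfies them. Combining \eqref{iso_sign}, \eqref{eq:intermediate_lift} and \eqref{eq:xisqcap-new}, for every $\Psi\in D\duer B$ with $\pi_{C^*}(\Psi)=\gamma_m$ one gets
\[\nsp{\sigma_A^\star(a)(\gamma_m)}{\Psi}=-\ell_{\sigma_A(a)}(\Psi)=-\langle\Psi,\sigma_A(a)(\pi_B(\Psi))\rangle_B.\]
On the other hand, the definition of $\nsp{\cdot\,}{\cdot}$ gives $\nsp{\sigma_A^\star(a)(\gamma_m)}{\Psi}=\langle\sigma_A^\star(a)(\gamma_m),d\rangle_A-\langle\Psi,d\rangle_B$ for any $d\in D$ with $\pi_A(d)=a(m)$ and $\pi_B(d)=\pi_B(\Psi)$. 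Taking $d=\sigma_A(a)(b_m)$ together with $\Psi$ such that $\pi_B(\Psi)=b_m$, the two expressions force $\langle\sigma_A^\star(a)(\gamma_m),\sigma_A(a)(b_m)\rangle_A=0$, the first equation. Taking instead $d=c^\dagger(a(m))$ and $\Psi$ with $\pi_B(\Psi)=0^B_m$ (so $\sigma_A(a)(\pi_B(\Psi))=\tilde 0_{a(m)}$), and using $c^\dagger(a(m))=\tilde 0_{a(m)}+_B\overline{c(m)}$ together with linearity of $\langle\Psi,\cdot\rangle_B$ on the fibre of $D\to B$ over $0^B_m$, they give
\[\langle\sigma_A^\star(a)(\gamma_m),c^\dagger(a(m))\rangle_A=\langle\Psi,c^\dagger(a(m))\rangle_B-\langle\Psi,\tilde 0_{a(m)}\rangle_B=\langle\Psi,\overline{c(m)}\rangle_B.\]
Finally the $D\duer B$-analogue of the footnote formula reads $\pi_{C^*}(\Psi)(c_m)=\langle\Psi,\overline{c_m}\rangle_B$, so this last quantity equals $\langle\gamma_m,c(m)\rangle$, which is the second equation. (A $\Psi$ with the prescribed $\pi_B$ and $\pi_{C^*}$ exists in each case because the two side projections of the double vector bundle $D\duer B$ are jointly surjective onto $B\times_M C^*$, and the computations above show the answer is independent of the choice of $\Psi$.)

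The bulk of this — the decomposition bookkeeping in the second paragraph — is routine. The one genuinely delicate point is keeping the four pairings $\langle\cdot\,,\cdot\rangle_A$, $\langle\cdot\,,\cdot\rangle_B$, $\langle\cdot\,,\cdot\rangle_{C^*}$ and $\nsp{\cdot\,}{\cdot}$ apart and getting right the sign in the identification $D\duer A\cong D\duer B\duer C^*$ built into \eqref{iso_sign}; that sign is the sole real pitfall, and it is the one already fixed by the requirement, noted right after \eqref{iso_sign}, that $\sigma_A^\star(a)$ cover $a$ and not $-a$.
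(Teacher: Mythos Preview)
Your proof is correct and follows essentially the same approach as the paper: both verify the two displayed equations by expanding $\nsp{\sigma_A^\star(a)(\gamma_m)}{\Psi}$ in two ways, once via \eqref{iso_sign}--\eqref{eq:xisqcap-new} and once via the definition of $\nsp{\cdot\,}{\cdot}$, with suitable choices of $d$ and $\Psi\in D\duer B$. Your added uniqueness paragraph (showing that the two equations pin down a single element of the fibre over $(a(m),\gamma_m)$) is a useful clarification the paper leaves implicit, and your handling of the second equation via a general $\Psi$ with $\pi_B(\Psi)=0^B_m$ is slightly cleaner than the paper's choice of the specific core section $\alpha^\dagger(\gamma_m)$, but the two are equivalent.
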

\begin{proof}
By \eqref{eq:intermediate_lift} and \eqref{eq:xisqcap-new}, we have 
\begin{equation}\label{first_eq}\nsp{\sigma_A^\star(a)(\gamma_m)}{\Phi}=-\langle (\sigma_A(a))^\sqcap(\gamma_m), \Phi\rangle
  =-\langle  \Phi, \sigma_A(a)(b_m)\rangle
\end{equation}
for $\Phi\in D\duer B$ with $\pi_B(\Phi)=b_m$ and
$\pi_{C^*}(\Phi)=\gamma_m$.  Since for $b\in\Gamma(B)$ with
$b(m)=b_m$, $d=\sigma_A(a)(b_m)=\sigma_B(b)(a(m))\in D$ is an element
with $\pi_A(d)=a(m)$ and $\pi_B(d)=b(m)=b_m$, the pairing on the left
can also be written
\begin{equation}\label{second_eq}
\nsp{\sigma_A^\star(a)(\gamma_m)}{\Phi}=\langle
\sigma_A^\star(a)(\gamma_m), \sigma_A(a)(b(m))\rangle_B- \langle
\Phi, \sigma_A(a)(b(m))\rangle.
\end{equation}
\eqref{first_eq} and \eqref{second_eq} together show that $\langle
\sigma_A^\star(a)(\gamma_m), \sigma_A(a)(b(m))\rangle=0$.
Further, we find for $\alpha\in\Gamma(B^*)$ and any $c\in\Gamma(C)$:
\[-\langle \alpha,
a\rangle(m)=\nsp{\sigma_A^\star(a)(\gamma_m)}{\alpha^\dagger(\gamma_m)}=\langle\sigma_A^\star(a)(\gamma_m),
c^\dagger(a(m))\rangle
-\langle\alpha^\dagger(\gamma_m),c^\dagger(a(m))\rangle.
\]
In order to pair $\alpha^\dagger(\gamma_m)$ with $c^\dagger(a(m))$ over $0^B_m\in B$, we write them as 
\[\alpha^\dagger(\gamma_m)= 0^{D\duer B}_{\gamma_m}+_B\overline{\alpha(m)}, \qquad 
c^\dagger(a(m))=0^D_{a(m)}+_B\overline{c(m)}.
\]
 We get 
\[-\langle \alpha, a\rangle (m)=\langle\sigma_A^\star(a)(\gamma_m),
c^\dagger(a(m))\rangle
-\langle \gamma_m,c(m)\rangle-\langle a, \alpha\rangle(m),
\]
and so $\langle\sigma_A^\star(a)(\gamma_m), c^\dagger(a(m))\rangle
=\langle \gamma_m,c(m)\rangle$ for all $a\in\Gamma(A)$,
$c\in\Gamma(C)$ and $\gamma_m\in C^*$.
\end{proof}

\subsection{VB-algebroids and double Lie algebroids}
\label{subsect:VBa}


Let $(D; A, B; M)$ be a double vector bundle
$$
\xymatrix{
D \ar[r]^{\pi_B}\ar[d]_{\pi_A}&   B\ar[d]^{q_B}\\
A\ar[r]_{q_A}                   &  M\\
}
$$ with core $C$.
Then $(D \to B; A \to M)$ is a \textbf{VB-algebroid}
(\cite{Mackenzie98x}; see also \cite{GrMe10a}) if $D \to B$ has a Lie
algebroid structure the anchor of which is a bundle morphism
$\Theta_B\colon D \to TB$ over $\rho_A\colon A \to TM$ and such that
the Lie bracket is linear:
\begin{equation*} [\Gamma^\ell_B(D), \Gamma^\ell_B(D)] \subset
  \Gamma^\ell_B(D), \qquad [\Gamma^\ell_B(D), \Gamma^c_B(D)] \subset
  \Gamma^c_B(D), \qquad [\Gamma^c_B(D), \Gamma^c_B(D)]= 0.
\end{equation*}
The vector bundle $A\to M$ is then also a Lie algebroid, with anchor
$\rho_A$ and bracket defined as follows: if $\xi_1,
\xi_2\in\Gamma^\ell_B(D)$ are linear over $a_1,a_2\in\Gamma(A)$, then
the bracket $[\xi_1,\xi_2]$ is linear over $[a_1,a_2]$.  We also say
that the Lie algebroid structure on $D\to B$ is linear over the Lie
algebroid $A\to M$.

Since the anchor $\Theta_B$ is linear, it sends a core section
$c^\dagger$, $c\in\Gamma(C)$ to a vertical vector field on $B$.  This
defines the \textbf{core-anchor} $\partial_B\colon C\to B$; for
$c\in\Gamma(C)$ we have $\Theta_B(c^\dagger)=(\partial_Bc)^\uparrow$
(see \cite{Mackenzie92}).

\begin{example}\label{td}
  It is easy to see from the considerations in \S\ref{tangent_double}
  that the tangent double $(TE;E,TM;M)$ of a vector bundle $E\to M$
  has a VB-algebroid structure $(TE\to E, TM\to M)$. (The Lie
  algebroid structures are the tangent Lie algebroid structures.)
\end{example}

If $D$ is a VB-algebroid with Lie algebroid structures on $D\to B$ and
$A\to M$ the dual vector bundle $D\duer B\to B$ has a
\emph{Lie-Poisson structure} (a linear Poisson structure), and the
structure on $D\duer B$ is also Lie-Poisson with respect to $D\duer
B\to C^*$ \cite[3.4]{Mackenzie11}. Dualising this bundle gives a Lie
algebroid structure on $D\duer B\duer C^*\to C^*$. This equips the
double vector bundle $(D\duer B\duer C^*; C^*,A;M)$ with a
VB-algebroid structure. Using the isomorphism defined by
$-\nsp{\cdot}{\cdot}$, the double vector bundle $(D\duer A\to C^*;A\to
M)$ is also a VB-algebroid. In the same manner, if $(D\to A, B\to M)$
is a VB-algebroid then $(D\duer B\to C^*;B\to M)$ is a VB-algebroid.

A \textbf{double Lie algebroid} \cite{Mackenzie11} is a double vector
bundle $(D;A,B;M)$ with core denoted $C$, and with Lie algebroid
structures on each of $A\to M$, $B\to M$, $D\to A$ and $D\to B$ such
that each pair of parallel Lie algebroids gives $D$ the structure of a
VB-algebroid, and such that
the pair $(D\duer A, D\duer B)$ with the induced Lie algebroid
structures on base $C^*$ and the pairing $\nsp{\cdot}{\cdot}$, is a
Lie bialgebroid.

\subsection{Representations up to homotopy and VB-algebroids}
\label{subsect:ruths}
Let $A\to M$ be a Lie algebroid and consider an $A$-connection
$\nabla$ on a vector bundle $E\to M$.  Then the space
$\Omega^\bullet(A,E)$ of $E$-valued Lie algebroid forms has an induced
operator $\dr_\nabla$ given by:
\begin{equation*}
\begin{split}
  \dr_\nabla\omega(a_1,\ldots,a_{k+1})=&\sum_{i<j}(-1)^{i+j}\omega([a_i,a_j],a_1,\ldots,\hat a_i,\ldots,\hat a_j,\ldots, a_{k+1})\\
  &\qquad +\sum_i(-1)^{i+1}\nabla_{a_i}(\omega(a_1,\ldots,\hat
  a_i,\ldots,a_{k+1}))
\end{split}
\end{equation*}
for all $\omega\in\Omega^k(A,E)$ and $a_1,\ldots,a_{k+1}\in\Gamma(A)$.
The connection is flat if and only if $\dr_\nabla=0$.

Let now $\mathcal E= \bigoplus_{k\in \mathbb{Z}} E_k[k]$ be a graded
vector bundle. Consider the space $\Omega(A,\mathcal E)$ with grading
given by
$$
\Omega(A,\mathcal E)[k] = \bigoplus_{i+j=k}\Omega^i(A, E_j).
$$

\begin{definition}\cite{ArCr12}
  A \emph{representation up to homotopy of $A$ on $\mathcal E$} is a
  map \linebreak$\mathcal D\colon \Omega(A, \mathcal E) \to \Omega(A,\mathcal
  E)$ with total degree $1$ and such that $\mathcal D^2=0$ and
$$
\mathcal D(\alpha \wedge \omega) = \dr_A\alpha \wedge \omega +
(-1)^{|\alpha|} \alpha \wedge \mathcal D(\omega), \text{ for } \alpha
\in \Gamma(\wedge A^*), \, \omega \in \Omega(A,\mathcal E),
$$
where $\dr_A\colon \Gamma(\wedge A^*) \to \Gamma(\wedge A^*)$ is the
Lie algebroid differential.
\end{definition}
Note that Gracia-Saz and Mehta
\cite{GrMe10a} defined this concept 
independently and called them
``superrepresentations''. 

Let $A$ be a Lie algebroid. The representations up to homotopy which
we will consider are always on graded vector bundles $\mathcal E=
E_0[0]\oplus E_1[1]$ concentrated on degrees 0 and 1, so called
\emph{$2$-term graded vector bundles}.  These representations are
equivalent to the following data (see \cite{ArCr12,GrMe10a}):
\begin{enumerate}
\item [(1)] a map $\partial\colon E_0\to E_1$,
\item [(2)] two $A$-connections, $\nabla^0$ and $\nabla^1$ on $E_0$
  and $E_1$, respectively, such that $\partial \circ \nabla^0 =
  \nabla^1 \circ \partial$, \item [(3)] an element $R \in \Omega^2(A,
  \Hom(E_1, E_0))$ such that $R_{\nabla^0} = R\circ \partial$,
  $R_{\nabla^1}=\partial \circ R$ and $\dr_{\nabla^{\Hom}}R=0$,
  where $\nabla^{\Hom}$ is the connection induced on $\Hom(E_1,E_0)$
  by $\nabla^0$ and $\nabla^1$.
\end{enumerate}
For brevity we will call such a 2-term representation up to homotopy a
\textbf{2-re\-pre\-sen\-ta\-tion}.
\medskip

Let $(D\to B, A\to M)$ be a VB-Lie algebroid and choose a linear splitting
$\Sigma\colon A\times_MB\to D$. Since the anchor of a linear section
is linear, for each $a\in \Gamma(A)$ the vector field
$\Theta_B(\sigma_A(a))$ defines a derivation of $\Gamma(B)$ with
symbol $\rho(a)$ (see \S \ref{tangent_double}). This defines a linear
connection $\nabla^{AB}\colon \Gamma(A)\times\Gamma(B)\to\Gamma(B)$:
\[\Theta_B(\sigma_A(a))=\widehat{\nabla_a^{AB}}\]
for all $a\in\Gamma(A)$.  Since the bracket of a linear section with a
core section is again a core section, we find a linear connection
$\nabla^{AC}\colon\Gamma(A)\times\Gamma(C)\to\Gamma(C)$ such
that \[[\sigma_A(a),c^\dagger]=(\nabla_a^{AC}c)^\dagger\] for all
$c\in\Gamma(C)$ and $a\in\Gamma(A)$.  The difference
$\sigma_A[a_1,a_2]-[\sigma_A(a_1), \sigma_A(a_2)]$ is a core-linear
section for all $a_1,a_2\in\Gamma(A)$.  This defines a vector valued
Lie algebroid form $R\in\Omega^2(A,\operatorname{Hom}(B,C))$ such that
\[[\sigma_A(a_1), \sigma_A(a_2)]=\sigma_A[a_1,a_2]-\widetilde{R(a_1,a_2)},
\]
for all $a_1,a_2\in\Gamma(A)$. See \cite{GrMe10a} for more details on
these constructions.  The following theorem is proved in
\cite{GrMe10a}.
\begin{theorem}\label{rajan}
  Let $(D \to B; A \to M)$ be a VB-algebroid and choose a linear
  splitting $\Sigma\colon A\times_MB\to D$.  The triple
  $(\nabla^{AB},\nabla^{AC},R)$ defined as above is a
  $2$-representation of $A$ on the complex $\partial_B\colon C\to B$,
  where $\partial_B$ is the core-anchor.

  Conversely, let $(D;A,B;M)$ be a double vector bundle such that $A$
  has a Lie algebroid structure and choose a linear splitting
  $\Sigma\colon A\times_MB\to D$. Then if
  $(\nabla^{AB},\nabla^{AC},R)$ is a 2-representation of $A$ on a
  complex $\partial_B\colon C\to B$, then the three equations above
  and the core-anchor $\partial_B$ define a VB-algebroid structure on
  $(D\to B; A\to M)$.

\end{theorem}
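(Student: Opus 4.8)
The plan is to use that, once the linear splitting $\Sigma$ (equivalently the lift $\sigma_A$) is fixed, $\Gamma_B(D)$ is generated as a $C^\infty(B)$-module by the horizontal lifts $\sigma_A(a)$, $a\in\Gamma(A)$, and the core sections $c^\dagger$, $c\in\Gamma(C)$, and is \emph{locally free} on such a frame. Hence a linear Lie algebroid structure on $D\to B$ over $A\to M$ is completely determined by the structure equations recalled just before the theorem, together with $\Theta_B(c^\dagger)=(\partial_Bc)^\uparrow$ and $[\Gamma_B^c(D),\Gamma_B^c(D)]=0$, and these equations set up a bijection between such structures and quadruples $(\nabla^{AB},\nabla^{AC},R,\partial_B)$ with $\nabla^{AB},\nabla^{AC}$ connections, $R\in\Omega^2(A,\Hom(B,C))$ and $\partial_B\colon C\to B$ a bundle map. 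The content of the theorem is then the precise matching of the Lie algebroid axioms for $D\to B$ with the defining identities of a $2$-representation.

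For the direct statement I would argue as follows. Linearity of the anchor forces $\Theta_B(\sigma_A(a))$ to be a linear vector field on $B$ over $\rho_A(a)$, hence — via \eqref{ableitungen} — a derivation of $\Gamma(B)$ with symbol $\rho_A(a)$ that is $C^\infty(M)$-linear in $a$: this is $\nabla^{AB}$; and $\Theta_B(c^\dagger)$ is vertical, hence of the form $(\partial_Bc)^\uparrow$, defining the core-anchor. Linearity of the bracket likewise yields $\nabla^{AC}$ from $[\sigma_A(a),c^\dagger]\in\Gamma_B^c(D)$, and $R\in\Omega^2(A,\Hom(B,C))$ from the core-linear section $\sigma_A[a_1,a_2]-[\sigma_A(a_1),\sigma_A(a_2)]$ (which is linear over $0$, so tensorial and skew in $a_1,a_2$). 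Next, the compatibility $\Theta_B[\xi,\eta]=[\Theta_B\xi,\Theta_B\eta]$, evaluated on $(\sigma_A(a),c^\dagger)$ and on $(\sigma_A(a_1),\sigma_A(a_2))$ — using the standard formulas for brackets of linear and vertical vector fields on $B$ in terms of derivations of $\Gamma(B)$, cf.\ \eqref{ableitungen} and \eqref{Lie_bracket_VF}, and the derived identity $\Theta_B(\widetilde\psi)=\widehat{\partial_B\circ\psi}$ — gives respectively $\partial_B\circ\nabla^{AC}=\nabla^{AB}\circ\partial_B$ and $R_{\nabla^{AB}}=\partial_B\circ R$. Finally the Jacobi identity on $(\sigma_A(a_1),\sigma_A(a_2),c^\dagger)$ and on $(\sigma_A(a_1),\sigma_A(a_2),\sigma_A(a_3))$ — using the auxiliary formulas $[\widetilde\psi,c^\dagger]=-(\psi(\partial_Bc))^\dagger$ and $[\sigma_A(a),\widetilde\psi]=\widetilde{\nabla^{\Hom}_a\psi}$ — gives $R_{\nabla^{AC}}=R\circ\partial_B$ and $\dr_{\nabla^{\Hom}}R=0$. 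These are exactly the three conditions defining a $2$-representation on the complex $\partial_B\colon C\to B$; the remaining instances of the axioms (Jacobi with a core section in the last slot or with two core sections, compatibility on $(c_1^\dagger,c_2^\dagger)$) are automatic from $[\Gamma_B^c(D),\Gamma_B^c(D)]=0$ and the Leibniz rule.

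For the converse I would run this backwards. Given the $2$-representation and $\partial_B\colon C\to B$, define $\Theta_B\colon D\to TB$ by $\Theta_B(\sigma_A(a))=\widehat{\nabla^{AB}_a}$ and $\Theta_B(c^\dagger)=(\partial_Bc)^\uparrow$ — a well-defined bundle map, manifestly linear over $\rho_A$ — and define $[\cdot\,,\cdot]$ on $\Gamma_B(D)$ to be $\R$-bilinear, skew-symmetric, Leibniz with respect to $\Theta_B$, and equal on linear and core sections to the three structure equations together with $[c_1^\dagger,c_2^\dagger]=0$. Well-definedness of this Leibniz extension follows because $\sigma_A$ and $c\mapsto c^\dagger$ are $C^\infty(M)$-linear (so $\sigma_A(fa)=(q_B^*f)\sigma_A(a)$ and $(q_B^*f)c^\dagger=(fc)^\dagger$), which reduces all consistency relations to the connection/tensor identities for $\nabla^{AB},\nabla^{AC},R$ and the Leibniz rule for $[\cdot\,,\cdot]_A$; linearity of the resulting bracket is read off the formulas. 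It then remains to check the Lie algebroid axioms, and here the order matters: first verify that $\Theta_B$ is bracket-compatible — this expression is $C^\infty(B)$-tensorial in both arguments, so it suffices to check it on the generators, where it amounts precisely to $\partial_B\circ\nabla^{AC}=\nabla^{AB}\circ\partial_B$ and $R_{\nabla^{AB}}=\partial_B\circ R$; \emph{granted} this, the Jacobiator becomes $C^\infty(B)$-tensorial in all three arguments, so the Jacobi identity need only be checked on generators, where it reduces to $R_{\nabla^{AC}}=R\circ\partial_B$ and $\dr_{\nabla^{\Hom}}R=0$. Both pairs of identities hold by the $2$-representation axioms, so $(D\to B,\Theta_B,[\cdot\,,\cdot])$ is a Lie algebroid, linear over the given $A\to M$ and with core-anchor $\partial_B$, i.e.\ a VB-algebroid.

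The hard part is bookkeeping rather than conceptual: one must carry the various sign conventions (in the identification \eqref{ableitungen} of linear vector fields with derivations, in $\widehat{D}$, in the core-linear section $\widetilde\psi$ and its anchor, in the core-anchor, and in the vertical lift $e^\uparrow$) consistently through the computations so that the structure equations reproduce the $2$-representation axioms with exactly the stated signs; and in the converse one must prove anchor-compatibility \emph{before} invoking tensoriality of the Jacobiator. All the underlying computations are routine manipulations with linear and core sections, and the complete details are carried out in \cite{GrMe10a}.
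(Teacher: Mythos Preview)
The paper does not give its own proof of this theorem but simply attributes it to \cite{GrMe10a}, after setting up the objects $\partial_B,\nabla^{AB},\nabla^{AC},R$ exactly as you do and recording the auxiliary identities $[\sigma_A(a),\widetilde\phi]=\widetilde{\nabla_a^{\rm Hom}\phi}$ and $[c^\dagger,\widetilde\phi]=(\phi(\partial_Bc))^\dagger$ immediately afterwards. Your outline is the standard argument from that reference and is correct; there is nothing to compare against in the paper itself beyond this setup, which you follow faithfully.
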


 In the situation of the previous theorem, we have
\begin{equation*}
\left[\sigma_A(a),\widetilde\phi\right]=\widetilde{\nabla_a^{\rm Hom}\phi}
\qquad \text{ and }\qquad 
\left[c^\dagger,\widetilde\phi\right]=(\phi(\partial_Bc))^\dagger
\end{equation*}
for all $a\in\Gamma(A)$, $\phi\in\Gamma(\operatorname{Hom}(B,C))$ and
$c\in\Gamma(C)$, see for instance \cite{GrJoMaMe14}.

\begin{remark}\label{change}
  If $\Sigma_1,\Sigma_2\colon A\times_M B\to D$ are two linear
  splittings of a VB-algebroid $(D\to B, A\to M)$ and
  $\phi_{12}\in\Gamma(A^*\otimes B^*\otimes C)$ is the change of
  splitting, then the two corresponding 2-representations are related
  by the following identities \cite{GrMe10a}.
\[\nabla^{B,2}_a=\nabla^{B,1}_a+\partial_B\circ \phi_{12}(a), 
\quad \nabla^{C,2}_a=\nabla^{C,1}_a+\phi_{12}(a)\circ\partial_B\]
and
\begin{equation*}
\begin{split}
R^2(a_1,a_2)=&R^1(a_1,a_2)+(\dr_{\nabla^{\operatorname{Hom}(B,C)}}\phi_{12})(a_1,a_2)\\
&\qquad+\phi_{12}(a_1)\partial_B\phi_{12}(a_2)-\phi_{12}(a_2)\partial_B\phi_{12}(a_1)
\end{split}
\end{equation*}
for all $a,a_1,a_2\in\Gamma(A)$.
\end{remark}

\begin{example}\label{double_ruth}
  Choose a linear connection
  $\nabla\colon\mx(M)\times\Gamma(E)\to\Gamma(E)$ and consider the
  corresponding linear splitting $\Sigma^\nabla$ of $TE$ as in Section
  \ref{tangent_double}.  The description of the Lie bracket of vector
  fields in \eqref{Lie_bracket_VF} shows that the 2-representation
  induced by $\Sigma^\nabla$ is the 2-representation of $TM$ on
  $\Id_E\colon E\to E$ given by $(\nabla,\nabla,R_\nabla)$.
\end{example}

\begin{example}[The tangent of a Lie algebroid]\label{tangent_double_al}
Let $(A\to M,\rho,[\cdot\,,\cdot])$ be a Lie algebroid. Then the
tangent $TA\to TM$ has a Lie algebroid structure with bracket defined
by $[Ta_1, Ta_2]=T[a_1,a_2]$, $[Ta_1, a_2^\dagger]=[a_1,a_2]^\dagger$
and $[a_1^\dagger, a_2^\dagger]=0$ for all $a_1,a_2\in\Gamma(A)$. The
anchor of $Ta$ is $\widehat{[\rho(a),\cdot]}\in\mx(TM)$ and the anchor
of $a^\dagger$ is $\rho(a)^\uparrow$ for all $a\in\Gamma(A)$.  This
defines a VB-algebroid structure $(TA\to TM; A\to M)$ on
$(TA;TM,A;M)$.

Given a $TM$-connection on $A$, and so a linear splitting
$\Sigma^\nabla$ of $TA$ as in Section \ref{tangent_double}, the
2-representation of $A$ on $\rho\colon A\to TM$ encoding this
VB-algebroid is the \textbf{adjoint $2$-representation}
$(\nabla^{\rm bas},\nabla^{\rm bas}, R_\nabla^{\rm bas})$, where the
connections are defined by
\begin{equation*}
\begin{split}
\nabla^{\rm bas}&\colon \Gamma(A)\times\mx(M)\to\mx(M), \qquad \nabla^{\rm
    bas}_aX=[\rho(a), X]+\rho(\nabla_Xa)
\end{split}
\end{equation*} 
and 
\begin{equation*}
\begin{split}
  \nabla^{\rm bas}&\colon
  \Gamma(A)\times\Gamma(A)\to\Gamma(A), \qquad \nabla^{\rm bas}_{a_1}a_2=[a_1,a_2]+\nabla_{\rho(a_2)}a_1,
\end{split}
\end{equation*}  and $R_\nabla^{\rm
    bas}\in \Omega^2(A,\operatorname{Hom}(TM,A))$ is given by
  \[R_\nabla^{\rm
    bas}(a_1,a_2)X=-\nabla_X[a_1,a_2]+[\nabla_Xa_1,a_2]+[a_1,\nabla_Xa_2]+\nabla_{\nabla^{\rm
      bas}_{a_2}X}a_1-\nabla_{\nabla^{\rm bas}_{a_1}X}a_2
\]
for all $X\in\mx(M)$, $a, a_1,a_2\in\Gamma(A)$.
\end{example}

\subsubsection{Dualisation and $2$-representations}\label{dual_and_ruths}
Let $(D\to B, A\to M)$ be a VB-algebroid.  Let $\Sigma\colon
A\times_MB\to D$ be a linear splitting of $D$ and denote by
$(\nabla^B,\nabla^C,R)$ the 2-representation of the Lie algebroid $A$
on $\partial_B\colon C\to B$. We have seen above that $(D\duer A\to
C^*, A\to M)$ has an induced VB-algebroid structure, and we have shown
that the linear splitting $\Sigma$ induces a linear splitting
$\Sigma^\star\colon A\times_M C^*\to D\duer A$ of $D\duer A$.  The
2-representation of $A$ that is associated to this splitting is then
$({\nabla^C}^*,{\nabla^B}^*,-R^*) $ on the complex $\partial_B^*\colon
B^*\to C^*$. This is easy to verify, and proved in the
appendix\footnote{The construction of the ``dual'' linear splitting of
  $D\duer A$, given a linear splitting of $D$, is done in
  \cite{DrJoOr13} by dualising the decomposition and taking its
  inverse. The resulting linear splitting of $D\duer A$ is the same.}
of \cite{DrJoOr13}.  For the proof we only need to recall that, by
construction, $\ell_{\sigma_A^\star(a)}$ equals $\ell_{\sigma_A(a)}$
as a function on $D\duer B$.

\subsubsection{Double Lie algebroids and matched pairs of $2$-representations}\label{matched_pair_2_rep_sec}

\begin{definition}\cite{GrJoMaMe14}\label{matched_pair_2_rep}
  Let $(A\to M, \rho_A, [\cdot\,,\cdot])$ and and $(B\to M, \rho_B,
  [\cdot\,,\cdot])$ be two Lie algebroids and assume that $A$ acts on
  $B\oplus C$ up to homotopy via $(\partial_B\colon C\to B,
  \nabla^{AB},\nabla^{AC}, R_{AB})$ and $B$ acts on $A\oplus C$ up to
  homotopy via \linebreak
$(\partial_A\colon C\to A, \nabla^{BA},\nabla^{BC},
  R_{BA})$\footnote{For the sake of simplicity, we write in this
    definition $\nabla$ for all the four connections. It will always be
    clear from the indexes which connection is meant. We write
    $\nabla^A$ for the $A$-connection induced by $\nabla^{AB}$ and
    $\nabla^{AC}$ on $\wedge^2 B^*\otimes C$ and $\nabla^B$ for the
    $B$-connection induced on $\wedge^2 A^*\otimes C$.  }.  Then we
  say that the two representations up to homotopy form a matched pair
  if
\begin{enumerate}
\item
  $\nabla_{\partial_Ac_1}c_2-\nabla_{\partial_Bc_2}c_1=-\nabla_{\partial_Ac_2}c_1+\nabla_{\partial_Bc_1}c_2$,
\item $[a,\partial_Ac]=\partial_A(\nabla_ac)-\nabla_{\partial_Bc}a$,
\item $[b,\partial_Bc]=\partial_B(\nabla_bc)-\nabla_{\partial_Ac}b$,
\item 
$\nabla_b\nabla_ac-\nabla_a\nabla_bc-\nabla_{\nabla_ba}c+\nabla_{\nabla_ab}c=
R_{BA}(b,\partial_Bc)a-R_{AB}(a,\partial_Ac)b$,
\item
  $\partial_A(R_{AB}(a_1,a_2)b)=-\nabla_b[a_1,a_2]+[\nabla_ba_1,a_2]+[a_1,\nabla_ba_2]+\nabla_{\nabla_{a_2}b}a_1-\nabla_{\nabla_{a_1}b}a_2$,
\item
  $\partial_B(R_{BA}(b_1,b_2)a)=-\nabla_a[b_1,b_2]+[\nabla_ab_1,b_2]+[b_1,\nabla_ab_2]+\nabla_{\nabla_{b_2}a}b_1-\nabla_{\nabla_{b_1}a}b_2$,
\end{enumerate}
for all $a,a_1,a_2\in\Gamma(A)$, $b,b_1,b_2\in\Gamma(B)$ and
$c,c_1,c_2\in\Gamma(C)$, and
\begin{enumerate}\setcounter{enumi}{6}
\item $\dr_{\nabla^A}R_{BA}=\dr_{\nabla^B}R_{AB}\in \Omega^2(A,
  \wedge^2B^*\otimes C)=\Omega^2(B,\wedge^2 A^*\otimes C)$, where
  $R_{AB}$ is seen as an element of $\Omega^1(A, \wedge^2B^*\otimes
  C)$ and $R_{AB}$ as an element of $\Omega^1(B, \wedge^2A^*\otimes
  C)$.
\end{enumerate}
\end{definition}

\begin{remark}\label{lie_bracket_on_C}
  From these equations follow
  $\rho_A\circ \partial_A=\rho_B\circ\partial_B$ and
  $[\rho_A(a),\rho_B(b)]=\rho_B(\nabla_ab)-\rho_A(\nabla_ba)$ for all
  $a\in\Gamma(A)$ and $b\in\Gamma(B)$. 

  The vector bundle $C$ inherits a Lie algebroid structure with anchor
  $\rho_A\circ  \partial_A=\rho_B\circ\partial_B$   and  with  bracket
  given                            by                           $[c_1,
  c_2]=\nabla_{\partial_Ac_1}c_2-\nabla_{\partial_Bc_2}c_1$   for  all
  $c_1,c_2\in\Gamma(C)$.  The proof of  the Jacobi  identity is  a not
  completely straightforward computation; it follows from (2), (3) and
  (4) and can be done just as the proof of Theorem \ref{core_courant}.
\end{remark}

Consider a double vector bundle $(D;A,B;M)$ with core $C$ and a VB-Lie
algebroid structure on each of its sides.  After the choice of a
splitting $\Sigma\colon A\times_M B\to D$, the Lie algebroid
structures on the two sides of $D$ are described as above by two
$2$-representations; the Lie algebroid $D\to B$ is
described by
$(\partial_B,\nabla^{AB},\nabla^{AC},R_{AB}\in\Omega^2(A,\operatorname{Hom}(B,C)))$. The
Lie algebroid structure of $D\to A$ is described by $(\partial_A,
\nabla^{BA},\nabla^{BC},R_{BA}\in\Omega^2(B,\operatorname{Hom}(A,C)))$,
where $\partial_A\colon C\to A$,
$\nabla^{BA}\colon\Gamma(B)\times\Gamma(A)\to\Gamma(A)$,
$\nabla^{BC}\colon\Gamma(B)\times\Gamma(C)\to \Gamma(C)$ are
connections and $R_{BA}$ is the curvature term.

We prove in \cite{JoMa14} that $(D;A,B,M)$ is a double Lie algebroid
if and only if, for any decomposition of $D$, the two induced
2-representations above form a matched pair.

\section{{[2]}-manifolds and metric double vector bundles}\label{sec:metDVB}
In this section we recall the definition of N-manifolds of degree
$2$. Then we introduce linear metrics on double vector bundles, and we
show how the category of N-manifolds of degree $2$ is equivalent to
the category of metric double vector bundles.
\subsection{N-manifolds}
Here we give the definitions of N-manifolds. We are particularly
interested in N-manifolds of degree $2$. We refer to \cite{BoPo13} for
more details.

\begin{definition}\label{n_man} An \textbf{ N-manifold} or \textbf{$\N$-graded
    manifold} $\mathcal M$ of degree $n$ and dimension $(p;
  r_1,\ldots, r_n)$ is a smooth $p$-dimensional manifold $M$ endowed
  with a sheaf $\mathcal A=C^\infty(\mathcal M)$ of $\N$-graded
  commutative associative unital $\R$-algebras, whose degree 0 term is
  $\mathcal A^0=C^\infty(M)$ and which is locally freely generated as
  a sheaf of $C^\infty(M)$-algebras by $r_1+\ldots+ r_n$ graded
  commutative generators
  $\xi_1^{1},\ldots,\xi_1^{r_1}$,$\xi_2^1,\ldots,\xi_2^{r_2},\ldots$, $\xi_n^1,\ldots,\xi_n^{r_n}$
  with $\xi_i^j$ of degree $i$ for $i\in\{1,\ldots,n\}$ and
  $j\in\{1,\ldots,r_i\}$.

  A morphism of $N$-manifolds $\mu:\mathcal M\to \mathcal N$ is a
  smooth map $\mu_0\colon M\to N$ of the underlying smooth manifolds
  together with a morphism $\mu^\star\colon C^\infty(\mathcal N)\to
  C^\infty(\mathcal M)$ of sheaves of graded algebras such that
  $\mu^\star(f)=\mu_0^*f$ for all $f\in C^\infty(N)$.
\end{definition}

We will call \textbf{$[n]$-manifold} an N-manifold of degree $n\in
\N$. We will write $\mathcal A^i$ for the elements of degree $i$ in
$\mathcal A$, and we will write $|\xi|$ for the degree of a
homogeneous element $\xi\in\mathcal A$, i.e.~an element which can be
written as a sum of functions of the same degree.  Note that a
\textbf{$[1]$-manifold} over a smooth manifold $M$ is equivalent to a
locally finitely generated sheaf of $C^\infty(M)$-modules.

Let us quickly introduce the notion of vector field on an N-manifold. Let $\mathcal
M$ be an $[n]$-manifold and write as before $\mathcal
A=C^\infty(\mathcal M)$.  A \textbf{vector field} of degree $j$ on $\mathcal M$ 
is a degree $j$ derivation $\phi$ of $\mathcal A$
such that
\[|\phi(\xi)|=j+|\xi|
\]
for a homogeneous element $\xi\in\mathcal A$.  As in \cite{BoPo13}, we
write $\der^\bullet\mathcal A$ for the sheaf of graded derivations of
$\mathcal A$.  

The vector fields on $\mathcal M$ and their Lie bracket defined 
by $[\phi,\psi]=\phi\psi-(-1)^{|\phi||\psi|}\psi\phi$
 satisfy the following 
conditions:
\begin{enumerate}
\item $\phi(\xi\eta)=\phi(\xi)\eta+(-1)^{|\phi||\xi|}\xi\phi(\eta)$,
\item $[\phi,\psi]=(-1)^{1+|\phi||\psi|}[\psi,\phi]$,
\item $[\phi,\xi\psi]=\phi(\xi)\psi+(-1)^{|\phi||\xi|}\xi[\phi,\psi]$,
\item $(-1)^{|\phi||\gamma|}[\phi,[\psi,\gamma]]+(-1)^{|\psi||\phi|}[\psi,[\gamma,\phi]]
+(-1)^{|\gamma||\psi|}[\gamma,[\phi,\psi]]=0$
\end{enumerate}
for $\phi,\psi,\gamma$ homogeneous elements of $\der^\bullet\mathcal
A$ and $\xi,\eta$ homogeneous elements of $\mathcal A$.  For instance,
the derivation $\partial_{\xi^i_j}$ of $\mathcal A(U)$ sends $\xi^i_j$
to $1$ and the other local generators to $0$. It is hence a derivation
of degree $-j$. Locally, $\der^\bullet\mathcal A(U)$ is generated as a
$\mathcal A(U)$-module by $\partial_{x_k}$, $k=1,\ldots,p$ and
$\partial_{\xi^i_j}$, $j=1,\ldots,n$, $i=1,\ldots,r_j$.

\bigskip Our goal in this chapter is to prove that $[2]$-manifolds are
equivalent to double vector bundles endowed with a linear metric
(Theorem \ref{main_crucial}).  We begin with a few observations on the
equivalence of sheaves of locally finite $C^\infty$-modules with
smooth vector bundles. Theorem \ref{main_crucial} will in a sense
generalise this result.

\subsubsection{Vector bundles and $[1]$-manifolds}\label{classical_eq}
Here we recall the equivalence of categories between degree
$[1]$-manifolds (or locally finitely generated sheaves of
$C^\infty$-modules) and smooth vector bundles (see for instance
\cite[Theorem II.1.13]{Wells08}). This section can be seen as
introductory to the methods in Section \ref{eq_2_manifolds}.

Let $\operatorname{VB}$ be the category of smooth vector bundles, with
the following morphisms. Let $E\to M$ and $F\to N$ be vector
bundles. Then a morphism $\Phi\colon F\dashrightarrow E$ of vector
bundles is a vector bundle map $\phi\colon F^*\to E^*$ over
$\phi_0\colon N\to M$. Recall from Lemma
\ref{bundlemap_eq_to_morphism} that this is equivalent to a map
$\phi^\star\colon \Gamma(E)\to\Gamma(F)$ defined as in
\eqref{dual_of_VB_map} and satisfying
\[\phi^\star(f\cdot e)=\phi_0^*f\cdot\phi^\star(e)
\]
for all $f\in C^\infty(M)$ and $e\in \Gamma(E)$.

Let $\operatorname{[1]-Man}$ be the category of $[1]$-manifolds, or
equivalently the category of locally finitely generated sheaves of
$C^\infty(M)$-modules, for smooth manifolds $M$.  The morphisms in
this category are defined as follows.  Let $\mathcal A_M$ and
$\mathcal A_N$ be two sheaves of $C^\infty(M)$, respectively
$C^\infty(N)$-modules, for two smooth manifolds $M$ and $N$.  A
morphism $\mu\colon\mathcal A_N\dashrightarrow \mathcal A_M$ is a pair
of a morphism $\mu^\star\colon\mathcal A_M\to\mathcal A_N$ of sheaves
of modules and a smooth map $\mu_0\colon N\to M$, such that
\[\mu^\star(f\cdot a)=\mu_0^*f\cdot \mu^\star(a)
\]
for all $f\in C^\infty(U)$ and $a\in\mathcal A_M(U)$, $U$ open in $M$.

\medskip

We now establish the equivalence between $\operatorname{VB}$ and
$\operatorname{[1]-Man}$.  The functor \linebreak$\Gamma(\cdot)\colon
\operatorname{VB}\to\operatorname{[1]-Man}$ sends a vector bundle
$E\to M$ to its set of sections $\Gamma(E)$, a locally finitely
generated sheaf of $C^\infty(M)$-modules. $\Gamma(\cdot)$ sends a
morphism $\Phi=(\phi,\phi_0)\colon F\dashrightarrow E$ as above to the
morphism $\phi^\star\colon \Gamma(E)\to \Gamma(F)$ over
$\phi_0^*\colon C^\infty(M)\to C^\infty(N)$. 

\medskip Next choose a $[1]$-manifold $\mathcal A$ over a smooth
manifold $M$.  There exists a maximal covering $\{U_\alpha\}$ of $M$
such that $\mathcal A(U_\alpha)$ is finitely generated by generators
$\xi^\alpha_1,\ldots,\xi^\alpha_m$. For two indices $\alpha,\beta$
such that $U_\alpha\cap U_\beta\neq\emptyset$, we can write each
generator in an unique manner as
$\xi^\beta_j=\sum_{i=1}^m\psi_{\alpha\beta}^{ij}\xi^\alpha_i$ with
smooth functions $\psi_{\alpha\beta}^{ij}\in C^\infty(U_\alpha\cap
U_\beta)$.  We define $A_{\alpha\beta}\in C^\infty(U_\alpha\cap
U_\beta, \operatorname{Gl}(\R^m))$ by
$A_{\alpha\beta}(x)=(\psi_{\alpha\beta}^{ij})_{i,j=1,\ldots,m}$.  We
have then immediately \begin{equation} \label{cocycle}
  A_{\gamma\alpha}\cdot A_{\alpha\beta}=A_{\gamma\beta},
\end{equation}
 where $\cdot$ is the pointwise
multiplication of matrices.  Next we consider the disjoint union
$\tilde E=\bigsqcup_{\alpha}U_\alpha\times \R^m$
and identify for $x\in U_\alpha\cap U_\beta\neq \emptyset$
\[(x,v)\in U_\beta\times \R^m \quad \text{ with }\quad (x, A_{\alpha\beta}(x)v)\in U_\alpha\times \R^m .
\]
By \eqref{cocycle}, this defines an equivalence relation on $\tilde E$
and the quotient $E=E(\mathcal A^1)$ has a smooth vector bundle
structure with vector bundle charts given by the inclusions
$U_\alpha\times \R^m\hookrightarrow E$, and changes of charts the
cocycles $A_{\alpha\beta}$.  Note that the maps $e_i^\alpha\colon
U_\alpha\to U_\alpha\times\R^m$, $x\mapsto (x,e_i)$ define smooth
local sections of $E$ and
$e_i^\beta=\sum_{j=1}^n\psi_{\alpha\beta}^{ji}e_j^\alpha$ for
$\alpha,\beta$ such that $U_\alpha\cap U_\beta\neq \emptyset$. Hence,
we can identify $\xi_i^\alpha$ with the section $e_i^\alpha$ and we
see that a morphism $\mu\colon\mathcal A^1_N\dashrightarrow \mathcal
A^1_M$ over $\mu_0\colon N\to M$ defines a morphism
$E(\mu)^\star\colon \Gamma(E(\mathcal A^1_M))\to\Gamma(E(\mathcal
A^1_N))$ of modules over $\mu_0^*\colon C^\infty(M)\to C^\infty(N)$,
and so by Lemma \ref{bundlemap_eq_to_morphism} a vector bundle
morphism $E^*(\mathcal A^1_N)\to E^*(\mathcal A^1_M)$ over
$\mu_0\colon N\to M$.  Hence we have constructed a functor
$E(\cdot)\colon \operatorname{[1]-Man}\to\operatorname{VB}$.

\medskip

Next we show that the two functors build together an equivalence of
categories.  The functor $\E(\cdot)\circ \mathcal A(\cdot)\colon
\operatorname{VB}\to\operatorname{VB}$ sends a vector bundle to the
abstract vector bundle defined by its trivialisations and cocycles.
There is an obvious natural isomorphism between this functor and the
identity functor $\operatorname{VB}\to\operatorname{VB}$.

The functor $\mathcal A(\cdot)\circ E(\cdot)\colon
\operatorname{[1]-Man}\to\operatorname{[1]-Man}$ sends a
$[1]$-manifold over $M$ with local generators $\xi^\alpha_i$ and
cocycles $A_{\alpha\beta}$ to the sheaf of sections of $E(\mathcal
A)$, with local basis sections $e_i^\alpha$ and cocycles
$A_{\alpha\beta}$. There is an obvious natural isomorphism between this functor and the
identity functor $\operatorname{[1]-Man}\to\operatorname{[1]-Man}$.

\medskip

Finally note that it would be more natural to define a
(contravariant!)  functor from the category of $[1]$-manifolds to the
category of vector bundles with the usual notion of vector bundle
morphism, by sending a vector bundle $E\to M$ to the sheaf of sections
of its dual $E^*$ (i.e.~the sheaf over $M$ of linear functions on
$E$), and by sending a $[1]$-manifold $\mathcal A$ to the dual of
$E(\mathcal A)$ constructed above.
In the remainder of this section we will extend
 the less natural equivalence to an equivalence of metric double 
vector bundles with $[2]$-manifolds. In that case, the equivalence 
of categories will be more natural in this manner.

\subsubsection{Split N-manifolds}
Next we quickly discuss split N-manifolds and we recall how each
N-manifold is noncanonically isomorphic to a split N-manifold of the
same degree and of the same dimension.
\begin{example} 
\begin{enumerate}
\item  Let $E$ be a smooth vector bundle of rank $r$ over a smooth manifold
  $M$ of dimension $p$ and assign the degree $n$ to the fiber coordinates
  of $E$. This defines $E[-n]$, an $[n]$-manifold of dimension
  $(p;r_1=0,\ldots,r_{i-1}=0,r_i=r)$ with
  $\mathcal A^n=\Gamma(E^*)$.

\item Now let $E_{-1},E_{-2},\ldots,E_{-n}$ be smooth vector bundles
  of finite ranks $r_1,\ldots,r_n$ over $M$ and assign the degree $i$
  to the fiber coordinates of $E_{-i}$, for each $i=1,\ldots,n$.  The
  direct sum $E=E_{-1}\oplus \ldots\oplus E_{-n}$ is a graded vector
  bundle with grading concentrated in degrees $-1,\ldots,-n$.  The
  $[n]$-manifold $E_{-1}[-1]\oplus\ldots\oplus E_{-n}[-n]$
  has local basis sections of ${E_{-i}}^*$ as local generators of
  degree $i$, for $i=1,\ldots,n$, and so dimension $(p;r_1,\ldots,r_n)$. 
  An $[n]$-manifold
  $\mathcal M=E_{-1}[-1]\oplus\ldots\oplus E_{-n}[n]$ defined in this
  manner by a graded vector bundle is called a \textbf{split
    $[n]$-manifold}.\end{enumerate}
\end{example}
In this paper, we are exclusively interested in the cases $n=2$ and
$n=1$.  Choose two vector bundles $E_{-1}$ and $E_{-2}$ of ranks $r_1$
and $r_2$ over a smooth manifold $M$. Set $E=E_{-1}\oplus E_{-2}$ and
consider $\mathcal M=E_{-1}[-1]\oplus E_{-2}[-2]$.  We find $\mathcal
A^0=C^\infty(M)$, $\mathcal A^1=\Gamma({E_{-1}}^*)$ and $\mathcal
A^2=\Gamma({E_{-2}}^*\oplus \wedge^2{E_{-1}}^*)$.

  A morphism $\mu\colon E_{-1}[-1]\oplus E_{-2}[-2]\to
  F_{-1}[-1]\oplus F_{-2}[-2]$ of split $[2]$-manifolds over the bases $M$ and $N$, respectively, consists of a
  smooth map $\mu_0\colon M\to N$, 
  three vector bundle morphisms $\mu_1\colon
  E_{-1}\to F_{-1}$, $\mu_2\colon E_{-2}\to F_{-2}$ and
  $\mu_{12}\colon \wedge^2E_{-1}\to F_{-2}$ over $\mu_0$. The map
  $\mu^\star$ sends a degree $1$ function $\xi\in\Gamma(F_{-1})$ to
  \begin{equation}\label{morphism_dec_1}
{\mu_1}^\star\xi\in\Gamma(E_{-1}), \qquad
  ({\mu_1}^\star\xi)(m)={\mu_1}_m^*(\xi(\mu_0(m)))\quad \text{ for all
  }m\in M,
\end{equation}
 and a degree $2$-function $\xi\in\Gamma(F_{-2}^*)$ to
  \begin{equation}\label{morphism_dec_2}
{\mu_2}^\star\xi+\mu_{12}^\star\xi\in\Gamma({E_{-2}}^*\oplus
  \wedge^2{E_{-1}}^*).
\end{equation}
Any N-manifold is non-canonically diffeomorphic to a split
N-manifold. In other words, the categories of split N-manifolds and of
N-manifolds are equivalent.  This is proved for instance in
\cite{BoPo13}. The super-version of this theorem is due to Batchelor
\cite{Batchelor80} and is known as Batchelor's theorem.

\begin{theorem}\label{split_N} Any $[n]$-manifold
  is non-canonically diffeomorphic to a split $[n]$-manifold.
\end{theorem}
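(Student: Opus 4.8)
The plan is to prove Theorem~\ref{split_N} by induction on the degree $n$, following the strategy that works for Batchelor's theorem in the super-setting and for the $n=1$ case already recalled in \S\ref{classical_eq}. The base case $n=1$ is exactly the equivalence between $[1]$-manifolds and vector bundles established above: a $[1]$-manifold over $M$ is a locally finitely generated sheaf of $C^\infty(M)$-modules, hence isomorphic to $\Gamma(E^*)$ for a vector bundle $E\to M$, which means $\mathcal M\cong E[-1]$. For the inductive step, suppose $\mathcal M$ is an $[n]$-manifold of dimension $(p;r_1,\ldots,r_n)$ with structure sheaf $\mathcal A=C^\infty(\mathcal M)$. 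The first step is to isolate the top-degree generators: I would look at the ideal $\mathcal I\subseteq\mathcal A$ generated by all elements of degree $\geq 1$, and more precisely at the subsheaf $\mathcal A_{\leq n-1}$ obtained by a suitable truncation. The key observation is that the degree-$n$ part $\mathcal A^n$ decomposes (non-canonically) as $\mathcal A^n \cong \Gamma(E_{-n}^*) \oplus (\text{products of lower-degree generators})$: the ``new'' degree-$n$ generators span, modulo decomposables, a locally free $C^\infty(M)$-module, hence define a vector bundle $E_{-n}\to M$.

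The second step is to construct the truncated $[n-1]$-manifold $\mathcal M'$ whose structure sheaf is $\mathcal A' = \mathcal A / \mathcal J$, where $\mathcal J$ is the ideal generated by the chosen degree-$n$ generators together with all elements of degree $>n$ (here there are none, so just degree exactly $n$ new generators). One checks $\mathcal A'$ is locally freely generated by $r_1+\cdots+r_{n-1}$ generators in degrees $1,\ldots,n-1$, so $\mathcal M'$ is an $[n-1]$-manifold of dimension $(p;r_1,\ldots,r_{n-1})$. By the inductive hypothesis there are vector bundles $E_{-1},\ldots,E_{-(n-1)}$ and a (non-canonical) isomorphism $\mathcal M'\cong E_{-1}[-1]\oplus\cdots\oplus E_{-(n-1)}[-(n-1)]$. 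The third step is to lift this back: choosing local splittings of the algebra extension $\mathcal A'\hookrightarrow\mathcal A$ (which exists because, locally, $\mathcal A$ is a polynomial algebra over $\mathcal A'$ in the degree-$n$ generators) and gluing them with a partition of unity on $M$, I obtain a global isomorphism of sheaves of graded algebras $\mathcal A \cong \mathcal A' \otimes_{C^\infty(M)} \Gamma(\wedge^\bullet E_{-n}^*)$, i.e.~$\mathcal M \cong \mathcal M' \oplus E_{-n}[-n] \cong E_{-1}[-1]\oplus\cdots\oplus E_{-n}[-n]$, which is split.

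The main obstacle is the gluing step, exactly as in Batchelor's theorem: a local algebra splitting $\mathcal A' \hookrightarrow \mathcal A$ is \emph{not} canonical, so two such splittings over overlapping charts differ by an automorphism of $\mathcal A$ restricting to the identity in degree $0$; one must check these transition automorphisms can be averaged via a partition of unity subordinate to a trivialising cover of $M$. Here the crucial point is that the relevant ``sheaf of automorphisms fixing $\mathcal A'$'' is, in an appropriate sense, a sheaf of affine spaces modelled on a sheaf of (nilpotent, in the polynomial degree) $C^\infty(M)$-modules, so convex combinations make sense and the standard partition-of-unity argument applies. This is the technical heart; the degree bookkeeping that ensures everything is finitely generated in the right degrees is routine. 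Since the statement only asserts the diffeomorphism (and the parenthetical remark about the equivalence of categories is not part of the formal claim), I would not need to track functoriality here, although the construction is visibly natural enough that one could upgrade it. As the excerpt notes, full details are in \cite{BoPo13}, so I would present the argument at the level of this sketch and refer there for the partition-of-unity manipulations.
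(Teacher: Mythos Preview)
Your inductive strategy is correct and is essentially the paper's approach: the paper only spells out $n=2$, building up via the subalgebra $\mathcal A_1$ generated by degrees $\leq 1$ and then splitting the short exact sequence $0\to(\mathcal A^1)^2\to\mathcal A^2\to\Gamma(E_{-2}^*)\to 0$ of $C^\infty(M)$-modules; you peel off the top degree and recurse, which amounts to the same tower of module splittings.

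Two points are worth cleaning up. First, you define $\mathcal A'=\mathcal A/\mathcal J$ as a \emph{quotient} but then write $\mathcal A'\hookrightarrow\mathcal A$; these arrows point in opposite directions. The object you actually want is the canonical \emph{subalgebra} $\mathcal A_{n-1}\subseteq\mathcal A$ generated by degrees $\leq n-1$ (this is precisely what the paper uses), which is an $[n-1]$-manifold without any choice and already sits inside $\mathcal A$; the composite $\mathcal A_{n-1}\hookrightarrow\mathcal A\twoheadrightarrow\mathcal A/\mathcal J$ is an isomorphism, so your $\mathcal A'$ and $\mathcal A_{n-1}$ agree, but working with the subalgebra avoids the confusion. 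Second, the elaborate gluing of local algebra splittings in your third step is unnecessary. The only non-canonical choice is the global $C^\infty(M)$-module splitting of $0\to D^n\to\mathcal A^n\to\Gamma(E_{-n}^*)\to 0$ (where $D^n$ is the degree-$n$ decomposables), and this exists simply because short exact sequences of vector bundles split. Once that splitting $\Gamma(E_{-n}^*)\hookrightarrow\mathcal A^n$ is fixed, multiplication defines a \emph{global} graded algebra map from $\mathcal A_{n-1}\otimes_{C^\infty(M)}\Gamma(S^\bullet(E_{-n}^*[n]))$ to $\mathcal A$, which is locally an isomorphism by the local freeness in Definition~\ref{n_man}, hence globally one. (Here $S^\bullet$ means the graded-symmetric algebra: exterior for $n$ odd, ordinary symmetric for $n$ even --- your $\wedge^\bullet E_{-n}^*$ is only right for odd $n$.) The partition of unity is hidden in the module splitting, not in any algebra-level gluing.
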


We give here the proof by \cite{BoPo13} in the case $n=2$. We are
especially interested in the morphism of split $[2]$-manifolds induced
by a change of splitting of a $[2]$-manifold and will emphasize this
in the proof.
\begin{proof}[Sketch of Proof, \cite{BoPo13}]
  Consider a $[2]$-manifold $\mathcal M$ over a smooth base manifold
  $M$. Since ${\mathcal A}^0=C^\infty(M)$ and ${\mathcal A}^0{\mathcal
    A}^1\subset {\mathcal A}^1$, the sheaf ${\mathcal A}^1$ is a
  locally free sheaf of $C^\infty(M)$-modules and there exists a
  vector bundle $E\to M$ such that ${\mathcal A}^1\simeq
  \Gamma(E)$. Set $E_{-1}^*=E$. Now let ${\mathcal A}_1$ be the
  subalgebra of $\mathcal A$ generated by ${\mathcal
    A}^0\oplus{\mathcal A}^1$. We find easily that $\mathcal A_1\simeq
  \Gamma(\wedge^\bullet E_{-1}^*)$ and ${\mathcal A}_{1}\cap {\mathcal
    A}^2=(\mathcal A^1)^2$ is a proper $\mathcal A^0$-submodule of
  $\mathcal A^2$. Since the quotient $\mathcal A^2/(\mathcal A^1)^2$
  is a locally free sheaf of $C^\infty(M)$-modules, we have ${\mathcal
    A}^2/({\mathcal A}^1)^2\simeq \Gamma(E_{-2}^*)$, where $E_{-2}$ is
  a vector bundle over $M$. The short exact sequence \begin{equation*}
    0\rightarrow ({\mathcal A}^1)^2\hookrightarrow {\mathcal
      A}^2\rightarrow \Gamma(E_{-2}^*)\rightarrow 0\end{equation*} of
  ${\mathcal A}^0$-modules is non canonically split. Let us choose a
  splitting and identify $\Gamma(E_{-2}^*)$ with a submodule of
  ${\mathcal A}^2$:
$${\mathcal A}^2 \simeq ({\mathcal A}^1)^2\oplus\Gamma(E_{-2}^*)= \Gamma(\wedge^2E^*_{-1}\oplus
E^*_{-2})\;.$$ Hence, the considered $[2]$-manifold is diffeomorphic,
modulo the chosen splitting, to the split $[2]$-manifold
$E_{-1}[-1]\oplus E_{-2}[-2]$.

Note finally that a change of splitting is equivalent to a section
$\phi$ of\linebreak $\operatorname{Hom}(E_{-2}^*,\wedge^2E_{-1}^*)$ and
induces an isomorphism of split $[2]$-manifolds over the identity on
$M$: $\mu^\star(\xi)=\xi+\phi\circ\xi\in\Gamma(E_{-2}^*\oplus
\wedge^2E_{-1}^*)$ for all $\xi\in\Gamma(E_{-2}^*)$ and
$\mu^\star(\xi)=\xi$ for all $\xi\in\Gamma(E_{-1}^*)$.
\end{proof}

Note that $[1]$-manifolds are automatically split. As we have seen in
\S \ref{classical_eq}, $[1]$-manifolds are just vector bundles with a
degree shifting in the fibers, i.e.~$\mathcal M=E[-1]$ for some vector
bundle $E\to M$ and $C^\infty(\mathcal M)=\Gamma(\bigwedge^\bullet
E^*)$, the exterior algebra of $E$.  We finally give the definition of
wide $[1]$-submanifolds of $[2]$-manifolds.
\begin{definition}\label{def_1_subman}
  Let $\mathcal M$ be a $[2]$-manifold of dimension $(p;r_1,r_2)$. A
  wide $[1]$-submanifold $\mathcal N$ of $\mathcal M$ is a
  $[1]$-manifold of dimension $(p;r)$, $r\leq r_1$, over the same
  smooth base $M$, together with a morphism of $N$-manifolds
  $\mu\colon \mathcal N\to \mathcal M$ over the identity on $M$ and
  such that locally, $\mu_U^\star\colon C^\infty_U(\mathcal M)\to
  C^\infty_U(\mathcal N)$, $\mu^\star_U(\xi_2^j)=0$ for
  $j=1,\ldots,r_2$, $\mu^\star_U(\xi_1^j)=0$ for $j=r+1,\ldots,r_1$ and
  $\mu^\star_U(\xi_1^j)=\eta_1^j$ for $j=1,\ldots,r$.  Here,
  $\xi_1^j,\xi_2^k$ are local generators of $C^\infty(\mathcal M)$ and
  $\eta_1^j$ are local generators for $C^\infty(\mathcal N)$.
\end{definition}

More explicitly, if $\mathcal M$ splits as $Q[-1]\oplus B^*[-2]$,
then $\mathcal N$ can be understood as $U[-1]$ for a subbundle
$U\subseteq Q$.  The map $\mu^\star$ is then locally described as
follows.  Take a trivialising open set $V\subseteq M$ for both $B$ and
$Q$.  Choose local basis fields $u_1,\ldots,u_r$ for $U$ over $V$, and
complete this list to local basis fields
$u_1,\ldots,u_r,q_{r+1},\ldots,q_{r_1}$ for $Q$. The dual basis fields
$\tau_1,\ldots,\tau_{r_1}$ for $Q^*$ satisfy then
$\tau_{r+1},\ldots,\tau_{r_1}\in\Gamma(U^\circ)$. The morphism
$\mu^\star_U$ sends $\tau_j$ to $0$ for $j=r+1,\ldots,r_1$ and to
$\bar\tau_j=\tau_j+U^\circ\in\Gamma(Q^*/U^\circ)\simeq\Gamma(U^*)$ for
$j=1,\ldots,r$.

\medskip

Finally note that if an $[n]$-manifold $\mathcal M$ splits as
$E_1[-1]\oplus E_2[-2]\oplus\ldots\oplus E_n[-n]$, then each section
$e$ of $E_j$ defines a derivation $\hat{e}$ of degree $-j$ on
$\mathcal M$: $\hat{e}(f)=0$,
$\hat{e}(\xi_j^i)=\langle e, \xi_j^j\rangle$, and $\hat{e}(\xi_k^i)=0$
for $k\neq j$.  We find $\hat{e_j^i}=\partial_{\xi_j^i}$ if
$\{e_j^1, \ldots,e_j^{r_j}\}$ is a local basis of $E_j$ and
$\{\xi_j^1,\ldots,\xi_j^{r_j}\}$ is the dual basis of $E_j^*$.

Further, a derivation $\phi$ of degree $0$ on $\mathcal M$ can be
written as a sum
\begin{equation*}
X+D_1+D_2+\ldots+D_n,
\end{equation*}
with $X\in \mx(M)$ and each $D_i$ a derivation of $E_i^*$ with symbol
$X\in\mx(M)$.  The derivation $X+D_1+\ldots+D_n$ can be written in coordinates as
\[\sum_{i=1}^pX(x_i)\partial_{x_i}+\sum_{i,j=1}^{r_1}\langle e^i_1,
D_1(\xi^j_1)\rangle\xi^i_1\partial_{\xi_1^j}+\ldots
+\sum_{i,j=1}^{r_1}\langle e^i_n,
D_n(\xi^j_n)\rangle\xi^i_n\partial_{\xi_n^j}.
\]
In particular, if for each $j$ the map $D^j\colon \mx(M)\to\der(E_j)$ is a morphism of
$C^\infty(M)$-modules 
that sends a vector field $X$ to a derivation $D^j(X)$ over $X$, 
then 
\begin{equation}\label{der_deg_0}
\{X+D^1(X)+\ldots+D^n(X)\mid X\in\mx(M)\}\cup\{\hat{e}\mid
e\in\Gamma(E_j) \text{ for some } j\} 
\end{equation}
span $\der(C^\infty(\mathcal M))$ as a $C^\infty(\mathcal M)$-module.

\subsection{Metric double vector bundles} 
Next we introduce linear metrics on double vector bundles.
\begin{definition}
A \textbf{metric double vector bundle} is a double vector bundle
$(\mathbb E, Q; B, M)$ equipped with a \textbf{linear symmetric
  non-degenerate pairing on $\mathbb E\times_B\mathbb E\to \R$},
i.e.~such that the map
\begin{equation}
\begin{xy}
  \xymatrix{\mathbb{E}\ar[rr]^{\pi_B}\ar[dd]_{\pi_Q}&&B\ar[dd]^{q_B}&
    & &\mathbb{E}\duer B\ar[rr]\ar[dd]&&B\ar[dd]\\
    &C\ar[dr]&   &\ar[r]^{\Beta}& &&Q^*\ar[dr]&\\
    Q\ar[rr]_{q_Q}&&M&&&C^*\ar[rr]&&M }
\end{xy}
\end{equation}
defined by the pairing $\langle\cdot,\cdot\rangle$ is an isomorphism
of double vector bundles.  In particular, the core $C\to M$ of
$\mathbb{E}$ is canonically isomorphic to $Q^*\to M$.
 \end{definition}
 Note that equivalently, a linear symmetric non-degenerate pairing
 $\langle\cdot\,,\cdot\rangle$ on $\mathbb E\to B$ is linear if the
 core of $\mathbb E$ is isomorphic to $Q^*$ and, via this isomorphism,
\begin{enumerate}
\item $\langle \tau_1^\dagger, \tau_2^\dagger\rangle=0$ for
  $\tau_1,\tau_2\in\Gamma(Q^*)$,
\item $\langle \chi, \tau^\dagger\rangle=q_B^*\langle q,\tau\rangle$
  for $\chi\in\Gamma_B^l(\mathbb E)$ linear over $q\in\Gamma(Q)$ and
  $\tau\in\Gamma(Q^*)$ and
\item $\langle\chi_1,\chi_2\rangle$ is a linear function on $B$ for
  $\chi_1,\chi_2\in \Gamma_B^l(\mathbb E)$.
\end{enumerate}
In the following, we will always identify with $Q^*$ the core of a
metric double vector bundle $(\mathbb E, Q; B, M)$.

Note also that the \textbf{opposite} $(\overline{\mathbb E};Q;B,M)$ of
a metric double vector bundle $(\overline{\mathbb E};B;Q,M)$ is the
metric double vector bundle with
\[\langle\cdot\,,\cdot\rangle_{\overline{\mathbb
    E}}=-\langle\cdot\,,\cdot\rangle_{\mathbb E}.
\]

\subsubsection{Lagrangian decompositions of a metric double vector
  bundle}\label{Lagr_dec}

\begin{definition}
  Let $(\mathbb E, B; Q, M)$ be a metric double vector bundle.  A
  linear splitting $\Sigma\colon Q\times_MB\to \mathbb E$ (or
  equivalently a decomposition of $\mathbb E$) is said to be
  \textbf{Lagrangian} if its image is maximal isotropic in $\mathbb
  E\to B$.  The corresponding horizontal lift
  $\sigma_Q\colon\Gamma(Q)\to\Gamma^l_B(\mathbb E)$ is then also said
  to be \textbf{Lagrangian}.
\end{definition}
Note that by definition, a horizontal lift $\sigma_Q\colon
\Gamma(Q)\to \Gamma^l_B(\mathbb E)$ is Lagrangian if and only if
$\langle \sigma_Q(q_1), \sigma_Q(q_2)\rangle=0$ for all $q_1,q_2\in
\Gamma(Q)$.

Recall from Section \ref{dual} that given a linear splitting
$\Sigma\colon Q\times_M B\to \mathbb E$, one can construct a linear
splitting $\Sigma^\star\colon Q^{**}\times_M B\to \mathbb E\duer B$.
\begin{lemma}\label{lem_Lagr_split_sections}
  Let $(\mathbb E; Q, B; M)$ be a metric double vector bundle and
  choose a linear splitting $\Sigma$ of $\mathbb E$.  Then $\Sigma$ is
  Lagrangian if and only if the linear map $\Beta\colon \mathbb E\to
  \mathbb E\duer B$ sends $\sigma_B(b)$ to $\sigma^\star_{B}(b)$ for
  all $b\in\Gamma(B)$.
\end{lemma}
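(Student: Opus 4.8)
The plan is to verify the identity $\Beta(\sigma_B(b))=\sigma_B^\star(b)$ by testing it fibrewise against the bundle $\mathbb E\to B$. First I would record the relevant identifications: $\mathbb E\duer B$ is a double vector bundle with sides $B$ and $C^*$ and core $Q^*$; the pairing supplies an isomorphism $\Beta\colon\mathbb E\to\mathbb E\duer B$ of double vector bundles covering the identity on $B$, the canonical isomorphism $Q\simeq C^*$, and the identity on the core $Q^*$; and, by the very definition of $\Beta$, one has $\langle\Beta(e),e'\rangle_B=\langle e,e'\rangle$ for all $e,e'$ in one fibre of $\mathbb E\to B$. Consequently $\Beta(\sigma_B(b))$ and $\sigma_B^\star(b)$ are both linear sections of $\mathbb E\duer B\to C^*$ over $b$, so it suffices to show that for each $m$ and each $q_m\in Q_m$ the two elements $\Beta(\sigma_B(b)(q_m))$ and $\sigma_B^\star(b)(q_m)$ of the fibre $(\mathbb E\duer B)_{b(m)}$ pair the same, via $\langle\cdot\,,\cdot\rangle_B$, with every element of $\mathbb E_{b(m)}$. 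Since the splitting $\Sigma$ is fixed, that fibre is spanned by the values $\sigma_Q(q')(b(m))=\sigma_B(b)(q'(m))$, $q'\in\Gamma(Q)$, together with the values $\tau^\dagger(b(m))$ of core sections, $\tau\in\Gamma(Q^*)$, so it is enough to test against these two families.

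Next I would run the two tests. Against a core value $\tau^\dagger(b(m))$: by the defining property of $\Beta$ and the second of the three conditions characterising a linear pairing, $\langle\Beta(\sigma_B(b)(q_m)),\tau^\dagger(b(m))\rangle_B=\langle\sigma_B(b)(q_m),\tau^\dagger(b(m))\rangle=\langle\sigma_Q(q),\tau^\dagger\rangle(b(m))=\langle q_m,\tau(m)\rangle$, where $q\in\Gamma(Q)$ is any section with $q(m)=q_m$; and on the other side $\langle\sigma_B^\star(b)(q_m),\tau^\dagger(b(m))\rangle_B=\langle q_m,\tau(m)\rangle$ by Lemma \ref{lemma_dual_splitting} read with the roles of $Q$ and $B$ interchanged. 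Hence the two elements already agree on core elements, irrespective of any isotropy hypothesis. Against a value $\sigma_B(b)(q'_m)=\sigma_Q(q')(b(m))$: similarly $\langle\Beta(\sigma_B(b)(q_m)),\sigma_B(b)(q'_m)\rangle_B=\langle\sigma_Q(q),\sigma_Q(q')\rangle(b(m))$, whereas $\langle\sigma_B^\star(b)(q_m),\sigma_B(b)(q'_m)\rangle_B=0$ by the same interchanged form of Lemma \ref{lemma_dual_splitting}.

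Putting the two tests together, $\Beta(\sigma_B(b))=\sigma_B^\star(b)$ for the given $b$ if and only if $\langle\sigma_Q(q),\sigma_Q(q')\rangle(b(m))=0$ for all $q,q'\in\Gamma(Q)$ and all $m$; letting $b$ and $m$, hence $b(m)$, range over $B$, this happens for all $b\in\Gamma(B)$ if and only if $\langle\sigma_Q(q_1),\sigma_Q(q_2)\rangle=0$ for all $q_1,q_2\in\Gamma(Q)$, that is, if and only if $\sigma_Q$ --- equivalently $\Sigma$ --- is Lagrangian. That is the assertion.

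The only point requiring care, I expect, is the dualisation bookkeeping: pinning down $\mathbb E\duer B$ as a double vector bundle and the three maps covered by $\Beta$, and transcribing Lemma \ref{lemma_dual_splitting} to $\sigma_B^\star$ with the correct sign --- the sign convention of \S\ref{dual} being designed precisely so that $\sigma_B^\star(b)$ is linear over $b$ and not over $-b$, which is why no sign intrudes in the computations above. Granting these, the proof reduces to the two pairing identities, and uses nothing beyond the definition of $\Beta$, the defining conditions of a linear metric, and Lemma \ref{lemma_dual_splitting}.
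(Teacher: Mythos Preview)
Your proposal is correct and follows essentially the same approach as the paper's proof: both test $\Beta(\sigma_B(b))$ against $\sigma_B^\star(b)$ fibrewise by pairing with core sections $\tau^\dagger$ and with the lifted sections $\sigma_B(b)(q'_m)=\sigma_Q(q')(b(m))$, invoking Lemma~\ref{lemma_dual_splitting} for the values of $\sigma_B^\star$. Your write-up is in fact slightly more explicit about the converse direction, which the paper dispatches with ``It is easy to see from the four equalities above that this condition is necessary for $\Sigma$ to be Lagrangian.''
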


\begin{proof}
  Recall that Lemma \ref{lemma_dual_splitting} states that given a
  horizontal lift $\sigma_B\colon\Gamma(B)\to\Gamma_Q^l(\mathbb E)$,
  the dual horizontal lift
  $\sigma^\star_B\colon\Gamma(B)\to\Gamma_{Q^{**}}^l(\mathbb E\duer
  B)$ can be defined by
\[\langle \sigma_B^\star(b)(p_m),\sigma_B(b)(q_m)\rangle_B=0, \qquad
\langle \sigma_B^\star(b)(p_m),\tau^\dagger(b(m))\rangle_B=\langle p_m,
\tau(m)\rangle
\]
for all $b\in\Gamma(B)$, $\tau\in\Gamma(Q^*)$, $q_m\in Q$ and $p_m\in
Q^{**}\simeq Q$.

On the other hand, if $\Sigma\colon B\times_MQ\to\mathbb E$ is a
Lagrangian splitting, we have \begin{equation*}
\begin{split}
\langle \Beta(\sigma_B(b)(p(m))),
\sigma_B(b)(q(m))\rangle_B&= \langle \sigma_B(b)(p(m)),
\sigma_B(b)(q(m))\rangle_{\mathbb E}\\
&=\langle
\sigma_Q(p),\sigma_Q(q)\rangle_{\mathbb E}(b(m))=0
\end{split}
\end{equation*}
 for all $q,p\in\Gamma(Q)$ and
$b\in\Gamma(B)$, and
\begin{equation*}
\begin{split}
  \langle \Beta(\sigma_B(b)(p(m))), \tau^\dagger(b(m))\rangle_B&=
  \langle \sigma_B(b)(p(m)), \tau^\dagger(b(m))\rangle_{\mathbb E}\\
  &=\langle \sigma_Q(p)(b(m)), \tau^\dagger(b(m))\rangle_{\mathbb
    E}=\langle p,\tau\rangle(m)
\end{split}
\end{equation*}
for all $\tau\in\Gamma(Q^*)$. This proves that $\Beta$ sends the
linear section $\sigma_B(b)\in\Gamma_Q^l(\mathbb E)$ to
$\sigma^\star_B(b)\in \Gamma^l_{Q^{**}}(\mathbb E\duer B)$.  It is
easy to see from the four equalities above that this condition is
necessary for $\Sigma$ to be Lagrangian.
\end{proof}

Let $\sigma_Q\colon\Gamma(Q)\to\Gamma^l_B(\mathbb E)$ be an arbitrary
horizontal lift. We have seen that by definition of a linear metric on
$\mathbb E\to B$, the pairing of two linear sections is a linear
function on $B$. This implies with
\[ \sigma_Q(fq)=q_B^*f\cdot\sigma_Q(q) \text{ and }
\ell_{f\beta}=q_B^*f\cdot\ell_\beta \text{ for all } f\in C^\infty(M),
q\in \Gamma(Q) \text{ and } \beta\in\Gamma(B^*)
\]
the existence of a symmetric tensor $\Lambda\in S^2(Q,B^*)$ such that
  \begin{equation}\label{lambda_def}
    \langle \sigma_Q(q_1), \sigma_Q(q_2)\rangle_{\mathbb E}=\ell_{\Lambda(q_1,q_2)}.
\end{equation}
In particular, $\Lambda(q,\cdot): Q\to B^*$ is a morphism of vector
bundles for each $q\in\Gamma(Q)$.  Define a new horizontal lift
$\sigma_Q'\colon\Gamma(Q)\to \Gamma^l_B(\mathbb E)$ by
$\sigma_Q'(q)=\sigma_Q(q)-\frac{1}{2}\widetilde{\Lambda(q,\cdot)^*}$ for
all $q\in\Gamma(Q)$. 
Since for $\phi\in\Gamma(\operatorname{Hom}(B,Q^*))$, $\langle
\widetilde{\phi}, \chi\rangle=\ell_{\phi^*(q)}$ if
$\chi\in\Gamma_B^l(\mathbb E)$ is linear over $q\in\Gamma(Q)$, we find then
\[
\langle \sigma_Q'(q_1),\sigma_Q'(q_2)\rangle_{\mathbb E}
=\langle\sigma_Q(q_1),\sigma_Q(q_2)\rangle_{\mathbb E}-\frac{1}{2}\ell_{\Lambda(q_1,q_2)}-\frac{1}{2}\ell_{\Lambda(q_2,q_1)}
=0
\] for all $q_1,q_2\in\Gamma(Q)$. This proves
the following result.
\begin{theorem}\label{symmetrization}
  Let $(\mathbb E, B; Q, M)$ be a metric double vector bundle.  Then
  there exists a Lagrangian splitting of $\mathbb E$.
\end{theorem}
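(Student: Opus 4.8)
The plan is to start from an arbitrary linear splitting of $\mathbb{E}$ and then correct it by a core-linear section so that its image becomes isotropic in $\mathbb{E}\to B$.

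First I would invoke the existence of a (global) linear splitting $\Sigma\colon Q\times_M B\to\mathbb{E}$, recalled in \S\ref{subsub:lsl}, with associated horizontal lift $\sigma_Q\colon\Gamma(Q)\to\Gamma^l_B(\mathbb{E})$. Since the pairing on $\mathbb{E}\to B$ is linear, for $q_1,q_2\in\Gamma(Q)$ the function $\langle\sigma_Q(q_1),\sigma_Q(q_2)\rangle_{\mathbb{E}}$ is linear on $B$, hence of the form $\ell_{\Lambda(q_1,q_2)}$ for a unique $\Lambda(q_1,q_2)\in\Gamma(B^*)$ (this is the tensor $\Lambda$ appearing in \eqref{lambda_def}). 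Using $\sigma_Q(fq)=q_B^*f\cdot\sigma_Q(q)$, $\ell_{f\beta}=q_B^*f\cdot\ell_\beta$ and the symmetry of the pairing, one checks that $\Lambda$ is $C^\infty(M)$-bilinear and symmetric, i.e.\ $\Lambda\in S^2(Q,B^*)$; in particular each $\Lambda(q,\cdot)\colon Q\to B^*$ is a morphism of vector bundles, with transpose $\Lambda(q,\cdot)^*\colon B\to Q^*$, using the canonical identifications $(Q^*)^*\cong Q$ and (core of $\mathbb{E}$) $\cong Q^*$.

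Then I would define $\sigma_Q'\colon\Gamma(Q)\to\Gamma^l_B(\mathbb{E})$ by $\sigma_Q'(q)=\sigma_Q(q)-\frac{1}{2}\widetilde{\Lambda(q,\cdot)^*}$. Subtracting a core-linear section does not change the base section and, since $\Lambda$ is $C^\infty(M)$-bilinear, $\sigma_Q'$ is again a horizontal lift, so it corresponds to a linear splitting $\Sigma'$ of $\mathbb{E}$. To see $\Sigma'$ is Lagrangian I would compute $\langle\sigma_Q'(q_1),\sigma_Q'(q_2)\rangle_{\mathbb{E}}$ using two facts: $\langle\widetilde{\phi},\chi\rangle=\ell_{\phi^*(q)}$ whenever $\phi\in\Gamma(\operatorname{Hom}(B,Q^*))$ and $\chi\in\Gamma^l_B(\mathbb{E})$ is linear over $q$; and $\langle\widetilde{\phi_1},\widetilde{\phi_2}\rangle=0$, since core-linear sections take values in the core $Q^*$ and $\langle\tau_1^\dagger,\tau_2^\dagger\rangle=0$. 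The two cross terms then contribute $-\frac{1}{2}\ell_{\Lambda(q_1,q_2)}-\frac{1}{2}\ell_{\Lambda(q_2,q_1)}=-\ell_{\Lambda(q_1,q_2)}$ by symmetry of $\Lambda$, exactly cancelling $\langle\sigma_Q(q_1),\sigma_Q(q_2)\rangle_{\mathbb{E}}=\ell_{\Lambda(q_1,q_2)}$, while the last term vanishes; hence $\langle\sigma_Q'(q_1),\sigma_Q'(q_2)\rangle_{\mathbb{E}}=0$ for all $q_1,q_2\in\Gamma(Q)$, which is precisely the condition for $\Sigma'$ to be Lagrangian.

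The argument is otherwise routine; the only real care needed — the ``main obstacle'' such as it is — is bookkeeping: verifying that $\Lambda$ is genuinely $C^\infty(M)$-bilinear and symmetric (so that $\Lambda(q,\cdot)^*$ is well defined and depends $C^\infty(M)$-linearly on $q$, which is what makes $\sigma_Q'$ a lift), and keeping the identifications of the core with $Q^*$ and of $B$ with $(B^*)^*$, together with the factor and sign conventions for $\widetilde{\phi}$ and for the pairing of a core-linear section with a linear section, straight throughout. No partition-of-unity or integrability argument beyond that already used to produce the initial splitting $\Sigma$ is required.
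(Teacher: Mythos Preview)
Your proposal is correct and follows essentially the same approach as the paper: start from an arbitrary horizontal lift, define the symmetric tensor $\Lambda\in S^2(Q,B^*)$ via $\langle\sigma_Q(q_1),\sigma_Q(q_2)\rangle_{\mathbb E}=\ell_{\Lambda(q_1,q_2)}$, and correct the lift by $-\tfrac{1}{2}\widetilde{\Lambda(q,\cdot)^*}$ so that the cross terms cancel the original pairing. Your write-up is slightly more explicit than the paper's about why the core-linear/core-linear term vanishes and why $\sigma_Q'$ is again a lift, but the argument is the same.
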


Next we show that a change of Lagrangian splitting corresponds to a
skew-symmetric element of $\Gamma(Q^*\otimes B^*\otimes Q^*)$.
\begin{proposition}\label{lagrangian_change_of_split}
  Let $(\mathbb E, B; Q, M)$ be a metric double vector bundle and
  choose a Lagrangian horizontal lift
  $\sigma_Q^1\colon\Gamma(Q)\to\Gamma_B^l(\mathbb E)$.  Then a second
  horizontal lift $\sigma_Q^2\colon\Gamma(Q)\to\Gamma_B^l(\mathbb E)$
  is Lagrangian if and only if the change of lift
  $\phi_{12}\in\Gamma(Q^*\otimes B^*\otimes Q^*)$ satisfies the
  following equality:
\[
\langle \phi_{12}(q),q'\rangle=-\langle\phi_{12}(q'),
q\rangle\in\Gamma(B^*)\] for all $q,q'\in\Gamma(Q)$, i.e.~if and only
if $\phi_{12}\in\Gamma(Q^*\wedge Q^*\otimes B^*)$
\end{proposition}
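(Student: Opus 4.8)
The plan is to compute the Lagrangian condition $\langle\sigma_Q^2(q_1),\sigma_Q^2(q_2)\rangle_{\mathbb E}=0$ directly, substituting the change-of-splitting relation $\sigma_Q^2(q)=\sigma_Q^1(q)-_B\widetilde{\phi_{12}(q)}$, and then to read off the constraint on $\phi_{12}$. The only inputs are the description of the change of lift recalled at the end of \S\ref{subsub:lsl}, the three properties characterising a linear metric, and the identity $\langle\widetilde\phi,\chi\rangle=\ell_{\phi^*(q)}$ (valid for $\chi\in\Gamma^l_B(\mathbb E)$ linear over $q\in\Gamma(Q)$ and $\phi\in\Gamma(\Hom(B,Q^*))$) noted just before Theorem \ref{symmetrization}.

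First I would recall that, since the two lifts differ by $\phi_{12}\in\Gamma(Q^*\otimes B^*\otimes Q^*)$ (using the identification $C\cong Q^*$), one has $\sigma_Q^2(q)=\sigma_Q^1(q)-_B\widetilde{\phi_{12}(q)}$ for all $q\in\Gamma(Q)$, where $\widetilde{\phi_{12}(q)}$ is the core-linear section determined by $\phi_{12}(q)\in\Gamma(\Hom(B,Q^*))$. Then, using that the metric is a $C^\infty(B)$-bilinear symmetric pairing on $\mathbb E\to B$, I would expand
\begin{align*}
\langle\sigma_Q^2(q_1),\sigma_Q^2(q_2)\rangle_{\mathbb E}
&=\langle\sigma_Q^1(q_1),\sigma_Q^1(q_2)\rangle_{\mathbb E}
-\langle\sigma_Q^1(q_1),\widetilde{\phi_{12}(q_2)}\rangle_{\mathbb E}\\
&\qquad-\langle\widetilde{\phi_{12}(q_1)},\sigma_Q^1(q_2)\rangle_{\mathbb E}
+\langle\widetilde{\phi_{12}(q_1)},\widetilde{\phi_{12}(q_2)}\rangle_{\mathbb E}.
\end{align*}
The first term is $0$ since $\sigma_Q^1$ is Lagrangian. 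For the last term, at any $b_m\in B$ the value $\widetilde{\phi_{12}(q)}(b_m)$ equals $\tau^\dagger(b_m)$ for any $\tau\in\Gamma(Q^*)$ with $\tau(m)=\phi_{12}(q)(b_m)$, so property (1) of a linear metric forces $\langle\widetilde{\phi_{12}(q_1)},\widetilde{\phi_{12}(q_2)}\rangle_{\mathbb E}=0$. For the two middle terms I would apply $\langle\widetilde\phi,\chi\rangle=\ell_{\phi^*(q)}$ with $\chi=\sigma_Q^1(q_i)$ (linear over $q_i$) and $\phi=\phi_{12}(q_j)$, together with the symmetry of the metric; since the transpose satisfies $\phi_{12}(q_j)^*(q_i)=\langle\phi_{12}(q_j),q_i\rangle\in\Gamma(B^*)$, this gives
\[
\langle\sigma_Q^2(q_1),\sigma_Q^2(q_2)\rangle_{\mathbb E}
=-\ell_{\langle\phi_{12}(q_2),q_1\rangle+\langle\phi_{12}(q_1),q_2\rangle}.
\]

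Finally, since $\beta\mapsto\ell_\beta$ is injective on $\Gamma(B^*)$ and $\sigma_Q^2$ is Lagrangian if and only if the left-hand side vanishes for all $q_1,q_2\in\Gamma(Q)$, this is equivalent to $\langle\phi_{12}(q_1),q_2\rangle=-\langle\phi_{12}(q_2),q_1\rangle$ for all $q_1,q_2$, which by $C^\infty(M)$-bilinearity of $\phi_{12}$ is exactly the assertion $\phi_{12}\in\Gamma(Q^*\wedge Q^*\otimes B^*)$. I do not expect a genuine obstacle; the only care needed is the bookkeeping of the canonical identifications $C\cong Q^*$ and $Q^{**}\cong Q$ and of the transpose $\phi_{12}(q)^*\colon Q\to B^*$, so that the cross terms land as sections of $B^*$ with the stated signs.
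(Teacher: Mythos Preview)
Your proof is correct and follows essentially the same approach as the paper. The only cosmetic difference is in the bilinear bookkeeping: you expand $\langle\sigma_Q^2(q_1),\sigma_Q^2(q_2)\rangle$ into four terms and kill the first and last, whereas the paper telescopes the difference $\langle\sigma_Q^1(q),\sigma_Q^1(q')\rangle-\langle\sigma_Q^2(q),\sigma_Q^2(q')\rangle$ into two cross terms $\langle\widetilde{\phi_{12}(q)},\sigma_Q^1(q')\rangle+\langle\sigma_Q^2(q),\widetilde{\phi_{12}(q')}\rangle$, thereby never needing the core--core pairing to vanish explicitly; both routes reduce to the same linear function $\ell_{\langle\phi_{12}(q),q'\rangle+\langle\phi_{12}(q'),q\rangle}$.
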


\begin{proof}
  For $q\in\Gamma(Q)$ we have $\langle\widetilde{\phi_{12}(q)},
  \chi\rangle=\ell_{\langle\phi_{12}(q), q'\rangle}$ for any linear
  section $\chi\in\Gamma_B^l(\mathbb E)$ over $q'\in\Gamma(Q)$.  Hence
  we find
\begin{equation*}
\begin{split}
  &\langle \sigma_Q^1(q), \sigma_Q^1(q')\rangle_{\mathbb E}-\langle \sigma_Q^2(q),
  \sigma_Q^2(q')\rangle_{\mathbb E} \\
&=\langle \sigma_Q^1(q)-\sigma_Q^2(q),
  \sigma_Q^1(q')\rangle_{\mathbb E}
  +\langle \sigma_Q^2(q), \sigma_Q^1(q')-\sigma_Q^2(q')\rangle_{\mathbb E}\\
  &=\ell_{\langle \phi_{12}(q),q'\rangle}+\ell_{\langle q,
    \phi_{12}(q')\rangle}
\end{split}
\end{equation*}
and we can conclude.
\end{proof}

\begin{remark}
  It is interesting to see that the last proposition implies that not
  any linear section of $\mathbb E$ over $B$ can be obtained as the
  Lagrangian horizontal lift of a section of $Q$. This is easy to
  understand in the following example.
\end{remark}

\begin{example}\label{metric_connections}
  Let $E\to M$ be a metric vector bundle, i.e.~a vector bundle endowed
  with a symmetric non-degenerate pairing
  $\langle\cdot\,,\cdot\rangle\colon E\times_M E\to \R$.  Then
  $E\simeq E^*$ and the tangent double is a metric double vector
  bundle $(TE,E;TM,M)$ with pairing $TE\times_{TM}TE\to \R$ the
  tangent of the pairing $E\times_M E\to \R$. In particular, we have
\[\langle Te_1, Te_2\rangle_{TE}=\ell_{\dr\langle e_1,e_2\rangle}, 
\quad \langle Te_1, e_2^\dagger\rangle_{TE}=p_M^* \langle e_1,e_2\rangle
\quad \text{ and }\langle e_1^\dagger, e_2^\dagger\rangle_{TE}=0\] for
$e_1,e_2\in\Gamma(E)$.

Recall from \S\ref{tangent_double} that linear splittings of $TE$ are
equivalent to linear connections $\nabla\colon
\mx(M)\times\Gamma(E)\to\Gamma(E)$.  We have then for all $e_1,e_2\in\Gamma(\mathsf E)$:
  \[\left\langle \sigma_{\mathsf E}^\nabla(e_1), e_2^\dagger\right\rangle=\left\langle
    Te_1-\widetilde{\nabla_\cdot e_1},
    e_2^\dagger\right\rangle=p_M^*\langle e_1, e_2\rangle\] and
  \[\left\langle \sigma_{\mathsf E}^\nabla (e_1), \sigma_{\mathsf E}^\nabla (e_2)\right\rangle=\left\langle
    Te_1-\widetilde{\nabla_\cdot e_1}, Te_2-\widetilde{\nabla_\cdot
    e_2}\right\rangle=\ell_{\dr\langle e_1, e_2\rangle-\langle
    e_2, \nabla_\cdot e_1\rangle-\langle e_1, \nabla_\cdot e_2\rangle}.\] 
The Lagrangian splittings of $TE$
are hence exactly the linear splittings that correspond to \textbf{metric}
connections, i.e.~linear connections $\nabla\colon
\mx(M)\times\Gamma(E)\to\Gamma(E)$ that preserve the metric:
\[\langle\nabla_\cdot e_1, e_2\rangle+\langle e_1,\nabla_\cdot e_2\rangle=\dr\langle e_1, e_2\rangle
\quad \text{ for } e_1,e_2\in\Gamma(E).
\]
\end{example}

\begin{example}\label{met_TET*E}
Let $q_E\colon E\to M$ be a vector bundle and consider the double vector bundle
\begin{equation*}
\begin{xy}
  \xymatrix{
    TE\oplus T^*E\ar[rr]^{\Phi_E:=({q_E}_*, r_E)}\ar[d]_{\pi_E}&& TM\oplus E^*\ar[d]\\
    E\ar[rr]_{q_E}&&M }
\end{xy}
\end{equation*} with sides $E$ and $TM\oplus E^*\to M$, and 
with core $E\oplus T^*M\to M$.  The projection
$r_E\colon T^*E\to E$ is defined by
\[r_E(\theta_{e_m})\in E^*_m, \qquad \langle r_E(\theta_{e_m}),
e_m'\rangle =\left\langle \theta_{e_m},
  \left.\frac{d}{dt}\right\an{t=0}e_m+te_m'\right\rangle,
 \]
 and is a fibration of vector bundles over the projection $q_E\colon
 E\to M$.  The core elements are identified in the following manner with elements of $E\oplus
 T^*M\to M$. For $m\in M$ and
 $(e_m,\theta_m)\in E_m\times T_m^*M$, the pair
\[\left(\left.\frac{d}{dt}\right\an{t=0}te_m, (T_{0_m^E}q_E)^*\theta_m
\right)\] projects to $(0^{TM}_m,0^{E^*}_m)$ under $\Phi_E$ and to
$0^E_m$ under $\pi_E$. Conversely, any element of $TE\oplus T^*E$ in
the double kernel can be written in this manner.  Next recall that
$TE\oplus_E T^*E\to E$ has a natural pairing given by
\begin{equation}\label{standard_pairing}
\langle (v^1_{e_m},\theta^1_{e_m}),  (v^2_{e_m},\theta^2_{e_m})\rangle=\theta^1_{e_m}(v^2_{e_m})+\theta^2_{e_m}(v^1_{e_m}),
\end{equation}
the natural pairing underlying the standard Courant algebroid
structure on $TE\oplus_E T^*E\to E$.

We prove in \cite{Jotz13a} that linear splittings of
$TE\oplus_E T^*E$  are in bijection with dull
brackets on sections of $TM\oplus E^*$, and so also with Dorfman
connections $\Delta\colon \Gamma(TM\oplus E^*)\times\Gamma(E\oplus
T^*M)\to\Gamma(E\oplus T^*M)$. Choose such a Dorfman connection.  For
any pair $(X,\epsilon)\in\Gamma(TM\oplus E^*)$, the horizontal lift
$\sigma:=\sigma_{TM\oplus E^*}^\Delta\colon\Gamma(TM\oplus
E^*)\to\Gamma_E(TE\oplus T^*E)=\mx(E)\times\Omega^1(E)$ is given by
\[\sigma(X,\epsilon)(e_m)=\left(T_me X(m), \dr
    \ell_\epsilon(e_m)\right)-\Delta_{(X,\epsilon)}(e,0)^\dagger(e_m)
\]
for all $e_m\in E$.

The vector bundle $TM\oplus E^*$ is anchored by the morphism
$\pr_{TM}\colon TM\oplus E^*\to TM$.  As aconsequence, the $TM$-part
of $\lb q_1, q_2\rb_\Delta +\lb q_2, q_1\rb_\Delta$ is trivial and
this sum be seen as an element of $\Gamma(E^*)$.  We proved the
following result in \cite{Jotz13a}.
\begin{theorem}\label{Lagrangin1}
  Choose $q, q_1,q_2\in\Gamma(TM\oplus E^*)$ and $\tau,\tau_1,
  \tau_2\in\Gamma(E\oplus T^*M)$.  The natural pairing on fibres of
  $TE\oplus T^*E\to E$ is given by
 \begin{enumerate}
\item $\left\langle \sigma(q_1), \sigma(q_2)\right\rangle=\ell_{\lb q_1, q_2\rb_\Delta +\lb q_2, q_1\rb_\Delta}$,
\item $\left\langle \sigma(q),
    \tau^\dagger\right\rangle=q_E^*\langle q, \tau\rangle$.
\end{enumerate}
\end{theorem}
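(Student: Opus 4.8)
The plan is to prove both identities by a direct fibrewise computation, starting from the explicit formula for $\sigma:=\sigma_{TM\oplus E^*}^\Delta$ recalled above, the description of the core of $TE\oplus T^*E\to E$, and the standard pairing \eqref{standard_pairing}. I fix $e_m\in E$ and pick $e\in\Gamma(E)$ with $e(m)=e_m$; since the right-hand side of the formula for $\sigma$ is independent of this choice, the computations below will be too. I abbreviate the ``naive lift'' $\hat\sigma(q)(e_m):=\bigl(T_me\,X(m),\dr\ell_\epsilon(e_m)\bigr)$ for $q=(X,\epsilon)$, so that $\sigma(q)(e_m)=\hat\sigma(q)(e_m)-\bigl(\Delta_q(e,0)\bigr)^\dagger(e_m)$ inside the fibre of $TE\oplus T^*E\to E$ over $e_m$.

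First I would record the building blocks. The core section of $TE\oplus T^*E\to E$ attached to $\tau=(e',\theta)\in\Gamma(E\oplus T^*M)$ is $\tau^\dagger=\bigl(e'^{\uparrow},\,q_E^*\theta\bigr)$, the vertical lift paired with the pullback one-form. Using the flow $\phi_t(e_m)=e_m+te'(m)$ of $e'^{\uparrow}$, the identity $q_E\circ e=\mathrm{id}_M$, and verticality of $e'^{\uparrow}$, one gets $\dr\ell_\epsilon(e_m)\bigl(e'^{\uparrow}(e_m)\bigr)=\langle\epsilon,e'\rangle(m)$, $(q_E^*\theta)(e_m)\bigl(T_me\,X(m)\bigr)=\langle\theta,X\rangle(m)$ and $(q_E^*\theta)(e_m)\bigl(e'^{\uparrow}(e_m)\bigr)=0$. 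With \eqref{standard_pairing} this yields
\begin{equation*}
\bigl\langle\hat\sigma(q)(e_m),\tau^\dagger(e_m)\bigr\rangle=\langle q,\tau\rangle(m)
\qquad\text{and}\qquad
\bigl\langle\tau_1^\dagger(e_m),\tau_2^\dagger(e_m)\bigr\rangle=0 .
\end{equation*}
Statement (2) is then immediate: expanding $\langle\sigma(q)(e_m),\tau^\dagger(e_m)\rangle$ and using that $\Delta_q(e,0)^\dagger$ is a core section, the core--core term drops out and one is left with $\langle q,\tau\rangle(m)=q_E^*\langle q,\tau\rangle(e_m)$.

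For (1), expanding bilinearity of the pairing and using the two displayed identities gives
\begin{equation*}
\bigl\langle\sigma(q_1)(e_m),\sigma(q_2)(e_m)\bigr\rangle
=\bigl\langle\hat\sigma(q_1)(e_m),\hat\sigma(q_2)(e_m)\bigr\rangle
-\langle q_1,\Delta_{q_2}(e,0)\rangle(m)-\langle q_2,\Delta_{q_1}(e,0)\rangle(m),
\end{equation*}
the core--core term vanishing. For the ``naive--naive'' term I compute $\dr\ell_{\epsilon_1}(e_m)\bigl(T_me\,X_2(m)\bigr)=\bigl(X_2\langle\epsilon_1,e\rangle\bigr)(m)$ and symmetrically, so it equals $\bigl(X_1\langle\epsilon_2,e\rangle+X_2\langle\epsilon_1,e\rangle\bigr)(m)$. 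It remains to match this with $\ell_{\lb q_1,q_2\rb_\Delta+\lb q_2,q_1\rb_\Delta}(e_m)$. I recall from \cite{Jotz13a} that $\Delta$ and its dull bracket are tied by $\langle\Delta_q\tau,q'\rangle=\pr_{TM}(q)\langle\tau,q'\rangle-\langle\tau,\lb q,q'\rb_\Delta\rangle$ for $q,q'\in\Gamma(TM\oplus E^*)$ and $\tau\in\Gamma(E\oplus T^*M)$, and that $\lb q_1,q_2\rb_\Delta+\lb q_2,q_1\rb_\Delta$ has vanishing $TM$-component (as noted above, since $\pr_{TM}$ is the anchor), hence lies in $\Gamma(E^*)$ so that $\ell$ of it is a linear function on $E$. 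Applying this relation with $\tau=(e,0)$ and using $\langle(e,0),(X',\epsilon')\rangle=\langle\epsilon',e\rangle$, I obtain that $\ell_{\lb q_1,q_2\rb_\Delta+\lb q_2,q_1\rb_\Delta}(e_m)$ equals
\begin{equation*}
\bigl(X_1\langle\epsilon_2,e\rangle+X_2\langle\epsilon_1,e\rangle-\langle\Delta_{q_1}(e,0),q_2\rangle-\langle\Delta_{q_2}(e,0),q_1\rangle\bigr)(m),
\end{equation*}
which is exactly the expression computed above; since $e_m$ was arbitrary, this proves (1).

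I do not expect a genuine obstacle: everything reduces to bookkeeping. The point requiring care is keeping the canonical identifications consistent --- $(TM\oplus E^*)^*\cong E\oplus T^*M$, the identification of the core of $TE\oplus T^*E$ with $E\oplus T^*M$, and the convention $Q^{**}\cong Q$ from Section~\ref{dual} --- together with computing $\dr\ell_\epsilon$ correctly on vertical lifts $e'^{\uparrow}$ and on tangent lifts $T_me\,X$, which is where all the content of the two formulas sits; the identification of $\Delta$ with its dull bracket is the only external ingredient, and it is the one recalled from \cite{Jotz13a}.
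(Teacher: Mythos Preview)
Your argument is correct: the fibrewise computation goes through exactly as you outline, and the only external input---the duality relation between $\Delta$ and the dull bracket $\lb\cdot\,,\cdot\rb_\Delta$---is in fact stated in the remark following Definition~\ref{the_def}, so your proof is self-contained within the paper. Note that the paper itself does not give a proof of this theorem but simply cites \cite{Jotz13a}; your direct computation is therefore more than the paper offers, and there is no in-paper argument to compare against.
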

As a consequence, the natural pairing on fibres of $TE\oplus T^*E\to
E$ is a linear metric on $(TE\oplus T^*E;TM\oplus E^*,E;M)$ and the
Lagrangian splittings are equivalent to skew-symmetric dull brackets
on sections of the anchored vector bundle $(TM\oplus E^*,\pr_{TM})$.
\end{example}

\subsubsection{The category of metric double vector bundles}\label{morphisms_of_met_DVB}
We define a morphism $\Omega$ of metric double vector bundles
$(\mathbb E; Q,B;M)$ and $(\mathbb F;P,A;M)$ as the dual of a genuine
morphism $\mathbb E\duer Q\to \mathbb F\duer P$ of double vector
bundles.
\begin{definition}
A morphism 
\begin{equation*}
\Omega\colon \mathbb F\dashrightarrow\mathbb E 
\end{equation*}
of metric double vector bundles
is an isotropic relation $\Omega\subseteq \overline{\mathbb
  F}\times\mathbb E$ that is the dual of a double vector bundle
morphism
\begin{equation*}
  \omega\colon\begin{xy}
    \xymatrix{\mathbb F\duer P\ar[r]\ar[d]& P\ar[d]\\
      P^{**}\ar[r]&N}
\end{xy}
\rightarrow\begin{xy}
\xymatrix{\mathbb E\duer Q\ar[r]\ar[d]& Q\ar[d]\\
Q^{**}\ar[r]&M}
\end{xy}
\end{equation*}
over $\omega_0\colon N\to M$.

We write $\operatorname{MDVB}$ for the obtained category of metric double vector bundles.
\end{definition} 
Let $(\mathbb E, B; Q, M)$ be a metric double vector bundle. 
Choose a Lagrangian splitting $\Sigma\colon Q\times_MB\to \mathbb E$
and set
\[\mathcal A^2(\mathbb
E):=\sigma_B(\Gamma(B))+
\{\tilde\omega\mid \omega\in\Gamma(Q^*\wedge Q^*)\}.
\]
In other words, $\mathcal A^2(\mathbb E)$ is the $C^\infty(M)$-module
generated by all Lagrangian horizontal lifts of sections of $B$. Note
that by Proposition \ref{lagrangian_change_of_split}, $\mathcal
A^2(\mathbb E)$ does not depend on the choice of Lagrangian splitting.

\begin{theorem}\label{met_maps}
  Let $(\mathbb E;B,Q;M)$ and $\mathbb F;A,P;N)$ be two metric double
  vector bundles.  A morphism $\Omega\colon \mathbb
  F\dashrightarrow\mathbb E$ is equivalent to a triple of maps
\begin{align*}
\omega^\star \colon \mathcal A^2(\mathbb E)\to \mathcal A^2(\mathbb F),\quad 
\omega_P^\star \colon \Gamma(Q^*)\to \Gamma(P^*),\quad\text{ and } \quad 
\omega_0 \colon N\to M
\end{align*}
such that
\begin{enumerate}
\item $\omega^\star\left(\widetilde{\tau_1\wedge\tau_2}\right)=\widetilde{\omega_P^\star\tau_1\wedge\omega_P^\star\tau_2}$,
\item
  $\omega^\star(q_Q^*f\cdot\chi)=q_P^*(\omega_0^*f)\cdot\omega^\star(\chi)$
  and
\item $\omega_P^\star(f\cdot \tau)=\omega_0^*f\cdot \omega_P^\star\tau$
\end{enumerate}
for all $\tau,\tau_1,\tau_2\in\Gamma(Q^*)$, $f\in C^\infty(M)$ and
$\chi \in \mathcal A^2(\mathbb E)$.
\end{theorem}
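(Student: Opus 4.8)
The plan is the following. By definition a morphism $\Omega\colon\mathbb F\dashrightarrow\mathbb E$ is the dual, as a relation, of a double vector bundle morphism $\omega\colon\mathbb F\duer P\to\mathbb E\duer Q$ over $\omega_0\colon N\to M$, so the content of the theorem is that such morphisms $\omega$ are in bijection with triples $(\omega^\star,\omega_P^\star,\omega_0)$ satisfying (1)--(3), and that under this bijection the isotropy of $\Omega$ corresponds to (1). First I would fix once and for all Lagrangian splittings $\Sigma_{\mathbb E}$ of $\mathbb E$ and $\Sigma_{\mathbb F}$ of $\mathbb F$, which exist by Theorem \ref{symmetrization}, decomposing $\mathbb E\simeq Q\times_M B\times_M Q^*$ and $\mathbb F\simeq P\times_N A\times_N P^*$ as decomposed metric double vector bundles with cores $Q^*$, $P^*$. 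By the construction recalled in \S\ref{dual} these induce linear splittings $\Sigma_{\mathbb E}^\star$ of $\mathbb E\duer Q$ and $\Sigma_{\mathbb F}^\star$ of $\mathbb F\duer P$; working in these splittings makes $\omega$ explicit and, crucially, identifies $\mathcal A^2(\mathbb E)$ with $\Gamma(B\oplus\wedge^2 Q^*)$ via the short exact sequence $0\to\Gamma(\wedge^2 Q^*)\to\mathcal A^2(\mathbb E)\to\Gamma(B)\to 0$ split by $\sigma_B$, and likewise for $\mathbb F$.

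For the forward direction, given $\omega$ I would extract the base map $\omega_0$; the map $\omega_P^\star\colon\Gamma(Q^*)\to\Gamma(P^*)$ as the dual (Lemma \ref{bundlemap_eq_to_morphism}) of the side morphism of $\omega$ relating the $P^{**}$-side of $\mathbb F\duer P$ to the $Q^{**}$-side of $\mathbb E\duer Q$, read through $P^{**}\simeq P$ and $Q^{**}\simeq Q$; and $\omega^\star$ on $\mathcal A^2(\mathbb E)$ as follows: a section $\chi\in\mathcal A^2(\mathbb E)\subseteq\Gamma^{\ell}_Q(\mathbb E)$ gives a fibrewise-linear function $\ell_\chi$ on $\mathbb E\duer Q$, its pullback $\ell_\chi\circ\omega$ is fibrewise-linear on $\mathbb F\duer P$, hence equals $\ell_{\omega^\star(\chi)}$ for a unique $\omega^\star(\chi)\in\Gamma^{\ell}_P(\mathbb F)$, and one checks in the splittings $\Sigma_{\mathbb E}^\star,\Sigma_{\mathbb F}^\star$ that $\omega^\star(\chi)$ again lies in the Lagrangian submodule $\mathcal A^2(\mathbb F)$, using that $\omega$ preserves the relevant isotropic subbundles (equivalently, that $\Omega$ is isotropic). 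Conditions (2) and (3) are then the $C^\infty$-linearity of $\omega$, respectively of $\omega_P^\star$, over $\omega_0$ (Lemma \ref{bundlemap_eq_to_morphism}); condition (1) follows because $\widetilde{\tau_1\wedge\tau_2}\in\mathcal A^2(\mathbb E)$ is the core-linear section determined by $\tau_1\wedge\tau_2\in\Gamma(\wedge^2 Q^*)=\Gamma(\Hom(Q,Q^*))$, and a short computation in the splittings, using that $\omega$ intertwines the core and side maps as a double vector bundle morphism, shows that its image under $\omega^\star$ is the core-linear section built from $\omega_P^\star\tau_1\wedge\omega_P^\star\tau_2$.

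Conversely, given a triple satisfying (1)--(3): by Lemma \ref{bundlemap_eq_to_morphism}, $\omega_P^\star$ is the dual of a vector bundle morphism $P\to Q$ over $\omega_0$, which fixes the two side morphisms of the sought $\omega$ (on the $P$-side directly and on the $P^{**}$-side via the canonical double-dualisation identifications); condition (2) makes $\omega^\star$ a morphism of locally free $C^\infty$-modules over $\omega_0^*$, and feeding the images of the Lagrangian lifts $\sigma_B(b)$ and of the core-linear sections $\widetilde{\tau_1\wedge\tau_2}$ through the splitting $\Sigma_{\mathbb E}^\star$ reconstructs the remaining ``tensor'' component of $\omega$; condition (1) is exactly what is needed for this component to be consistent with the side morphism already determined by $\omega_P^\star$, so that the pieces assemble into a bona fide double vector bundle morphism $\omega\colon\mathbb F\duer P\to\mathbb E\duer Q$, whose dual relation is then isotropic by construction. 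One finally checks that extraction and reconstruction are mutually inverse, and that the resulting $\omega$, hence the correspondence, is independent of the chosen Lagrangian splittings, comparing the outcomes for two choices by means of Proposition \ref{lagrangian_change_of_split}. I expect the main obstacle to be this reconstruction step: it is the analogue, one degree higher, of Lemma \ref{bundlemap_eq_to_morphism}, and it amounts to showing that the degree-$1$ datum $\omega_P^\star$ and the degree-$2$ datum $\omega^\star$, tied together only by the algebraic relation (1), recover precisely the components of a double vector bundle morphism of the duals --- no information lost and no spurious freedom --- which is where the bookkeeping with the induced dual splittings of \S\ref{dual} does the real work.
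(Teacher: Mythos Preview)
Your approach is essentially the paper's: extract $\omega_0$ and $\omega_P$ from the side data of $\omega$, define $\omega^\star$ on $\Gamma^\ell_Q(\mathbb E)$ by pulling back linear functions along $\omega$, and use a Lagrangian splitting to see that the image lands in $\mathcal A^2(\mathbb F)$. The paper only spells out the forward direction and leaves the reconstruction implicit; your proposal is more complete in sketching the converse and the splitting-independence check, and those parts are fine.

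One point to correct: your claim that ``the isotropy of $\Omega$ corresponds to (1)'' is not right. Isotropy of $\Omega$ is what makes the pairing computation
\[
\langle \omega^\star(\chi)(p^1_n),\omega^\star(\chi)(p^2_n)\rangle_{\mathbb F}
=\langle \chi(\omega_P(p^1_n)),\chi(\omega_P(p^2_n))\rangle_{\mathbb E}
\]
valid, and hence is precisely what forces $\omega^\star$ to have codomain $\mathcal A^2(\mathbb F)$ rather than all of $\Gamma^\ell_P(\mathbb F)$. Equivalently, isotropy forces the two side maps $\omega_P\colon P\to Q$ and $\omega_{P^{**}}\colon P^{**}\to Q^{**}$ of the double vector bundle morphism $\omega$ to agree under the canonical identifications. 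Condition (1), by contrast, is a pure module-compatibility statement: the paper proves it in one line by writing $\widetilde{\tau_1\wedge\tau_2}=\ell_{\tau_1}\tau_2^\dagger-\ell_{\tau_2}\tau_1^\dagger$ and using that $\omega^\star$ is a module map over $\omega_P^*$ together with $\omega^\star(\tau^\dagger)=(\omega_P^\star\tau)^\dagger$. You do get this right in your detailed description of the forward and converse steps, so it is just the overview sentence that needs fixing.
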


\begin{proof}
By definition, a morphism $\Omega\colon \mathbb
  F\dashrightarrow\mathbb E$ is a morphism 
\[\omega\colon \mathbb F\duer P \to \mathbb E\duer Q\]
of double vector bundles with some further properties.  Let
$\omega_P\colon P\to Q$ be the induced vector bundle map on the side
$P$ of $\mathbb F\duer P$, and let $\omega_0\colon N\to M$ be the
induced smooth map on the double base.  The morphism of double vector bundles
induces a morphism $\omega^\star\colon \Gamma_Q(\mathbb E)\to
\Gamma_P(\mathbb F)$ of modules over $\omega_P^*\colon C^\infty(Q)\to
C^\infty(P)$.  Since $\omega_P$ is a vector bundle map, the pullback
$\omega_P^*\colon C^\infty(Q)\to C^\infty(P)$ is completely determined
by its value on linear functions and on pullbacks under $q_Q$ of
functions on $M$;
\[\omega_P^*(q_Q^*f)=q_P^*(\omega_0^*f)
\]
for all $f\in C^\infty(M)$ and
\[\omega_P^*(\ell_{\tau})=\ell_{\omega^\star_P(\tau)}
\]
for all $\tau\in\Gamma(Q^*)$.  The map $\omega^\star$ sends linear
sections to linear sections and core sections to core sections, and it
is completely determined by its images on these two sets of sections.
We denote by $\omega^\star\colon \Gamma^l_Q(\mathbb E)\to
\Gamma^l_P(\mathbb F)$ the induced map. We need to check that the induced
map on core sections is given by
$\omega^\star(\tau^\dagger)=(\omega_P^\star(\tau))^\dagger$ for all
$\tau\in \Gamma(Q^*)$ and 
that 
$\omega^\star$ restricts further to a map
\[\omega^\star\colon \mathcal A(\mathbb E) \to \mathcal A(F).
\]

The morphism $\omega$ of double vector bundles induces a vector bundle
map $\omega_{A^*}\colon A^*\to B^*$ on the cores, and so a map
$\omega_{A^*}^\star\colon \Gamma(B)\to \Gamma(A)$ of modules over
$\omega_0^*\colon C^\infty(M)\to C^\infty(N)$.  If
$\chi\in\Gamma_Q^l(\mathbb E)$ is linear over $b\in \Gamma(B)$, then
$\omega^\star(\chi)\in\Gamma_P^l(\mathbb F)$ is linear over
$\omega_{A^*}^\star(b)\in\Gamma(A)$. In particular, choose Lagrangian
splittings $\Sigma^e\colon Q\times_MB\to \mathbb E$, $\Sigma^f\colon
P\times_N A\to \mathbb F$.  Then the image of $\sigma^e_B(b)$ is
$\sigma_A^f(\omega_{A^*}^\star(b))+\widetilde{\psi}$ for some
$\psi\in\Gamma(\operatorname{Hom}(P,P^*))$.  We have for all
$p^1,p^2\in \Gamma(P)$, $n\in N$:
\begin{align*}
  0&=\langle \sigma_B^e(b)(\omega_P(p^1_n)),
  \sigma_B^e(b)(\omega_P(p^2_n))\rangle\\
  &=\langle \sigma_A^f(\omega_{A^*}^\star(b))(p^1_n)+\widetilde{\psi}(p^1_n), \sigma_A^f(\omega_{A^*}^\star(b))(p^2_n)+\widetilde{\psi}(p^2_n)\rangle\\
  &=\langle
  \sigma_P^f(p^1)(\omega_{A^*}^\star(b)(n))+(\psi(p^1))^\dagger(\omega_{A^*}^\star(b)(n)),
  \sigma_P^f(p^2)(\omega_{A^*}^\star(b)(n))\\
  &\qquad \qquad \qquad\qquad \qquad \qquad \qquad \qquad \qquad \qquad \qquad \qquad 
 +(\psi(p^2))^\dagger(\omega_{A^*}^\star(b)(n)) \rangle\\
  &=0+\langle \psi(p^1(n)),p^2(n)\rangle+\langle
  \psi(p^2(n)),p^1(n)\rangle+0.
\end{align*}
This shows that $\psi\in\Gamma(P^*\wedge P^*)$, and so that
$\omega^\star(\sigma_B^e(b))\in\mathcal A^2(\mathbb F)$.

Finally note that for $\nu\in\Gamma(P^*)$ and $p_1,p_2\in \Gamma(P)$, we have 
\[\nu^\dagger(p_1(n))=\nu^\dagger(0^A(n))+_A\sigma_P(p_1)(0^A(n))
\]
and so
\begin{equation*}
\begin{split}
\langle \nu^\dagger(p_1(n)), \sigma_A(0^A)(p_2(n))\rangle
&=\langle\nu^\dagger(0^A(n))+_A\sigma_P(p_1)(0^A(n)) , \sigma_P(p_2)(0^A(n))\rangle\\
&=\langle\nu,p_2\rangle(n).
\end{split}
\end{equation*}
Hence, since for $\tau\in\Gamma(Q^*)$, $\omega^\star(\tau^\dagger)$ is
a core section of $\mathbb F$ over $P$ and
\begin{equation*}
\begin{split}
  \langle \omega^\star(\tau^\dagger)(p^1(n)),
  \omega^\star(\sigma_B^e(0^B))(p^2(n))\rangle
  &=\langle\tau^\dagger(\omega_P(p^1(n))),\sigma_B^e(0^B)(\omega_P(p^2(n)))\rangle\\
  &=\langle \tau(\omega_0(n)),\omega_P(p^2(n))\rangle\\
  &=\langle \omega_P^*\tau(\omega_0(n)),p^2(n)\rangle=\langle
  (\omega_P^\star\tau)(n), p^2(n)\rangle,
\end{split}
\end{equation*}
we find that $\omega^\star(\tau^\dagger)=\left(\omega_P^\star\tau\right)^\dagger$.

Now we check the 3 conditions. (2) and (3) follow immediately from the definition
of $\omega_P^\star$ and $\omega^\star$. To see (1), write $\widetilde{\tau_1\wedge\tau_2}$ as 
$\ell_{\tau_1}\tau_2^\dagger-\ell_{\tau_2}\tau_1^\dagger$. Then 
\[\omega^\star\left(\widetilde{\tau_1\wedge\tau_2}\right)
=\ell_{\omega_P^\star\tau_1}(\omega_P^\star\tau_2)^\dagger-\ell_{\omega_P^\star\tau_2}(\omega_P^\star\tau_1)^\dagger
=\widetilde{\omega_P^\star\tau_1\wedge\omega_P^\star\tau_2}.
\]
\end{proof}

\begin{remark}
\begin{enumerate}
\item The morphism
\[\omega_{A^*}^\star\colon \Gamma(B)\to \Gamma(A)
\]
of modules over $\omega_0^*\colon C^\infty(M)\to C^\infty(N)$,
i.e.~the vector bundle morphism $\omega_{A^*}\colon A^*\to B^*$ is
induced as follows by the three maps in the theorem.  If
$\chi\in\Gamma_Q^l(\mathbb E)$ is linear over $b\in \Gamma(B)$, then
$\omega^\star(\chi)$ is linear over $\omega_{A^*}^\star(b)$. To see
that $\omega_{A^*}^\star$ is well-defined
(i.e.~$\omega_{A^*}^\star(b)$ does not depend on the choice of $\chi$
over $b$), use (1) in the theorem.  To see that
$\omega_{A^*}^\star(f\cdot b)=\omega_0^*f\cdot\omega_{A^*}^\star(b)$
for $f\in C^\infty(M)$, use that $q_Q^*f\cdot\chi$ is linear over
$f\cdot b$ and (2) in the theorem.
\item A morphism $\Omega\colon
  B_2\times_{M_2}Q_2\times_{M_2}Q_2^*\dashrightarrow
  B_1\times_{M_1}Q_1\times_{M_1}Q_1^* $ of decomposed metric double
  vector bundles is consequently described by $\omega_Q\colon Q_1\to Q_2$,
  $\omega_{B}\colon B_1^*\to B_2^*$ and $\omega_{12}\colon Q_1\wedge
  Q_1\to B_2^*$, all morphisms of vector bundles over a smooth map
  $\omega_0\colon M_1\to M_2$.  

  For $b\in\Gamma(B_2)$ the section $b^l\in\Gamma_{Q_2}^l(
  B_2\times_{M_2}Q_2\times_{M_2}Q_2^*)$,
  $b^l(q_m)=(b(m),q_m,0_m^{Q^*})$, is sent by $\omega^\star$ to
  $(\omega_B^\star(b))^l+\widetilde{\omega_{12}^\star(b)}\in\Gamma_{Q_1}^l(B_1\times_{M_1}Q_1\times_{M_1}Q_1^*)$,
  where for $\phi\in\Gamma(\operatorname{Hom}(Q_1,Q_1^*))$,
  $\widetilde\phi\in\Gamma_{Q_1}^l(B_1\times_{M_1}Q_1\times_{M_1}Q_1^*)$
  is defined by $\widetilde{\phi}(q_m)=(0^B_m,q_m,\phi(q_m))$.  For
  $\tau\in\Gamma_{M_2}(Q_2^*)$, the core section $\tau^\dagger\in \Gamma_{Q_2}^c(
  B_2\times_{M_2}Q_2\times_{M_2}Q_2^*)$, 
$\tau^\dagger(q_m)=(0^B_m,q_m,\tau(m))$, 
is sent to
  $(\omega_Q^\star\tau)^\dagger\in \Gamma_{Q_1}^c(
  B_1\times_{M_1}Q_1\times_{M_1}Q_1^*)$.
\end{enumerate}
\end{remark}

\subsection{Equivalence of [2]-manifolds and metric double vector
  bundles}\label{eq_2_manifolds}
In this section our goal is to prove the following theorem.
\begin{theorem}\label{main_crucial}
  The category of metric double vector bundles and the category of
  $[2]$-manifolds are equivalent.
\end{theorem}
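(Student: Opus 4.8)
The plan is to construct two functors, $\mathcal A^2(\cdot)\colon \operatorname{MDVB}\to \operatorname{[2]-Man}$ and a quasi-inverse $\mathbb E(\cdot)\colon \operatorname{[2]-Man}\to \operatorname{MDVB}$, and then exhibit natural isomorphisms showing that their composites are isomorphic to the identity functors, exactly as was done in \S\ref{classical_eq} for the degree $1$ case. On objects the recipe is dictated by the structure we have already set up: to a metric double vector bundle $(\mathbb E, B;Q,M)$ we assign the sheaf $\mathcal A$ with $\mathcal A^0 = C^\infty(M)$, $\mathcal A^1 = \Gamma(Q^*)$ (recall $\ell_\tau$ for $\tau\in\Gamma(Q^*)$ is a degree-one generator), and $\mathcal A^2 = \mathcal A^2(\mathbb E)$, the $C^\infty(M)$-module generated by Lagrangian horizontal lifts of sections of $B$ and by the core-linear sections $\widetilde\omega$, $\omega\in\Gamma(Q^*\wedge Q^*)$; this is well-defined by Proposition \ref{lagrangian_change_of_split}, and Theorem \ref{symmetrization} guarantees a Lagrangian splitting exists so that $\mathcal A^2(\mathbb E)$ is non-empty and locally free. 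One checks the multiplication $\mathcal A^1\times\mathcal A^1\to\mathcal A^2$, $(\ell_{\tau_1},\ell_{\tau_2})\mapsto \widetilde{\tau_1\wedge\tau_2}$ (using the identity $\widetilde{\tau_1\wedge\tau_2}=\ell_{\tau_1}\tau_2^\dagger-\ell_{\tau_2}\tau_1^\dagger$ from the proof of Theorem \ref{met_maps}) together with $\mathcal A^0$-linearity makes $\mathcal A$ a sheaf of $\N$-graded commutative associative unital $\R$-algebras, locally freely generated in degrees $1$ and $2$; after a choice of Lagrangian splitting this is precisely the split $[2]$-manifold $Q^*[-1]\oplus B^*[-2]$, so the dimension count $(p; \operatorname{rk}Q, \operatorname{rk}B)$ is correct. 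On morphisms, Theorem \ref{met_maps} does essentially all the work: a morphism $\Omega\colon \mathbb F\dashrightarrow \mathbb E$ of metric double vector bundles is the same as the triple $(\omega^\star,\omega_P^\star,\omega_0)$, and conditions (1)--(3) there are exactly the statement that the induced map $C^\infty(\mathcal N)\to C^\infty(\mathcal M)$ is a morphism of sheaves of graded algebras restricting to $\omega_0^*$ in degree $0$; functoriality is then immediate.

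For the reverse functor, given a $[2]$-manifold $\mathcal M$ over $M$ I would first invoke Theorem \ref{split_N} to split it as $Q^*[-1]\oplus B^*[-2]$ for vector bundles $Q,B$ over $M$ (with $Q^* = E_{-1}^*$, $B^*=E_{-2}^*$ in the notation of the sketch), and then build the \emph{decomposed} metric double vector bundle $B\times_M Q\times_M Q^*$ with the standard linear pairing of Example \ref{met_TET*E}-type: $\langle (b,q,\tau),(b,q',\tau')\rangle$ defined so that core sections pair trivially, a linear section over $q$ pairs with $\tau^\dagger$ to give $q_B^*\langle q,\tau\rangle$, and two linear sections over $Q$ pair to zero — i.e. the $\mathbb E$ for which the chosen splitting is tautologically Lagrangian. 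The transition data of $\mathcal M$ over a cover $\{U_\alpha\}$ — cocycles for $Q^*$ in degree $1$ and, in degree $2$, cocycles for $B^*$ together with the $\wedge^2(Q^*)\to B^*$ overlaps, exactly as in \eqref{morphism_dec_1}--\eqref{morphism_dec_2} — glue the local decomposed pieces into a global metric double vector bundle, just as the cocycles $A_{\alpha\beta}$ glued the local $U_\alpha\times\R^m$ in \S\ref{classical_eq}. The change-of-splitting map $\phi\in\Gamma(\operatorname{Hom}(B^*,\wedge^2Q^*))$ appearing at the end of the proof of Theorem \ref{split_N} corresponds, under dualisation, to a change of Lagrangian splitting $\phi_{12}\in\Gamma(Q^*\wedge Q^*\otimes B^*)$ in the sense of Proposition \ref{lagrangian_change_of_split}, so the resulting $\mathbb E$ is independent of the choice up to canonical isomorphism; on morphisms one again reads off a vector-bundle triple from Theorem \ref{met_maps} and builds $\omega$, hence $\Omega$.

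Finally, to see these functors form an equivalence I would produce the two natural isomorphisms. The composite $\mathcal A^2(\cdot)\circ\mathbb E(\cdot)$ sends a $[2]$-manifold, presented through local generators $\tau_i^\alpha$ (degree $1$), $\beta_j^\alpha$ (degree $2$) and cocycles, to the sheaf of the "reconstructed" metric double vector bundle, whose local generators $\ell_{\tau_i^\alpha}$ and $\sigma_B(\beta_j^\alpha)$ carry the \emph{same} cocycles; the obvious identification is a natural isomorphism to the identity, and the independence on the splitting used in Theorem \ref{split_N} is absorbed by Proposition \ref{lagrangian_change_of_split}. Conversely $\mathbb E(\cdot)\circ\mathcal A^2(\cdot)$ sends a metric double vector bundle to the decomposed one built from any Lagrangian splitting; since a metric double vector bundle is, by Theorem \ref{symmetrization} and the definition of a linear metric, isomorphic to a decomposed one via a Lagrangian decomposition, and different choices differ by $\phi_{12}\in\Gamma(Q^*\wedge Q^*\otimes B^*)$, there is a natural isomorphism to the identity functor on $\operatorname{MDVB}$. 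I expect the main obstacle to be the gluing/well-definedness step for $\mathbb E(\cdot)$: one must check carefully that the degree-$2$ cocycle data of the $[2]$-manifold — in particular the $\wedge^2(Q^*)\to B^*$ components — are exactly the data needed to patch the \emph{linear metric} (not just the double vector bundle) coherently, i.e. that the local pairings agree on overlaps; this is where the compatibility of the symmetric tensor $\Lambda$ of \eqref{lambda_def} with changes of splitting, and the skew-symmetry constraint of Proposition \ref{lagrangian_change_of_split}, must be used in tandem. Everything else is bookkeeping of the kind already carried out for $[1]$-manifolds in \S\ref{classical_eq} and for morphisms in Theorem \ref{met_maps}.
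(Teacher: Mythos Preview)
Your plan is correct and matches the paper's architecture: a functor $\operatorname{MDVB}\to\operatorname{[2]\text{-}Man}$ built from $\mathcal A^0=C^\infty(M)$, $\mathcal A^1=\Gamma(Q^*)$, $\mathcal A^2=\mathcal A^2(\mathbb E)$, with Theorem~\ref{met_maps} handling morphisms, a reverse functor, and natural isomorphisms. The forward direction is identical to the paper's functor $\mathcal M(\cdot)$.

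The reverse direction is where your route diverges slightly. You propose to invoke Theorem~\ref{split_N} to choose a \emph{global} splitting $\mathcal M\simeq Q[-1]\oplus B^*[-2]$ (you wrote $Q^*[-1]$, a harmless slip), form the decomposed metric double vector bundle $B\times_M Q\times_M Q^*$, and then argue via Proposition~\ref{lagrangian_change_of_split} that different splittings yield canonically isomorphic results. The paper instead avoids any global choice: it reads off from the \emph{local} degree-$1$ and degree-$2$ generators of $\mathcal M$ the transition data $(A_1^{\alpha\beta},A_2^{\alpha\beta},\nu^{\alpha\beta})$, interprets these directly as a double vector bundle atlas in Pradines' sense (Definition~\ref{double_atlas}), glues, and defines the linear metric chart-by-chart as $\langle(x,v_1,v_2,l_0),(x,v_1',v_2,l_0')\rangle=l_0(v_1')+l_0'(v_1)$. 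Your approach is quicker once Theorem~\ref{split_N} is in hand, but the paper's construction is intrinsic and makes the ``main obstacle'' you flagged --- coherence of the metric on overlaps --- automatic: the skew-symmetry of $\nu^{\alpha\beta}$, forced by the graded commutativity $\xi_k^\alpha\xi_l^\alpha=-\xi_l^\alpha\xi_k^\alpha$, is precisely what guarantees the local pairings agree. Either route gives the equivalence; the paper's has the virtue that the functor $\mathcal G$ is defined without auxiliary choices, so naturality is immediate rather than something to be checked against a change of splitting.
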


\subsubsection{The functor $\mathcal M(\cdot)\colon \operatorname{MDVB}\to \operatorname{[2]-Man}$.}
Let $(\mathbb E, B; Q, M)$ be a metric double vector bundle.  We
define a $[2]$-manifold $\mathcal M(\mathbb E)$ as follows. The sheaf
$\mathcal A(\mathbb E)=C^\infty(\mathcal M(\mathbb E))$ of $\N$-graded
commutative associative unital $\R$-algebras is generated by $\mathcal
A^0(\mathbb E)=C^\infty(M)$, $\mathcal A^1(\mathbb E)=
\Gamma(Q^*)$ and the sheaf of degree $2$-functions is $\mathcal
A^2(\mathbb E)$. The product of $\tau_1$ with $\tau_2\in\Gamma(Q^*)$
is $\widetilde{\tau_1\wedge\tau_2}\in\mathcal A^2(\mathbb E)$.  The
product of $f\in C^\infty(M)$ with $\xi\in \mathcal A^2(\mathbb E)$ is
$q_Q^*f\cdot\xi\in \mathcal A^2(\mathbb E)$ and the product of
elements of $\mathcal A^0$ with elements of $\mathcal A^1$ is obvious
since $\Gamma(Q^*)$ is a sheaf of $C^\infty(M)$-modules.  This proves
that $\mathcal A(\mathbb E)$ is well-defined.
  
Next we check that $\mathcal A(\mathbb E)$ is locally freely generated
over $C^\infty(M)$.  Choose $m\in M$ and a coordinate neighborhood
$U\ni m$ such that $Q$ and $B$ are trivialized on $U$ by the basis
frames $(q_1,\ldots, q_k)$ and $(b_1,\ldots,b_l)$. Let
$(\tau_1,\ldots,\tau_k)$ be the dual frame to $(q_1,\ldots, q_k)$,
i.e. a trivialization for $Q^*$. Recall also that after the choice of
a Lagrangian splitting $\Sigma\colon Q\times_M B\to \mathbb E$, each
element $\xi\in\mathcal A^2(\mathbb E)$ over $b\in\Gamma(B)$ can be
written $\sigma_B(b)+\widetilde{\phi}$ with
$\phi\in\Gamma(Q^*\wedge Q^*)$. Since
$\sigma_B\left(\sum_{i=1}^lf_ib_i\right)=\sum_{i=1}^l
q_Q^*f_i\cdot\sigma_B(b_i)$ for $f_1,\ldots,f_l\in C^\infty(M)$, we
conclude that $\mathcal A(\mathbb E)$ is generated on $U$ by
$\{\tau_1,\ldots,\tau_k,\sigma_B(b_1),\ldots,\sigma_B(b_l)\}$ over
$C^\infty(M)$.

  \medskip We have constructed a map $\mathcal M(\cdot)$ sending
  metric double vector bundles to $[2]$-manifolds.  By
  Theorem \ref{met_maps} a morphism
  $\Omega\colon \mathbb F\dashrightarrow \mathbb E$ of metric double
  vector bundles is the same as a triple of maps
  \[\omega_0\colon N\to M \quad \Leftrightarrow \quad \omega_0^*\colon
  C^\infty(M)\to C^\infty(N),\]
\[\omega^\star\colon \mathcal A^2(\mathbb E)\to \mathcal A^2(\mathbb F) \quad 
\text{ and } \quad \omega_P^\star\colon \Gamma(Q^*)\to \Gamma(P^*)
\]
with
\[
\omega^\star\left(\widetilde{\tau_1\wedge\tau_2}\right)=\widetilde{\omega_P^\star\tau_1\wedge\omega_P^\star\tau_2},\qquad
q_P^*\omega_0^*f\cdot \omega^\star(\chi)=\omega^\star(q_Q^*f\cdot\chi)\]
 and
\[\omega_0^*f\cdot\omega_P^\star(\tau)=\omega^\star(f\cdot\tau)
\]
for $f\in C^\infty(M)=\mathcal A^0(\mathbb E)$, $\tau\in
\Gamma(Q^*)=\mathcal A^1(\mathbb E)$ and $\chi\in \mathcal A^2(\mathbb
E)$. Hence we find that the triple
$(\omega^\star,\omega_P^\star,\omega_0^*)$ defines a morphism
$\mathcal M(\Omega)\colon \mathcal M(\mathbb F)\to \mathcal M(\mathbb
E)$ of $[2]$-manifolds. 
 \medskip

 We have so defined a functor $\mathcal M(\cdot)\colon
 \operatorname{MDVB}\to \operatorname{[2]-Man}$ from the category of
 metric double vector bundles to the category of $[2]$-manifolds.

\subsubsection{The functor $\mathcal G\colon\operatorname{[2]-Man} \to \operatorname{MDVB}$.}
Conversely, we construct explicitly a metric double vector bundle
associated to a given $[2]$-manifold $\mathcal M$. The idea is to
adapt the proof of the equivalence between locally free sheaves of
$C^\infty(M)$-modules with vector bundles over $M$ (see \S\ref{classical_eq}).

First we give Pradines'
original definition of a double vector bundle \cite{Pradines77} (in
the smooth and finite-dimensional case).
\begin{definition}\cite[C. \S 1]{Pradines77}\label{double_atlas}
  Let $M$ be a smooth manifold and $D$ a set with a map $\Pi\colon
  D\to M$. A \textbf{double vector bundle chart} is a quintuple
  $c=(U,\Theta,V_1,V_2,V_0)$, where $U$ is an open set in $M$,
  $V_1,V_2,V_3$ are three vector spaces and $\Theta\colon \Pi\inv(U)\to
  U\times V_1\times V_2\times V_0$ is a bijection such that
  $\Pi=\pr_1\circ\Theta$.

  Two double vector bundle charts $c$ and $c'$ are \textbf{compatible}
  if the ``change of chart'' $\Theta'\circ\Theta\inv$ over $U\cap U'$
  has the following form:
\[(x,v_1,v_2,v_0)\mapsto (x,A_1(x)v_1,A_2(x)v_2,A_0(x)v_0+\omega(x)(v_1,v_2))
\]
with $x\in U\cap U'$, $v_i\in V_i$, $A_i\in C^\infty(M,
\operatorname{Gl}(V_i))$ for $i=0,1,2$ and \linebreak$\omega\in
C^\infty(M,\operatorname{Hom}(V_1\otimes V_2,V_0))$.

A \textbf{double vector bundle atlas} $\lie A$ on $D$ is a set of
double vector bundle charts of $D$ that are pairwise compatible and
such that the set of underlying open sets in $M$ is a covering of $M$.
As usual, two double vector bundle atlases $\lie A_1$ and $\lie A_2$
are \textbf{equivalent} if their union is an atlas.  A double vector
bundle structure on $D$ is an equivalence class of double vector
bundle atlases on $D$.
\end{definition}

Given a $[2]$-manifold $\mathcal M$, we interpret its functions as the
components of a double vector bundle atlas, and show that the
obtained double vector bundle has a natural metric structure.

Let $M$ be the smooth manifold underlying $\mathcal M$ and assume that
$\mathcal M$ has dimension $(l;m,n)$.  Choose a maximal open covering
$\{U_\alpha\}$ of $M$ such that $\mathcal A(U_\alpha)$ is freely
generated by $\xi_1^\alpha,\ldots,\xi_m^\alpha$ (in degree $1$) and
$\eta^\alpha_1,\ldots,\eta^\alpha_n$ (degree $2$ generators).  Choose
now $\alpha,\beta$ such that $U_\alpha\cap U_\beta\neq
\emptyset$. Then each generator $\xi_i^\beta$ can be written in a
unique manner as $\sum_{j=1}^m\omega_{\alpha\beta}^{ji}\xi^\alpha_j$ with $\omega^{ji}\in
C^\infty(U_\alpha\cap U_\beta)$.  Each generator $\eta^\beta_i$ can
be written \[
\eta^\beta_i=\sum_{j=1}^n\psi^{ji}_{\alpha\beta}\cdot\left(\eta^\alpha_j+\sum_{1\leq k<l\leq
  m}\rho^{jkl}_{\alpha\beta}\cdot\xi^\alpha_k\wedge\xi^\alpha_l\right)\] with $\psi_{\alpha\beta}^{ij},
\rho_{\alpha\beta}^{ikl}\in C^\infty(U_\alpha\cap U_\beta)$.  Set
$A_1^{\alpha\beta}=(\omega^{ij}_{\alpha\beta})_{i,j}\in
C^\infty(M,\operatorname{Gl}({\R^m}^*))$,
$A_2^{\alpha\beta}=(\psi^{ij}_{\alpha\beta})_{i,j}\in
C^\infty(M,\operatorname{Gl}(\R^n))$. Define $\nu^{\alpha\beta}\in C^\infty(M, \operatorname{Hom}(\R^m\otimes
\R^n,{\R^m}^*))$ by \linebreak
$\nu^{\alpha\beta}(e_i,e_j)(e_l)=\rho^{jil}$
for $1\leq i<l\leq m$ and $j=1,\ldots,n$. 
Then by construction
\begin{equation}\label{2_cocycle}
\begin{split}
   A_1^{\gamma\alpha}\cdot A_1^{\alpha\beta}
  &=A_1^{\gamma\beta},\qquad A_2^{\gamma\alpha}\cdot A_2^{\alpha\beta}=A_2^{\gamma\beta}\quad\text{ and }\\
  \nu^{\gamma\beta}({A_1^{\beta\gamma}}^*(e_i),
  A_2^{\gamma\beta}(e_j))(e_l)&=\nu^{\gamma\alpha}({A_1^{\beta\gamma}}^*(e_i),
  A_2^{\gamma\beta}(e_j), e_l)\\
&\qquad \qquad   +\nu^{\alpha\beta}({A_1^{\beta\alpha}}^*(e_i), A_2^{\alpha\beta}(e_j))( {A_1^{\gamma\alpha}}^*e_l).
\end{split}
\end{equation}

Set $\tilde{\mathbb E}=\bigsqcup_\alpha
U_\alpha\times \R^m\times \R^n\times {\R^m}^*$ (the disjoint union)
and identify \[ (x,v_1,v_2,l_0)\in U_\beta\times\R^m\times \R^n\times {\R^m}^* \] with
\[\left(x, (A^{\beta\alpha}_1(x))^*(v_1), A^{\alpha\beta}_2(x)(v_2),A_1^{\alpha\beta}(x)(l_0)
  +\nu^{\alpha\beta}(x)((A_1^{\beta\alpha}(x))^*(v_1),
  A^{\alpha\beta}_2(x)(v_2))\right)\] in $U_\alpha\times\R^m\times
\R^n\times {\R^m}^*$ for $x\in U_\alpha\cap U_\beta$. The cocycle
equations \eqref{2_cocycle} imply that this defines an equivalence
relation on $\tilde{\mathbb E}$.  The quotient space is $\mathbb E$, a
double vector bundle that does not depend anymore on the choice of
charts covering $M$. The map $\Pi\colon \mathbb E\to M$,
$(x,v_1,v_2,l_0)\mapsto x$ is well-defined and, by construction, the
charts $c=(U_\alpha,\Theta_\alpha=\Id,\R^m,\R^n,{\R^m}^*)$ define a
double vector bundle atlas on $\mathbb E$. Since the covering was
chosen to be maximal, the obtained double atlas of $\mathbb E$ does
not depend on any choices.

Recall from the proof of Theorem \ref{split_N} that there are two
vector bundles $E_{-1}$ and $E_{-2}$ associated canonically to a
$[2]$-manifold $\mathcal M$ (only the inclusion of $\Gamma(E_{-2}^*)$
in $\mathcal A^2$ is non-canonical). $E_{-1}$ and $E_{-2}^*$ are the
sides of $\mathbb E$ and $E_{-1}^*$ is the core of $\mathbb E$.  The
vector bundle $E_{-1}^*$ can be defined by the transition functions
$A_1^{\alpha\beta}$ and $\tilde
E_{-1}=\bigsqcup_{\alpha}U_\alpha\times \R^n$, $(x,v)\sim
(x,A_1^{\alpha\beta}(x)(v))$ for $x\in U_\alpha\cap U_\beta$. The
vector bundle $E_{-2}$ can be defined in the same manner using
$A_2^{\alpha\beta}$ and the model space $\R^m$.

We use this to define a linear metric on $\mathbb E$. Over a chart domain $U_\alpha$ we set 
\[\langle (x, v_1,v_2,l_0), (x,v_1',v_2,l_0')\rangle=l_0(v_1')+l_0'(v_1).\]
By construction, this does not depend on the choice of $\alpha$ with
$x\in U_\alpha$.

\medskip

Again by definition of the morphisms in the category of
$[2]$-manifolds and in the category of metric double vector bundles,
this defines a functor $\mathcal G\colon \operatorname{[2]-Man}\to
\operatorname{MDVB}$ between the two categories.

\subsubsection{Equivalence of categories.}

Finally we need to prove that the two obtained functors define an
equivalence of categories.  The functor $\mathcal G\circ\mathcal
M(\cdot)$ is the functor that sends a metric double vector bundle to
its maximal double vector bundle atlas, hence it is naturally
isomorphic to the identity functor.

The functor $\mathcal M(\cdot)\circ \mathcal G\colon
\operatorname{[2]-Man}\to\operatorname{[2]-Man}$ sends a
$[2]$-manifold $\mathcal M$ over $M$ with degree 1 local generators $\xi_\alpha^i$ and
cocycles $A^1_{\alpha\beta}$ and degree 2 generators $\eta_\alpha^i$ and cocycles
$A^2_{\alpha\beta}$ and $\nu_{\alpha\beta}$
to the sheaf of core and Lagrangian linear sections of $\mathcal G(\mathcal M)$. 
There is an obvious natural isomorphism between this functor and the
identity functor $\operatorname{[2]-Man}\to\operatorname{[2]-Man}$.

\subsubsection{Correspondence of splittings.}\label{cor_splittings}
Decomposed metric double vector bundles 
$B\times_MQ\times_MQ^*$ are equivalent to split $[2]$-manifolds.

Choose a metric double vector bundle $(\mathbb E;Q,B;M)$ and the
corresponding $[2]$-manifold $\mathcal M$.  Each choice of a
Lagrangian decomposition $\mathbb I$ of $\mathbb E$ is equivalent to a
choice of splitting $\mathcal S$ of the corresponding $[2]$-manifold,
such that the following diagram commutes
\begin{equation*}
\begin{xy}
  \xymatrix{
    \mathbb E\ar[r]^{\mathbb I\qquad }\ar[d]_{\mathcal M(\cdot)}
&Q\times_MB\times_MQ^*\ar[d]^{\mathcal M(\cdot)}\\
    \mathcal M(\mathbb E)\ar[r]_{\mathcal S\qquad }&Q[-1]\oplus B^*[-2].  }
\end{xy}
\end{equation*}

Note also that the category of split $[2]$-manifolds is equivalent to
the category of $[2]$-manifolds, and the category of decomposed metric
double vector bundles is equivalent to the category of metric double
vector bundles.  We will often use this in the following sections.

\section{Poisson $[2]$-manifolds and metric VB-algebroids.}\label{sec:Poisson}
In this section we study $[2]$-manifolds endowed with a Poisson
structure of degree $-2$. We show how split Poisson $[2]$-manifolds
are equivalent to a special family of 2-representations. Then we prove
that Poisson $[2]$-manifolds are equivalent to metric double vector
bundles endowed with a linear Lie algebroid structure that is
compatible with the metric.

\begin{definition}
  A Poisson $[2]$-manifold is a $[2]$-manifold endowed with a Poisson
  structure of degree $-2$. A morphism of Poisson $[2]$-manifolds 
is a morphism of $[2]$-manifolds that preserves the Poisson structure. 
\end{definition}

Note that a Poisson bracket of degree $-2$ on a $[2]$-manifold
$\mathcal M$ is an $\R$-bilinear map $\{\cdot\,,\cdot\}\colon
C^\infty(\mathcal M)\times C^\infty(\mathcal M)\to C^\infty(\mathcal
M)$ of the graded sheaves of functions, such that\footnote{We will
  also write $|\xi|$ for the degree of a homogeneous element $\xi\in
  C^\infty_{\mathcal M}(U)$.}  $\deg\{\xi,\eta\}=\deg\xi+\deg\eta-2$
for homogeneous elements $\xi,\eta\in C^\infty_{\mathcal M}(U)$. The
bracket is graded skew-symmetric;
$\{\xi,\eta\}=-(-1)^{|\xi|\,|\eta|}\{\eta,\xi\}$ and satisfies the
graded Leibniz and Jacobi identities
 \begin{equation}\label{graded_leibniz}
\{\xi_1,\xi_2\cdot\xi_3\}=\{\xi_1,\xi_2\}\cdot\xi_3+(-1)^{|\xi_1|\,|\xi_2|}\xi_2\cdot\{\xi_1,\xi_3\}
\end{equation}
and
\begin{equation}\label{graded_jacobi}
\{\xi_1,\{\xi_2,\xi_3\}\}=\{\{\xi_1,\xi_2\},\xi_3\}+(-1)^{|\xi_1|\,|\xi_2|}\{\xi_2,\{\xi_1,\xi_3\}\}
\end{equation}
for homogeneous $\xi_1,\xi_2,\xi_3\in C^\infty_{\mathcal M}(U)$.

A morphism $\mu\colon\mathcal M_1\to\mathcal M_2$ of Poisson
$[2]$-manifolds
satisfies \[\mu^\star\{\xi_1,\xi_2\}=\{\mu^\star\xi_1,\mu^\star\xi_2\}
\]
for all $\xi_1,\xi_2\in C^\infty_{\mathcal M_2}(U)$, $U$ open in $M_2$.

\subsection{Split Poisson $[2]$-manifolds and self-dual $2$-representations}\label{sec:Poisson2man}
We begin by defining self-dual $2$-representations.
\begin{definition}\label{saruth}
  Let $(A,\rho,[\cdot\,,\cdot])$ be a Lie algebroid. A
  2-representation $(\nabla^Q,\nabla^{Q^*}, R)$ of $A$ on a complex
  $\partial_Q\colon Q^*\to Q$ is said to be \textbf{self-dual} if it
  equals its dual, i.e.
  \begin{enumerate}
\item $\partial_Q=\partial_Q^*$,
\item $\nabla^Q$ and $\nabla^{Q^*}$ are dual to each other,
\item and 
  $R^*=-R\in\Omega^2(A,\operatorname{Hom}(Q,Q^*))$.
\end{enumerate}
\end{definition}

Then we prove the following result.
\begin{theorem}\label{poisson_is_saruth}
  There is a bijection between split Poisson 2-manifolds and
  self-dual 2-representations.
\end{theorem}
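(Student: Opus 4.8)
The plan is to fix a Lagrangian splitting throughout, so that the object in question is a split $[2]$-manifold $\mathcal M = Q[-1]\oplus B^*[-2]$; then $\mathcal A^0 = C^\infty(M)$, $\mathcal A^1 = \Gamma(Q^*)$, and, canonically once the splitting is fixed, $\mathcal A^2 = \sigma_B(\Gamma(B))\oplus\Gamma(\wedge^2 Q^*)\cong\Gamma(B)\oplus\Gamma(\wedge^2 Q^*)$. A bracket $\{\cdot\,,\cdot\}$ of degree $-2$ satisfies $\{\mathcal A^0,\mathcal A^0\} = 0$ and $\{\mathcal A^0,\mathcal A^1\} = 0$ by a degree count, and by the graded Leibniz rule \eqref{graded_leibniz} it is completely determined by its values on generators, i.e.\ by $\{\tau_1,\tau_2\}\in C^\infty(M)$, $\{\beta,f\}\in C^\infty(M)$, $\{\beta,\tau\}\in\Gamma(Q^*)$ and $\{\beta_1,\beta_2\}\in\mathcal A^2$ for $\tau,\tau_i\in\Gamma(Q^*)$, $\beta,\beta_i\in\Gamma(B)$, $f\in C^\infty(M)$ (everything else, such as $\{\tau,\widetilde{\tau_1\wedge\tau_2}\}$, being a contraction built from these by \eqref{graded_leibniz}).

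From these values I would read off the data of Definition \ref{saruth}. Graded skew-symmetry on two elements of degree $1$ makes $\{\tau_1,\tau_2\}$ symmetric, hence it defines a \emph{self-adjoint} bundle map $\partial_Q\colon Q^*\to Q$, $\langle\partial_Q\tau_1,\tau_2\rangle = \{\tau_1,\tau_2\}$. By \eqref{graded_leibniz}, $\rho_B(\beta)\colon f\mapsto\{\beta,f\}$ is a bundle map $B\to TM$, and $\nabla^{Q^*}_\beta\tau := \{\beta,\tau\}$ is a $B$-connection on $Q^*$ with anchor $\rho_B$. The two components of $\{\beta_1,\beta_2\}$ give a skew-symmetric bracket $[\cdot\,,\cdot]_B$ on $\Gamma(B)$ and a form $R\in\Omega^2(B,\wedge^2 Q^*)$; crucially $R(\beta_1,\beta_2)$ lies in $\Gamma(\wedge^2 Q^*)\subseteq\Gamma(\operatorname{Hom}(Q,Q^*))$ — that is, $R(\beta_1,\beta_2)^* = -R(\beta_1,\beta_2)$ — simply because $\mathcal A^2$ of a split $[2]$-manifold has no $\Gamma(Q^*\otimes Q^*)$ summand. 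Setting $\nabla^Q := (\nabla^{Q^*})^*$ then makes $(\partial_Q,\nabla^Q,\nabla^{Q^*},R)$ self-dual by construction; it remains to see that it is a $2$-representation of the Lie algebroid $(B,\rho_B,[\cdot\,,\cdot]_B)$.

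The core of the proof is the equivalence of the graded Jacobi identity \eqref{graded_jacobi} with the Lie algebroid axioms for $(\rho_B,[\cdot\,,\cdot]_B)$ together with the $2$-representation axioms of \S\ref{subsect:ruths} for $(\nabla^Q,\nabla^{Q^*},R)$ on $\partial_Q$. I would check this by running through the degree patterns of the three arguments of \eqref{graded_jacobi}: patterns involving at least two factors from $\mathcal A^0$, as well as $(\tau_1,\tau_2,\tau_3)$ and $(f,\beta,\tau)$, reduce to $0=0$ by a degree count; $(f,\beta_1,\beta_2)$ yields $\rho_B[\beta_1,\beta_2]_B = [\rho_B(\beta_1),\rho_B(\beta_2)]$; the $\Gamma(B)$-part of $(\beta_1,\beta_2,\beta_3)$ yields the Jacobi identity for $[\cdot\,,\cdot]_B$ and its $\Gamma(\wedge^2 Q^*)$-part yields $\dr_{\nabla^{\operatorname{Hom}}}R = 0$; $(\beta,\tau_1,\tau_2)$ yields $\partial_Q\circ\nabla^{Q^*}_\beta = \nabla^Q_\beta\circ\partial_Q$; and $(\beta_1,\beta_2,\tau)$ yields $R_{\nabla^{Q^*}} = R\circ\partial_Q$, from which $R_{\nabla^Q} = \partial_Q\circ R$ follows by dualising using $\partial_Q = \partial_Q^*$ and $R^* = -R$. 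This exhausts \eqref{graded_jacobi}. Running the same computations backwards: given a self-dual $2$-representation of a Lie algebroid $A$ on $\partial_Q\colon Q^*\to Q$, set $B := A$ and $\mathcal M := Q[-1]\oplus A^*[-2]$, define $\{\cdot\,,\cdot\}$ on generators by the same formulas, and extend by graded skew-symmetry and \eqref{graded_leibniz}; well-definedness and the graded Jacobi identity are exactly the displayed axioms. The two constructions are visibly inverse at the level of the tuple $(\partial_Q,\rho_B,[\cdot\,,\cdot]_B,\nabla^{Q^*},R)$, giving the asserted bijection (and an equivalence of categories once one checks the corresponding statement for morphisms, which is not needed for the theorem as stated).

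The main obstacle is the sign bookkeeping in the Jacobi analysis — each degree pattern of \eqref{graded_jacobi} generates its own Koszul signs, and the two degree-$2$ identities must be split carefully into their $\Gamma(B)$- and $\Gamma(\wedge^2 Q^*)$-components — together with the (routine but necessary) verification that the bracket is genuinely recovered from the finite-rank data via the graded Leibniz rule, so that nothing is lost on restricting to generators.
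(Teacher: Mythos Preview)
Your proposal is correct and follows essentially the same approach as the paper: extract the data $(\partial_Q,\rho_B,\nabla,[\cdot\,,\cdot]_B,R)$ from the Poisson bracket on generators, observe that self-duality is automatic from the graded skew-symmetry and the fact that the degree-$2$ component lands in $\Gamma(\wedge^2 Q^*)$, and then verify the $2$-representation axioms by running the graded Jacobi identity over all relevant degree patterns. The paper organises the case analysis identically (with $b\in\Gamma(B)$ in place of your $\beta$), and notes the same dualisation trick to pass from $R_{\nabla^{Q^*}}=R\circ\partial_Q$ to $R_{\nabla^Q}=\partial_Q\circ R$.
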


\begin{proof}
  First let us consider a split 2-manifold $\mathcal M=Q[-1]\oplus
  B^*[-2]$. That is, $Q$ and $B$ are vector bundles over $M$ and the
  functions of degree $0$ on $\mathcal M$ are the elements of
  $C^\infty(M)$, the functions of degree $1$ are sections of $Q^*$ and
  the functions of degree $2$ are sections of $B\oplus Q^*\wedge Q^*$.
  Let us now take any Poisson bracket $\{\cdot\,,\cdot\}$ of degree
  $-2$ on $C^\infty(\mathcal M)$.  In the following, we consider
  arbitrary $f,f_1,f_2\in C^\infty(M)$,
  $\tau,\tau_1,\tau_2\in\Gamma(Q^*)$, and
  $b,b_1,b_2\in\Gamma(B)$.

The brackets $\{f_1,f_2\}$,
  $\{f,\tau\}$ have degree $-2$ and $-1$, respectively, and
  must hence vanish. The bracket $\{\tau_1,\tau_2\}$ is a function
  on $M$ because it has degree $0$.
  Since $\{f,\tau\}=0$ for all $f\in C^\infty(M)$ and
  $\tau\in\Gamma(Q^*)$, this defines a vector bundle morphism
  $\partial_Q\colon Q^*\to Q$ by \eqref{graded_leibniz}:
  $\langle\tau_2, \partial_Q(\tau_1)\rangle=\{\tau_1,\tau_2\}$.  Since
  $\{\tau_1,\tau_2\}=-(-1)^{|\tau_2|}\{\tau_2,\tau_1\}=\{\tau_2,\tau_1\}$,
  we find that $\partial_Q^*=\partial_Q$. The Poisson bracket
  $\{b,f\}$ has degree $0$ and is hence an element of
  $C^\infty(M)$. Again by \eqref{graded_leibniz}, this defines a derivation
  $\{b,\cdot\}\an{C^\infty(M)}$ of $C^\infty(M)$, hence a vector field
  $\rho_B(b)\in \mx(M)$; $\{b,f\}=\rho_B(b)(f)$. By the Leibnitz
  identity \eqref{graded_leibniz} for the Poisson bracket and the
  equality $\{f_1,f_2\}=0$ for all $f_1,f_2\in C^\infty(M)$, we get in
  this manner a vector bundle morphism (an anchor) $\rho_B\colon B\to
  TM$. The bracket $\{b,\tau\}$ has degree $1$ and is hence a section
  of $Q^*$.  Since
  $\{b,f\tau\}=f\{b,\tau\}+\{b,f\}\tau=f\{b,\tau\}+\rho_B(b)(f)\tau$
  and $\{f b,\tau\}=f\{b,\tau\}+\{f,\tau\}b=f\{b,\tau\}$, we find a
  linear $B$-connection $\nabla$ on $Q^*$ by setting
  $\nabla_b\tau=\{b,\tau\}$.  Let us finally look at the bracket
  $\{b_1,b_2\}$. This function has degree $2$ and is hence the sum of
  a section of $B$ and a section of $Q^*\wedge Q^*$.  We write
  $\{b_1,b_2\}=[b_1,b_2]-R(b_1,b_2)$ with $ [b_1,b_2]\in\Gamma(B)$ and
  $R(b_1,b_2)\in\Gamma(Q^*\wedge Q^*)$. By a similar reasoning as
  before, we find that this defines a skew-symmetric bracket
  $[\cdot\,,\cdot]$ on $\Gamma(B)$ that satisfies a Leibniz equality
  with respect to $\rho_B$, and an element
  $R\in\Omega^2(B,\operatorname{Hom}(Q,Q^*))$ such that $R^*=-R$.
  Note also here that the bracket $\{b,\phi\}$ for
  $\phi\in\Gamma(Q^*\wedge Q^*)\subseteq
  \Gamma(\operatorname{Hom}(Q,Q^*))$ is just
  $\nabla^{\operatorname{Hom}}_b\phi$, where
  $\nabla^{\operatorname{Hom}}$ is the $B$-connection induced on $
  \operatorname{Hom}(Q,Q^*)$ by $\nabla$ and $\nabla^*$.  \medskip

  Now we will show that the dull algebroid structure on $B$ is in
  reality a Lie algebroid structure, and that $(\nabla,\nabla^*,R)$ is
  a self-dual 2-representation of $B$ on $\partial_Q\colon Q\to
  Q^*$.  In order to do this, we only need to recall that the
  Poisson structure $\{\cdot\,,\cdot\}$ satisfies the Jacobi identity.
  The Jacobi identity for the three functions $b_1,b_2,f$ yields the
  compatibility of the anchor on $B$ with the bracket on
  $\Gamma(B)$. The Jacobi identity for $b,\tau_1,\tau_2$ yields
  $\partial_Q\circ \nabla=\nabla^*\circ\partial_Q$, and the Jacobi
  identity for $b_1,b_2,\tau$ yields $R_\nabla=R\circ\partial_Q$. The
  equality $R_{\nabla^*}=\partial_Q\circ R$ follows using
  $\partial_Q=\partial_Q^*$, $R^*=-R$ and $R_\nabla^*=-R_{\nabla^*}$.
  The Jacobi identity for $b_1,b_2,b_3\in\Gamma(B)$ yields in a
  straightforward manner the Jacobi identity for $[\cdot\,,\cdot]$ on
  sections of $\Gamma(B)$ and the equation
  $\dr_{\nabla^{\operatorname{Hom}}}=0$.  \medskip

  Take conversely a self dual 2-representation of a Lie algebroid $B$
  on a 2-term complex $\partial_Q\colon Q^*\to Q$ and consider the
  $[2]$-manifold $\mathcal M=Q[-1]\oplus B^*[-2]$. Then the self-dual
  2-representation defines as described above a Poisson bracket of
  degree $-2$ on $C^\infty(\mathcal M)$.
\end{proof}

Next take two split $[2]$-manifolds $Q_1\oplus B_1^*$ and $Q_2\oplus B_2^*$
over $M_1$ and $M_2$, respectively, endowed with two Poisson
structures of degree $-2$.  By the preceding theorem, we have hence
two self-dual 2-representations: $B_1$ acts on $\partial_{Q_1}\colon
Q_1^*\to Q_1$ via $\nabla^1\colon
\Gamma(B_1)\times\Gamma(Q_1)\to\Gamma(Q_1)$ and
$R_1\in\Omega^2(B_1,Q_1^*\wedge Q_1^*)$, and $B_2$ acts on
$\partial_{Q_2}\colon Q_2^*\to Q_2$ via $\nabla^2\colon
\Gamma(B_2)\times\Gamma(Q_2)\to\Gamma(Q_2)$ and
$R_2\in\Omega^2(B_2,Q_2^*\wedge Q_2^*)$.

A computation shows that a morphism $\mu\colon Q_1\oplus B_1^*\to
Q_2\oplus B_2^*$ preserves the Poisson structures if and only if its
decomposition $\mu_0\colon M_1\to M_2$, $\mu_Q\colon Q_1\to
Q_2$, $\mu_B\colon B_1^*\to B_2^*$ and
$\mu_{QB}\in\Omega^2(Q_1,\omega_0^*B_2)$ define a \emph{morphism of
  self-dual 2-representations}. That is:\label{morphism_of_saruth}
\begin{enumerate}
\item $[\mu_B^\star(b_1), \mu_B^\star(b_2)]_1=\mu_B^\star[b_1,b_2]_2$ for all $b_1,b_2\in\Gamma(B_2)$,
\item $\rho_{B_1}(\mu_B^\star(b))\sim_{\mu_0}\rho_{B_2}(b)$ for all $b\in\Gamma(B)$,
\item $\langle
  \mu_Q^\star(\tau_1),\partial_{Q_1}\mu_Q^\star(\tau_2)\rangle=\langle
  \tau_1,\partial_{Q_2}\tau_2\rangle$ for all
  $\tau_1,\tau_2\in\Gamma(Q_2^*)$,
\item
  $\mu_Q^\star(\nabla^2_b\tau)=\nabla^1_{\mu_B^\star(b)}\mu_Q^\star(\tau)-\mu_{QB}^\star(b)\circ\partial_{Q_1}\mu_Q^\star(\tau)$
  for all $b\in\Gamma(B_2)$ and $\tau\in\Gamma(Q_2^*)$, and
\item $R_1(\mu_B^\star(b_1),\mu_B^\star(b_2))-\mu_Q^\star R_2(b_1,b_2)
  =-\mu_{QB}^\star[b_1,b_2]+\nabla_{\mu_B^\star(b_1)}\mu_{QB}^\star(b_2)\linebreak-\nabla_{\mu_B^\star(b_2)}\mu_{QB}^\star(b_1)
  +\mu_{QB}^\star(b_2)\circ\partial_{Q_1}\mu_{QB}^\star(b_1)-\mu_{QB}^\star(b_1)\circ\partial_{Q_1}\mu_{QB}^\star(b_2)$.
\end{enumerate}
In the fourth and fifth equation, $\mu_{QB}^\star(b)$ is seen as a
section of $\operatorname{Hom}(Q,Q^*)$.  Note that if $M_1=M_2=M$ and
$\omega_0=\Id_M$, then (1)--(3) translate to
\begin{enumerate}
\item $\mu_B^*\colon B_2\to B_1$ is a Lie algebroid morphism,
\item $\mu_Q\circ\partial_{Q_1}\circ\mu_Q^*=\partial_{Q_2}$.
\end{enumerate}

\subsubsection{Symplectic $[2]$-manifolds}\label{symplectic}
Note that an ordinary Poisson manifold $(M,\{\cdot\,,\cdot\})$ is
symplectic if and only if the vector bundle morphism $\sharp\colon
T^*M\to TM$ defined by $\dr f\mapsto X_f$ is surjective, where
$X_f\in\mx(M)$ is the derivation $\{ f,\cdot\}$.  Alternatively, we
can say that the Poisson manifold is symplectic if the image of the
map $\sharp\colon C^\infty(M)\to \mx(M)$, $f\mapsto \{f,\cdot\}$
generates $\mx(M)$ as a $C^\infty(M)$-module.

In the same manner, if $(\mathcal M, \{\cdot\,,\cdot\})$ is a Poisson
$[n]$-manifold, the map $\sharp\colon C^\infty(\mathcal M)\to
\der(C^\infty(\mathcal M))$ sends $\xi$ to $\{\xi, \cdot\}$. Then
$(\mathcal M, \{\cdot\,,\cdot\})$ is a \textbf{symplectic
  $[n]$-manifold} if the image of this map generates $\der(C^\infty(\mathcal M))$
as a $C^\infty(\mathcal M)$-module.

\medskip
Let $(q_E\colon E\to M, \langle\cdot\,,\cdot\rangle)$ be an Euclidean
vector bundle, i.e.~a vector bundle endowed with a nondegenerate
fiberwise pairing
$\langle\cdot\,,\cdot\rangle\colon E\times_M E\to \R$. 
Choose a metric connection $\nabla\colon
\mx(M)\times\Gamma(E)\to\Gamma(E)$.
Then, identifing $E$
with $E^*$ via the metric, we find that the $2$-representation 
$(\Id_E\colon E\to E, \nabla, \nabla, R_\nabla)$ is self-dual  (an easy calculation shows that if
$\nabla$ is metric, then $\langle
R_\nabla(X_1,X_2)e_1,e_2\rangle=-\langle
R_\nabla(X_1,X_2)e_2,e_1\rangle$ for all $e_1,e_2\in\Gamma(E)$ and
$X_1,X_2\in\mx(M)$).
Consider the split Poisson $[2]$-manifold $E[-1]\oplus T^*M[-2]$, with
the Poisson bracket given by the metric
connection
$\nabla\colon \mx(M)\times\Gamma(E)\to\Gamma(E)$. That is, 
the Poisson bracket is given by
\[ \{f_1,f_2\}=0,\quad \{f,\Beta(e)\}=0,\quad
\{\Beta(e_1),\Beta(e_2)\}=\Beta(e_2)(e_1)=\langle e_1,
e_2\rangle,\]
\[\{X,\xi\}=\Beta(\nabla_Xe), \quad \{X,f\}=X(f)\]
and 
\[\{X_1,X_2\}=[X_1,X_2]-R_\nabla(X_1,X_2).
\]

Recall from \eqref{der_deg_0} the special derivations that we found on
split $[n]$-manifolds.  The function
$\sharp\colon C^\infty(E[-1]\oplus T^*M[-2])\to
\operatorname{Der}(C^\infty(E[-1]\oplus T^*M[-2]))$
sends a function $f$ of degree $0$ to $\hat{\dr f}$, a
derivation of degree $-2$. $\sharp$ sends $\Beta(e)$ to
$\hat{e}+\Beta(\nabla_\cdot e)$, which is a derivation of degree
$-1$. Note that $\Beta(\nabla_\cdot e)$ can be written as a sum
$\sum_i \Beta(e_i)\hat{\dr f_i}$ with some sections $e_i\in\Gamma(E)$ and
functions $f_i\in C^\infty(M)$.  Finally $\sharp$ sends $X$ to
$X+\Beta\circ\nabla_X\circ\Beta\inv+[X,\cdot]-R(X,\cdot)$, which is a
derivation of degree $0$.  Note that $R(X,\cdot)$ can be written as
$\sum\Beta(e_i)\Beta(e_j)\hat{\dr f_{ij}}$ for some sections
$e_i,e_j\in\Gamma(E)$ and some functions $f_{ij}$ in $C^\infty(M)$.
Hence, since the derivations $\hat{\dr f}$, $\hat e$ and
$X+\Beta\circ\nabla_X\circ\Beta\inv+[X,\cdot]$ for $f\in C^\infty(M)$,
$e\in\Gamma(E)$ and $X\in\mx(M)$, span
$\operatorname{Der}(C^\infty(E[-1]\oplus T^*M[-2]))$ as a
$C^\infty(E[-1]\oplus T^*M[-2])$-module, we find as a consequence that
$E[-1]\oplus T^*M[-2]$ is a symplectic $[2]$-manifold.

\medskip 

More generally, take a split Poisson $[2]$-manifold $Q[-1]\oplus
B^*[-2]$, hence a self-dual $2$-representation $(\partial_Q\colon
Q^*\to Q, \nabla, \nabla^*,R)$ of a Lie algebroid $B$. Then $\sharp
f=\rho_B^*\dr f$ for all $f\in C^\infty(M)$,
$\sharp\tau=\hat{\partial_Q\tau}-\nabla^*_\cdot\tau$ and $\sharp
b=\rho_B(b)+\nabla_b^*+[b,\cdot]-R(b,\cdot)$. A discussion as the one
above shows that the Poisson structure is symplectic if and only if
$\rho_B\colon B\to TM$ is injective and surjective, hence an
isomorphism and $\partial_Q\colon Q^*\to Q$ is surjective, hence an
isomorphism. The isomorphism $\partial_Q$ identifies then $Q$ with its
dual and $Q$ becomes so an Euclidean bundle with the pairing $\langle
q_1, q_2\rangle_Q=\langle \partial_Q\inv(q_1),
q_2\rangle=\{\partial_Q\inv q_1,\partial_Q\inv q_2\}$. Via the
identification $\partial_Q\colon Q^*\overset{\sim}{\to} Q$, the linear
connection $\nabla$ is then automatically a metric connection and the
self-dual $2$-representation is $(\Id_Q\colon Q\to
Q,\nabla,\nabla,R_\nabla)$.

\medskip

We have hence found that split symplectic $[2]$-manifolds are
equivalent to self-dual $2$-representation 
$(\Id_E\colon E\to E, \nabla, \nabla, R_\nabla)$ defined by an
Euclidean vector bundle $E$ together with a metric
connection $\nabla$, see also \cite{Roytenberg02}.

\subsection{Metric VB-algebroids}\label{sec:metVBa}
Next we introduce the notion of metric VB-algebroids.
\begin{definition}\label{metric_VBLA}
  Let $(D;Q,B;M)$ be a metric double vector bundle (with core $Q^*$)
  and assume that $(D\to Q, B\to M)$ is a VB-algebroid. Then
  $(D\to Q, B\to M)$ is a \textbf{metric VB-algebroid} if the isomorphism
  $\Beta\colon D\to D\duer B$ is an isomorphism of VB-algebroids.

  A morphism $\Omega\colon \mathbb E_2\to\mathbb E_1$ of metric
  VB-algebroids is a morphism of the underlying metric double vector
  bundles, such that $\Omega\subseteq \overline{\mathbb
    E_2}\times\mathbb E_1$ is a subalgebroid.
\end{definition}

Recall from Theorem \ref{rajan} that linear splittings of
VB-algebroids define $2$-re\-pre\-sen\-ta\-tions.  We will prove that
Lagrangian splittings of metric VB-algebroids correspond to self-dual
$2$-re\-pre\-sen\-ta\-tions.

\begin{proposition}\label{metric_VBLA_via_sa_ruth}
  Let $(\mathbb E\to Q, B\to M)$ be a VB-algebroid with core $Q^*$ and
  assume that $\mathbb E$ is endowed with a linear metric.  Choose a
  Lagrangian decomposition of $\mathbb E$ and consider the
  corresponding 2-representation of $B$ on $\partial_Q\colon Q^*\to
  Q$. This 2-representation is self-dual if and only if $(\mathbb
  E\to Q, B\to M)$ is a metric VB-algebroid.
\end{proposition}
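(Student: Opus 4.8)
The plan is to translate the statement "$\Beta\colon \mathbb E\to \mathbb E\duer B$ is an isomorphism of VB-algebroids" into a statement about the $2$-representation attached to a fixed Lagrangian splitting, and then compare with the self-duality conditions (1)--(3) of Definition~\ref{saruth}. First I would fix a Lagrangian splitting $\Sigma\colon Q\times_M B\to \mathbb E$ and let $(\nabla^Q,\nabla^{Q^*},R)$ denote the $2$-representation of $B$ on $\partial_Q\colon Q^*\to Q$ that it determines via Theorem~\ref{rajan} (here the roles of $A$, $B$, $C$ in that theorem are played by $B$, $Q$, $Q^*$). The key structural input is Lemma~\ref{lem_Lagr_split_sections}: since $\Sigma$ is Lagrangian, $\Beta$ carries the linear section $\sigma_Q(b)$ of $\mathbb E\to Q$ (linear over $b\in\Gamma(B)$) to the dual linear section $\sigma_Q^\star(b)$ of $\mathbb E\duer B\to Q^*$, and it carries core sections $\tau^\dagger$ ($\tau\in\Gamma(Q^*)$, the core of $\mathbb E$) to the core sections of $\mathbb E\duer B$ determined by the canonical identification of the core of $\mathbb E\duer B$ with $Q^{**}\simeq Q$. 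In other words, under $\Beta$ the chosen Lagrangian splitting $\Sigma$ of $\mathbb E$ goes exactly to the induced splitting $\Sigma^\star$ of $\mathbb E\duer B$ described in \S\ref{dual}.

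Next I would invoke \S\ref{dual_and_ruths}: the $2$-representation of $B$ attached to the induced splitting $\Sigma^\star$ of $\mathbb E\duer B$ (for the VB-algebroid structure $(\mathbb E\duer B\to Q^*, B\to M)$, which exists by the discussion in \S\ref{subsect:VBa}) is precisely the dual $2$-representation $((\nabla^{Q^*})^{*}\kern-1pt{}, (\nabla^{Q})^{*}\kern-1pt{}, -R^{*})$ on the dual complex $\partial_Q^*\colon Q\to Q^*$, up to the canonical identification of $Q^{**}$ with $Q$. Now $\Beta$ is by construction an isomorphism of the underlying double vector bundles intertwining $\Sigma$ with $\Sigma^\star$; hence $\Beta$ is an isomorphism of VB-algebroids if and only if it intertwines the two Lie algebroid structures on $\mathbb E\to Q$ and $\mathbb E\duer B\to Q^*$, which by Theorem~\ref{rajan} (which says the $2$-representation in a splitting determines the bracket and anchor, and conversely) holds if and only if the two $2$-representations of $B$ agree. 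Under the identifications above, "agree" reads exactly: $\partial_Q=\partial_Q^*$, $\nabla^{Q^*}=(\nabla^Q)^*$ (equivalently $\nabla^Q=(\nabla^{Q^*})^*$), and $R=-R^*$ --- which is precisely Definition~\ref{saruth}. This gives both implications at once.

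For the bookkeeping I would spell out the dictionary once: an isomorphism of VB-algebroids over the identity that is also an isomorphism of double vector bundles, and that sends a lift $\sigma_B$ of one to the lift $\sigma_B'$ of the other, is a Lie algebroid isomorphism iff it matches anchors on linear sections ($\Theta_Q(\sigma_Q(b))=\widehat{\nabla^Q_b}$ vs.\ the corresponding object on $\mathbb E\duer B$), matches brackets $[\sigma_Q(b_1),\sigma_Q(b_2)]$ (encoded by $R$) and brackets $[\sigma_Q(b),\tau^\dagger]$ (encoded by $\nabla^{Q^*}$), and matches core-anchors (encoded by $\partial_Q$); these four data are exactly the ingredients of the $2$-representation and its dual, so equality of the whole packages is equality term by term. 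The main obstacle --- really the only non-formal point --- is verifying cleanly that $\Beta$ does intertwine $\Sigma$ with $\Sigma^\star$ as claimed, i.e.\ reconciling the sign convention in \eqref{iso_sign}/\eqref{equal_fat} (where $\mathbb E\duer A$ is identified using $-\nsp{\cdot}{\cdot}$) with the identification of $\mathbb E$ with $\mathbb E\duer B$ furnished by $\Beta$ and with the identification of the core of $\mathbb E\duer B$ with $Q$; but this is precisely the content of Lemma~\ref{lem_Lagr_split_sections} together with Lemma~\ref{lemma_dual_splitting}, so once those are cited the remainder is the routine matching of the four items above, and the sign $-R^*$ versus $-R$ causes no trouble because $R\in\Gamma(Q^*\wedge Q^*\otimes B^*)$ is skew in $Q^*$ so $R^*=-R$ is the condition that is symmetric under the identification.
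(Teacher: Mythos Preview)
Your proposal is correct and follows essentially the same approach as the paper's own proof: both use Lemma~\ref{lem_Lagr_split_sections} to see that $\Beta$ intertwines the Lagrangian splitting $\Sigma$ with the induced splitting $\Sigma^\star$ of $\mathbb E\duer B$, then invoke \S\ref{dual_and_ruths} to identify the $2$-representation on $\mathbb E\duer B$ as the dual $2$-representation, and conclude that $\Beta$ is a VB-algebroid isomorphism if and only if the $2$-representation agrees with its own dual. One notational slip to fix: the linear sections you denote $\sigma_Q(b)$ should be written $\sigma_B(b)$ (the lift is indexed by the bundle whose sections are being lifted), and $\mathbb E\duer B$ is a vector bundle over $Q^{**}$, not $Q^*$; the paper identifies $Q^{**}$ with $Q$ via the canonical isomorphism, which is what makes the comparison possible.
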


\begin{proof}
  It is easy to see that $\Beta\colon \mathbb E\to \mathbb E\duer B$
  sends core sections $\tau^\dagger\in\Gamma^c_Q(\mathbb E)$ to core
  sections $\tau^\dagger\in\Gamma^c_Q(\mathbb E\duer B)$.  (As always,
  we identify $Q^{**}$ with $Q$ via the canonical isomorphism.)  Let
  $\Sigma\colon B\times_MQ\to \mathbb E$ be a Lagrangian splitting of
  $\mathbb E$.  We have seen in Section \ref{dual} that the map
  $\sigma_B\colon \Gamma(B)\to\Gamma_Q^l(\mathbb E)$ induces a
  horizontal lift $\sigma_B^\star\colon \Gamma(B)\to\Gamma_Q^l(\mathbb
  E\duer B)$.  Recall from Lemma \ref{lem_Lagr_split_sections} that
  $\Beta$ sends also the linear sections $\sigma_B(b)$ to
  $\sigma_B^\star(b)$, for all $b\in\Gamma(B)$.

\medskip

The double vector bundle $\mathbb E\duer B$ has a VB-algebroid
structure $(\mathbb E\duer B\to Q^{**}, B\to M)$ (see
\S\ref{subsect:VBa}).  Given the splitting $\Sigma^\star\colon
B\times_M Q^{**}\to \mathbb E\duer B$ defined by a Lagrangian
splitting $\Sigma\colon B\times_M Q\to \mathbb E$, the VB-algebroid
structure is given by the dual of the $2$-representation
$(\partial_Q\colon Q^*\to Q, \nabla^Q, \nabla^{Q^*},
R\in\Omega^2(B,\Hom(Q,Q^*)))$, i.e.
\begin{equation*}
\begin{split}
\rho_{\mathbb E\duer B}(\tau^\dagger)&=(\partial_Q^*\tau)^\uparrow \in \mx^c(Q^{**}),\\
\rho_{\mathbb E\duer B}(\sigma_B^\star(b))&=\widehat{{\nabla^{Q^*}}^*}\in \mx^l(Q^{**}),\\
\left[\sigma_B^\star(b), \tau^\dagger\right]&=({\nabla^Q}^*_b\tau)^\dagger, \text{ and }\\
\left[\sigma_B^\star(b_1), \sigma_B^\star(b_2)\right]&=\sigma_B^\star[b_1,b_2]+\widetilde{R(b_1,b_2)^*}
\end{split}
\end{equation*}
(see \S\ref{dual_and_ruths}).
This shows immediately that $\Beta$ is an isomorphism of VB-algebroids
over the canonical isomorphism $Q\to Q^{**}$ if and only if the $2$-representation
$(\partial_Q\colon Q^*\to Q, \nabla^Q, \nabla^{Q^*},
R\in\Omega^2(B,\Hom(Q,Q^*)))$ is self-dual.
\end{proof}

Recall that a morphism $\Omega\subseteq \overline{\mathbb
  E_2}\times\mathbb E_1$ of metric double vector bundles has four
components: a smooth map $\omega_0\colon M_1\to M_2$ of the double
bases, two vector bundle morphisms $\omega_Q\colon Q_1\to Q_2$ and
$\omega_B\colon B_1^*\to B_2^*$ over $\omega_0$ and a vector valued
$2$-form $\omega\in\Omega^2(Q_1,\omega_0^*B_2)$. Choose two Lagrangian
splittings $\Sigma^1\colon Q_1\times_{M_1}B_1\to \mathbb E_1$ and
$\Sigma^2\colon Q_2\times_{M_2}B_2\to\mathbb E_2$ and the
corresponding self-dual 2-representations. Using Section
\ref{morphisms_of_met_DVB}, we note that $\Omega$ is spanned over
$(\operatorname{graph}(\omega_Q)\subseteq Q_1\times Q_2)$ by sections
$\tau^\dagger\colon \operatorname{graph}(\omega_Q)\to \Omega$,
$\tau^\dagger(q_m,\omega_Q(q_m))=(\omega_Q^\star(\tau)^\dagger(q_m),
\tau^\dagger(\omega_Q(q_m)))$ for all $\tau\in\Gamma(Q_2^*)$ and
$\sigma(b)\colon \operatorname{graph}(\omega_Q)\to \Omega$,
$\sigma(b)(q_m,\omega_Q(q_m))=(\sigma_{B_1}^1(\omega_B^\star(b))(q_m)+\widetilde{\omega^\star(b)}(q_m),
\sigma_{B_2}^2(b)(\omega_Q(q_m)))$ for all $b\in\Gamma(B_2)$.

A straightforward computation shows that $\Omega$ is a subalgebroid over
$\operatorname{graph}(\omega_Q)$ of $\overline{\mathbb E_2}\times
\mathbb E_1$ if and only if $\omega_0,\omega_Q,\omega_B,\omega$ and
the two self-dual 2-representations satisfy Conditions (1) to (5)
on Page \pageref{morphism_of_saruth}:
\begin{enumerate}
\item $[\omega_B^\star(b_1), \omega_B^\star(b_2)]_1=\omega_B^\star[b_1,b_2]_2$ for all $b_1,b_2\in\Gamma(B_2)$,
\item $\rho_{B_1}(\omega_B^\star(b))\sim_{\omega_0}\rho_{B_2}(b)$ for all $b\in\Gamma(B)$,
\item $\langle
  \omega_Q^\star(\tau_1),\partial_{Q_1}\omega_Q^\star(\tau_2)\rangle=\langle
  \tau_1,\partial_{Q_2}\tau_2\rangle$ for all
  $\tau_1,\tau_2\in\Gamma(Q_2^*)$,
\item
  $\omega_Q^\star(\nabla^2_b\tau)=\nabla^1_{\omega_B^\star(b)}\omega_Q^\star(\tau)-\omega^\star(b)\circ\partial_{Q_1}\omega_Q^\star(\tau)$
  for all $b\in\Gamma(B_2)$ and $\tau\in\Gamma(Q_2^*)$, and
\item $R_1(\omega_B^\star(b_1),\omega_B^\star(b_2))-\omega_Q^\star R_2(b_1,b_2)
  =-\omega^\star[b_1,b_2]+\nabla_{\omega_B^\star(b_1)}\omega^\star(b_2)\linebreak -\nabla_{\omega_B^\star(b_2)}\omega^\star(b_1)
  +\omega^\star(b_2)\circ\partial_{Q_1}\omega^\star(b_1)-\omega^\star(b_1)\circ\partial_{Q_1}\omega^\star(b_2)$.
\end{enumerate}

\subsection{Equivalence of Poisson $[2]$-manifolds with metric VB-algebroids.}
The functors found in Section \ref{eq_2_manifolds} between the
category of metric double vector bundles and the category of
$[2]$-manifolds restrict to functors between the category of metric
VB-algebroids and the category of Poisson $[2]$-manifolds.
\begin{theorem}
  The category of Poisson $[2]$-manifolds is equivalent to the
  category of metric VB-algebroids.
\end{theorem}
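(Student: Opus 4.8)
The plan is to leverage the equivalence of categories from Theorem \ref{main_crucial} and restrict it to the relevant subcategories on each side, using the bridge provided by self-dual $2$-representations. Concretely, I would first observe that a Poisson structure of degree $-2$ on a $[2]$-manifold $\mathcal M$ is an extra structure carried by the sheaf $C^\infty(\mathcal M)$, and that under the functor $\mathcal G\colon \operatorname{[2]-Man}\to\operatorname{MDVB}$ this structure must correspond to exactly the extra data making $\mathcal G(\mathcal M)$ into a metric VB-algebroid. The key computational input is already available: by Theorem \ref{poisson_is_saruth}, after a choice of splitting, a Poisson $[2]$-manifold is the same as a self-dual $2$-representation of a Lie algebroid $B$ on a complex $\partial_Q\colon Q^*\to Q$; and by Proposition \ref{metric_VBLA_via_sa_ruth}, after a choice of Lagrangian decomposition, a metric VB-algebroid is \emph{also} the same as a self-dual $2$-representation of $B$ on $\partial_Q\colon Q^*\to Q$. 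The correspondence of splittings in \S\ref{cor_splittings} (a Lagrangian decomposition of $\mathbb E$ corresponds to a splitting of $\mathcal M(\mathbb E)$) shows that these two identifications are compatible, so the underlying object-level bijection is immediate.

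The main steps I would carry out are as follows. \emph{Step 1:} Show that $\mathcal M(\cdot)$ and $\mathcal G$ send objects of one category to objects of the other. Given a metric VB-algebroid $(\mathbb E\to Q, B\to M)$, pick a Lagrangian decomposition; by Proposition \ref{metric_VBLA_via_sa_ruth} this yields a self-dual $2$-representation, which by Theorem \ref{poisson_is_saruth} defines a Poisson bracket of degree $-2$ on the split $[2]$-manifold $Q[-1]\oplus B^*[-2]$; transporting through the diagram in \S\ref{cor_splittings} equips $\mathcal M(\mathbb E)$ with a Poisson bracket of degree $-2$. One must check this bracket is independent of the chosen Lagrangian decomposition: a change of Lagrangian splitting is a section of $\Gamma(Q^*\wedge Q^*\otimes B^*)$ (Proposition \ref{lagrangian_change_of_split}), and the formulas in Remark \ref{change} together with the self-duality constraints show that the two self-dual $2$-representations differ by a coboundary-type change which corresponds precisely to the same Poisson bracket expressed in a different splitting — i.e. the induced map on $C^\infty(\mathcal M)$ is the identity. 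The reverse direction is symmetric. \emph{Step 2:} Show the functors respect morphisms. A morphism of Poisson $[2]$-manifolds is a morphism of $[2]$-manifolds preserving $\{\cdot\,,\cdot\}$; under $\mathcal G$ this becomes a morphism $\Omega\subseteq\overline{\mathbb E_2}\times\mathbb E_1$ of metric double vector bundles, and one must check $\Omega$ is a subalgebroid. This is exactly the content of the computation on Page \pageref{morphism_of_saruth} (conditions (1)--(5)): a morphism of split Poisson $[2]$-manifolds is the same as a morphism of self-dual $2$-representations, and a morphism of metric double vector bundles is a subalgebroid iff those same conditions hold. So the morphism-level statement also reduces to already-established facts.

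\emph{Step 3:} Assemble: since $\mathcal M(\cdot)$ and $\mathcal G$ restrict to functors between the category of metric VB-algebroids and the category of Poisson $[2]$-manifolds, and since they are mutually quasi-inverse on the ambient categories by Theorem \ref{main_crucial}, the natural isomorphisms $\mathcal G\circ\mathcal M(\cdot)\simeq\operatorname{Id}$ and $\mathcal M(\cdot)\circ\mathcal G\simeq\operatorname{Id}$ restrict to the subcategories (one checks the components of these natural isomorphisms are morphisms in the restricted categories, which is automatic because they are identity-like on the underlying structures and the extra structure is transported consistently). Hence the two restricted functors give an equivalence. The main obstacle I anticipate is \emph{Step 1's well-definedness check} — verifying that the Poisson bracket produced on $\mathcal M(\mathbb E)$ genuinely does not depend on the choice of Lagrangian decomposition. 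This requires carefully tracking how the components $(\partial_Q,\nabla^Q,\nabla^{Q^*},R)$ of the self-dual $2$-representation transform under a change of Lagrangian splitting $\phi_{12}\in\Gamma(Q^*\wedge Q^*\otimes B^*)$, and confirming via the change-of-splitting morphism of split $[2]$-manifolds from the proof of Theorem \ref{split_N} that the resulting brackets agree after the induced identification of sheaves. Everything else is bookkeeping: the deep work (the coordinate-free dictionary between Poisson brackets, self-dual $2$-representations, and metric VB-algebroid structures) has already been done in Theorems \ref{poisson_is_saruth}, \ref{main_crucial} and Proposition \ref{metric_VBLA_via_sa_ruth}.
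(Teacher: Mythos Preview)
Your proposal is correct and follows essentially the same route as the paper's proof: restrict the equivalence of Theorem \ref{main_crucial} to the two subcategories by passing through self-dual $2$-representations via Theorem \ref{poisson_is_saruth} and Proposition \ref{metric_VBLA_via_sa_ruth}, use the correspondence of splittings in \S\ref{cor_splittings}, and match morphisms via conditions (1)--(5) on Page \pageref{morphism_of_saruth}. The paper is somewhat terser on the independence-of-splitting check in Step~1, which you rightly flag as the point requiring the most care.
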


\begin{proof}
  Let $(\mathcal M, \{\cdot\,,\cdot\})$ be a Poisson $[2]$-manifold
  and consider the corresponding double vector bundle $\mathbb
  E_{\mathcal M}$.  Choose a splitting $\mathcal M\simeq Q[-1]\oplus
  B^*[-2]$ of $\mathcal M$ and consider the corresponding Lagrangian
  splitting $\Sigma$ of $\mathbb E_{\mathcal M}$.  

  As we have seen in Theorem \ref{poisson_is_saruth}, the split
  Poisson 2-manifold $Q[-1]\oplus B^*[-2]$ is equivalent to a
  self-dual 2-representation of a Lie algebroid structure on $B$ on
  a morphism $\partial_Q\colon Q^*\to Q$. This 2-representation
  defines a VB-algebroid structure on the decomposition of $\mathbb
  E_{\mathcal M}$ and so by isomorphism on $\mathbb E_{\mathcal
    M}$. Proposition \ref{metric_VBLA_via_sa_ruth} implies then that
  the Lie algebroid structure is compatible with the metric. The
  metric Lie algebroid structure on $\mathbb E_{\mathcal M}$ does not
  depend on the choice of splitting of $\mathcal M$.  Hence, the
  functor $\mathcal G$ restricts to a functor $\mathcal G_{Poi}$ from
  the category of Poisson $[2]$-manifolds to metric VB-algebroids.

The discussions at the end of Sections \ref{sec:Poisson2man} and \ref{sec:metVBa}
show that morphisms of split Poisson $[2]$-manifolds are 
sent by $\mathcal G$ to morphisms of decomposed metric VB-algebroids.

The functor $\mathcal F$ restricts in a similar manner to a functor
$\mathcal F_{LA}$ from the category of metric VB-algebroids to the
category of Poisson $[2]$-manifolds. The natural transformations found
in the proof of Theorem \ref{main_crucial} restrict to natural
transformations $\mathcal F_{LA}\mathcal G_{Poi}\simeq \Id$ and
$\mathcal G_{Poi}\mathcal F_{LA}\simeq \Id$.
\end{proof}

\subsection{Examples}

We conclude by discussing three important classes of examples.
\subsubsection{Tangent doubles of Euclidean bundles vs symplectic $[2]$-manifolds }\label{tangent_euclidean}
Consider an Euclidean vector bundle $E\to M$ and a metric connection 
$\nabla\colon\mx(M)\times\Gamma(E)\to\Gamma(E)$. The double tangent
\begin{equation*}
\begin{xy}
\xymatrix{
TE \ar[d]_{Tq_E}\ar[r]^{p_E}& E\ar[d]^{q_E}\\
 TM\ar[r]_{p_M}& M}
\end{xy}
\end{equation*}
has a VB-algebroid structure $(TE\to E;TM\to M)$ and a linear metric\linebreak
$\langle\cdot\,,\cdot\rangle\colon TE\times_{TM}TE\to \R$ defined as
in Example \ref{metric_connections}.  

Recall that Lagrangian linear splittings of $TE$ are equivalent to metric
connections $\nabla\colon \mx(M)\times\Gamma(E)\to\Gamma(E)$, i.e.~
connections that preserve the pairing: $\dr\langle
e_1,e_2\rangle=\langle\nabla_\cdot e_1,e_2\rangle+\langle
e_1,\nabla_\cdot e_2\rangle$ for all $e_1,e_2\in\Gamma(E)$.  In other
words, $\nabla=\nabla^*$ when $E^*$ is identified with $E$ via the
non-degenerate pairing. The $2$-representation $(\Id_E\colon E\to E,
\nabla, \nabla, R_\nabla)$ defined by the Lagrangian splitting
$\Sigma^\nabla\colon E\times_M TM\to TE$ and the VB-algebroid $(TE\to
E, TM\to M)$ is then self-dual (see also \S\ref{symplectic}).

The Poisson $[2]$-manifold $\mathcal M(TE)$ associated to $TE$ is
given as follows. The functions of degree $0$ are elements of
$C^\infty(M)$, the functions of degree $1$ are sections of $E$ ($E$ is
identified with $E^*$ via the isomorphism $\Beta\colon E\to E^*$
defined by the pairing) and the functions of degree $2$ are the vector
fields $\tilde X$ on $E$ that preserve the pairing,
i.e.~$\tilde X=\widehat{D}\in\mx(E)$ for a derivation $D_{\tilde X}$
over $X\in\mx(M)$, that preserves the pairing. The Poisson bracket is
given by $\{\tilde X,\tilde Y\}=\widetilde{[X,Y]}$,
$\{\tilde X, e\}=D_{\tilde X}(e)$ and $\{\tilde X, f\}=X(f)$,
$\{e_1,e_2\}=\langle e_1, e_2\rangle$, and $\{e, f\}=\{f_1,f_2\}=0$
for all $e,e_1,e_2\in\Gamma(E)$, $f,f_1,f_2\in C^\infty(M)$ and
$\tilde X, \tilde Y\in \mx^{\langle\cdot\,,\cdot\rangle, l}(E)$.  The
Poisson $[2]$-manifold $\mathcal M(TE)$ splits as the split Poisson
$[2]$-manifold described in \S\ref{symplectic}. It is hence
symplectic.  Thus, we have found that the equivalence found in this
section restricts to an equivalence of symplectic $[2]$-manifolds with
tangent doubles of Euclidean vector bundles.

\subsubsection{The metric double of a VB-algebroid}\label{metric_double_VB_alg}
Take a VB-algebroid $(D\to A, B \to M)$ with core $C$ and a linear
splitting $\Sigma\colon A\times_M B\to D$.  Let $(\partial_A\colon
C\to A, \nabla^A,\nabla^C, R\in \Omega^2(B,\operatorname{Hom}(A,C)))$
be the $2$-representation of the Lie algebroid $B$ that is induced by
$\Sigma$ and the VB-algebroid $(D\to A, B\to M)$.  Recall from Section
\S\ref{dual_and_ruths} that $(D\duer B\to C^*, B\to M)$ has an induced
VB-algebroid structure and from Lemma \ref{lemma_dual_splitting} that
the splitting $\Sigma$ induces a splitting $\Sigma^\star\colon
B\times_M C^*\to D\duer B$. The $2$-representation that is defined by
this splitting and this VB-algebroid is the $2$-representation
$(\partial_A^*\colon A^*\to C^*, {\nabla^A}^*,{\nabla^C}^*, -R^*\in
\Omega^2(B,\operatorname{Hom}(C^*,A^*)))$.

The direct sum $D\oplus_B(D\duer B)$ over $B$ \begin{equation*}
\begin{xy}
\xymatrix{
D\oplus_B(D\duer B) \ar[d]\ar[r]& B\ar[d]\\
A\oplus C^*\ar[r]& M}
\end{xy}
\end{equation*}
has then a VB-algebroid structure $(D\oplus_B(D\duer B)\to A\oplus
C^*, B\to M)$ with core $C\oplus A^*$. It is easy to see that $\Sigma$
and $\Sigma^\star$ define a linear splitting $\tilde\Sigma\colon
B\times_M (A\oplus C^*)\to D\oplus_B(D\duer B)$, $\tilde \Sigma(b_m,
(a_m,\gamma_m))=(\Sigma(a_m,b_m),\Sigma^\star(b_m,\gamma_m))$.
The induced $2$-representation is 
\[(\partial_A\oplus\partial_A^*\colon C\oplus A^*\to A\oplus C^*, \nabla^A\oplus{\nabla^C}^*,\nabla^C\oplus{\nabla^A}^*,
R\oplus(-R^*)),
\]
a self-dual $2$-representation of the Lie algebroid $B$. This gives us
a new class of examples of (split) Poisson $2$-manifolds induced from
ordinary $2$-representations or VB-algebroids.  Note that the
splittings of $D\oplus_B(D\duer B)$ obtained as above are not the only
Lagrangian splittings, and that the Example of $(TA\oplus T^*A\to
TM\oplus A^*, A\to M)$ discussed in the next example and in \cite{Jotz13a} is a
special case.

\subsubsection{The Pontryagin algebroid over a Lie algebroid}\label{pont_VB_LA}
If $A$ is a Lie algebroid, then since $TA\duer A=T^*A$, the double
vector bundle $T^*A$ has a VB-algebroid structure $(T^*A\to A^*, A\to
M)$ with core $T^*M$.  As a consequence, the fibered product
$TA\oplus_AT^*A$ has a VB-algebroid structure $(TA\oplus_AT^*A\to
TM\oplus A^*, A\to M)$. Recall from Example \ref{met_TET*E} that
$(TA\oplus T^*A;TM\oplus A^*,A;M)$ has also a natural linear metric,
which is given by \eqref{standard_pairing}.

Recall from Example \ref{met_TET*E} that linear splittings of
$TA\oplus_AT^*A$ are in bijection with dull brackets on sections of
$TM\oplus A^*$, and so also with Dorfman connections $\Delta\colon
\Gamma(TM\oplus A^*)\times\Gamma(A\oplus T^*M)\to\Gamma(A\oplus
T^*M)$. We give in \cite{Jotz13a} the $2$-representation
$((\rho,\rho^*)\colon A\oplus T^*M\to TM\oplus A^*, \nabla^{\rm
  bas},\nabla^{\rm bas}, R_\Delta^{\rm bas})$ of $A$ that is defined
by the VB-algebroid $(TA\oplus_AT^*A\to TM\oplus A^*, A\to M)$ side
and any such Dorfman connection: The connections $\nabla^{\rm
  bas}\colon \Gamma(A)\times\Gamma(A\oplus T^*M) \to\Gamma(A\oplus
T^*M)$ and $\nabla^{\rm bas}\colon \Gamma(A)\times\Gamma(TM\oplus A^*)
\to\Gamma(TM\oplus A^*)$ are
\begin{equation*}
\nabla^{\rm bas}_a(X,\alpha)=(\rho,\rho^*)(\Omega_{(X,\alpha)}a)+\ldr{a}(X,\alpha)
\end{equation*}
and
 \begin{equation*}\nabla^{\rm
  bas}_a(b,\theta)=\Omega_{(\rho,\rho^*)(b,\theta)}a+\ldr{a}(b,\theta), 
\end{equation*}
where $\Omega\colon \Gamma(TM\oplus A^*)\times\Gamma(A)\to\Gamma(A\oplus T^*M)$ is defined by
\[\Omega_{(X,\alpha)}a=\Delta_{(X,\alpha)}(a,0)-(0,\dr\langle\alpha, a\rangle)
\] and for $a\in\Gamma(A)$, the derivations $\ldr{a}$ over $\rho(a)$
are defined by:
\[\ldr{a}\colon \Gamma(A\oplus T^*M)\to\Gamma(A\oplus T^*M),\quad \ldr{a}(b,\theta)=([a,b], \ldr{\rho(a)}\theta)\]
and 
\[\ldr{a}\colon \Gamma(TM\oplus A^*)\to\Gamma(TM\oplus A^*), \quad \ldr{a}(X,\alpha)=([\rho(a),X], \ldr{a}\alpha).\]
We prove in \cite{Jotz13a} that the two connections above are dual to
each other if and only if the dull bracket dual to $\Delta$ is
skew-symmetric. Hence, the two connections are dual to each other if
and only if the chosen linear splitting is Lagrangian (see Example
\ref{met_TET*E}).  The basic curvature
\[R_\Delta^{\rm bas}\colon \Gamma(A)\times\Gamma(A)\times\Gamma(TM\oplus A^*)\to\Gamma(A\oplus T^*M)
\]
is given by 
\begin{align*}
R_\Delta^{\rm bas}(a,b)(X,\xi)=&-\Omega_{(X,\xi)}[a,b] +\ldr{a}\left(\Omega_{(X,\xi)}b\right)-\ldr{b}\left(\Omega_{(X,\xi)}a\right)\\
&\qquad                                                     + \Omega_{\nabla^{\rm bas}_b(X,\xi)}a-\Omega_{\nabla^{\rm bas}_a(X,\xi)}b.
\end{align*}
Assume that the linear splitting is Lagrangian.  A relatively long but
straightforward computation shows that ${R_\Delta^{\rm
    bas}}^*=-R_\Delta^{\rm bas}$, and so that the $2$-representation
is self-dual. Hence $(TA\oplus_AT^*A\to TM\oplus A^*, A\to M)$ is a
metric VB-algebroid.

\section{Split Lie 2-algebroids and Dorfman 2-representations}\label{sec:split_lie_2}
In this section we recall the notions of Courant algebroids, Dirac
structures, dull algebroids and Dorfman connections. Then we discuss
(split) Lie $2$-algebroids and the dual Dorfman
$2$-representations. We give several classes of examples of split Lie
$2$-algebroids.
\subsection{Preliminaries}\label{background_courant_notions}
We introduce in this section a slights generalisations of the notion
of Courant algebroid, namely \emph{degenerate Courant algebroids} and
\emph{Courant algebroids with pairing in a vector bundle.}  Degenerate
Courant algebroids will appear naturally in our study of LA-Courant
algebroids, and we will see that the fat bundle associated to a
VB-Courant algebroid will carry a natural Courant algebroid structure
with pairing in a vector bundle.

In the following, an anchored vector bundle is a vector bundle $Q\to
M$ endowed with a vector bundle morphism $\rho_Q\colon Q\to TM$ over
the identity.  An anchored vector bundle $(Q\to M, \rho_Q)$ and a
vector bundle $B\to M$ are said to be paired if there exists a
fibrewise pairing $\langle\cdot\,,\cdot\rangle\colon Q\times_M B\to
\R$ and a map $\dr_B\colon C^\infty(M)\to \Gamma(B)$ such that
\begin{equation}\label{compatibility_anchor_pairing}
  \langle q, \dr_Bf\rangle=\rho_Q(q)(f)
\end{equation}
for all $q\in\Gamma(Q)$ and $f\in C^\infty(M)$.  The triple
$(B,\dr_B, \langle\cdot\,,\cdot\rangle)$ will be called a
\textbf{pre-dual} 
of $Q$ and $Q$ and $B$ are said to be \textbf{paired by
  $\langle\cdot\,,\cdot\rangle$}.

Consider an anchored vector bundle $(\mathsf E\to M, \rho)$ and a
vector bundle $V$ over the same base $M$ together with a map
$\tilde\rho\colon\Gamma(\mathsf E)\to \operatorname{Der}(V)$, such
that the symbol of $\tilde\rho(e)$ is $\rho(e)\in \mx(M)$ for all
$e\in \Gamma(\mathsf E)$.  Assume that $\mathsf E$ is paired with
itself via a pairing $\langle\cdot\,,\cdot\rangle\colon \mathsf
E\times_M \mathsf E\to V$ with values in $V$ and that there exists a
map $\mathcal D\colon \Gamma(V)\to \Gamma(\mathsf E)$ such that
$\langle \mathcal Dv, e\rangle=\tilde\rho(e)(v)$ for all
$v\in\Gamma(V)$.

Then $\mathsf E\to M$ is a \textbf{degenerate Courant algebroid with
  pairing in $V$} over the manifold $M$ if $\mathsf E$ is in addition
equipped with an $\R$-bilinear bracket $\lb\cdot\,,\cdot\rb$ on the
smooth sections $\Gamma(\mathsf E)$ such that the following conditions
are satisfied:
\begin{enumerate}
\item[(CA1)] $\lb e_1, \lb e_2, e_3\rb\rb= \lb \lb e_1, e_2\rb, e_3\rb+ \lb
  e_2, \lb e_1, e_3\rb\rb$,
\item[(CA2)] $\tilde\rho(e_1 )\langle e_2, e_3\rangle= \langle\lb e_1,
  e_2\rb, e_3\rangle + \langle e_2, \lb e_1 , e_3\rb\rangle$,
\item[(CA3)] $\lb e_1, e_2\rb+\lb e_2, e_1\rb =\mathcal D\langle e_1 ,
  e_2\rangle$,
\item[(CA4)]     $\rho(\lb e_1, e_2\rb ) = [\rho(e_1), \rho(e_2)]$,
\item[(CA5)]      $\lb e_1, f e_2\rb= f \lb e_1 , e_2\rb+ (\rho(e_1 )f )e_2$
\end{enumerate}
for all $e_1, e_2, e_3\in\Gamma(\mathsf E)$ and $f\in
C^\infty(M)$.  If the pairing $\langle\cdot\,,\cdot\rangle$ is
nondegenerate, then $(\mathsf E\to M, \tilde\rho,
\langle\cdot\,,\cdot\rangle, \lb\cdot\,,\cdot\rb)$ is a
\textbf{Courant algebroid with pairing in V}. 
If $V=\mathbb R\times M\to M$ is the trivial bundle and the pairing is
nondegenerate, then $\mathcal D = \beta\inv\circ\rho^*\circ\dr \colon
C^\infty(M)\to\Gamma(\mathsf E)$, where $\beta$ is the isomorphism
$\mathsf E\to \mathsf E^*$ given by
$\Beta(e)=\langle e,\cdot\rangle$
for all $e\in\mathsf E$.  The quadruple $(\mathsf E\to M, \rho,
\langle\cdot\,,\cdot\rangle, \lb\cdot\,,\cdot\rb)$ is then a
\textbf{Courant algebroid} \cite{LiWeXu97,Roytenberg99} and Conditions (CA4) and
(CA5) follow then from (CA1), (CA2) and (CA3) (see \cite{Uchino02} and also \cite{Jotz13a}
for a quicker proof).

\medskip

In our study of VB-Courant algebroids, we will need the following two lemmas. 
\begin{lemma}[\cite{Roytenberg02}]\label{roytenberg_useful}
  Let $(\mathsf E\to M, \rho, \langle\cdot\,,\cdot\rangle,
  \lb\cdot\,,\cdot\rb)$ be a Courant algebroid.  For all $\theta\in
  \Omega^1(M)$ and $e\in\Gamma(\mathsf E)$, we have:
  \[ \lb e,
  \beta\inv\rho^*\theta\rb=\beta\inv\rho^*(\ldr{\rho(e)}\theta),
  \qquad \lb\beta\inv\rho^*\theta,
  e\rb=-\beta\inv\rho^*(\ip{\rho(e)}\dr\theta)
\]
and 
\begin{equation}\label{anchor_with_D}
  \rho(\beta\inv\rho^*\theta)=0.
\end{equation}
\end{lemma}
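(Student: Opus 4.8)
The plan is to prove the three identities in Lemma~\ref{roytenberg_useful} by direct computation from the Courant algebroid axioms, using the non-degeneracy of the pairing to test equalities by pairing with arbitrary sections. Throughout, write $\mathcal D = \beta\inv\rho^*\dr \colon C^\infty(M)\to\Gamma(\mathsf E)$, so that $\langle \mathcal Df, e\rangle = \rho(e)(f)$ for all $f\in C^\infty(M)$, $e\in\Gamma(\mathsf E)$; more generally $\beta\inv\rho^*\theta$ is characterised by $\langle \beta\inv\rho^*\theta, e\rangle = \langle \theta, \rho(e)\rangle$ for all $e$. Since $\theta$ is a one-form, it is locally a sum of terms $g\,\dr h$ with $g,h\in C^\infty(M)$, and by $C^\infty(M)$-linearity considerations (using (CA5) and the Leibniz rule for $\rho(e)$) it suffices to treat $\theta = g\,\dr h$, or even just $\theta = \dr h$ after tracking the $g$-dependence.

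First I would establish \eqref{anchor_with_D}: $\rho(\beta\inv\rho^*\theta)=0$. By (CA4) and (CA3), $\rho\circ\mathcal D = 0$ follows from $\rho(\lb e,e\rb) = [\rho(e),\rho(e)] = 0$ together with (CA3) giving $2\lb e,e\rb = \mathcal D\langle e,e\rangle$; hence $\rho(\mathcal D\langle e,e\rangle)=0$, and polarising (replacing $e$ by $e_1+e_2$) yields $\rho(\mathcal D\langle e_1,e_2\rangle)=0$. Since functions of the form $\langle e_1,e_2\rangle$ locally generate $C^\infty(M)$ (one can always arrange $\langle e_1, e_2\rangle$ to be any prescribed function by non-degeneracy and a partition of unity, or use that $\rho\circ\mathcal D$ is a $C^\infty(M)$-linear-up-to-derivation map that annihilates enough functions), we get $\rho\circ\mathcal D = 0$, i.e.\ $\rho(\beta\inv\rho^*\dr h)=0$ for all $h$. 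For general $\theta = g\,\dr h$, note $\beta\inv\rho^*(g\,\dr h) = g\cdot\beta\inv\rho^*\dr h$ since $\beta$ is $C^\infty(M)$-linear and $\rho^*$ is the pointwise dual, so $\rho(\beta\inv\rho^*(g\,\dr h)) = g\,\rho(\mathcal D h) = 0$.

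Next, the first bracket identity $\lb e, \beta\inv\rho^*\theta\rb = \beta\inv\rho^*(\ldr{\rho(e)}\theta)$. I would test both sides against an arbitrary $e'\in\Gamma(\mathsf E)$ using (CA2): $\langle \lb e,\beta\inv\rho^*\theta\rb, e'\rangle = \rho(e)\langle\beta\inv\rho^*\theta, e'\rangle - \langle \beta\inv\rho^*\theta, \lb e, e'\rb\rangle = \rho(e)\langle\theta,\rho(e')\rangle - \langle\theta, \rho(\lb e,e'\rb)\rangle = \rho(e)\langle\theta,\rho(e')\rangle - \langle\theta,[\rho(e),\rho(e')]\rangle$ by (CA4); this is exactly $\langle \ldr{\rho(e)}\theta, \rho(e')\rangle = \langle\beta\inv\rho^*(\ldr{\rho(e)}\theta), e'\rangle$ by the Cartan formula $(\ldr X\theta)(Y) = X(\theta(Y)) - \theta([X,Y])$. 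Non-degeneracy finishes it. For the second identity $\lb \beta\inv\rho^*\theta, e\rb = -\beta\inv\rho^*(\ip{\rho(e)}\dr\theta)$, combine the first one with (CA3): $\lb\beta\inv\rho^*\theta, e\rb = -\lb e, \beta\inv\rho^*\theta\rb + \mathcal D\langle\beta\inv\rho^*\theta, e\rangle = -\beta\inv\rho^*(\ldr{\rho(e)}\theta) + \beta\inv\rho^*\dr\langle\theta,\rho(e)\rangle = -\beta\inv\rho^*\big(\ldr{\rho(e)}\theta - \dr(\ip{\rho(e)}\theta)\big) = -\beta\inv\rho^*(\ip{\rho(e)}\dr\theta)$, using the Cartan magic formula $\ldr X = \dr\ip X + \ip X\dr$ and $\langle\theta,\rho(e)\rangle = \ip{\rho(e)}\theta$.

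The main obstacle, such as it is, is the reduction step: verifying that these pointwise tensorial identities can be checked on $\theta = \dr h$ and then propagated to all one-forms, which requires being careful that $\beta\inv\rho^*$, although built from the $C^\infty(M)$-linear maps $\beta\inv$ and $\rho^*$, interacts correctly with the Cartan calculus and that no anomalous derivation terms appear when multiplying $\theta$ by a function. In fact the cleanest route avoids the reduction entirely: all three computations above used only (CA2), (CA3), (CA4) and the defining property $\langle\beta\inv\rho^*\theta, e\rangle = \langle\theta,\rho(e)\rangle$ together with standard Cartan identities, and none of them assumed $\theta$ exact, so the reduction is unnecessary and the proof is genuinely just the two applications of (CA2)/(CA3) displayed above plus the polarisation argument for \eqref{anchor_with_D}.
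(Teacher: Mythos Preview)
Your proof is correct. The paper itself does not supply a proof of this lemma; it is cited from \cite{Roytenberg02} and stated without argument, so there is nothing to compare against beyond confirming correctness.

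Your argument is the standard one: (CA2) and (CA4) give the first bracket identity by pairing against an arbitrary $e'$, then (CA3) and Cartan's magic formula give the second, and the polarisation argument (which could equally well be stated directly as $\rho(\lb e_1,e_2\rb+\lb e_2,e_1\rb)=[\rho(e_1),\rho(e_2)]+[\rho(e_2),\rho(e_1)]=0$) handles $\rho\circ\mathcal D=0$. Your closing observation is accurate: the two bracket identities need no reduction to exact forms, but the passage from $\rho\circ\mathcal D=0$ to $\rho(\beta\inv\rho^*\theta)=0$ for a general one-form $\theta$ does use the local decomposition $\theta=\sum g_i\,\dr h_i$ together with $C^\infty(M)$-linearity of $\beta\inv\rho^*$, exactly as you indicate. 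The claim that functions $\langle e_1,e_2\rangle$ exhaust $C^\infty(M)$ locally is justified by non-degeneracy: choose $e_1,e_2$ with $\langle e_1,e_2\rangle=1$ locally, then $f=\langle fe_1,e_2\rangle$.
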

In particular, it follows from \eqref{anchor_with_D} that 
\begin{equation}\label{anchor_with_D_2}
\rho\circ \mathcal D=0.
\end{equation}

\begin{lemma}[\cite{Li-Bland12}]\label{useful_lemma}
  Let $\mathsf {E}\to M$ be a vector bundle, $\rho\colon \mathsf{E}\to
  TM$ be a bundle map, $\langle\cdot,\cdot\rangle$ a bundle metric on
  $\mathsf{E}$, and let $\mathcal S\subseteq\Gamma(\mathsf{E})$ be a
  subspace of sections which generates $\Gamma(\mathsf{E})$ as a
  $C^\infty(M)$-module. Suppose that $\lb\cdot\,,\cdot\rb:\mathcal
  S\times\mathcal S\to \mathcal S$ is a bracket which satisfies
\begin{enumerate}
\item $\lb s_1,\lb s_2,s_3\rb\rb=\lb\lb s_1,s_2\rb,s_3\rb
+\lb s_2,\lb s_1,s_3\rb\rb$, 
\item $\rho(s_1)\langle s_2,s_3\rangle=\langle
  \lb s_1,s_2\rb, s_3\rangle+\langle s_2, \lb
  s_1,s_3\rb\rangle$,
\item $\lb s_1,s_2\rb+\lb s_2,s_1\rb=\rho^*\dr
  \langle s_1,s_2\rangle$,
\item $\rho\lb s_1,s_2\rb=[\rho(s_1),\rho(s_2)]$,
\end{enumerate}
for any $s_i\in \mathcal S$, and that $\rho\circ\rho^*=0$. Then
there is a unique extension of $\lb\cdot\,,\cdot\rb$ to a 
bracket on all of $\Gamma(\mathsf{E})$ such that $(\mathsf E, \rho,
\langle\cdot\,,\cdot\rangle, \lb\cdot\,,\cdot\rb)$ is a Courant algebroid.
\end{lemma}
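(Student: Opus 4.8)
The statement is Lemma~\ref{useful_lemma}, which says that a bracket defined only on a generating subspace $\mathcal S\subseteq\Gamma(\mathsf E)$ and satisfying the Courant axioms (CA1)--(CA4) on $\mathcal S$ extends uniquely to a Courant algebroid bracket on all of $\Gamma(\mathsf E)$. The natural approach is to first figure out what the extension \emph{must} be, using the Leibniz-type constraints, then verify consistency (well-definedness) of this formula, and finally check that all the Courant axioms hold globally.

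First I would establish uniqueness and the defining formula. Any Courant bracket extending $\lb\cdot\,,\cdot\rb$ must satisfy (CA5): $\lb s_1, f s_2\rb = f\lb s_1,s_2\rb + (\rho(s_1)f)s_2$ and the left-Leibniz rule obtained by combining (CA3) with (CA5), namely $\lb f s_1, s_2\rb = f\lb s_1,s_2\rb - (\rho(s_2)f)s_1 + \langle s_1,s_2\rangle\,\rho^*\dr f$. Since every section of $\mathsf E$ is a finite $C^\infty(M)$-combination $\sum_i f_i s_i$ of elements of $\mathcal S$, these two rules force the value of $\lb\cdot\,,\cdot\rb$ on all of $\Gamma(\mathsf E)\times\Gamma(\mathsf E)$; this gives uniqueness immediately and simultaneously writes down the candidate extension. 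Concretely, for $e_1 = \sum_i f_i s_i$, $e_2 = \sum_j g_j t_j$, one sets
\[
\lb e_1, e_2\rb = \sum_{i,j}\Big( f_i g_j \lb s_i, t_j\rb + f_i(\rho(s_i)g_j)\,t_j - g_j(\rho(t_j)f_i)\,s_i + f_i g_j\langle s_i, t_j\rangle\,\rho^*\dr(\log\text{-}\text{free terms})\Big),
\]
where the last term must be handled carefully — I would instead phrase it as $\lb e_1,e_2\rb = \sum_j g_j \lb e_1, t_j\rb + \sum_j (\rho(e_1)g_j)t_j$ with $\lb e_1, t_j\rb$ defined first via the left rule. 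The main work is to show this two-step definition is independent of the chosen representations $e_1 = \sum_i f_i s_i$, $e_2 = \sum_j g_j t_j$; this is the technical heart and uses that the right-hand side of the Leibniz rules is itself $C^\infty(M)$-tensorial in the appropriate sense once (CA2) and (CA3) are invoked on $\mathcal S$. A clean way to organise this: first define $\lb e, t\rb$ for $e\in\Gamma(\mathsf E)$ arbitrary and $t\in\mathcal S$ by the left-Leibniz formula, check well-definedness using that $\langle s_i, t\rangle$, $\rho(t)$, and the axioms on $\mathcal S$ control the failure; then extend in the second slot by the right-Leibniz formula and again check well-definedness, this time using axiom (CA2) on $\mathcal S$ to see that the correction terms match up.

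Once the extension is well-defined, I would verify the axioms one at a time, reducing each to the already-known version on $\mathcal S$ by bilinear expansion and the Leibniz rules. Axiom (CA5) holds by construction. Axiom (CA3), $\lb e_1,e_2\rb + \lb e_2,e_1\rb = \rho^*\dr\langle e_1,e_2\rangle$, follows by expanding both sides in the generators and using (CA3) on $\mathcal S$ together with $\rho\circ\rho^* = 0$ (needed so that the Leibniz correction terms involving $\rho^*\dr f$ do not obstruct the symmetrisation; this is where that hypothesis enters). Axiom (CA2) is a direct computation: $\rho(e_1)\langle e_2,e_3\rangle$ is $C^\infty(M)$-linear in $e_2, e_3$ and a derivation in $e_1$, so both sides expand the same way down to the $\mathcal S$-level identity. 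Axiom (CA4), $\rho\lb e_1,e_2\rb = [\rho(e_1),\rho(e_2)]$, uses (CA4) on $\mathcal S$, the Leibniz rules, $\rho\circ\rho^* = 0$ again, and the fact that $\rho$ is a bundle map so $[\rho(\cdot),\rho(\cdot)]$ obeys the same Leibniz-with-correction identities.

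The subtle axiom is the Jacobi identity (CA1). Here I would \emph{not} try to expand everything by brute force. Instead I would use the standard trick: define the Jacobiator $\mathrm{Jac}(e_1,e_2,e_3) = \lb e_1,\lb e_2,e_3\rb\rb - \lb\lb e_1,e_2\rb,e_3\rb - \lb e_2,\lb e_1,e_3\rb\rb$ and show it is $C^\infty(M)$-linear (tensorial) in each of its three arguments. Tensoriality follows formally from axioms (CA2)--(CA5) — which we have now established globally — by a computation identical to the classical one (e.g.\ as in Uchino's argument referenced in the text), and does not require (CA1) itself. Since $\mathrm{Jac}$ is tensorial and $\mathcal S$ generates $\Gamma(\mathsf E)$ as a $C^\infty(M)$-module, $\mathrm{Jac}$ vanishes on all of $\Gamma(\mathsf E)^3$ as soon as it vanishes on $\mathcal S^3$, which is hypothesis (1). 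The main obstacle, then, is the well-definedness argument for the extension in the two slots (the bookkeeping of Leibniz correction terms), together with correctly setting up the tensoriality-of-the-Jacobiator computation; everything else is routine bilinear expansion. I expect the well-definedness step to be where most of the care is needed, since it is the only place one genuinely uses that the axioms on $\mathcal S$ are compatible with multiplication by functions.
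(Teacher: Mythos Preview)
The paper does not prove this lemma; it is quoted from Li-Bland's thesis \cite{Li-Bland12} and used as a black box. Your plan is the standard one and is correct: force the extension via the two Leibniz rules, check it is well-defined independently of the presentation $\sum f_i s_i$, verify (CA2)--(CA5) globally by bilinear expansion, and then reduce (CA1) to $\mathcal S$ by showing the Jacobiator is $C^\infty(M)$-trilinear. One small correction: the reference to ``Uchino's argument'' is slightly misplaced---Uchino's paper \cite{Uchino02} shows that (CA4) and (CA5) are \emph{consequences} of (CA1)--(CA3), not that the Jacobiator is tensorial. The tensoriality of $\mathrm{Jac}$ given (CA2)--(CA5) and $\rho\circ\rho^*=0$ is a separate (though equally routine) computation; in particular you will need the identity $\lb e,\rho^*\dr f\rb=\rho^*\dr(\rho(e)f)$, which follows from (CA2), (CA4) and nondegeneracy of the pairing, to control the terms coming from the left-Leibniz rule. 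Also, the displayed formula with ``$\log$-free terms'' is garbled; your two-step reformulation immediately after it is the right way to organise the definition.
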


\medskip

A \textbf{Dirac structure} with support in a Courant algebroid $\mathsf E\to M$ 
is a subbundle $D\to S$ over a sub-manifold $S$ of $M$, such that
$D(s)$ is maximal isotropic in $\mathsf E(s)$ for all $s\in S$ and 
\[ e_1\an{S}\in\Gamma_S(D), e_2\in\Gamma_S(D) \quad \Rightarrow\quad \lb e_1, e_2\rb\an{S}\in\Gamma_S(D)
\]
for all $e_1,e_2\in\Gamma(\mathsf E)$.

We will use the following lemma in two of our technical proofs
involving Dirac structures with support. We leave the proof to the
reader.
\begin{lemma}\label{useful_for_dirac_w_support}
  Let $\mathsf E\to M$ be a Courant algebroid and $D\to S$ a
  subbundle; with $S$ a sub-manifold of $M$. Assume that $D\to S$ is
  spanned by the restrictions to $S$ of a family $\mathcal S\subseteq
  \Gamma(\mathsf E)$ of sections of $\mathsf E$. Then $D$ is a Dirac
  structure with support $S$ if and only if
\begin{enumerate}
\item $\rho_{\mathsf E}(e)(s)\in T_sS$ for all $e\in\mathcal S$ and $s\in S$,
\item $D_s$ is Lagrangian in $\mathbb E_s$ for all $s\in S$ and
\item $\lb e_1,e_2\rb\an{S}\in\Gamma_S(D)$ for all $e_1,e_2\in\mathcal
  S$.
\end{enumerate}
\end{lemma}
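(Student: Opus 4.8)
The plan is to prove the two implications separately. Condition (2) is literally the statement that $D\to S$ is a maximal isotropic subbundle, so the only substantive point is the closedness under the Courant bracket, and that is easiest to obtain in the direction $\Leftarrow$.

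For $\Rightarrow$, suppose $D\to S$ is a Dirac structure with support $S$. Then (2) holds by definition. Since the restrictions $e\an{S}$, $e\in\mathcal S$, span $\Gamma_S(D)$, each $e\in\mathcal S$ has $e\an{S}\in\Gamma_S(D)$, and the closedness of $\Gamma_S(D)$ under $\lb\cdot\,,\cdot\rb$ gives $\lb e_1,e_2\rb\an{S}\in\Gamma_S(D)$ for $e_1,e_2\in\mathcal S$, which is (3). Finally the anchor of a Dirac structure with support is tangent to $S$, so $\rho_{\mathsf E}(e)(s)\in T_sS$ for $e\in\mathcal S$ and $s\in S$, which is (1).

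For $\Leftarrow$, assume (1)--(3). By (2) it remains to show that $e_1,e_2\in\Gamma(\mathsf E)$ with $e_i\an{S}\in\Gamma_S(D)$ force $\lb e_1,e_2\rb\an{S}\in\Gamma_S(D)$. Working on a chart domain meeting $S$, I would first write $e_i=\sum_k f^i_k a^i_k+g_i$ with $a^i_k\in\mathcal S$, $f^i_k\in C^\infty(M)$, and $g_i\an{S}=0$; this is possible because $\mathcal S$ spans $\Gamma_S(D)$ as a $C^\infty(M)$-module. Expanding $\lb e_1,e_2\rb$ bilinearly and using (CA5) together with the identity
\[\lb f\chi_1,\chi_2\rb=\langle\chi_1,\chi_2\rangle\,\mathcal D f+f\lb\chi_1,\chi_2\rb-(\rho_{\mathsf E}(\chi_2)f)\chi_1,\]
which follows from (CA3), (CA5) and the Leibniz rule for $\mathcal D=\beta\inv\rho_{\mathsf E}^*\dr$, reduces the claim — after restriction to $S$ — to three families of terms: $\lb a,b\rb$ with $a,b\in\mathcal S$; $\lb a,g\rb$ and $\lb g,a\rb$ with $a\in\mathcal S$, $g\an{S}=0$; and $\lb g_1,g_2\rb$ with $g_i\an{S}=0$ (together with the correction terms just displayed). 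The first family restricts into $\Gamma_S(D)$ by (3), the $\mathcal D$-correction being killed by $\langle a,b\rangle\an{S}=0$, i.e.\ by the isotropy in (2).

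The mixed and vanishing terms are where (1) does the work. Writing $g=\sum_l\psi_l\chi_l$ with $\psi_l\an{S}=0$, one gets $\lb a,g\rb\an{S}=\sum_l(\rho_{\mathsf E}(a)\psi_l)\an{S}\,\chi_l\an{S}=0$, since $\rho_{\mathsf E}(a)$ is tangent to $S$ by (1) and hence annihilates the $\psi_l$ along $S$; likewise $\rho_{\mathsf E}(g)\an{S}=0$. Next, $\lb g,a\rb=\mathcal D\langle g,a\rangle-\lb a,g\rb$, and $\mathcal D h\an{S}\in\Gamma_S(D)$ whenever $h\an{S}=0$: for such $h$ and $d\in D_s$ one has $\langle(\mathcal D h)(s),d\rangle=(\dr_s h)(\rho_{\mathsf E}(d))=0$ because $\rho_{\mathsf E}(D_s)\subseteq T_sS$ — as $\mathcal S$ spans $D$ pointwise and (1) holds — so $(\mathcal D h)(s)\in D_s^\perp=D_s$ by (2). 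Assembling these observations, every remaining term either vanishes on $S$ or lies in $\Gamma_S(D)$, and in particular $\lb g_1,g_2\rb\an{S}=0$. I expect the only genuine difficulty to be organizational: bookkeeping the expansion into these families and checking that each Leibniz-type correction is absorbed either into $\Gamma_S(D)$ via (2)--(3) or into a section vanishing on $S$ via the anchor–tangency (1). Beyond (CA3), (CA5) and the explicit formula for $\mathcal D$, nothing further is needed (in particular not Lemma \ref{roytenberg_useful}).
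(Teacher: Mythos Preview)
Your argument is correct; the paper itself omits the proof, stating only ``We leave the proof to the reader,'' so there is nothing to compare against. The decomposition $e_i=\sum_k f^i_k a^i_k+g_i$ and the subsequent case analysis via (CA3), (CA5) and the isotropy--anchor interplay is exactly the expected elementary verification, and each of your reductions (in particular $\mathcal D h\an{S}\in\Gamma_S(D)$ for $h\an{S}=0$, and $\lb g_1,g_2\rb\an{S}=0$) is justified.
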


Next we recall the notion of Dorfman connection \cite{Jotz13a}.
\begin{definition}\label{the_def}
  Let $(Q\to M,\rho_Q)$ be an anchored vector bundle and let $(B\to M,
  \dr_B, \langle\cdot\,,\cdot\rangle)$ be paired with $(Q,\rho_Q)$.  A
  \textbf{Dorfman ($Q$-)connection on $B$} is an $\R$-linear map
\begin{equation*}
  \Delta\colon \Gamma(Q)\to \operatorname{Der}(B)
\end{equation*} 
such that 
\begin{enumerate}
\item $\Delta_q$ is a derivation over $\rho_Q(q)\in\mx(M)$,
\item $\Delta_{f q}b=f\Delta_qb+\langle q, b\rangle \cdot \dr_B f$ and
\item $\Delta_{q}\dr_Bf=\dr_B(\rho_Q(q)f)$
\end{enumerate}
for all $f\in C^\infty(M)$, $q,q'\in\Gamma(Q)$, $b\in\Gamma(B)$.
\end{definition}

In the last definition, the map $\langle q, \cdot\rangle
\dr_Bf\colon B\to B$ is seen as a section of $\operatorname{Hom}(B,B)$,
i.e.~a derivation over $0\in\mx(M)$.

\begin{remark}
Note that if the pairing $\langle\cdot\,,\cdot\rangle:Q\times_MB\to
\R$ is nondegenerate, then $B\simeq Q^*$ 
and the map $\dr_{B}=\dr_{Q^*}\colon
C^\infty(M)\to\Gamma(Q^*)$ is \emph{defined} by
\eqref{compatibility_anchor_pairing}: we have then
$\dr_{Q^*}f=\rho_Q^*\dr f$
for all $f\in C^\infty(M)$.

The map $\Delta^*:\Gamma(Q)\times\Gamma(Q)\to \Gamma(Q)$ that is dual
to $\Delta$ in the sense of dual derivations, i.e. $\langle
\Delta^*_{q_1}q_2, \tau\rangle=\rho_Q(q_1)\langle q_2,\tau\rangle
-\langle q_2, \Delta_{q_1}\tau\rangle$ for all $q_1,q_2\in
\Gamma(Q)$ and $\tau\in\Gamma(Q^*)$ defines then a \emph{dull bracket} on
$\Gamma(Q)$:
\[\lb q_1, q_2\rb_\Delta=\Delta^*_{q_1}q_2
\]
in the sense of the following definition.
\end{remark}

\begin{definition}
  A \textbf{dull algebroid} is an anchored vector bundle $(Q\to M,
  \rho_Q)$ with a bracket $\lb\cdot\,,\cdot\rb $ on $\Gamma(Q)$ such that
\begin{equation}\label{anchor_preserves_bracket}
  \rho_Q\lb q_1, q_2\rb=[\rho_Q(q_1),\rho_Q(q_2)]
\end{equation}
and  (the Leibniz identity)
\begin{equation*} \lb f_1 q_1, f_2
  q_2\rb=f_1f_2\lb q_1,
  q_2\rb+f_1\rho_Q(q_1)(f_2)q_2-f_2\rho_Q(q_2)(f_1)q_1
\end{equation*}
for all $f_1,f_2\in C^\infty(M)$, $q_1, q_2\in\Gamma(Q)$.
\end{definition}
In other words, a dull algebroid is a \textbf{Lie algebroid} if its
bracket is in addition skew-symmetric and satisfies the Jacobi
identity. Note that a skew symmetric dull bracket can be constructed
as follows from an arbitrary dull bracket on $Q$; the
skew-symmetrisation $\lb\cdot\,,\cdot\rb'$ of $\lb\cdot\,,\cdot\rb$ is
defined by $\lb q_1,q_2\rb'=\frac{1}{2}\left(\lb q_1,q_2\rb-\lb q_2,
  q_1\rb\right)$ for all $q_1,q_2\in\Gamma(Q)$.

Assume that $\Delta\colon \Gamma(Q)\to\Gamma(Q^*)\to \Gamma(Q^*)$ is a
Dorfman connection and let $\lb\cdot\,,\cdot\rb_\Delta$ be the dual
dull bracket.  Note that \eqref{anchor_preserves_bracket} is
equivalent to (3) in Definition \ref{the_def} and the Leibniz identity
corresponds to (2) and (3).

The \emph{curvature} of a general Dorfman connection $\Delta\colon
\Gamma(Q)\times\Gamma(B)\to\Gamma(B)$ is the map
\[R_\Delta\colon \Gamma(Q)\times\Gamma(Q)\to\Gamma(B^*\otimes B),\]
defined on $q,q'\in\Gamma(Q)$ by
$R_\Delta(q,q'):=\Delta_q\Delta_{q'}-\Delta_{q'}\Delta_q-\Delta_{[q,q']_Q}$. If
$B=Q^*$ and the pairing is the natural one, the curvature is
equivalent to the Jacobiator of the dull bracket:
\begin{equation}\label{curv_dual_Jac}
\langle \tau, \Jac_{\lb\cdot\,,\cdot\rb_\Delta}(q_1,q_2,q_3)\rangle =\langle R_\Delta(q_1,q_2)\tau, q_3\rangle
\end{equation}
for $q_1,q_2,q_3\in\Gamma(Q)$ and $b\in\Gamma(B)$, where 
\[ \Jac_{\lb\cdot\,,\cdot\rb_\Delta}(q_1,q_2,q_3)=\lb \lb
q_1,q_2\rb_\Delta,q_3\rb_\Delta+\lb q_2,\lb q_1,q_3\rb_\Delta-\lb
q_1,\lb q_2,q_3\rb_\Delta\rb_\Delta
\]
is the Jacobiator of $\lb\cdot\,,\cdot\rb_\Delta$ in Leibniz
form. Hence, the Dorfman connection is flat if and only if the
corresponding dull bracket satisfies the Jacobi identity in Leibniz
form $\lb q_1,\lb q_2,q_3\rb_\Delta\rb_\Delta=\lb \lb
q_1,q_2\rb_\Delta,q_3\rb_\Delta+\lb q_2,\lb q_1,q_3\rb_\Delta$ for all
$q_1,q_2,q_3\in\Gamma(Q)$. A flat Dorfman connection is called a
\textbf{Dorfman representation}, if the dual dull bracket is in
addition skew-symmetric. The dull bracket $\lb\cdot\,,\cdot\rb_\Delta$
and the anchor $\rho_Q$ define then a Lie algebroid structure on
$\Gamma(Q)$.  Conversely, given a Lie algebroid $A$, then the Lie
derivative $\ldr{}^A\colon \Gamma(A)\times\Gamma(A^*)\to\Gamma(A^*)$
is a Dorfman representation. Hence, Lie algebroids are dual to Dorfman
representations.

\bigskip

Assume that $(Q,\rho_Q)$ is an anchored vector bundle with a Dorfman
connection $\Delta:\Gamma(Q)\times\Gamma(Q^*)\to\Gamma(Q^*)$ and let
$\nabla:\Gamma(Q)\times\Gamma(B)\to\Gamma(B)$ be a linear
$Q$-connection on a vector bundle\footnote{Note that the notation
  slightly changes here. $Q$ is paired with its dual $Q^*$ and $B$ is
  just a vector bundle over the same base.} $B$.  For each
$q\in\Gamma(Q)$, $\nabla_q$ and $\Delta_q$ define a derivation
$\lozenge_q$ of $\Gamma(\operatorname{Hom}(B,Q^*))$: for $\Phi\in
\Gamma(\operatorname{Hom}(B,Q^*))$ and $b\in\Gamma(B)$, we have
\begin{equation}
(\lozenge_q\Phi)(b)=\Delta_q(\Phi(b))-\Phi(\nabla_qb).
\end{equation}
The map $\lozenge:\Gamma(Q)\times \Gamma(\operatorname{Hom}(B,Q^*))\to
\Gamma(\operatorname{Hom}(B,Q^*))$ satisfies\footnote{One could define this
 as a  Dorfman connection over a vector bundle valued pairing: here the
  pairing $Q\times_M\operatorname{Hom}(B,Q^*)\to B^*$ would be defined by
  $(q,\Phi)\mapsto \Phi^*(q)$.}
\[\lozenge_{f q}\Phi=f
\lozenge_q\Phi+\Phi^*(q)\cdot\rho_Q^*\dr f
\]
and $\lozenge_{q}(\xi\cdot\rho_Q^*\dr f)=\nabla_q^*\xi\cdot\rho_Q^*\dr
f+\xi\cdot\rho_Q^*\dr(\ldr{\rho_Q(q)}f) $ for all $q\in\Gamma(Q)$,
$f\in C^\infty(M)$ and $\xi\in\Gamma(B^*)$.  If the dull bracket
$\lb\cdot\,,\cdot\rb_\Delta$ dual to $\Delta$ is skew-symmetric, then
the complex $\Omega(Q,\operatorname{Hom}(B,Q^*))$ has an induced
operator $\dr_\lozenge\colon
\Omega^q(Q,\operatorname{Hom}(B,Q^*))\to\Omega^{q+1}(Q,\operatorname{Hom}(B,Q^*))$
given by the Koszul formula
\begin{equation*}
\begin{split}
  \dr_\lozenge\eta(q_1,\ldots,q_{k+1})=&\sum_{i<j}(-1)^{i+j}\omega(\lb q_i,q_i\rb_\Delta,\ldots,\hat q_i, \ldots, \hat q_j, \ldots, q_{k+1})\\
  &\qquad +\sum_i(-1)^{i+1}\lozenge_{q_i}(\omega(q_1, \ldots, \hat q_i,
  \ldots, q_{k+1}))
\end{split}
\end{equation*}
for all $\eta\in\Omega^k(Q,\operatorname{Hom}(B,Q^*))$ and $q_1,\ldots,q_{k+1}\in\Gamma(Q)$.

\subsection{Dorfman 2-representations and split Lie 2-algebroids}\label{dorfman_eq_split}
We now define the \emph{Dorfman 2-representations} and show that they
are the dual derivations to split Lie 2-algebroids. 

Recall from \S\ref{symplectic} the definition of a graded derivation
on an $[n]$-manifold.
A \textbf{homological} vector field $\chi$ on $\mathcal M$ is a
derivation of degree $1$ of $C^\infty(\mathcal M)$ such that
$\chi^2=\frac{1}{2}[Q,Q]$ vanishes.  A
homological vector field on a $[1]$-manifold $\mathcal M=E[-1]$ is the
Cartan differential $\dr_E$ associated to a Lie algebroid structure on
$E$. This result is due to Vaintrob \cite{Vaintrob97} and was
explained in our introduction. A \textbf{Lie n-algebroid} is an
$[n]$-manifold endowed with a homological vector field (an
\emph{NQ-manifold} of degree $n$).  

A \textbf{split Lie n-algebroid} is a split $[n]$-manifold endowed
with a homological vector field.  Split Lie n-algebroids were studied
by Sheng and Zhu \cite{ShZh14} and described as vector bundles endowed
with a bracket that satisfies the Jacobi identity up to some
correction terms, see also \cite{BoPo13}.  Let us first give in our
own words their definition of a split Lie $2$-algebroid.
\begin{definition}\label{split_Lie2}
  A split Lie 2-algebroid $Q\oplus B^*\to M$ is a pair of an anchored vector bundle
  \footnote{The names that we choose for the vector bundles will
    become natural in a moment.}  $(Q\to M, \rho_Q)$ and a vector
  bundle $B\to M$, together with
\begin{enumerate}
\item a vector bundle map $l_1\colon B^*\to Q$,
\item a
  skew-symmetric dull bracket $\lb\cdot\,,\cdot\rb\colon
  \Gamma(Q)\times\Gamma(Q)\to\Gamma(Q)$,
\item a linear connection $\nabla\colon\Gamma(Q)\times\Gamma(B)\to\Gamma(B)$ and 
\item a vector valued $3$-form $l_3\in \Omega^3(Q,B^*)$,
\end{enumerate}
such that
\begin{enumerate}
\item[(i)] $\nabla_{l_1(\beta_1)}^*\beta_2+\nabla_{l_1(\beta_2)}^*\beta_1=0$ for
  all $\beta_1,\beta_2\in\Gamma(B^*)$,
\item[(ii)] $\lb q, l_1(\beta)\rb=l_1(\nabla_q^*\beta)$ for
  $q\in\Gamma(Q)$ and $\beta\in\Gamma(B^*)$,
\item [(iii)]$\Jac_{\lb \cdot,\cdot\rb}=-l_1\circ l_3\in \Omega^3(Q,Q)$,
\item[(iv)] $R_{\nabla^*}(q_1,q_2)\beta=l_3(q_1,q_2,l_1(\beta))$
  for $q\in\Gamma(Q)$ and $\beta_1,\beta_2\in\Gamma(B^*)$, and 
\item[(v)] $\dr_{\nabla^*}l_3=0$.
\end{enumerate}
\end{definition}
From (iii) follows the identity $\rho_Q\circ l_1=0$. To get the
definition that was first given in \cite{ShZh14}, consider the skew
symmetric bracket $l_2\colon \Gamma(Q\oplus B^*)\times\Gamma(Q\oplus
B^*)\to \Gamma(Q\oplus B^*)$,
\begin{equation}\label{l2}
l_2((q_1,\beta_1),(q_2,\beta_2))=(\lb q_1, q_2\rb, \nabla_{q_1}^*\beta_2-\nabla_{q_2}^*\beta_1)
\end{equation}
for $q_1,q_2\in\Gamma(Q)$ and $\beta_1,\beta_2\in\Gamma(B^*)$. Note
that this bracket satisfies a Leibniz identity with anchor
$\rho_Q\circ \pr_Q\colon Q\oplus B^*\to TM$ and that the Jacobiator of
this bracket is then given by
\[\Jac_{l_2}((q_1,\beta_1), (q_2,\beta_2), (q_3,\beta_3))
=(-l_1(l_3(q_1,q_2,q_3)), l_3(q_1,q_2,l_1(\beta_3))+\rm{c.p.}
\]
Since $\dr_{\nabla^*}l_3=0$, one could say that a split Lie
2-algebroid is a Lie algebroid ``up to homotopy''.  Conversely, all
the geometric objects in the previous definition can easily be
constructed from the original definition of split Lie 2-algebroids.

As we have seen above, flat Dorfman connections with skew-symmetric
dual dull brackets are in duality with Lie algebroids (or equivalently
``split Lie 1-algebroids'').  As we will see below, we define in fact
Dorfman 2-representations as the Lie derivatives that are dual to
split Lie 2-algebroids.  The notion of Dorfman 2-representation
defined below resembles the notion of 2-representation. The meaning of
this analogy will become clearer in our study of VB-Courant
algebroids.
\begin{definition}\label{def_dorfman_2_conn}
  Let $(Q\to M, \rho_Q)$ be an anchored vector bundle.  A $(Q,
  \rho_Q)$-Dorfman 2-representation is a quadruple
  $(\partial_B,\Delta,\nabla,R)$ where $\partial_B\colon Q^*\to B$ is
  a vector bundle morphism, $\Delta$ is a Dorfman connection
\[\Delta\colon \Gamma(Q)\times\Gamma(Q^*)\to\Gamma(Q^*),
\]
$\nabla$ is a linear connection
\[\nabla\colon \Gamma(Q)\times\Gamma(B)\to\Gamma(B),\]
and $R$ is an element of $\Omega^2(Q,\operatorname{Hom}(B,Q^*))$ such
that
\begin{enumerate}
\item[(D1)]$\partial_B\circ \Delta_q=\nabla_q\circ \partial_B$,
\item[(D2)] $\lb q_1, q_2\rb_\Delta=-\lb q_2, q_1\rb_\Delta$,
\item[(D3)]
  $\nabla^*_{\partial_B^*\xi_1}\xi_2+\nabla^*_{\partial_B^*\xi_2}\xi_1=0$,
\item[(D4)] $\partial_B\circ R(q_1,q_2)=R_\nabla(q_1,q_2)$ and
  $R(q_1,q_2)\circ \partial_B=R_\Delta(q_1,q_2)$,
\item[(D5)] $R(q_1,q_2)^*q_3=-R(q_1,q_3)^*q_2$ and
\item[(D6)]  $\dr_\lozenge R(q_1,q_2,q_3)=\nabla_\cdot^*\left(R(q_1,q_2)^*q_3\right)$
\end{enumerate}
for all 
 $\xi_1,\xi_2\in\Gamma(B^*)$ and  $q, q_1, q_2, q_3\in\Gamma(Q)$ and $f\in C^\infty(M)$.
\end{definition}

Note that (D5) is equivalent to $R$ defining an element $\omega_R$ of
$\Omega^3(Q,B^*)$ by $\omega_R(q_1,q_2,q_3)=R(q_1,q_2)^*q_3$.
Axiom (D6) is equivalent to $\dr_{\nabla^*}\omega_R=0$ and (D4) is
equivalent to $(R_{\nabla}(q_1,q_2))^*\xi=-R_{\nabla^*}(q_1,q_2)\xi=\omega_R(q_1,q_2,\partial_B^*\xi)$ for
$q_1,q_2\in\Gamma(Q)$ and $\xi\in\Gamma(B^*)$ and 
$\Jac_{\lb\cdot\,,\cdot\rb_\Delta}=\partial_B^*\omega_R$.

One can see quite easily that the definition of a Dorfman
2-representations is just a rephrasing of the definition of a Lie
2-algebroid. 
Set $\omega_R=l_3$ and $\partial_B^*=-l_1$.  Then (D1) is
(ii) in Definition \ref{split_Lie2} and the other axioms have already
been explained above.  Note that the vector bundle $Q\oplus B^*$ is
anchored by $\rho_Q$ and paired with $Q^*\oplus B$ by the natural
pairing and the map $C^\infty(M)\to \Gamma(Q^*\oplus B)$,
$f\mapsto (\rho_Q^*\dr f, 0)$. Hence we can define 
a new Dorfman connection 
\[\Delta^2\colon \Gamma(Q\oplus B^*)\times \Gamma(Q^*\oplus B)\to\Gamma(Q^*\oplus B)\]
by 
\[\Delta^2_{(q,\beta)}(\tau,b)=(\Delta_q\tau, \nabla_qb),
\]
then $\Delta^2$ is the Dorfman connection that is dual to the bracket
$l_2$ defined in \eqref{l2}.  Hence, we can think of Dorfman
2-representations as ``Lie 2-derivatives'', or ``Lie derivatives up to
homotopy''. In other words, as the duals of Lie algebroids are Dorfman
representations, the duals of Lie $2$-algebroids are Dorfman
$2$-representations.

\subsection{Split Lie-$2$-algebroids as split $[2]$Q-manifolds}
Before we go on with the study of examples, we briefly describe how to
construct from the objects in Definitions \ref{split_Lie2} and
\ref{def_dorfman_2_conn} the corresponding homological vector fields
on split $[2]$-manifolds.

Consider a split $[2]$-manifold $\mathcal M=Q[-1]\oplus B^*[-2]$.
On an open chart
$U\subseteq M$ trivialising both $Q$ and $B$, we have coordinates
$(x_1,\ldots,x_p)$ and we can choose a local frame $(q_1,\ldots,q_{r_1})$ of
sections of $Q$, and a local frame $(\beta_1,\ldots,\beta_{r_2})$ of sections
of $B^*$. We denote by
$(\tau_1,\ldots,\tau_{r_1})$ and $(b_1,\ldots,b_{r_2})$ the dual frames
for $Q^*$ and $B$, respectively. 
As functions on $\mathcal M$, 
the coordinates functions $x_1,\ldots,x_p$ have degree $0$, the functions $\tau_1,
\ldots,\tau_{r_1}$ have degree $1$ and the functions $b_1,\ldots, b_{r_2}$
have degree $2$.  The vector fields $\partial_{x_1},
\ldots, \partial_{x_p},\partial_{\tau_1},\ldots,\partial_{\tau_{r_1}},\partial_{b_1},\ldots,\partial_{b_{r_2}}$
have degree $0,-1,-2$, respectively, and generate (locally) the set of
vector fields on $\mathcal M$ as a $C^\infty_{\mathcal M}$-module.
Assume that $Q$ is endowed with an anchor $\rho_Q$ and take a Dorfman
2-representation $(\partial_B\colon Q^*\to B,\Delta,\nabla,R)$ of $Q$
on $B\oplus Q^*$. Define a vector field $\mathcal Q$ of degree $1$ on
$\mathcal M$ by the following formula in local coordinates:
\begin{equation}\label{Q_in_coordinates}
\begin{split}
  \mathcal Q=&\sum_{i,j}\rho_Q(q_i)(x_j)\tau_i\partial_{x_j}
-\sum_{i<j}\sum_k\langle \lb q_i, q_j\rb_\Delta,\tau_k\rangle\tau_i\tau_j\partial_{\tau_k}\\
  &+\sum_{r,k}\langle\partial_B^*\beta_r,\tau_k\rangle b_r\partial_{\tau_k}
-\sum_{i<j<k}\sum_l\omega_R(q_i,q_j,q_k)(b_l)\tau_i\tau_j\tau_k\partial_{b_l}\\
  &-\sum_{ijl}\langle\nabla_{q_i}^*\beta_j,b_l\rangle\tau_ib_j\partial_{b_l}.
\end{split}
\end{equation}
This vector field satisfies $[\mathcal Q,\mathcal Q]=0$. To see this,
we compute $\mathcal Q\circ \mathcal Q$ in local coordinates.

For $f\in C^\infty(M)$ we have $\mathcal Q(f)=\rho_Q^*\dr f$ and a
relatively easy computation shows that $\mathcal Q(\mathcal Q(f))=0$
for all $f\in C^\infty(M)$ if and only if $\rho_Q\circ\partial_B^*=0$
and $\rho_Q\lb q_1,q_2\rb_\Delta=[\rho_Q(q_1), \rho_Q(q_2)]$ for all
$q_1,q_2\in\Gamma(Q)$.

A longer, but straightforward computation shows that $\mathcal
Q(\mathcal Q(\tau_k))=0$ for $k=1,\ldots,r_1$ if and only if
$\nabla_{q_i}(\partial_B\tau_k)=\partial_B(\Delta_{q_i}\tau_k)$ for
all $i=1,\ldots,n$ and $\lb \lb q_i,q_j\rb_\Delta,q_l\rb_\Delta+\lb
q_j, \lb q_i,q_l\rb_\Delta \rb_\Delta-\lb q_i,\lb
q_j,q_l\rb_\Delta\rb_\Delta=\partial_B^*R(q_i,q_j)^*q_l=\partial_B^*\omega_R(q_i,q_j,q_l)$
for all $i,j,l=1,\ldots,n$.

Finally, a very long but also straightforward computation shows that
$\mathcal Q^2(b_k)=0$ for $k=1,\ldots,l$ if and only if
$\nabla_{\partial_B^*\beta_r}\beta_j+\nabla_{\partial_B^*\beta_j}\beta_r=0$
for $j,r=1,\ldots,l$, $\partial_B(R(q_i,q_j)b_k)=R_\nabla(q_i,q_j)b_l$
for $i,j=1,\ldots,n$ and $k=1,\ldots,l$ and
$\dr_{\nabla^*}\omega_R=0$.

Conversely, write locally the homological vector field in
coordinates and define $\rho_Q$, $\omega_R$ and $\lb\cdot\,,\cdot\rb_\Delta$
on basis sections $q_1,\ldots,q_n$ of $Q$, $\partial_B^*$ on basis
sections of $B^*$ and $\nabla^*$ on basis sections of $B^*$ and
$Q$. The global objects can then be defined locally using Leibniz
identities and tensoriality of these objects. These local
definitions will then be compatible with changes of trivialisations
and define globally the structure components of a split Lie
2-algebroid.

Hence, we have found an explicit way of writing the homological vector
field that is equivalent to a split Lie 2-algebroid.  Note that by
definition, a $[2]$-manifolds is always locally split. Hence, a Lie
$2$-algebroid always defines local Dorfman $2$-representation, and the
homological vector field can always be locally written as in
\eqref{Q_in_coordinates}. Going from an open set to an other will
involve a change of local basis and a change of splitting, but we will show later that changes
of splittings of Lie $2$-algebroids are easy to describe

\subsection{Examples of Dorfman 2-representations and split Lie 2-algebroids}\label{examples_split_lie_2}
We describe here five classes of examples of split Lie
2-algebroids. Later we will discuss their geometric meanings. We do
not verify in detail the axioms for Dorfman 2-representations of for
split Lie 2-algebroids. The computations in order to do this for
Examples \ref{standard}, \ref{adjoint} and \ref{double} are long, but
straightforward. Note that, alternatively, the next section will
provide a geometric proof of the fact that the following objects are
Dorfman $2$-representations (and split Lie $2$-algebroids), since we
will find them to be equivalent to special classes of VB-Courant
algebroids.

\subsubsection{Dorfman 2-representation associated to a Lie algebroid representation} 
Let $(Q\to M,\rho,[\cdot\,,\cdot])$ be a Lie algebroid and
$\nabla\colon\Gamma(Q)\times\Gamma(B)\to\Gamma(B)$ a representation of $Q$
on a vector bundle $B$.  Then $(\partial_B=0\colon Q^*\to B, \ldr{}^Q,
\nabla, R=0)$ is a Dorfman 2-representation.

 The corresponding Lie 2-algebroid is a
semi-direct extension of the Lie algebroid $Q$ (and a special case of
the double Lie 2-algebroids defined later). Here $l_1=0$ and $l_2$ is given by
$l_2(q_1+\beta_1,q_2+\beta_2)=[q_1,q_2]+(\nabla_{q_1}^*\beta_2-\nabla_{q_2}^*\beta_1)$
for $q_1,q_2\in\Gamma(Q)$ and $\beta_1,\beta_2\in\Gamma(B^*)$, and
$l_3=0$. Hence $(Q\oplus B^*\to M, \rho=\rho_Q\circ\pr_Q, l_2)$ is simply a Lie algebroid.

\subsubsection{Standard Dorfman 2-representations}\label{standard}
Let $E\to M$ be a vector bundle, set $\partial_E=\pr_E\colon E\oplus
T^*M\to E$ and consider a skew-symmetric dull bracket
$\lb\cdot\,,\cdot\rb$ on $\Gamma(TM\oplus E^*)$, with $TM\oplus E^*$
anchored by $\pr_{TM}$ and let $\Delta\colon \Gamma(TM\oplus
E^*)\times\Gamma(E\oplus T^*M)\to\Gamma(E\oplus T^*M)$ be the dual
Dorfman connection. This defines as follows a split Lie 2-algebroid
structure on the vector bundles $(TM\oplus E^*,\pr_{TM})$ and $E^*$,
or a Dorfman 2-representation of $(TM\oplus E^*,\pr_{TM})$ on $E\oplus
T^*M$ and $E^*$.

Let $\nabla\colon\Gamma(TM\oplus E^*)\times\Gamma(E)\to\Gamma(E)$ be
the ordinary linear connection\footnote{To see that
  $\nabla=\pr_E\circ\Delta\circ\iota_E$ is an ordinary connection,
  recall that since $TM\oplus E^*$ is anchored by $\pr_{TM}$, the map
  $\dr_{E\oplus T^*M}=\pr_{TM}^*\dr\colon C^\infty(M)\to
  \Gamma(E\oplus T^*M)$ sends $f\to (0,\dr f)$.}  defined by
$\nabla=\pr_E\circ\Delta\circ\iota_E$, where $\Delta$ is dual to
$\lb\cdot\,,\cdot\rb$.  The vector bundle map $l_1=-\pr_E^*\colon
E^*\to TM\oplus E^*$ is just the opposite of the canonical inclusion.
Finally define $l_3$ by
$l_3(v_1,v_2,v_3)=\Jac_{\lb\cdot\,,\cdot\rb}(v_1,v_2,v_3)$ and
accordingly $R=R_\Delta\circ\iota_E\in\Omega^2(TM\oplus E^*,
\Hom(E,E\oplus T^*M))$. (Note that since $TM\oplus E^*$ is anchored by
$\pr_{TM}$, the tangent part of the dull bracket must just be the Lie
bracket of vector fields. The Jacobiator $\Jac_{\lb\cdot\,,\cdot\rb}$
can hence really be seen as an element of $\Omega^3(TM\oplus E^*,
E^*)$.)

A straightforward verification of the axioms shows that
$(\partial_E,\Delta,\nabla,R)$ is a $(TM\oplus E^*,\pr_{TM})$-Dorfman
2-representation, or equivalently, that $l_1$, $\lb\cdot\,,\cdot\rb$,
$\nabla^*$, $l_3$ define a split Lie 2-algebroid. For reasons that
will become clearer later, we call \emph{standard} this type of split
Lie 2-algebroid and Dorfman 2-representation.

\subsubsection{Adjoint Dorfman 2-representations}\label{adjoint}
Let $\mathsf E\to M$ be a Courant algebroid with anchor $\rho_{\mathsf
  E}$ and bracket $\lb\cdot\,,\cdot\rb$ and choose a metric linear
connection $\nabla\colon\mx(M)\times\Gamma(\mathsf E)\to\Gamma(\mathsf
E)$, i.e.~a linear connection that preserves the pairing. Set
$\partial_{TM}=\rho_{\mathsf E}\colon \mathsf E\to TM$, identifying
$\mathsf E$ with its dual via the pairing.  The map $\Delta\colon
\Gamma(\mathsf E)\times\Gamma(\mathsf E)\to\Gamma(\mathsf E)$,
\[\Delta_ee'=\lb e,e'\rb+\nabla_{\rho(e')}e\]
is a Dorfman connection, which we call the \emph{basic Dorfman connection associated to
$\nabla$}. The dual skew-symmetric (!) dull bracket is given by $
\lb e,e'\rb_\Delta=\lb e,e'\rb-\Beta\inv\rho^*\langle\nabla_\cdot
e,e'\rangle $ for all $e,e'\in\Gamma(\mathsf E)$.  The map
$\nabla^{\rm bas}\colon\Gamma(\mathsf E)\times\mx(M)\to\mx(M)$,
\[\nabla^{\rm bas}_eX=[\rho(e),X]+\rho(\nabla_{X}e)\]
is a linear connection, the \emph{basic connection associated to $\nabla$}.

We now define the \emph{basic curvature} $R_\Delta^{\rm
  bas}\in\Omega^2(\mathsf E,\Hom(TM,\mathsf E))$
by\footnote{Alternatively, $R_\Delta^{\rm bas}$ can be defined by $
  R_\Delta^{\rm bas}(e_1,e_2)X= -\nabla_X\lb e_1,e_2\rb+\lb
  \nabla_Xe_1,e_2\rb+\lb e_1,\nabla_Xe_2\rb +\nabla_{\nabla^{\rm
      bas}_{e_2}X}e_1-\nabla_{\nabla^{\rm
      bas}_{e_1}X}e_2-\beta\inv\langle \nabla_{\nabla^{\rm bas}_\cdot
    X}e_1,e_2\rangle$. Using $-R_\nabla^*=R_{\nabla^*}=R_\nabla$
  (where we identify $\mathsf E$ with its dual using
  $\langle\cdot\,,\cdot\rangle$), the identity $R_\Delta^{\rm
    bas}(e_1,e_2)=-R_\Delta^{\rm bas}(e_2,e_1)$ is obvious using the
  first definition. It is an easy computation using the second one.}
\begin{align*}
  R_\Delta^{\rm bas}(e_1,e_2)X=& -\nabla_X\lb
  e_1,e_2\rb_\Delta+\lb \nabla_Xe_1,e_2\rb_\Delta+\lb e_1,\nabla_Xe_2\rb_\Delta\\
  &+\nabla_{\nabla^{\rm bas}_{e_2}X}e_1-\nabla_{\nabla^{\rm
      bas}_{e_1}X}e_2-\beta\inv\rho^*\langle R_\nabla(X,\cdot)e_1,e_2\rangle
\end{align*}
for all $e_1,e_2\in\Gamma(\mathsf E)$ and $X\in\mx(M)$.
Then $(\rho, \Delta,\nabla^{\rm bas}, R_\Delta^{\rm bas})$ is a Dorfman 2-representation.
Note the similarity of this construction with the one of the adjoint 
representation up to homotopy in Example \ref{tangent_double_al}.

 The corresponding \emph{adjoint} split Lie 2-algebroid can be described as follows. 
The map $l_1$ is $-\Beta\inv\circ
 \rho_{\mathsf E}^*\colon T^*M\to\mathsf E$ and 
 $l_2(e_1+\theta_1,e_2+\theta_2)$ is
\[(\lb e_1,
e_2\rb-\beta\inv\rho_{\mathsf E}\langle \nabla_\cdot e_1,
e_2\rangle)+(\ldr{\rho(e_1)}\theta_2-\ldr{\rho(e_2)}\theta_1+\langle
\rho^*\theta_1,\nabla_\cdot e_2\rangle-\langle
\rho^*\theta_2,\nabla_\cdot e_1\rangle)
\]
for $e_1,e_2\in\Gamma(\mathsf E)$ and
$\theta_1,\theta_2\in\Omega^1(M)$. The form $l_3\in\Omega^3(\mathsf
E,T^*M)$ is given by $l_3(e_1,e_2,e_3)=\langle R_\Delta^{\rm
  bas}(e_1,e_2), e_3\rangle$ and corresponds to the
tensor $\Psi$ defined in \cite[Definition 4.1.2]{Li-Bland12} (the left-hand side of \eqref{alternative_curvature}).  
We will see later that the adjoint split Lie 2-algebroids are exactly the \emph{split symplectic 
Lie 2-algebroids}, and correspond hence to splittings of the tangent doubles of 
Courant algebroids. They are usually considered as the super-geometric objects corresponding 
to Courant algebroids, but we will explain this correspondence in more detail later.

\subsubsection{Dorfman 2-representation defined by a 2-representation}\label{semi-direct}
Let $(\partial_B\colon C\to B,\nabla,\nabla,R)$ be a representation up
to homotopy of a Lie algebroid $A$ on $B\oplus C$.  Then the quadruple
$(\partial_B\circ \pr_C\colon C\oplus A^*\to B, \Delta,\nabla,R)$
defined by
\begin{equation}
\begin{split}
\Delta\colon&\Gamma(A\oplus C^*)\times\Gamma(C\oplus A^*)\to\Gamma(C\oplus A^*)\\
&\Delta_{(a,\gamma)}(c,\alpha)=(\nabla_ac,\ldr{a}\alpha+\langle\nabla^*_\cdot\gamma,c\rangle),
\end{split}
\end{equation}
\begin{equation}
\begin{split}
  \nabla\colon&\Gamma(A\oplus C^*)\times\Gamma(B)\to\Gamma(B), \qquad \nabla_{(a,\gamma)}b=\nabla_ab
\end{split}
\end{equation}
with $A\oplus C^*$ anchored by $\rho_A\circ\pr_A$,
and 
$R\in \omega^2(A\oplus C^*, \operatorname{Hom}(B,C\oplus A^*))$,
\begin{equation}
R((a_1,\gamma_1), (a_2,\gamma_2))=\left(R(a_1,a_2), \langle\gamma_2,R(a_1,\cdot)\rangle
+\langle\gamma_1,R(\cdot,a_2)\rangle\right)
\end{equation}
is a Dorfman 2-representation.

The vector bundle map $l_1$ is here 
$l_1=\iota_{C^*}\circ\partial_B^*$, where $\iota_{C^*}\colon C^*\to
A\oplus C^*$ is the canonical inclusion, and the bracket $l_2$ is given
by \[l_2((a_1,\gamma_1)+\alpha_1,(a_2,\gamma_2)+\alpha_2)=([a_1,a_2],\nabla_{a_1}^*\gamma_2-\nabla_{a_2}^*\gamma_1)+(\nabla_{a_1}^*\alpha_2-\nabla_{a_2}^*\alpha_1)\]
for $a_1,a_2\in\Gamma(A)$, $\gamma_1,\gamma_2\in\Gamma(C^*)$ and
$\alpha_1,\alpha_2\in\Gamma(B^*)$. The tensor $l_3$ is finally given
by \[l_3((a_1,\gamma_1), (a_2,\gamma_2), (a_3,\gamma_3))=\langle
R(a_1,a_2), \gamma_3\rangle+\text{c.p.}
\]

Note that if we work with the dual $A$-representation up to
homotopy\linebreak $(\partial_B^*\colon B^*\to C^*, \nabla^*,
\nabla^*, -R^*)$, then we get the Lie 2-algebroid defined in
\cite[Proposition 3.5]{ShZh14} as the semi-direct product of a 2-
representation and a Lie algebroid.  This is then also a special case
of the bicrossproduct of a matched pair of 2-representations (see the
next section).  We will explain later the slightly peculiar choice
that we make here.

\subsection{The bicrossproduct of a matched pair of
  2-representations}\label{double}
In this section we describe a class of examples, which is of great
independent interest: we show that the bicrossproduct of a
matched pair of 2-representations is a split Lie 2-algebroid.

We construct a split Lie 2-algebroid $(A\oplus B)\oplus C$ induced by
a matched pair of 2-representations as in Definition
\ref{matched_pair_2_rep}.  The vector bundle $A\oplus B\to M$ is
anchored by $\rho_A\circ\pr_A+\rho_B\circ\pr_B$ and paired with $A^*\oplus B^*$ as
follows:
\[\langle (a,b),(\alpha,\beta)\rangle=\alpha(a)-\beta(b)
\]
for all $a\in\Gamma(A)$, $b\in\Gamma(B)$, $\alpha\in\Gamma(A^*)$ and
$\beta\in\Gamma(B^*)$. The morphism $A^*\oplus B^*\to C^*$ is
$\partial_A^*\circ\pr_{A^*}+\partial_B^*\circ\pr_{B^*}$. The $A\oplus B$-Dorfman connection
on $A^*\oplus B^*$ is defined by
\[\Delta_{(a,b)}(\alpha, \beta)=(\nabla^*_b\alpha+\ldr{a}\alpha-\langle\nabla_\cdot
b,\beta\rangle, \nabla^*_a\beta+\ldr{b}\beta-\langle \nabla_\cdot
a,\alpha\rangle).
\]
The dual dull bracket on $\Gamma(A\oplus B)$ is
\begin{equation}\label{bicross_bracket}
\lb (a,b), (a',b')\rb=([a,a']+\nabla_ba'-\nabla_{b'}a,
[b,b']+\nabla_ab'-\nabla_{a'}b).
\end{equation}
The $A\oplus B$-connection on $C^*$ is simply given by
$\nabla_{(a,b)}^*\gamma=\nabla_a^*\gamma+\nabla_b^*\gamma$ and the
dual connection is $\nabla\colon \Gamma(A\oplus
B)\times\Gamma(C)\to\Gamma(C)$, 
\begin{equation}\label{nabla_bicross_C}
\nabla_{(a,b)}c=\nabla_ac+\nabla_bc.
\end{equation}
Finally, the form $\omega_R=l_3\in\Omega^3(A\oplus B,C)$ is given by
\begin{equation}\label{omega_matched}
\begin{split}
  \omega_R((a_1,b_1),(a_2,b_2),(a_3,b_3))&=R(a_1,a_2)b_3+R(a_2,a_3)b_1+R(a_3,a_1)b_2\\
  &\qquad -R(b_1,b_2)a_3-R(b_2,b_3)a_1-R(b_3,b_1)a_2.
\end{split}
\end{equation}
The quadruple $(\partial_A^*+\partial_B^*\colon A^*\oplus B^*\to C^*,
\nabla, \Delta,R)$ is a Dorfman 2-representation.  Equivalently the
vector bundle $(A\oplus B)\oplus C\to M$ with the anchor $\rho_A\circ\pr_A+\rho_B\circ\pr_B\colon A\oplus B\to C$,
$l_1=\partial_B\circ\pr_B-\partial_A\circ\pr_A$, $l_3=\omega_R$ and the skew-symmetric
dull bracket \eqref{bicross_bracket} define a split Lie
2-algebroid.

Moreover, we prove the following theorem:
\begin{theorem}\label{double_2_rep}
  The double of a matched pair of 2-representations is a split Lie
  2-algebroid with the structure given above.
  Conversely if $(A\oplus B)\oplus C$ has a split Lie 2-algebroid
  structure such that 
\begin{enumerate}
\item $\lb (a_1,0),(a_2,0)\rb=([a_1,a_2],0)$
with a section $[a_1,a_2]\in\Gamma(A)$ for all $a_1,a_2\in\Gamma(A)$
and in the same manner $\lb (0,b_1),(0,b_2)\rb=(0, [b_1,b_2])$
with a section $[b_1,b_2]\in\Gamma(B)$ for all $b_1,b_2\in\Gamma(B)$, and
\item $l_3((a_1,0), (a_2,0), (a_3,0))=0$ and $l_3((0, b_1), (0,b_2), (0,b_3))=0$
for all $a_1,a_2,a_3$ in $\Gamma(A)$ and $b_1,b_2,b_3$ in $\Gamma(B)$,
\end{enumerate} then
$A$ and $B$ are Lie subalgebroids of $(A\oplus B)\oplus C$ and 
  $(A\oplus B)\oplus C$ is the double of a matched pair of
  $2$-representations of $A$ on $B\oplus C$ and of $B$ on $A\oplus
  C$. The 2-representation of $A$ is given by
\begin{equation}\label{2_rep_A}
\begin{split}
\partial_B(c)&=\pr_B(l_1(c)),\,\, 
\nabla_ab=\pr_B\lb (a,0), (0,b)\rb,\,\,
\nabla_ac=\nabla_{(a,0)}c,\\
 R_{AB}(a_1,a_2)b&=l_3(a_1,a_2,b)
\end{split}
\end{equation}
and the $B$-representation is given by 
\begin{equation}
\begin{split}\label{2_rep_B}
\partial_A(c)&=-\pr_A(l_1(c)),\,\,
\nabla_ba=\pr_A\lb(0,b),(a,0)\rb,\,\,\nabla_bc=\nabla_{(0,b)}c,\\
 R_{BA}(b_1,b_2)a&=-l_3(b_1,b_2,a).
\end{split}
\end{equation}
\end{theorem}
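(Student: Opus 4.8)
The plan is to verify Theorem~\ref{double_2_rep} in two directions, using the already-established correspondence between split Lie $2$-algebroids and Dorfman $2$-representations (Definitions~\ref{split_Lie2} and~\ref{def_dorfman_2_conn}). For the forward direction, I would take a matched pair of $2$-representations as in Definition~\ref{matched_pair_2_rep} and check directly that the quadruple $(\partial_A^*+\partial_B^*,\Delta,\nabla,R)$ written down above satisfies axioms (D1)--(D6) of a Dorfman $2$-representation of the anchored bundle $(A\oplus B,\rho_A\circ\pr_A+\rho_B\circ\pr_B)$ on $(A^*\oplus B^*,C)$; equivalently, by the dictionary $\omega_R=l_3$, $\partial_B^*=-l_1$, one verifies (i)--(v) of Definition~\ref{split_Lie2}. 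Each axiom decomposes into the ``$A$-part'', the ``$B$-part'', and a ``mixed'' part, and these correspond respectively to: the fact that $(\partial_A,\nabla^{AB},\nabla^{AC},R_{AB})$ and $(\partial_B,\nabla^{BA},\nabla^{BC},R_{BA})$ are $2$-representations (conditions (1)--(3) on the list before Theorem~\ref{rajan}), and the matched-pair compatibility conditions (1)--(7) of Definition~\ref{matched_pair_2_rep}. The key computational inputs are: $\lb\cdot\,,\cdot\rb$ from \eqref{bicross_bracket} has Jacobiator governed by the matched-pair axioms (this is the content of Remark~\ref{lie_bracket_on_C} combined with (5), (6)); the connection $\nabla$ from \eqref{nabla_bicross_C} has curvature $R_{\nabla}(a+b,a'+b') = R_{AB}(a,a') + R_{BA}(b,b')$ up to mixed terms controlled by (4); and $\dr_{\nabla^*}\omega_R=0$ unwinds, via the bigraded splitting $\Omega^\bullet(A\oplus B,C)=\bigoplus\Omega^i(A,\wedge^jB^*\otimes C)$, into $\dr_{\nabla^A}R_{AB}=0$, $\dr_{\nabla^B}R_{BA}=0$, and the cross term $\dr_{\nabla^A}R_{BA}=\dr_{\nabla^B}R_{AB}$, which is exactly axiom (7).

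For the converse, I would start from a split Lie $2$-algebroid structure on $(A\oplus B)\oplus C$ satisfying the two normalisation hypotheses. Hypothesis (1) says that $\Gamma(A)$ and $\Gamma(B)$ (embedded as $\Gamma(A)\times\{0\}$ and $\{0\}\times\Gamma(B)$) are closed under the dull bracket; since the dull bracket is skew-symmetric with anchor compatibility \eqref{anchor_preserves_bracket}, and since the restriction of a Lie-$2$-algebroid's Jacobiator is $-l_1\circ l_3$, hypothesis (2) forces $\Jac$ to vanish on triples from $\Gamma(A)$ (resp.\ $\Gamma(B)$), so the restricted brackets are genuine Lie algebroid brackets --- this gives the Lie subalgebroid claim. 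Then one \emph{defines} the eight pieces of data by the formulas \eqref{2_rep_A} and \eqref{2_rep_B} and checks they are well-defined tensors of the right type (using the Leibniz rules for $\lb\cdot\,,\cdot\rb$ and $l_1$, and tensoriality of $l_3$), then verifies the $2$-representation axioms for each and the seven matched-pair conditions. This is essentially the same list of identities as in the forward direction, re-read backwards: one extracts from (i)--(v) of Definition~\ref{split_Lie2} (specialised to the various combinations of $A$- and $B$-arguments) precisely the statements (1)--(7) of Definition~\ref{matched_pair_2_rep}. Finally one notes that the two constructions are mutually inverse by inspection of the formulas.

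The main obstacle I expect is bookkeeping: axiom (v), $\dr_{\nabla^*}l_3=0$, is a $3$-form identity on the rank-$(\rk A+\rk B)$ bundle $A\oplus B$, and expanding $l_3=\omega_R$ from \eqref{omega_matched} with mixed arguments produces a large number of terms that must be sorted by bidegree and matched against axioms (4)--(7) of the matched pair; similarly axiom (iv), $R_{\nabla^*}(q_1,q_2)\beta = l_3(q_1,q_2,l_1(\beta))$, with $q_1\in\Gamma(A)$, $q_2\in\Gamma(B)$, $\beta\in\Gamma(C^*)$, is where the cross-curvature condition~(4) of Definition~\ref{matched_pair_2_rep} enters and the signs coming from the pairing $\langle(a,b),(\alpha,\beta)\rangle=\alpha(a)-\beta(b)$ and from $l_1=\partial_B\circ\pr_B-\partial_A\circ\pr_A$ need care. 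A secondary subtlety is the sign/variance conventions relating $\nabla^{AB}$ to its dual and relating the Dorfman connection $\Delta$ to its dull bracket, which must be tracked consistently so that, e.g., \eqref{bicross_bracket} really is dual to the stated $\Delta$. I would organise the proof by first recording these bidegree decompositions as a lemma, then treating (i)--(v) one at a time, so that the bulk of the verification is reduced to term-by-term comparison rather than open-ended computation. I will present the forward direction in full and indicate that the converse is obtained by reading the same computations in reverse, since no new identity is needed.
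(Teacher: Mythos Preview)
Your proposal is correct and follows essentially the same strategy as the paper: decompose each axiom (i)--(v) of Definition~\ref{split_Lie2} according to how many arguments lie in $A$ versus $B$, and identify the resulting pieces with the $2$-representation axioms on each side together with the matched-pair conditions (1)--(7) of Definition~\ref{matched_pair_2_rep}. The only organisational difference is that the paper presents the converse direction first---extracting the $2$-representation data via \eqref{2_rep_A}--\eqref{2_rep_B}, verifying directly that each is a $2$-representation (e.g.\ $\partial_B(\nabla_a c)=\nabla_a(\partial_B c)$ from axiom (ii)), and then establishing the term-by-term \emph{equivalence} (M1)$\Leftrightarrow$(i), (M2)+(M3)$\Leftrightarrow$(ii), (M5)+(M6)$\Leftrightarrow$(iii), (M4)$\Leftrightarrow$(iv), (M7)$\Leftrightarrow$(v)---so that both directions fall out at once, whereas you plan to do the forward direction in full and then read it backwards; the computational content is the same.
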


\begin{proof}
  Assume first that $(A\oplus B)\oplus C$ is a split Lie 2-algebroid
  with (1) and (2). The bracket
  $[\cdot\,,\cdot]\colon\Gamma(A)\times\Gamma(A)\to\Gamma(A)$ defined
  by $\lb (a_1,0), (a_2,0)\rb=([a_1,a_2],0)$ is obviously
  skew-symmetric and $\R$-bilinear. Define an anchor $\rho_A$ on $A$
  by $\rho_A(a)=\rho_{A\oplus B}(a,0)$. Then we get immediately
  \[([a_1,fa_2],0)=\lb (a_1,0),
  f(a_2,0)\rb=f([a_1,a_2],0)+\rho_{A\oplus B}(a_1,0)(f)(a_2,0),\]
  which shows that $[a_1,fa_2]=f[a_1,a_2]+\rho_A(a_1)(f)a_2$ for all
  $a_1,a_2\in\Gamma(A)$. Further, we find
  $\Jac_{[\cdot\,,\cdot]}(a_1,a_2,a_3)=\pr_A(\Jac_{\lb\cdot\,,\cdot\rb}((a_1,0),(a_2,0),(a_3,0)))=-(\pr_A\circ
  l_1\circ l_3 )((a_1,0),(a_2,0),(a_3,0)))=0$ since $l_3$ vanishes on
  sections of $A$.  Hence $A$ is a wide subalgebroid of the split Lie
  $2$-algebroid. In a similar manner, we find a Lie algebroid
  structure on $B$. Next we prove that \eqref{2_rep_A} defines a
  $2$-representation of $A$.
Using (ii) in Definition \ref{split_Lie2} we find for $a\in\Gamma(A)$ and $c\in\Gamma(C)$ that 
\begin{equation*}
\begin{split}
\partial_B(\nabla_ac)&=(\pr_B\circ l_1)(\nabla_{(a,0)}c)\overset{\rm (ii)}{=}\pr_B\lb (a,0),l_1(c)\rb\\
&=\pr_B\lb (a,0),(0,\pr_B(l_1(c)))\rb=\nabla_a(\partial_Bc).
 \end{split}
\end{equation*}
In the third equation we have used Condition (1) and in the last
equation the definitions of $\partial_B$ and
$\nabla_a\colon\Gamma(B)\to\Gamma(B)$.
In the following, we will write for simplicity $a$ for $(a,0)\in\Gamma(A\oplus B)$, etc.
We get easily
\begin{equation*}
\begin{split}
R_{AB}(a_1,a_2)\partial_Bc=l_3(a_1,a_2,\pr_B(l_1(c)) )=l_3(a_1,a_2,l_1(c))\overset{\rm (iv)}{=}R_\nabla(a_1,a_2)c
 \end{split}
\end{equation*}
and 
\[\partial_BR_{AB}(a_1,a_2)b=(\pr_B\circ l_1\circ l_3)(a_1,a_2,b)\overset{\rm
  (iii)}{=}-\pr_B(\Jac_{\lb\cdot\,,\cdot\rb}(a_1,a_2,b))
\]
for all $a_1,a_2\in\Gamma(A)$, $b\in\Gamma(B)$ and $c\in\Gamma(C)$.
By Condition (1) and the definition of $\nabla_a\colon\Gamma(B)\to\Gamma(B)$,
we find \begin{equation*}
\begin{split}
R_\nabla(a_1,a_2)b&=\pr_B\lb a_1,\lb a_2,b\rb\rb-\pr_B\lb a_2\lb a_1, b\rb\rb
-\pr_B\lb\lb a_1,a_2\rb, b\rb\\
&=-\pr_B(\Jac_{\lb\cdot\,,\cdot\rb}(a_1,a_2,b)).
 \end{split}
\end{equation*}
Hence, $\partial_BR_{AB}(a_1,a_2)b=R_\nabla(a_1,a_2)b$. Finally, an easy
computation along the same lines shows that
\begin{equation}\label{dl3_van1}
\langle (\dr_{\nabla^{\operatorname{Hom}}}R_{AB})(a_1,a_2,a_3), b\rangle=(\dr_{\nabla}l_3)(a_1,a_2,a_3,b)
\end{equation}
for $a_1,a_2,a_3\in\Gamma(A)$ and $b\in\Gamma(B)$.  Since
$\dr_{\nabla}l_3=0$, we find
$\dr_{\nabla^{\operatorname{Hom}}}R_{AB}=0$. In a similar manner, we prove
that \eqref{2_rep_B} defines a $2$-representation of $B$.  Further, by
construction of the $2$-representations, the split Lie $2$-algebroid
structure on $(A\oplus B)\oplus C)$ must be defined as in
\eqref{bicross_bracket}, \eqref{nabla_bicross_C} and
\eqref{omega_matched}, with the anchor
$\rho_A\circ\pr_A+\rho_B\circ\pr_B$ and
$l_1=\partial_B\circ\pr_B-\partial_A\circ\pr_A$. Hence, to conclude
the proof, it only remains to check that the split Lie $2$-algebroid
conditions for these objects are equivalent to the seven conditions in
Definition \ref{matched_pair_2_rep} for the two $2$-representations.

First, we find immediately that (M1) is equivalent to (i).
Then we find by construction
\[[a,\partial_Ac]+\nabla_{\partial_Bc}a=-[a,\pr_A(l_1(c))]+\nabla_{\pr_B(l_1(c))}a=\pr_A\lb l_1(c),a \rb=-\pr_A \lb a, l_1(c)\rb.
\]
Hence, we find (M2) if and only if 
$\pr_A \lb a, l_1(c)\rb=\pr_A\circ l_1(\nabla_ac)$. But since 
\begin{equation*}
\begin{split}
\lb a, l_1c\rb&=(\pr_A \lb a, l_1(c)\rb, \nabla_a\pr_Bl_1(c))=(\pr_A \lb a, l_1(c)\rb, \nabla_a\partial_B(c))\\
&=(\pr_A \lb a, l_1(c)\rb,\partial_B\nabla_ac)=(\pr_A \lb a, l_1(c)\rb,\pr_B(l_1(\nabla_ac))),
\end{split}
\end{equation*}
we have $\pr_A \lb a, l_1(c)\rb=\pr_A\circ l_1(\nabla_ac)$ if and only
if $\lb a, l_1c\rb=l_1(\nabla_ac)$. Hence (M2) is satisfied if and
only if $\lb a, l_1c\rb=l_1(\nabla_ac)$ for all $a\in\Gamma(A)$ and
$c\in\Gamma(C)$.  In a similar manner, we find that (M3) is equivalent
to $\lb b, l_1c\rb=l_1(\nabla_bc)$ for all $b\in\Gamma(B)$ and
$c\in\Gamma(C)$. This shows that (M2) and (M3) together are equivalent to (ii).

Next, a simple computation shows that (M4) is equivalent to
$R_\nabla(b,a)c=l_3(b,a,l_1(c))$. Since
$R_\nabla(a,a')c=R_{AB}(a,a')\partial_Bc=l_3(a,a',\pr_B(l_1(c)))=l_3(a,a',l_1(c))$
and $R_\nabla(b,b')c=l_3(b,b',l_1(c))$, we get that (M4) is equivalent
to (iv).

Two straightwforward computations show that (M5) is equivalent to
\[\pr_A(\Jac_{\lb\cdot\,,\cdot\rb}(a_1,a_2,b))=-\pr_A(l_1l_3(a_1,a_2,b))\]
and that (M6) is equivalent to
\[\pr_B(\Jac_{\lb\cdot\,,\cdot\rb}(b_1,b_2,a))=-\pr_A(l_1l_3(b_1,b_2,a)).\]
But since $\pr_B(\Jac_{\lb\cdot\,,\cdot\rb}(a_1,a_2,b))=-R_\nabla(a_1,a_2)b$
by construction and $R_\nabla(a_1,a_2)b=\partial_BR_{AB}(a_1,a_2)b=\pr_B(l_1l_3(a_1,a_2,b))$,
we find \[\pr_B(\Jac_{\lb\cdot\,,\cdot\rb}(a_1,a_2,b))=-\pr_B(l_1l_3(a_1,a_2,b)),\] and in a similar 
manner \[\pr_A(\Jac_{\lb\cdot\,,\cdot\rb}(b_1,b_2,a))=-\pr_A(l_1l_3(b_1,b_2,a)).\]
Since $\Jac_{\lb\cdot\,,\cdot\rb}(a_1,a_2,a_3)=0$, $\Jac_{\lb\cdot\,,\cdot\rb}(b_1,b_2,b_3)=0$, and 
$l_3$ vanishes on sections of $A$, and respectively on sections of $B$,
we conclude that (M5) and (M6) together are equivalent to (iii).

Finally, a slightly longer, but still straightforward computation
shows that
\[(\dr_{\nabla^A}R_{BA})(a_1,a_2)(b_1,b_2)-(\dr_{\nabla^B}R_{AB})(b_1,b_2)(a_1,a_2)=(\dr_\nabla l_3)(a_1,a_2,b_1,b_2)
\]
for all $a_1,a_2\in\Gamma(A)$ and $b_1,b_2\in\Gamma(B)$. This,
\eqref{dl3_van1} and a similar identity for $R_{BA}$, and the
vanishing of $l_3$ on sections of $A$, and respectively on sections of
$B$, show that (M7) is equivalent to (v).
\end{proof}

 If $C=0$, then $R_{AB}=0$ and $R_{BA}=0$ and
the matched pair of 2-representations is just a matched pair of Lie
algebroids.  The double is then concentrated in degree $0$, with
$l_3=0$ and $l_2$ is the bicrossproduct Lie algebroid structure on
$A\oplus B$ with anchor $\rho_A+\rho_B$ \cite{Lu97,Mokri97}. Hence, in
that case the split Lie 2-algebroid is just the bicrossproduct of a
matched pair of representations and the dual (flat) Dorfman connection
is the corresponding Lie derivative. The Lie $2$-algebroid is in that
case a genuine Lie $1$-algebroid.

It would be interesting to define the notion
of matched pair of higher representations up to homotopy, and show
that the induced doubles are split Lie n-algebroids.
This will be studied in a subsequent project.

In the case where $B$ is a trivial Lie algebroid and acts trivially
up to homotopy on $\partial_A=0\colon C\to A$, the
double is the semi-direct product Lie 2-algebroid found in
\cite[Proposition 3.5]{ShZh14} (see our preceding example).

\subsection{Morphisms of (split) Lie 2-algebroids}\label{mor_lie_2_alg}
In this section we quickly discuss morphisms of split Lie
$2$-algebroids.
\begin{definition}
  A morphism $\mu\colon (\mathcal M_1,\mathcal Q_1)\to (\mathcal
  M_2,\mathcal Q_2)$ of Lie 2-algebroids is a morphism $\mu\colon
  \mathcal M_1\to \mathcal M_2$ of the underlying $[2]$-manifolds,
  such that
\begin{equation}\label{morphism_of_Q}
\mu^\star\circ\mathcal Q_2=\mathcal Q_1\circ\mu^\star\colon
C^\infty(\mathcal M_2)\to C^\infty(\mathcal M_1).
\end{equation}
\end{definition}

Assume that the two $[2]$-manifolds $\mathcal M_1$ and $\mathcal M_2$
are split $[2]$-manifold $\mathcal M_1=Q_1[-1]\oplus B_1^*[-2]$ and
$\mathcal M_2=Q_2[-1]\oplus B_2^*[-2]$.  Then the homological vector
fields $\mathcal Q_1$ and $\mathcal Q_2$ can be written in coordinates
as in \eqref{Q_in_coordinates}, with two Dorfman $2$-representations;
$(\partial_{1}\colon Q_1^*\to B_1, \Delta^1, \nabla^1, R_1)$ of
$(Q_1\to M_1,\rho_{1}\colon Q_1\to TM_1)$ and $(\partial_{2}\colon
Q_2^*\to B_2, \Delta^2, \nabla^2, R_2)$ of $(Q_2\to M_2,\rho_{2}\colon
Q_2\to TM_2)$.

Since $\mathcal M_1$ and $\mathcal M_2$ are split, the morphism
$\mu^\star\colon C^\infty(\mathcal M_2)\to C^\infty(\mathcal M_1)$
over $\mu_0^*\colon C^\infty(M_2)\to C^\infty(M_1)$ decomposes as well
as $\mu_Q\colon Q_1\to Q_2$, $\mu_B\colon B_1^*\to B_2^*$ and
$\mu_{12}\in \Omega^2(Q_1,\mu_0^*B_2^*)$ as in \eqref{morphism_dec_1}
and \eqref{morphism_dec_2}.

A study of \eqref{morphism_of_Q} in coordinates, which we leave to the
reader, shows that \eqref{morphism_of_Q} is equivalent to
\begin{enumerate}
\item $\mu_Q\colon Q_1\to Q_2$ over $\mu_0\colon M_1\to M_2$ is compatible 
with the anchors $\rho_1\colon Q_1\to TM_1$ and $\rho_2\colon Q_2\to TM_2$:
\[T_m\mu_0(\rho_1(q_m))=\rho_2(\mu_Q(q_m))
\]
for all $q_m\in Q_1$,
\item $\partial_1\circ\mu_Q^\star=\mu_B^\star\circ\partial_2$ 
as maps from $\Gamma(Q_2^*)$ to $\Gamma(B_1)$, or in other words $\mu_Q\circ\partial_1^*=\partial_2^*\circ\mu_B$.
\item $\mu_Q$ preserves the dull brackets up to $\partial_2^*\mu_{12}$:
i.e.~if $q^1\sim_{\mu_Q}r^1$ and $q^2\sim_{\mu_Q}r^2$, then
\[ \lb q^1,r^1\rb_1\sim_{\mu_Q}\lb q^2,r^2\rb_2-\partial_2^*\mu_{12}(q^1,r^1).
\]
\item $\mu_B$ and $\mu_Q$ intertwines the connections 
$\nabla^1$ and $\nabla^2$ up to $\partial_1\circ\mu_{12}$:
\[\mu_B^\star((\mu_Q^\star\nabla^2)_{q}b)=\nabla^1_{q}(\mu_B^\star(b))-\partial_1\circ\langle\mu_{12}(q,\cdot), b\rangle
\in\Gamma(B_1)
\]
for all $q_m\in Q_1$ and $b\in\Gamma(B^2)$, and 
\item $\mu_Q^\star\omega_{R_2}-\mu_B\circ\omega_{R_1}=-\dr_{(\mu_Q^\star\nabla^2)}\mu_{12}\in\Omega^3(Q_1,\mu_0^*B_2^*)$.
\end{enumerate}

In the equalities above we have used the following constructions.
Recall that $\mu_{12}$ is an element of $\Omega^2(Q_1,\mu_0^*B_2^*)$, 
and  $\omega_{R_i}\in\Omega^3(Q_i,B_i^*)$ for $i=1,2$.
The tensors $\mu_Q^\star\omega_{R_2}\in\Omega^2(Q_1,\mu_0^*B_2^*)$ and 
$\mu_B\circ \omega_{R_1}\in\Omega^2(Q_1,\mu_0^*B_2^*)$
can be defined as follows:
\[(\mu_Q^\star\omega_{R_2})(q_1(m),q_2(m),q_3(m))=\omega_{R_2}(\mu_Q(q_1(m)),\mu_Q(q_2(m)),\mu_Q(q_3(m)))
\]
in $B_2^*(\mu_0(m))$, and 
\[(\mu_B\circ \omega_{R_1})(q_1(m),q_2(m),q_3(m))=\mu_B(\omega_{R_1})(q_1(m),q_2(m),q_3(m)))\in B_2^*(\mu_0(m))
\]
for all $q_1,q_2,q_3\in\Gamma(Q_1)$. The linear connection
\[\mu_Q^\star\nabla^2\colon\Gamma(Q_1)\times\Gamma(\mu_0^*B_2^*)\to\Gamma(\mu_0^*B_2^*)
\]
is defined by
\[ (\mu_Q^\star\nabla^2)_q(\mu_0^!\beta)(m)=\nabla^2_{\mu_Q(q(m))}\beta\in B_2^*(\mu_0(m))
\]
for all $q\in\Gamma(Q_1)$ and $\beta\in\Gamma(B_2^*)$. 

\medskip
\begin{definition}\label{morphism_Lie_2}
  We call a triple $(\mu_Q,\mu_B,\mu_{12})$ over $\mu_0$ satisfying
  the five conditions above a \textbf{morphism of split Lie
    $2$-algebroids}.
\end{definition}
In particular, if $\mathcal M_1=\mathcal M_2$, $\mu_0=\Id_M\colon M\to
M$, $\mu_Q=\Id_Q\colon Q\to Q$ and $\mu_B=\Id_{B^*}\colon B^*\to B^*$,
then $\mu_{12}\in\Gamma(Q^*\wedge Q^*\otimes B^*)$ is just a change
of splitting The five conditions above simplify to
\begin{enumerate}
\item The dull brackets are related by $\lb q, q'\rb_{2}=\lb q,
  q'\rb_1 +\partial_B^*\mu_{12}(q,q')$.
\item The connections are related by
  $\nabla^2_q=\nabla^1_q-\partial_B\circ\mu_{12}(q)$.
\item The curvature terms are related by 
$\omega_{R^2}-\omega_{R^1}=-\dr_{{\nabla^2}^*}\mu_{12}$,
where the Cartan differential $\dr_{{\nabla^2}^*}$ is defined with the
dull bracket $\lb\cdot\,,\cdot\rb_1$ on $\Gamma(Q)$.
\end{enumerate}

\section{VB-Courant algebroids and Lie 2-algebroids}\label{sec:VB_cour}
In this section we describe and prove in detail the equivalence
between VB-Courant algebroids and Lie 2-algebroids. In short, a
homological vector field on a $[2]$-manifold defines an anchor and a
Courant bracket on the corresponding metric double vector bundle. This
Courant bracket and this anchor are automatically compatible with the
metric and define so a linear Courant algebroid structure on the
double vector bundle.  Be aware that a correspondence of Lie
2-algebroids and VB-Courant algebroids has already been discussed by
Li-Bland \cite{Li-Bland12}.  Our goal is to make this result more
constructive, to deduce it from the two preceding sections, and to
illustrate it with several (partly new) examples.

\subsection{Definition and observations}
We will work with the following definition of a VB-Courant
algebroid, which is basically due to Li-Bland \cite{Li-Bland12}.
\begin{definition}
A VB-Courant algebroid is a metric double vector bundle 
\[\begin{xy}
\xymatrix{\mathbb{E}\ar[r]^{\pi_B}\ar[d]_{\pi_Q}&B\ar[d]^{q_B}\\
Q\ar[r]_{q_Q}&M
}
\end{xy}
\]
with core $Q^*$ such that $\mathbb E\to B$ is a Courant algebroid and
the following conditions are satisfied.
\begin{enumerate}
\item The anchor map $\Theta\colon \mathbb E\to TB$ is linear. That is, 
\begin{equation}
\begin{xy}
  \xymatrix{\mathbb{E}\ar[rr]^{\pi_B}\ar[dd]_{\pi_Q}&&B\ar[dd]^{q_B}&
    & &TB\ar[rr]^{p_B}\ar[dd]_{Tq_B}&&B\ar[dd]^{q_B}\\
    &C\ar[dr]&   &\ar[r]^\Theta& &&B\ar[dr]&\\
    Q\ar[rr]_{q_Q}&&M&&&TM\ar[rr]_{p_M}&&M }
\end{xy}
\end{equation}
is a morphism of double vector bundles.
\item The Courant bracket is linear. That is 
\begin{align*}
  \left\lb\Gamma^l_B(\mathbb{E}),\Gamma^l_B(\mathbb{E})\right\rb&\subseteq\Gamma^l_B(\mathbb{E}),\qquad 
  \left\lb\Gamma^l_B(\mathbb{E}),\Gamma^c_B(\mathbb{E})\right\rb\subseteq\Gamma^c_B(\mathbb{E}),\\
 & \left\lb\Gamma^c_B(\mathbb{E}),\Gamma^c_B(\mathbb{E})\right\rb=0.
\end{align*}
\end{enumerate}
\end{definition}

We make the following observations. Let $\rho_Q\colon Q\to TM$ be the
side map of the anchor, i.e.~if $\pi_Q(\chi)=q$ for $\chi\in \mathbb
E$, then $Tq_B(\Theta(\chi))=\rho_Q(q)$.  Let $\partial_B\colon Q^*\to
B$ be the core map defined as follows by the anchor $\Theta$:
\begin{equation}\label{anchor_on_pullbacks}
\Theta(\sigma^\dagger)=(\partial_B\sigma)^\uparrow
\end{equation}
for all $\sigma\in\Gamma(Q^*)$ ($\partial_B$ is sometimes called the
``core-anchor'').  Then the operator $\mathcal D=\Theta^*\dr\colon
C^\infty(B)\to \Gamma_B(\mathbb E)$ satisfies
$\mathcal D(q_B^*f)=(\rho_Q^*\dr f)^\dagger$
for all $f\in C^\infty(M)$ and \eqref{anchor_with_D_2} yields
immediately 
\begin{equation}\label{rho_delta}
\partial_B\circ \rho_Q^*=0,\quad  \text{ which is equivalent to } \quad \rho_Q\circ \partial_B^*=0.
\end{equation}
Recall finally that if $\chi \in\Gamma^l_B(\mathbb E) $ is linear over
$q\in\Gamma(Q)$, then $\langle \chi, \tau^\dagger\rangle=q_B^*\langle
q, \tau\rangle$ for all $\tau\in\Gamma(Q^*)$ and $\rho(\chi)$ is
linear over $\rho_Q(q)$.

\subsection{The fat Courant algebroid}
Recall from \eqref{fat_seq} that there exists a vector bundle
$\widehat{\mathbb E}\to M$, which sheaf of sections is the sheaf of
$C^\infty(M)$-modules $\Gamma^l_B(\mathbb E)$, the linear sections of
$\mathbb E$ over $B$.  Gracia-Saz and Mehta show in \cite{GrMe10a}
that if $\mathbb E$ is endowed with a linear Lie algebroid structure
over $B$, then $\widehat{\mathbb E}\to M$ inherits a Lie algebroid
structure, which is called the ``fat Lie algebroid''. For
completeness, we describe here quickly the counterpart of this in the
case of a linear Courant algebroid structure on $\mathbb E\to B$.

Note that the restriction of the pairing on $\mathbb E$ to linear
sections of $\mathbb E$ defines a nondegenerate pairing on
$\widehat{\mathbb E}$ with values in $B^*$.  Since the Courant bracket
of linear sections is again linear, we get the following theorem.
\begin{theorem}\label{fat}
The vector bundle $\widehat{\mathbb E}$
inherits a Courant algebroid structure with pairing in $B^*$. 
\end{theorem}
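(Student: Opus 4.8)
The plan is to transfer the Courant-algebroid axioms (CA1)–(CA5), in their ``pairing in a vector bundle'' form as given just before Lemma~\ref{roytenberg_useful}, from the linear sections $\Gamma^l_B(\mathbb E)$ of $\mathbb E\to B$ to the sheaf of sections $\Gamma(\widehat{\mathbb E})$ of the fat bundle. First I would record the structure data on $\widehat{\mathbb E}$: the pairing with values in $B^*$ is the restriction of $\langle\cdot\,,\cdot\rangle$ to linear sections, which is $C^\infty(M)$-bilinear and $B^*$-valued precisely because for $\chi_1,\chi_2\in\Gamma^l_B(\mathbb E)$ the function $\langle\chi_1,\chi_2\rangle$ is linear on $B$, hence of the form $\ell_\beta$ for a unique $\beta\in\Gamma(B^*)$; nondegeneracy on $\widehat{\mathbb E}$ follows from the nondegeneracy of the linear metric together with the fact that linear and core sections generate $\Gamma_B(\mathbb E)$ and the pairing of a linear with a core section is a pullback. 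The anchor $\widehat\rho\colon\widehat{\mathbb E}\to \mathfrak X(M)\rightsquigarrow\der(B^*)$: since $\Theta$ is linear, $\Theta(\chi)$ is a linear vector field on $B$ for $\chi$ linear, hence (by \S\ref{tangent_double}) a derivation $D_\chi$ of $\Gamma(B)$ over some $X\in\mathfrak X(M)$; dualising gives a derivation $\widetilde\rho(\chi)$ of $\Gamma(B^*)$ over $X$, and this is exactly the ``$\widetilde\rho$'' in the definition of a Courant algebroid with pairing in $B^*$. The bracket on $\Gamma(\widehat{\mathbb E})$ is the restriction of $\lb\cdot\,,\cdot\rb$, which lands in $\Gamma^l_B(\mathbb E)$ by linearity of the Courant bracket. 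Finally the map $\mathcal D_{\widehat{\mathbb E}}\colon\Gamma(B^*)\to\Gamma(\widehat{\mathbb E})$ required by the definition is obtained from the ambient $\mathcal D=\Theta^*\dr\colon C^\infty(B)\to\Gamma_B(\mathbb E)$ applied to linear functions $\ell_\beta$: one checks $\mathcal D(\ell_\beta)$ is a linear section, and that $\langle\mathcal D(\ell_\beta),\chi\rangle=\widetilde\rho(\chi)(\beta)$ follows by restricting (CA2)/(CA3) of the ambient Courant algebroid.

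Next I would verify (CA1)–(CA5) for this data. (CA1) (Jacobi/Leibniz identity for $\lb\cdot\,,\cdot\rb$) holds on $\Gamma(\widehat{\mathbb E})$ simply because it holds on all of $\Gamma_B(\mathbb E)$ and the subspace of linear sections is closed under the bracket. For (CA2), start from $\rho_{\mathbb E}(\chi_1)\langle\chi_2,\chi_3\rangle=\langle\lb\chi_1,\chi_2\rb,\chi_3\rangle+\langle\chi_2,\lb\chi_1,\chi_3\rb\rangle$ in $C^\infty(B)$; the right-hand side is the linear function $\ell_{\langle\chi_2,\chi_3\rangle_{B^*}}$-type expression, and the left-hand side is $\rho_{\mathbb E}(\chi_1)$ applied to a linear function, which by the linear-vector-field identity \eqref{ableitungen} equals $\ell_{D^*_{\chi_1}(\langle\chi_2,\chi_3\rangle_{B^*})}=\ell_{\widetilde\rho(\chi_1)(\langle\chi_2,\chi_3\rangle_{B^*})}$; comparing the sections of $B^*$ that these linear functions correspond to gives (CA2) on $\widehat{\mathbb E}$. (CA3) is immediate: $\lb\chi_1,\chi_2\rb+\lb\chi_2,\chi_1\rb=\mathcal D\langle\chi_1,\chi_2\rangle_{\mathbb E}=\mathcal D(\ell_{\langle\chi_1,\chi_2\rangle_{B^*}})=\mathcal D_{\widehat{\mathbb E}}\langle\chi_1,\chi_2\rangle_{B^*}$. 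For (CA4), the ambient identity $\rho_{\mathbb E}\lb\chi_1,\chi_2\rb=[\rho_{\mathbb E}(\chi_1),\rho_{\mathbb E}(\chi_2)]$ is an identity of linear vector fields on $B$; a bracket of linear vector fields is linear and corresponds to the commutator of the associated derivations, so passing to the derivations of $\Gamma(B^*)$ yields $\widetilde\rho\lb\chi_1,\chi_2\rb=[\widetilde\rho(\chi_1),\widetilde\rho(\chi_2)]$, and on symbols this is $[X_1,X_2]$ as required. (CA5) is the Leibniz rule in the $C^\infty(M)$-direction: expand $\lb\chi_1,(q_B^*f)\chi_2\rb$ using (CA5) for $\mathbb E\to B$, note $\rho_{\mathbb E}(\chi_1)(q_B^*f)=q_B^*(X_1 f)$ by \eqref{ableitungen}, and compare with $f\lb\chi_1,\chi_2\rb+(\widehat\rho(\chi_1)f)\chi_2$.

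The only genuinely delicate point — the one I expect to be the main obstacle — is the bookkeeping identifying, consistently, the $B^*$-valued pairing, the derivation $\widetilde\rho$, and the operator $\mathcal D_{\widehat{\mathbb E}}$ with the restrictions of the ambient structures, and checking that $\langle\mathcal D_{\widehat{\mathbb E}}v,\chi\rangle=\widetilde\rho(\chi)(v)$ holds for all $v\in\Gamma(B^*)$ (not just those of the form $\langle\chi_1,\chi_2\rangle_{B^*}$, which need not span pointwise in a naive way, though they do generate as a module). This is handled by combining (CA2) and (CA3) for the ambient Courant algebroid: for $\chi\in\Gamma^l_B(\mathbb E)$ and a linear function $\ell_\beta$ on $B$, one has $\langle\mathcal D(\ell_\beta),\chi\rangle=\langle\lb\chi,\mathcal D(\ell_\beta)\rb,?\rangle$-type manipulations; more directly, $\mathcal D=\beta_{\mathbb E}\inv\rho_{\mathbb E}^*\dr$ (writing $\beta_{\mathbb E}$ for the musical isomorphism of the Courant algebroid $\mathbb E\to B$), so $\langle\mathcal D(\ell_\beta),\chi\rangle_{\mathbb E}=\langle\rho_{\mathbb E}^*\dr\ell_\beta,\chi\rangle=\rho_{\mathbb E}(\chi)(\ell_\beta)=\ell_{\widetilde\rho(\chi)(\beta)}$, which says precisely that the $B^*$-section pairing of $\mathcal D_{\widehat{\mathbb E}}\beta$ with $\chi$ is $\widetilde\rho(\chi)(\beta)$. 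Once this compatibility is in place, everything else is routine restriction, and the proof concludes by invoking the definition of a Courant algebroid with pairing in a vector bundle. (If one prefers, the whole verification can be streamlined by applying Lemma~\ref{useful_lemma} in its evident ``pairing in $B^*$'' analogue to the generating set $\Gamma^l_B(\mathbb E)$, but since closure of linear sections is already built in, direct restriction is cleanest.)
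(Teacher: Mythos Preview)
Your proposal is correct and follows exactly the approach the paper intends: the paper treats Theorem~\ref{fat} as essentially immediate, observing just before the statement that the pairing of linear sections is a linear function on $B$ (hence $B^*$-valued and nondegenerate) and that the Courant bracket preserves linear sections, so all the Courant-algebroid data restrict. Your write-up simply unpacks this, verifying (CA1)--(CA5) and the compatibility $\langle\mathcal D_{\widehat{\mathbb E}}\beta,\chi\rangle=\widetilde\rho(\chi)(\beta)$ explicitly; the only minor wobble is your stated reason for nondegeneracy (pairing with core sections is not what you use---rather, pair first against core-linear sections $\widetilde\phi$ to kill the base section $q$, then against arbitrary linear sections to kill the remaining $\operatorname{Hom}(B,Q^*)$-part), but the conclusion is right.
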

We will come back later to this structure.  Recall that for
$\phi\in\Gamma(\operatorname{Hom}(B,Q^*))$, the core-linear section
$\widetilde{\phi}$ of $\mathbb E\to B$ is defined by
\[\widetilde{\phi}(b_m)=0_{b_m}+_B\overline{\phi(b_m)}.
\]
Note that $\widehat{\mathbb E}$ is also naturally paired with $Q^*$:
$\langle \chi(m), \sigma(m)\rangle=\langle \pi_Q(\chi(m)),
\sigma(m)\rangle$ for all $\chi\in\Gamma^l_B(\mathbb
E)=\Gamma(\widehat{\mathbb E})$ and $\sigma\in\Gamma(Q^*)$.  This
pairing is degenerate since it restricts to $0$ on
$\operatorname{Hom}(B,Q^*)\times_MQ^*$.  The following proposition can
easily be proved.
\begin{proposition}
\begin{enumerate}
\item The map 
\[\Delta\colon \Gamma(\widehat{\mathbb E})\times \Gamma(Q^*)\to
\Gamma(Q^*)\quad 
\text{defined by } \quad \left(\Delta_\chi \tau\right)^\dagger=\lb \chi,
\tau^\dagger\rb
\]
is a flat Dorfman connection, where $\widehat{\mathbb E}$ is endowed
with the anchor $\rho_Q\circ \pi_Q$ and paired with $Q^*$ as above.
\item The map
\[\nabla^*\colon \Gamma(\widehat{\mathbb E})\times \Gamma(B^*)\to
\Gamma(B^*)\quad 
\text{defined by } \quad
\ell_{\nabla^*_\chi\beta}=\Theta(\chi)(\ell_\beta)\]
for all $\beta\in\Gamma(B^*)$ is a flat connection.
\end{enumerate}
\end{proposition}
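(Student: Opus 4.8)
The statement asserts that a linear Courant algebroid structure on $\mathbb E\to B$ (a VB-Courant algebroid) induces, on the fat bundle together with its degenerate pairing with $Q^*$, a flat Dorfman connection $\Delta$ and a flat connection $\nabla^*$ via the two displayed formulae. The plan is to check in turn: (a) that $\Delta$ and $\nabla^*$ are well-defined, i.e.~that the right-hand sides really are core sections of $\mathbb E\to B$ (respectively linear functions on $B$), and tensorial/Leibnizian in the correct slots; (b) that they satisfy the Dorfman connection axioms of Definition \ref{the_def} (resp.~the defining identities of a linear connection); and (c) that both are flat, using that the Courant bracket $\lb\cdot\,,\cdot\rb$ on $\mathbb E\to B$ satisfies the left Jacobi identity (CA1).

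First I would record the bookkeeping. The anchor $\Theta\colon\mathbb E\to TB$ is linear, so it sends linear sections of $\mathbb E\to B$ to linear vector fields on $B$ and core sections $\tau^\dagger$ to vertical vector fields $(\partial_B\tau)^\uparrow$, by \eqref{anchor_on_pullbacks}. Linearity of the bracket gives $\lb\Gamma^l_B(\mathbb E),\Gamma^c_B(\mathbb E)\rb\subseteq\Gamma^c_B(\mathbb E)$, so $\lb\chi,\tau^\dagger\rb$ is indeed a core section and $\Delta_\chi\tau$ is well-defined in $\Gamma(Q^*)$. For the tensoriality properties: $C^\infty(M)$-linearity in $\chi$ with the correct correction term comes from (CA5), the Leibniz rule $\lb\chi, q_B^*f\cdot\tau^\dagger\rb=q_B^*f\cdot\lb\chi,\tau^\dagger\rb+\Theta(\chi)(q_B^*f)\tau^\dagger$, together with $\Theta(\chi)(q_B^*f)=q_B^*(\rho_Q(\pi_Q\chi)(f))$ since $\Theta(\chi)$ is linear over $\rho_Q(\pi_Q(\chi))$; for the first-slot identity one uses $\lb q_B^*f\cdot\chi,\tau^\dagger\rb=q_B^*f\lb\chi,\tau^\dagger\rb-\mathcal D(q_B^*f)\cdot\langle\chi,\tau^\dagger\rangle$ via (CA3), and $\mathcal D(q_B^*f)=(\rho_Q^*\dr f)^\dagger$ together with $\langle\chi,\tau^\dagger\rangle=q_B^*\langle\pi_Q\chi,\tau\rangle$, which reproduces exactly the term $\langle\chi,\tau\rangle\dr_{Q^*}f$ with $\dr_{Q^*}=\rho_Q^*\dr$. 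The third Dorfman axiom $\Delta_\chi\dr_{Q^*}f=\dr_{Q^*}(\rho_Q(\pi_Q\chi)f)$ follows by applying the formula to $\tau=\rho_Q^*\dr f$ and using Lemma \ref{roytenberg_useful} in the Courant algebroid $\mathbb E\to B$ (the identity $\lb e,\beta\inv\rho^*\theta\rb=\beta\inv\rho^*(\ldr{\rho(e)}\theta)$), combined with $\mathcal D(q_B^*f)=(\rho_Q^*\dr f)^\dagger$. Flatness of $\Delta$ is then immediate: $(\lb\chi_1,\lb\chi_2,\tau^\dagger\rb\rb - \lb\chi_2,\lb\chi_1,\tau^\dagger\rb\rb - \lb\lb\chi_1,\chi_2\rb,\tau^\dagger\rb)$ vanishes by (CA1), and unwinding the definition this says $(\Delta_{\chi_1}\Delta_{\chi_2}\tau - \Delta_{\chi_2}\Delta_{\chi_1}\tau - \Delta_{[\chi_1,\chi_2]}\tau)^\dagger = 0$, hence $R_\Delta=0$ (here $[\chi_1,\chi_2]$ is the fat-bundle bracket, i.e.~the linear bracket $\lb\chi_1,\chi_2\rb$).

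For $\nabla^*$: a linear vector field on $B$ sends linear functions to linear functions, so $\Theta(\chi)(\ell_\beta)=\ell_{\nabla^*_\chi\beta}$ defines $\nabla^*_\chi\beta\in\Gamma(B^*)$ (this is exactly the derivation-correspondence \eqref{ableitungen} applied to $\Theta(\chi)\in\mx^l(B)$). $C^\infty(M)$-linearity in $\chi$ and the Leibniz rule in $\beta$ follow from $\Theta$ being a bundle map over $\rho_Q\circ\pi_Q$ and from $\Theta(\chi)(q_B^*f)=q_B^*(\rho_Q(\pi_Q\chi)f)$, $\ell_{f\beta}=q_B^*f\cdot\ell_\beta$. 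Flatness follows because $\chi\mapsto\Theta(\chi)$ is a bracket morphism from the linear sections with the Courant (fat) bracket to $\mx^l(B)$ with the commutator — this is (CA4) together with the fact that the anchor of a Courant algebroid preserves brackets — so $\Theta(\lb\chi_1,\chi_2\rb)=[\Theta(\chi_1),\Theta(\chi_2)]$, and applying both sides to $\ell_\beta$ gives $\nabla^*_{[\chi_1,\chi_2]}\beta = \nabla^*_{\chi_1}\nabla^*_{\chi_2}\beta - \nabla^*_{\chi_2}\nabla^*_{\chi_1}\beta$, i.e.~$R_{\nabla^*}=0$.

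\textbf{Main obstacle.} The only place requiring genuine care is checking that the correction terms in the Dorfman axioms for $\Delta$ come out with exactly the right coefficients and identifications — in particular matching $\mathcal D(q_B^*f)\langle\chi,\tau^\dagger\rangle$ against $\langle\pi_Q\chi,\tau\rangle\cdot\rho_Q^*\dr f$ as core sections, and verifying axiom (3) of Definition \ref{the_def}, where one must invoke Lemma \ref{roytenberg_useful} correctly and keep track of the sign conventions in $\mathcal D=\Theta^*\dr$ versus $\beta\inv\rho^*\dr$; everything else is a direct unwinding of the linearity hypotheses and the Courant axioms (CA1), (CA4), (CA5) restricted to linear and core sections.
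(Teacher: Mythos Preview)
The paper does not actually prove this proposition; it only states that it ``can easily be proved''. Your proposal supplies a correct proof along exactly the expected lines: the linearity axioms of the VB-Courant structure give well-definedness, the Courant algebroid identities (CA3)--(CA5) give the Dorfman/connection axioms, and (CA1) and (CA4) give flatness of $\Delta$ and $\nabla^*$ respectively.

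One small slip: in your first-slot computation you write
\[
\lb q_B^*f\cdot\chi,\tau^\dagger\rb=q_B^*f\,\lb\chi,\tau^\dagger\rb-\langle\chi,\tau^\dagger\rangle\,\mathcal D(q_B^*f),
\]
but the sign of the $\mathcal D$-term is wrong. The standard derivation from (CA3) and (CA5) gives
\[
\lb f e_1, e_2\rb = f\lb e_1,e_2\rb - (\rho(e_2)f)\,e_1 + \langle e_1,e_2\rangle\,\mathcal D f,
\]
and here the middle term vanishes because $\Theta(\tau^\dagger)=(\partial_B\tau)^\uparrow$ kills $q_B^*f$. With the correct $+$ sign you land precisely on axiom (2) of Definition~\ref{the_def}, $\Delta_{f\chi}\tau=f\Delta_\chi\tau+\langle\pi_Q\chi,\tau\rangle\,\rho_Q^*\dr f$, as you intended. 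Everything else is fine.
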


We dualise the connection $\nabla^*$ to a flat connection
$\nabla\colon\Gamma(\widehat{\mathbb E})\times \Gamma(B)\to \Gamma(B)$.
\begin{proposition}
The following hold for $\Delta$ and $\nabla$:
\begin{enumerate}
\item $\partial_B\circ \Delta=\nabla\circ \partial_B$ and
\item $\left\lb \chi, \widetilde{\phi}\right\rb_{\widehat{\mathbb E}}=\widetilde{\phi\circ
  \nabla_\chi-\Delta_\chi\circ \phi}$
\end{enumerate}
for $\chi\in\Gamma(\widehat{\mathbb E})$ and
$\phi\in\Gamma(\operatorname{Hom}(B,Q^*))$.
\end{proposition}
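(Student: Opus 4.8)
The plan is to verify the two identities by working with core sections, core-linear sections, and linear sections of $\mathbb E\to B$, and exploiting the compatibility of the Courant bracket with the metric and with the double vector bundle structure. For the first identity, I would start from the defining relation $(\Delta_\chi\tau)^\dagger=\lb\chi,\tau^\dagger\rb$ and apply the anchor $\Theta$ to both sides. On the left, by \eqref{anchor_on_pullbacks} we get $\Theta\bigl((\Delta_\chi\tau)^\dagger\bigr)=(\partial_B\Delta_\chi\tau)^\uparrow$. On the right, I would use (CA4), i.e.~$\rho_{\mathbb E}\lb\chi,\tau^\dagger\rb=[\rho_{\mathbb E}(\chi),\rho_{\mathbb E}(\tau^\dagger)]$, together with the facts that $\rho_{\mathbb E}(\tau^\dagger)=(\partial_B\tau)^\uparrow$ is a vertical (core) vector field on $B$, that $\rho_{\mathbb E}(\chi)$ is a linear vector field on $B$ over $\rho_Q(q)$ (where $q=\pi_Q(\chi)$), and that the bracket of a linear vector field $\widehat{\nabla_\chi}$ with a vertical lift $(\partial_B\tau)^\uparrow$ is the vertical lift $(\nabla_\chi(\partial_B\tau))^\uparrow$ — this is exactly the structure equation in \eqref{Lie_bracket_VF} for linear vector fields, adapted here using the connection $\nabla$ defined via $\nabla^*$ in the previous proposition. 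Equating the two vertical lifts and using that $\uparrow$ is injective gives $\partial_B\circ\Delta_\chi=\nabla_\chi\circ\partial_B$.

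For the second identity, the key observation is that a core-linear section $\widetilde\phi$ can be paired in a controlled way: for $\chi'\in\Gamma^l_B(\mathbb E)$ linear over $q'\in\Gamma(Q)$ one has $\langle\widetilde\phi,\chi'\rangle=\ell_{\phi^*(q')}$ (this is recalled in the proof of Theorem \ref{symmetrization}), and $\langle\widetilde\phi,\tau^\dagger\rangle=0$ for $\tau\in\Gamma(Q^*)$. So the plan is to show that $\lb\chi,\widetilde\phi\rb$ is again core-linear — which follows from linearity of the Courant bracket, since $\widetilde\phi$ is a linear section whose base section is the zero section of $Q$ — and then to identify which core-linear section it is by pairing it against core sections $\tau^\dagger$ and against linear sections $\chi'$. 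Pairing against $\tau^\dagger$: using (CA2), $\rho_{\mathbb E}(\chi)\langle\widetilde\phi,\tau^\dagger\rangle=\langle\lb\chi,\widetilde\phi\rb,\tau^\dagger\rangle+\langle\widetilde\phi,\lb\chi,\tau^\dagger\rb\rangle$; the left side is $\rho_{\mathbb E}(\chi)(0)=0$, and $\lb\chi,\tau^\dagger\rb=(\Delta_\chi\tau)^\dagger$ is a core section, so the second term vanishes, giving $\langle\lb\chi,\widetilde\phi\rb,\tau^\dagger\rangle=0$, which is consistent with $\lb\chi,\widetilde\phi\rb$ being core-linear, say equal to $\widetilde\psi$ for some $\psi\in\Gamma(\operatorname{Hom}(B,Q^*))$. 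To pin down $\psi$, I pair against a linear section $\chi'$ over $q'$: by (CA2) again, $\rho_{\mathbb E}(\chi)\langle\widetilde\phi,\chi'\rangle=\langle\lb\chi,\widetilde\phi\rb,\chi'\rangle+\langle\widetilde\phi,\lb\chi,\chi'\rb\rangle$. Here $\langle\widetilde\phi,\chi'\rangle=\ell_{\phi^*(q')}$, and $\rho_{\mathbb E}(\chi)$ acts on the linear function $\ell_{\phi^*(q')}$ over $B$ producing, via the connection $\nabla^*$ on $B^*$ and the Dorfman connection $\Delta$ on $Q^*$ controlling how $\pi_Q(\chi')$ and $q'$ evolve, a linear function which I can compute; $\lb\chi,\chi'\rb$ is linear over $\lb q,q'\rb_\Delta$ (this is how $\Delta$ is defined together with $\nabla^*$), so $\langle\widetilde\phi,\lb\chi,\chi'\rb\rangle=\ell_{\phi^*(\lb q,q'\rb_\Delta)}$. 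Collecting terms and using $\langle\widetilde\psi,\chi'\rangle=\ell_{\psi^*(q')}$ identifies $\psi^*$ and hence $\psi=\phi\circ\nabla_\chi-\Delta_\chi\circ\phi$ after transposing and using that $\Delta$ and $\nabla^*$ are dual-compatible.

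The main obstacle I anticipate is bookkeeping the linear-function computation $\rho_{\mathbb E}(\chi)(\ell_{\phi^*(q')})$ correctly: one must express $\rho_{\mathbb E}(\chi)$ as the linear vector field $\widehat{D}$ associated to a derivation $D$ of $\Gamma(Q^*)$ (acting via $\nabla^*$ on the $B^*$ slot and via $\Delta$ — or rather its dual dull bracket — on the $Q$ slot of $\operatorname{Hom}(B,Q^*)$), apply \eqref{ableitungen} to convert its action on $\ell_{\phi^*(q')}$ into $\ell_{D^*(\phi^*(q'))}$, and then carefully disentangle the resulting expression into the pieces $\phi^*(\nabla^*_\chi q')$ (coming from how $q'$ transforms) and $\Delta^*_\chi(\phi^*(q'))$ (coming from the $Q^*$-derivation). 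The compatibility of $\nabla$ and $\Delta$ that is implicit in part (1) (namely $\partial_B\circ\Delta=\nabla\circ\partial_B$, already proved in this proof) together with the duality relations between $\nabla,\nabla^*$ and between $\Delta$ and its dual dull bracket are what make these pieces reorganize into the claimed formula. Once the sign conventions and the order of $\chi$ versus $\widetilde\phi$ in the bracket are fixed, the identity follows; alternatively, since both sides are core-linear sections, it suffices to check the pairing with core sections (automatic, both sides give $0$) and with linear sections spanning $\widehat{\mathbb E}$, which reduces everything to the one linear-function computation above.
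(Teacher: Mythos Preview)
Your argument for part (1) is exactly the paper's: apply the anchor $\Theta$ to $(\Delta_\chi\tau)^\dagger=\lb\chi,\tau^\dagger\rb$, use (CA4) on the right, and identify $[\widehat{\nabla_\chi},(\partial_B\tau)^\uparrow]=(\nabla_\chi\partial_B\tau)^\uparrow$.

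For part (2) your route is correct but more laborious than the paper's. The paper simply writes $\widetilde\phi=\sum_i\ell_{\beta_i}\tau_i^\dagger$ and applies the Leibniz rule (CA5) for $\lb\chi,\,\cdot\,\rb$ termwise: this immediately gives $\lb\chi,\widetilde\phi\rb=\sum_i\bigl(\ell_{\nabla_\chi^*\beta_i}\tau_i^\dagger+\ell_{\beta_i}(\Delta_\chi\tau_i)^\dagger\bigr)$, which one recognises as the core-linear section of $\Delta_\chi\circ\phi-\phi\circ\nabla_\chi$ after one line of Leibniz on $\Delta_\chi(\langle\beta_i,b\rangle\tau_i)$. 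Your approach---identifying the unknown core-linear section $\widetilde\psi$ by pairing it, via (CA2), against core sections (trivially) and linear sections $\chi'$---works, but the step where you write ``$\lb\chi,\chi'\rb$ is linear over $\lb q,q'\rb_\Delta$'' needs care: here $\Delta$ is a $\widehat{\mathbb E}$-Dorfman connection on $Q^*$, so there is no dull bracket on $\Gamma(Q)$ as such; what you actually need (and what your (CA2) computation gives) is that $\pi_Q(\lb\chi,\chi'\rb)$ equals the dual derivation $\Delta_\chi^*q'$, which is well-defined independently of the lift $\chi'$ of $q'$ precisely because $\lb\chi,\widetilde\eta\rb$ is core-linear for any $\eta\in\Gamma(\operatorname{Hom}(B,Q^*))$. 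Once that is said, your computation goes through. The paper's direct Leibniz calculation avoids this detour entirely.
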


\begin{proof}
\begin{enumerate}
\item Choose $\chi\in\Gamma^l_B(\mathbb E)$ and $\tau\in\Gamma(Q^*)$. Then 
\begin{equation*}
  (\partial_B\circ\Delta_\delta\tau)^\uparrow=
\Theta(\Delta_\chi\tau^\dagger)=\Theta\left(\left\lb \chi,\tau^\dagger\right\rb\right)
  =\left[\Theta(\chi), (\partial_B\tau)^\uparrow\right]=(\nabla_\chi(\partial_B\tau))^\uparrow.
\end{equation*}
\item The second equation is easy to check by writing $\widetilde{\phi}=\sum_{i=1}^n\ell_{\beta_i}\cdot\tau_i^\dagger$ 
with $\beta_i\in\Gamma(B^*)$ and $\tau_i\in\Gamma(Q^*)$.
\end{enumerate}
\end{proof}

\begin{lemma}\label{formulas}
  For $\phi,\psi\in\Gamma(\operatorname{Hom}(B,Q^*))$ and
  $\tau\in\Gamma(Q^*)$, we have
\begin{enumerate}
\item $\left\lb \tau^\dagger, \widetilde{\phi}\right\rb
  =(\phi(\partial_B \tau))^\dagger= -\left\lb \widetilde{\phi},
    \tau^\dagger\right\rb$ and
\item $\left\lb \widetilde{\phi},
    \widetilde{\psi}\right\rb=\widetilde{\psi\circ\partial_B\circ\phi-\phi\circ\partial_B\circ\psi}
  $.
\end{enumerate}
\end{lemma}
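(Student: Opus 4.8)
The strategy is to reduce everything to the defining identities of the Dorfman connection $\Delta$ and the connection $\nabla$ attached to the fat Courant algebroid, together with the Leibniz rule (CA5) for the linear Courant bracket. First I would recall the expression of a core-linear section in terms of core sections and linear functions: locally, $\widetilde{\phi}=\sum_{i}\ell_{\beta_i}\cdot\tau_i^\dagger$ for suitable $\beta_i\in\Gamma(B^*)$ and $\tau_i\in\Gamma(Q^*)$ such that $\phi=\sum_i\beta_i\otimes\tau_i$ (viewing $\Gamma(\operatorname{Hom}(B,Q^*))\simeq\Gamma(B^*\otimes Q^*)$). This is the same device already used in the proof of the previous proposition, and it will turn every statement about $\widetilde{\phi}$ into a statement about the $\tau_i^\dagger$ weighted by linear functions $\ell_{\beta_i}$ on $B$.

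For item (1), I would compute $\lb\tau^\dagger,\widetilde{\phi}\rb=\sum_i\lb\tau^\dagger,\ell_{\beta_i}\tau_i^\dagger\rb$. Using (CA5), each summand is $\ell_{\beta_i}\lb\tau^\dagger,\tau_i^\dagger\rb+(\rho_{\mathbb E}(\tau^\dagger)\ell_{\beta_i})\,\tau_i^\dagger$. The first term vanishes because $\lb\Gamma^c_B(\mathbb E),\Gamma^c_B(\mathbb E)\rb=0$. For the second term, the anchor of a core section is $\rho_{\mathbb E}(\tau^\dagger)=\Theta(\tau^\dagger)=(\partial_B\tau)^\uparrow$ by \eqref{anchor_on_pullbacks}, and $(\partial_B\tau)^\uparrow(\ell_{\beta_i})=q_B^*\langle\partial_B\tau,\beta_i\rangle$ — this is the standard fact that a vertical lift applied to a linear function gives the pullback of the corresponding pairing function. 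Hence $\lb\tau^\dagger,\widetilde{\phi}\rb=\sum_i q_B^*\langle\partial_B\tau,\beta_i\rangle\,\tau_i^\dagger=(\sum_i\langle\partial_B\tau,\beta_i\rangle\tau_i)^\dagger=(\phi(\partial_B\tau))^\dagger$. The equality $-\lb\widetilde{\phi},\tau^\dagger\rb$ then follows from (CA3): $\lb\tau^\dagger,\widetilde{\phi}\rb+\lb\widetilde{\phi},\tau^\dagger\rb=\mathcal D\langle\tau^\dagger,\widetilde{\phi}\rangle$, and the pairing $\langle\tau^\dagger,\widetilde{\phi}\rangle$ vanishes since both are core sections of $\mathbb E\to B$ (cf. condition (1) of a linear metric, $\langle\tau_1^\dagger,\tau_2^\dagger\rangle=0$).

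For item (2), I would similarly expand $\lb\widetilde{\phi},\widetilde{\psi}\rb$. Two natural routes: either use the already-established formula $\lb\chi,\widetilde{\psi}\rb_{\widehat{\mathbb E}}=\widetilde{\psi\circ\nabla_\chi-\Delta_\chi\circ\psi}$ from the preceding proposition — but $\widetilde{\phi}$ is not a linear section, so that formula does not directly apply and one must instead localize — or, more cleanly, write $\widetilde{\phi}=\sum_i\ell_{\beta_i}\tau_i^\dagger$ and use (CA5) to get $\lb\widetilde{\phi},\widetilde{\psi}\rb=\sum_i\ell_{\beta_i}\lb\tau_i^\dagger,\widetilde{\psi}\rb+\sum_i(\rho_{\mathbb E}(\widetilde{\phi})\ell_{\beta_i})\cdots$. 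The anchor $\rho_{\mathbb E}(\widetilde{\phi})$ of a core-linear section is zero (it projects to the zero section of $A$, hence its image under $\Theta$ is the zero vertical vector field — more precisely $\Theta(\widetilde\phi)$ covers $0\in TM$ and is itself $0$ since core-linear sections lie in the kernel of the relevant projection), so the second sum drops. Then by item (1), $\lb\tau_i^\dagger,\widetilde{\psi}\rb=(\psi(\partial_B\tau_i))^\dagger$, so $\lb\widetilde{\phi},\widetilde{\psi}\rb=\sum_i\ell_{\beta_i}(\psi(\partial_B\tau_i))^\dagger$. Recognizing $\sum_i\ell_{\beta_i}(\psi(\partial_B\tau_i))^\dagger=\widetilde{\psi\circ\partial_B\circ\phi}$ (since $\phi=\sum_i\beta_i\otimes\tau_i$ means $\psi\circ\partial_B\circ\phi=\sum_i\beta_i\otimes\psi(\partial_B\tau_i)$) gives the first half. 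The antisymmetric correction term $-\widetilde{\phi\circ\partial_B\circ\psi}$ comes from (CA3) together with the computation of $\mathcal D\langle\widetilde{\phi},\widetilde{\psi}\rangle$; here $\langle\widetilde{\phi},\widetilde{\psi}\rangle$ need not vanish, and one checks $\mathcal D\langle\widetilde\phi,\widetilde\psi\rangle=\widetilde{\psi\circ\partial_B\circ\phi+\phi\circ\partial_B\circ\psi}$, giving the result after subtracting.

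\textbf{Main obstacle.} The routine parts are genuinely routine, so the only real care-point is the bookkeeping of signs and the precise identification $\sum_i\ell_{\beta_i}(\chi_i)^\dagger=\widetilde{(\cdots)}$ under the isomorphism $\operatorname{Hom}(B,Q^*)\simeq B^*\otimes Q^*$, together with verifying that $\rho_{\mathbb E}(\widetilde{\phi})=0$ and computing $\mathcal D\langle\widetilde\phi,\widetilde\psi\rangle$ correctly — i.e. checking that the ``obvious'' symmetrization term from (CA3) indeed reassembles into $\widetilde{\phi\circ\partial_B\circ\psi}$ with the right sign. I would double-check this last point by pairing both sides against an arbitrary $\tau^\dagger$ and $\sigma_B(q)$ and using \eqref{rho_delta}.
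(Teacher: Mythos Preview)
Your argument for item (1) is correct and coincides with the paper's proof.

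Your argument for item (2) has genuine gaps. First, you misapply the Leibniz rule: (CA5) governs the \emph{second} argument, but you expand the \emph{first} argument $\widetilde{\phi}=\sum_i\ell_{\beta_i}\tau_i^\dagger$. The correct first-argument rule, obtained from (CA3) and (CA5), reads
\[
\lb f e_1, e_2\rb = f\lb e_1,e_2\rb - (\Theta(e_2)f)\,e_1 + \langle e_1,e_2\rangle\,\mathcal D f,
\]
so the correction term involves $\Theta(\widetilde{\psi})$, not $\Theta(\widetilde{\phi})$. Second, and more importantly, the anchor of a core-linear section is \emph{not} zero: $\Theta(\widetilde{\psi})=\sum_j\ell_{\beta'_j}(\partial_B\tau_j)^\uparrow$ is a nonzero vertical linear vector field on $B$ (namely the core-linear section $\widetilde{\partial_B\circ\psi}$ of $TB\to B$). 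Covering $0\in TM$ only means the vector field is vertical, not that it vanishes. Third, since $\langle\tau_i^\dagger,\tau_j^\dagger\rangle=0$ one has $\langle\widetilde{\phi},\widetilde{\psi}\rangle=0$ and hence $\mathcal D\langle\widetilde{\phi},\widetilde{\psi}\rangle=0$; it is not equal to $\widetilde{\psi\circ\partial_B\circ\phi+\phi\circ\partial_B\circ\psi}$, so (CA3) cannot supply the missing term the way you propose.

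The fix is immediate once the correct rule is used. With $\langle\tau_i^\dagger,\widetilde{\psi}\rangle=0$ one gets directly
\[
\lb\widetilde{\phi},\widetilde{\psi}\rb
=\sum_i\ell_{\beta_i}\lb\tau_i^\dagger,\widetilde{\psi}\rb
-\sum_i\bigl(\Theta(\widetilde{\psi})\ell_{\beta_i}\bigr)\,\tau_i^\dagger
=\widetilde{\psi\circ\partial_B\circ\phi}-\widetilde{\phi\circ\partial_B\circ\psi},
\]
since $\Theta(\widetilde{\psi})\ell_{\beta_i}=\sum_j\ell_{\beta'_j}\,q_B^*\langle\partial_B\tau_j,\beta_i\rangle$ and the second sum reassembles into $\widetilde{\phi\circ\partial_B\circ\psi}$. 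The paper carries out essentially this computation by expanding both $\widetilde{\phi}$ and $\widetilde{\psi}$ simultaneously.
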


\begin{remark}
  Note that by the second equality, $\operatorname{Hom}(B,Q^*)$ has
  the structure of a Lie algebra bundle.
\end{remark}

\begin{proof} We write $\phi=\sum_{i=1}^n\beta_i\cdot \tau_i$ and
  $\psi=\sum_{j=1}^n\beta'_j\cdot\tau_j$ with
  $\beta_1,\ldots,\beta_n,\beta'_1,\ldots,\beta'_n\in\Gamma(B^*)$ and
  $\tau_1,\ldots,\tau_n\in\Gamma(Q^*)$.  Hence, we have
  $\widetilde{\phi}=\sum_{i=1}^n\ell_{\beta_i}\tau_i^\dagger$ and
  $\widetilde{\psi}=\sum_{j=1}^n\ell_{\beta'_j}\tau_j^\dagger$.  First we
  compute \begin{equation*}
\begin{split}
\left\lb \tau^\dagger,
    \sum_{i=1}^n\ell_{\beta_i}\tau_i^\dagger\right\rb
  &=\sum_{i=1}^n(\partial_B\tau)^\uparrow(\ell_{\beta_i})\tau_i^\dagger\\
  &=\sum_{i=1}^nq_B^*\langle \partial_B\tau, \beta_i\rangle
  \tau_i^\dagger=\left(\sum_{i=1}^n \langle \partial_B\tau,
    \beta_i\rangle \tau_i\right)^\dagger
\end{split}
\end{equation*}
and we get (1).  Since $\langle
\tau^\dagger,\widetilde{\phi}\rangle=0$, the second equality follows.

Then we have 
\begin{align*}
  \left\lb \sum_{i=1}^n\ell_{\beta_i}\tau_i^\dagger,
    \sum_{j=1}^n\ell_{\beta'_j}\tau_j^\dagger\right\rb
  &=\sum_{i=1}^n\sum_{j=1}^n\ell_{\beta_i}(\partial_B\tau_i)^\uparrow(\ell_{\beta'_j})\tau_j^\dagger-\ell_{\beta'_j}(\partial_B\tau_j)^\uparrow(\ell_{\beta_i})\tau_i^\dagger\\
  &=\left(\sum_{i=1}^n\sum_{j=1}^n\langle \partial_B\tau_i,
    \beta'_j\rangle\cdot\beta_i\cdot\tau_j-\langle\partial_B\tau_j,\beta_i\rangle\cdot\beta'_j\cdot\tau_i
  \right)^\dagger.
\end{align*}
Thus, we get (2).

\end{proof}

\subsection{Dorfman 2-representations and Lagrangian decompositions of VB-algebroids}
In this section, we study in detail the structure of a VB-Courant
algebroid, using Lagrangian decompositions of the underlying metric double
vector bundle.  Our goal is the following theorem. Note the similarity
of this result with Theorem \ref{rajan} in the VB-algebroid case.
\begin{theorem}\label{main}
  Let $(\mathbb E; Q,B;M)$ be a VB-Courant algebroid and choose a
  Lagrangian splitting $\Sigma\colon Q\times_MB\to \mathbb E$. Then there exists a Dorfman
 2-representation $(\Delta,\nabla, R)$ of
  $(Q,\rho_Q)$ on the core-anchor $\partial_B\colon Q^*\to B$ such that
\begin{equation}\label{VB_def1}
\begin{split}
  \Theta(\sigma_Q(q))&=\widehat{\nabla_q}\in\mx(B), \\
  \lb \sigma_Q(q_1), \sigma_Q(q_2)\rb &=\sigma_Q(\lb q_1,
    q_2\rb_\Delta)-\widetilde{R(q_1,q_2)},\\
  \lb \sigma_Q(q), \tau^\dagger\rb&=(\Delta_q\tau)^\dagger
\end{split}
\end{equation}
for all $q,q_1,q_2\in\Gamma(Q)$ and $\tau\in\Gamma(Q^*)$, where
$\lb\cdot\,,\cdot\rb_\Delta\colon\Gamma(Q)\times\Gamma(Q)\to\Gamma(Q)$ is
the dull bracket that is dual to $\Delta $.

Conversely, a Lagrangian splitting $\Sigma\colon Q\times B^*\to
\mathbb E$ of the metric double vector bundle $\mathbb E$ together
with a Dorfman 2-representation define a linear Courant algebroid
structure on $\mathbb E$ by \eqref{VB_def1}.
\end{theorem}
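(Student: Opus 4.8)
The plan is to transfer the structure through the Lagrangian splitting exactly as in the VB-algebroid case (Theorem \ref{rajan}), but now working with a Courant bracket instead of a Lie bracket and keeping careful track of the pairing. First I would set up the three structure maps. Given a Lagrangian splitting $\Sigma$ with horizontal lift $\sigma_Q\colon\Gamma(Q)\to\Gamma^\ell_B(\mathbb E)$, the anchor $\Theta$ restricts on linear sections to give linear vector fields on $B$; since $\Theta(\sigma_Q(q))$ covers $\rho_Q(q)$, it is of the form $\widehat{\nabla_q}$ for a unique $Q$-connection $\nabla$ on $B$, as in \S\ref{tangent_double}. The bracket $\lb\sigma_Q(q),\tau^\dagger\rb$ is a core section by linearity of the Courant bracket, hence of the form $(\Delta_q\tau)^\dagger$, and $\Delta$ is a Dorfman connection: conditions (1)--(3) of Definition \ref{the_def} follow from (CA5), the Leibniz rule $\sigma_Q(fq)=q_B^*f\,\sigma_Q(q)$, and $\mathcal D(q_B^*f)=(\rho_Q^*\dr f)^\dagger$ together with \eqref{anchor_on_pullbacks}. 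Finally $\lb\sigma_Q(q_1),\sigma_Q(q_2)\rb$ is linear, and its core-linear correction term is named $-R(q_1,q_2)$; the base section of $\lb\sigma_Q(q_1),\sigma_Q(q_2)\rb$ is the dull bracket $\lb q_1,q_2\rb_\Delta$ dual to $\Delta$, which I would check by pairing with $\tau^\dagger$ and using (CA2) together with the Lagrangian condition $\langle\sigma_Q(q_1),\sigma_Q(q_2)\rangle=0$: indeed $q_B^*\langle\lb q_1,q_2\rb_\Delta,\tau\rangle=\langle\lb\sigma_Q(q_1),\sigma_Q(q_2)\rb,\tau^\dagger\rangle$ because the anchor and pairing reduce it to $\rho_Q(q_1)\langle q_2,\tau\rangle-\langle q_2,\Delta_{q_1}\tau\rangle$, which is precisely $\langle\Delta^*_{q_1}q_2,\tau\rangle$.

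Next I would verify that $(\partial_B,\Delta,\nabla,R)$ satisfies axioms (D1)--(D6). Axiom (D2), skew-symmetry of the dull bracket, follows from (CA3): $\lb\sigma_Q(q_1),\sigma_Q(q_2)\rb+\lb\sigma_Q(q_2),\sigma_Q(q_1)\rb=\mathcal D\langle\sigma_Q(q_1),\sigma_Q(q_2)\rangle=\mathcal D(0)=0$ by the Lagrangian property, so the base sections cancel and the $R$-terms cancel, giving both $\lb q_1,q_2\rb_\Delta=-\lb q_2,q_1\rb_\Delta$ and $R(q_1,q_2)=-R(q_2,q_1)$. Axiom (D1), $\partial_B\circ\Delta_q=\nabla_q\circ\partial_B$, comes from applying $\Theta$ to $\lb\sigma_Q(q),\tau^\dagger\rb=(\Delta_q\tau)^\dagger$ and using (CA4), $\Theta(\sigma^\dagger)=(\partial_B\sigma)^\uparrow$, and $[\widehat{\nabla_q},(\partial_B\tau)^\uparrow]=(\nabla_q\partial_B\tau)^\uparrow$. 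Axiom (D3) follows from (CA5)/flatness applied to $\lb\mathcal D(\ell_\beta),\sigma_Q(q)\rb$-type identities, or more directly by pairing the symmetry relation for $\tau^\dagger$ with $\sigma_Q$; I would use $\rho_Q\circ\partial_B^*=0$ from \eqref{rho_delta}. Axioms (D4), (D5), (D6) are the curvature identities: (D4) splits into $\partial_B\circ R=R_\nabla$ (apply $\Theta$ to the linear-bracket formula, use (CA4) and $[\widehat{\nabla_{q_1}},\widehat{\nabla_{q_2}}]=\widehat{\nabla_{[\rho_Q(q_1),\rho_Q(q_2)]}}+\widehat{R_\nabla(q_1,q_2)}$) and $R(q_1,q_2)\circ\partial_B=R_\Delta(q_1,q_2)$ (compute $\lb\sigma_Q(q_1),\lb\sigma_Q(q_2),\tau^\dagger\rb\rb$ two ways via (CA1) and Lemma \ref{formulas}(1)). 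Axiom (D5), $R(q_1,q_2)^*q_3=-R(q_1,q_3)^*q_2$, follows from pairing the linear-bracket formula with $\sigma_Q(q_3)$, using (CA2) and the Lagrangian vanishing of $\langle\sigma_Q(q_i),\sigma_Q(q_j)\rangle$; this forces the relevant expression to be totally alternating. Axiom (D6) is the Jacobi-type identity: expand the Courant-Jacobiator (CA1) for three linear sections $\sigma_Q(q_1),\sigma_Q(q_2),\sigma_Q(q_3)$, collect the linear parts and the core-linear parts, and read off $\dr_\lozenge R(q_1,q_2,q_3)=\nabla^*_\cdot(R(q_1,q_2)^*q_3)$ after using (D1)--(D5) and the definition of $\dr_\lozenge$ from \S\ref{background_courant_notions}.

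For the converse, given a Lagrangian splitting and a Dorfman $2$-representation $(\partial_B,\Delta,\nabla,R)$, I would \emph{define} the anchor $\Theta$ and bracket $\lb\cdot\,,\cdot\rb$ on $\mathbb E\to B$ by \eqref{VB_def1}, extended to all sections by the Leibniz rules (CA5) and (CA2)/linearity; concretely $\Theta$ is determined on $\Gamma^\ell_B(\mathbb E)\cup\Gamma^c_B(\mathbb E)$ and these generate $\Gamma_B(\mathbb E)$ over $C^\infty(B)$. The bracket is manifestly linear by construction. To check the Courant axioms I would invoke Lemma \ref{useful_lemma} of Li-Bland: the set $\mathcal S=\{\sigma_Q(q)\}\cup\{\tau^\dagger\}$ generates $\Gamma_B(\mathbb E)$ as a $C^\infty(B)$-module, the pairing is the linear metric (nondegenerate), $\rho_B\circ\rho_B^*=0$ translates into $\partial_B\circ\rho_Q^*=0$ which is \eqref{rho_delta}, so it suffices to verify (CA1)--(CA4) of that lemma on elements of $\mathcal S$. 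Conditions (CA3), (CA4) and the Leibniz rule on $\mathcal S$ are essentially the content of (D1)--(D3) plus the Lagrangian identity $\langle\sigma_Q(q_1),\sigma_Q(q_2)\rangle=\ell_{\Lambda}=0$; the Jacobi identity (CA1) on the three cases (three linear, two linear one core, one linear two core) unwinds to (D4), (D5), (D6) and flatness of $\Delta$ and $\nabla$ (which follow from the same axioms since $\Jac_{\lb\cdot\,,\cdot\rb_\Delta}=\partial_B^*\omega_R$ and $\dr_{\nabla^*}\omega_R=0$). The compatibility of this Courant structure with the second vector bundle structure — linearity of $\Theta$ (condition (1) of the VB-Courant definition) and linearity of the bracket (condition (2)) — is immediate because $\Theta$ and $\lb\cdot\,,\cdot\rb$ were built to send linear and core sections to the prescribed types.

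The main obstacle I expect is the bookkeeping in axiom (D6) / the Courant-Jacobiator computation: expanding $\lb\sigma_Q(q_1),\lb\sigma_Q(q_2),\sigma_Q(q_3)\rb\rb$ and its cyclic terms produces a linear piece and a core-linear piece, and separating them correctly — while keeping the signs in the definition of $\dr_\lozenge$ and of $\omega_R(q_1,q_2,q_3)=R(q_1,q_2)^*q_3$ straight, and correctly using Lemma \ref{formulas}(2) for the brackets of the core-linear correction terms $\widetilde{R(q_i,q_j)}$ — is the one genuinely delicate calculation. Everything else is either a direct consequence of the linearity of the Courant structure (which section types map to which) or a short application of (CA1)--(CA5) to two generators at a time. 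This is exactly the Courant-algebroid analogue of the VB-algebroid computation behind Theorem \ref{rajan}, and I would note that the long version is deferred to Appendix \ref{appendix_proof_of_main}.
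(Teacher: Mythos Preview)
Your plan is correct and follows essentially the same route as the paper's proof in Appendix \ref{appendix_proof_of_main}: construct $\nabla,\Delta,R$ from the anchor and bracket on the generating set $\mathcal S=\{\sigma_Q(q),\tau^\dagger\}$, then match each Dorfman $2$-representation axiom with a Courant axiom on $\mathcal S$ (and conversely via Lemma \ref{useful_lemma}). The one place where your sketch is off is axiom (D3): neither ``$\lb\mathcal D(\ell_\beta),\sigma_Q(q)\rb$-type identities'' nor ``pairing the symmetry relation for $\tau^\dagger$'' yields $\nabla^*_{\partial_B^*\beta_1}\beta_2+\nabla^*_{\partial_B^*\beta_2}\beta_1=0$. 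The paper instead uses the general Courant identity $\Theta\circ\Theta^*=0$ (equivalently $\rho\circ\mathcal D=0$, see \eqref{anchor_with_D_2}): one first computes $\mathcal D\ell_\beta=\sigma_Q(\partial_B^*\beta)+\widetilde{\nabla_\cdot^*\beta}$, then $\Theta(\mathcal D\ell_\beta)(\ell_{\beta'})=\ell_{\nabla^*_{\partial_B^*\beta}\beta'+\nabla^*_{\partial_B^*\beta'}\beta}$, and its vanishing is exactly (D3). Everything else in your outline (the sources of (D1), (D2), (D4), (D5), (D6), and the use of Lemma \ref{useful_lemma} for the converse) matches the paper.
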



First we will construct the objects $\lb\cdot\,,\cdot\rb_\Delta,
\Delta, \nabla,R$ as in the theorem, and then we will prove in the appendix that they
satisfy the axioms of a Dorfman 2-representation.

\subsubsection{Construction of the Dorfman 2-representation and outline of the proof}\label{construction_of_objects}
The objects $\lb\cdot\,,\cdot\rb_\Delta$, $\nabla$, $\Delta$ and $R$
are defined as in the theorem. Let us be more precise.

First recall that, by definition, the Courant bracket of two linear
sections of $\mathbb E\to B$ is again linear. Hence, we can denote by
$\lb q_1, q_2\rb_\sigma$ the section of $Q$ such that
\begin{equation}
  \pi_Q\circ \lb \sigma_Q(q_1), \sigma_Q(q_2)\rb =\lb q_1, q_2\rb_\sigma\circ
  q_B.
\end{equation}

Since for each $q\in\Gamma(Q)$, the anchor $\Theta(\sigma_Q(q))$ is a linear vector field on
  $B$ over $\rho_Q(q) \in\mx(M)$, there
  exists a derivation $D_q\colon\Gamma(B^*)\to\Gamma(B^*)$ over
  $\rho_Q(q) $ such that $\Theta(\sigma_Q(q))(\ell_\beta)=\ell_{D_q\beta}$ for
  all $\beta\in\Gamma(B^*)$, and $\Theta(\sigma_Q(q))(q_B^*f)=q_B^*(\rho_Q(q)(f))$ for all $f\in
  C^\infty(M)$.  This defines a linear $Q$-connection
  $\nabla\colon\Gamma(Q)\times\Gamma(B)\to\Gamma(B)$:
\[ \nabla_qb=D_q^*b
\]
for all $b\in\Gamma(B)$. Then by definition, $\Theta(\sigma_Q(q))=\widehat{\nabla_q}\in \mx^l(B)$.

 For $q\in\Gamma(Q)$ and $\tau\in\Gamma(Q^*)$, the bracket
  $\left\lb \sigma_Q(q), \tau^\dagger\right\rb$ is a core section. It
  is easy to check that the map
  $\Delta\colon\Gamma(Q)\times\Gamma(Q^*)\to\Gamma(Q^*)$ defined by
  \[ \left\lb \sigma_Q(q),
    \tau^\dagger\right\rb=(\Delta_q\tau)^\dagger
\]
is a Dorfman connection.\footnote{ Note that Condition $(C3)$ then implies that 
$\lb \tau^\dagger, \sigma_Q(q)\rb=\left(-\Delta_q\tau+\rho_Q^*\dr\langle \tau,
  q\rangle\right)^\dagger$.}

The difference of the two linear sections $\lb \sigma_Q(q_1),
  \sigma_Q(q_2)\rb-\sigma_Q(\lb q_1, q_2\rb_\sigma)$ is again a linear section, which projects to $0$ under $\pi_Q$. 
Hence, there
  exists a vector bundle morphism $R(q_1,q_2)\colon B\to Q^*$ such that
  $\sigma_Q(\lb q_1, q_2\rb_\sigma)-\lb \sigma_Q(q_1), \sigma_Q(q_2)\rb=\widetilde{R(q_1,q_2)}$.
  This defines a map
  $R\colon\Gamma(Q)\times\Gamma(Q)\to\Gamma(\operatorname{Hom}(B,Q^*))$.
We show in the appendix that $(\partial_B, \nabla, \Delta, R)$ is a
Dorfman 2-representation, and that $\lb\cdot\,,\cdot\rb_\Delta=\lb\cdot\,,\cdot\rb_\sigma$.

Conversely, choose a Lagrangian splitting $\Sigma\colon Q\times_MB$ of
a metric double vector bundle $(\mathbb E, Q; B, M)$ with core $Q^*$
and let $\mathcal S\subseteq\Gamma_B(\mathbb E)$ be the subset
$\{\tau^\dagger\mid \tau\in\Gamma(Q^*)\}\cup \{\sigma_Q(q)\mid
q\in\Gamma(Q)\}\subseteq \Gamma(\mathbb E)$. Choose a Dorfman
2-representation $(\partial_B\colon Q^*\to B, \nabla, \Delta, R)$ of $(Q,\rho_Q)$. 
Define then a vector bundle map $\Theta\colon\mathbb E\to TB$ over the
identity on $B$ by $\Theta(\sigma_Q(q))=\widehat{\nabla_q}$ and
$\Theta(\tau^\dagger)=(\partial_B\tau)^\dagger$ and a bracket
$\lb\cdot\,,\cdot\rb$ on $\mathcal S$ by
\begin{equation*}
\begin{split}
  \lb \sigma_Q(q_1), \sigma_Q(q_2)\rb=\sigma_Q(\lb q_1,
    q_2\rb_\Delta)-\widetilde{R(q_1,q_2)}, \quad \lb \sigma_Q( q),
  \tau^\dagger\rb=(\Delta_q\tau)^\dagger, \\\quad \lb
  \tau^\dagger, \sigma_Q(q)\rb=\left(-\Delta_q\tau+\rho_Q^*\dr\langle
    \tau, q\rangle\right)^\dagger, \quad \lb \tau_1^\dagger,
  \tau_2^\dagger\rb=0.
\end{split}
\end{equation*}
We show in the appendix that this bracket, the pairing and the anchor satisfy the
conditions of Lemma \ref{useful_lemma}, and so that $(\mathbb E, B; Q, M)$ with this structure 
is a VB-Courant algebroid.

\subsubsection{Change of Lagrangian decomposition}
Next we study how the Dorfman 2-representation $(\partial_B\colon
Q^*\to B, \nabla, \Delta, R, \Lambda)$ associated to a Lagrangian
decomposition of a VB-Courant algebroid changes when one changes the
Lagrangian decomposition.

The proof of the following proposition is straightforward and left to
the reader. Compare this result with the equations subsequent to
Definition \ref{morphism_Lie_2}, that describe a change of splittings
of split Lie $2$-algebroid.
 
\begin{proposition}\label{change_of_lift}
  Let $\Sigma^1,\Sigma^2\colon B\times_MQ\to\mathbb E$ be two
  Lagrangian splittings and let $\phi_{12}\in\Gamma(Q^*\otimes Q^*\otimes
  B^*)$ be the change of lift.
\begin{enumerate}
\item The Dorfman connections are related by
$\Delta_q^2\sigma=\Delta_q^1\sigma+\phi_{12}(q)(\partial_B\sigma)$
\item and the dull brackets consequently by $\lb q, q'\rb_{2}=\lb q,
  q'\rb_1 -\partial_B^*\phi_{12}(q)^*(q')$.
\item The connections are related by
  $\nabla^2_q=\nabla^1_q+\partial_B\circ\phi_{12}(q)$.
\item The curvature terms are related by 
$\omega_{R^2}-\omega_{R^1}=\dr_{{\nabla^2}^*}\phi_{12}$,
where the Cartan differential $\dr_{{\nabla^2}^*}$ is defined with the
dull bracket $\lb\cdot\,,\cdot\rb_1$ on $\Gamma(Q)$.
\end{enumerate}
\end{proposition}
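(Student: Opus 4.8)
The strategy is to compute directly how each of the four structure objects transforms under a change of Lagrangian decomposition, using the defining equations \eqref{VB_def1} together with the known behaviour of linear and core-linear sections under a change of splitting (as recalled around \eqref{fat_seq_gamma} and in \S\ref{subsub:lsl}). Recall that two linear splittings differ by $\phi_{12}\in\Gamma(Q^*\otimes Q^*\otimes B^*)\simeq\Gamma(\operatorname{Hom}(Q,\operatorname{Hom}(B,Q^*)))$ in the sense that $\sigma_Q^1(q)-_B\sigma_Q^2(q)=\widetilde{\phi_{12}(q)}$ for all $q\in\Gamma(Q)$; by Proposition \ref{lagrangian_change_of_split}, since both splittings are Lagrangian, $\phi_{12}$ lands in $\Gamma(Q^*\wedge Q^*\otimes B^*)$, i.e.~$\langle\phi_{12}(q)(b),q'\rangle=-\langle\phi_{12}(q')(b),q\rangle$ for all $q,q'\in\Gamma(Q)$ and $b\in\Gamma(B)$. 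This antisymmetry is exactly what will be needed to make the curvature formula close up.

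First I would treat the Dorfman connection. From $\lb\sigma_Q^i(q),\tau^\dagger\rb=(\Delta^i_q\tau)^\dagger$ and $\sigma_Q^1(q)=\sigma_Q^2(q)+_B\widetilde{\phi_{12}(q)}$, I would expand using the first formula of Lemma \ref{formulas}, namely $\lb\tau^\dagger,\widetilde{\phi}\rb=(\phi(\partial_B\tau))^\dagger=-\lb\widetilde{\phi},\tau^\dagger\rb$; this immediately gives $(\Delta^1_q\tau)^\dagger=(\Delta^2_q\tau)^\dagger+(\phi_{12}(q)(\partial_B\tau))^\dagger$, hence (1), and then (2) follows by dualising: $\lb q,q'\rb_\Delta=\Delta^*_qq'$ and one reads off the change using $\langle\Delta^{1*}_qq'-\Delta^{2*}_qq',\tau\rangle=-\langle q',\phi_{12}(q)(\partial_B\tau)\rangle=-\langle\partial_B^*\phi_{12}(q)^*(q'),\tau\rangle$ where the last step uses the definition of the transpose $\partial_B^*\colon B^*\to Q$ and of $\phi_{12}(q)^*\colon Q\to B^*$. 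For (3), I would use $\Theta(\sigma_Q^i(q))=\widehat{\nabla^i_q}$ together with the fact, established in the text (second formula after Theorem \ref{rajan}, and the analogous statement here), that $\Theta(\widetilde{\phi})=$ the vertical-type contribution given by $\partial_B\circ\phi$; more precisely, evaluating $\Theta(\sigma_Q^1(q))(\ell_\beta)=\Theta(\sigma_Q^2(q))(\ell_\beta)+\Theta(\widetilde{\phi_{12}(q)})(\ell_\beta)$ and translating into connections yields ${D^1_q}=D^2_q+(\partial_B\circ\phi_{12}(q))^*$ on $\Gamma(B^*)$, hence $\nabla^1_q=\nabla^2_q+\partial_B\circ\phi_{12}(q)$, which rearranges to (3).

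The main obstacle is (4), the transformation of the curvature term $\omega_{R}$. Here I would start from the middle equation of \eqref{VB_def1}, $\lb\sigma_Q^i(q_1),\sigma_Q^i(q_2)\rb=\sigma_Q^i(\lb q_1,q_2\rb_{\Delta^i})-\widetilde{R^i(q_1,q_2)}$, substitute $\sigma_Q^1=\sigma_Q^2+_B\widetilde{\phi_{12}}$ on both sides, and expand the left side bilinearly. The cross terms $\lb\sigma_Q^2(q_1),\widetilde{\phi_{12}(q_2)}\rb$ and $\lb\widetilde{\phi_{12}(q_1)},\sigma_Q^2(q_2)\rb$ are computed via the second identity after the proposition following Theorem \ref{rajan} (here its Courant analogue): $\lb\chi,\widetilde{\phi}\rb=\widetilde{\phi\circ\nabla_\chi-\Delta_\chi\circ\phi}$ and its reverse, and the term $\lb\widetilde{\phi_{12}(q_1)},\widetilde{\phi_{12}(q_2)}\rb$ via the second identity of Lemma \ref{formulas}. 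Collecting all core-linear contributions and matching them against $-\widetilde{R^1(q_1,q_2)}$ on the right, while using (1)--(3) already proven to rewrite $\sigma_Q^1(\lb q_1,q_2\rb_{\Delta^1})$ in terms of $\sigma_Q^2$ and $\phi_{12}$, should reorganise into precisely the Koszul-type expression $\dr_{{\nabla^2}^*}\phi_{12}$ once one recognises the combination $\Delta^2_{q_1}\phi_{12}(q_2)-\Delta^2_{q_2}\phi_{12}(q_1)-\phi_{12}(\lb q_1,q_2\rb_1)+(\text{connection and }\partial_B\text{ corrections})$ as $(\dr_\lozenge\phi_{12})(q_1,q_2)$, which by the remark after Definition \ref{def_dorfman_2_conn} corresponds to $\dr_{{\nabla^2}^*}\phi_{12}$ acting on $\omega$-forms; the antisymmetry of $\phi_{12}$ from Proposition \ref{lagrangian_change_of_split} is what kills the would-be non-antisymmetric remainder. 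This bookkeeping is routine but lengthy, which is why the text defers it to the reader; the only genuinely delicate point is keeping consistent sign conventions between $\phi_{12}$, its transpose, and the Cartan differential $\dr_{\lozenge}$ versus $\dr_{{\nabla^2}^*}$.
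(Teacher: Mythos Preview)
Your approach is correct and is precisely what the paper intends: the paper explicitly leaves this proof to the reader as a straightforward computation, and your plan of substituting $\sigma_Q^1(q)=\sigma_Q^2(q)+\widetilde{\phi_{12}(q)}$ into the defining equations \eqref{VB_def1} and expanding via Lemma \ref{formulas} (together with the formula $\lb\sigma_Q(q),\widetilde{\phi}\rb=\widetilde{\lozenge_q\phi}$ for the cross terms in part (4)) is exactly the expected verification. There is one harmless sign slip in your step for (1): since $\lb\widetilde{\phi},\tau^\dagger\rb=-(\phi(\partial_B\tau))^\dagger$, the intermediate identity should read $(\Delta^1_q\tau)^\dagger=(\Delta^2_q\tau)^\dagger-(\phi_{12}(q)(\partial_B\tau))^\dagger$, which then rearranges correctly to (1).
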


As an application, we get the following corollary
of Theorem \ref{main} and Theorem \ref{fat}.

\begin{corollary}
  Let $(Q\oplus B^*\to M, \rho_Q,
  l_1, l_2, l_3)$ be a split Lie 2-algebroid and
  $(\partial_B=-l_1^*,\Delta,\nabla,R)$ the dual Dorfman
  2-representation. Then the vector bundle $\mathsf
  E:=Q\oplus\operatorname{Hom}(B,Q^*)$ is a
  Courant algebroid with pairing in $B^*$ given by $\langle
  (q_1,\phi_1), (q_2,\phi_2)\rangle=\phi_1^*(q_2)+\phi_2^*(q_1)$, with
  the anchor $\tilde\rho\colon \mathsf E\to
  \widehat{\operatorname{Der}(B)}$,
  $\tilde\rho(q,\phi)=\nabla_q^*+\phi^*\circ l_1$ over $\rho(q)$ and
  the bracket given by
\begin{equation*}
\begin{split}
  \lb (q_1,\phi_1), (q_2, \phi_2)\rb=\Bigl(\lb q_1,
  q_2\rb_\Delta+l_1^*(\phi_1^*(q_2)),
  &\lozenge_{q_1}\phi_2-\lozenge_{q_2}\phi_1+\nabla_\cdot^*(\phi_1^*(q_2))
  \\
  +&\phi_2\circ l_1^*\circ\phi_1-\phi_1\circ
  l_1^*\circ\phi_2+R(q_1,q_2)\Bigr).
\end{split}
\end{equation*}
The map $\mathcal D\colon \Gamma(B^*)\to \Gamma(\mathsf E)$ sends
$q $ to $(l_1(q),\nabla^*_\cdot q)$. The bracket does not depend on the choice of splitting.
\end{corollary}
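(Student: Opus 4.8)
The plan is to reduce the corollary entirely to Theorem~\ref{fat} combined with the explicit formulas of Theorem~\ref{main}, so that no genuinely new verification is needed. First I would set up the VB-Courant algebroid $\mathbb E$ that is equivalent to the given split Lie $2$-algebroid: by Theorem~\ref{main} (converse direction), the Lagrangian decomposition $Q\times_M B\times_M Q^*$ together with the Dorfman $2$-representation $(\partial_B=-l_1^*,\Delta,\nabla,R)$ defines a linear Courant algebroid structure on $\mathbb E\to B$ via \eqref{VB_def1}. Then I would apply Theorem~\ref{fat}: the fat bundle $\widehat{\mathbb E}\to M$, whose sections are the linear sections $\Gamma_B^l(\mathbb E)$, inherits a Courant algebroid structure with pairing in $B^*$. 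The whole proof is then the identification of $\widehat{\mathbb E}$, in the chosen Lagrangian splitting, with $Q\oplus\operatorname{Hom}(B,Q^*)$: the short exact sequence \eqref{fat_seq} reads $0\to B^*\otimes C\hookrightarrow\widehat{\mathbb E}\to Q\to 0$ with $C=Q^*$, and the Lagrangian splitting provides the splitting $q\mapsto\sigma_Q(q)$, core-linear sections being the $\widetilde\phi$ with $\phi\in\Gamma(\operatorname{Hom}(B,Q^*))$.

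Next I would compute each piece of the structure in terms of this identification. For the pairing: by the linearity of the metric, $\langle\sigma_Q(q_1),\sigma_Q(q_2)\rangle=0$ (Lagrangian), $\langle\sigma_Q(q),\widetilde\phi\rangle=\ell_{\phi^*(q)}$, and $\langle\widetilde{\phi_1},\widetilde{\phi_2}\rangle=\ell_{\phi_1^*(q_2)}$-type terms vanish on the nose when both are core-linear; collecting, the $B^*$-valued pairing of $(q_1,\phi_1)$ with $(q_2,\phi_2)$ is $\phi_1^*(q_2)+\phi_2^*(q_1)$, as claimed. For the anchor: $\Theta$ restricted to linear sections takes values in $\mx^l(B)=\widehat{\operatorname{Der}(B)}$; from \eqref{VB_def1} we have $\Theta(\sigma_Q(q))=\widehat{\nabla_q}$, i.e.\ the derivation $\nabla_q^*$ on $B^*$, while $\Theta(\widetilde\phi)$ must be read off from $\widetilde\phi=\sum_i\ell_{\beta_i}\tau_i^\dagger$ and $\Theta(\tau^\dagger)=(\partial_B\tau)^\uparrow=(-l_1^*\tau)^\uparrow$: a short computation of the Leibniz rule shows $\Theta(\widetilde\phi)=\phi^*\circ l_1$ as a derivation of $B$ over the zero vector field; summing gives $\tilde\rho(q,\phi)=\nabla_q^*+\phi^*\circ l_1$. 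For the bracket: I would expand $\lb\sigma_Q(q_1)+\widetilde{\phi_1},\sigma_Q(q_2)+\widetilde{\phi_2}\rb$ using bilinearity, the three equations of \eqref{VB_def1} for the $\sigma_Q$--$\sigma_Q$ term (yielding $\sigma_Q(\lb q_1,q_2\rb_\Delta)-\widetilde{R(q_1,q_2)}$), the formula $\lb\sigma_Q(q),\widetilde\phi\rb=\widetilde{\phi\circ\nabla_q-\Delta_q\circ\phi}=\widetilde{\lozenge_q\phi}$ from the proposition following Lemma~\ref{formulas} together with its mirror $\lb\widetilde\phi,\sigma_Q(q)\rb$ (which by (C3) differs by a $\mathcal D$-term, contributing the $\nabla^*_\cdot(\phi^*(q))$ piece), and Lemma~\ref{formulas}(2) for the $\widetilde{\phi_1}$--$\widetilde{\phi_2}$ term (yielding $\widetilde{\phi_2\circ\partial_B\circ\phi_1-\phi_1\circ\partial_B\circ\phi_2}=\widetilde{-\phi_2\circ l_1^*\circ\phi_1+\phi_1\circ l_1^*\circ\phi_2}$; note the sign bookkeeping with $\partial_B=-l_1^*$). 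The first component of the result comes from Lemma~\ref{formulas}(1): $\lb\widetilde{\phi_1},\tau^\dagger\rb$-type contributions feed into $l_1^*(\phi_1^*(q_2))$ via the identification of the $Q$-part. Reassembling these contributions reproduces exactly the displayed bracket. Finally $\mathcal D=\Theta^*\circ\dr$ on $q_B^*$-pullbacks is $(\rho_Q^*\dr f)^\dagger$, and on linear functions $\ell_\beta$, $\beta\in\Gamma(B^*)$, it must land in $\widehat{\mathbb E}$; tracking the pairing formula shows $\mathcal D\beta=(l_1(\beta),\nabla^*_\cdot\beta)$, since pairing this with $(q,\phi)$ gives $\phi^*(l_1\beta)+(\nabla^*_\cdot\beta)^*(q)$, which matches $\tilde\rho(q,\phi)(\beta)$.

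For splitting-independence, I would invoke Proposition~\ref{change_of_lift}: under a change of Lagrangian splitting by $\phi_{12}\in\Gamma(Q^*\wedge Q^*\otimes B^*)$ the objects $\Delta,\nabla,R$ transform in a controlled way, and one checks that the induced isomorphism $\widehat{\mathbb E}\to\widehat{\mathbb E}$ sending $\sigma_Q^1(q)\mapsto\sigma_Q^2(q)=\sigma_Q^1(q)-\widetilde{\phi_{12}(q)}$ (and fixing core-linear sections) intertwines the two bracket formulas. But this is automatic: $\widehat{\mathbb E}$ and its Courant structure are defined intrinsically from $\mathbb E$ by Theorem~\ref{fat}, and $\mathbb E$ itself does not depend on the splitting, so the bracket, pairing and anchor on $\widehat{\mathbb E}$ are splitting-independent before any identification is made. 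Only the \emph{presentation} as $Q\oplus\operatorname{Hom}(B,Q^*)$ uses the splitting, and the compatibility of the two presentations is exactly the transformation rule of Proposition~\ref{change_of_lift}.

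The main obstacle I anticipate is the sign and index bookkeeping in the bracket computation — in particular correctly handling the asymmetry between $\lb\sigma_Q(q),\widetilde\phi\rb$ and $\lb\widetilde\phi,\sigma_Q(q)\rb$ (the source of the lone $\nabla^*_\cdot(\phi_1^*(q_2))$ term rather than a symmetric expression), and keeping consistent the identification $\partial_B=-l_1^*$ versus $\partial_B^*=-l_1$ throughout Lemma~\ref{formulas} and the Dorfman-connection axioms. A secondary subtlety is checking that $\tilde\rho$ genuinely lands in $\widehat{\operatorname{Der}(B)}$ and that $\phi^*\circ l_1$ is the correct symbol-zero derivation of $B$ (as opposed to $B^*$); this just requires care with the duality conventions fixed in \S\ref{tangent_double} and \S\ref{dual}. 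None of these steps is conceptually hard once Theorems~\ref{main} and \ref{fat} are in hand; the corollary is essentially a dictionary entry.
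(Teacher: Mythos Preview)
Your proposal is correct and follows exactly the route the paper intends: the corollary is stated as an application of Theorem~\ref{main} and Theorem~\ref{fat}, and you have correctly identified that the entire content is the translation of the fat Courant algebroid structure on $\widehat{\mathbb E}$ into the splitting-induced identification $\widehat{\mathbb E}\simeq Q\oplus\operatorname{Hom}(B,Q^*)$, using the formulas of \eqref{VB_def1}, Lemma~\ref{formulas}, the $\lozenge$-lemma in Appendix~\ref{appendix_proof_of_main}, and Lemma~\ref{formula_for_Dl} for $\mathcal D$. Your observation that splitting-independence is automatic because $\widehat{\mathbb E}$ is intrinsic is also the paper's reasoning (this is why the corollary is placed immediately after Proposition~\ref{change_of_lift}).
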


\subsection{Examples of VB-algebroids and the corresponding Dorfman 2-re\-pre\-sen\-ta\-tions}
We give here some examples of VB-Courant algebroids, and we compute
the corresponding classes of Dorfman $2$-representations. We find the
Dorfman $2$-representations described in Section
\ref{examples_split_lie_2}. In each of the examples below, it is easy
to check that the Courant algebroid structure is linear. Hence, it is
easy to check geometrically that the objects described in
\ref{examples_split_lie_2} are indeed split Lie $2$-algebroids. This
is why we omitted the detailed computations in that section.

\subsubsection{The standard Courant algebroid over a vector bundle}\label{standard_VB_Courant_ex}
We have discussed this example in great detail in \cite{Jotz13a}, but
not in the language of Dorfman 2-representations and Lie 2-algebroids.
In \cite{Jotz13a}, we worked with general, not necessarily Lagrangian,
linear splittings.

Let $q_E\colon E\to M$ be a vector bundle and consider the VB-Courant
algebroid
\begin{equation*}
\begin{xy}
  \xymatrix{
    TE\oplus T^*E\ar[rr]^{\Phi_E:=({q_E}_*, r_E)}\ar[d]_{\pi_E}&& TM\oplus E^*\ar[d]\\
    E\ar[rr]_{q_E}&&M }
\end{xy}
\end{equation*} with base $E$
and side $TM\oplus E^*\to M$, and 
with core $E\oplus T^*M\to M$, or in other words the standard (VB-)Courant
algebroid over a vector bundle $q_E\colon E\to M$. Recall that $TE\oplus T^*E$ was found in Example \ref{met_TET*E}
to have a linear metric. Recall also from this example that linear splittings of
$TE\oplus_ET^*E$ are in bijection with dull brackets on sections of
$TM\oplus E^*$, and so also with Dorfman connections $\Delta\colon
\Gamma(TM\oplus E^*)\times\Gamma(E\oplus T^*M)\to\Gamma(E\oplus
T^*M)$, and that Lagrangian splittings of $TE\oplus_ET^*E$ are in bijection with
skew-symmetric  dull brackets on sections of
$TM\oplus E^*$.

The anchor $\Theta=\pr_{TE}\colon TE\oplus T^*E\to TE$ restricts to
the map $\partial_{E}=\pr_{E}\colon E\oplus T^*M\to E$ on the cores,
and defines an anchor $\rho_{TM\oplus E^*}=\pr_{TM}\colon TM\oplus
E^*\to TM$ on the side.  In other words, the anchor of
$(e,\theta)^\dagger$ is $e^\uparrow\in \mx^c(E)$ and if $\widetilde{(X,\epsilon)}$
is a linear section of $TE\oplus T^*E\to E$ over
$(X,\epsilon)\in\Gamma(TM\oplus E^*)$, the anchor
$\Theta(\widetilde{(X,\epsilon)})$ is linear over $X$. 

Let $\iota_E\colon E\to E\oplus T^*M$ be the canonical inclusion. In
\cite{Jotz13a} we prove the following result (for general linear
splittings).
\begin{theorem}\label{Lagrangin}
  Choose $q, q_1,q_2\in\Gamma(TM\oplus E^*)$ and $\tau,\tau_1,
  \tau_2\in\Gamma(E\oplus T^*M)$.  
The Courant-Dorfman bracket on sections of $TE\oplus T^*E\to E$ is
given by
\begin{enumerate}\setcounter{enumi}{2}
\item $\left\lb \sigma(q),
    \tau^\dagger\right\rb=\left(\Delta_{q}\tau\right)^\dagger$,
 \item $ \left\lb \sigma(q_1), \sigma(q_2)\right\rb =\sigma(\lb q_1,
   q_2\rb_\Delta)-\widetilde{R_\Delta(q_1, q_2)\circ \iota_E}$.
\end{enumerate}
The anchor $\rho$ is described by
\begin{enumerate}\setcounter{enumi}{4}
\item $\Theta(\sigma(q))=\widehat{\nabla_q^*}\in\mx(E)$,
\end{enumerate}
where $\nabla\colon \Gamma(TM\oplus E^*)\times\Gamma(E)\to \Gamma(E)$
is defined by $\nabla_q=\pr_E\circ\Delta_q\circ\iota_E$ for all $q\in\Gamma(TM\oplus E^*)$.
\end{theorem}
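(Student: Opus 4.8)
Note that items (3)--(5) complete the description of the structure of $TE\oplus T^*E\to E$ begun in Theorem \ref{Lagrangin1}. The plan is to read off all three formulas directly from the explicit linear splitting
\[
\sigma(X,\epsilon)(e_m)=\left(T_me\, X(m),\ \dr\ell_\epsilon(e_m)\right)-\Delta_{(X,\epsilon)}(e,0)^\dagger(e_m)
\]
of $TE\oplus_ET^*E\to E$ recalled in Example \ref{met_TET*E} (here $e\in\Gamma(E)$ is any extension of $e_m$), together with the standard Courant--Dorfman bracket $\lb(V_1,\omega_1),(V_2,\omega_2)\rb=\bigl([V_1,V_2],\ldr{V_1}\omega_2-\ip{V_2}\dr\omega_1\bigr)$ and the anchor $\pr_{TE}$ on sections of $TE\oplus T^*E\to E$. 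First I would check that $TE\oplus T^*E$ is a VB-Courant algebroid, i.e.\ that $\pr_{TE}$ is linear and that brackets of linear (resp.\ linear and core, resp.\ core) sections are again linear (resp.\ core, resp.\ zero); this is an easy verification and guarantees that the brackets computed below are of the stated type. Since $\Gamma_E(TE\oplus T^*E)$ is generated over $C^\infty(E)$ by the $\sigma(q)$, $q\in\Gamma(TM\oplus E^*)$, and the $\tau^\dagger$, $\tau\in\Gamma(E\oplus T^*M)$, it is enough to evaluate everything on these generators.

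For (5): applying $\pr_{TE}$ to the displayed formula gives that $\Theta(\sigma(X,\epsilon))$ is the linear vector field on $E$ with horizontal part $T_\cdot e\,X$ and vertical part $-(\pr_E\Delta_{(X,\epsilon)}(\cdot,0))^\uparrow$. By the correspondence between linear vector fields and derivations, and the splitting/connection dictionary of \S\ref{tangent_double}, the derivation of $\Gamma(E)$ so obtained is exactly $\nabla_q=\pr_E\circ\Delta_q\circ\iota_E$; tracking the passage to the dual derivation yields $\Theta(\sigma(q))=\widehat{\nabla_q^*}$. Formula (3) is essentially the defining property of the Dorfman connection attached to the splitting in \cite{Jotz13a}: a core section $\tau^\dagger$ with $\tau=(e,\theta)$ is the pair $(e^\uparrow,q_E^*\theta)\in\mx(E)\times\Omega^1(E)$, one computes $\lb\sigma(q),\tau^\dagger\rb$ with the Courant bracket, checks that the result is again a core section, and reads off that it equals $(\Delta_q\tau)^\dagger$.

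For (4): linearity of the bracket lets one write $\lb\sigma(q_1),\sigma(q_2)\rb=\sigma(q)+\widetilde{\phi}$ for a unique $q\in\Gamma(TM\oplus E^*)$ and a unique core-linear term. Projecting to $TM\oplus E^*$ along $\Phi_E$ identifies $q$ with the base section of this linear bracket, which by construction of the dull bracket dual to $\Delta$ is $\lb q_1,q_2\rb_\Delta$. The remaining core-linear term is then obtained by expanding $\ldr{}$ and $\ip{}\dr$ against the two copies of the splitting formula and simplifying; the bookkeeping of the identifications (of core-linear sections of $TE\oplus T^*E$ with elements of $\operatorname{Hom}(E,E\oplus T^*M)$, and of the various lifts) and of the signs is the one genuinely technical point, and it should produce precisely $-\widetilde{R_\Delta(q_1,q_2)\circ\iota_E}$, which is formula (4).

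I expect the cleanest route, at least for \emph{Lagrangian} splittings, to bypass this computation: by Theorem \ref{main} a Lagrangian splitting of the VB-Courant algebroid $TE\oplus T^*E$ produces a Dorfman $2$-representation of $(TM\oplus E^*,\pr_{TM})$ on $\partial_E=\pr_E\colon E\oplus T^*M\to E$, and the computations for (5) and (3) above already show that its connection is $\pr_E\circ\Delta_\cdot\circ\iota_E$ and its Dorfman connection is $\Delta$; since these determine it, it must be the \emph{standard} Dorfman $2$-representation of Section \ref{standard}, whose curvature is $R_\Delta\circ\iota_E$ by construction, giving (4). The passage from an arbitrary linear splitting to a Lagrangian one is then handled by Proposition \ref{change_of_lift} together with the matching change-of-Dorfman-connection and change-of-dull-bracket formulas. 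The main obstacle, as noted, is the explicit identification of the curvature correction term in (4); all the remaining steps are either short computations with the standard Courant bracket or direct appeals to \S\ref{tangent_double} and to the way $\Delta$ and $\lb\cdot\,,\cdot\rb_\Delta$ are defined from the splitting.
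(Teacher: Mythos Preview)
The paper does not give its own proof of this theorem; it is quoted from \cite{Jotz13a}, where the computations are carried out directly in the standard Courant algebroid $TE\oplus T^*E$ for arbitrary (not necessarily Lagrangian) linear splittings. Your direct approach---computing (5) and (3) from the explicit lift formula and the standard Courant bracket, then extracting (4) by expanding the bracket of two lifts---is exactly what that reference does, and your sketch for (3) and (5) is correct.

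Your alternative route for (4) via Theorem~\ref{main} is logically available (Theorem~\ref{main} is proved independently in Appendix~\ref{appendix_proof_of_main}) and is slicker, but the step ``since these determine it'' is not justified as written. A Dorfman $2$-representation is the quadruple $(\partial_B,\Delta,\nabla,R)$, and in general $R$ is \emph{not} determined by the other three. What makes it work here is that the core-anchor $\partial_B=\pr_E\colon E\oplus T^*M\to E$ is surjective: axiom~(D4) of Definition~\ref{def_dorfman_2_conn} gives $R(q_1,q_2)\circ\partial_B=R_\Delta(q_1,q_2)$, and surjectivity of $\pr_E$ then forces $R(q_1,q_2)=R_\Delta(q_1,q_2)\circ\iota_E$. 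You should make this explicit. Note also that this route only covers Lagrangian splittings; Proposition~\ref{change_of_lift} compares two \emph{Lagrangian} splittings and does not give the passage to an arbitrary one, so for the general statement you are still relying on the direct computation of \cite{Jotz13a}.
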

Hence, if we choose a Lagrangian splitting of $TE\oplus_ET^*E$, we
find the Dorfman 2-representation of Example \ref{standard}. 

\subsubsection{VB-Courant algebroid defined by a VB-Lie algebroid}\label{VB_Courant_alg}
More generally, let
\begin{equation*}
\begin{xy}
  \xymatrix{
    D\ar[r]^{\pi_B}\ar[d]_{\pi_A}& B\ar[d]^{q_B}\\
    A\ar[r]_{q_A}&M }
\end{xy}
\end{equation*}
with core $C$, be endowed with a VB-Lie algebroid structure $(D\to B, A\to
M)$. Then the pair $(D,D\duer B)$ of vector bundles over $B$ is a Lie
bialgebroid, with $D\duer B$ endowed with the trivial Lie algebroid
structure.  We get a linear Courant algebroid $D\oplus_B (D\duer B)$
over $B$ with side $A\oplus C^*$
\begin{equation*}
\begin{xy}
  \xymatrix{
   D\oplus_B (D\duer B)\ar[r]\ar[d]&B\ar[d]\\
   A\oplus C^* \ar[r]&M }
\end{xy}
\end{equation*}
and core $C\oplus A^*$.  We check that the Courant algebroid structure
is linear. Let $\Sigma\colon A\times_M B\to D$ be a linear splitting
of $D$. Recall from Lemma \ref{lemma_dual_splitting} that we can
define a linear splitting of $D\duer B$ by $\Sigma^\star\colon
B\times_M C^*\to D\duer B$, $\langle\Sigma^\star(b_m,\gamma_m),
\Sigma(a_m,b_m)\rangle=0$ and $\langle\Sigma^\star(b_m,\gamma_m),
c^\dagger(b_m)\rangle=\langle \gamma_m,c(m)\rangle$ for all $b_m\in
B$, $a_m\in A$, $\gamma_m\in C^*$ and $c\in\Gamma(C)$. The linear
splitting $\tilde\Sigma\colon B\times_M(A\oplus C^*)\to D\oplus_B
(D\duer B)$,
$\tilde\Sigma(b_m,(a_m,\gamma_m))=(\Sigma(a_m,b_m),\Sigma^\perp(b_m,\gamma_m))$
as in \S\ref{metric_double_VB_alg} is then a Lagrangian splitting by
Lemma \ref{lemma_dual_splitting}.  A computation shows that the
Courant bracket on $\Gamma_B(D\oplus_B(D\duer B))$ is given by
\begin{equation*}
\begin{split}
  &\left\lb \tilde\sigma_{A\oplus C^*}(a_1,\gamma_1),
    \tilde\sigma_{A\oplus
      C^*}(a_2,\gamma_2)\right\rb\\
&=([\sigma_A(a_1),\sigma_A(a_2)],
  \ldr{\sigma_A(a_1)}\sigma^\star_{C^*}(\gamma_2)-\ip{\sigma_A(a_2)}\dr\sigma^\star_{C^*}(\gamma_1))\\
  &=\left(\sigma_A([a_1,a_2])-\widetilde{R(a_1,a_2)},
    \sigma^\star_{C^*}(\nabla_{a_1}^*\gamma_2-\nabla_{a_2}^*\gamma_1)-\widetilde{\langle\gamma_2,
      R(a_1, \cdot)\rangle}
    +\widetilde{\langle\gamma_1, R(a_2, \cdot)\rangle}\right)\\
  &\left\lb \tilde\sigma_{A\oplus C^*}(a,\gamma), (c,\alpha)^\dagger\right\rb
=\left(\nabla_ac^\dagger,(\ldr{a}\alpha +\langle\nabla_\cdot^*\gamma,c\rangle)^\dagger\right)\\
  &\left\lb (c_1,\alpha_1)^\dagger, (c_2,\alpha_2)^\dagger\right\rb=0,
\end{split}
\end{equation*}
and the anchor of $D\oplus_B(D\duer B)$ is defined by
\begin{equation*}
\begin{split}
\Theta(\tilde\sigma_{A\oplus C^*}(a,\gamma))=\Theta(\sigma_{A}(a))=\widehat{\nabla_a}\in \mx^l(B), \quad 
\Theta((c,\alpha)^\dagger)=(\partial_Bc)^\uparrow\in \mx^c(B),
\end{split}
\end{equation*}
where $(\partial_B\colon C\to B,
\nabla\colon\Gamma(A)\times\Gamma(B)\to\Gamma(B),\nabla\colon\Gamma(A)\times\Gamma(C)\to\Gamma(C),R)$
is the 2-representation of $A$ associated to the splitting
$\Sigma\colon A\times_MB\to D$ of the VB-algebroid $(D\to B, A\to M)$.
Hence, we have found the split Lie 2-algebroid described in Example \ref{semi-direct}.

\subsubsection{The tangent Courant algebroid}\label{tangent_Courant}
We consider here a Courant algebroid\linebreak $(\mathsf
E,\rho_{\mathsf E},\lb\cdot\,,\cdot\rb,\langle \cdot\,,\cdot\rangle)$.
In this example, $\mathsf E$ will always be anchored by the Courant
algebroid anchor map $\rho_{\mathsf E}$ and paired with itself by
$\langle \cdot\,,\cdot\rangle$ and $\mathcal
D=\Beta\inv\circ\rho_{\mathsf E}^*\circ\dr\colon C^\infty(M)\to
\Gamma(\mathsf E)$.  Note that $\lb\cdot\,,\cdot\rb$ is not a dull bracket.

We show that, after the choice of a metric connection on $\mathsf E$
and so of a Lagrangian splitting $\Sigma^\nabla\colon
TM\times_M\mathsf E\to T\mathsf E$ (see Example
\ref{metric_connections} and \S \ref{tangent_double}), the VB-Courant
algebroid structure on $(T\mathsf E\to TM, \mathsf E\to M)$ is
equivalent to the Dorfman $2$-representation defined by $\nabla$ as in
Example \ref{adjoint}.

\begin{theorem}\label{tangent_courant_double}
  Choose a linear connection $\nabla\colon\mx(M)\times\Gamma(\mathsf
  E)\to\Gamma(\mathsf E)$ that preserves the pairing on $\mathsf E$.
  The Courant algebroid structure on $T\mathsf E\to TM$ can be
  described as follows:
\begin{enumerate}
\item The pairing is given by 
\[\left\langle e_1^\dagger, e_2^\dagger\right\rangle=0, 
\quad \left\langle \sigma_{\mathsf E}^\nabla(e_1), e_2^\dagger\right\rangle=p_M^*\langle e_1,
e_2\rangle, \quad \text{ and } \left\langle \sigma_{\mathsf E}^\nabla (e_1),
  \sigma_{\mathsf E}^\nabla (e_2)\right\rangle=0,\]
\item the anchor is given by \[\Theta(\sigma_{\mathsf E}^\nabla (e))=\widehat{\nabla^{\rm bas}_e\cdot}
  \text{ and } \Theta(e^\dagger)=(\rho_{\mathsf E}(e))^\uparrow,\]
\item and the bracket is given by
\[\left\lb e_1^\dagger, e_2^\dagger\right\rb=0, \qquad 
\left\lb \sigma_{\mathsf E}^\nabla (e_1), e_2^\dagger\right\rb=(\Delta_{e_1}
e_2)^\dagger\] and    \[\left\lb \sigma_{\mathsf E}^\nabla (e_1), \sigma_{\mathsf E}^\nabla (e_2)\right\rb=\sigma_{\mathsf E}^\nabla (\lb e_1,
  e_2\rb_\Delta)-\widetilde{R_\Delta^{\rm bas}(e_1, e_2)}\]
\end{enumerate}
for all $e, e_1,e_2\in\Gamma(\mathsf E)$.
\end{theorem}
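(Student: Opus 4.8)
The strategy is to apply Theorem \ref{main} to the VB-Courant algebroid $(T\mathsf E; TM, \mathsf E; M)$ equipped with the Lagrangian splitting $\Sigma^\nabla$ coming from the metric connection $\nabla$, and to show that the resulting Dorfman $2$-representation of $(\mathsf E, \rho_{\mathsf E})$ on $\partial_{TM}=\rho_{\mathsf E}\colon \mathsf E\to TM$ is exactly the adjoint Dorfman $2$-representation $(\rho_{\mathsf E}, \Delta, \nabla^{\rm bas}, R_\Delta^{\rm bas})$ of Example \ref{adjoint}. First I would recall that $(T\mathsf E\to TM, \mathsf E\to M)$ is indeed a VB-Courant algebroid: $\mathsf E$ carries a Courant algebroid structure, hence the corresponding symplectic Lie $2$-algebroid has a tangent prolongation whose geometrisation is the tangent double; alternatively this is in \cite{Li-Bland12}. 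Its core is $\mathsf E\cong \mathsf E^*$ (using the pairing), and the core-anchor is $\partial_{TM}=\rho_{\mathsf E}$. By Example \ref{metric_connections}, the linear metric on $T\mathsf E$ is the tangent of the pairing on $\mathsf E$, and $\Sigma^\nabla$ is Lagrangian precisely because $\nabla$ is metric; this gives item (1) directly from the formulas in Example \ref{metric_connections}.

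Next I would compute the three structural objects of Theorem \ref{main} for this splitting. For the anchor: the anchor $\Theta\colon T\mathsf E\to T(TM)$ of the tangent Courant algebroid is the tangent of $\rho_{\mathsf E}$ (up to the canonical flip), so $\Theta(\sigma^\nabla_{\mathsf E}(e))$ is a linear vector field on $TM$; unwinding its action on $\ell_\theta$ for $\theta\in\Omega^1(M)$ and on pullbacks, using $\rho_{\mathsf E}$ and $\nabla$, identifies the associated connection $\nabla\colon\Gamma(\mathsf E)\times\mx(M)\to\mx(M)$ as $\nabla_e X=[\rho_{\mathsf E}(e),X]+\rho_{\mathsf E}(\nabla_X e)=\nabla^{\rm bas}_e X$, which is item (2). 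This is the same computation as in Example \ref{tangent_double_al} for the adjoint $2$-representation, carried out with $\rho_{\mathsf E}$ in place of the Lie algebroid anchor. For the bracket of a linear with a core section: $\lb \sigma^\nabla_{\mathsf E}(e_1), e_2^\dagger\rb$ is a core section $(\Delta_{e_1}e_2)^\dagger$, and evaluating the tangent Courant bracket on the defining sections of $T\mathsf E$ yields $\Delta_{e_1}e_2=\lb e_1,e_2\rb+\nabla_{\rho_{\mathsf E}(e_2)}e_1$, the basic Dorfman connection of Example \ref{adjoint}; here one must check that this $\Delta$ is genuinely a Dorfman connection on $\mathsf E^*\cong\mathsf E$, which follows from (CA5) and the properties of $\nabla$. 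Finally, the dull bracket dual to $\Delta$ is $\lb e_1,e_2\rb_\Delta=\lb e_1,e_2\rb-\Beta\inv\rho_{\mathsf E}^*\langle\nabla_\cdot e_1,e_2\rangle$ as recorded in Example \ref{adjoint}, and the curvature term $R$ in $\sigma^\nabla_{\mathsf E}(\lb e_1,e_2\rb_\Delta)-\lb\sigma^\nabla_{\mathsf E}(e_1),\sigma^\nabla_{\mathsf E}(e_2)\rb=\widetilde{R(e_1,e_2)}$ is identified with $R_\Delta^{\rm bas}(e_1,e_2)$ by comparing with the Koszul-type formula for $\dr_\lozenge$ and the explicit expression for $R_\Delta^{\rm bas}$; again this parallels the computation of the basic curvature in Example \ref{tangent_double_al}.

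Having matched all three items of Theorem \ref{main} with the adjoint Dorfman $2$-representation, the theorem's converse direction guarantees that the formulas in (1)--(3) of the statement reconstruct exactly the linear Courant algebroid structure on $T\mathsf E\to TM$, completing the proof. The main obstacle will be the bracket of two linear sections, i.e.\ pinning down that the curvature term equals $R_\Delta^{\rm bas}$: this requires carefully expanding $\lb \sigma^\nabla_{\mathsf E}(e_1),\sigma^\nabla_{\mathsf E}(e_2)\rb$ in the tangent Courant algebroid, using both the derivation property of the tangent bracket along $Te$ and the correction terms $\widetilde{\nabla_\cdot e}$ in the definition of $\sigma^\nabla_{\mathsf E}$, and recognising the six-term expression for $R_\Delta^{\rm bas}$ (in either of its two equivalent forms given in the footnote to Example \ref{adjoint}). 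The rest is bookkeeping: the identification of cores, the Lagrangian condition, and the anchor computation are all straightforward given Examples \ref{metric_connections}, \ref{adjoint} and \ref{tangent_double_al}.
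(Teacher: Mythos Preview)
Your plan is correct and matches the paper's proof: both start from the known tangent Courant algebroid structure on $T\mathsf E\to TM$ (the paper cites the explicit formulas $\lb Te_1,Te_2\rb=T\lb e_1,e_2\rb$, $\lb Te_1,e_2^\dagger\rb=\lb e_1,e_2\rb^\dagger$, $\Theta(Te)=\widehat{\ldr{\rho_{\mathsf E}(e)}}$ from \cite{BoZa09}), write $\sigma^\nabla_{\mathsf E}(e)=Te-\widetilde{\nabla_\cdot e}$, and compute the anchor and brackets directly to recognise $\nabla^{\rm bas}$, $\Delta$, and $R_\Delta^{\rm bas}$. Your framing via Theorem \ref{main} is a harmless repackaging of the same computation; just be sure to invoke the \cite{BoZa09} formulas (or an equivalent concrete description) rather than the abstract symplectic Lie $2$-algebroid route, since the latter risks circularity with material developed later in the paper.
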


\begin{proof} We use the characterisation of the tangent Courant
  algebroid in \cite{BoZa09} (see also \cite{Li-Bland12}): the pairing
has already been discussed in Example \ref{metric_connections} and \S\ref{tangent_euclidean}.
It is given by $\langle Te_1, Te_2\rangle=\ell_{\dr\langle e_1,
    e_2\rangle}$ and $\langle Te_1, e_2^\dagger\rangle=p_M^*\langle
  e_1, e_2\rangle$.  The anchor is given by
  $\Theta(Te)=\widehat{\ldr{\rho_{\mathsf E}(e)}}\in\mx(TM)$ and
  $\Theta(e^\dagger)=(\rho_{\mathsf E}(e))^\uparrow\in \mx(TM)$. The
  bracket is given by $\lb Te_1, Te_2\rb =T\lb e_1, e_2\rb$ and $\lb
  Te_1, e_2^\dagger\rb=\lb e_1, e_2\rb^\dagger$ for all $e, e_1,
e_2\in \Gamma(\mathsf E)$.

(1) has already been checked in Example \ref{metric_connections}.  We
check (2), i.e.~that the anchor satisfies
$\Theta(\sigma^\nabla_{\mathsf E}(e))=\widehat{\nabla_e^{\rm bas}}$.
We have for $\theta\in\Omega^1(M)$ and $v_m\in TM$: $
\Theta(\sigma^\nabla_{\mathsf E} (e)(v_m))(\ell_\theta)=
\ell_{\ldr{\rho(e)}\theta}(v_m)-\langle \theta_m, \rho_{\mathsf
  E}(\nabla_{v_m}e)\rangle=\ell_{ {\nabla^{\rm bas}}^*_e\theta}(v_m)$
and for $f\in C^\infty(M)$: $\Theta(\sigma^\nabla_{\mathsf E}
(e))(p_M^*f)=p_M^*(\rho_{\mathsf E}(e)f)$.  This proves the equality.

Then we compute the brackets of our linear and core sections.  Choose
sections $\phi,\phi'$ of $\operatorname{Hom}(TM,\mathsf E)$. Then
$\left\lb T e, \widetilde{\phi}\right\rb=\widetilde{\ldr{e}\phi}$,
with $\ldr{e}\phi\in\Gamma(\Hom(TM,\mathsf E))$ defined by
$(\ldr{e}\phi)(X)=\lb e, \phi(X)\rb-\phi([\rho_{\mathsf E}(e),X])$ for
all $X\in\mx(M)$. The equality $\left\lb\widetilde{\phi}, T
  e\right\rb=-\widetilde{\ldr{e}\phi}+\mathcal
D\ell_{\langle\phi(\cdot),e\rangle}$ follows. For
$\theta\in\Omega^1(M)$, we compute $\left\langle \mathcal
  D\ell_\theta,
  e^\dagger\right\rangle=\Theta(e^\dagger)(\ell_\theta)=p_M^*\langle
\rho_{\mathsf E}(e), \theta\rangle$.  Thus, $\mathcal
D\ell_\theta=T(\Beta\inv\rho_{\mathsf E}^*\theta)+\widetilde{\psi}$
for a section $\psi\in\Gamma(\operatorname{Hom}(TM,\mathsf E))$ to be
determined.  Since $\left\langle
  \mathcal D\ell_\theta,
  Te\right\rangle=\Theta(Te)(\ell_\theta)=\ell_{\ldr{\rho_{\mathsf E}(e)}\theta}$, 
the bracket $\langle T(\Beta\inv\rho_{\mathsf E}^*\theta)+\widetilde{\psi},
Te\rangle=\ell_{\dr\langle\theta,\rho_{\mathsf E}(e)\rangle+\langle\psi(\cdot),
  e\rangle}$ must equal $\ell_{\ldr{\rho_{\mathsf E}(e)}\theta}$, and we find
$\langle\psi(\cdot), e\rangle=\ip{\rho_{\mathsf E}(e)}\dr\theta$. Because
$e\in\Gamma(\mathsf E)$ was arbitrary we find 
$\psi(X)=-\Beta\inv\rho_{\mathsf E}^*\ip{X}\dr\theta$ for
$X\in\mx(M)$.  We get in particular
\[\left\lb\widetilde{\phi}, T
  e\right\rb=-\widetilde{\ldr{e}\phi}+T(\Beta\inv\rho_{\mathsf
  E}^*\langle\phi(\cdot),e\rangle)-(\widetilde{\Beta\inv\rho_{\mathsf
    E}^*\ip{X}\dr\langle\phi(\cdot),e\rangle}.
\]
The formula $\left\lb \widetilde{\phi},
  \widetilde{\phi'}\right\rb=\widetilde{\phi'\circ\rho_{\mathsf
    E}\circ\phi-\phi\circ\rho_{\mathsf E}\circ\phi'}$
can easily be checked, as well as $\left\lb \widetilde{\phi},
  e^\dagger\right\rb=-\left\lb e^\dagger,
  \widetilde{\phi}\right\rb=-(\phi(\rho_{\mathsf E}(e)))^\dagger$.
Using this, we find now easily that
\begin{equation*}
\begin{split}\left\lb \sigma^\nabla_{\mathsf E} (e_1), \sigma^\nabla_{\mathsf E} (e_2)\right\rb&=\left\lb
    Te_1-\widetilde{\nabla_\cdot e_1}, Te_2-\widetilde{\nabla_\cdot
      e_2}\right\rb\\
  &=T\left\lb e_1, e_2\right\rb-\widetilde{\ldr{e_1}\nabla_\cdot e_2}
  +\widetilde{\ldr{e_2}\nabla_\cdot e_1}-T(\Beta\inv\rho_{\mathsf
    E}^*\langle\nabla_\cdot e_1,e_2\rangle)\\
  &\quad+\widetilde{\Beta\inv\rho_{\mathsf E}^*\dr\langle \nabla_\cdot
    e_1,e_2\rangle}+\widetilde{\nabla_{\rho_{\mathsf E}(\nabla_\cdot e_1)} e_2}-\widetilde{\nabla_{\rho_{\mathsf E}(\nabla_\cdot e_2)} e_1 }\\
  &=T\left\lb e_1,
    e_2\right\rb_\Delta-\widetilde{\ldr{e_1}\nabla_\cdot
    e_2}+\widetilde{\ldr{e_2}\nabla_\cdot
    e_1}+\widetilde{\Beta\inv\rho_{\mathsf E}^*\dr\langle \nabla_\cdot
    e_1,e_2\rangle}\\
  &\quad +\widetilde{\nabla_{\rho_{\mathsf E}(\nabla_\cdot e_1)}
    e_2}-\widetilde{\nabla_{\rho_{\mathsf E}(\nabla_\cdot e_2)} e_1 }.
\end{split}
\end{equation*}
Since for all $X\in \mx(M)$, we have 
\begin{equation*}
\begin{split}
  &-(\ldr{e_1}\nabla_\cdot e_2)(X)+(\ldr{e_2}\nabla_\cdot
  e_1)(X)+\beta\inv\rho_{\mathsf E}^*\ip{X}\dr\langle \nabla_\cdot
  e_1,e_2\rangle\\
  &=-\lb e_1, \nabla_X e_2\rb +\nabla_{[\rho_{\mathsf E}(e_1),
    X]}e_2+\lb e_2, \nabla_X e_1\rb-\nabla_{[\rho_{\mathsf E}(e_2),
    X]}e_1 +\beta\inv\rho_{\mathsf E}^*\ip{X}\dr\langle \nabla_\cdot
  e_1,e_2\rangle\\
  &=-\lb e_1, \nabla_X e_2\rb +\nabla_{[\rho_{\mathsf E}(e_1),
    X]}e_2-\lb \nabla_X e_1, e_2\rb-\nabla_{[\rho_{\mathsf E}(e_2),
    X]}e_1 +\beta\inv\rho_{\mathsf E}^*\ldr{X}\langle \nabla_\cdot
  e_1,e_2\rangle,
\end{split}
\end{equation*}
we find that $\left\lb \sigma^\nabla_{\mathsf E} (e_1),
  \sigma^\nabla_{\mathsf E} (e_2)\right\rb=T\lb e_1, e_2\rb_\Delta-\widetilde{R_\Delta^{\rm
  bas}(e_1,e_2)}$.  Finally we compute $ \left\lb \sigma^\nabla_{\mathsf E} (e_1),
  e_2^\dagger\right\rb=\left\lb Te_1-\widetilde{\nabla_\cdot e_1},
  e_2^\dagger\right\rb=\lb e_1, e_2\rb^\dagger+\nabla_{\rho_{\mathsf
    E}(e_2)}e_1^\dagger=\Delta_{e_1}e_2^\dagger$.
\end{proof}

\subsubsection{The VB-Courant algebroid associated to a double Lie
  algebroid}\label{VB_courant_from_double}
Consider a double vector bundle $(D;A,B;M)$ with core $C$ and a VB-Lie
algebroid structure on each of its sides.  Recall from \S
\ref{matched_pair_2_rep_sec} that $(D;A,B,M)$ is a double Lie
algebroid if and only if, for any linear splitting of $D$, the two
induced $2$-representations (denoted as in
\S\ref{matched_pair_2_rep_sec}) form a matched pair. By definition of
a double Lie algebroid, $(D\duer A, D\duer B)$ is then a Lie
bialgebroid over $C^*$, and so the double vector bundle
\begin{equation*}
\begin{xy}
  \xymatrix{D\duer A\oplus D\duer B\ar[r]\ar[d]&C^*\ar[d]\\
    A\oplus B\ar[r]&M }
\end{xy}
\end{equation*}
with core $A^*\oplus B^*$ has the structure of a VB-Courant algebroid
with base $C^*$ and side $A\oplus B$.


Consider the splitting $\tilde\Sigma\colon (A\oplus B)\times_M C^*\to
D\duer A\oplus D\duer B$ given by
\linebreak$\tilde\Sigma((a(m),b(m)),\gamma_m)=(\sigma_A^\star(\gamma_m),
\sigma_B^\star(\gamma_m))$, where $\sigma_A^\star\colon \Gamma(A)\to
\Gamma_{C^*}^l(D\duer A)$ and $\sigma_B^\star\colon \Gamma(B)\to
\Gamma_{C^*}^l(D\duer B)$ are defined as in \eqref{iso_sign} or Lemma
\ref{lemma_dual_splitting}. By \eqref{equal_fat}, this is a Lagrangian
splitting.  Recall from \S\ref{dual_and_ruths} that the splitting
$\Sigma^\star\colon A\times_{M}C^*\to D\duer A$ of the VB-algebroid
$(D\duer A\to C^*, A\to M)$ corresponds to the $2$-representation
$({\nabla^C}^*,{\nabla^B}^*,-R^*) $ of $A$ on the complex
$\partial_B^*\colon B^*\to C^*$. In the same manner, the splitting
$\Sigma^\star\colon B\times_{M}C^*\to D\duer B$ of the VB-algebroid
$(D\duer B\to C^*, B\to M)$ corresponds to the $2$-representation
$({\nabla^C}^*,{\nabla^A}^*,-R^*) $ of $B$ on the complex
$\partial_A^*\colon A^*\to C^*$.

We quickly check that the split Lie 2-algebroid corresponding to the
linear splitting $\tilde\Sigma$ of $D\duer A\oplus D\duer B$ is the
bicrossproduct of the matched pair of 2-representations (see Example
\ref{double}). The equalities in \eqref{equal_fat} imply that we have
to consider $A\oplus B$ as paired with $A^*\oplus B^*$ in the non
standard way:
\[\langle (a,b),(\alpha,\beta)\rangle=\alpha(a)-\beta(b)
\]
for all $a\in\Gamma(A)$, $b\in\Gamma(B)$, $\alpha\in\Gamma(A^*)$ and
$\beta\in\Gamma(B^*)$.  The anchor of
$\tilde\sigma(a,b)=(\sigma^\star(a),\sigma^\star(b))$ is
$\widehat{\nabla_a^*}+\widehat{\nabla_b^*}\in\mx^l(C^*)$, and the
anchor of
$(\alpha,\beta)^\dagger=(\beta^\dagger,\alpha^\dagger)\in\Gamma_{C^*}^c(D\duer
A\oplus D\duer B)$ is
$(\partial_B^*\beta+\partial_A^*\alpha)^\uparrow\in\mx^c(C^*)$.  The
Courant bracket $\left\lb (\sigma^\star_A(a),\sigma_B^\star(b)),
  (\beta^\dagger,\alpha^\dagger)\right\rb $ is
\[\left([\sigma_A^\star(a),\beta^\dagger]+\ldr{\sigma_B^\star(b)}\beta^\dagger-\ip{\alpha^\dagger}\dr_{D\duer B}\sigma_A^\star(a),
  [\sigma_B^\star(b),\alpha^\dagger]+\ldr{\sigma_A^\star(a)}\alpha^\dagger-\ip{\beta^\dagger}\dr_{D\duer
    A}\sigma_B^\star(b) \right),
\]
where $\dr_{D\duer A}\colon\Gamma_{C^*}(\bigwedge^\bullet D\duer B)\to
\Gamma_{C^*}(\bigwedge^{\bullet+1} D\duer B)$ is defined as usual by
the Lie algebroid $D\duer A$, and similarly for $D\duer B$ (bear in
mind that some non standard signs arise from the signs in
\eqref{equal_fat}).  The derivation $\ldr{}\colon \Gamma(D\duer
A)\times \Gamma(D\duer B)\to \Gamma(D\duer B)$ is described by
\begin{equation*}
\begin{split}
\ldr{\beta^\dagger}\alpha^\dagger&=0,\quad 
\ldr{\beta^\dagger}\sigma_B^\star(b)=-\langle b,
\nabla^*_\cdot\beta\rangle^\dagger,\quad 
\ldr{\sigma_A^\star(a)}\alpha^\dagger=\ldr{a}\alpha^\dagger,\\
&\qquad 
\ldr{\sigma_A^\star(a)}\sigma_B^\star(b)=\sigma_B^\star(\nabla_ab)+\widetilde{R(a,\cdot)b},
\end{split}
\end{equation*}
in \cite[Lemma 4.8]{GrJoMaMe14}. Similar formulae hold for 
$\ldr{}\colon \Gamma(D\duer B)\times \Gamma(D\duer A)\to \Gamma(D\duer A)$.
We get
\[\left\lb (\sigma^\star_A(a),\sigma_B^\star(b)), (\beta^\dagger,\alpha^\dagger)\right\rb
=\left((\nabla_a^*\beta+\ldr{b}\beta-\langle\nabla_\cdot
  a,\alpha\rangle)^\dagger,
  (\nabla_b^*\alpha+\ldr{a}\alpha-\langle\nabla_\cdot
  b,\beta\rangle)^\dagger \right).
\]
In the same manner, we get
\begin{equation*}
\begin{split}
  &\left\lb (\sigma^\star_A(a_1),\sigma_B^\star(b_1)),
    (\sigma^\star_A(a_2),\sigma_B^\star(b_2))\right\rb\\
&  =\left(\sigma_A^\star([a,a']+\nabla_ba'-\nabla_{b'}a),
    \sigma_B^\star([b,b']+\nabla_ab'-\nabla_{a'}b)\right)\\
  &\qquad+\Bigl(-\widetilde{R(a_1,a_2)}+\widetilde{R(b_1,\cdot)a_2}-\widetilde{R(b_2,\cdot)a_1},
  -\widetilde{R(b_1,b_2)}+\widetilde{R(a_1,\cdot)b_2}-\widetilde{R(a_2,\cdot)b_1}\Bigr).
\end{split}
\end{equation*}

Hence we have found the following result.
\begin{theorem}\label{bij_da_matched_lie_2}
  There is a bijection between decomposed double Lie algebroids and Lie
  2-algebroids that are the bicrossproducts of matched pairs of
  2-representations.
\end{theorem}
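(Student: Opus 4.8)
The plan is to assemble Theorem \ref{bij_da_matched_lie_2} from the pieces that have already been set up, rather than to prove anything from scratch. The key observation is that the machinery is already in place: by \S\ref{matched_pair_2_rep_sec} (and the result of \cite{JoMa14} quoted there), a decomposed double Lie algebroid $(D;A,B;M)$ with core $C$, equipped with a linear splitting $\Sigma\colon A\times_M B\to D$, is the same data as a matched pair of $2$-representations in the sense of Definition \ref{matched_pair_2_rep}; and by Theorem \ref{double_2_rep}, a matched pair of $2$-representations of $A$ on $B\oplus C$ and of $B$ on $A\oplus C$ is the same data as a split Lie $2$-algebroid on $(A\oplus B)\oplus C$ satisfying conditions (1) and (2) of that theorem. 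So the bijection we want is really the composite of these two, and the main content of the proof is to identify the intermediate object correctly and to check that "decomposed double Lie algebroid" on the one side matches "bicrossproduct split Lie $2$-algebroid" on the other.

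First I would set up the correspondence on objects. Start from a decomposed double Lie algebroid $D = A\times_M B\times_M C$ with its two VB-Lie algebroid structures and the standard (identity) splitting; by \S\ref{matched_pair_2_rep_sec} this yields a matched pair $(\partial_B,\nabla^{AB},\nabla^{AC},R_{AB})$, $(\partial_A,\nabla^{BA},\nabla^{BC},R_{BA})$. Then form $D\duer A\oplus D\duer B$ over $C^*$, which by the bialgebroid structure (Definition of double Lie algebroid) is a VB-Courant algebroid with side $A\oplus B$ and core $A^*\oplus B^*$, exactly as in the displayed diagram preceding the theorem. The computation carried out in the paragraphs just above the theorem statement — using the Lagrangian splitting $\tilde\Sigma((a(m),b(m)),\gamma_m)=(\sigma_A^\star(\gamma_m),\sigma_B^\star(\gamma_m))$, the equalities \eqref{equal_fat}, and the bracket formulae of \cite[Lemma 4.8]{GrJoMaMe14} — shows that the Dorfman $2$-representation attached to $\tilde\Sigma$ via Theorem \ref{main} is precisely the one in Example \ref{double}, i.e. the bicrossproduct of the matched pair. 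So the object assignment in both directions is already exhibited; what remains for objects is to note that the construction $D\mapsto D\duer A\oplus D\duer B$ and the construction of Example \ref{double} are mutually inverse up to canonical isomorphism. The inverse direction uses the "converse" half of Theorem \ref{double_2_rep}: a split Lie $2$-algebroid of bicrossproduct type gives back a matched pair of $2$-representations, hence (by \cite{JoMa14}) a decomposed double Lie algebroid, and one checks that dualising it over $A$ and $B$ and taking the direct sum over $C^*$ recovers the VB-Courant algebroid we started from.

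Next I would handle morphisms, which is where the bookkeeping lives. On the double Lie algebroid side a morphism is a morphism of double vector bundles compatible with all four Lie algebroid structures; after choosing splittings on source and target this translates, via Remark \ref{change} and the discussion of \S\ref{matched_pair_2_rep_sec}, into the conditions for a morphism of matched pairs of $2$-representations. On the Lie $2$-algebroid side, a morphism of split Lie $2$-algebroids is the triple $(\mu_Q,\mu_B,\mu_{12})$ of Definition \ref{morphism_Lie_2}, and under the equivalence of \S\ref{sec:VB_cour} (Lagrangian splittings of VB-Courant algebroids $\leftrightarrow$ Dorfman $2$-representations) this corresponds to a morphism of VB-Courant algebroids, hence of the metric double vector bundles $D\duer A\oplus D\duer B$. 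Dualising the VB-Courant morphism back (the dualisation being an equivalence of categories, Lemma \ref{bundlemap_eq_to_morphism} and \S\ref{dual}) gives a morphism of double Lie algebroids. Here one must be careful that the change-of-splitting data $\mu_{12}$ on the Lie $2$-algebroid side matches up with the change-of-splitting tensor $\phi_{12}$ on the double side under dualisation, using Proposition \ref{change_of_lift} together with the formulae after Definition \ref{morphism_Lie_2}; this is the step I expect to be the main obstacle, since the equivalence is only canonical after fixing Lagrangian/linear splittings and one has to verify splitting-independence of the resulting functor. Finally, functoriality (preservation of composition and identities) follows because all the intermediate equivalences — $D\leftrightarrow$ matched pair, matched pair $\leftrightarrow$ bicrossproduct split Lie $2$-algebroid, VB-Courant algebroid $\leftrightarrow$ Lie $2$-algebroid, double Lie algebroid $\leftrightarrow$ its dual — are already known (or just proved) to be functorial, so the bijection on objects together with the bijection on morphisms upgrades to an equivalence of categories, which in particular gives the asserted bijection.
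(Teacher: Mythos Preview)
Your object-level argument is correct and is exactly the paper's approach: the paper's proof is nothing more than the word ``Hence'' following the computation in \S\ref{VB_courant_from_double}, which shows that the VB-Courant algebroid $D\duer A\oplus_{C^*}D\duer B$ of a decomposed double Lie algebroid, in the canonical Lagrangian splitting $\tilde\Sigma$, yields precisely the bicrossproduct split Lie $2$-algebroid of \S\ref{double}; the inverse is supplied by Theorem~\ref{double_2_rep} together with the result of \cite{JoMa14} quoted in \S\ref{matched_pair_2_rep_sec}. You have identified all of these pieces correctly.

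Two remarks. First, the theorem only asserts a bijection of objects, not an equivalence of categories; your entire second paragraph on morphisms, change-of-splitting tensors, and functoriality is therefore superfluous for what is being claimed (and the paper does not treat morphisms here). Second, your phrasing ``the construction $D\mapsto D\duer A\oplus D\duer B$ and the construction of Example~\ref{double} are mutually inverse'' is slightly off: these two constructions go in the same direction (from the double-Lie-algebroid side to the Lie-$2$-algebroid side), and the point is that they \emph{agree}. The actual inverse is the converse half of Theorem~\ref{double_2_rep} composed with \cite{JoMa14}, exactly as you say in your opening paragraph --- so the cleanest proof is simply that composite of two known bijections, with the VB-Courant computation serving as a consistency check rather than a logical step.
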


Recall that if the vector bundle $C$ is trivial, the matched pair of
$2$-representations is just a matched pair of the Lie algebroids $A$
and $B$. The corresponding double Lie algebroid is the decomposed
double Lie algebroid $(A\times_MB,A,B,M)$ found in \cite{Mackenzie11}.
The corresponding VB-Courant algebroid is 
\begin{equation*}
\begin{xy}
  \xymatrix{A\times_MB^*\oplus A^*\times_MB\ar[r]\ar[d]&0\ar[d]\\
    A\oplus B\ar[r]&M }
\end{xy}
\end{equation*}
with core $B^*\oplus A^*$. In that case there is a natural Lagrangian
splitting and the corresponding Lie $2$-algebroid is just the
bicrossproduct Lie algebroid structure defined on $A\oplus B$ by the
matched pair, see also the end of \S\ref{double}. This shows that the
two notions of double of a matched pair of Lie algebroids; the
bicrossproduct Lie algebroid in \cite{Mokri97} and the double Lie
algebroid in \cite{Mackenzie11} are just the $N$-geometric and the
classical descriptions of the same phenomenon, and special cases of
Theorem \ref{bij_da_matched_lie_2}.

\subsection{Categorical equivalence of Lie 2-algebroids and VB-Courant
  algebroids}
In this section we quickly describe morphisms of VB-Courant
algebroids. Then we find an equivalence between the category of
VB-Courant algebroids and the category of Lie $2$-algebroids.
\subsubsection{Morphisms of VB-Courant algebroids}\label{mor_VB_Courant}
Recall from \S\ref{usual_VB_morphisms} and
\S\ref{morphisms_of_met_DVB} that a morphism $\Omega\colon \mathbb
E_1\dashrightarrow\mathbb E_2$ of metric double vector bundles is an
isotropic relation $\Omega\colon \overline{\mathbb E_1}\times\mathbb
E_2$. Assume that $\mathbb E_1$ and $\mathbb E_2$ have linear Courant
algebroid structures. Then $\Omega$ is a morphism of VB-Courant
algebroid if it is a Dirac structure (with support) in
$\overline{\mathbb E_1}\times\mathbb E_2$.

\medskip

Choose two Lagrangian splittings $\Sigma^1\colon Q_1\times
B_1\to\mathbb E_1$ and $\Sigma^2\colon Q_2\times B_2\to\mathbb
E_2$. Then, bY \S\ref{morphisms_of_met_DVB} there exists four
structure maps $\omega_0\colon M_1\to M_2$, $\omega_Q\colon Q_1\to
Q_2$, $\omega_B\colon B_1^*\to B_2^*$ and
$\omega_{12}\in\Omega^2(Q_1,\omega_0^*B_2^*)$ that define completely
$\Omega$. More precisely, $\Omega$ is spanned over
$\operatorname{Graph}(\omega_Q\colon Q_1\to Q_2)$ by sections
\[ \tilde b\colon \operatorname{Graph}(\omega_Q)\to \Omega,\]
\[\tilde b(q_m,\omega_Q(q_m))= \left(\sigma_{B_1}(\omega_B^\star
  b)(q_m)+\widetilde{\omega_{12}^\star(b)}(q_m),
  \sigma_{B_2}(b)(\omega_Q(q_m))\right)
\]
for all $b\in\Gamma_{M_2}(B_2)$,
and 
\[ \tau^\times\colon \operatorname{Graph}(\omega_Q)\to \Omega, \qquad
\tau^\times(q_m,\omega_Q(q_m))=
\left((\omega_Q^\star\tau)^\dagger(q_m),
  \tau^\dagger(\omega_Q(q_m))\right)
\]
for all $\tau\in\Gamma_{M_2}(Q_2^*)$.
Note that $\Omega$ projects under $\pi_{B_1}\times\pi_{B_2}$ to
$R_{\omega_B^*}\subseteq B_1\times B_2$.  But if $q\in \Gamma(Q_1)$
then $\omega_Q^!q\in\Gamma_{M_1}(\omega_0^*Q_2)$ can be written as
$\sum_{i}f_i\omega_0^!q_i$ with $f_i\in C^\infty(M_1)$ and
$q_i\in\Gamma_{M_2}(Q_2)$. The pair $\left(\sigma_{B_1}(\omega_B^\star
  b)(q_m)+\widetilde{\omega_{12}^\star(b)}(q_m),
  \sigma_{B_2}(b)(\omega_Q(q_m))\right)$ can be written as
\[\left(\left(\sigma_{Q_1}(q)+\langle\omega_{12}(q,\cdot), b(\omega_0(m)))\rangle^\dagger\right)(\omega_B^\star
b(m)), \sum_if_i(m)\sigma_{Q_2}(q_i)(b(\omega_0(m)))\right).\] Hence,
$\Omega$ is spanned over $R_{\omega_B^*}$ by sections
\begin{equation}\label{section_morphism_1}
\left(\sigma_{Q_1}(q)\circ\pr_1+\langle\omega_{12}(q,\cdot),\pr_2\rangle^\dagger\circ\pr_1,
  \sum_i(f_i\circ q_{B_1}\circ \pr_1)\cdot(\sigma_{Q_2}(q_i)\circ\pr_2)\right)
\end{equation}
for all $q\in \Gamma_{M_1}(Q_1)$ and
\begin{equation}\label{section_morphism_2}
\left( (\omega_Q^\star\tau)^\dagger\circ\pr_1,\tau^\dagger\circ\pr_2\right)
\end{equation}
for all $\tau\in\Gamma(Q_2^*)$. Note also that
$\langle\omega_{12}(q,\cdot),\pr_2\rangle^\dagger\circ\pr_1$ can be written
\[\sum_{ijk}((f_{ijk}\chi_i(q))\circ q_{B_1}\circ\pr_1)\,\cdot\,
(\ell_{\beta_k}\circ\pr_2)\,\cdot\, (\tau_j^\dagger\circ\pr_1)\]
for some
$f_{ijk}\in C^\infty(M_1)$, $\chi_i,\tau_j\in\Gamma(Q_1^*)$ and
$\beta_k\in\Gamma(B_2^*)$.

\medskip

Checking all the conditions in Lemma \ref{useful_for_dirac_w_support}
on the two types of sections \eqref{section_morphism_1} and
\eqref{section_morphism_2} yield that $\Omega\to R_{\omega_B^*}$ is a
Dirac structure with support if and only if
\begin{enumerate}
\item $\omega_Q\colon Q_1\to Q_2$ over $\omega_0\colon M_1\to M_2$ is compatible 
with the anchors $\rho_1\colon Q_1\to TM_1$ and $\rho_2\colon Q_2\to TM_2$:
\[T_m\omega_0(\rho_1(q_m))=\rho_2(\omega_Q(q_m))
\]
for all $q_m\in Q_1$,
\item $\partial_1\circ\omega_Q^\star=\omega_B^\star\circ\partial_2$ 
as maps from $\Gamma(Q_2^*)$ to $\Gamma(B_1)$, or equivalently
$\omega_Q\circ\partial_1^*=\partial_2^*\circ\omega_B$,
\item $\omega_Q$ preserves the dull brackets up to
  $\partial_2^*\omega_{12}$: i.e.~if\footnote{For simplicity, we
    assume that sections $q\sim_{\omega_Q}r$ exist and span $Q_1$,
    respectively $Q_2$.  A general discussion without this assumption
    would be along the same lines, but much more technical (see the
    general definition of morphisms of Lie algebroids in
    \cite{Mackenzie05}.}  $q^1\sim_{\omega_Q}r^1$ and
  $q^2\sim_{\omega_Q}r^2$, then
\[ \lb q^1,r^1\rb_1\sim_{\omega_Q}\lb q^2,r^2\rb_2-\partial_2^*\omega_{12}(q^1,r^1).
\]
\item $\omega_B$ and $\omega_Q$ intertwines the connections 
$\nabla^1$ and $\nabla^2$ up to $\partial_1\circ\omega_{12}$:
\[\omega_B^\star((\omega_Q^\star\nabla^2)_{q}b)
=\nabla^1_{q}(\omega_B^\star(b))-\partial_1\circ\langle\omega_{12}(q,\cdot), b\rangle
\in\Gamma(B_1)
\]
for all $q_m\in Q_1$ and $b\in\Gamma(B^2)$, and 
\item
  $\omega_Q^\star\omega_{R_2}-\omega_B\circ\omega_{R_1}=-\dr_{(\omega_Q^\star\nabla^2)}\omega_{12}\in\Omega^3(Q_1,\omega_0^*B_2^*)$.
\end{enumerate}
Hence, $\Omega$ is a morphism of VB-Courant algebroid if and only if
it induces a morphism of Dorfman $2$-representations after any choice
of Lagrangian decompositions of $\mathbb E_1$ and $\mathbb E_2$.

\subsubsection{Equivalence of categories}
The functors found in Section \ref{eq_2_manifolds} between the
category of metric double vector bundles and the category of
$[2]$-manifolds restrict to functors between the category of 
VB-Courant algebroids and the category of Lie $[2]$-algebroids.
\begin{theorem}
  The category of Lie $2$-algebroids is equivalent to the
  category of VB-Courant algebroids.
\end{theorem}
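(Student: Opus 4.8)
The plan is to assemble this equivalence from the three main tools already established in the paper: the equivalence $\mathcal M(\cdot)\colon \operatorname{MDVB}\to \operatorname{[2]-Man}$ and its inverse $\mathcal G$ (Theorem \ref{main_crucial}), the correspondence of Lagrangian decompositions with splittings of the associated $[2]$-manifold (\S\ref{cor_splittings}), and Theorem \ref{main}, which says that a Lagrangian splitting of a VB-Courant algebroid is equivalent to a Dorfman $2$-representation, while \S\ref{dorfman_eq_split} and \S\ref{Split Lie-$2$-algebroids as split $[2]$Q-manifolds} say that a Dorfman $2$-representation is exactly the data of a homological vector field on the corresponding split $[2]$-manifold. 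The strategy is therefore: show that $\mathcal G$ carries Lie $2$-algebroid structures to VB-Courant algebroid structures and $\mathcal M(\cdot)$ does the reverse, then invoke the natural transformations of Theorem \ref{main_crucial} (which restrict, because both constructions are built from the same splitting data).

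First I would define the functor $\mathcal G_{\mathrm{Lie}2}\colon \operatorname{[2]Q-Man}\to\operatorname{VB-Courant}$. Given a Lie $2$-algebroid $(\mathcal M,\mathcal Q)$, form $\mathbb E=\mathcal G(\mathcal M)$ and choose a splitting $\mathcal M\simeq Q[-1]\oplus B^*[-2]$; by \S\ref{dorfman_eq_split} the homological vector field $\mathcal Q$, written in coordinates as in \eqref{Q_in_coordinates}, is equivalent to a Dorfman $2$-representation $(\partial_B,\Delta,\nabla,R)$ of $(Q,\rho_Q)$. The splitting of $\mathcal M$ corresponds under \S\ref{cor_splittings} to a Lagrangian decomposition of $\mathbb E$, and the converse part of Theorem \ref{main} turns the Dorfman $2$-representation into a linear Courant algebroid structure on $\mathbb E$ via \eqref{VB_def1}. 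The key point is that this VB-Courant algebroid structure on $\mathbb E$ does not depend on the chosen splitting: one compares the Dorfman $2$-representations attached to two splittings using the change-of-splitting formulas subsequent to Definition \ref{morphism_Lie_2} and Proposition \ref{change_of_lift}, and checks these are intertwined by the change of Lagrangian decomposition. (Since $\mathcal Q$ is intrinsic, this is forced, but it must be spelled out.) For morphisms, \S\ref{mor_lie_2_alg} characterises a morphism of split Lie $2$-algebroids by the five conditions on $(\mu_Q,\mu_B,\mu_{12})$; \S\ref{mor_VB_Courant} characterises a morphism of VB-Courant algebroids by literally the same five conditions (this is the content of the last sentence of \S\ref{mor_VB_Courant}). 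Hence $\mathcal G$ sends morphisms to morphisms, functorially.

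Conversely, given a VB-Courant algebroid $\mathbb E$, set $\mathcal M=\mathcal M(\mathbb E)$; choosing a Lagrangian decomposition gives, by Theorem \ref{main}, a Dorfman $2$-representation, hence by \S\ref{Split Lie-$2$-algebroids as split $[2]$Q-manifolds} a homological vector field $\mathcal Q$ on the split $[2]$-manifold $\mathcal M\simeq Q[-1]\oplus B^*[-2]$. One shows $\mathcal Q$ is independent of the decomposition by the same change-of-splitting comparison, and that $\mathcal Q^2=0$ follows from the axioms (D1)--(D6), which is precisely the computation $\mathcal Q^2(f)=\mathcal Q^2(\tau_k)=\mathcal Q^2(b_k)=0$ sketched after \eqref{Q_in_coordinates}. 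This defines $\mathcal F_{\mathrm{Lie}2}\colon \operatorname{VB-Courant}\to\operatorname{[2]Q-Man}$. Finally, the natural isomorphisms $\mathcal F\mathcal G\simeq\Id$ and $\mathcal G\mathcal F\simeq\Id$ from the proof of Theorem \ref{main_crucial} (the one sending a metric double vector bundle to its maximal double vector bundle atlas, and the one sending a $[2]$-manifold to its sheaf of core and Lagrangian linear sections) are built from the same underlying data, so they restrict to natural isomorphisms $\mathcal F_{\mathrm{Lie}2}\mathcal G_{\mathrm{Lie}2}\simeq\Id$ and $\mathcal G_{\mathrm{Lie}2}\mathcal F_{\mathrm{Lie}2}\simeq\Id$, exactly as in the proof of the Poisson-case equivalence in \S\ref{sec:Poisson}.

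The main obstacle is the bookkeeping of \emph{splitting-independence} on objects and the matching of the five morphism conditions: one has to verify that the change-of-Lagrangian-decomposition formulas in Proposition \ref{change_of_lift} are precisely the change-of-splitting-of-Lie-$2$-algebroid formulas after Definition \ref{morphism_Lie_2} (note a sign discrepancy in the relation $\lb q,q'\rb_2 = \lb q,q'\rb_1 \pm \partial_B^*\phi_{12}(q,q')$ must be tracked carefully through the identification $\partial_B=-l_1^*$), and that the morphism conditions in \S\ref{mor_VB_Courant} really are the conditions of Definition \ref{morphism_Lie_2}; both of these are already essentially laid out in the text, so the proof is short and mostly consists in citing \S\ref{mor_VB_Courant}, \S\ref{mor_lie_2_alg}, Theorem \ref{main}, Theorem \ref{main_crucial} and \S\ref{cor_splittings}, then observing the two natural transformations restrict. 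No genuinely new computation is needed; the work was done in Sections \ref{sec:metDVB}, \ref{sec:split_lie_2} and \ref{sec:VB_cour}.
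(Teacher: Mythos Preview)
Your proposal is correct and follows essentially the same route as the paper: restrict the functors $\mathcal G$ and $\mathcal M(\cdot)$ of Theorem \ref{main_crucial} by transporting the extra structure through a chosen splitting via Theorem \ref{main} and \S\ref{dorfman_eq_split}, check splitting-independence with Proposition \ref{change_of_lift} and \S\ref{cor_splittings}, handle morphisms via the matching five conditions in \S\ref{mor_lie_2_alg} and \S\ref{mor_VB_Courant}, and then observe that the natural isomorphisms of Theorem \ref{main_crucial} restrict. Your remark about tracking the sign in the change-of-splitting formulas is a useful caution, but no new idea is required beyond the references you cite.
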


\begin{proof}
  Let $(\mathcal M, \mathcal Q)$ be a Lie $2$-algebroid and consider
  the double vector bundle $\mathbb E_{\mathcal M}$ corresponding to
  $\mathcal M$.  Choose a splitting $\mathcal M\simeq Q[-1]\oplus
  B^*[-2]$ of $\mathcal M$ and consider the corresponding Lagrangian
  splitting $\Sigma$ of $\mathbb E_{\mathcal M}$.

  As we have seen in \S\ref{dorfman_eq_split}, the split Lie
  $2$-algebroid $(Q[-1]\oplus B^*[-2],\mathcal Q)$ is equivalent to a
  Dorfman $2$-representation. By Theorem \ref{main}, this Dorfman
  $2$-representation defines a VB-Courant algebroid structure on the
  decomposition of $\mathbb E_{\mathcal M}$ and so by isomorphism on
  $\mathbb E_{\mathcal M}$. Further, by Proposition
  \ref{change_of_lift} and \S\ref{cor_splittings}, the Courant
  algebroid structure on $\mathbb E_{\mathcal M}$ does not depend on
  the choice of splitting of $\mathcal M$, since a different choice of
  splitting will induce a change of Lagrangian splitting of $\mathbb
  E_{\mathcal M}$.  This shows that the functor $\mathcal G$ restricts
  to a functor $\mathcal G_{Q}$ from the category of Lie
  $2$-algebroids to the category of VB-Courant algebroids.

 Sections \ref{mor_lie_2_alg} and \ref{mor_VB_Courant}
show that morphisms of split Lie $2$-algebroids are 
sent by $\mathcal G$ to morphisms of decomposed VB-Courant algebroids.

The functor $\mathcal F$ restricts in a similar manner to a functor
$\mathcal F_{\rm VBC}$ from the category of VB-Courant algebroids to the
category of Lie $2$-algebroids. The natural transformations found
in the proof of Theorem \ref{main_crucial} restrict to natural
transformations $\mathcal F_{\rm VBC}\mathcal G_{Q}\simeq \Id$ and
$\mathcal G_{Q}\mathcal F_{\rm VBC}\simeq \Id$.
\end{proof}

\section{LA-Courant algebroids vs Poisson Lie 2-algebroids}\label{sec:LA-Cour}
In this section, we prove that a split Poisson Lie 2-algebroid is
equivalent to the \emph{matched pair} of a Dorfman 2-representation with a
self-dual 2-representation.

Take a double vector bundle 
\[\begin{xy}
\xymatrix{\mathbb{E}\ar[r]^{\pi_B}\ar[d]_{\pi_Q}&B\ar[d]^{q_B}\\
Q\ar[r]_{q_Q}&M
}
\end{xy}
\]
with core $Q^*$, with a VB-Lie algebroid structure on $(\mathbb E\to
Q, B\to M)$ and a VB-Courant algebroid structure on $(\mathbb E\to B,
Q\to M)$. In this section we show that the double vector bundle is an
LA-Courant algebroid if and only if the VB-algebroid is metric and the
self-dual 2-representation defined by any Lagrangian decomposition of
$\mathbb E$ and the VB-algebroid side forms a \emph{matched pair} with
the Dorfman 2-representation describing the Courant algebroid side.

We conclude by recovering in a constructive manner the equivalence
between LA-Courant algebroids and Poisson Lie 2-algebroids
\cite{Li-Bland12}.

We begin with the following definition.
\begin{definition}\label{matched_pairs}
  Let $(B\to M, \rho_B, [\cdot\,,\cdot])$ be a Lie algebroid and
  $(Q\to M, \rho_Q)$ an anchored vector bundle.  Assume that $B$ acts
  on $\partial_Q\colon Q^*\to Q$ up to homotopy via
  a self-dual 2-representation $(\nabla^Q,\nabla^{Q^*}, R_B)$, and let $(\partial_B\colon Q^*\to B,
  \Delta,\nabla, R_Q)$\footnote{For the sake of simplicity, we write
    in this definition $\nabla$ for three different connections,
    unless it is not clear from the indexes which connection is
    meant.} be a $Q$-Dorfman 2-representation.  Then we say that the
  2-representation and the Dorfman 2-representation form a matched
  pair if
  \begin{enumerate}
\item[(M1)] $\partial_Q(\Delta_q\tau)=\nabla_{\partial_B\tau}q+\lb
  q, \partial_Q\tau\rb+\partial_B^*\langle \tau, \nabla_\cdot q\rangle$,
\item[(M2)] $\partial_B(\nabla_b\tau)=[b,\partial_B\tau]+\nabla_{\partial_Q\tau}b$,
\item[(M3)]
  $\partial_BR(b_1,b_2)q=-\nabla_q[b_1,b_2]+[\nabla_qb_1,b_2]+[b_1,\nabla_qb_2]+\nabla_{\nabla_{b_2}q}b_1-\nabla_{\nabla_{b_1}q}b_2$,
\item[(M4)] $\partial_QR(q_1,q_2)b=-\nabla_b\lb q_1,q_2\rb+\lb q_1,
  \nabla_{b}q_2\rb+\lb \nabla_bq_1,
  q_2\rb+\nabla_{\nabla_{q_2}b}q_1-\nabla_{\nabla_{q_1}b}q_2+\partial_B^*\langle
  R(\cdot, b)q_1, q_2\rangle$.
\item[(M5)] $\dr_{\nabla^B}\omega_R=\dr_{\nabla^Q} \omega_B\in \Omega^2(B,\wedge^3Q^*)=\Omega^3(Q,\wedge^2B^*)$, where 
$\omega_R$ is seen as an element of $\Omega^1(B,\wedge^3Q^*)$ and $\omega_B\in \Omega^2(Q,\wedge^2B^*)$ is defined 
by\linebreak $\omega_B(q_1,q_2)(b_1,b_2)=\langle R(b_1,b_2)q_1,q_2\rangle$. 
\end{enumerate}
\end{definition}

\begin{remark}\label{remark_simplifications}
\begin{enumerate}
\item (M5) is written out as \begin{equation*}
\begin{split}
  &\nabla_{b_2}R(q_1,q_2)b_1-\nabla_{b_1}R(q_1,q_2)b_2+R(q_1,q_2)[b_1,b_2]\\
  &+R(\nabla_{b_1}q_1,q_2)b_2+R(q_1,\nabla_{b_1}q_2)b_2-R(\nabla_{b_2}q_1,q_2)b_1-R(q_1,\nabla_{b_2}q_2)b_1\\
  &+\Delta_{q_1}R(b_1,b_2)q_2-\Delta_{q_2}R(b_1,b_2)q_1-R(b_1,b_2)\lb q_1,q_2\rb\\
  &-R(\nabla_{q_1}b_1,b_2)q_2-R(b_1,\nabla_{q_1}b_2)q_2+R(\nabla_{q_2}b_1,b_2)q_1-R(b_1,\nabla_{q_2}b_2)q_1\\
  =&\langle (R(b_1,\nabla_\cdot b_2)+R(\nabla_\cdot b_1, b_2))q_1,q_2\rangle -\rho_Q^*\dr\langle R(b_1,b_2)q_1, q_2\rangle
\end{split} 
\end{equation*}
for all $q_1,q_2\in \Gamma(Q)$ and $b_1,b_2\in\Gamma(B)$.
\item The equality $\rho_Q\circ \partial_Q=\rho_B\circ\partial_B$ follows easily from (M1)
if $Q$ has positive rank, and from (M2) if $B$ has positive rank. If both $Q$ and $B$ have rank zero, then
$\rho_Q\circ \partial_Q=\rho_B\circ\partial_B$ is trivially satisfied.
\item The equation $[\rho_Q(q), \rho_B(b)]=\rho_B(\nabla_qb)-\rho_Q(\nabla_bq)$ follows easily 
from (M3) if $B$ has positive rank, and from (M4) if $Q$ has positive rank.
If both $Q$ and $B$ have rank zero, then it is trivially satisfied.
\item If $\rho_Q\circ \partial_Q=\rho_B\circ\partial_B$, then (M1) is equivalent to
\begin{equation}\label{almost_C}
\left(\Delta_{\partial_Q\sigma_1}\sigma_2-\nabla_{\partial_B\sigma_2}\sigma_1\right)
+\left(\Delta_{\partial_Q\sigma_2}\sigma_1-\nabla_{\partial_B\sigma_1}\sigma_2\right)
=\rho_Q^*\dr\langle \sigma_1, \partial_Q\sigma_2\rangle
\end{equation}
for all $\sigma_1,\sigma_2\in\Gamma(Q^*)$.
\item If $[\rho_Q(q), \rho_B(b)]=\rho_B(\nabla_qb)-\rho_Q(\nabla_bq)$, then (M4) is equivalent to 
\begin{equation}\label{(LC10)}
R(q,\partial_Q\tau)b-R(b,\partial_B\tau)q=\Delta_q\nabla_b\tau-\nabla_b\Delta_q\tau
+\Delta_{\nabla_bq}\tau-\nabla_{\nabla_qb}\tau-\langle\nabla_{\nabla_\cdot  b}q, \tau\rangle
\end{equation}
for all $b\in\Gamma(B)$, $q\in\Gamma(Q)$ and $\tau\in\Gamma(Q^*)$.
\end{enumerate}
\end{remark}

\subsection{Poisson Lie 2-algebroids via matched pairs.}
We begin this subsection with the definition of a Poisson Lie
2-algebroid.
\begin{definition}\label{def_Poisson_lie_2}
  Let $\mathcal M$ be an Poisson $[2]$-manifold with algebra of
  functions $\mathcal A=C^\infty(\mathcal M)$ and Poisson bracket
  $\{\cdot\,,\cdot\}$. Assume that $\mathcal M$ has in addition a Lie
  2-algebroid structure, i.e.~ it is endowed with a homological vector
  field $Q\in\der^1\mathcal A$. Then $(\mathcal M, Q,
  \{\cdot\,,\cdot\})$ is a \textbf{Poisson Lie 2-algebroid} if the
  homological vector field preserves the Poisson structure, i.e.~ if
  \begin{equation}\label{Q_preserves_bracket}
Q\{\xi_1,\xi_2\}=\{Q(\xi_1), \xi_2\}+(-1)^{\deg\xi_1}\{\xi_1,Q(\xi_2)\}
\end{equation} for all
$\xi_1,\xi_2\in\mathcal A$.

A morphism of Poisson Lie 2-algebroids is a morphism of the underlying
$[2]$-manifold that is a morphism of Poisson $[2]$-manifolds
\emph{and} a morphism of Lie 2-algebroids.
\end{definition}

The main theorem of this section shows that matched pairs as in
Definition \ref{matched_pairs} are equivalent to split Poisson Lie
2-algebroids.
\begin{theorem}\label{Poisson_Lie_2_char}
  Let $\mathcal M=Q[-1]\oplus B^*[-2]$ be a split $[2]$-manifold
  endowed with a homological vector field $\mathcal Q$ and a Poisson
  bracket $\{\cdot\,,\cdot\}$ of degree $-2$. Let $(\partial_B\colon
  Q^*\to B,\Delta,\nabla,R_Q)$ be the Dorfman 2-representation of
  $(Q,\rho_Q)$ that encodes $\mathcal Q$ in coordinates, and let
  $(\partial_Q\colon Q^*\to Q, \nabla^*, \nabla, R_B)$ be the
  self-dual $2$-representation that is equivalent to the Poisson
  bracket.

  Then $(\mathcal M, \mathcal Q, \{\cdot\,,\cdot\})$ is a Poisson Lie
  2-algebroid if and only if the self dual 2-representation and the
  Dorfman 2-representation form a matched pair.
\end{theorem}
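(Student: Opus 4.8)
The plan is to translate the single super-geometric condition \eqref{Q_preserves_bracket} into the classical language of the two $2$-representations by testing it on the three types of homogeneous generators of $\mathcal A=C^\infty(\mathcal M)$, namely functions of degree $0$ (elements of $C^\infty(M)$), of degree $1$ (sections of $Q^*$) and of degree $2$ (sections of $B\oplus Q^*\wedge Q^*$), and unwinding the result using the explicit coordinate formula \eqref{Q_in_coordinates} for $\mathcal Q$ and the description of $\{\cdot\,,\cdot\}$ from the proof of Theorem \ref{poisson_is_saruth}. Since both $\mathcal Q$ and $\{\cdot\,,\cdot\}$ are local and the claim is tensorial once the structure maps are fixed, it suffices to check \eqref{Q_preserves_bracket} for pairs $(\xi_1,\xi_2)$ where each $\xi_i$ is a product of a coordinate function on $M$ with one of the distinguished generators; by the graded Leibniz rules \eqref{graded_leibniz} for $\{\cdot\,,\cdot\}$ and condition (1) in the definition of a vector field, the compatibility on general elements then follows from the compatibility on generators together with the fact that $\mathcal Q$ is already known to be a homological vector field and $\{\cdot\,,\cdot\}$ a genuine Poisson bracket.

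Concretely, I would organise the verification by bidegree of the pair of arguments. The pair $(f_1,f_2)$ with $f_i\in C^\infty(M)$ gives nothing new: both sides land in degree $-1$ and vanish, using only $\mathcal Q(f)=\rho_Q^*\dr f$ and $\{f_1,f_2\}=0$ (and this already forces $\rho_Q\circ\partial_B^*=0$, which is part of being a Lie $2$-algebroid). The pair $(b,f)$ with $b\in\Gamma(B)$, $f\in C^\infty(M)$ produces, after identifying degree-$0$ brackets with the anchor $\rho_B$ and using $\mathcal Q(b)=-\omega_R(\cdot,\cdot,b)\tau\tau\tau-\langle\nabla^*_\cdot(\cdot),b\rangle\tau b$ in coordinates, the relation $\rho_Q\circ\partial_Q=\rho_B\circ\partial_B$ together with $[\rho_Q(q),\rho_B(b)]=\rho_B(\nabla_q b)-\rho_Q(\nabla_b q)$, i.e.\ the ``anchor compatibilities'' of Remark \ref{remark_simplifications}(2)--(3). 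The pair $(\tau_1,\tau_2)$ with $\tau_i\in\Gamma(Q^*)$, where $\{\tau_1,\tau_2\}=\langle\tau_2,\partial_Q\tau_1\rangle\in C^\infty(M)$, yields after applying $\mathcal Q$ and $\{\mathcal Q\tau_i,\cdot\}$ exactly the symmetrised relation \eqref{almost_C}, hence (M1). The pair $(b,\tau)$ similarly yields (M2): here one uses $\{b,\tau\}=\nabla_b\tau$, $\{b,\cdot\}$ on the degree-$2$ function $\mathcal Q(\tau)$, and $\mathcal Q$ applied to $\nabla_b\tau$, and the bidegree-$0$ part recovers $\partial_B(\nabla_b\tau)=[b,\partial_B\tau]+\nabla_{\partial_Q\tau}b$. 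The pairs $(b_1,b_2)$ and $(\tau,\phi)$ with $\phi\in\Gamma(Q^*\wedge Q^*)$ produce the curvature identities (M3) and (M4) (equivalently \eqref{(LC10)}), where on the $B$-side one reads off $\partial_B R(b_1,b_2)q$ from the $\Gamma(B)$-component of $\mathcal Q\{b_1,b_2\}-\{\mathcal Q b_1,b_2\}+\{b_1,\mathcal Q b_2\}$ and on the $Q^*$-side one reads off $\partial_Q R(q_1,q_2)b$; and finally the $\Gamma(Q^*\wedge Q^*)$-component of the $(b_1,b_2)$-identity, together with the top-degree piece of the $(\tau,\phi)$-identity, is precisely the cocycle condition (M5), i.e.\ $\dr_{\nabla^B}\omega_R=\dr_{\nabla^Q}\omega_B$. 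Throughout I would use Remark \ref{remark_simplifications} to present (M1) and (M4) in their symmetric/skew forms, which is what actually falls out of the computation, and I would invoke $R^*=-R$, $\partial_Q=\partial_Q^*$, $\nabla$ and $\nabla^*$ dual, plus the axioms of a Dorfman $2$-representation, to pass between equivalent formulations.

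For the converse, given a matched pair I would run the same computation backwards: define $\mathcal Q$ by \eqref{Q_in_coordinates} from $(\partial_B,\Delta,\nabla,R_Q)$ and $\{\cdot\,,\cdot\}$ from $(\partial_Q,\nabla^*,\nabla,R_B)$ as in Theorems \ref{poisson_is_saruth} and the coordinate description in \S\ref{dorfman_eq_split}; we already know from those results that $\mathcal Q^2=0$ and that $\{\cdot\,,\cdot\}$ is Poisson. Then \eqref{Q_preserves_bracket} need only be checked on generators, and (M1)--(M5) are exactly what is needed, so it holds. The main obstacle is bookkeeping: there are many bidegree components and sign conventions to keep straight, and one must be careful that the coordinate formula \eqref{Q_in_coordinates} is written in a \emph{Lagrangian} splitting compatible with the chosen splitting of the Poisson structure (so that the identification of degree-$2$ functions with $\Gamma(B)\oplus\Gamma(Q^*\wedge Q^*)$ is the same on both sides); the correspondence of splittings established in \S\ref{cor_splittings} and the change-of-splitting formulas after Definition \ref{morphism_Lie_2} and in Proposition \ref{change_of_lift} guarantee that the resulting notion is splitting-independent, but verifying that the seven components of \eqref{Q_preserves_bracket} match \emph{exactly} the five conditions (M1)--(M5) — no more, no less — is the delicate part and where the bulk of the (routine but long) computation lies. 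This computation is carried out in detail in Appendix \ref{appendix_LACA}.
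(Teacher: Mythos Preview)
Your approach is essentially the same as the paper's: check the compatibility \eqref{Q_preserves_bracket} on pairs of homogeneous generators and read off (M1)--(M5) from the various bidegree components. A few of your attributions are off, though: the relation $\rho_Q\circ\partial_Q=\rho_B\circ\partial_B$ actually comes from the pair $(\tau,f)$, not from $(b,f)$; the pair $(b,\tau)$ yields \emph{both} (M2) (from the $\Gamma(B)$-component) and (M4) (from the $\Gamma(Q^*\wedge Q^*)$-component), while the pair $(b_1,b_2)$ gives (M3) and (M5); and there is no need for a separate $(\tau,\phi)$ case since $\phi\in\Gamma(Q^*\wedge Q^*)$ is a product of degree-$1$ generators and is handled by Leibniz. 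Finally, your closing reference is mistaken: Appendix \ref{appendix_LACA} proves Theorem \ref{LA-Courant} on LA-Courant algebroids, not this theorem, whose computation the paper carries out directly in the main text.
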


\begin{proof}
  The idea of this proof is very simple, but requires rather long
  computations in coordinates. We will leave some of the
  detailed verifications to the reader.

  We check \eqref{Q_preserves_bracket} in coordinates, by using the
  formulae found in \eqref{Q_in_coordinates} and Theorem
  \ref{poisson_is_saruth} for $\chi$ and $\{\cdot\,,\cdot\}$,
  respectively.  On an open chart $U\subseteq M$ trivialising both $Q$
  and $B$, we have coordinates $(x_1,\ldots,x_p)$ and we can choose a
  local frame $(q_1,\ldots,q_{r_1})$ of sections of $Q$, and a local frame
  $(\beta_1,\ldots,\beta_{r_2})$ of sections of $B^*$. We denote by
  $(\tau_1,\ldots,\tau_{r_1})$ and $(b_1,\ldots,b_{r_2})$ the dual frames for
  $Q^*$ and $B$, respectively.  As functions on $\mathcal M$, the
  coordinates functions $x_1,\ldots,x_p$ have degree $0$, the
  functions $\tau_1, \ldots,\tau_{r_1}$ have degree $1$ and the functions
  $b_1,\ldots, b_{r_2}$ have degree $2$.

  First we have
  $\mathcal Q(f)=\sum_i\rho_Q(q_i)(x_k)\sigma_i\partial_{x_k}(f)=\rho_Q^*\dr
  f\in\Gamma(Q^*)$ and $\{f,g\}=0$ for $f,g\in C^\infty(M)$. This
  yields
\[\{\mathcal Q(f),g\}+\{f,\mathcal Q(g)\}=\left\{\rho_Q^*\dr f, g\right\}
+\left\{f, \rho_Q^*\dr g\right\}=0=\mathcal Q\{f,g\}
\]
by the graded skew-symmetry and $\{\tau,f\}=0$ for
$\tau\in\Gamma(Q^*)$ and $f\in C^\infty(M)$.  
Then we have 
\begin{equation*}
\begin{split}
 & \{\mathcal Q(\tau_k),f\}-\{\tau_k,\mathcal Q(f)\}\\
&=\left\{-\sum_{i<j}\langle \lb q_i, q_j\rb,\tau_k\rangle\tau_i\tau_j+\sum_{r}\langle\partial_B^*\beta_r,\tau_k\rangle b_r,f\right\}-\{\tau_k,\rho_Q^*\dr f\}\\
  &=\sum_{r}\langle\partial_B^*\beta_r,\tau_k\rangle \rho_B(b_r)(f)-\langle \partial_Q\tau_k,\rho_Q^*\dr f\rangle\\
  &=\rho_B(\partial_B\tau_k)(f)-\rho_Q(\partial_Q\tau_k)(f).
\end{split}
\end{equation*}
But we also have $\mathcal Q\{\tau_k,f\}=\mathcal Q(0)=0$. Hence,
$\{\mathcal Q(\tau_k),f\}-\{\tau_k,\mathcal Q(f)\}=\mathcal Q\{\tau_k,f\}$ is equivalent to
$\rho_B(\partial_B\tau_k)(f)=\rho_Q(\partial_Q\tau_k)(f)$.

In a similar manner, we have $\mathcal Q\{b_l,f\}=\rho_Q^*\dr(\rho_B(b_l)(f))$ and $\{\mathcal Q(b_l),f\}+\{b_l,\mathcal Q(f)\}$ is 
\begin{equation*}
\begin{split}
  &-\left\{\sum_{i<j<k}\omega_R(q_i,q_j,q_k)(b_l)\tau_i\tau_j\tau_k
    +\sum_{ij}\langle\nabla_{q_i}^*\beta_j,b_l\rangle\tau_ib_j,
    f\right\}+\{b_l,\rho_Q^*\dr f\}\\
  =&-\sum_{ij}\langle\nabla_{q_i}^*\beta_j,b_l\rangle\tau_i\rho_B(b_j)(f)+\nabla^*_{b_l}(\rho_Q^*\dr
  f)=\sum_{ij}\langle\rho_B^*\dr
  f,\nabla_{q_i}b_l\rangle\tau_i+\nabla^*_{b_l}(\rho_Q^*\dr f).
\end{split}
\end{equation*}
Hence, $\mathcal Q\{b_l,f\}=\{\mathcal Q(b_l),f\}+\{b_l,\mathcal Q(f)\}$ if and only if 
\[\rho_Q(q)\rho_B(b_l)(f)=\langle\rho_B^*\dr f,\nabla_{q}b_l\rangle+\rho_B(b_l)\rho_Q(q)(f)-\rho_Q(\nabla_{b_l}q)(f)\]
for all $q\in\Gamma(Q)$. This is 
\[ [\rho_Q(q),\rho_B(b)]=\rho_B(\nabla_{q}b_l)(f)-\rho_Q(\nabla_{b_l}q)(f).
\]
Then we have $\mathcal Q\{b_l,\tau_k\}=\mathcal Q(\nabla_{b_l}\tau_k)$, which is
\begin{equation*}
\begin{split}
  \sum_{i,j}\rho_Q(q_i)\langle\nabla_{b_l}\tau_k,q_j\rangle\tau_i\tau_j-\sum_{i<j}\langle\lb
  q_i,q_j\rb,\nabla_b\tau_k\rangle\tau_i\tau_j
  +\sum_r\langle\nabla_{b_l}\tau_k,\partial_B^*\beta_r\rangle b_r
\end{split}
\end{equation*}
in coordinates. The Poisson bracket $\{\mathcal Q(b_l),\tau_k\}$ is computed to
be
\begin{equation*}
\begin{split}
  &-\sum_{i<j}\langle
  R(\partial_Q\tau_k,q_i)b_l,q_j\rangle\tau_i\tau_j
  -\sum_{r}\langle\nabla_{\partial_Q\tau_k}^*\beta_r,b_l\rangle
  b_r+\sum_{i,j}\langle\nabla_{\nabla_{q_i}b_l}\tau_k,q_j\rangle\tau_i\tau_j
\end{split}
\end{equation*}
in coordinates, and the Poisson bracket $\{b_l, \mathcal Q(\tau_k)\}$
is \begin{equation*}
\begin{split}
  &-\sum_{i<j}\rho_B(b_l)\langle\lb
  q_i,q_j\rb,\tau_k\rangle\tau_i\tau_j
  +\sum_{i,j}\sum_r\langle\lb q_r,q_j\rb,\tau_k\rangle\langle\tau_r,\nabla_{b_l}q_i\rangle\tau_i\tau_j\\
  &+\sum_{r}\langle[b_l,\partial_B\tau_k],\beta_r\rangle
  b_r-\sum_{i<j}\langle
  R(b_l,\partial_B\tau_k)q_i,q_j\rangle\tau_i\tau_j.
\end{split}
\end{equation*}
By comparing the coefficients in these three equations, we find that 
\[\mathcal Q\{b_l,\tau_k\}=\{\mathcal Q(b_l),\tau_k\}+\{b_l, \mathcal Q(\tau_k)\}
\]
if and only if 
\[\partial_B\nabla_{b_l}\tau_k=\nabla_{\partial_Q\tau_k}b_l+[b_l,\partial_B\tau_k],
\]
which is (M2)
and 
\begin{equation*}
\begin{split}
  &\cancel{\rho_Q(q_i)\langle\nabla_{b_l}\tau_k,q_j\rangle}-\cancel{\rho_Q(q_j)\langle\nabla_{b_l}\tau_k,q_i\rangle}
  -\langle\lb q_i,q_j\rb,\nabla_b\tau_k\rangle\\
  =&- \langle
  R(\partial_Q\tau_k,q_i)b_l,q_j\rangle+\langle\nabla_{\nabla_{q_i}b_l}\tau_k,q_j\rangle
  -\langle\nabla_{\nabla_{q_j}b_l}\tau_k,q_i\rangle\\
  &-\rho_B(b_l)\langle\lb q_i,q_j\rb,\tau_k\rangle+\langle\lb
  \nabla_{b_l}q_i,q_j\rb,\tau_k\rangle
  +\cancel{\rho_Q(q_j)\langle\tau_k,\nabla_{b_l}q_i\rangle} \\
  &-\langle\lb \nabla_{b_l}q_j,q_i\rb,\tau_k\rangle
  -\cancel{\rho_Q(q_i)\langle\tau_k,\nabla_{b_l}q_j\rangle} -\langle
  R(b_l,\partial_B\tau_k)q_i,q_j\rangle
\end{split}
\end{equation*}
which simplifies to (M4).  Next we study the condition
$\mathcal Q\{\tau_k,\tau_l\}=\{\mathcal Q(\tau_k),\tau_l\}-\{\tau_k,\mathcal Q(\tau_l)\}$.  The
left hand side is
$\rho_Q^*\dr\langle\tau_k,\partial_Q\tau_l\rangle=\sum_i\rho_Q(q_i)\langle\tau_k,\partial_Q\tau_l\rangle\tau_i$.
To get the right hand side we note that
$-\{\tau_k,\mathcal Q(\tau_l)\}=\{\mathcal Q(\tau_l),\tau_k\}$.  A short computation
yields
\[\{\mathcal Q(\tau_k),\tau_l\}=\sum_{i,j}\langle\lb
q_i,q_j\rb,\tau_k\rangle\langle\tau_l,\partial_Q\tau_i\rangle\tau_j
+\sum_r\sum_j\langle\partial_B^*\beta_r,\tau_k\rangle\langle\nabla_{b_r}\tau_l,q_j\rangle\tau_j.
\]
We get easily that
$\mathcal Q\{\tau_k,\tau_l\}=\{\mathcal Q(\tau_k),\tau_l\}-\{\tau_k,\mathcal Q(\tau_l)\}$ is
equivalent to \eqref{almost_C}. Recall from Remark
\ref{remark_simplifications} that together with
$\rho_B\circ\partial_B=\rho_Q\circ\partial_Q$, this is equivalent to
(M1).

The study of the equation $\mathcal Q\{b_l,b_k\}=\{\mathcal Q(b_l),b_k\}+\{b_l,\mathcal Q(b_k)\}$
is long, but relatively straightforward. We get $C^\infty(M)$-linear
combinations of $\tau_i\tau_j\tau_k$ and of $\tau_ib_r$.  Comparing
the factors of $\tau_i\tau_j\tau_k$ yields (M5) and comparing the
factors of $\tau_ib_r$ yields (M3).
\end{proof}

\subsection{LA-Courant algebroids and equivalence of categories}
Li-Bland's definition of an LA-Courant algebroid is quite technical
and requires the consideration of triple vector bundles. We give it
and study it in Appendix \ref{appendix_LACA}, where we prove the
following theorem.

\begin{theorem}\label{LA-Courant}
  Let $(\mathbb E;Q,B;M)$ be a double vector bundle with a VB-Courant
  algebroid structure on $(\mathbb E\to B, Q\to M)$ and a VB-Lie
  algebroid structure on $(\mathbb E\to Q, B\to M)$. Then in
  particular, $\mathbb E$ is a metric double vector bundle with the
  linear metric underlying the linear Courant algebroid structure on
  $\mathbb E\to B$.  Choose a Lagrangian decomposition $\Sigma\colon
  B\times_MQ\to \mathbb E$ of $\mathbb E$.  Then $(\mathbb E;Q,B;M)$
  is an LA-Courant algebroid if and only if
\begin{enumerate}
\item the linear Lie algebroid structure on $\mathbb E\to Q$ is
  compatible in the sense of Definition \ref{metric_VBLA} with the
  linear metric, and
\item the self-dual 2-representation and the Dorfman
  2-representation obtained from the Lagrangian splitting form a
  matched pair as in Definition \ref{matched_pairs}.
\end{enumerate}
\end{theorem}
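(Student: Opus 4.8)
The overall strategy is to reduce the statement to a combination of results already established in this paper, dealing with the two defining conditions of an LA-Courant algebroid separately, and then to verify that together they are equivalent to the two listed conditions. Recall that Li-Bland's definition of an LA-Courant algebroid $(\mathbb E;Q,B;M)$ requires: a VB-Courant algebroid structure on $(\mathbb E\to B, Q\to M)$; a VB-Lie algebroid structure on $(\mathbb E\to Q, B\to M)$; and a \emph{compatibility} condition, phrased (in the appendix) in terms of the triple vector bundle $T\mathbb E$ and the requirement that the anchor $\Theta\colon \mathbb E\to TB$ be a morphism of Lie algebroids for the $Q$-linear (respectively $TQ$-linear) structures, together with the condition that the Courant bracket of $\mathbb E\to B$ behave well with respect to the Lie algebroid $\mathbb E\to Q$. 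The plan is to translate each piece of this datum into the language of the Lagrangian decomposition $\Sigma\colon B\times_M Q\to \mathbb E$ fixed in the statement.

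First I would invoke Theorem~\ref{main}: the Lagrangian splitting $\Sigma$ together with the VB-Courant algebroid structure on $(\mathbb E\to B, Q\to M)$ is the same thing as a Dorfman $2$-representation $(\partial_B\colon Q^*\to B, \Delta,\nabla, R)$ of $(Q,\rho_Q)$. Next, since $\mathbb E$ carries the linear metric underlying the Courant structure and $(\mathbb E\to Q, B\to M)$ is a VB-algebroid, Proposition~\ref{metric_VBLA_via_sa_ruth} tells us that condition (1) --- compatibility of the Lie algebroid structure on $\mathbb E\to Q$ with the metric, i.e.\ $(\mathbb E\to Q, B\to M)$ being a \emph{metric} VB-algebroid --- holds if and only if the $2$-representation of $B$ on $\partial_Q\colon Q^*\to Q$ induced by $\Sigma$ (via Theorem~\ref{rajan}) is self-dual. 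So under condition (1) we have exactly the two $2$-representations of Definition~\ref{matched_pairs}: the self-dual one from the VB-algebroid side, the Dorfman one from the VB-Courant side. The remaining task is then to show that the \emph{interaction} of the two structures (which is precisely what the compatibility axiom in Li-Bland's definition encodes, beyond the two structures existing separately) is equivalent to the matched-pair conditions (M1)--(M5) of Definition~\ref{matched_pairs}. Here the crucial link is Theorem~\ref{Poisson_Lie_2_char}: a split Poisson Lie $2$-algebroid --- i.e.\ a split $[2]$-manifold with a homological vector field $\mathcal Q$ and a degree $-2$ Poisson bracket such that $\mathcal Q$ preserves the bracket --- is equivalent to a matched pair of a Dorfman $2$-representation with a self-dual $2$-representation. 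Combining this with the categorical equivalences already proved (Theorem~\ref{main_crucial} and the two equivalences of categories for VB-algebroids and for VB-Courant algebroids with their supergeometric counterparts), a Lagrangian decomposition of $\mathbb E$ corresponds to a splitting of the associated $[2]$-manifold $\mathcal M(\mathbb E)$, the VB-Courant structure corresponds to a homological vector field $\mathcal Q$, and the metric VB-algebroid structure corresponds to a degree $-2$ Poisson bracket. Thus conditions (1) and (2) together say precisely that $(\mathcal M(\mathbb E),\mathcal Q,\{\cdot\,,\cdot\})$ is a Poisson Lie $2$-algebroid.

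The final step --- and the main obstacle --- is to show directly that this Poisson Lie $2$-algebroid condition is equivalent to Li-Bland's triple-vector-bundle compatibility condition for $(\mathbb E;Q,B;M)$ to be an LA-Courant algebroid. This is the content of the appendix (Section~\ref{appendix_LACA}) and is "very long and very technical": one must unwind the definition of an LA-Courant algebroid in terms of $T\mathbb E$, express everything in a Lagrangian splitting adapted to the decomposition of $\mathbb E$ and a compatible splitting of $T\mathbb E$, and match the resulting identities term by term with (M1)--(M5) and with the self-duality/metric condition. The bookkeeping with the several dual vector bundles, cores of triple vector bundles, and the signs coming from the dualisations (cf.\ \eqref{iso_sign}, \eqref{equal_fat}) is the genuinely hard part; the conceptual reduction above is routine once Theorems~\ref{main}, \ref{rajan}, \ref{Poisson_Lie_2_char}, Proposition~\ref{metric_VBLA_via_sa_ruth} and the categorical equivalences are in place. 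Finally, once the splitting-dependent statement is proved, one checks independence of the choice of Lagrangian splitting: by Proposition~\ref{change_of_lift} and Proposition~\ref{lagrangian_change_of_split} a change of Lagrangian splitting changes the Dorfman $2$-representation and the self-dual $2$-representation by the "same" $\phi_{12}\in\Gamma(Q^*\wedge Q^*\otimes B^*)$, and one verifies that the matched-pair conditions are stable under this simultaneous change --- consistently with the fact that, on the supergeometric side, a change of splitting of $\mathcal M$ does not affect whether $\mathcal Q$ preserves $\{\cdot\,,\cdot\}$.
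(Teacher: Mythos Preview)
Your ``final step'' is the whole proof, and it is essentially what the paper does; the preceding reduction through Poisson Lie $2$-algebroids is a detour that adds nothing and in fact reverses the paper's logical order. In the paper, Theorem~\ref{LA-Courant} is proved \emph{first}, directly from Li-Bland's definition, and the equivalence of LA-Courant algebroids with Poisson Lie $2$-algebroids is then \emph{deduced} by combining it with Theorem~\ref{Poisson_Lie_2_char}. So invoking Theorem~\ref{Poisson_Lie_2_char} and the categorical equivalences to ``reduce'' to showing LA-Courant $\Leftrightarrow$ Poisson Lie $2$-algebroid, only to then unpack Li-Bland's condition and match it term by term with (M1)--(M5) and self-duality, is circular: that unpacking \emph{is} the direct proof of Theorem~\ref{LA-Courant}, and once you have it the detour is moot.

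Your description of Li-Bland's compatibility condition is also imprecise: it is not formulated as ``$\Theta\colon\mathbb E\to TB$ is a morphism of Lie algebroids'' together with a separate bracket condition. The actual definition (recalled in the appendix) is that the relation $\Pi_{\mathbb E}\subseteq T\mathbb E\times T\mathbb E$, built from the Lie algebroid $\mathbb E\to Q$ via the linear Poisson structure on $\mathbb E\duer Q$, is a Dirac structure with support $\Pi_B$ in the Courant algebroid $\overline{T\mathbb E}\times T\mathbb E\to TB\times TB$. The paper's proof then applies Lemma~\ref{useful_for_dirac_w_support} to this Dirac-with-support condition, breaking it into three pieces: maximal isotropy of $\Pi_{\mathbb E}\to\Pi_B$ (Theorem~\ref{theorem_isotropy}, equivalent to self-duality of the $2$-representation, hence to condition~(1)); tangency of the anchor to $\Pi_B$ (equivalent to a subset of the matched-pair equations including (M2) and (M3)); and closure under the Courant bracket of the four explicit types of spanning sections of $\Pi_{\mathbb E}\to\Pi_B$ (Proposition~\ref{theorem_integrability}, equivalent to the remaining equations (M1), (M4), (M5)). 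No auxiliary splitting of $T\mathbb E$ is introduced; instead, explicit spanning sections of $\Pi_{\mathbb E}$ over $\Pi_B$ are written down using $\Sigma$, and all computations are done on those. Your final paragraph about independence of the Lagrangian splitting is fine but also unnecessary in the paper's approach, since the LA-Courant condition on one side and the Poisson Lie $2$-algebroid condition on the other are both splitting-independent by definition.
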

The proof of this theorem is very long and technical, showing that the
definition of an LA-Courant algebroid is hard to handle.  Hence our
result provides a new definition of LA-Courant algebroids, that is
much simpler to articulate and probably also easier to use.

Further, we now explain how this theorem shows that LA-Courant
algebroid are equivalent to Poisson Lie 2-algebroids. (Note that this
has already been found by Li-Bland in \cite{Li-Bland12}.) First,
morphisms of LA-Courant algebroids are morphisms of metric double
vector bundles that preserve the Courant algebroid structure and the
Lie algebroid structure \cite{Li-Bland12}.  Hence, the category of
LA-Courant algebroids is a full subcategory of the intersection of the
category of metric VB-algebroids and the category of VB-Courant
algebroids.

On the other hand, Definition \ref{def_Poisson_lie_2} shows that the
category of Poisson Lie 2-algebroids is a full subcategory of the
intersection of the categories of Poisson $[2]$-manifolds and of Lie
$2$-algebroids.

This, Theorem \ref{Poisson_Lie_2_char} and Theorem \ref{LA-Courant}
show that the equivalences of the categories of metric VB-algebroids
and of Poisson $[2]$-manifolds and of the categories of VB-Courant
algebroids and Lie $2$-algebroids restrict to an equivalence of the
category of LA-Courant algebroids with the category of Poisson Lie
$2$-algebroids.


\subsection{Examples of LA-Courant algebroids and Poisson Lie 2-algebroids}
Next we discuss some classes of Examples of LA-Courant algebroids, and
the corresponding Poisson Lie $2$-algebroids.
\subsubsection{The tangent double of a Courant algebroid}\label{TCourant_LACourant}
Let $\mathsf E\to M$ be a Courant algebroid and choose a metric
connection $\nabla\colon\mx(M)\times\Gamma(\mathsf E)\to\Gamma(\mathsf
E)$.  We have seen in Examples \ref{double_ruth},
\ref{metric_connections} and \S\ref{tangent_euclidean} that the triple
$(\nabla,\nabla,R_\nabla)$ is then the $TM$-representation up to
homotopy describing $(T\mathsf E\to \mathsf E, TM\to M)$ after the
choice of the splitting $\Sigma^\nabla\colon\mathsf E\times_M M\to
T\mathsf E$.  We have also seen in Section \ref{tangent_Courant} that
the $\mathsf E$-Dorfman 2-representation encoding the Courant
algebroid side $(T\mathsf E\to TM, \mathsf E\to TM)$ is
$(\rho_{\mathsf E}\colon\mathsf E\to TM, \Delta,\nabla^{\rm bas},
R^{\rm bas}_\Delta)$. A straightforward computation resembling the one
in \cite[Section 3.2]{JoMa14} for the tangent double of a Lie
algebroid shows that this $2$-representation and this Dorfman
$2$-representation are compatible, and so that $T\mathsf E$ is
an LA-Courant algebroid (see also \cite{Li-Bland12}).

Example \ref{tangent_euclidean} and \S\ref{symplectic} show that this
class of LA-Courant algebroids is equivalent to the \emph{symplectic}
Lie $2$-algebroids.

\subsubsection{The standard Courant algebroid over a Lie algebroid}\label{PontLA_LACourant}
Let $A$ be a Lie algebroid. Then $TA\oplus T^*A$ has a VB-Courant
algebroid structure $(TA\oplus_AT^*A; TM\oplus A^*, A;M)$ by
\S\ref{standard_VB_Courant_ex} and a metric VB-algebroid structure
$(TA\oplus_AT^*A\to TM\oplus A^*, A\to M)$ by Example \ref{met_TET*E}.
Recall from \S\ref{standard_VB_Courant_ex} and Example \ref{standard}
the Dorfman $2$-representation given by a Lagrangian splitting and the
VB-Courant algebroid structure, and recall from Example
\ref{met_TET*E} the self-dual $2$-representation defined by the same
Lagrangian splitting and the VB-algebroid structure.  A
straightforward computation, that also resembles the one in
\cite[Section 3.2]{JoMa14} for the tangent double of a Lie algebroid,
shows that the Dorfman $2$-representation and the self-dual
$2$-representation form a matched pair.  Hence, $TA\oplus_AT^*A$ is an
LA-Courant algebroid.

\subsubsection{The LA-Courant algebroid defined by a double Lie algebroid}\label{Poi_lie_2_def_by_matched_ruth}
  More generally, take a double Lie algebroid $(D,A,B,M)$ and consider
 the metric VB-algebroid
\begin{equation*}
\begin{xy}
\xymatrix{
D\oplus_B(D\duer B) \ar[d]\ar[r]& B\ar[d]\\
 A\oplus C^*\ar[r]& M}
\end{xy}
\end{equation*}
defined as in \S\ref{metric_double_VB_alg} by the VB-algebroid $(D\to
A, B\to M)$. Since $(D\to B, A\to M)$ is a VB-algebroid as well, we
get a linear Courant algebroid structure on $D\oplus_B(D\duer B)\to B$
as in \S\ref{VB_Courant_alg}. Given a linear splitting $\Sigma\colon
A\times_MB\to D$, we get a matched pair of $2$-representations as in Definition 
\ref{matched_pair_2_rep}, see \S\ref{matched_pair_2_rep_sec}.

The induced linear splitting $\tilde\Sigma\colon
B\times_M(A\oplus C^*)\to D\oplus_B(D\duer B)$ defines hence a
$2$-representation
\begin{equation}\label{double_2_rep}
(\partial_A\oplus\partial_A^*\colon C\oplus A^*\to A\oplus C^*, \nabla^A\oplus{\nabla^C}^*,\nabla^C\oplus{\nabla^A}^*,
R\oplus(-R^*)),
\end{equation}
as in \S\ref{metric_double_VB_alg}
and a Dorfman $2$-representation 
$(\partial_B\circ \pr_C\colon C\oplus A^*\to B, \Delta,\nabla,R)$
defined by
\begin{equation}\label{dorf_2_rep_1}
\begin{split}
\Delta\colon&\Gamma(A\oplus C^*)\times\Gamma(C\oplus A^*)\to\Gamma(C\oplus A^*)\\
&\Delta_{(a,\gamma)}(c,\alpha)=(\nabla_ac,\ldr{a}\alpha+\langle\nabla^*_\cdot\gamma,c\rangle),
\end{split}
\end{equation}
\begin{equation}\label{dorf_2_rep_2}
\begin{split}
  \nabla\colon&\Gamma(A\oplus C^*)\times\Gamma(B)\to\Gamma(B), \qquad \nabla_{(a,\gamma)}b=\nabla_ab
\end{split}
\end{equation}
with $A\oplus C^*$ anchored by $\rho_A$,
and 
$R\in \omega^2(A\oplus C^*, \operatorname{Hom}(B,C\oplus A^*))$,
\begin{equation}\label{dorf_2_rep_3}
R((a_1,\gamma_1), (a_2,\gamma_2))=\left(R(a_1,a_2), \langle\gamma_2,R(a_1,\cdot)\rangle
+\langle\gamma_1,R(\cdot,a_2)\rangle\right)
\end{equation}
as in Example \ref{semi-direct}, see \S\ref{VB_Courant_alg}.  A
straightforward computation shows that the matched pair conditions for
the $2$-representations describing the sides of $D$ imply that the
$2$-representation \eqref{double_2_rep} and the Dorfman $2$-representation
\eqref{dorf_2_rep_1}--\eqref{dorf_2_rep_3}
 form a matched
pair. Hence, $(D\oplus_B(D\duer B),A\oplus C^*,B,M)$ has a
natural LA-Courant algebroid structure.  In the same manner,
$(D\oplus_A(D\duer A),B\oplus C^*,A,M)$ has a natural LA-Courant
algebroid structure.  Hence, we get the following theorem.
\begin{theorem}
  Consider a matched pair of $2$-representations with the usual
  notation.  Then the split $[2]$-manifold $(A\oplus C^*)[-1]\oplus
  B^*[-2]$ endowed with the semi-direct Lie $2$-algebroid structure in
  \S\ref{semi-direct} and the Poisson bracket defined by
  \eqref{double_2_rep} and \S\ref{metric_double_VB_alg}, is a split
  Poisson Lie $2$-algebroid.

By symmetry, the split $[2]$-manifold $(B\oplus C^*)[-1]\oplus
  A^*[-2]$ also inherits a split Poisson Lie $2$-algebroid structure.
\end{theorem}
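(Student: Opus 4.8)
The plan is to deduce this theorem directly from the preceding results, treating it as an assembly of two facts proven earlier in the paper rather than a new computation. First I would recall that, by \S\ref{metric_double_VB_alg}, a matched pair of $2$-representations of $A$ on $B\oplus C$ and $B$ on $A\oplus C$ gives in particular two $2$-representations (one of $A$ on $\partial_A\colon C\to A$ via $(\nabla^A,\nabla^C,R)$, one of $B$ on $\partial_B\colon C\to B$), and that the VB-algebroid side $(D\to A, B\to M)$ together with its dual yields the self-dual $2$-representation \eqref{double_2_rep} of $B$ on $\partial_A\oplus\partial_A^*\colon C\oplus A^*\to A\oplus C^*$. By Theorem \ref{poisson_is_saruth} this self-dual $2$-representation is exactly the data of a Poisson bracket of degree $-2$ on the split $[2]$-manifold $(A\oplus C^*)[-1]\oplus B^*[-2]$. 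Likewise, by \S\ref{dorfman_eq_split} (Definition \ref{def_dorfman_2_conn} and the translation to Definition \ref{split_Lie2}), the Dorfman $2$-representation \eqref{dorf_2_rep_1}--\eqref{dorf_2_rep_3} of Example \ref{semi-direct} is precisely a homological vector field $\mathcal Q$ on the same split $[2]$-manifold. So the split $[2]$-manifold $(A\oplus C^*)[-1]\oplus B^*[-2]$ carries simultaneously a Poisson structure of degree $-2$ and a homological vector field.

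The content is then to check that these two structures are compatible, i.e. that $\mathcal Q$ preserves the Poisson bracket in the sense of Definition \ref{def_Poisson_lie_2}, equation \eqref{Q_preserves_bracket}. By Theorem \ref{Poisson_Lie_2_char}, this is equivalent to the assertion that the self-dual $2$-representation \eqref{double_2_rep} and the Dorfman $2$-representation \eqref{dorf_2_rep_1}--\eqref{dorf_2_rep_3} form a matched pair in the sense of Definition \ref{matched_pairs}, i.e. satisfy (M1)--(M5). Thus the whole theorem reduces to verifying (M1)--(M5) for this specific pair, given that the original matched-pair conditions (M1)--(M7) of Definition \ref{matched_pair_2_rep} hold for $(\nabla^{AB},\nabla^{AC},R_{AB})$ and $(\nabla^{BA},\nabla^{BC},R_{BA})$. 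This is the ``straightforward computation'' already asserted in \S\ref{Poi_lie_2_def_by_matched_ruth}: each of (M1), (M2), (M3), (M4), (M5) unwinds, after substituting the explicit formulas $\partial_Q=\partial_A\oplus\partial_A^*$, $\Delta_{(a,\gamma)}(c,\alpha)=(\nabla_ac,\ldr{a}\alpha+\langle\nabla^*_\cdot\gamma,c\rangle)$, $\nabla_{(a,\gamma)}b=\nabla_ab$ and $R$ as in \eqref{dorf_2_rep_3}, into a pair of identities: one that is a consequence of the $A$-equivariance conditions (e.g. $\partial_A\circ\nabla^A=\nabla^A\circ\partial_A$, $R_{\nabla^A}=R_{AB}\circ\partial_B$, etc.) and one that is exactly one of conditions (2)--(7) of Definition \ref{matched_pair_2_rep}. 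For instance (M2) becomes the conjunction of $\partial_B\nabla_ab=\nabla_a\partial_Bb$ (a $2$-representation axiom) with condition (3) $[b,\partial_Bc]=\partial_B(\nabla_bc)-\nabla_{\partial_Ac}b$; (M3) becomes condition (5); (M4) splits into condition (4) and condition (2); and (M5) becomes condition (7) together with the $\dr_{\nabla^{\mathrm{Hom}}}R=0$ axioms.

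Concretely I would organize the verification by splitting every section of $C\oplus A^*$ into its $C$-component and its $A^*$-component, and similarly for $A\oplus C^*$, so that each of (M1)--(M5) decomposes into ``$C$-valued'' and ``$A^*$-valued'' parts. The $C$-valued parts reproduce the matched-pair conditions involving the $B$-action $\nabla^{BC}$ and $R_{BA}$ and the map $\partial_A$; the $A^*$-valued parts reproduce their duals, which hold because the dual of a $2$-representation is a $2$-representation and because $R\oplus(-R^*)$ is built to be self-dual. The symmetric statement ``$(B\oplus C^*)[-1]\oplus A^*[-2]$ is a split Poisson Lie $2$-algebroid'' follows by interchanging the roles of $A$ and $B$ throughout (using that a matched pair is symmetric in $A$ and $B$, with $C$ replaced by the same core and $R_{AB}\leftrightarrow R_{BA}$), so no extra work is needed.

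The main obstacle is purely bookkeeping: condition (M5), written out as in Remark \ref{remark_simplifications}(1), is a lengthy identity in $\Omega^2(Q,\wedge^2 B^*)$ with many terms, and matching it term-by-term against condition (7) of Definition \ref{matched_pair_2_rep} plus the Bianchi-type identities $\dr_{\nabla^{\mathrm{Hom}}}R_{AB}=0$ and $\dr_{\nabla^{\mathrm{Hom}}}R_{BA}=0$ requires care with signs and with the induced connections on the various $\mathrm{Hom}$ and tensor bundles. I expect the $A$- and $B$-equivariance axioms $R_{\nabla^A}=R_{AB}\circ\partial_B$, $R_{\nabla^C}=\partial_B\circ R_{AB}$ (and their $B$-analogues) to be exactly what is needed to close the gaps in (M3), (M4) and (M5), so the argument is complete but tedious; since \S\ref{Poi_lie_2_def_by_matched_ruth} already records these computations as routine, I would present only the reduction scheme and one or two representative term-matchings, leaving the remainder to the reader as the paper does.
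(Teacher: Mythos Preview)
Your proposal is correct and follows exactly the approach of the paper: the paper's proof is precisely the sentence in \S\ref{Poi_lie_2_def_by_matched_ruth} that ``a straightforward computation shows that the matched pair conditions for the $2$-representations describing the sides of $D$ imply that the $2$-representation \eqref{double_2_rep} and the Dorfman $2$-representation \eqref{dorf_2_rep_1}--\eqref{dorf_2_rep_3} form a matched pair'', combined with Theorem \ref{Poisson_Lie_2_char}. One small correction in your condition-matching: with the conventions here, (M3) unwinds to condition (6) of Definition \ref{matched_pair_2_rep} (the $\partial_B R_{BA}$ identity), not condition (5); the roles of $A$ and $B$ are easy to swap accidentally, so it is worth rechecking each pairing before writing out representative terms.
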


\subsection{The degenerate Courant algebroid structure on the core}
We prove in this section that the core of an LA-Courant algebroid
inherits a natural structure of degenerate Courant algebroid.  We
discuss some examples and we deduce a new way of describing the
equivalence between Courant algebroids and symplectic Lie
2-algebroids.
\begin{theorem}\label{core_courant}
  Let $(\mathbb E; B, Q;M)$ be an LA-Courant algebroid and choose a Lagrangian
  splitting. Then the core $Q^*$ inherits the structure of a
  degenerate Courant algebroid over $M$, with the anchor
  $\rho_Q\partial_Q$, the map $\mathcal D=\rho_Q^*\dr\colon
  C^\infty(M)\to\Gamma(Q^*)$, the pairing defined by $\langle \tau_1,
  \tau_2\rangle_{Q^*}=\langle \tau_1, \partial_Q\tau_2\rangle$ and the
  bracket defined by $\lb\tau_1,
  \tau_2\rb_{Q^*}=\Delta_{\partial_Q\tau_1}\tau_2-\nabla_{\partial_B\tau_2}\tau_1$
  for all $\tau_1,\tau_2\in\Gamma(Q^*)$.  This structure does not
  depend on the choice of the Lagrangian splitting, and the map
  $\partial_B\colon Q^*\to B$ preserves the brackets and the anchors.
\end{theorem}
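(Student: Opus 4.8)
The plan is to work in a fixed Lagrangian splitting $\Sigma\colon B\times_M Q\to \mathbb E$ and use the combined data of a self-dual $2$-representation $(\partial_Q,\nabla^Q,\nabla^{Q^*},R_B)$ and a Dorfman $2$-representation $(\partial_B,\Delta,\nabla,R_Q)$ that, by Theorem \ref{LA-Courant}, form a matched pair in the sense of Definition \ref{matched_pairs}. First I would record the candidate structure on $Q^*$: the anchor $\rho_{Q^*}:=\rho_Q\circ\partial_Q$, the map $\mathcal D:=\rho_Q^*\dr\colon C^\infty(M)\to\Gamma(Q^*)$, the symmetric pairing $\langle\tau_1,\tau_2\rangle_{Q^*}=\langle\tau_1,\partial_Q\tau_2\rangle$ (symmetry of which is exactly condition (1) of Definition \ref{saruth}, $\partial_Q=\partial_Q^*$), and the bracket $\lb\tau_1,\tau_2\rb_{Q^*}=\Delta_{\partial_Q\tau_1}\tau_2-\nabla_{\partial_B\tau_2}\tau_1$. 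Then I would verify the axioms (CA1)--(CA5) of a degenerate Courant algebroid one at a time, using the matched-pair equations (M1)--(M5) together with the axioms (D1)--(D6) of the Dorfman $2$-representation and the self-duality conditions.

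The order I would carry this out is: (CA5), the Leibniz rule, first, since it follows directly from $\Delta$ and $\nabla$ being, respectively, a Dorfman connection and a linear connection, plus the compatibility $\partial_B\circ\partial_Q^*=\partial_B\circ\partial_Q$ and $\rho_Q\circ\partial_B^*=0$; (CA4), compatibility of bracket with anchor, using $\rho_Q\lb q_1,q_2\rb_\Delta=[\rho_Q(q_1),\rho_Q(q_2)]$, the relation $\rho_Q\partial_Q=\rho_B\partial_B$ (Remark \ref{remark_simplifications}(2)) and $[\rho_Q(q),\rho_B(b)]=\rho_B(\nabla_qb)-\rho_Q(\nabla_bq)$ (Remark \ref{remark_simplifications}(3)); (CA3), the symmetric-part identity $\lb\tau_1,\tau_2\rb_{Q^*}+\lb\tau_2,\tau_1\rb_{Q^*}=\mathcal D\langle\tau_1,\tau_2\rangle_{Q^*}$, which is precisely equation \eqref{almost_C} in Remark \ref{remark_simplifications}(4), equivalent to (M1); (CA2), the derivation property of $\rho_{Q^*}$ over the pairing, using (D1), (M1), (M2) and the fact that $\nabla^Q,\nabla^{Q^*}$ are dual; and finally (CA1), the Jacobi/Leibniz identity for the bracket, which will be the crux. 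For the independence from the Lagrangian splitting, I would invoke Proposition \ref{change_of_lift} (change of the Dorfman $2$-representation under a change of lift $\phi_{12}\in\Gamma(Q^*\wedge Q^*\otimes B^*)$) together with Remark \ref{change} (change of the self-dual $2$-representation) and check that the combinations defining $\rho_{Q^*}$, $\langle\cdot\,,\cdot\rangle_{Q^*}$ and $\lb\cdot\,,\cdot\rb_{Q^*}$ are invariant: the $\phi_{12}$-corrections to $\Delta$ and to $\nabla$ are $\phi_{12}(q)(\partial_B\sigma)$ and $-\partial_B\circ\phi_{12}(q)$ respectively (signs as in Proposition \ref{change_of_lift}), and one checks these cancel after composing with $\partial_Q$ and $\partial_B$ and using skew-symmetry of $\phi_{12}$ in its two $Q^*$-slots and $\partial_Q=\partial_Q^*$. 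That $\partial_B$ preserves anchors is immediate from $\rho_Q\partial_Q=\rho_B\partial_B$, and that it preserves brackets, i.e.\ $\partial_B\lb\tau_1,\tau_2\rb_{Q^*}=[\partial_B\tau_1,\partial_B\tau_2]$, follows from (D1) (giving $\partial_B\Delta_q=\nabla_q\partial_B$) and (M2) (giving $\partial_B\nabla_b=[b,\partial_B\cdot]+\nabla_{\partial_Q\cdot}b$) combined appropriately.

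The main obstacle will be (CA1), the left Leibniz/Jacobi identity
\[
\lb\tau_1,\lb\tau_2,\tau_3\rb_{Q^*}\rb_{Q^*}=\lb\lb\tau_1,\tau_2\rb_{Q^*},\tau_3\rb_{Q^*}+\lb\tau_2,\lb\tau_1,\tau_3\rb_{Q^*}\rb_{Q^*}.
\]
Expanding each nested bracket produces many terms of the shape $\Delta_{\partial_Q\cdot}\Delta_{\partial_Q\cdot}\tau$, $\Delta_{\partial_Q\cdot}\nabla_{\partial_B\cdot}\tau$, $\nabla_{\partial_B\cdot}\Delta_{\partial_Q\cdot}\tau$, $\nabla_{\partial_B\cdot}\nabla_{\partial_B\cdot}\tau$, and terms where the connection hits a bracket argument like $\partial_B\Delta_{\partial_Q\tau_1}\tau_2$ or $\partial_Q\nabla_{\partial_B\tau_2}\tau_1$. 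The strategy is to rewrite $\partial_B\Delta_q\tau=\nabla_q\partial_B\tau$ by (D1) and $\partial_Q\Delta_q\tau=\nabla_{\partial_B\tau}q+\lb q,\partial_Q\tau\rb+\partial_B^*\langle\tau,\nabla_\cdot q\rangle$ by (M1), so that the curvatures $R_\Delta$ (equivalently the Jacobiator of $\lb\cdot\,,\cdot\rb_\Delta$, via \eqref{curv_dual_Jac} and axiom (D4): $\Jac_{\lb\cdot\,,\cdot\rb_\Delta}=\partial_B^*\omega_R$), $R_\nabla$ and the mixed curvature $R(q,\partial_Q\tau)b-R(b,\partial_B\tau)q$ expressed via \eqref{(LC10)} (equivalent to (M4)) and (D4) can be matched term by term. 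I expect this to collapse after using (M1)--(M4), the curvature identities in (D4), and $\rho_Q\circ\partial_B^*=0$; (M5) should not be needed for (CA1) on the core (it concerns the degree-$2$ sector). I anticipate the bookkeeping is essentially the same computation as the proof of the Jacobi identity in Remark \ref{lie_bracket_on_C} for the Lie bracket on $C$ in a matched pair of ordinary $2$-representations — the paper itself cross-references exactly this — so I would phrase the proof as ``a straightforward but lengthy verification parallel to that of Remark \ref{lie_bracket_on_C}'', carrying out a couple of the representative term-cancellations in detail and leaving the remainder to the reader.
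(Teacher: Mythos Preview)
Your proposal is correct and follows essentially the same route as the paper's proof: verify (CA1)--(CA5) from the matched-pair axioms (M1)--(M4) together with (D1), (D4), \eqref{(LC10)}, \eqref{almost_C}, and $\rho_Q\circ\partial_B^*=0$, then check splitting-independence via the change-of-lift formulas. Your observation that (M5) is not needed for (CA1) is correct (the paper's reference to ``(M5) and (D1)'' before \eqref{partial_A_preserves} is a typo for (M2) and (D1)).

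The only organizational difference worth noting is that the paper first isolates the two intermediate identities
\[
\partial_B\lb\tau_1,\tau_2\rb_{Q^*}=[\partial_B\tau_1,\partial_B\tau_2],\qquad
\partial_Q\lb\tau_1,\tau_2\rb_{Q^*}=\lb\partial_Q\tau_1,\partial_Q\tau_2\rb_\Delta+\partial_B^*\langle\tau_2,\nabla_\cdot\partial_Q\tau_1\rangle
\]
(equations \eqref{partial_A_preserves} and \eqref{partial_Q_preserves}) and then reuses them both for (CA4) and inside the expansion of the Jacobiator for (CA1). This makes the (CA1) computation noticeably shorter than a fully direct expansion: the nested brackets $\lb\lb\tau_1,\tau_2\rb_{Q^*},\tau_3\rb_{Q^*}$ immediately simplify via $\partial_Q\lb\tau_1,\tau_2\rb_{Q^*}$ and $\partial_B\lb\tau_1,\tau_2\rb_{Q^*}$. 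Your plan to expand everything directly and then match curvature terms will work, but adopting these two lemmas first will save you considerable bookkeeping.
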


\begin{proof}
  Theorem \ref{LA-Courant} states that the $2$-representation and the
  Dorfman $2$-representation defined by a Lagrangian splitting form a
  matched pair and that the $2$-representation is self-dual.  Hence,
  by (1) of Definition \ref{saruth}, the pairing $\langle \cdot\,,
  \cdot\rangle_{Q^*}$ is symmetric.  The map $\rho_Q^*\dr\colon
  C^\infty(M)\to\Gamma(Q^*)$ satisfies $\langle \tau, \rho_Q^*\dr
  \varphi\rangle_{Q^*}=\langle \partial_Q\tau, \rho_Q^*\dr
  \varphi\rangle=\langle \rho_Q\partial_Q\tau, \dr
  \varphi\rangle=(\rho_Q\circ\partial_Q)(\tau)(\varphi)$ for all
  $\tau\in\Gamma(Q^*)$ and $\varphi\in C^\infty(M)$.  We check
  (CA1)--(CA5) in Definition
  \ref{background_courant_notions}. Condition (CA5) is immediate by
  definition of the bracket. Condition (CA3) is exactly
  \eqref{almost_C}.  Note that (M5) and (D1) imply
\begin{align}\label{partial_A_preserves}
  \partial_B\lb \tau_1,
  \tau_2\rb_{Q^*}&=\partial_B(\Delta_{\partial_Q\tau_1}\tau_2-\nabla_{\partial_B\tau_2}\tau_1) \nonumber\\
  &=\nabla_{\partial_Q\tau_1}\partial_B\tau_2-[\partial_B\tau_2,\partial_B\tau_1]
  -\nabla_{\partial_Q\tau_1}\partial_B\tau_2=[\partial_B\tau_1,\partial_B\tau_2].
\end{align}
This and $\rho_Q\circ\partial_Q=\rho_B\circ\partial_B$ (see Remark
\ref{remark_simplifications}) imply the last assertion of the theorem.
In the same manner (M1) and
$\nabla^{Q}\circ\partial_Q=\partial_Q\circ\nabla^{Q^*}$ (by Definition
of a $2$-representation) imply the equation
\begin{align}\label{partial_Q_preserves}
\partial_Q\lb \tau_1,
\tau_2\rb_{Q^*}&=\lb\partial_Q\tau_1,\partial_Q\tau_2\rb_\Delta+\partial_B^*\langle
\tau_2, \nabla_\cdot\partial_Q\tau_1\rangle.
\end{align}
The compatibility of the bracket with the anchor (CA4) follows then
immediately from \eqref{partial_Q_preserves} with \eqref{rho_delta},
or from \eqref{partial_A_preserves} with
$\rho_Q\circ\partial_Q=\rho_B\circ\partial_B$.  Next we check (CA2)
using (M1) and $\nabla^{Q}\circ\partial_Q=\partial_Q\circ\nabla^{Q^*}$. We have
\begin{align*}
  &\rho_Q\partial_Q(\tau_1)\langle\tau_2,
  \tau_3\rangle_{Q^*}-\langle\lb \tau_1, \tau_2\rb_{Q^*},
  \tau_3\rangle_{Q^*}-\langle\tau_2, \lb \tau_1,
  \tau_3\rb_{Q^*}\rangle_{Q^*}\\
  =\,&\langle \tau_2,
  \lb \partial_Q\tau_1, \partial_Q\tau_3\rb_\Delta\rangle
  +\langle\nabla_{\partial_B\tau_2}
  \tau_1, \partial_Q\tau_3\rangle -
  \langle\tau_2, \partial_Q\Delta_{\partial_Q\tau_1}\tau_3-\partial_Q\nabla_{\partial_B\tau_3}\tau_1\rangle\\
  =\,&\langle\tau_2, -\partial_Q\Delta_{\partial_Q\tau_1}\tau_3
  +\nabla_{\partial_B\tau_3}\partial_Q\tau_1
  +\lb \partial_Q\tau_1, \partial_Q\tau_3\rb_\Delta
  +\partial_B^*\langle\nabla_{\cdot}
  \partial_Q\tau_1, \tau_3\rangle \rangle=0.
\end{align*}
Finally we check the Jacobi identity (CA1).  Using
\eqref{partial_A_preserves} and \eqref{partial_Q_preserves}, we have
for $\tau_1,\tau_2,\tau_3\in\Gamma(Q^*)$:
\begin{equation*}
\begin{split}
  &\lb \lb \tau_1,\tau_2\rb_{Q^*},\tau_3\rb_{Q^*}+\lb \tau_2,\lb
  \tau_1,\tau_3\rb_{Q^*}\rb_{Q^*}-\lb \tau_1,\lb \tau_2,\tau_3\rb_{Q^*}\rb_{Q^*}\\
  &=\lb
  \Delta_{\partial_Q\tau_1}\tau_2-\nabla_{\partial_B\tau_2}\tau_1,
  \tau_3\rb_{Q^*} +\lb \tau_2,
  \Delta_{\partial_Q\tau_1}\tau_3-\nabla_{\partial_B\tau_3}\tau_1\rb_{Q^*}\\
&  -\lb \tau_1,
  \Delta_{\partial_Q\tau_2}\tau_3-\nabla_{\partial_B\tau_3}\tau_2\rb_{Q^*}\\
  &=\Delta_{\lb\partial_Q\tau_1,\partial_Q\tau_2\rb+\partial_B^*\langle
    \tau_2,
    \nabla_\cdot\partial_Q\tau_1\rangle}\tau_3-\nabla_{\partial_B\tau_3}(
  \Delta_{\partial_Q\tau_1}\tau_2-\nabla_{\partial_B\tau_2}\tau_1)\\
  &\quad +\Delta_{\partial_Q\tau_2}
  (\Delta_{\partial_Q\tau_1}\tau_3-\nabla_{\partial_B\tau_3}\tau_1)
  -\nabla_{[\partial_B\tau_1,\partial_B\tau_3]}\tau_2\\
  &\quad-\Delta_{\partial_Q\tau_1}(\Delta_{\partial_Q\tau_2}\tau_3-\nabla_{\partial_B\tau_3}\tau_2)+\nabla_{[\partial_B\tau_2,\partial_B\tau_3]}\tau_1\\
  &=R_\nabla(\partial_B\tau_3,\partial_B\tau_2)\tau_1+\nabla_{\partial_B\tau_2}\nabla_{\partial_B\tau_3}\tau_1-R_\Delta(\partial_Q\tau_1,\partial_Q\tau_2)\tau_3+\Delta_{\partial_B^*\langle
    \tau_2, \nabla_\cdot\partial_Q\tau_1\rangle}\tau_3\\
  &\quad -\nabla_{\partial_B\tau_3}
  \Delta_{\partial_Q\tau_1}\tau_2+\Delta_{\partial_Q\tau_1}\nabla_{\partial_B\tau_3}\tau_2-\Delta_{\partial_Q\tau_2}\nabla_{\partial_B\tau_3}\tau_1-\nabla_{[\partial_B\tau_1,\partial_B\tau_3]}\tau_2.
\end{split}
\end{equation*} 
Using the equalities 
\[R_\Delta(\partial_Q\tau_1, \partial_Q\tau_2)\tau_3=R(\partial_Q\tau_1, \partial_Q\tau_2)\partial_B\tau_3
\quad \text{ by (D4)},
\]
\[
R_\nabla(\partial_B\tau_3,\partial_B\tau_2)\tau_1=R(\partial_B\tau_3,\partial_B\tau_2)\partial_Q\tau_1\quad
\text{ by the Def. of a $2$-representation},\] and \eqref{(LC10)},
this is
\begin{equation*}
\begin{split}
  &-\Delta_{\nabla_{\partial_B\tau_3}\partial_Q\tau_1}\tau_2+\nabla_{\nabla_{\partial_Q\tau_1}\partial_B\tau_3}\tau_2+\langle\nabla_{\nabla_\cdot \partial_B\tau_3}\partial_Q\tau_1,
  \tau_2\rangle \\
  &\quad
  +\nabla_{\partial_B\tau_2}\nabla_{\partial_B\tau_3}\tau_1+\Delta_{\partial_B^*\langle
    \tau_2,
    \nabla_\cdot\partial_Q\tau_1\rangle}\tau_3-\nabla_{[\partial_B\tau_1,\partial_B\tau_3]}\tau_2-\Delta_{\partial_Q\tau_2}\nabla_{\partial_B\tau_3}\tau_1.
\end{split}
\end{equation*}
By \eqref{almost_C}, we can replace 
\begin{equation*}
\begin{split}
  &-\Delta_{\nabla_{\partial_B\tau_3}\partial_Q\tau_1}\tau_2+\nabla_{\partial_B\tau_2}\nabla_{\partial_B\tau_3}\tau_1-\Delta_{\partial_Q\tau_2}\nabla_{\partial_B\tau_3}\tau_1\\
  =&-\Delta_{\partial_Q(\nabla_{\partial_B\tau_3}\tau_1)}\tau_2+\nabla_{\partial_B\tau_2}(\nabla_{\partial_B\tau_3}\tau_1)-\Delta_{\partial_Q\tau_2}(\nabla_{\partial_B\tau_3}\tau_1)
\end{split}
\end{equation*}
by 
\[-\nabla_{\partial_B(\nabla_{\partial_B\tau_3}\tau_1)}\tau_2-\rho_Q^*\dr
\langle \tau_2, \partial_Q\nabla_{\partial_B\tau_3}\tau_1\rangle
\]
and we get
\begin{equation*}
\begin{split}
  &\nabla_{\nabla_{\partial_Q\tau_1}\partial_B\tau_3
    -[\partial_B\tau_1,\partial_B\tau_3]-\partial_B(\nabla_{\partial_B\tau_3}\tau_1)}\tau_2+\langle\nabla_{\nabla_\cdot \partial_B\tau_3}\partial_Q\tau_1,
  \tau_2\rangle \\
  &\quad +\Delta_{\partial_B^*\langle \tau_2,
    \nabla_\cdot\partial_Q\tau_1\rangle}\tau_3 -\rho_Q^*\dr \langle
  \tau_2, \partial_Q\nabla_{\partial_B\tau_3}\tau_1\rangle
\end{split}
\end{equation*}
Since
$\nabla_{\partial_Q\tau_1}\partial_B\tau_3
-[\partial_B\tau_1,\partial_B\tau_3]-\partial_B(\nabla_{\partial_B\tau_3}\tau_1)=0$ by (M2),
we finally get
\begin{equation}\label{remainder}
\begin{split}
&\langle\nabla_{\nabla_\cdot \partial_B\tau_3}\partial_Q\tau_1, \tau_2\rangle
+\Delta_{\partial_B^*\langle
\tau_2,
\nabla_\cdot\partial_Q\tau_1\rangle}\tau_3
-\rho_Q^*\dr
\langle \tau_2, \partial_Q\nabla_{\partial_B\tau_3}\tau_1\rangle.
\end{split}
\end{equation}
We write $\beta:=\langle\nabla_\cdot \partial_Q\tau_1,
\tau_2\rangle\in\Gamma(B^*)$. Since
$\rho_Q\circ\partial_Q=\rho_B\circ\partial_B$ and
$\nabla^{Q}\circ\partial_Q=\partial_Q\circ\nabla^{Q^*}$, we find
$\beta=\langle\partial_Q\nabla_\cdot \tau_1,
\tau_2\rangle=\langle\nabla_\cdot \tau_1,
\partial_Q\tau_2\rangle\in\Gamma(B^*)$.  To see that
\eqref{remainder}, a section of $Q^*$, vanishes, we evaluate it on an
arbitrary $q\in\Gamma(Q)$. We use (D1) and the definition of a
$2$-representation and we get
\begin{equation*}
\begin{split}
  &\langle\nabla_{\nabla_q \partial_B\tau_3}\partial_Q\tau_1,
  \tau_2\rangle +\langle\Delta_{\partial_B^*\beta}\tau_3, q\rangle
  -\rho_Q(q) \langle
  \tau_2, \partial_Q\nabla_{\partial_B\tau_3}\tau_1\rangle\\
  =&\langle\nabla_{\partial_B\Delta_q\tau_3}\partial_Q\tau_1,
  \tau_2\rangle +\langle\Delta_{\partial_B^*\beta}\tau_3, q\rangle
  -\rho_Q(q) \langle
  \tau_2, \partial_Q\nabla_{\partial_B\tau_3}\tau_1\rangle\\
  =&\langle\beta, \partial_B\Delta_q\tau_3\rangle
  +\langle\Delta_{\partial_B^*\beta}\tau_3, q\rangle -\rho_Q(q)
  \langle\beta,\partial_B\tau_3\rangle =-\langle\lb
  q, \partial_B^*\beta\rb_\Delta,\tau_3\rangle+\langle\Delta_{\partial_B^*\beta}\tau_3,
  q\rangle.
\end{split}
\end{equation*}
Since the Dorfman connection $\Delta$ is dual to the skew-symmetric
dull bracket $\lb\cdot\,,\cdot\rb_\Delta$, this is
\[\rho_Q(\partial_B^*\beta)\langle q,\tau_3\rangle.
\]
Since $\rho_Q\circ\partial_B^*=0$ by \eqref{rho_delta}, we can
conclude.

We finally prove that the degenerate Courant algebroid structure does
not depend on the choice of the Lagrangian splitting.  Clearly the
pairing and anchor are independent of the splitting, so we only need
to check that the bracket remains the same if we choose a different
Lagrangian splitting. Assume that $\Sigma_1,\Sigma_2\colon
B\times_MQ\to\mathbb E$ are two Lagrangian splittings.  Then
$\Delta^2_{\partial_Q\tau_1}\tau_2-\nabla^2_{\partial_B\tau_2}\tau_1=
\Delta^1_{\partial_Q\tau_1}\tau_2+\phi_{12}(\partial_Q\tau_1,\partial_B\tau_2)
-\nabla^2_{\partial_B\tau_2}\tau_1-\phi_{12}(\partial_Q\tau_1,\partial_B\tau_2)
=\Delta^1_{\partial_Q\tau_1}\tau_2-\nabla^1_{\partial_B\tau_2}\tau_1$
by Proposition \ref{change_of_lift} and the considerations following
Theorem \ref{rajan}.
\end{proof}

We will study in a future work the N-geometric description of
degenerate Courant algebroids.

\begin{example}[Tangent Courant algebroid]
  Consider the example described in \S\ref{tangent_euclidean},
  \S\ref{adjoint} and \S\ref{tangent_Courant}.  The degenerate Courant
  algebroid structure on the core $\mathsf E$ of $T\mathsf E$ is just
  the initial Courant algebroid structure on $\mathsf E$ since
  $\Delta_{e_1}e_2=\lb e_1, e_2\rb +\nabla_{\rho(e_2)}e_1$ by
  definition and so
\[\Delta_{e_1}e_2-\nabla_{\rho(e_2)}e_1
=\lb e_1, e_2\rb.
\]
\end{example}

We have hence proved that the Courant algebroid associated to a
symplectic Lie 2-algebroid can be defined directly from any of the
splittings of the Lie 2-algebroid, and so does not need to be obtained
as a derived bracket. Recall that we have characterised split
symplectic $[2]$-manifolds in \S\ref{symplectic}.

\begin{theorem}\label{culminate}
  Let $\mathcal M$ be a symplectic Lie 2-algebroid over a base
  manifold $M$. Then the corresponding Courant algebroid is defined as
  follows. Choose any splitting $\mathcal M\simeq Q[-1]\oplus
  T^*M[-2]$ of the underlying symplectic $[2]$-manifold. Then $Q\simeq
  Q^*$ via $\partial_Q$ and $Q$ inherits a nondegenerate pairing
  given by $\langle \partial_Q\tau_1,
  \partial_Q\tau_2\rangle_{Q}=\langle\tau_1,\partial_Q\tau_2\rangle$.  The
  split Lie 2-algebroid structure on $Q\oplus T^*M$ is dual to a
  Dorfman 2-representation of $Q$ on $-l_1^*\colon Q^*\to TM$.  The
  map $l_1^*\circ\partial_Q\inv$ defines an anchor on $Q$ and the bracket
  $\lb\cdot\,,\cdot\rb_{Q}$ defined on $\Gamma(Q)$ by $\lb \partial_Q\tau_1,
  \partial_Q\tau_2\rb_{Q}=\partial_Q(\Delta_{\partial_Q\tau_1}\tau_2-\{l_1^*\tau_2,\tau_1\})$
  does not depend on the choice of the splitting. This anchor, pairing
  and bracket define a Courant algebroid structure on $Q$.
\end{theorem}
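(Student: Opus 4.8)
The plan is to obtain Theorem \ref{culminate} as a corollary of the description of the core of an LA-Courant algebroid in Theorem \ref{core_courant}, by transporting that structure along the isomorphism $\partial_Q$ available in the symplectic case. First I would set the stage: a symplectic Lie $2$-algebroid is in particular a Poisson Lie $2$-algebroid (the homological vector field preserves the symplectic, hence the Poisson, bracket), so by Theorem \ref{Poisson_Lie_2_char} and Theorem \ref{LA-Courant} it corresponds to an LA-Courant algebroid $(\mathbb E; Q,B;M)$. Fix a splitting $\mathcal M\simeq Q[-1]\oplus B^*[-2]$; by the analysis of \S\ref{symplectic} the symplectic condition is equivalent to $\rho_B\colon B\to TM$ and $\partial_Q\colon Q^*\to Q$ being isomorphisms, so after identifying $B$ with $TM$ via $\rho_B$ (whence $B^*\cong T^*M$) we are precisely in the situation $\mathcal M\simeq Q[-1]\oplus T^*M[-2]$ of the statement. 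The homological vector field is then encoded by a Dorfman $2$-representation $(\partial_B=-l_1^*,\Delta,\nabla,R_Q)$ of $(Q,\rho_Q)$ and the Poisson bracket by a self-dual $2$-representation $(\partial_Q,\nabla^{Q^*},\nabla^Q,R_B)$ of $B$, and by Theorem \ref{Poisson_Lie_2_char} these form a matched pair.

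Next I would invoke Theorem \ref{core_courant}: since the matched-pair conditions hold, the core $Q^*$ carries a degenerate Courant algebroid structure with anchor $\rho_Q\partial_Q$, map $\mathcal D=\rho_Q^*\dr$, pairing $\langle\tau_1,\tau_2\rangle_{Q^*}=\langle\tau_1,\partial_Q\tau_2\rangle$ and bracket $\lb\tau_1,\tau_2\rb_{Q^*}=\Delta_{\partial_Q\tau_1}\tau_2-\nabla^{Q^*}_{\partial_B\tau_2}\tau_1$, independently of the Lagrangian splitting. Transporting this verbatim along the bundle isomorphism $\partial_Q\colon Q^*\to Q$ — that is, relabelling sections $\tau\leftrightarrow q=\partial_Q\tau$ — automatically yields a degenerate Courant algebroid structure on $Q$, since each axiom (CA1)--(CA5) is preserved under such a relabelling, with pairing $\langle\partial_Q\tau_1,\partial_Q\tau_2\rangle_Q=\langle\tau_1,\partial_Q\tau_2\rangle$, anchor $\rho_Q\partial_Q\circ\partial_Q^{-1}$ and bracket $\lb\partial_Q\tau_1,\partial_Q\tau_2\rb_Q=\partial_Q\bigl(\Delta_{\partial_Q\tau_1}\tau_2-\nabla^{Q^*}_{\partial_B\tau_2}\tau_1\bigr)$. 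The point where ``degenerate'' is upgraded to ``genuine'' is that $\langle\cdot\,,\cdot\rangle_{Q^*}$ is nondegenerate precisely because $\partial_Q$ is an isomorphism, so the transported pairing on $Q$ is nondegenerate; its symmetry is $\partial_Q=\partial_Q^*$, which is (1) of Definition \ref{saruth}. It then remains to rewrite the anchor and bracket in the claimed form: using Remark \ref{remark_simplifications}(2) (namely $\rho_Q\partial_Q=\rho_B\partial_B$) together with $\rho_B=\Id$ and $\partial_B=-l_1^*$ one identifies the anchor with $l_1^*\circ\partial_Q^{-1}$ up to the overall sign fixed by these conventions, and using the dictionary of Theorem \ref{poisson_is_saruth} — the degree $-2$ Poisson bracket satisfies $\{b,\tau\}=\nabla^{Q^*}_b\tau$ for $b\in\Gamma(B)=\Gamma(TM)$ and $\tau\in\Gamma(Q^*)$ — together again with $\partial_B=-l_1^*$, one rewrites $\nabla^{Q^*}_{\partial_B\tau_2}\tau_1=\{l_1^*\tau_2,\tau_1\}$, obtaining exactly the bracket of the statement.

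For splitting-independence of the resulting Courant algebroid structure on $Q$: a change of splitting of $\mathcal M$ induces a change of Lagrangian decomposition of $\mathbb E$ (\S\ref{cor_splittings}) acting by the identity on the canonical bundle $Q=E_{-1}$, so by the splitting-independence part of Theorem \ref{core_courant} the transported structure does not move. As a consistency check, and to confirm that this Courant algebroid is the one produced by Roytenberg's approach, I would evaluate the formula on the model class of \S\ref{tangent_euclidean}, \S\ref{adjoint} and \S\ref{tangent_Courant}: for $\mathcal M=\mathcal M(T\mathsf E)$ the basic Dorfman connection gives $\Delta_{e_1}e_2-\{l_1^*e_2,e_1\}=\Delta_{e_1}e_2-\nabla_{\rho_{\mathsf E}(e_2)}e_1=\lb e_1,e_2\rb$, recovering the original Courant bracket on $\mathsf E$.

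The main obstacle is not a deep step — the argument is transport of structure plus bookkeeping — but rather twofold. First, one must be sure that a symplectic Lie $2$-algebroid really does satisfy the hypotheses of Theorem \ref{core_courant}, i.e.\ is a Poisson Lie $2$-algebroid whose associated matched-pair conditions hold after every splitting; this rests on the normal-form analysis of \S\ref{symplectic} together with Theorem \ref{Poisson_Lie_2_char}. Second, and more tediously, one must keep all the identifications $B\cong TM$, $B^*\cong T^*M$, $Q^*\cong Q$, $\mathsf E\cong\mathsf E^*$ and the sign conventions $\partial_B^*=-l_1$, $l_1=-\Beta^{-1}\rho^*$ consistent, so that the transported anchor and bracket land exactly on the formulas in the statement; nondegeneracy, the Courant axioms, and splitting-independence are all inherited for free from Theorem \ref{core_courant}.
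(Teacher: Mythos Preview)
Your proposal is correct and follows exactly the paper's own route: the theorem is stated as an immediate consequence of Theorem \ref{core_courant} together with the characterisation of symplectic $[2]$-manifolds in \S\ref{symplectic}, and the paper confirms it by the tangent Courant algebroid example just before the statement, precisely as you do. Your summary of the bookkeeping (identifying $B\cong TM$ via $\rho_B$, $\partial_B=-l_1^*$, and $\{b,\tau\}=\nabla^{Q^*}_b\tau$ from Theorem \ref{poisson_is_saruth}) is the only content beyond transport of structure, and you have correctly flagged that keeping the signs straight through these identifications is the sole delicate point.
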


Note that the Courant algebroid structure is more naturally defined on
$Q^*$, but we consider the induced Courant algebroid structure on $Q$
by the vector bundle isomorphism $\Beta=\partial_Q\colon Q^*\to Q$
defined by the pairing on $Q^*$. We do this for our result to be
consistent with the constructions in \S\ref{symplectic},
\S\ref{adjoint} and \S\ref{TCourant_LACourant}.

\begin{example}[Core of the standard Courant algebroid over a Lie algebroid]
  Consider now the Example discussed in Example \ref{standard},
  \S\ref{standard_VB_Courant_ex} and \S\ref{PontLA_LACourant}; namely
  the standard LA-Courant algebroid
\begin{equation*}
\begin{xy}
\xymatrix{
TA\oplus_AT^*A \ar[d]\ar[r]& TM\oplus A^*\ar[d]\\
 A\ar[r]& M}
\end{xy}
\end{equation*}
over a Lie algebroid $A$.  The degenerate Courant algebroid structure
on the core $A\oplus T^*M$ of $TA\oplus T^*A$ is here given by
$\rho_{A\oplus T^*M}(a,\theta)=\rho_A(a)$, \[\langle (a_1,\theta_1),
(a_2, \theta_2)\rangle_{A\oplus T^*M}=\langle (a_1,\theta_1),
(\rho_A,\rho_A^*)(a_2, \theta_2)\rangle\] and the bracket defined
by \[\lb (a_1,\theta_1), (a_2, \theta_2)\rb_{A\oplus
  T^*M}=([a_1,a_2],\ldr{\rho_A(a_1)}\theta_2-\ip{\rho_A(a_2)}\dr\theta_1)\]
for all $a,a_1,a_2\in\Gamma(A)$ and
$\theta,\theta_1,\theta_2\in\Omega^1(M)$. To see this, use Lemma 5.16
in \cite{Jotz13a} or the next example; this degenerate Courant
algebroid plays a crucial role in the infinitesimal description of
Dirac groupoids \cite{Jotz14}, i.e.~in the definition of \emph{Dirac
  bialgebroids}.

\end{example}
\begin{example}[LA-Courant algebroid associated to a double Lie algebroid]
  More generally, the LA-Courant algebroids (and the
  corresponding Poisson Lie $2$-algebroids) constructed in
  \S\ref{metric_double_VB_alg}, \S\ref{semi-direct},
  \S\ref{VB_Courant_alg} and \S\ref{Poi_lie_2_def_by_matched_ruth}
and Theorem \ref{core_courant} yield the following application.

A matched pair of $2$-representations as in Definition
\ref{matched_pair_2_rep} defines two degenerate Courant algebroids.
The first one is $C\oplus A^*\to M$ with the anchor $\rho_{C\oplus
  A^*}\colon C\oplus A^*\to TM$ defined by
$\rho_A\circ\pr_A\circ(\partial_A\oplus\partial_A^*)=\rho_B\circ\partial_B\circ\pr_C$.
The pairing is defined by
\[\langle (c_1,\alpha_1), (c_2,\alpha_2)\rangle_{C\oplus A^*}=\langle
\alpha_1, \partial_Ac_2\rangle+\langle \alpha_2, \partial_Ac_1\rangle
\]
for all $\alpha_1,\alpha_2\in\Gamma(A^*)$ and $c_1,c_2\in\Gamma(C)$, 
and the bracket by
\begin{equation*}
\begin{split}
  \lb (c_1,\alpha_1), (c_2,\alpha_2)\rb_{C\oplus A^*}
  &=\Delta_{(\partial_Ac_1,\partial_A^*\alpha_1)}(c_2,\alpha_2)-\nabla_{\partial_Bc_2}(c_1,\alpha_1)\\
  &=(\nabla_{\partial_Ac_1}c_2-\nabla_{\partial_Bc_2}c_1,\ldr{\partial_Ac_1}\alpha_2+\langle\nabla^*_\cdot\partial_A^*\alpha_1,c_2\rangle-\nabla^*_{\partial_Bc_2}\alpha_1)\\
  &=(\nabla_{\partial_Ac_1}c_2-\nabla_{\partial_Bc_2}c_1,\ldr{\partial_Ac_1}\alpha_2-\ip{\partial_Ac_2}\dr_A\alpha_1).
\end{split}
\end{equation*}

The second degenerate Courant algebroid is 
$C\oplus B^*\to M$ with the anchor $\rho_{C\oplus
  B^*}\colon C\oplus B^*\to TM$ defined by
$\rho_B\circ\pr_B\circ(\partial_B\oplus\partial_B^*)=\rho_B\circ\partial_B\circ\pr_C$,
the pairing defined by
\[\langle (c_1,\beta_1), (c_2,\beta_2)\rangle_{C\oplus B^*}=\langle
\beta_1, \partial_Bc_2\rangle+\langle \beta_2, \partial_Bc_1\rangle
\]
for all $\beta_1,\beta_2\in\Gamma(B^*)$ and $c_1,c_2\in\Gamma(C)$, 
and the bracket
\begin{equation*}
\begin{split}
  \lb (c_1,\beta_1), (c_2,\beta_2)\rb_{C\oplus B^*}
  &=(\nabla_{\partial_Bc_1}c_2-\nabla_{\partial_Ac_2}c_1,\ldr{\partial_Bc_1}\beta_2-\ip{\partial_Bc_2}\dr_B\beta_1).
\end{split}
\end{equation*}
Note that in both cases, the restriction to 
$\Gamma(C)$ of the Courant bracket is the Lie algebroid bracket 
induced on $C$ by the matched pair, see \S\ref{lie_bracket_on_C}.
\end{example}

\section{VB-Dirac structures, LA-Dirac structures and pseudo Dirac
  structures}\label{sec:dirac}

In this section, we study isotropic subalgebroids of metric
VB-algebroids and Dirac structures in VB- and LA-Courant
algebroids. While we paid attention in the preceding sections to
bridge $[2]$-geometric objects to geometric structures on metric
double vector bundles, we are here more interested in classifications
of Dirac structures via the simple geometric descriptions that we
found before for VB-Courant algebroids and LA-Courant algebroids.

\subsection{VB-Dirac structures}
Let $(\mathbb E; B, Q;M)$ be a VB-Courant algebroid with core $Q^*$
and anchor $\Theta\colon\mathbb E\to TB$.  Let $D$ be a double vector
subbundle structure over $B'\subseteq B$ and $U\subseteq Q$ and with
core $K$.  Choose a linear splitting $\Sigma\colon
B\times_M Q\to \mathbb E$ that is adapted\footnote{To see that such a
  splitting exist, we work with decompositions.  Since $D$ and
  $\mathbb E$ are both double vector bundles, there exist two
  decompositions $\mathbb I_D\colon B'\times_MU\times_MK \to D$ and
  $\mathbb I\colon  B\times_M Q\times_M Q^*\to\mathbb E$. Let
  $\iota\colon D\to \mathbb E$ be the double vector bundle inclusion,
  over $\iota_U\colon U\to Q$ and $\iota_{B'}\colon B'\to B$, and with
  core $\iota_K\colon K\to Q^*$.  Then there exists
  $\phi\in\Gamma({B'}^*\otimes U^*\otimes Q^*)$ such that the map
  $\mathbb I\inv\circ\iota\circ\mathbb I_D \colon
  B'\times_MU\times_MK\to B\times_M Q\times_M Q^*$ sends
  $(b_m,u_m,k_m)$ to
  $(\iota_B(b_m),\iota_U(u_m),\iota_K(k_m)+\phi(b_m,u_m))$. Using
  local basis sections of $B$ and $Q$ adapted to $B'$ and $U$ and a
  partition of unity on $M$, extend $\phi$ to $\hat\phi\in
  \Gamma(B^*\otimes Q\otimes Q^*)$. Then define a new decomposition
  $\tilde{\mathbb I}\inv\colon \mathbb E \to B\times_M Q\times_M Q^*$ by
  $\tilde{\mathbb I}\inv(e)= \mathbb
  I\inv(e)+_B(b_m,0^Q_m,-\hat\phi(b_m,q_m))=\mathbb
  I\inv(e)+_Q(0^B_m,q_m,-\hat\phi(b_m,q_m))$ for $e\in\mathbb E$ with
  $\pi_B(e)=b_m$ and $\pi_Q(e)=q_m$. Then $(\tilde{\mathbb
    I}\circ\iota\circ\mathbb
  I_D)(b_m,u_m,k_m)=(\iota_B(b_m),\iota_U(u_m),\iota_K(k_m))$ for all
  $(b_m,u_m,k_m)\in B'\times_M U\times_MK$.  The corresponding linear
  splitting $\tilde\Sigma\colon B\times _M Q\to \mathbb E$, $\tilde
  \Sigma(b_m,q_m)=\tilde{\mathbb I}(b_m,q_m,0^{Q^*}_m)$ sends
  $(\iota_{B'}(b_m),\iota_U(u_m))$ to $\iota(\mathbb
  I_D(b_m,u_m,0^K_m)\in \iota(D)$.} to $D$, i.e. such that
$\Sigma(B'\times_MU)\subseteq D$.  Then $D$ is spanned as a vector
bundle over $B'$ by the sections $\sigma_Q(u)\an{B'}$ for all
$u\in\Gamma(U)$ and $\tau^\dagger\an{B'}$ for all $\tau\in\Gamma(K)$.

The following two propositions follow immediately from the study of
linear metrics in \S\ref{Lagr_dec} and the definition in \eqref{lambda_def}.

\begin{proposition}\label{prop1}
  In the situation described above, the double subbundle $D\subseteq
  \mathbb E$ over $B'$ is isotropic if and only if $K\subseteq
  U^\circ$ and $\Lambda$ as in \eqref{lambda_def} sends $U\otimes U$
  to ${B'}^\circ$.
\end{proposition}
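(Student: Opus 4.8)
The statement is a local characterisation of isotropy of the double subbundle $D$ in terms of the data $(U,K,\Lambda)$ attached to an adapted Lagrangian-type splitting. The plan is to use that, by the remark just before Proposition~\ref{prop1}, $D$ is spanned over $B'$ by the sections $\sigma_Q(u)\an{B'}$ for $u\in\Gamma(U)$ and $\tau^\dagger\an{B'}$ for $\tau\in\Gamma(K)$; hence $D$ is isotropic if and only if the fibrewise pairing vanishes on all pairs of such spanning sections, restricted to $B'$. First I would record the three relevant pairings coming from the definition of a linear metric and from \eqref{lambda_def}: for $u_1,u_2\in\Gamma(U)\subseteq\Gamma(Q)$ one has $\langle \sigma_Q(u_1),\sigma_Q(u_2)\rangle_{\mathbb E}=\ell_{\Lambda(u_1,u_2)}$; for $u\in\Gamma(U)$ and $\tau\in\Gamma(K)\subseteq\Gamma(Q^*)$ one has, since $\sigma_Q(u)$ is linear over $u$, $\langle \sigma_Q(u),\tau^\dagger\rangle_{\mathbb E}=q_B^*\langle u,\tau\rangle$; and for $\tau_1,\tau_2\in\Gamma(K)$ one has $\langle \tau_1^\dagger,\tau_2^\dagger\rangle_{\mathbb E}=0$.

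Second, I would restrict each of these to $B'$ and translate ``vanishes on $B'$'' into the claimed algebraic conditions. The third pairing is identically zero, so it imposes nothing. The second pairing, $q_B^*\langle u,\tau\rangle$, restricted to $B'$ is the pullback to $B'$ of the function $m\mapsto\langle u(m),\tau(m)\rangle$ on $M$; this vanishes for all $u\in\Gamma(U)$, $\tau\in\Gamma(K)$ if and only if $\langle u,\tau\rangle=0$ pointwise, i.e.\ $K\subseteq U^\circ$. The first pairing, $\ell_{\Lambda(u_1,u_2)}$, is the linear function on $B$ associated to the section $\Lambda(u_1,u_2)\in\Gamma(B^*)$; its restriction to $B'$ is the linear function on $B'$ associated to $\Lambda(u_1,u_2)\an{B'}\in\Gamma({B'}^*)$, which vanishes identically on $B'$ exactly when $\Lambda(u_1,u_2)\in\Gamma({B'}^\circ)$. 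Since this must hold for all $u_1,u_2\in\Gamma(U)$, and $\Lambda$ is $C^\infty(M)$-bilinear, this is equivalent to $\Lambda$ sending $U\otimes U$ into ${B'}^\circ$. Conversely, if $K\subseteq U^\circ$ and $\Lambda(U\otimes U)\subseteq {B'}^\circ$, then all three families of pairings vanish on $B'$, and because the sections $\sigma_Q(u)\an{B'}$ and $\tau^\dagger\an{B'}$ span $D$ over $C^\infty(B')$ and the pairing on $\mathbb E\to B$ is $C^\infty(B)$-bilinear (with the functions restricting to $C^\infty(B')$ on $B'$), the pairing vanishes on all of $D\times_{B'}D$. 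This gives isotropy.

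The only mild subtlety — and the step I would be most careful about — is the passage between ``the pairing vanishes on the spanning sections over $B'$'' and ``the pairing vanishes on every pair of points of $D$ lying over the same point of $B'$'': one must note that an arbitrary element of $D$ over $b'_m\in B'$ is a $C^\infty(B')$-combination (in the two vector bundle structures) of the values at $b'_m$ of the listed sections, and that the linear metric is $C^\infty(B)$-bilinear, so vanishing on a generating set of sections forces fibrewise vanishing; this is exactly the bilinearity argument used throughout \S\ref{Lagr_dec}. No genuine obstacle is expected; the proof is a direct computation once the pairing formulae of \S\ref{Lagr_dec} and \eqref{lambda_def} are in hand, which is why I would keep it short and simply invoke those formulae rather than re-derive them.
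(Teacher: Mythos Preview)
Your proposal is correct and is precisely the argument the paper has in mind: the paper does not spell out a proof but simply says the result ``follows immediately from the study of linear metrics in \S\ref{Lagr_dec} and the definition in \eqref{lambda_def}'', i.e.\ from exactly the three pairing formulae you identify and use. Your handling of the spanning argument and the restriction to $B'$ is the intended computation.
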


\begin{proposition}\label{prop2}
  In the situation described above, $D$ is maximal isotropic if and
  only if $U=K^\circ$ and $\Lambda$  sends
  $U\otimes U$ to ${B'}^\circ$.
\end{proposition}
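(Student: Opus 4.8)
The plan is to work throughout with an adapted linear splitting $\Sigma\colon B\times_M Q\to\mathbb E$ with $\Sigma(B'\times_M U)\subseteq D$, whose existence was established in the footnote above, and to reduce the statement about $D$ to a fibrewise statement about the pairing on $\mathbb E\to B$. Proposition \ref{prop1} gives the $\subseteq$-direction: if $D$ is isotropic then $K\subseteq U^\circ$ and $\Lambda(U\otimes U)\subseteq {B'}^\circ$. So for Proposition \ref{prop2} it remains to show that, under these two hypotheses, $D$ is \emph{maximal} isotropic in $\mathbb E\to B$ if and only if $U=K^\circ$. First I would fix a point $b_m\in B'$ and describe the fibre $\mathbb E_{b_m}$ using the decomposition: $\mathbb E_{b_m}\cong Q_m\oplus Q^*_m$ via $q\oplus\tau\mapsto \Sigma(q)(b_m)+_B\tau^\dagger(b_m)$, and under this identification the linear metric on the fibre is, by \eqref{lambda_def} together with items (1) and (2) of the definition of a linear metric,
\[
\langle q_1\oplus\tau_1,\ q_2\oplus\tau_2\rangle_{\mathbb E_{b_m}}
=\Lambda(q_1,q_2)(b_m)+\langle q_1,\tau_2\rangle+\langle q_2,\tau_1\rangle.
\]
The subspace $D_{b_m}$ corresponds to $U_m\oplus K_m\subseteq Q_m\oplus Q^*_m$.

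Next I would use a dimension count. Since $\mathbb E_{b_m}$ carries a non-degenerate symmetric pairing of rank $2\dim Q_m$, a maximal isotropic subspace has dimension exactly $\dim Q_m$. An isotropic subspace $D_{b_m}=U_m\oplus K_m$ with $K_m\subseteq U_m^\circ$ has dimension $\dim U_m+\dim K_m\le \dim U_m+(\dim Q_m-\dim U_m)=\dim Q_m$, with equality precisely when $\dim K_m=\dim Q_m-\dim U_m$, i.e.\ precisely when $K_m=U_m^\circ$ (using again $K_m\subseteq U_m^\circ$). Since all the bundles involved are vector bundles over the connected-component-wise constant-rank locus $B'\subseteq B$, the fibrewise equality $K_m=U_m^\circ$ for all $m$ is equivalent to the bundle equality $U=K^\circ$ (note $U=K^\circ$ and $K=U^\circ$ are equivalent given $K\subseteq U^\circ$, by taking annihilators). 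Conversely, if $U=K^\circ$ then $\dim D_{b_m}=\dim Q_m$, so any isotropic subspace of that dimension is automatically maximal. This shows: given that $D$ is isotropic with the data as in Proposition \ref{prop1}, $D$ is maximal isotropic $\iff U=K^\circ$.

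Finally I would assemble the equivalence as stated. If $D$ is maximal isotropic, then in particular $D$ is isotropic, so by Proposition \ref{prop1} we have $K\subseteq U^\circ$ and $\Lambda(U\otimes U)\subseteq{B'}^\circ$; and by the dimension count $U=K^\circ$. Conversely, suppose $U=K^\circ$ and $\Lambda$ sends $U\otimes U$ to ${B'}^\circ$. Then $K=U^\circ$, so in particular $K\subseteq U^\circ$, and Proposition \ref{prop1} gives that $D$ is isotropic; the dimension count then promotes this to maximal isotropic. The main (and essentially only) obstacle is bookkeeping: making the fibrewise identification of $\mathbb E_{b_m}$ with $Q_m\oplus Q^*_m$ genuinely compatible with the metric formula above, and being careful that $\dim U_m$ may vary over components of $B'$ so the rank argument is applied component-by-component; the linear-algebra core (maximal isotropics have half-dimension, annihilator dimension formula) is entirely routine.
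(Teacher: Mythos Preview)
Your proof is correct and is exactly the argument the paper has in mind: the paper states that both propositions ``follow immediately from the study of linear metrics in \S\ref{Lagr_dec} and the definition in \eqref{lambda_def}'', and your fibrewise identification $\mathbb E_{b_m}\cong Q_m\oplus Q_m^*$ together with the dimension count is precisely how one unpacks that remark. One small quibble: the ranks of $U$ and $K$ are governed by $M$, not by $B'$, so your aside about ``$\dim U_m$ may vary over components of $B'$'' should read ``over components of $M$''; this does not affect the argument.
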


Now we can prove that if $D$ is maximal isotropic, then there exists a \emph{Lagrangian} 
splitting of $\mathbb E$ that is adapted to $D$. 
\begin{corollary}
  Let $(\mathbb E, B; Q, M)$ be a metric double vector bundle and
  $D\subseteq \mathbb E$ a maximal isotropic double subbundle.  Then
  there exists a Lagrangian splitting that is adapted to $D$.
\end{corollary}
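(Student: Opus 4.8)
The statement to prove is: given a metric double vector bundle $(\mathbb E, B; Q, M)$ and a maximal isotropic double subbundle $D \subseteq \mathbb E$ over $B' \subseteq B$ and $U \subseteq Q$ with core $K$, there is a Lagrangian splitting of $\mathbb E$ adapted to $D$. The idea is to combine the construction already used twice in the paper: first produce \emph{some} adapted linear splitting (this was done in the footnote attached to the definition of VB-Dirac structures), then run the symmetrisation argument from the proof of Theorem \ref{symmetrization}, but carefully enough that symmetrising does not destroy the adaptedness.

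First I would invoke the footnoted construction (or repeat it briefly) to obtain a linear splitting $\Sigma \colon B \times_M Q \to \mathbb E$ with $\Sigma(B' \times_M U) \subseteq D$; call $\sigma_Q$ the associated horizontal lift. By Proposition \ref{prop2}, since $D$ is maximal isotropic we have $U = K^\circ$ and the symmetric tensor $\Lambda \in S^2(Q, B^*)$ attached to $\Sigma$ via \eqref{lambda_def} sends $U \otimes U$ into ${B'}^\circ$. Now recall from the proof of Theorem \ref{symmetrization} that one corrects $\sigma_Q$ to a Lagrangian lift by setting $\sigma_Q'(q) = \sigma_Q(q) - \tfrac12 \widetilde{\Lambda(q,\cdot)^*}$, where $\Lambda(q,\cdot)^* \in \Gamma(\operatorname{Hom}(B, Q^*))$. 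The key point is to check that the correction term $\widetilde{\Lambda(q,\cdot)^*}$, for $q \in \Gamma(U)$, is a section of $D$ over $B'$ — equivalently, that $\Lambda(u,\cdot)^* \an{B'}$ takes values in $K = U^\circ{}^\circ$... more precisely in the core $K$ of $D$. This is where the hypothesis on $\Lambda$ enters: the adjoint $\Lambda(u, \cdot)^* \colon B \to Q^*$ of $\Lambda(u,\cdot)\colon Q \to B^*$ satisfies $\langle \Lambda(u,\cdot)^*(b_m), q_m \rangle = \Lambda(u, q)(b_m)$; restricting to $b_m \in B'$ and using $\Lambda(U \otimes U) \subseteq {B'}^\circ$, one gets that $\Lambda(u,\cdot)^*(b_m)$ annihilates all of $U_m$, i.e. $\Lambda(u,\cdot)^*(b_m) \in U_m^\circ = K_m$. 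Hence the core-linear section $\widetilde{\Lambda(u,\cdot)^*}$ restricts over $B'$ to a section of $D$, so $\sigma_Q'(u)\an{B'} = \sigma_Q(u)\an{B'} - \tfrac12 \widetilde{\Lambda(u,\cdot)^*}\an{B'}$ is again a section of $D$.

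With that verified, $\sigma_Q'$ (equivalently the corresponding linear splitting $\Sigma'$) is Lagrangian by the computation in the proof of Theorem \ref{symmetrization} — $\langle \sigma_Q'(q_1), \sigma_Q'(q_2)\rangle_{\mathbb E} = 0$ for all $q_1, q_2 \in \Gamma(Q)$ — and it is adapted to $D$ because $\Sigma'(B' \times_M U) \subseteq D$: for $u_m \in U_m$ and $b_m \in B'_m$, $\Sigma'(b_m, u_m) = \Sigma(b_m, u_m) -_B \tfrac12 \widetilde{\Lambda(u,\cdot)^*}(b_m)$ lies in $D$ as the difference of two elements of $D$ over the same point of $B'$. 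This gives the desired Lagrangian adapted splitting.

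The main obstacle is the bookkeeping in the middle step: one has to make sure that the pointwise statement ``$\Lambda(u_m, \cdot)^*(b_m) \in K_m$ for $u_m \in U_m$, $b_m \in B'_m$'' really follows from the section-level statement ``$\Lambda(U \otimes U) \subseteq {B'}^\circ$'' together with $U = K^\circ$, and that $\widetilde{\Lambda(u,\cdot)^*}\an{B'}$ is not merely in $\mathbb E\an{B'}$ but genuinely in the double subbundle $D$ — this uses that core-linear sections over $0^Q$ built from $\operatorname{Hom}(B, K)$-valued tensors land in $D$ whenever the splitting is adapted, which in turn is a direct consequence of $D$ being a double subbundle with core $K$ and $\Sigma(B' \times_M U) \subseteq D$. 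Everything else is either a quotation of Theorem \ref{symmetrization}, Proposition \ref{prop1}, or Proposition \ref{prop2}, or routine linear algebra, so I would keep those parts terse.
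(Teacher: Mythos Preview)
Your proposal is correct and follows essentially the same approach as the paper: start from an adapted linear splitting (the footnoted construction), apply the symmetrisation of Theorem~\ref{symmetrization}, and check that the correction term $\tfrac{1}{2}\widetilde{\Lambda(u,\cdot)^*}$ restricts over $B'$ to a core-linear section with values in $K=U^\circ$, using Proposition~\ref{prop2}. Your write-up is in fact more explicit than the paper's about the dualisation step $\langle \Lambda(u,\cdot)^*(b_m), q_m\rangle = \Lambda(u,q)(b_m)$ and about why the resulting core-linear section lands in $D$; the paper compresses this into a single sentence.
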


\begin{proof}
  As before, let $U\subseteq Q$ and $B'\subseteq B$ be the sides of
  $D$. Then by Proposition \ref{prop2} the core of $D$ is the vector
  bundle $U^\circ\subseteq Q^*$. Choose a linear splitting $\Sigma\colon
  Q\times_M B\to \mathbb E$ that is adapted to $D$. Then $D$ is
  spanned as a vector bundle over $B'$ by the sections
  $\sigma_Q(u)\an{B'}$ for all $u\in\Gamma(U)$ and
  $\tau^\dagger\an{B'}$ for all $\tau\in\Gamma(U^\circ)$.  As in the proof
  of Theorem \ref{symmetrization}, transform $\Sigma$ into a new
  Lagrangian linear splitting $\Sigma'$. We need to show that
  $\sigma_Q'(u)\an{B'}-\sigma_Q(u)\an{B'}$ is equivalent to a section
  of ${B'}^*\otimes U^\circ$ for all $u\in\Gamma(U)$. But
  $\sigma_Q'(u)-\sigma_Q(u)=\widetilde{\frac{1}{2}\Lambda(u,\cdot)}$ by
  construction and, since $D$ is isotropic, we have
  $\Lambda(u,u')\an{B'}=0$ for all $u,u'\in\Gamma(U)$.
\end{proof}

\begin{remark}\label{invariant_part}
  Consider a Courant algebroid $\mathsf E\to M$ and its tangent double
  $T\mathsf E$.  Recall from Example \ref{metric_connections} that
  Lagrangian splittings of $T\mathsf E$ are equivalent to metric
  connections $\mx(M)\times\Gamma(\mathsf E)\to \Gamma(\mathsf E)$.
  Let $\nabla$ be such a metric connection, that is adapted to a
  maximally isotropic double subbundle $D$ over the sides $TM$ and
  $U\subseteq \mathsf E$.  Define $[\nabla]\colon
  \mx(M)\times\Gamma(U)\to\Gamma(\mathsf E/U^\perp)$ by
  $[\nabla]_Xu=\overline{\nabla_Xu}\in\Gamma(\mathsf E/U^\perp)$.  A
  second metric connection $\nabla'\colon \mx(M)\times\Gamma(\mathsf
  E)\to \Gamma(\mathsf E)$ is adapted to $D$ if and only if
  $\nabla_Xu-\nabla'_Xu\in\Gamma(U^\circ)$ for all $X\in\mx(M)$ and
  for all $u\in\Gamma(U)$. Hence, if and only if $[\nabla]=[\nabla']$.
  We call $[\nabla]$ the \textbf{invariant part of the metric
    connection adapted to $D$}.
\end{remark}

The existence of Lagrangian splittings of $\mathbb E$ adapted to
maximal isotropic double subbundles $D$ will now be used to study the
involutivity of $D$.  Note that in a very early version of this work,
we studied VB-Courant algebroids via general (not necessarily
Lagrangian) linear splittings. We found some more general objects than
Dorfman $2$-representations; basically Dorfman $2$-connections with
the additional structure object $\Lambda$ appearing in (D1) to (D5) in
Definition \ref{def_dorfman_2_conn}.  The study of the involutivity of
general (not necessarily isotropic) double subbundles $D$ of $\mathbb
E$ was then possible, and yielded very similar results.

\begin{proposition}\label{char_VB_dir}
  Let $(\mathbb E, B; Q, M)$ be a VB-Courant algebroid and $D\subseteq
  \mathbb E$ a maximal isotropic double subbundle.  Choose a
  Lagrangian splitting of $\mathbb E$ that is adapted to $D$ and
  consider the corresponding Dorfman $2$-representation, denoted as
  usual.  Then $D$ is a Dirac structure in $\mathbb E$ with support
  $B'$ if and only if
\begin{enumerate}
\item $\partial_B(U^\circ)\subseteq B'$,
\item $\nabla_ub\in\Gamma(B')$ for all $u\in\Gamma(U)$ and $b\in\Gamma(B')$,
\item $\lb u_1, u_2\rb\in\Gamma(U)$ for all $u_1,u_2\in\Gamma(U)$,
\item $R(u_1,u_2)$ restricts to a section of  $\Gamma(\operatorname{Hom}(B',U^\circ))$ for all
  $u_1,u_2\in\Gamma(U)$.
\end{enumerate}
\end{proposition}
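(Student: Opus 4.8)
The strategy is to reduce the statement to the characterisation of Dirac structures with support given in Lemma~\ref{useful_for_dirac_w_support}, applied to the spanning set $\mathcal S=\{\sigma_Q(u)\mid u\in\Gamma(U)\}\cup\{\tau^\dagger\mid\tau\in\Gamma(U^\circ)\}$ of $D\to B'$. First I would recall from Proposition~\ref{prop2} that since $D$ is maximal isotropic, its core is $U^\circ$ and its sides are $U\subseteq Q$ and $B'\subseteq B$; by the preceding corollary we may assume the chosen Lagrangian splitting is adapted to $D$, so that $D$ is indeed spanned over $B'$ by the sections in $\mathcal S$, restricted to $B'$. Then I would go through the three conditions of Lemma~\ref{useful_for_dirac_w_support} one at a time, translating them into conditions on the Dorfman $2$-representation $(\partial_B,\Delta,\nabla,R)$ defined by the splitting via Theorem~\ref{main}.

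The first condition of the lemma is that the anchor $\Theta$ sends each section of $\mathcal S$ to a vector field tangent to $B'$. Using $\Theta(\tau^\dagger)=(\partial_B\tau)^\uparrow$ for $\tau\in\Gamma(U^\circ)$: the vertical lift $(\partial_B\tau)^\uparrow$ is tangent to $B'$ along $B'$ exactly when $\partial_B\tau\in\Gamma(B')$, giving condition~(1). Using $\Theta(\sigma_Q(u))=\widehat{\nabla_u}$, the linear vector field $\widehat{\nabla_u}$ is tangent to $B'$ along $B'$ precisely when the derivation $\nabla_u$ preserves $\Gamma(B')$, i.e.~$\nabla_ub\in\Gamma(B')$ for all $b\in\Gamma(B')$; this is condition~(2). (Here one uses the standard fact, e.g.~from \S\ref{tangent_double}, that a linear vector field on $B$ is tangent to a subbundle $B'$ iff the corresponding derivation of $\Gamma(B)$ restricts to a derivation of $\Gamma(B')$.) The isotropy conditions of Lemma~\ref{useful_for_dirac_w_support}(2) are automatic since $D$ is maximal isotropic by hypothesis.

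For the third condition of the lemma, the bracket conditions, I would compute $\lb s_1,s_2\rb|_{B'}$ for $s_1,s_2\in\mathcal S$ using the three formulae of Theorem~\ref{main}. Since $\lb\tau_1^\dagger,\tau_2^\dagger\rb=0$, the pair of core sections contributes nothing. For $\lb\sigma_Q(u),\tau^\dagger\rb=(\Delta_u\tau)^\dagger$ with $u\in\Gamma(U)$, $\tau\in\Gamma(U^\circ)$: this is a core section of $D$ along $B'$ iff $\Delta_u\tau\in\Gamma(U^\circ)$ for all such $u,\tau$; I would note that this is in fact \emph{implied} by conditions (1)--(4), by pairing $\Delta_u\tau$ against an arbitrary $u'\in\Gamma(U)$ and using the duality between $\Delta$ and $\lb\cdot\,,\cdot\rb_\Delta$ together with condition~(3) and the fact (from Definition~\ref{the_def} and $\rho_Q=\rho_Q$) that $\rho_Q(u)$ is tangent to... — more precisely, $\langle\Delta_u\tau,u'\rangle=\rho_Q(u)\langle\tau,u'\rangle-\langle\tau,\lb u,u'\rb_\Delta\rangle$, and the right side vanishes along the relevant locus because $\tau\in\Gamma(U^\circ)$, $u'\in\Gamma(U)$ and $\lb u,u'\rb_\Delta\in\Gamma(U)$ by~(3). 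Finally, $\lb\sigma_Q(u_1),\sigma_Q(u_2)\rb=\sigma_Q(\lb u_1,u_2\rb_\Delta)-\widetilde{R(u_1,u_2)}$ is a section of $D$ along $B'$ iff both $\lb u_1,u_2\rb_\Delta\in\Gamma(U)$ — condition~(3) — and the core-linear section $\widetilde{R(u_1,u_2)}$ restricts to a section of $D$, which by the description of core-linear sections happens iff $R(u_1,u_2)$ maps $B'$ into $U^\circ$ — condition~(4).

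The main obstacle, and the only genuinely delicate point, is the bookkeeping in the bracket of two linear sections: one must be careful that $\sigma_Q(\lb u_1,u_2\rb_\Delta)|_{B'}$ lands in $D$ not merely as an abstract linear section of $\mathbb E$ but as a section of the \emph{adapted} splitting, which is exactly why adaptedness of the Lagrangian splitting was arranged first; and one must check that the argument establishing that condition $\Delta_u\tau\in\Gamma(U^\circ)$ is not an extra independent requirement but follows from (1)--(4), so that the four listed conditions are genuinely necessary \emph{and} sufficient. I would also remark that necessity is immediate: each of (1)--(4) is obtained by reading off the components of $\lb s_1,s_2\rb$ and $\Theta(s)$ along $B'$ for $s,s_i\in\mathcal S$, so if $D$ is Dirac all four hold. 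This completes the plan; the remaining verifications are routine applications of the formulae in Theorem~\ref{main} and the linear-algebra facts about splittings collected in \S\ref{subsub:lsl} and \S\ref{Lagr_dec}.
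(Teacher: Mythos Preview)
Your proposal is correct and follows essentially the same approach as the paper: both apply Lemma~\ref{useful_for_dirac_w_support} to the spanning set $\mathcal S=\{\sigma_Q(u),\tau^\dagger\}$, translate the anchor condition into (1) and (2) via $\Theta(\tau^\dagger)=(\partial_B\tau)^\uparrow$ and $\Theta(\sigma_Q(u))=\widehat{\nabla_u}$, and handle the bracket condition by noting that $\Delta_u\tau\in\Gamma(U^\circ)$ is equivalent to (3) by duality of $\Delta$ with $\lb\cdot\,,\cdot\rb_\Delta$, after which the linear--linear bracket yields (3) and (4). The paper's proof is slightly more terse but the logical structure is identical.
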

A Dirac double subbundle $D$ of a VB-Courant algebroid $\mathbb E$ as
in the proposition is called a \textbf{VB-Dirac structure}.

\begin{proof}
  This is easy to prove using Lemma \ref{useful_for_dirac_w_support}
  on sections $\sigma_Q(u)$ and $\tau^\dagger$, for $u\in\Gamma(U)$
  and $\tau\in\Gamma(U^\circ)$.  Their
  anchors and Courant brackets can be described by
\begin{equation}\label{ruth_D_to_B}
\begin{split}
\Theta(\sigma_Q(u))&=\widehat{\nabla_u}\in\mx^l(B),\qquad 
\Theta(\tau^\dagger)=(\partial_B\tau)^\uparrow\in\mx^c(B),\\
\lb \sigma_Q(u_1),\sigma_Q(u_2)\rb&=\sigma_Q(\lb u_1, u_2\rb)-\widetilde{R(u_1,u_2)},\\
\lb \sigma_Q(u),\tau^\dagger\rb&=(\Delta_u\tau)^\dagger,\qquad \lb \tau_1^\dagger, \tau_2^\dagger\rb=0
\end{split}
\end{equation}
for all $u,u_1,u_2\in\Gamma(U)$ and
$\tau,\tau_1,\tau_2\in\Gamma(U^\circ)$.  The vector field
$\widehat{\nabla_u}$ is tangent to $B'$ on $B'$ if and only if for all
$\beta\in\Gamma((B')^\circ)$, $\widehat{\nabla_u}(\ell_\beta)=\ell_{\nabla_u^*\beta}$
vanishes on $B'$. That is, if and only if, for all
$\beta\in\Gamma((B')^\circ)$, $\nabla_u^*\beta$
is again a section of $(B')^\circ$. This yields (2).
The vector field $(\partial_B\tau)^\uparrow$ is tangent to $B'$
if and only if $\partial_B\tau\in\Gamma(B')$. This yields (1).
Next, $\lb \sigma_Q(u),\tau^\dagger\rb=(\Delta_u\tau)^\dagger$
is a section of $D$ over $B'$ if and only if $\Delta_u\tau\in\Gamma(U^\circ)$.
Since $\Delta_u\tau\in\Gamma(U^\circ)$ for all $u\in\Gamma(U)$ and
  $\tau\in\Gamma(U^\circ)$ if and only if $\lb u_1,
  u_2\rb\in\Gamma(U)$ for all $u_1,u_2\in\Gamma(U)$, this is
  (3). Further, $\sigma_Q(\lb u_1, u_2\rb)$ takes then values in $D$
  over $B'$, and so $\lb \sigma_Q(u_1),\sigma_Q(u_2)\rb$ takes values in $D$
  over $B'$ if and only if $R(u_1,u_2)$ restricts to a morphism $B'\to
  U^\circ$. This is (4).
\end{proof}

We get the following result for ordinary VB-Dirac structures (with support $B$) in $\mathbb E$.

\begin{corollary}\label{LA_on_U} Let $(\mathbb E, B; Q, M)$ be a VB-Courant algebroid and
  $(D,B,U,M)\subseteq \mathbb E$ a maximal isotropic double subbundle.
  Choose a Lagrangian splitting of $\mathbb E$ that is adapted to $D$
  and consider the corresponding Dorfman $2$-representation, denoted
  as usual.  If $D$ is a Dirac structure in $\mathbb E\to B$, then $U$
  inherits a Lie algebroid structure with bracket $\lb\cdot\,,\cdot\rb
  \an{\Gamma(U)\times\Gamma(U)}$ and anchor $\rho_Q\an{U}$. This Lie
  algebroid structure does not depend on the choice of Lagrangian
  splitting.
\end{corollary}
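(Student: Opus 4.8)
The plan is to derive the corollary from Proposition \ref{char_VB_dir} together with the axioms (D1)--(D6) of the Dorfman $2$-representation. First I would note that in the special case of a VB-Dirac structure with \emph{full} support $B' = B$, conditions (1)--(4) of Proposition \ref{char_VB_dir} say precisely that $\partial_B(U^\circ)\subseteq B$ (automatic), that $\nabla$ restricts trivially (automatic since $B' = B$), that $\lb U, U\rb \subseteq \Gamma(U)$, and that $R(u_1,u_2)$ restricts to $\operatorname{Hom}(B, U^\circ)$ for all $u_1,u_2\in\Gamma(U)$. So the two substantive facts I get for free are: the dull bracket $\lb\cdot\,,\cdot\rb_\Delta$ closes on $\Gamma(U)$, and $R$ takes values in $\operatorname{Hom}(B,U^\circ)$ when evaluated on sections of $U$.

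Next I would check that $(U, \rho_Q\an U, \lb\cdot\,,\cdot\rb\an{\Gamma(U)\times\Gamma(U)})$ is a Lie algebroid. The anchor compatibility $\rho_Q\lb u_1, u_2\rb = [\rho_Q(u_1), \rho_Q(u_2)]$ and the Leibniz identity are inherited directly from the dull algebroid axioms for $\lb\cdot\,,\cdot\rb_\Delta$ on $\Gamma(Q)$, since $U$ is a subbundle closed under the bracket; skew-symmetry is axiom (D2). The only nontrivial point is the Jacobi identity. By \eqref{curv_dual_Jac}, the Jacobiator of $\lb\cdot\,,\cdot\rb_\Delta$ is controlled by the curvature $R_\Delta$ of the Dorfman connection: $\langle \tau, \Jac_{\lb\cdot\,,\cdot\rb_\Delta}(u_1,u_2,u_3)\rangle = \langle R_\Delta(u_1,u_2)\tau, u_3\rangle$ for $\tau\in\Gamma(Q^*)$. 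By (D4), $R_\Delta(u_1,u_2) = R(u_1,u_2)\circ\partial_B$, so I need $\langle R(u_1,u_2)(\partial_B\tau), u_3\rangle = 0$ for all $u_1,u_2,u_3\in\Gamma(U)$ and $\tau\in\Gamma(Q^*)$. Since $\partial_B\tau$ is an arbitrary element of $\Gamma(B)$ as $\tau$ ranges over $\Gamma(Q^*)$ (or at least lies in $\Gamma(B)$), and $R(u_1,u_2)$ restricts to a section of $\operatorname{Hom}(B, U^\circ)$ by condition (4) of Proposition \ref{char_VB_dir}, we get $R(u_1,u_2)(\partial_B\tau)\in\Gamma(U^\circ)$, whence its pairing with $u_3\in\Gamma(U)$ vanishes. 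Therefore $\Jac_{\lb\cdot\,,\cdot\rb_\Delta}(u_1,u_2,u_3) = 0$ and the Jacobi identity holds on $\Gamma(U)$.

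Finally I would address independence of the Lagrangian splitting. Using Proposition \ref{change_of_lift}, two adapted Lagrangian splittings differ by some $\phi_{12}\in\Gamma(Q^*\otimes Q^*\otimes B^*)$, and the dull brackets are related by $\lb u, u'\rb_2 = \lb u, u'\rb_1 - \partial_B^*\phi_{12}(u)^*(u')$ for $u,u'\in\Gamma(U)$. So I must show that $\partial_B^*\phi_{12}(u)^*(u')$ vanishes as a section of $U$ — equivalently, that $\partial_B^*$ maps into $U^\circ$'s annihilator intersected appropriately, or more precisely that $\langle \partial_B^*\phi_{12}(u)^*(u'), \tau\rangle = 0$ for all $\tau\in\Gamma(U^\circ)$. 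Here I would use that both splittings are adapted to $D$: the difference of the horizontal lifts $\sigma_Q^1(u) - \sigma_Q^2(u)$ must land in the core-linear sections associated to $U^\circ$-valued forms, i.e. $\phi_{12}(u)$ restricted to the relevant arguments takes values in $U^\circ$, forcing the correction term to vanish on $\Gamma(U)$. (Concretely, $\phi_{12}(u,u')\in\Gamma((B')^\circ) = \Gamma(0)$ when $B'=B$, or one tracks that $\partial_B^*\phi_{12}(u)^*(u')\in\Gamma(U^\circ)$ via condition analogous to the isotropy argument in the preceding corollary.) The anchor $\rho_Q\an U$ is manifestly splitting-independent. I expect the main obstacle to be the bookkeeping in this last step: pinning down exactly which component of $\phi_{12}$ is constrained by the two splittings being adapted to $D$, and verifying the correction term to the bracket genuinely vanishes on $\Gamma(U)$ rather than merely lying in $\Gamma(Q)$. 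Once the annihilator condition $K = U^\circ$ from Proposition \ref{prop2} is fed in, this should come out cleanly.
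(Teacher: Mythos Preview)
Your argument is correct and follows the same route as the paper: closure of the bracket and the vanishing of $\omega_R$ on $U$ from Proposition \ref{char_VB_dir}, then the Jacobiator identity $\Jac_{\lb\cdot,\cdot\rb_\Delta}=\partial_B^*\omega_R$ (your version via \eqref{curv_dual_Jac} and (D4) is the same computation in dual form), and finally the change-of-splitting formula from Proposition \ref{change_of_lift}.

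One clarification on the last step, where you flag uncertainty. You do not need to show the correction term lies in $(B')^\circ$ or is annihilated by $U^\circ$; you need it to vanish outright in $\Gamma(Q)$. Your main sentence already contains the correct reason: since both splittings are adapted to $D$, the difference $\sigma_Q^1(u)-\sigma_Q^2(u)=\widetilde{\phi_{12}(u)}$ is a core-linear section of $D\to B$, and the core of $D$ is $U^\circ$, so $\phi_{12}(u)\in\Gamma(\operatorname{Hom}(B,U^\circ))$. Then for $u'\in\Gamma(U)$ one has $\phi_{12}(u,u')=\phi_{12}(u)^*(u')=0$ in $\Gamma(B^*)$ directly, because $\langle\phi_{12}(u)(b),u'\rangle=0$ for every $b$. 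Hence $\partial_B^*\phi_{12}(u,u')=0$ and the two brackets agree. The parenthetical about $(B')^\circ$ is a red herring and should be dropped.
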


\begin{proof}
  By (3) in Proposition \ref{char_VB_dir}, $\lb\cdot\,,\cdot\rb$
  restricts to a bracket on sections of $U$.  For $u_1,u_2,u_3$,
  $\operatorname{Jac}_{\lb\cdot\,,\cdot\rb}(u_1,u_2,u_3)=\partial_B^*\omega_R(u_1,u_2,u_3)=0$
  since $\omega_R(u_1,u_2,u_3)=\langle R(u_1,u_2),
  u_3\rangle=0\in\Gamma(B^*)$ by (4) in Proposition \ref{char_VB_dir}.
  Hence, $U$ with the bracket
  $\lb\cdot\,,\cdot\rb\an{\Gamma(U)\times\Gamma(U)}$ and the anchor
  $\rho_Q\an{U}$ is a Lie algebroid.

If $\phi_{12}\in\Gamma(Q^*\wedge Q^*\otimes B^*)$ is the tensor
defined as in \S\ref{subsub:lsl}
by a change of Lagrangian splitting adapted to $D$, then, by 
Proposition \ref{change_of_lift}, 
\[\lb u, u'\rb_1=\lb u, u'\rb_2+\partial_B^*\phi_{12}(u,u')
\]
for all $u,u'$. But since both splittings $\Sigma^1,\Sigma^2\colon
B\times_M Q\to \mathbb E$ are adapted to $D$, we know that
$\sigma^1_Q(u)$ and $\sigma^2_Q(u)$ have values in $D$, and their
difference $\sigma_Q^1(u)-\sigma_Q^2(u)=\widetilde{\phi_{12}(u)}$ is a
core-linear section of $D\to B$. Hence it must takes values in
$U^\circ$, and $\phi_{12}(u,u')$ must so vanish for all
$u,u'\in\Gamma(U)$. As a consequence, 
$\lb u, u'\rb_1=\lb u, u'\rb_2$.
\end{proof}

The following two corollaries are now easy to prove.
(The first one was already found in \cite{Li-Bland12}.)
\begin{corollary}\label{Lie_1_sub}
Let $(\mathcal M,\mathcal Q)$ be a Lie $2$-algebroid, and 
$(\mathbb E\to B,Q\to M)$ the corresponding VB-Courant algebroid.
Then VB-Dirac structures in $\mathbb E$ are equivalent to 
wide Lie $1$-subalgebroids of $(\mathcal M,\mathcal Q)$.
\end{corollary}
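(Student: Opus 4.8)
The statement to prove is Corollary~\ref{Lie_1_sub}: VB-Dirac structures in the VB-Courant algebroid $\mathbb E$ associated to a Lie $2$-algebroid $(\mathcal M,\mathcal Q)$ correspond to wide Lie $1$-subalgebroids of $(\mathcal M,\mathcal Q)$. The plan is to combine the equivalence of categories between metric double vector bundles and $[2]$-manifolds (Theorem~\ref{main_crucial}), the explicit description of wide $[1]$-submanifolds of $[2]$-manifolds (Definition~\ref{def_1_subman}), and the characterisation of VB-Dirac structures via the Dorfman $2$-representation (Proposition~\ref{char_VB_dir}). The key translation is this: under $\mathcal G$, a maximal isotropic double subbundle $D\subseteq\mathbb E$ with sides $U\subseteq Q$, $B'\subseteq B$ and core $U^\circ\subseteq Q^*$ corresponds, after choosing a Lagrangian splitting adapted to $D$ (which exists by the Corollary following Proposition~\ref{prop2}) and hence a splitting $\mathcal M\simeq Q[-1]\oplus B^*[-2]$, to a wide $[1]$-submanifold $\mathcal N=U[-1]$ precisely when the conditions making $D$ isotropic (Proposition~\ref{prop2}) hold; the remaining data $B'$ is then recovered as the image of $\partial_B$ restricted to $U^\circ$, or rather constrained by it.

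First I would fix a Lagrangian splitting of $\mathbb E$ adapted to $D$, giving a split $[2]$-manifold $Q[-1]\oplus B^*[-2]$ with homological vector field $\mathcal Q$ written in coordinates as in \eqref{Q_in_coordinates}, and the Dorfman $2$-representation $(\partial_B,\Delta,\nabla,R)$. By the discussion after Definition~\ref{def_1_subman}, a wide $[1]$-submanifold $\mathcal N$ of $\mathcal M$ is, in this splitting, exactly the choice of a subbundle $U\subseteq Q$, with $\mu^\star$ killing the degree-$2$ generators $b_r$ and the $\tau_j\in\Gamma(U^\circ)$, and sending $\tau_i\mapsto\bar\tau_i\in\Gamma(U^*)$ for $i$ indexing $U$. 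Then $\mathcal N$ is a Lie $1$-subalgebroid, i.e.\ $\mathcal N\hookrightarrow\mathcal M$ is a morphism of Lie $2$-algebroids onto its image (equivalently $\mu^\star$ intertwines $\mathcal Q$ with a homological vector field $\mathcal Q_{\mathcal N}$ on $\mathcal N$), if and only if $\mathcal Q$ is tangent to the submanifold defined by $\mathcal N$, which unwinds via \eqref{Q_in_coordinates} to exactly: $\rho_Q(U)\subseteq TM$ closes up on $U$-relatedness; $\lb u_1,u_2\rb_\Delta\in\Gamma(U)$ for $u_1,u_2\in\Gamma(U)$; $\partial_B^*|$ maps into ... so that the $b_r\partial_{\tau_k}$-term does not leave $U$, i.e.\ $\langle\partial_B^*\beta_r,\tau_k\rangle=0$ whenever $\tau_k\in\Gamma(U^\circ)$ and we only keep $\beta_r$; and the $\omega_R$- and $\nabla^*$-terms are controlled. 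Reading these off and matching them against (1)--(4) of Proposition~\ref{char_VB_dir} (with $B'$ taken to be $\partial_B(U^\circ)$, or more precisely: the support $B'$ is forced) is the heart of the argument.

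The main obstacle will be the bookkeeping of the support $B'$: a VB-Dirac structure with support is allowed to sit over a proper subbundle $B'\subseteq B$, whereas a wide $[1]$-submanifold as literally defined in Definition~\ref{def_1_subman} is over the \emph{same} base $M$ and carries no ``$B'$'' datum. The resolution I would adopt is that for a wide Lie $1$-\emph{subalgebroid} the relevant notion lives over all of $M$ and the conditions (1) and (2) of Proposition~\ref{char_VB_dir} involving $B'$ become, for $B'=B$, automatically part of the closure conditions; more carefully, the data $B'$ of the VB-Dirac structure is not free but is determined (as the minimal subbundle containing $\partial_B(U^\circ)$ and invariant under $\nabla_u$, $u\in\Gamma(U)$), so that VB-Dirac structures biject with pairs $(U,B')$ subject to (1)--(4), and these pairs biject with wide Lie $1$-subalgebroids $U[-1]$ once one observes that on the $[2]$-manifold side the datum $B'$ is invisible (the $[1]$-submanifold only sees $U$) and is reconstructed on the double vector bundle side. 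I would also need to check splitting-independence: this follows from Corollary~\ref{LA_on_U} and Proposition~\ref{change_of_lift}, exactly as in the proof of that corollary, since a change of adapted Lagrangian splitting changes $\phi_{12}$ by something vanishing on $\Gamma(U)$, hence does not affect the induced structure on $U$, and correspondingly a change of splitting of $\mathcal M$ induces an isomorphism of the resulting wide $[1]$-submanifolds. Assembling these observations and invoking Theorem~\ref{main_crucial} (and its functoriality on morphisms, via \S\ref{mor_lie_2_alg} and \S\ref{mor_VB_Courant}) to upgrade the bijection of objects to an equivalence compatible with morphisms completes the proof.
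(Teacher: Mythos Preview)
Your proposal contains a genuine misreading of the statement that leads you astray. The VB-Dirac structures in this corollary are \emph{wide}, i.e.\ $B'=B$: this is clear from the surrounding context (Corollary~\ref{LA_on_U} and Corollary~\ref{VB_dir_is_VB_alg} both take the side over $B$ to be all of $B$), and the paper's own proof says so explicitly. Once $B'=B$, conditions (1) and (2) of Proposition~\ref{char_VB_dir} are vacuous, and the whole ``main obstacle'' you identify---how to recover or constrain $B'$ from the $[1]$-submanifold data---evaporates. Your proposed resolution, that $B'$ is uniquely determined by $U$ as the minimal $\nabla$-invariant subbundle containing $\partial_B(U^\circ)$, is in any case incorrect: for a given $U$ satisfying (3) and (4) with $B'=B$, there may well be strictly smaller $B'$ also satisfying (1)--(4), and these give genuinely different Dirac structures with support, so no bijection with subbundles $U\subseteq Q$ alone would result.

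With $B'=B$ fixed, the argument is short and direct, and you do gesture at it: choose an adapted Lagrangian splitting, write $\mathcal Q$ via \eqref{Q_in_coordinates}, and check that $\mu^\star\circ\mathcal Q=\mathcal Q_U\circ\mu^\star$ defines a homological vector field on $U[-1]$ if and only if (3) and (4) of Proposition~\ref{char_VB_dir} hold. Concretely, applying $\mu^\star$ to $\mathcal Q(\tau_k)$ forces the dull bracket to restrict to $\Gamma(U)$, giving (3); applying $\mu^\star$ to $\mathcal Q(b_l)$ and demanding it vanish forces $\omega_R(u_1,u_2,u_3)=0$ for $u_i\in\Gamma(U)$, which is exactly (4) with $B'=B$ (since $\omega_R(u_1,u_2,u_3)=R(u_1,u_2)^*u_3\in\Gamma(B^*)$). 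That is the entire proof; no discussion of support, no reconstruction of $B'$, and no appeal to functoriality of the equivalence on morphisms is needed.
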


\begin{proof}
  A wide Lie subalgebroid of $(\mathcal M, \mathcal Q)$ is a wide
  $[1]$-submanifold $U[-1]$ of $\mathcal M$ such that $\mathcal
  Q_U(\mu^\star\xi)=\mu^\star(\mathcal Q(\xi))$, $\xi\in
  C^\infty(\mathcal M)$, defines a Lie algebroid structure $\mathcal
  Q_U$ on $U$. Here, $\mu\colon U[-1]\to \mathcal M$ is the
  submanifold inclusion as in Definition \ref{def_1_subman}.

  In a splitting $Q[-1]\oplus B^*[-2]$ of $\mathcal M$, the
  homological vector field $\mathcal Q$ is given by
  \eqref{Q_in_coordinates}. Assume that the choice of local basis
  vector fields is as in the discussion after Definition
  \ref{def_1_subman}. Then we have $\mathcal
  Q_U(f)=\mu^\star(\rho_Q^*\dr f)=\rho_Q^*\dr f+ U^\circ$ for all
  $f\in C^\infty(M)$. This translates easily to
  $\rho_U=\rho_Q\an{U}$. Then we have $\mathcal
  Q_U(\tau_i+U^\circ)=\mathcal Q_U(\mu^\star\tau_i)=
  \mu^\star(\mathcal Q(\tau_k))=-\sum_{i<j}^r\langle \lb u_i,u_j\rb,
  \tau_k\rangle\bar\tau_i\bar\tau_j$ for $k=1,\ldots,r$.  This shows
  that the bracket on $U$ must be the restriction to $\Gamma(U)$ of
  the dull bracket on $\Gamma(Q)$. Finally $0=\mathcal Q_U(\mu^\star
  b_l)= \mu^\star(\mathcal
  Q(b_l))=-\sum_{i<j<k<r}\omega_R(u_i,u_j,u_k)(b_l)\bar\tau_i\bar\tau_j\bar\tau_k$
  for all $l$ shows that $\omega_R(u_1,u_2,u_3)$ must be zero for all
  $u_1,u_2,u_3\in\Gamma(U)$. This is equivalent to (3) in Proposition
  \ref{char_VB_dir} (with $B'=B$). Note that since $B'=B$, (1) and (2) in Proposition
  \ref{char_VB_dir} are trivially satisfied. Hence we can conclude.
\end{proof}

\begin{corollary}\label{VB_dir_is_VB_alg}
  A VB-Dirac structure $(D,B;U,M)$ in a VB-Courant algebroid inherits
  a linear Lie algebroid structure $(D\to B, U\to M)$.
\end{corollary}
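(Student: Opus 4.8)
\textbf{Proof proposal for Corollary \ref{VB_dir_is_VB_alg}.}

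The plan is to combine Corollary \ref{char_VB_dir}, Corollary \ref{LA_on_U}, and the converse direction of Theorem \ref{main}. Let $(D,B;U,M)\subseteq\mathbb E$ be a VB-Dirac structure in the VB-Courant algebroid $(\mathbb E\to B, Q\to M)$, so $D$ is a maximal isotropic double subbundle over the sides $B$ and $U$, with core $U^\circ$, that is a Dirac structure in $\mathbb E\to B$. First I would choose, using the corollary preceding Proposition \ref{prop1} (existence of a Lagrangian splitting adapted to a maximal isotropic double subbundle), a Lagrangian splitting $\Sigma\colon Q\times_M B\to\mathbb E$ adapted to $D$, i.e.\ with $\Sigma(U\times_M B)\subseteq D$. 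This gives the associated Dorfman $2$-representation $(\partial_B\colon Q^*\to B,\Delta,\nabla,R)$ of $(Q,\rho_Q)$ via Theorem \ref{main}, and $D$ is spanned over $B$ by $\{\sigma_Q(u)\mid u\in\Gamma(U)\}\cup\{\tau^\dagger\mid\tau\in\Gamma(U^\circ)\}$.

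Next I would read off from the four conditions in Proposition \ref{char_VB_dir}, specialised to support $B'=B$, exactly the data needed to apply the converse half of Theorem \ref{rajan}. By Proposition \ref{char_VB_dir}(3), $\lb\cdot\,,\cdot\rb$ restricts to a bracket on $\Gamma(U)$, and by Corollary \ref{LA_on_U} this bracket together with the anchor $\rho_Q|_U$ makes $U\to M$ a Lie algebroid (independently of the adapted Lagrangian splitting). The restriction of $\Sigma$ to $U\times_M B$ is a linear splitting of the double vector bundle $(D;U,B;M)$ with core $U^\circ$; I would check directly that, using the formulae \eqref{ruth_D_to_B}, the triple $(\nabla|_{\Gamma(U)\times\Gamma(B)},\Delta|_{\Gamma(U)\times\Gamma(U^\circ)}, R|_{\Gamma(U)\times\Gamma(U)})$ — where the last is viewed in $\Omega^2(U,\operatorname{Hom}(B,U^\circ))$ by Proposition \ref{char_VB_dir}(4) — descends to a $2$-representation of the Lie algebroid $U$ on the complex $\partial_B|_{U^\circ}\colon U^\circ\to B$ (which lands in $B$ by Proposition \ref{char_VB_dir}(1)). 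The axioms of a $2$-representation for this restricted triple follow from the Dorfman $2$-representation axioms (D1)--(D6) restricted to sections of $U$ and $U^\circ$: (D1) gives $\partial_B\circ\nabla^0=\nabla^1\circ\partial_B$, (D4) gives the two curvature-compatibility identities, and (D6)/(D5) give the Bianchi identity $\dr_{\nabla^{\operatorname{Hom}}}R=0$ on $U$, using that $\lb\cdot\,,\cdot\rb_\Delta|_{\Gamma(U)}$ is the Lie bracket of $U$ by Corollary \ref{LA_on_U} and that $\omega_R$ vanishes on triples of sections of $U$.

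Then by the converse direction of Theorem \ref{rajan}, this $2$-representation together with the core-anchor $\partial_B|_{U^\circ}$ defines a VB-algebroid structure $(D\to B; U\to M)$ on the decomposition $U\times_M B\times_M U^\circ$, hence by the isomorphism $\mathbb I$ on $D$ itself; and by \eqref{ruth_D_to_B} this linear Lie algebroid structure on $D\to B$ is precisely the restriction of the Courant bracket and anchor of $\mathbb E\to B$ to the isotropic subbundle $D$, so it agrees with the subalgebroid structure $D$ inherits as a Dirac structure. The main thing to be careful about — though it is routine given the earlier results — is verifying that the restricted objects genuinely satisfy the $2$-representation axioms and that the construction is independent of the adapted Lagrangian splitting; the latter reduces, via Proposition \ref{change_of_lift}, to the observation (already used in the proof of Corollary \ref{LA_on_U}) that a change of adapted Lagrangian splitting is governed by a tensor $\phi_{12}$ whose relevant components vanish on sections of $U$, so the induced $2$-representation, and hence the Lie algebroid structure on $D\to B$, is unchanged.
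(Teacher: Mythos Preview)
Your proposal is correct and is exactly the argument the paper intends: the paper leaves the proof of this corollary blank (declaring it ``easy to prove'' from the preceding results), and the detailed verification you give --- that $\Delta$ restricts to an ordinary $U$-connection on $U^\circ$ and that (D1), (D4), (D5)/(D6) restrict to the axioms of a $2$-representation of $U$ on $\partial_B|_{U^\circ}\colon U^\circ\to B$ --- is precisely the computation the paper spells out a few lines later in the proof of the theorem that LA-Dirac structures are double Lie algebroids.
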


\begin{proof}

\end{proof}

\subsection{LA-Dirac structures}\label{LA-Dirac}
Assume now that $(\mathbb E\to Q;B\to M)$ is a metric VB-algebroid,
and take a maximal isotropic double subbundle $D$ of $\mathbb E$ over
the sides $U\subseteq Q$ and $B'\subseteq B$. We will study conditions
on the self-dual $2$-representation defined by a Lagrangian splitting
and the linear Lie algebroid structure on $\mathbb E\to Q$, and on $Q$
and on $B'$, for $D$ to be an isotropic subalgebroid of $\mathbb E\to
Q$ over $U$. 

Note the similarity of the following result with Proposition \ref{char_VB_dir}.
\begin{proposition}\label{char_sub_alg}
  Let $(\mathbb E, B; Q, M)$ be a metric VB-algebroid and
  $(D,B';U,M)\subseteq \mathbb E$ a maximal isotropic double
  subbundle.  Choose a Lagrangian splitting of $\mathbb E$ that is
  adapted to $D$ and consider the corresponding self-dual
  $2$-representation, denoted as usual.  Then $D\to U$ is a
  subalgebroid of $\mathbb E\to Q$ if and only if
\begin{enumerate}
\item $\partial_Q(U^\circ)\subseteq U$,
\item $\nabla_bu\in\Gamma(U)$ for all $u\in\Gamma(U)$ and $b\in\Gamma(B')$,
\item $[b_1, b_2]\in\Gamma(B')$ for all $b_1,b_2\in\Gamma(B')$,
\item $R(b_1,b_2)$ restricts to a section of
  $\Gamma(\operatorname{Hom}(U,U^\circ))$ for all
  $b_1,b_2\in\Gamma(B')$.
\end{enumerate}
\end{proposition}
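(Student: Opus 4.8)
The plan is to reduce everything to a computation involving the linear Lie algebroid structure on $\mathbb E\to Q$, expressed through the self-dual $2$-representation attached to the chosen Lagrangian splitting adapted to $D$. First I would recall that, since the splitting $\Sigma\colon B\times_M Q\to\mathbb E$ is adapted to $D$ (such a splitting exists by the corollary preceding Proposition \ref{char_VB_dir}, which applies since $\mathbb E$ is in particular a metric double vector bundle and $D$ is maximal isotropic, with core $U^\circ$ by Proposition \ref{prop2}), the subbundle $D\to U$ is spanned over $U$ by the restrictions $\sigma_Q^\star(b)\an{U}$ for $b\in\Gamma(B')$ together with $\tau^\dagger\an{U}$ for $\tau\in\Gamma(U^\circ)$. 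Here I am using that the VB-algebroid $(\mathbb E\to Q, B\to M)$ has core $Q^*$ and that, by Proposition \ref{metric_VBLA_via_sa_ruth}, the $2$-representation attached to a Lagrangian splitting is self-dual, so the formulas of Theorem \ref{rajan} read $\rho_{\mathbb E}(\tau^\dagger)=(\partial_Q\tau)^\uparrow$, $\rho_{\mathbb E}(\sigma_Q^\star(b))=\widehat{\nabla_b}$, $[\sigma_Q^\star(b),\tau^\dagger]=(\nabla_b\tau)^\dagger$ (where $\nabla$ on $Q^*$ is dual to $\nabla$ on $Q$), and $[\sigma_Q^\star(b_1),\sigma_Q^\star(b_2)]=\sigma_Q^\star([b_1,b_2])-\widetilde{R(b_1,b_2)}$, with $\partial_Q\colon Q^*\to Q$ the core-anchor. (There is a mild notational issue: the statement writes $\nabla_b u$ for a connection on $Q$, while the Dorfman side uses $\Delta$; the relevant connection here is the one on $Q$ dual to the $Q^*$-connection, and $\nabla_b u\in\Gamma(U)$ is equivalent to $\nabla_b\tau\in\Gamma(U^\circ)$ for $\tau\in\Gamma(U^\circ)$ by duality and $U^{\circ\circ}=U$.)

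Next I would apply the obvious subalgebroid criterion: $D\to U$ is a subalgebroid of $\mathbb E\to Q$ if and only if the anchor $\rho_{\mathbb E}$ maps $D$ into $TU\subseteq TQ$ and the bracket of any two spanning sections of $D$, restricted over $U$, is again a section of $D$. I would then go through the four types of brackets and anchors one by one, exactly paralleling the proof of Proposition \ref{char_VB_dir}. The anchor $(\partial_Q\tau)^\uparrow$ for $\tau\in\Gamma(U^\circ)$ is a vertical vector field on $Q$ tangent to $U$ iff $\partial_Q\tau\in\Gamma(U)$, giving (1). The linear vector field $\widehat{\nabla_b}$ is tangent to $U$ along $U$ iff for all $\upsilon\in\Gamma(U^\circ)$ the function $\nabla_b^*\upsilon$ lies in $\Gamma(U^\circ)$, i.e.\ dually $\nabla_b u\in\Gamma(U)$ for $u\in\Gamma(U)$, which is (2). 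The bracket $[\sigma_Q^\star(b),\tau^\dagger]=(\nabla_b\tau)^\dagger$ lies in $\Gamma_U(D)$ iff $\nabla_b\tau\in\Gamma(U^\circ)$, again (2). Finally $[\sigma_Q^\star(b_1),\sigma_Q^\star(b_2)]=\sigma_Q^\star([b_1,b_2])-\widetilde{R(b_1,b_2)}$: the first term lies in $\Gamma_U(D)$ over $U$ iff $[b_1,b_2]\in\Gamma(B')$, which is (3) (using that $\sigma_Q^\star(b)\an{U}\in\Gamma_U(D)$ precisely when $b\in\Gamma(B')$, valid because the splitting is adapted to $D$); and then the core-linear term $\widetilde{R(b_1,b_2)}$ lies in $\Gamma_U(D)$ iff $R(b_1,b_2)$ restricts to a morphism $U\to U^\circ$, which is (4). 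The bracket $[\tau_1^\dagger,\tau_2^\dagger]=0$ imposes nothing.

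The only genuinely delicate point — and the step I expect to be the main obstacle — is making precise that it suffices to check the subalgebroid condition on the restrictions of the chosen spanning sections rather than on arbitrary sections of $D\to U$: one needs that $\Gamma_U(D)$ is generated, as a $C^\infty(U)$-module, by $\{\sigma_Q^\star(b)\an{U}\}\cup\{\tau^\dagger\an{U}\}$ and that the Leibniz rule for the VB-algebroid bracket, together with tensoriality of the anchor, propagates the conditions from generators to all sections. This is exactly the bookkeeping carried out in the proof of Proposition \ref{char_VB_dir}, and I would invoke it verbatim; the argument there uses that the bracket of a core section with a core section vanishes, the bracket of a linear section with a core section is a core section, and linearity of the anchor, all of which hold here by the VB-algebroid axioms and Theorem \ref{rajan}. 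Everything else is a routine translation between $\nabla$ on $Q$ and its dual on $Q^*$, and between conditions on $U$ and conditions on $U^\circ$; I would relegate those verifications to the reader, as the paper does for the parallel Proposition \ref{char_VB_dir}.
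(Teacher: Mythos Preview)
Your approach is correct and is exactly what the paper intends: it explicitly says the proof is ``very similar to the proof of Proposition~\ref{char_VB_dir}, and left to the reader,'' and you have carried out precisely that parallel computation, checking the anchor and bracket on the spanning linear and core sections of $D\to U$. One small notational slip: the horizontal lift of $b\in\Gamma(B)$ to a linear section of $\mathbb E\to Q$ is written $\sigma_B(b)$ in the paper, not $\sigma_Q^\star(b)$, but your usage is clear from context and the argument is unaffected.
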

\begin{proof}
  This proof is very similar to the proof of Proposition
  \ref{char_VB_dir}, and left to the reader.
\end{proof}

Now let  $(\mathbb E; Q, B; M)$ be an LA-Courant algebroid.
A VB-Dirac structure $(D;U,B';M)$ in $\mathbb E$ is an \textbf{LA-Dirac structure} 
if $(D\to U, B'\to M)$ is also a subalgebroid of $(\mathbb E\to Q;B\to M)$.
We deduce from Propositions \ref{char_VB_dir} and \ref{char_sub_alg}
a characterisation of LA-Dirac structures.

\begin{proposition}\label{char_LA_D}
  Let $(\mathbb E, B; Q, M)$ be an LA-Courant algebroid and
  $(D,B';U,M)$ a maximal isotropic double
  subbundle of $\mathbb E$.  Choose a Lagrangian splitting of $\mathbb E$ that is
  adapted to $D$ and consider the corresponding matched self-dual
  $2$-representation and Dorfman $2$-representation.  Then $D\to U$ is
  an LA-Dirac structure in $\mathbb E$ if and only if
\begin{enumerate}
\item $\partial_B(U^\circ)\subseteq B'$ and $\partial_Q(U^\circ)\subseteq U$,
\item $\nabla_ub\in\Gamma(B')$ for all $u\in\Gamma(U)$ and $b\in\Gamma(B')$,
\item $\nabla_bu\in\Gamma(U)$ for all $u\in\Gamma(U)$ and $b\in\Gamma(B')$,
\item $\lb u_1, u_2\rb\in\Gamma(U)$ for all $u_1,u_2\in\Gamma(U)$,
\item $[b_1, b_2]\in\Gamma(B')$ for all $b_1,b_2\in\Gamma(B')$,
\item $R(u_1,u_2)$ restricts to a section of  $\Gamma(\operatorname{Hom}(B',U^\circ))$ for all
  $u_1,u_2\in\Gamma(U)$,
\item $\nabla_bu\in\Gamma(U)$ for all $u\in\Gamma(U)$ and $b\in\Gamma(B')$,
\item $[b_1, b_2]\in\Gamma(B')$ for all $b_1,b_2\in\Gamma(B')$,
\item $R(b_1,b_2)$ restricts to a section of
  $\Gamma(\operatorname{Hom}(U,U^\circ))$ for all
  $b_1,b_2\in\Gamma(B')$.
\end{enumerate}
\end{proposition}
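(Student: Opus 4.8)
The plan is to prove Proposition \ref{char_LA_D} by simply combining Propositions \ref{char_VB_dir} and \ref{char_sub_alg}, using the definition of an LA-Dirac structure as a VB-Dirac structure that is at the same time a subalgebroid of the VB-algebroid side. First I would recall the setup: $(\mathbb E,B;Q,M)$ is an LA-Courant algebroid, so by Theorem \ref{LA-Courant} after any choice of Lagrangian splitting $\Sigma\colon B\times_MQ\to\mathbb E$ adapted to $D$ we obtain a matched pair consisting of a self-dual $2$-representation $(\partial_Q,\nabla^Q,\nabla^{Q^*},R_B)$ and a Dorfman $2$-representation $(\partial_B,\Delta,\nabla,R_Q)$; such an adapted Lagrangian splitting exists by the Corollary following Proposition \ref{prop2}. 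Since $D\subseteq\mathbb E$ is maximal isotropic with sides $U\subseteq Q$, $B'\subseteq B$, its core is $U^\circ$ by Proposition \ref{prop2}.

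Next I would invoke Proposition \ref{char_VB_dir} verbatim: $D$ is a VB-Dirac structure in $\mathbb E\to B$ (with support $B'$) if and only if conditions (1)--(4) of that proposition hold, namely $\partial_B(U^\circ)\subseteq B'$, $\nabla_ub\in\Gamma(B')$ for $u\in\Gamma(U)$, $b\in\Gamma(B')$, $\lb u_1,u_2\rb\in\Gamma(U)$ for $u_1,u_2\in\Gamma(U)$, and $R_Q(u_1,u_2)$ restricts to a section of $\operatorname{Hom}(B',U^\circ)$. Then I would invoke Proposition \ref{char_sub_alg} applied to the metric VB-algebroid $(\mathbb E\to Q;B\to M)$ and the same maximal isotropic double subbundle $D$ (with sides $U$ and $B'$): $D\to U$ is a subalgebroid of $\mathbb E\to Q$ if and only if $\partial_Q(U^\circ)\subseteq U$, $\nabla_bu\in\Gamma(U)$ for $u\in\Gamma(U)$, $b\in\Gamma(B')$, $[b_1,b_2]\in\Gamma(B')$ for $b_1,b_2\in\Gamma(B')$, and $R_B(b_1,b_2)$ restricts to a section of $\operatorname{Hom}(U,U^\circ)$. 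I should note that here $\partial_Q$ plays for the VB-algebroid side the role that $\partial_B$ plays for the Courant side (the core-anchor of $(\mathbb E\to Q;B\to M)$ is precisely $\partial_Q\colon Q^*\to Q$, under the identification of the core of $\mathbb E$ with $Q^*$), and the connections $\nabla,\nabla^Q$ and curvature $R_B$ in Proposition \ref{char_sub_alg} are exactly the components of the self-dual $2$-representation appearing in the matched pair of Theorem \ref{LA-Courant}. Taking the conjunction of the two lists of conditions gives the nine conditions listed in the statement (the list as written has some redundancy: conditions (3) and (7) coincide, and (5) and (8) coincide, reflecting that $\nabla_bu\in\Gamma(U)$ and $[b_1,b_2]\in\Gamma(B')$ each appear in Proposition \ref{char_sub_alg}).

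Finally I would observe that the characterisation is splitting-independent: although each of Propositions \ref{char_VB_dir} and \ref{char_sub_alg} is stated relative to a fixed adapted Lagrangian splitting, the property of being a VB-Dirac structure and the property of being a subalgebroid of the VB-algebroid side are intrinsic (defined without reference to a splitting), so the conjunction is well-defined; and by the Corollary preceding Proposition \ref{char_VB_dir} any two adapted Lagrangian splittings differ by $\phi_{12}\in\Gamma(Q^*\wedge Q^*\otimes B^*)$ which vanishes on $U\otimes U$ (and whose relevant contractions therefore preserve the conditions), which is exactly why the resulting structure on the sides and core does not depend on the chosen adapted splitting. The main obstacle here is essentially bookkeeping rather than mathematics: one must carefully check that the objects named ``$\nabla$'', ``$R$'', ``$\partial$'' in Proposition \ref{char_sub_alg} are identified with the correct components of the matched pair produced by Theorem \ref{LA-Courant}, since the notation $\nabla$ is reused for several connections and the two propositions were stated in slightly different conventions; once the dictionary between the two sets of structure data is fixed, the proof is an immediate concatenation of the two earlier results. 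Hence the proof reduces to: ``Combine Proposition \ref{char_VB_dir} and Proposition \ref{char_sub_alg}, noting that by definition an LA-Dirac structure is a double subbundle that is simultaneously a VB-Dirac structure and a subalgebroid of the VB-algebroid side, and that an adapted Lagrangian splitting exists and the conditions are splitting-independent.''
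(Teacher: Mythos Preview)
Your proposal is correct and matches the paper's approach exactly: the paper states Proposition \ref{char_LA_D} immediately after Proposition \ref{char_sub_alg} with the sentence ``We deduce from Propositions \ref{char_VB_dir} and \ref{char_sub_alg} a characterisation of LA-Dirac structures,'' and gives no further proof. Your observation that conditions (3),(7) and (5),(8) are redundant is also accurate.
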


Hence, we also have the following result.
\begin{corollary}
  VB-subalgebroids $(D\to U, B\to M)$ of a metric VB-algebroid
  $(\mathbb E\to Q,B\to M)$ are equivalent to wide coisotropic
  $[1]$-submanifolds of the corresponding Poisson $[2]$-manifold.

  LA-Dirac structures $(D\to U, B\to M)$ in an LA-Courant algebroid
  $(\mathbb E\to Q,B\to M)$ are equivalent to wide coisotropic Lie
  subalgebroids of the corresponding Poisson Lie 2-algebroid.
\end{corollary}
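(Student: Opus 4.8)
The plan is to reduce both statements to dual characterisations that have already been established in the paper, using the correspondences between classical and $N$-geometric objects. For the first claim, recall that by Theorem \ref{rajan} and Proposition \ref{metric_VBLA_via_sa_ruth}, a metric VB-algebroid $(\mathbb E\to Q, B\to M)$ is equivalent, after a choice of Lagrangian splitting, to a self-dual $2$-representation of the Lie algebroid $B$ on $\partial_Q\colon Q^*\to Q$, which in turn is equivalent by Theorem \ref{poisson_is_saruth} to a split Poisson $[2]$-manifold $Q[-1]\oplus B^*[-2]$. A wide $[1]$-submanifold $\mathcal N$ of the underlying $[2]$-manifold is, by the discussion after Definition \ref{def_1_subman}, described by a subbundle $U\subseteq Q$, and corresponds on the metric double vector bundle side precisely to a maximal isotropic double subbundle $D$ with sides $U$ and $B$ and core $U^\circ$ (by Proposition \ref{prop2}). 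First I would spell out what it means for such a $[1]$-submanifold to be \emph{coisotropic}: the Poisson bracket of the vanishing ideal $I_{\mathcal N}$ with itself is contained in $I_{\mathcal N}$. Writing this condition in a splitting using the formulas for $\{\cdot\,,\cdot\}$ from the proof of Theorem \ref{poisson_is_saruth} and the local generators adapted to $U$, one obtains exactly Conditions (1)--(4) of Proposition \ref{char_sub_alg}: $\partial_Q(U^\circ)\subseteq U$, $\nabla_bu\in\Gamma(U)$, $[b_1,b_2]\in\Gamma(B')$ (here $B'=B$, so this is automatic), and $R(b_1,b_2)$ restricting to $\operatorname{Hom}(U,U^\circ)$.

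For the second claim I would combine the first with the VB-Courant side. Recall that an LA-Courant algebroid, by Theorem \ref{LA-Courant}, is a metric double vector bundle carrying both a metric VB-algebroid structure and a VB-Courant algebroid structure whose Lagrangian-splitting data form a matched pair (Definition \ref{matched_pairs}); and by Theorem \ref{Poisson_Lie_2_char} together with the discussion at the end of Section \ref{sect:LA-Cour} this is equivalent to a Poisson Lie $2$-algebroid. An LA-Dirac structure $(D;U,B';M)$ is, by definition, a VB-Dirac structure that is simultaneously a subalgebroid of the VB-algebroid side, so it is characterised by the conditions of Proposition \ref{char_LA_D}. On the $N$-geometric side, a wide coisotropic Lie subalgebroid of the Poisson Lie $2$-algebroid is a wide $[1]$-submanifold $\mathcal N = U[-1]$ that is simultaneously a Lie subalgebroid of the Lie $2$-algebroid (equivalently, by Corollary \ref{Lie_1_sub}, $D$ is a VB-Dirac structure) and coisotropic for the Poisson structure (equivalently, by the first part, $D$ is a subalgebroid of the VB-algebroid side). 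Thus I would argue that the intersection of the two characterisations — ``$D$ is a VB-Dirac structure'' from Corollary \ref{Lie_1_sub} and ``$D$ is a maximal isotropic VB-subalgebroid'' from the first part — is exactly the list in Proposition \ref{char_LA_D}, completing the equivalence. The independence of the Lagrangian splitting, needed for well-definedness, follows from Proposition \ref{change_of_lift} and the fact already used in Corollary \ref{LA_on_U} that the change-of-splitting tensor $\phi_{12}$ vanishes on $\Gamma(U)$ when both splittings are adapted to $D$.

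The main obstacle I anticipate is the careful translation between the \emph{support} $B'$ on the classical side and the fact that a wide $[1]$-submanifold has $B'=B$ by construction: the ``with support'' versions of VB-Dirac and LA-Dirac structures do not correspond to wide submanifolds but to $[1]$-submanifolds with support, and one must be scrupulous about which statement is being proved. For the corollary as stated — with $(D\to U, B\to M)$, i.e.\ $B'=B$ — Conditions (1), (2), (5), (7), (8) of Proposition \ref{char_LA_D} that involve $B'$ simplify (the ones asserting that brackets or connections preserve $B'$ become automatic, and $\partial_B(U^\circ)\subseteq B$ is trivial), so the effective content is $\partial_Q(U^\circ)\subseteq U$, $\nabla_b u\in\Gamma(U)$, $\lb u_1,u_2\rb\in\Gamma(U)$, and the two curvature conditions, matching precisely the coisotropy-plus-subalgebroid conditions coming from the proofs of Theorem \ref{poisson_is_saruth}, Corollary \ref{Lie_1_sub} and the first part. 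The remaining work is the explicit but routine verification, in a splitting, that coisotropy of $I_{\mathcal N}$ under $\{\cdot\,,\cdot\}$ unravels into these algebraic conditions; I would present this compactly rather than grinding through every bracket, pointing to the coordinate formulas already recorded in the proof of Theorem \ref{Poisson_Lie_2_char}.
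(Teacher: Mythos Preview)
Your proposal is correct and follows essentially the same approach as the paper: unwind coisotropy of the vanishing ideal in a Lagrangian splitting to recover exactly the conditions of Proposition \ref{char_sub_alg} (with $B'=B$ so that the $B$-side conditions become automatic), and then combine with Corollary \ref{Lie_1_sub} for the second claim. The paper's proof is more terse and does not spell out the splitting-independence or the matched-pair machinery, but the logical skeleton is identical.
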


\begin{proof}
  Let $U[-1]$ be a $[1]$-submanifold of a Poisson $[2]$-manifold
  $(\mathcal M, \{\cdot\,,\cdot\})$.  Then $U[-1]$ is coisotropic if
  and only if $\mu^\star(\xi)=\mu^\star(\eta)=0$ imply
  $\mu^\star(\{\xi,\eta\})=0$ for all $\xi,\eta\in C^\infty(\mathcal
  M)$, where $\mu\colon Q[-1]\to\mathcal M$ is the inclusion as in
  Definition \ref{def_1_subman}.  In a local splitting, we find easily
  that this implies $\partial_Q(U^\circ)\subseteq U$,
  $\nabla_b^*\tau\in\Gamma(U^\circ)$ for all $b\in\Gamma(B)$ and
  $\tau\in\Gamma(U^\circ)$, and the restriction to $U$ of $R(b_1,b_2)$
  has image in $U^\circ$. By Proposition \ref{char_sub_alg}, we can
  conclude.

  The second claim follows with Corollary \ref{Lie_1_sub}.
\end{proof}

As a corollary of Theorem \ref{LA-Courant}, Proposition \ref{char_VB_dir} and
Proposition \ref{char_sub_alg}, we get the following theorem.  
\begin{theorem}
  Let $(\mathbb E, B; Q, M)$ be an LA-Courant algebroid and
  $(D,U;B,M)\subseteq \mathbb E$ a (wide) LA-Dirac structure in $\mathbb E$.

  Then $D$ is a double Lie algebroid with the VB-algebroid structure
  in Corollary \ref{VB_dir_is_VB_alg} and the VB-algebroid structure
  $(D\to U, B\to M)$.
\end{theorem}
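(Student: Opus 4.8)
The plan is to show that $D$, equipped with the two VB-algebroid structures $(D\to B', U\to M)$ (from Corollary \ref{VB_dir_is_VB_alg}) and $(D\to U, B'\to M)$ (from Proposition \ref{char_sub_alg}), satisfies the definition of a double Lie algebroid. By the result of \cite{JoMa14} recalled in \S\ref{matched_pair_2_rep_sec}, it suffices to choose a linear splitting of $D$ and verify that the two induced $2$-representations form a matched pair in the sense of Definition \ref{matched_pair_2_rep}. First I would fix a Lagrangian splitting $\Sigma$ of the ambient $\mathbb E$ that is adapted to $D$ — such a splitting exists by the Corollary following Proposition \ref{prop2} — and restrict it to obtain a linear splitting $\Sigma_D\colon B'\times_M U\to D$. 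The key point is that, since $D$ is adapted, all the structure data of the ambient LA-Courant algebroid restrict cleanly: by Proposition \ref{char_LA_D} the core of $D$ is $U^\circ\subseteq Q^*$, the Dorfman connection $\Delta$ and the connections $\nabla$ restrict to $U$, $U^\circ$ and $B'$, the dull bracket $\lb\cdot\,,\cdot\rb$ restricts to $\Gamma(U)$, the Lie bracket on $\Gamma(B)$ restricts to $\Gamma(B')$, and the curvature terms $R(u_1,u_2)$ and $R(b_1,b_2)$ restrict to maps with image in $U^\circ$.

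Next I would identify, using Theorem \ref{rajan} and Corollary \ref{LA_on_U}, the two $2$-representations attached to $\Sigma_D$. For the side $(D\to B', U\to M)$: the Lie algebroid $U$ acts on the complex $\partial_B|_{U^\circ}\colon U^\circ\to B'$ via the restrictions of $(\nabla^{UB'},\Delta|_{U^\circ},R|_{U\times U})$ — this is exactly the Dorfman $2$-representation of Theorem \ref{main} cut down to $U$, and the flatness/skew-symmetry conditions that make it an honest $2$-representation of the \emph{Lie} algebroid $U$ come from the vanishing of $\omega_R$ on $\Gamma(U)$ (condition (6) of Proposition \ref{char_LA_D}), as in the proof of Corollary \ref{LA_on_U}. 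For the side $(D\to U, B'\to M)$: the Lie algebroid $B'$ acts on $\partial_Q|_{U^\circ}\colon U^\circ\to U$ via the restrictions of $(\nabla^{B'U},\nabla^{B'U^\circ},R|_{B'\times B'})$, which is the self-dual $2$-representation of Proposition \ref{metric_VBLA_via_sa_ruth} cut down to $B'$; again condition (9) of Proposition \ref{char_LA_D} guarantees the required curvature identities. Thus $U$ acts on $B'\oplus U^\circ$ and $B'$ acts on $U\oplus U^\circ$, with common core bundle $U^\circ$ and the two "partial" bundle maps $\partial_B|_{U^\circ}\colon U^\circ\to B'$ and $\partial_Q|_{U^\circ}\colon U^\circ\to U$ playing the roles of $\partial_A$ and $\partial_B$ in Definition \ref{matched_pair_2_rep}.

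The heart of the argument is then that the seven matched-pair equations (M1)--(M7) of Definition \ref{matched_pair_2_rep} for these restricted data are precisely the restrictions of the matched-pair equations (M1)--(M5) of Definition \ref{matched_pairs} for the ambient LA-Courant algebroid, which hold by Theorem \ref{LA-Courant}. Concretely, I would match them up as follows: (M1) of Definition \ref{matched_pair_2_rep} (the symmetry of the core bracket) is the restriction of \eqref{almost_C}, equivalently (M1) of Definition \ref{matched_pairs}; equations (M2) and (M3) of Definition \ref{matched_pair_2_rep} become the restrictions to $U^\circ$ of (M1) and (M2) of Definition \ref{matched_pairs} (using $\partial_Q=\partial_Q^*$ and $\partial_B=\partial_B^*$ on the self-dual side, together with the relation $\Jac_{\lb\cdot\,,\cdot\rb}=\partial_B^*\omega_R$); the curvature-level equations (M4)--(M6) of Definition \ref{matched_pair_2_rep} come from restricting (M3), (M4) (in the form \eqref{(LC10)}) and (D4)/(D6); and the cohomological equation (M7) is the restriction of (M5). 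I expect the main obstacle to be purely bookkeeping: carefully checking that each of the ambient identities, when all arguments are taken in the subbundles $U$, $B'$, $U^\circ$, lands back in those subbundles (so that the equations even make sense as identities of sections of $U$, $B'$ or $U^\circ$) — this uses exactly items (1)--(9) of Proposition \ref{char_LA_D} and the self-duality built into Proposition \ref{char_sub_alg}, and must be done term by term. Once the matched-pair conditions are verified, \cite{JoMa14} gives that $D$ is a double Lie algebroid, and independence of the splitting follows because the double Lie algebroid structure on $D$ is determined by the restricted Lie algebroid structures on its two sides, which by Corollaries \ref{LA_on_U} and the analogue for the VB-algebroid side do not depend on the choice of adapted Lagrangian splitting.
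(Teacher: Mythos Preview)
Your proposal is correct and follows essentially the same route as the paper: choose a Lagrangian splitting of $\mathbb E$ adapted to $D$, observe that the Dorfman $2$-representation restricts to an honest $2$-representation of the Lie algebroid $U$ on $\partial_B\colon U^\circ\to B$ (the key point being that $\Delta$ becomes an ordinary connection on $U^\circ$ since $\langle u,\tau\rangle=0$), that the self-dual $2$-representation restricts to a $2$-representation of $B$ on $\partial_Q\colon U^\circ\to U$, and then verify that the matched-pair equations of Definition~\ref{matched_pairs} restrict term by term to those of Definition~\ref{matched_pair_2_rep}, invoking \cite{GrJoMaMe14} to conclude. Your attribution of which ambient equation yields which restricted one is slightly scrambled in places --- for instance, (D4)/(D6) are used to check that the restricted data form $2$-representations rather than to derive the matched-pair conditions (4)--(6), and the paper's precise correspondence is (M1)$\to$(2), (M2)$\to$(3), (M3)$\to$(6), (M4)$\to$(5), (M5)$\to$(7), \eqref{almost_C}$\to$(1), \eqref{(LC10)}$\to$(4) --- but this is bookkeeping and does not affect the validity of the argument.
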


\begin{proof}
  Let us study the two linear Lie algebroid structures on $D$. Choose
  as before a linear splitting $\Sigma\colon B\times_MQ\to \mathbb E$
  that restricts to a linear splitting $\Sigma_D\colon U\times_MB\to D$
  of $D$. The LA-Courant algebroid structure of $\mathbb E$ is then
  encoded as in Theorems \ref{main} and \ref{rajan}, respectively, by
  a Dorfman $2$-representation $(\Delta,\nabla,R)$ of $(Q,\rho_Q)$ on
  $\partial_B\colon Q^*\to B$ and by a self-dual $2$-representation
  $(\nabla,\nabla^*,R)$ of the Lie algebroid $B$ on
  $\partial_Q=\partial_Q^*\colon Q^*\to Q$. By Theorem
  \ref{LA-Courant}, the Dorfman $2$-representation and the
  $2$-representation form a matched pair as in Definition
  \ref{matched_pairs}.

  By Proposition \ref{char_VB_dir} and Corollary \ref{LA_on_U}, the
  restriction to $\Gamma(U)$ of the dull bracket on $\Gamma(Q)$ that
  is dual to $\Delta$ defines a Lie algebroid structure on $U$,
  $R\an{U\otimes U}$ can be seen as an element of
  $\Omega^2(U,\operatorname{Hom}(B,U^\circ))$ and since
  $\Delta_u\tau\in\Gamma(U^\circ)$ for all $u\in\Gamma(U)$ and
  $\tau\in\Gamma(U^\circ)$, the Dorfman connection $\Delta$ restricts
  to a map
  $\Delta^D\colon\Gamma(U)\times\Gamma(U^\circ)\to\Gamma(U^\circ)$.
  Since $\Delta^D_u(f\tau)=f\Delta^D_u\tau+\rho_Q(u)(f)\tau$ and
  $\Delta^D_{fu}\tau=f\Delta^D_u\tau+\langle u,\tau\rangle\rho_Q^*\dr
  f=f\Delta_u\tau$ for $f\in C^\infty(M)$, we find that this
  restriction is in fact an ordinary connection.  Since
  $R(u_1,u_2)^*u_3$ then vanishes for all $u_1,u_2,u_3\in\Gamma(U)$,
  it is then easy to see that the restrictions of (D1), (D4) and (D6)
  to sections of $U$ and $U^\circ$ define a ordinary
  $2$-representation.  By \eqref{ruth_D_to_B}, this $2$-representation
  $(\partial_B\colon U^\circ \to B,
  \nabla,\Delta^D,R\an{\Gamma(U)\times\Gamma(U)})$ of the Lie
  algebroid $U$ on $\partial_B\colon U^\circ \to B$ encodes the
  VB-algebroid structure that $D\to B$ inherits from the Courant
  algebroid $\mathbb E\to B$.

  In a similar manner, we find using Proposition \ref{char_sub_alg}
  that the self-dual $2$-representation $(\partial_Q\colon Q^*\to Q,
  \nabla, \nabla^*, R\in\Omega^2(B,Q^*\wedge Q^*))$ restricts to a
  $2$-representation $(\partial_B\colon U^\circ \to U, \nabla^U\colon
  \Gamma(B)\times\Gamma(U)\to\Gamma(U), \nabla^{U^\circ}\colon
  \Gamma(B)\times\Gamma(U^\circ)\to\Gamma(U^\circ),
  R\in\Omega^2(B,\operatorname{Hom}(U,U^\circ)))$ of $B$.

  A study of the restrictions to sections of $U$ and $U^\circ$ of the
  equations in Definition \ref{matched_pairs} shows then that (M1)
  restricts to (2) in Definition \ref{matched_pair_2_rep} since
  $\partial_B^*\langle \tau, \nabla^U\cdot u\rangle=0$ for all
  $u\in\Gamma(U)$ and $\tau\in\Gamma(U^\circ)$.  The equations (M2),
  and (M3) and immediately (3) and (6), respectively.  (M4) restricts
  to (5) since $\langle R(\cdot, b)u_1, u_2\rangle=0$ for all
  $u_1,u_2\in\Gamma(U)$ and $b\in\Gamma(B)$. (M5) restricts to (7)
  since the right-hand side of (M5) in (1) of Remark
  \ref{remark_simplifications} vanishes.  Finally, \eqref{almost_C}
  restricts to (1) and \eqref{(LC10)} restricts to (4) since $\langle
  \nabla_{\nabla_\cdot b}u, \tau\rangle =0$ for all $b\in \Gamma(B)$,
  $u\in\Gamma(U)$ and $\tau\in\Gamma(U^\circ)$.  Thus, the two
  $2$-representations describing the sides of $D$ given the splitting
  $\Sigma_D$ form a matched pair, which implies that $D$ is a double
  Lie algebroid (see \cite{GrJoMaMe14} or
  \S\ref{matched_pair_2_rep_sec} for a quick summary of this paper).
\end{proof}

Note finally that with a different approach as the one adopted in this
paper, we could deduce the main result in \cite{GrJoMaMe14} from our
Theorem \ref{LA-Courant}. If we had shown without the use of these
results that for each double Lie algebroid $(D, A, B,M)$ with core
$C$, the direct sum over $B$ of $D$ and $D\duer B$ defines an
LA-Courant algebroid $(D\oplus_B(D\duer B), A\oplus C^*, B,M)$ as in
\S\ref{Poi_lie_2_def_by_matched_ruth}, then we could use the last
Theorem to deduce the equations in Definition \ref{matched_pair_2_rep}
from the ones in Definition \ref{matched_pairs} and in Remark
\ref{remark_simplifications}: by construction, the double vector
subbundle $D$ of $D\oplus_B(D\duer B)$ is a VB-Dirac structure in
$D\oplus_B(D\duer B)\to B$ and a linear Lie subalgebroid in
$D\oplus_B(D\duer B)\to A\oplus C^*$.  We have chosen to use the main
theorem in \cite{GrJoMaMe14} to prove that $(D\oplus_B(D\duer B),
A\oplus C^*, B,M)$ is an LA-Courant algebroid, see
\S\ref{Poi_lie_2_def_by_matched_ruth}.

\subsection{Pseudo-Dirac structures}
We explain here the notion of pseudo-Dirac structures that was
introduced in \cite{Li-Bland12,Li-Bland14} and we compare it with our approach to
VB- and LA-Dirac structures in the tangent of a Courant algebroid. Consider a
VB-Courant algebroid
\begin{equation*}
\begin{xy}
\xymatrix{\mathbb E\ar[r]\ar[d]& Q \ar[d]\\
B\ar[r]&M
}
\end{xy}
\end{equation*}
with core $Q^*$, and a double vector subbundle
\begin{equation*}
\begin{xy}
\xymatrix{D\ar[r]\ar[d]& U\ar[d]\\
B\ar[r]&M
}
\end{xy}
\end{equation*}
in $\mathbb E$ with core $K$. 
Consider the restriction $\mathbb E\an{U}$ of $\mathbb E$ to $U$; 
i.e. $\mathbb E\an{U}=\pi_Q\inv(U)$. 
This is a double vector bundle 
\begin{equation*}
\begin{xy}
\xymatrix{\mathbb E\an{U}\ar[r]\ar[d]& U \ar[d]\\
B\ar[r]&M
}
\end{xy}
\end{equation*}
with core $Q^*$. The \textbf{total quotient} of $\mathbb E\an{U}$ by $D$ 
is the map $\mathsf q$ from 
\begin{equation*}
\begin{xy}
\xymatrix{\mathbb E\an{U}\ar[r]\ar[d]& Q \ar[d]\\
B\ar[r]&M
}
\end{xy}
\quad \text{
to
}
\quad
\begin{xy}
\xymatrix{Q^*/K\ar[r]\ar[d]& 0^M \ar[d]\\
0^M\ar[r]&M,
}
\end{xy}
\end{equation*}
defined by
\[\mathsf q(e)=\bar\tau \Leftrightarrow e-\tau^\dagger\in D.
\]
After the choice of a linear splitting of $\mathbb E$ that is adapted
to $D$, we know that each element of $\mathbb E\an{U}$ can be written
$\sigma_Q(u)(b_m)+\tau^\dagger(b_m)$ for some $u\in\Gamma(U)$,
$\tau\in\Gamma(Q^*)$ and $b_m\in B$. The image of $\sigma_Q
(u)(b_m)+\tau^\dagger(b_m)$ under $\mathsf q$ is then simply
$\bar\tau(m)$. It is easy to see that $D$ can be recovered from
$\mathsf q$.  Recall that if
$e_1=\sigma_Q(u_1)(b_m)+\tau_1^\dagger(b_m)$ and
$e_2=\sigma_Q(u_2)(b_m)+\tau_2^\dagger(b_m)\in \mathbb E$, then
\begin{equation*}
\begin{split}
  \langle e_1, e_2\rangle&=\langle \sigma_Q(u_1)(b_m)+\tau_1^\dagger(b_m), \sigma_Q(u_2)(b_m)+\tau_2^\dagger(b_m)\rangle\\
  &=\ell_{\Lambda(u_1,u_2)}(b_m)+\langle u_1(m),
  \tau_2(m)\rangle+\langle u_2(m), \tau_1(m)\rangle.
\end{split}
\end{equation*}
 In
particular,
\begin{equation*}
\begin{split}
  \langle e_1, e_2\rangle=\langle \pi_Q(e_1), \mathsf q(e_2)\rangle+\langle
  \pi_Q(e_2), \mathsf q(e_1)\rangle
\end{split}
\end{equation*}
for all $e_1, e_2\in \mathbb E\an{U}$ if and only if $\Lambda\an{U\otimes U}$ vanishes
and $K=U^\circ$, i.e. if and only if $D$ is maximal isotropic
(Proposition \ref{prop2}). 

\medskip
Now we recall Li-Bland's definition of a pseudo-Dirac structure \cite{Li-Bland14}.
\begin{definition}\label{davids_pseudo}
  Let $\mathsf E\to M$ be a Courant algebroid. A pseudo-Dirac
  structure is a pair $(U,\nabla^p)$ consisting of a subbundle
  $U\subseteq \mathsf E$ together with a map $\nabla^p\colon
  \Gamma(U)\to\Omega^1(M,U^*)$ satisfying
\begin{enumerate}
\item $\nabla^p(fu)=f\nabla^p u+\dr f\otimes\langle u,\cdot\rangle$,
\item $\dr\langle u_1,u_2\rangle=\langle\nabla^p u_1, u_2\rangle+\langle u_1, \nabla^p u_2\rangle$,
\item $\lb u_1,u_2\rb_p:=\lb u_1, u_2\rb_{\mathsf E}-\rho^*\langle
  \nabla^p u_1, u_2\rangle$ defines a bracket
  $\Gamma(U)\times\Gamma(U)\to\Gamma(U)$,
\item and 
\begin{multline}\label{alternative_curvature}
(\langle \lb u_1, u_2\rb_p,\nabla^p u_3\rangle+\ip{\rho(u_1)}\dr\langle\nabla^p u_2, u_3\rangle)+{\rm c.p.}\\
+\dr\left(\left\langle \nabla^p_{\rho(u_1)}u_2-\nabla^p_{\rho(u_2)}u_1, u_3
\right\rangle-\left\langle\lb u_1,u_2\rb_p, u_3
\right\rangle\right)=0
\end{multline}
\end{enumerate}
for all $u_1,u_2,u_3\in\Gamma(U)$ and $f\in C^\infty(M)$.
\end{definition}
Consider the tangent double $(T\mathsf E, TM, \mathsf E, M)$ where
$\mathsf E$ is a Courant algebroid over $M$. Choose a linear (wide)
Dirac structure $D$ in $T\mathsf E$, over the side $U\subseteq \mathsf
E$ and a metric connection $\nabla\colon\mx(M)\times\Gamma(\mathsf
E)\to \Gamma(\mathsf E)$ that is adapted to $D$. Li-Bland define the
pseudo-Dirac structure associated to $D$ \cite{Li-Bland14} as the map
$\nabla^p\colon\Gamma(U)\to\Gamma(\operatorname{Hom}(TM, U^*))$ that
is defined by $\nabla^pu=\mathsf q\circ Tu$ for all
$u\in\Gamma(U)$. By definition of $\sigma^\nabla_{\mathsf E}$, we have
$Tu=\sigma^\nabla_{\mathsf E}(u)+\widetilde{\nabla_\cdot u} $ and we
find that
$\nabla^pu(v_m)=\overline{\nabla_{v_m}u}=[\nabla]_{v_m}u$. The
pseudo-Dirac structure is nothing else than the invariant part of the
metric connection that is adapted to $D$ (Remark
\ref{invariant_part}).  Condition (2) in Definition
\ref{davids_pseudo} is then
\begin{equation}\label{pseudo}
\dr\langle u_1,u_2\rangle
=\langle Tu_1, Tu_2\rangle_{T\mathsf E}
=\langle u_1, \nabla^pu_2\rangle+\langle u_2 , \nabla^pu_1\rangle
\end{equation}
for all $u_1,u_2\in\Gamma(U)$ and Condition (1) is
\begin{equation}\label{pseudo2}
  \nabla^p(\varphi\cdot u)=\overline{\nabla_\cdot (\varphi\cdot u)}
  =\varphi\cdot \overline{\nabla_\cdot u}+\dr\varphi\otimes \overline{u}
=\varphi\cdot\nabla^pu+\dr\varphi\otimes \overline{u}.
\end{equation}
The bracket $\lb\cdot\,,\cdot\rb_p$ is then 
\[\lb u_1, u_2\rb_p=\lb u_1, u_2\rb_{\mathsf
  E}-\rho^*\langle\nabla^pu_1,u_2\rangle=\lb u_1, u_2\rb_{\mathsf
  E}-\rho^*\langle\nabla_\cdot u_1,u_2\rangle=\lb u_1, u_2\rb_\nabla,
\]
the bracket defined in \S\ref{adjoint}.  Finally, a straightforward
computation shows that the left-hand side of
\eqref{alternative_curvature} equals $R_\Delta^{\rm
  bas}(u_1,u_2)^*u_3\in\Gamma(B^*)$, which is zero by Proposition
\ref{char_VB_dir}.  Li-Bland proves that the bracket
$\lb\cdot\,,\cdot\rb_p$ defines a Lie algebroid structure on $U$. More
explicitly, he finds that the left-hand side $\Psi(u_1,u_2,u_3 )$ of
\eqref{alternative_curvature} defines a tensor
$\Psi\in\Omega^3(U,T^*M)$ that is related as follows to the Jacobiator of
$\lb\cdot\,,\cdot\rb_p$:
$\operatorname{Jac}_{\lb\cdot\,,\cdot\rb_p}=(\Beta\inv\circ\rho_{\mathsf
  E}^*)\Psi$. He proves so that (wide) linear Dirac structures in
$T\mathsf E$ are in bijection with pseudo-Dirac structures on $\mathsf
E$. Hence, our result in Proposition \ref{char_VB_dir} is a
generalisation of Li-Bland's result to linear Dirac structures in
general VB-Courant algebroids.

Further, our Theorem \ref{char_sub_alg} can be formulated as follows
in Li-Bland's setting. 
\begin{theorem}
  In the correspondence of linear Dirac structures with pseudo-Dirac
  connections in \cite{Li-Bland14}, LA-Dirac structures correspond
  to pseudo-Dirac connections $(U,\nabla^p)$ such that
\begin{enumerate}
\item $U\subseteq \mathsf E$ is an isotropic (or 'quadratic') subbundle,
  i.e.~$U^\perp\subseteq U$,
\item $\nabla^p$ sends $U^\perp$ to zero and so, by Condition (2) in
  Definition \ref{davids_pseudo}, has image in $U/U^\perp\subseteq
  \mathsf E/U^\perp\simeq U^*$,
\item the induced ordinary connection $\overline{\nabla^p}\colon
  \Gamma(U/U^\perp)\to\Omega^1(M,U/U^\perp)$ is flat.
\end{enumerate} 
\end{theorem}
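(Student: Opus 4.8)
The plan is to combine the characterization of LA-Dirac structures from Proposition \ref{char_LA_D} with the explicit dictionary between linear Dirac structures in $T\mathsf E$ and pseudo-Dirac connections that was set up just above. First I would fix a metric connection $\nabla\colon\mx(M)\times\Gamma(\mathsf E)\to\Gamma(\mathsf E)$ adapted to the maximal isotropic double subbundle $D\subseteq T\mathsf E$ over the side $U\subseteq\mathsf E$; by the corollary following Proposition \ref{prop2} such a connection exists, and $\nabla^p=[\nabla]$ is its invariant part. The Lagrangian splitting $\Sigma^\nabla$ of $T\mathsf E$ then yields, on the one side, the adjoint Dorfman $2$-representation $(\rho_{\mathsf E},\Delta,\nabla^{\rm bas},R_\Delta^{\rm bas})$ of Example \ref{adjoint}/Theorem \ref{tangent_courant_double} describing the Courant algebroid structure on $T\mathsf E\to TM$, and on the other side the self-dual $2$-representation $(\Id_{\mathsf E},\nabla,\nabla,R_\nabla)$ of $TM$ describing the VB-algebroid structure $(T\mathsf E\to\mathsf E,TM\to M)$. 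Since $D$ is an LA-Dirac structure, Proposition \ref{char_LA_D} applies with $Q=\mathsf E$, $B=TM$, $B'=TM$, $Q^*=\mathsf E$, and $U^\circ=U^\perp$ (using the metric identification $\mathsf E\simeq\mathsf E^*$).

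Next I would translate the nine conditions of Proposition \ref{char_LA_D} one by one. With $B'=B=TM$, conditions (2), (3), (5), (7), (8) become automatic or reduce to already-known statements. The surviving content is: $\partial_Q(U^\perp)=\rho_{\mathsf E}^*$-type conditions together with $\partial_B(U^\perp)\subseteq TM$ (automatic), which translate to $U^\perp$ being preserved by the relevant structure maps; condition (4), $\lb u_1,u_2\rb\in\Gamma(U)$ for the adjoint dull bracket, which is exactly the statement that $\lb u_1,u_2\rb_p=\lb u_1,u_2\rb_{\mathsf E}-\rho^*\langle\nabla^p u_1,u_2\rangle$ closes on $\Gamma(U)$, i.e.\ condition (3) of Definition \ref{davids_pseudo}; conditions (6) and (9), which say that the curvature terms $R_\Delta^{\rm bas}$ and $R_\nabla$ restrict to $\operatorname{Hom}(TM,U^\perp)$ and $\operatorname{Hom}(U,U^\perp)$ respectively. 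The first of these is automatic for a pseudo-Dirac structure (the left-hand side of \eqref{alternative_curvature} is $R_\Delta^{\rm bas}(u_1,u_2)^*u_3$, which vanishes on $U$ by the pseudo-Dirac axioms), while the second, $R_\nabla(X_1,X_2)u\in\Gamma(U^\perp)$ for $u\in\Gamma(U)$, is precisely flatness of the induced connection $\overline{\nabla^p}$ on $U/U^\perp$. Finally, the requirement $\partial_Q(U^\perp)\subseteq U$ with $\partial_Q=\rho_{\mathsf E}$ in this tangent-double setting is vacuous for the VB-algebroid side of $T\mathsf E$ (here $\partial_Q$ is the identity of $\mathsf E$ under the metric identification, so $\partial_Q(U^\perp)=U^\perp\subseteq U$ is equivalent to isotropy $U^\perp\subseteq U$), giving condition (1) of the theorem, and $\nabla_{X}u\in\Gamma(U)$ together with $\nabla^p$ killing $U^\perp$ is the second condition.

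I would then assemble these translations into the claimed equivalence: $(U,\nabla^p)$ is a pseudo-Dirac connection (so that $D$ is a VB-Dirac structure by Li-Bland's correspondence and our Proposition \ref{char_VB_dir}) \emph{and} $D$ is additionally a subalgebroid of $T\mathsf E\to\mathsf E$ precisely when $U^\perp\subseteq U$, $\nabla^p$ annihilates $U^\perp$, and $\overline{\nabla^p}$ is flat. For the converse direction I would start from a pseudo-Dirac connection with properties (1)--(3), reconstruct the adapted metric connection $\nabla$ (choosing any metric extension of $\nabla^p$ across the splitting $\mathsf E=U^\perp\oplus(\text{complement})$, using that $\nabla^p$ vanishes on $U^\perp$), form $D=\Sigma^\nabla(TM\times_M U)+(U^\perp)^\dagger$, and verify the conditions of Proposition \ref{char_LA_D} run backwards.

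The main obstacle I expect is bookkeeping rather than conceptual: carefully matching the indices and the roles of $\partial_B,\partial_Q,\nabla,\nabla^*,\Delta$ in Proposition \ref{char_LA_D} against the concrete adjoint-Dorfman and self-dual data of $T\mathsf E$, and checking that conditions (6) and (9) of Proposition \ref{char_LA_D} really do collapse (the first to a consequence of the pseudo-Dirac axioms, the second to flatness of $\overline{\nabla^p}$) when one uses the identities $-R_\nabla^*=R_{\nabla^*}=R_\nabla$ for a metric connection and the relation between $R_\Delta^{\rm bas}$ and the left side of \eqref{alternative_curvature}. Establishing that the notion obtained is genuinely independent of the choice of metric extension of $\nabla^p$ (i.e.\ that different adapted connections give the same $D$) follows from Remark \ref{invariant_part}, so that part is already handled. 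I would also invoke the preceding theorem (that LA-Dirac structures in any LA-Courant algebroid are double Lie algebroids) only if needed for a sanity check, not as an essential ingredient.
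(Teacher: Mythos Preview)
Your proposal is correct and follows essentially the same route as the paper: specialize the characterization of (LA-)Dirac structures in a Lagrangian splitting to the tangent double $T\mathsf E$ with the self-dual $2$-representation $(\Id_{\mathsf E},\nabla,\nabla,R_\nabla)$, and read off the three conditions. The paper's own argument is more compressed: since the pseudo-Dirac correspondence already encodes the VB-Dirac part, it applies only Proposition \ref{char_sub_alg} (the subalgebroid conditions) rather than the full Proposition \ref{char_LA_D}, obtaining isotropy from $\partial_Q=\Id_{\mathsf E}$, the condition $\nabla^p(U^\perp)=0$ from $\nabla$ preserving $U$ (and hence $U^\perp$, being metric), and flatness of $\overline{\nabla^p}$ from $R_\nabla(X_1,X_2)u\in\Gamma(U^\perp)$. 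One small slip to fix: you initially write ``$\partial_Q=\rho_{\mathsf E}$'' before correcting yourself; in this setting $\rho_{\mathsf E}$ is $\partial_B$ on the Courant side, while $\partial_Q=\Id_{\mathsf E}$ on the VB-algebroid side, as you then say.
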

We propose to call these pseudo-Dirac connections \textbf{quadratic
  pseudo-Dirac connections}.  Note that $\overline{\nabla^p}$ equals
$\bar\nabla\colon\mx(M)\times\Gamma(U/U^\perp)\to\Gamma(U/U^\perp)$
$\bar\nabla_X\bar u=\overline{\nabla_Xu}$, $u\in\Gamma(U)$ and
$X\in\mx(M)$, for any metric connection
$\nabla\colon\mx(M)\times\Gamma(U)\to\Gamma(U)$ such that
$[\nabla]=\nabla^p$. Such a connection must preserve $U$ by Condition
(2) in Proposition \ref{char_sub_alg}, and so also $U^\perp$ since it
is metric. The condition $R_\nabla(X_1,X_2)u\in\Gamma(U^\perp)$ for
all $X_1,X_2\in\mx(M)$ and $u\in\Gamma(U)$ in Proposition
\ref{char_sub_alg} is then equivalent to $R_{\bar\nabla}=0$.

\subsection{The Manin pair associated to an LA-Dirac structure}
Consider an LA-Courant algebroid 
\begin{equation*}
\begin{xy}
  \xymatrix{\mathbb E\ar[r]\ar[d]&Q\ar[d]\\
    B\ar[r]&M }
\end{xy}
\end{equation*}
with core $Q^*$, and an LA-Dirac structure 
\begin{equation*}
\begin{xy}
\xymatrix{D\ar[r]\ar[d]&U\ar[d]\\
B\ar[r]&M
}
\end{xy}
\end{equation*}
in $\mathbb E$ with core $U^\circ$.  Since $\partial_Q$ restricts to a
map from $U^\circ $ to $U$, we can define the vector bundle
\[\mathbb B=\frac{U\oplus
  Q^*}{\operatorname{graph}(-\partial_Q\an{U^\circ})}\to M.
\]
This vector bundle is anchored by the map
\[\rho_{\mathbb B}\colon\mathbb B\to TM, \qquad \rho_{\mathbb B}(u\oplus
\tau)=\rho_Q(u+\partial_Q\tau)=\rho_Q(u)+\rho_B(\partial_B\tau).
\]
Note that this map is well-defined because 
\[ \rho_{\mathbb
  B}(-\partial_Q\tau\oplus\tau)=\rho_Q(-\partial_Q\tau+\partial_Q\tau)=0
\]
for all $\tau\in U^\circ$.  We will show that there is a natural
symmetric non-degenerate pairing $\langle \cdot\,,\cdot\rangle_{\mathbb
  B}$ on $\mathbb B\times_M\mathbb B$ and a natural bracket
$\lb\cdot\,,\cdot\rb_{\mathbb B}$ on $\Gamma(\mathbb B)$ such that
\[ (\mathbb B\to M, \rho_{\mathbb B}, \langle
\cdot\,,\cdot\rangle_{\mathbb B}, \lb\cdot\,,\cdot\rb_{\mathbb B})
\]
is a Courant-algebroid.

We define the pairing on $\mathbb B$ by
\begin{align*}
\langle u_1\oplus\tau_1, u_2\oplus\tau_2\rangle_{\mathbb B}
=\langle u_1,\tau_1\rangle +\langle u_2,\tau_2\rangle +\langle
\tau_1, \partial_Q\tau_2\rangle.
\end{align*}
It is easy to check that this pairing is well-defined and
non-degenerate and that the
induced 
map $\mathcal D_{\mathbb B}: C^\infty(M)\to\Gamma(\mathbb B)$
given by 
\[\langle \mathcal D_{\mathbb B}f, u\oplus\tau\rangle_{\mathbb
  F}=\rho_{\mathbb B}(u\oplus\tau)(f)\]
can alternatively be defined by 
$ \mathcal D_{\mathbb B}f=0\oplus \rho_Q^*\dr f$.

Choose as before a Lagrangian splitting of $\mathbb E$ that is adapted
to $D$, and recall that the linear Courant algebroid structure and the
linear Lie algebroid structure on $\mathbb E$ are then encoded by a
Dorfman $2$-representation and a self-dual $2$-representation,
respectively, both denoted as usual.  We define the bracket on
$\Gamma(\mathbb B)$ by
\begin{equation}\label{C_bracket}
\begin{split}
  &\lb u_1\oplus\tau_1, u_2\oplus\tau_2\rb_{\mathbb B}\\
  =&(\lb u_1,
  u_2\rb_U+\nabla_{\partial_B\tau_1}u_2-\nabla_{\partial_B\tau_2}u_1 )
  \oplus (\lb \tau_1,
  \tau_2\rb_{Q^*}+\Delta_{u_1}\tau_2-\Delta_{u_2}\tau_1+\rho_Q^*\dr\langle
  \tau_1, u_2\rangle ).
\end{split}
\end{equation}
A quick computation using Remark \ref{change} and Proposition
\ref{change_of_lift} show that this bracket does not depend on the
choice of Lagrangian splitting.

\begin{theorem}\label{thm_CA_sd}
  Let $(D;U,B;M)$ be an LA-Dirac structure in a LA-Courant algebroid
  $(\mathbb E;Q,B;M)$.  Then the vector bundle \[ \mathbb
  B=\frac{U\oplus
    Q^*}{\operatorname{graph}(-\partial_Q\an{U^\circ})}\to M,\] with
  the anchor $\rho_{\mathbb B}$, the pairing $\langle
  \cdot\,,\cdot\rangle_{\mathbb B}$ and the bracket
  $\lb\cdot\,,\cdot\rb_{\mathbb B}$, is a Courant algebroid.
Further, $U$ is a Dirac structure in $\mathbb E$, via the inclusion $U\hookrightarrow\mathbb B$, 
$u\mapsto u\oplus 0$.
\end{theorem}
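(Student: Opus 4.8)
The plan is to verify directly that the quadruple $(\mathbb B\to M, \rho_{\mathbb B}, \langle\cdot\,,\cdot\rangle_{\mathbb B}, \lb\cdot\,,\cdot\rb_{\mathbb B})$ satisfies axioms (CA1)--(CA5) of a Courant algebroid, working in a fixed Lagrangian splitting of $\mathbb E$ adapted to $D$ (whose existence is guaranteed by the corollary following Proposition \ref{prop2}), and using the matched-pair equations (M1)--(M5) together with Remark \ref{remark_simplifications}, equations \eqref{almost_C} and \eqref{(LC10)}, and the axioms (D1)--(D6) of the Dorfman $2$-representation and the self-duality of the $2$-representation. First I would check that all the structure data are well-defined, i.e.\ independent of the choice of representative $u\oplus\tau$ modulo $\operatorname{graph}(-\partial_Q\an{U^\circ})$: for $\rho_{\mathbb B}$ this is the computation shown in the statement's setup; for the pairing one uses $\partial_Q=\partial_Q^*$ and for the bracket one uses Propositions \ref{char_VB_dir} and \ref{char_sub_alg} (so that $\lb\cdot\,,\cdot\rb_U$, $\nabla$ and $\Delta$ genuinely restrict to $U$, $U^\circ$) together with identities (2), (3) and (1) of Remark \ref{remark_simplifications}. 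The independence of the bracket from the Lagrangian splitting follows from Remark \ref{change} and Proposition \ref{change_of_lift}, exactly as asserted.

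Next I would observe that the degenerate Courant algebroid structure on the core $Q^*$ from Theorem \ref{core_courant} does most of the work: the bracket $\lb\cdot\,,\cdot\rb_{\mathbb B}$ on the $Q^*$-component is precisely $\lb\tau_1,\tau_2\rb_{Q^*}+\Delta_{u_1}\tau_2-\Delta_{u_2}\tau_1+\rho_Q^*\dr\langle\tau_1,u_2\rangle$, where $\lb\tau_1,\tau_2\rb_{Q^*}=\Delta_{\partial_Q\tau_1}\tau_2-\nabla_{\partial_B\tau_2}\tau_1$, and the $U$-component is $\lb u_1,u_2\rb_U+\nabla_{\partial_B\tau_1}u_2-\nabla_{\partial_B\tau_2}u_1$. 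The symmetry (CA3) should reduce to the identity \eqref{almost_C} restricted to $U^\circ$ (which holds by Proposition \ref{char_LA_D}) plus the already-established fact that $\lb u_1,u_2\rb_U$ is skew; the anchor compatibility (CA4) should follow from (M2), (M3) restricted to $U$, $U^\circ$ and $\rho_Q\circ\partial_Q=\rho_B\circ\partial_B$; (CA2) and (CA5) are routine Leibniz-type checks using axiom (D2) and the connection properties. I would then prove the Jacobi identity (CA1) by a computation modelled very closely on the proof of the Jacobi identity in Theorem \ref{core_courant}: expand $\lb\lb\cdot,\cdot\rb_{\mathbb B},\cdot\rb_{\mathbb B}$ with cyclic-type rearrangement, collect the $U$- and $Q^*$-components separately, and use (M1)--(M5), \eqref{(LC10)}, (D1), (D4), (D6) and self-duality to cancel everything; the curvature terms $R$ appearing in the intermediate steps vanish on triples from $U$ (resp.\ pair from $U$ with anything) because of clauses (6) and (9) of Proposition \ref{char_LA_D}, which is what makes the cancellations close up.

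Finally, for the last clause I would note that $U\hookrightarrow\mathbb B$, $u\mapsto u\oplus 0$, is well-defined and isotropic: $\langle u_1\oplus 0, u_2\oplus 0\rangle_{\mathbb B}=0$ trivially, and $U$ has half the rank of $\mathbb B$ since $\operatorname{rk}\mathbb B=\operatorname{rk}U+\operatorname{rk}Q^*-\operatorname{rk}U^\circ=\operatorname{rk}U+(\operatorname{rk}U+\operatorname{rk}U^\circ)-\operatorname{rk}U^\circ=2\operatorname{rk}U$, so $U$ is maximal isotropic; and $\lb u_1\oplus 0,u_2\oplus 0\rb_{\mathbb B}=\lb u_1,u_2\rb_U\oplus 0$ lies in the image of $U$, so $U$ is a subalgebroid, hence a Dirac structure. (Here the wording of the theorem is slightly loose in saying $U$ is a Dirac structure ``in $\mathbb E$'' — it is a Dirac structure in the Courant algebroid $\mathbb B$ — but this is exactly what the inclusion $U\hookrightarrow\mathbb B$ refers to.) The main obstacle I expect is the Jacobi identity bookkeeping: it is the longest computation, the terms mixing $\nabla_{\partial_B\tau}u$ with $\Delta_u\tau$ and $\rho_Q^*\dr\langle\tau,u\rangle$ do not organise themselves cyclically in an obvious way, and one must invoke essentially every matched-pair axiom plus \eqref{(LC10)} and the vanishing of $R$ on $U$-arguments in just the right combination; a careful organisation parallel to the proof of Theorem \ref{core_courant}, treating $\mathbb B$ as a kind of bicrossproduct of $U$ (with its Lie algebroid structure) and the degenerate Courant algebroid $Q^*$, is the way to keep it under control.
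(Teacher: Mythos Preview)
Your overall strategy matches the paper's: fix an adapted Lagrangian splitting, check well-definedness of the structure maps on the quotient, then verify (CA1)--(CA5) directly, with (CA1) as the main labour. Your treatment of well-definedness, (CA2), (CA3), (CA5) and the final Dirac-structure clause is in line with what the paper does.

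There is one point where your description is misleading and would likely lead you astray in the actual computation. For (CA1) you say you will ``cancel everything'' and that the curvature terms $R$ ``vanish on triples from $U$''. This is not what happens. The Jacobiator computed in $U\oplus Q^*$ (before passing to $\mathbb B$) is \emph{not} zero: the paper shows that
\[
\operatorname{Jac}_{\lb\cdot\,,\cdot\rb_{\mathbb B}}(u_1\oplus\tau_1,u_2\oplus\tau_2,u_3\oplus\tau_3)=(-\partial_Q\upsilon)\oplus\upsilon,
\]
where $\upsilon=\bigl(R(\partial_B\tau_1,\partial_B\tau_2)u_3-R(u_1,u_2)\partial_B\tau_3\bigr)+\text{c.p.}$ The terms $R(u_i,u_j)\partial_B\tau_k$ and $R(\partial_B\tau_i,\partial_B\tau_j)u_k$ do not vanish; clauses (6) and (9) of Proposition~\ref{char_LA_D} only guarantee that $\upsilon\in\Gamma(U^\circ)$, so that $(-\partial_Q\upsilon)\oplus\upsilon$ lies in $\operatorname{graph}(-\partial_Q\an{U^\circ})$ and hence is zero \emph{in the quotient} $\mathbb B$. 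If you go in expecting literal cancellation you will not find it.

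A second, related point: to organise the $Q^*$-part of the Jacobiator into the form above, the paper first proves an auxiliary identity (its Lemma~\ref{lemma_looks_like_basic}),
\[
R(\partial_B\tau_1,\partial_B\tau_2)q=-\Delta_q\lb\tau_1,\tau_2\rb_{Q^*}+\lb\Delta_q\tau_1,\tau_2\rb_{Q^*}+\lb\tau_1,\Delta_q\tau_2\rb_{Q^*}+\Delta_{\nabla_{\partial_B\tau_2}q}\tau_1-\Delta_{\nabla_{\partial_B\tau_1}q}\tau_2-\rho_Q^*\dr\langle\tau_1,\nabla_{\partial_B\tau_2}q\rangle,
\]
together with $\lb\rho_Q^*\dr f,\tau\rb_{Q^*}=0$. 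These are the technical devices that convert the raw cross-terms $\Delta_{u}\lb\tau_1,\tau_2\rb_{Q^*}$, $\lb\Delta_u\tau_1,\tau_2\rb_{Q^*}$, etc., into the curvature contributions making up $\upsilon$. Your plan to ``model the computation on the proof of Theorem~\ref{core_courant}'' is the right instinct, but you will need this lemma (which is itself proved from (M1), (D1), (D4) and \eqref{(LC10)}) to close the bookkeeping.
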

The proof of Theorem \ref{thm_CA_sd} can be found in Appendix
\ref{proof_of_manin_ap}.

\begin{corollary}\label{disapointing_cor}
  Let $(D;U,B;M)$ be an LA-Dirac structure in an LA-Courant algebroid
  $(\mathsf E, B; Q, M)$ (with core $Q^*$). The Manin pair $(\mathbb B, U)$ 
defined in Theorem \ref{thm_CA_sd} and the degenerate Courant algebroid $Q^*$ satisfy the following conditions:
\begin{enumerate}
\item There is a morphism $\psi\colon
  Q^*\to C$ of degenerate Courant algebroids and an embedding $\iota\colon U\to Q$ over the identity on $M$
\item $\iota$ is compatible with the anchors: $\rho_Q\circ\iota=\rho_C\an{U}$,
\item $\psi(Q^*)+U=C$ and
\item $\langle \psi(\tau), u\rangle_C=\langle  \iota(u), tau\rangle$ for all
  $\tau\in Q^*$ and $u\in U$.
\end{enumerate}
\end{corollary}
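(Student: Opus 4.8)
The plan is to read off all four claimed structures from the explicit data of an LA-Dirac structure, exactly as in the constructions of the previous subsections, and then check the four compatibilities by unwinding the definitions. Fix a Lagrangian splitting of $\mathbb E$ adapted to $D$, and let $(\partial_B\colon Q^*\to B,\Delta,\nabla,R)$ be the Dorfman $2$-representation of $(Q,\rho_Q)$ and $(\nabla,\nabla^*,R)$ the self-dual $2$-representation of $B$ encoding the two linear structures on $\mathbb E$; by Theorem \ref{LA-Courant} these form a matched pair. Here $C$ denotes the core degenerate Courant algebroid $Q^*$ of $\mathbb E$ from Theorem \ref{core_courant}, with anchor $\rho_Q\partial_Q$, pairing $\langle\tau_1,\tau_2\rangle_{Q^*}=\langle\tau_1,\partial_Q\tau_2\rangle$ and bracket $\lb\tau_1,\tau_2\rb_{Q^*}=\Delta_{\partial_Q\tau_1}\tau_2-\nabla_{\partial_B\tau_2}\tau_1$, and $\mathbb B$ is the Manin pair Courant algebroid of Theorem \ref{thm_CA_sd}, so that by Proposition \ref{char_LA_D} we have $\partial_Q(U^\circ)\subseteq U$. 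Recall from Proposition \ref{char_VB_dir} and Corollary \ref{LA_on_U} that $U$ carries a Lie algebroid structure with bracket the restriction of $\lb\cdot\,,\cdot\rb_\Delta$ and anchor $\rho_Q\an U$, and that $U$ sits inside $\mathbb B$ as a Dirac structure via $u\mapsto u\oplus 0$.

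First I would define $\iota\colon U\to Q$ to be the given vector bundle inclusion (the side inclusion of $D$), which is an embedding over $\operatorname{Id}_M$ by hypothesis; condition (2), $\rho_Q\circ\iota=\rho_{Q^*}\an U$ where $\rho_{Q^*}=\rho_Q\partial_Q$, is \emph{not} literally what is printed — I would instead read the intended statement as $\rho_Q(\iota(u))$ agreeing with the anchor that $U$ inherits \emph{as the Dirac structure inside $\mathbb B$}, which is $\rho_{\mathbb B}\an U(u)=\rho_Q(u+\partial_Q 0)=\rho_Q(u)$, so (2) is a tautology once $\iota$ is the inclusion. (I expect this is a minor typo in the statement: $\rho_C\an U$ should be the anchor of $U$ viewed inside the Manin pair, not $\rho_{Q^*}$ restricted along $\iota$. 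I would phrase the proof so that it proves the evidently correct assertion.) Next I would define $\psi\colon Q^*\to C$ to be the identity map $\operatorname{Id}_{Q^*}$, with $C=Q^*$; then condition (3), $\psi(Q^*)+U=C$, holds trivially since $\psi$ is already surjective. For condition (4), $\langle\psi(\tau),u\rangle_C=\langle\iota(u),\tau\rangle$, one computes $\langle\psi(\tau),u\rangle_{Q^*}=\langle\tau,\partial_Q u\rangle$ using the degenerate pairing on $C$ — but here $u\in U\subseteq Q$, not $Q^*$, so the left side should be interpreted via the degenerate-Courant pairing after embedding, or equivalently $\langle\psi(\tau),\iota(u)\rangle$ is the evaluation pairing of $Q^*$ with $Q$; with that reading (4) is again the definitional identity $\langle\tau,u\rangle=\langle u,\tau\rangle$. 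So the only substantive content is condition (1): that $\psi=\operatorname{Id}_{Q^*}\colon Q^*\to C$ is a \emph{morphism of degenerate Courant algebroids} in the appropriate sense, and that $\iota\colon U\to Q$ is likewise compatible with the Dirac/bracket data — i.e. the three structures (anchor, pairing, bracket) on $C=Q^*$ as constructed from $\mathbb E$ via the splitting match those induced on the core of the ambient LA-Courant algebroid independently of the Dirac structure.

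The main obstacle, therefore, is purely a matter of correctly parsing the corollary's statement and then verifying that the core structure $Q^*$ of $\mathbb E$ (Theorem \ref{core_courant}) and the core $U^\circ$ of $D$ are compatibly related through $\partial_Q$, so that the maps $\psi$ and $\iota$ intertwine anchors, pairings and brackets. Concretely I would: (a) use Proposition \ref{char_LA_D}(1),(4),(6) to show $\Delta$ and $R$ restrict to $U^\circ$ and that $\partial_Q\an{U^\circ}$ lands in $U$, so the formula $\lb\tau_1,\tau_2\rb_{Q^*}$ restricts sensibly; (b) observe that the inclusion $U^\circ\hookrightarrow Q^*=C$ is a morphism of degenerate Courant algebroids onto its image, which is the "$\psi$" after identifying — alternatively, if the intended $C$ in the corollary is the core of $\mathbb E$ and the intended source $Q^*$ is the core of $D$ (namely $U^\circ$), then $\psi$ is literally the core inclusion $U^\circ\hookrightarrow Q^*$, and all four conditions become: the core inclusion is a degenerate-Courant morphism (from Theorem \ref{core_courant} applied to $D$ versus $\mathbb E$, using that the restricted $2$-representations of the previous theorem's proof are sub-representations), it is anchor-compatible, $U^\circ$ together with $\iota(U)=U$ spans... which needs $\partial_Q(U^\circ)\subseteq U$ and a rank count, and the pairing identity is $\langle\tau,\partial_Q u'\rangle=\langle u',\tau\rangle$ for $\tau\in U^\circ$, $u'\in U$, which is just the evaluation pairing. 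I expect that once the notation is pinned down, every verification reduces to an identity already contained in Proposition \ref{char_LA_D}, Theorem \ref{core_courant}, and Theorem \ref{thm_CA_sd}; the only real work is bookkeeping of which core is $C$ and which is $Q^*$. I would write the proof to state this identification explicitly at the outset, then dispatch (1)–(4) in a few lines each, citing Theorem \ref{thm_CA_sd} for the Courant algebroid axioms on $\mathbb B$ and Theorem \ref{core_courant} for those on $C$.
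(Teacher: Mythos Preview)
Your proposal founders on a misidentification of $C$. You try $C=Q^*$ (with $\psi=\operatorname{Id}$) and then $C=Q^*$ with source $U^\circ$, but neither reading makes conditions (3) and (4) coherent, as you yourself notice. The intended meaning, visible from the paragraph following the corollary (``Conversely take a Manin pair $(C,U)$\ldots''), is that $C$ is the Courant algebroid of the Manin pair, i.e.\ $C=\mathbb B$. The corollary is stating abstract properties that the pair $(\mathbb B,U)$ together with the degenerate Courant algebroid $Q^*$ enjoys; the letter $C$ is just the generic name for the ambient Courant algebroid of a Manin pair.

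With this reading the paper's proof is two lines. Define $\psi\colon Q^*\to\mathbb B$ by $\psi(\tau)=0\oplus\tau$ and let $\iota\colon U\hookrightarrow Q$ be the side inclusion. Then (1) holds because, directly from the formulas for $\rho_{\mathbb B}$, $\langle\cdot,\cdot\rangle_{\mathbb B}$ and $\lb\cdot,\cdot\rb_{\mathbb B}$ in Theorem~\ref{thm_CA_sd}, one has $\rho_{\mathbb B}(0\oplus\tau)=\rho_Q\partial_Q\tau$, $\langle 0\oplus\tau_1,0\oplus\tau_2\rangle_{\mathbb B}=\langle\tau_1,\partial_Q\tau_2\rangle$, and $\lb 0\oplus\tau_1,0\oplus\tau_2\rb_{\mathbb B}=0\oplus\lb\tau_1,\tau_2\rb_{Q^*}$, which are exactly the structure maps of the core degenerate Courant algebroid from Theorem~\ref{core_courant}. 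Condition (2) is $\rho_Q(u)=\rho_{\mathbb B}(u\oplus 0)$, condition (3) is the tautology that every class $u\oplus\tau$ is a sum $(u\oplus 0)+(0\oplus\tau)$, and condition (4) is $\langle 0\oplus\tau,u\oplus 0\rangle_{\mathbb B}=\langle u,\tau\rangle$, all immediate from the definitions. None of the restriction or rank-count arguments you outline are needed.
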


\begin{proof}
 Take an LA-Dirac structure $(D;U,B;M)$ in an LA-Courant
  algebroid\linebreak $(\mathbb E;Q,B;M)$.  The morphism $\psi\colon Q^*\to \mathbb E$ defined by
  $\psi(\tau)\mapsto 0\oplus \tau$ is obviously a morphism of
  degenerate Courant algebroids. Conditions (1)--(4) are then immediate.
\end{proof}

Conversely take a Manin pair $(C,U)$ over $M$ satisfying with $Q^*$ the
conditions in Corollary \ref{disapointing_cor} and identify $U$ with a
subbundle of $Q$.  If $\tau\in U^\circ\subseteq Q^*$, then
$\psi(\tau)$ satisfies
\[\langle u, \psi(\tau)\rangle_C=\langle u, \tau\rangle=0
\]
for all $u\in U$. Since $U$ is a Dirac structure, we find that $\psi$
restricts to a map $U^\circ\to U$. Conversely, we find easily that
$\psi(\tau)\in U$ if and only if $\tau\in U^\circ$.  Next choose
$\tau_1\in U^\circ $ and $\tau_2\in Q^*$.  Then since $\psi(\tau_1)\in U$,
\[\langle
\psi(\tau_1), \tau_2\rangle=\langle
\psi(\tau_1),\psi(\tau_2)\rangle_C= \langle
\tau_1,\tau_2\rangle_{Q^*}=\langle \partial_Q\tau_1, \tau_2\rangle,
\] 
which shows that $\psi\an{U^\circ}=\partial_Q\an{U^\circ}$. In
particular, $\partial_Q$ sends $U^\circ$ to $U$, and $U^\circ$ is
isotropic in $Q^*$.  Consider the vector bundle map $U\oplus Q^*\to
C$, $(u,\tau)\mapsto u+\psi(\tau)$. By assumption, this map is
surjective. Its kernel is the set of pairs $(u,\tau)$ with
$u=-\psi(\tau)$, i.e.~the graph of $-\partial_Q\an{U^\circ}\colon
U^\circ\to U$. It follows that
\begin{equation}\label{id_C}
C\simeq \frac{U\oplus Q^*}{\operatorname{graph}(-\partial_Q\an{U^\circ}\colon
U^\circ\to U)}.
\end{equation}
Hence, we can use the notation $u\oplus \tau$ for
$\overline{u+\psi(\tau)}\in C$.

In the case of an LA-Courant algebroid $(TA\oplus_AT^*A,TM\oplus
A^*,A,M)$ as in \S\ref{PontLA_LACourant}, for a Lie algebroid $A$, we
could show in \cite{Jotz14} that Manin pairs as in Corollary
\ref{disapointing_cor} are \emph{in bijection} with LA-Dirac
structures on $A$. That is, given a Manin pair $(C,U)$ with an
inclusion $U\hookrightarrow TM\oplus A^*$ and a degenerate Courant
algebroid morphism $A\oplus T^*M\to C$ satisfying (1)--(4), then via
the identification \eqref{id_C}, there exists a Lagrangian splitting
of $TA\oplus_AT^*A$ such that the Courant bracket on $C$ is given
by \eqref{C_bracket}.

In a future project we will study how this result generalises to
LA-Dirac structures in general LA-Courant algebroids, and we will
compare the data contained in the Manin pair with the infinitesimal
description of Dirac groupoids that was found by Li-Bland and Severa
in \cite{LiSe11}.

\appendix
\section{Proof of Theorem \ref{main}}\label{appendix_proof_of_main} 
Let $(\mathbb E;Q,B;M)$ be a VB-Courant algebroid and choose a
Lagrangian splitting $\Sigma\colon Q\times_MB$. We prove here that the
split linear Courant algebroid is equivalent to a Dorfman
$2$-representation.  Recall that $\mathcal S\subseteq\Gamma_B(\mathbb
E)$ is the subset $\{\tau^\dagger\mid \tau\in\Gamma(Q^*)\}\cup
\{\sigma_Q(q)\mid q\in\Gamma(Q)\}\subseteq \Gamma(\mathbb E)$.

Recall also that the tangent double $(TB\to B;TM\to M)$ has a VB-Lie
algebroid structure, which is described in \S\ref{tangent_double}.

We start with the proofs of two useful lemmas.
\begin{lemma}\label{formula_for_Dl}
For $\beta\in\Gamma(B^*)$, we have 
\[\mathcal
D(\ell_\beta)=\sigma_Q(\partial_B^*\beta)+\widetilde{\nabla_\cdot^*\beta},
\]
where $\nabla_\cdot^*\beta$ is seen as
follows as a
section of $\Gamma(\operatorname{Hom}(B,Q^*))$: 
$\left(\nabla_\cdot^*\beta\right)(b)=
\langle \nabla_\cdot^*\beta, b\rangle
\in\Gamma(Q^*)$ for all $b\in\Gamma(B)$.
\end{lemma}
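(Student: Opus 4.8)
The plan is to use non-degeneracy of the linear metric on $\mathbb E\to B$ together with the fact, recalled at the start of this appendix, that $\mathcal S=\{\sigma_Q(q)\mid q\in\Gamma(Q)\}\cup\{\tau^\dagger\mid\tau\in\Gamma(Q^*)\}$ generates $\Gamma_B(\mathbb E)$ as a $C^\infty(B)$-module. Since the pairing is non-degenerate and $V=\mathbb R\times B$, the operator $\mathcal D=\Beta\inv\circ\Theta^*\circ\dr$ is characterised by $\langle\mathcal D(f),e\rangle=\Theta(e)(f)$ for all $f\in C^\infty(B)$ and $e\in\Gamma_B(\mathbb E)$. Both $e\mapsto\langle\mathcal D(\ell_\beta),e\rangle$ and $e\mapsto\langle\sigma_Q(\partial_B^*\beta)+\widetilde{\nabla_\cdot^*\beta},e\rangle$ are $C^\infty(B)$-linear, so to prove the claimed identity it suffices to test it against the two families of generators in $\mathcal S$. (I would also note in passing that $\nabla_\cdot^*\beta$ is a genuine section of $\operatorname{Hom}(B,Q^*)$: it is tensorial in the $Q$-slot because $\nabla^*$ is a connection, and $C^\infty(M)$-linear in the $B$-slot because the pairing is $C^\infty(M)$-bilinear, so $\widetilde{\nabla_\cdot^*\beta}$ is a legitimate core-linear section.)

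First I would test against a core section $\tau^\dagger$, $\tau\in\Gamma(Q^*)$. On the left, $\Theta(\tau^\dagger)=(\partial_B\tau)^\uparrow$ by \eqref{anchor_on_pullbacks}, and the vertical lift of $\partial_B\tau\in\Gamma(B)$ sends $\ell_\beta$ to $q_B^*\langle\partial_B\tau,\beta\rangle$ (directly from the flow description of $e^\uparrow$ in \S\ref{tangent_double}). On the right, $\sigma_Q(\partial_B^*\beta)$ is linear over $\partial_B^*\beta$, so by the defining property of a linear metric it pairs with $\tau^\dagger$ to give $q_B^*\langle\partial_B^*\beta,\tau\rangle=q_B^*\langle\beta,\partial_B\tau\rangle$, while the core-linear section $\widetilde{\nabla_\cdot^*\beta}$ pairs to zero with the core section $\tau^\dagger$; the two sides agree. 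Next I would test against $\sigma_Q(q)$, $q\in\Gamma(Q)$. On the left, $\Theta(\sigma_Q(q))=\widehat{\nabla_q}$, and by \eqref{ableitungen} this linear vector field sends $\ell_\beta$ to $\ell_{\nabla_q^*\beta}$. On the right, $\langle\sigma_Q(\partial_B^*\beta),\sigma_Q(q)\rangle=0$ because the splitting is Lagrangian, and $\langle\widetilde{\nabla_\cdot^*\beta},\sigma_Q(q)\rangle=\ell_{(\nabla_\cdot^*\beta)^*(q)}$ by the identity $\langle\widetilde\phi,\chi\rangle=\ell_{\phi^*(q)}$ valid for $\chi$ linear over $q$; a one-line index computation gives $(\nabla_\cdot^*\beta)^*(q)=\nabla_q^*\beta$, so once more the two sides coincide. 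By non-degeneracy of the metric the lemma follows.

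I do not expect a genuine obstacle here, as this is a direct verification. The only points demanding a little care are the correct identification $(\nabla_\cdot^*\beta)^*(q)=\nabla_q^*\beta$ of the relevant dual, and keeping track of which structure equation is being invoked in each line — the anchor value on core sections versus on linear sections, and the Lagrangian vanishing of $\langle\sigma_Q(\cdot),\sigma_Q(\cdot)\rangle$ versus the pairing rule $\langle\chi,\tau^\dagger\rangle=q_B^*\langle q,\tau\rangle$ for $\chi$ linear over $q$.
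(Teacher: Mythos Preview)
Your proof is correct and follows essentially the same approach as the paper: both arguments compute $\langle\mathcal D(\ell_\beta),\tau^\dagger\rangle=\Theta(\tau^\dagger)(\ell_\beta)=q_B^*\langle\partial_B\tau,\beta\rangle$ and $\langle\mathcal D(\ell_\beta),\sigma_Q(q)\rangle=\Theta(\sigma_Q(q))(\ell_\beta)=\ell_{\nabla_q^*\beta}$, then match these with the corresponding pairings of $\sigma_Q(\partial_B^*\beta)+\widetilde{\nabla_\cdot^*\beta}$. The only cosmetic difference is that the paper first observes $\mathcal D(\ell_\beta)$ is a linear section (so that $\mathcal D(\ell_\beta)-\sigma_Q(\partial_B^*\beta)$ is core-linear) and then identifies the core-linear part, whereas you verify the identity directly by pairing both sides against the generators and invoking non-degeneracy.
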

\begin{proof}
  For $\beta\in\Gamma(B^*)$, the section $\dr\ell_\beta$ is a linear
  section of $T^*B\to B$. Since the anchor $\Theta$ is linear, the
  section $\mathcal D\ell_\beta=\Theta^*\dr\ell_\beta$ is linear. Since for
  any $\tau\in\Gamma(Q^*)$,
\[\langle \mathcal D(\ell_\beta),
\tau^\dagger\rangle =
\Theta(\tau^\dagger)(\ell_\beta)=q_B^*\langle \partial_B\tau, \beta\rangle,
\]
we find that $\mathcal
D(\ell_\beta)-\sigma_Q(\partial_B^*\beta)\in\Gamma(\ker\pi_Q)$.
Hence, $\mathcal D(\ell_\beta)-\sigma_Q(\partial_B^*\beta)$ is a
core-linear section of $\mathbb \E\to B$ and there exists a section
$\phi$ of $\operatorname{Hom}(B,Q^*)$ such that $\mathcal
D(\ell_\beta)-\sigma_Q(\partial_B^*\beta)=\widetilde{\phi}$.  We
have \[\ell_{\langle\phi, q\rangle}= \langle \widetilde{\phi},
\sigma_Q(q)\rangle=\langle \mathcal
D(\ell_\beta)-\sigma_Q(\partial_B^*\beta), \sigma_Q(q)\rangle
=\Theta(\sigma_Q(q))(\ell_\beta)=\ell_{\nabla_q^*\beta}
\]
and so $\phi(b)=\langle \nabla_\cdot^*\beta, b\rangle\in\Gamma(Q^*)$ for all $b\in\Gamma(B)$.
\end{proof}

\begin{lemma}
  For $q\in\Gamma(Q)$ and $\phi\in \Gamma(\operatorname{Hom}(B,Q^*))$,
  we have
\[\left\lb \sigma_Q(q), \widetilde{\phi}\right\rb= \widetilde{\lozenge_q\phi}.
\]
\end{lemma}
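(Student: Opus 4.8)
The plan is to verify the identity $\lb \sigma_Q(q), \widetilde{\phi}\rb = \widetilde{\lozenge_q\phi}$ by pairing both sides against a spanning family of sections of $\mathbb E \to B$ and using the already-established formulas. Since $\widetilde{\phi}$ is a core-linear section of $\mathbb E \to B$, the bracket $\lb\sigma_Q(q),\widetilde{\phi}\rb$ is again core-linear (the Courant bracket of a linear section with a core section lying in $\Gamma^c_B(\mathbb E)$, and $\widetilde{\phi}$ being simultaneously core and core-linear, so the bracket is core-linear by linearity of the Courant bracket together with $\pi_Q$-projection being zero). Hence there is a unique $\psi\in\Gamma(\operatorname{Hom}(B,Q^*))$ with $\lb\sigma_Q(q),\widetilde\phi\rb = \widetilde\psi$, and it suffices to show $\psi = \lozenge_q\phi$, i.e.\ that $\psi(b) = \Delta_q(\phi(b)) - \phi(\nabla_q b)$ for all $b\in\Gamma(B)$.

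First I would reduce to the case where $\phi$ is a decomposable section: write $\phi = \sum_i \ell_{\beta_i}\cdot\tau_i^\dagger$ in the sense used in the proof of Lemma~\ref{formulas}, i.e.\ $\widetilde\phi = \sum_i \ell_{\beta_i}\,\tau_i^\dagger$ with $\beta_i\in\Gamma(B^*)$, $\tau_i\in\Gamma(Q^*)$ (locally this is always possible, and the identity is tensorial in $\phi$, so it is enough to check it on such sections, and by the Leibniz rule (CA5) for the Courant bracket on $\mathbb E\to B$ it is enough to check each summand). Then I would expand
\[
\lb \sigma_Q(q), \ell_{\beta}\,\tau^\dagger\rb
= \ell_{\beta}\,\lb\sigma_Q(q),\tau^\dagger\rb + \bigl(\Theta(\sigma_Q(q))(\ell_\beta)\bigr)\,\tau^\dagger
= \ell_{\beta}\,(\Delta_q\tau)^\dagger + \ell_{\nabla_q^*\beta}\,\tau^\dagger,
\]
using (CA5), the defining relation $\lb\sigma_Q(q),\tau^\dagger\rb = (\Delta_q\tau)^\dagger$ from Section~\ref{construction_of_objects}, and $\Theta(\sigma_Q(q)) = \widehat{\nabla_q}$ together with $\widehat{\nabla_q}(\ell_\beta) = \ell_{\nabla_q^*\beta}$ from \eqref{ableitungen}. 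The two resulting terms are core-linear sections, and I would identify the corresponding element of $\operatorname{Hom}(B,Q^*)$: for $\widetilde{\phi} = \ell_\beta\,\tau^\dagger$ one has $\phi(b) = \langle\beta,b\rangle\,\tau$, so $\ell_\beta (\Delta_q\tau)^\dagger$ corresponds to $b\mapsto \langle\beta,b\rangle\,\Delta_q\tau$ and $\ell_{\nabla_q^*\beta}\tau^\dagger$ to $b\mapsto \langle\nabla_q^*\beta,b\rangle\,\tau$. On the other hand, by definition of $\lozenge$,
\[
(\lozenge_q\phi)(b) = \Delta_q(\phi(b)) - \phi(\nabla_q b)
= \Delta_q(\langle\beta,b\rangle\,\tau) - \langle\beta,\nabla_q b\rangle\,\tau
= \langle\beta,b\rangle\,\Delta_q\tau + \bigl(\rho_Q(q)\langle\beta,b\rangle - \langle\beta,\nabla_q b\rangle\bigr)\tau,
\]
using that $\Delta_q$ is a derivation over $\rho_Q(q)$. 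Since $\langle\nabla_q^*\beta,b\rangle = \rho_Q(q)\langle\beta,b\rangle - \langle\beta,\nabla_q b\rangle$ by definition of the dual connection $\nabla^*$, the two expressions agree, which completes the argument.

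The main subtlety—rather than a true obstacle—is bookkeeping: making sure the identification of core-linear sections with sections of $\operatorname{Hom}(B,Q^*)$ is used consistently (in particular the sign and pairing conventions in $\widetilde{\cdot}$), and checking that the Leibniz expansion via (CA5) produces exactly the term $\Theta(\sigma_Q(q))(\ell_\beta)\,\tau^\dagger$ with no extra core contribution (it does, because $\pi_Q$ of $\tau^\dagger$ is zero and the linear vector field $\Theta(\sigma_Q(q))$ acts on the coefficient function $\ell_\beta$). Everything else is a short computation with the defining relations of the Dorfman $2$-representation already set up in Section~\ref{construction_of_objects}.
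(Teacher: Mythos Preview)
Your proof is correct and follows essentially the same approach as the paper: write $\widetilde\phi$ locally as $\sum_i \ell_{\beta_i}\,\tau_i^\dagger$, expand the Courant bracket using (CA5) together with $\Theta(\sigma_Q(q))(\ell_\beta)=\ell_{\nabla_q^*\beta}$ and $\lb\sigma_Q(q),\tau^\dagger\rb=(\Delta_q\tau)^\dagger$, and then compare with the direct computation of $(\lozenge_q\phi)(b)$ using the derivation property of $\Delta_q$ and the definition of $\nabla^*$. The only difference is that you spell out the identification of the dual-connection term $\langle\nabla_q^*\beta,b\rangle=\rho_Q(q)\langle\beta,b\rangle-\langle\beta,\nabla_q b\rangle$ explicitly, whereas the paper passes through the tensor-product notation $\nabla_q^*\beta_i\otimes\tau_i+\beta_i\otimes\Delta_q\tau_i$ directly.
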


\begin{proof}
  Write $\phi=\sum_{i=1}^n\beta_i\otimes \tau_i$ with
  $\beta_1,\ldots,\beta_n\in\Gamma(B^*)$ and
  $\tau_1,\ldots,\tau_n\in\Gamma(Q^*)$.  Then
  $\widetilde{\phi}=\sum_{i=1}^n\ell_{\beta_i}\cdot \tau_i^\dagger$ and
  we can compute
\begin{align*}
  \left\lb \sigma_Q(q),
    \widetilde{\phi}\right\rb&=\sum_{i=1}^n\left(\ell_{\nabla^*_q\beta_i}\cdot
    \tau_i^\dagger+\ell_{\beta_i}\cdot
    (\Delta_q\tau_i)^\dagger\right) =\widetilde{\sum_{i=1}^n
    \nabla^*_q\beta_i\otimes\tau_i+\beta_i\otimes
    \Delta_q\tau_i}
\end{align*}
on the one hand, and on the other hand 
\[(\lozenge_q\phi)(b)=\Delta_q\left(\sum_{i=1}^n\langle \beta_i,
  b\rangle\tau_i\right)-\sum_{i=1}^n\langle \beta_i,
\nabla_qb\rangle\tau_i =\left( \sum_{i=1}^n
  \beta_i\otimes\Delta_q\tau_i+\nabla_q^*\beta_i\otimes\tau_i\right)
  (b)\] for any $b\in\Gamma(B)$.
\end{proof}

\medskip

Now we can express all the conditions of Lemma \ref{useful_lemma} in
terms of the data
$\partial_B,\Delta,\nabla,\lb\cdot\,,\cdot\rb_\sigma, R$ found in
\S\ref{construction_of_objects}.

\begin{proposition}\label{anchor}
The anchor satisfies $\Theta\circ\Theta^*=0$ if and only if 
\begin{enumerate}
\item $\rho_Q\circ\partial_B^*=0$ and 
\item
  $\nabla_{\partial_B^*\beta_1}^*\beta_2+\nabla_{\partial_B^*\beta_2}^*\beta_1=0$
  for all $\beta_1,\beta_2\in\Gamma(B^*)$.
\end{enumerate}
\end{proposition}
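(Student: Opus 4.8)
The statement to prove is Proposition~\ref{anchor}: the composition $\Theta\circ\Theta^*=\Theta\circ\mathcal D=0$ holds if and only if $\rho_Q\circ\partial_B^*=0$ and $\nabla^*_{\partial_B^*\beta_1}\beta_2+\nabla^*_{\partial_B^*\beta_2}\beta_1=0$ for all $\beta_1,\beta_2\in\Gamma(B^*)$. The guiding idea is that $\Theta\colon\mathbb E\to TB$ is a morphism of double vector bundles and $\mathcal D=\Theta^*\dr\colon C^\infty(B)\to\Gamma_B(\mathbb E)$, so it suffices to test the identity $\Theta\circ\mathcal D=0$ on functions that generate $C^\infty(B)$ as an algebra, namely pullbacks $q_B^*f$ for $f\in C^\infty(M)$ and linear functions $\ell_\beta$ for $\beta\in\Gamma(B^*)$. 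The values of $\Theta$ on the relevant sections of $\mathbb E$ are already recorded in the excerpt: $\Theta(\sigma_Q(q))=\widehat{\nabla_q}$, $\Theta(\tau^\dagger)=(\partial_B\tau)^\uparrow$, and Lemma~\ref{formula_for_Dl} gives $\mathcal D(\ell_\beta)=\sigma_Q(\partial_B^*\beta)+\widetilde{\nabla_\cdot^*\beta}$, while $\mathcal D(q_B^*f)=(\rho_Q^*\dr f)^\dagger$ by the definition of $\mathcal D$ on pullbacks.

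First I would handle the pullback case. We have $\mathcal D(q_B^*f)=(\rho_Q^*\dr f)^\dagger$, so $\Theta(\mathcal D(q_B^*f))=\Theta((\rho_Q^*\dr f)^\dagger)=(\partial_B(\rho_Q^*\dr f))^\uparrow$. This vanishes for all $f$ precisely when $\partial_B\circ\rho_Q^*=0$, which by duality is exactly $\rho_Q\circ\partial_B^*=0$; this is condition~(1). (Note this is also already \eqref{rho_delta}, so one may cite it, but deriving it here keeps the proposition self-contained.) Next I would handle the linear case: using Lemma~\ref{formula_for_Dl},
\begin{equation*}
\Theta(\mathcal D(\ell_\beta))=\Theta\bigl(\sigma_Q(\partial_B^*\beta)\bigr)+\Theta\bigl(\widetilde{\nabla_\cdot^*\beta}\bigr)
=\widehat{\nabla_{\partial_B^*\beta}}+\Theta\bigl(\widetilde{\nabla_\cdot^*\beta}\bigr).
\end{equation*}
Since $\Theta$ is linear, the core-linear section $\widetilde{\nabla_\cdot^*\beta}$ is sent to a linear vector field on $B$ that projects to $0\in\mx(M)$; writing $\nabla_\cdot^*\beta=\sum_i\beta_i\otimes\tau_i$ and using $\widetilde{\nabla_\cdot^*\beta}=\sum_i\ell_{\beta_i}\tau_i^\dagger$ together with $\Theta(\tau_i^\dagger)=(\partial_B\tau_i)^\uparrow$ one computes $\Theta(\widetilde{\nabla_\cdot^*\beta})$ as an explicit sum of vertical vector fields; alternatively one evaluates directly on linear functions $\ell_{\beta'}$ for $\beta'\in\Gamma(B^*)$ and on pullbacks $q_B^*g$. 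The upshot is that $\Theta(\mathcal D(\ell_\beta))(\ell_{\beta'})=\ell_{(\nabla^*_{\partial_B^*\beta}\beta')+\langle\nabla^*_\cdot\beta,\partial_B^*\beta'\rangle}$ and $\Theta(\mathcal D(\ell_\beta))(q_B^*g)=q_B^*\bigl(\rho_Q(\partial_B^*\beta)(g)\bigr)$, which already vanishes by condition~(1). So $\Theta(\mathcal D(\ell_\beta))=0$ for all $\beta$ is equivalent to $\nabla^*_{\partial_B^*\beta}\beta'+\langle\nabla^*_\cdot\beta,\partial_B^*\beta'\rangle=0$ for all $\beta,\beta'$; recognising the second term as $\nabla^*_{\partial_B^*\beta'}\beta$ (pairing $\nabla^*_\cdot\beta$ against $\partial_B^*\beta'$ in the $B$-slot) gives precisely condition~(2).

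Finally I would assemble the two directions: if both conditions hold, then $\Theta\circ\mathcal D$ annihilates both $q_B^*f$ and $\ell_\beta$, hence annihilates all of $C^\infty(B)$ since $\mathcal D$ is a derivation over $\Theta$ and $\Theta\circ\mathcal D$ is a (degree-raising) first-order operator determined by its values on algebra generators --- more precisely, $\Theta\circ\mathcal D$ vanishes on a generating set of $C^\infty(B)$, and since $\mathcal D$ satisfies a Leibniz rule and $\Theta$ is $C^\infty(B)$-linear on sections, $\Theta(\mathcal D(\phi\psi))=\phi\,\Theta(\mathcal D\psi)+\psi\,\Theta(\mathcal D\phi)$, so vanishing propagates to products. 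Conversely, $\Theta\circ\mathcal D=0$ forces the vanishing on $q_B^*f$ and $\ell_\beta$ separately, giving (1) and (2). The main obstacle is bookkeeping in the linear case: one must be careful that the "extra" term $\langle\nabla^*_\cdot\beta,\partial_B^*\beta'\rangle$ really is symmetric in $\beta,\beta'$ in the right way to match the stated form of (2), and that the pullback-part of $\Theta(\widetilde{\nabla_\cdot^*\beta})$ genuinely vanishes once (1) is in force rather than imposing a new condition. This is routine once the definitions of $\widehat{\nabla_q}$ from \eqref{ableitungen} and of the core-linear sections are unwound, so I do not anticipate a genuine difficulty, only care with dualisation conventions.
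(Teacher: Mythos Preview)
Your proposal is correct and follows essentially the same route as the paper: reduce $\Theta\circ\Theta^*=0$ to its action on $\dr(q_B^*f)$ and $\dr\ell_\beta$, use $\mathcal D(q_B^*f)=(\rho_Q^*\dr f)^\dagger$ and Lemma~\ref{formula_for_Dl} for $\mathcal D(\ell_\beta)$, then evaluate the resulting vector field on pullbacks and linear functions to extract conditions~(1) and~(2). The only slip is cosmetic: when you identify $\langle\nabla^*_\cdot\beta,\partial_B^*\beta'\rangle$ with $\nabla^*_{\partial_B^*\beta'}\beta$ you are pairing in the $Q$-slot (since $\partial_B^*\beta'\in\Gamma(Q)$), not the $B$-slot, but your conclusion is correct.
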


\begin{proof}
  The composition $\Theta\circ \Theta^*$ vanishes if and only if
  $(\Theta\circ \Theta^*)\dr F=0$ for all linear and pullback
  functions $F\in C^\infty(B)$. For $f\in
  C^\infty(M)$,
  $\Theta(\Theta^*\dr(q_B^*f))=((\partial_B\circ\rho_Q^*)\dr
  f)^\uparrow$. For $\beta\in\Gamma(B^*)$, we find using Lemma \ref{formula_for_Dl}
  $\Theta(\Theta^*\dr\ell_\beta)=\Theta(\mathcal
  D\ell_\beta)=\Theta(\sigma_Q(\partial_B^*\beta)+\widetilde{\nabla_\cdot^*\beta})
  =\widehat{\nabla_{\partial_B^*\beta}}+\widetilde{\partial_B\circ
    \langle\nabla^*_\cdot\beta, \cdot\rangle}$.  Here,
  $\partial_B\circ \langle\nabla^*_\cdot\beta, \cdot\rangle$ is as
  follows a morphism $B\to B$;
  $b\mapsto \partial_B(\langle\nabla^*_\cdot\beta, b\rangle)$.  On a
  linear function $\ell_{\beta'}$, $\beta'\in\Gamma(B^*)$,
  $\Theta(\Theta^*\dr\ell_\beta)(\ell_{\beta'})
  =\ell_{\nabla_{\partial_B^*\beta}^*\beta'}+\ell_{\nabla_{\partial_B^*\beta'}^*\beta}$. On
  a pullback $q_B^*f$, $f\in C^\infty(M)$, this is
  $q_B^*(\ldr{(\rho_Q\circ\partial_B^*)(\beta)}f)$. 
\end{proof}

\begin{proposition}\label{comp}
The compatibility of $\Theta$ with the Courant algebroid
bracket $\lb\cdot\,,\cdot\rb$ 
implies
\begin{enumerate}
 \item $\partial_B\circ R(q_1,q_2)=R_\nabla(q_1,q_2)$,
 \item $\rho_Q\circ
   \lb\cdot\,,\cdot\rb_\sigma=[\cdot\,,\cdot]\circ(\rho_Q, \rho_Q)$,
   that is $\Delta_q(\rho_Q^*\dr f)=\rho_Q^*\dr(\rho_Q(q)(f))$ for all
   $q\in\Gamma(Q)$ and $f\in C^\infty(M)$, and
\item $\partial_B\circ \Delta= \nabla\circ \partial_B$.
\end{enumerate}
\end{proposition}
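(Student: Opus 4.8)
The plan is to deduce (1)--(3) by evaluating the bracket--anchor compatibility of the linear Courant algebroid $\mathbb E\to B$, i.e.\ axiom (CA4) in the form $\Theta(\lb e_1,e_2\rb)=[\Theta(e_1),\Theta(e_2)]$ (Lie bracket of vector fields on $B$), on the generating sections $\sigma_Q(q)$ and $\tau^\dagger$ of $\Gamma_B(\mathbb E)$, for $q\in\Gamma(Q)$ and $\tau\in\Gamma(Q^*)$. I will use throughout the defining formulas from \S\ref{construction_of_objects}: $\Theta(\sigma_Q(q))=\widehat{\nabla_q}$, $\Theta(\tau^\dagger)=(\partial_B\tau)^\uparrow$, $\lb\sigma_Q(q),\tau^\dagger\rb=(\Delta_q\tau)^\dagger$ and $\sigma_Q(\lb q_1,q_2\rb_\sigma)-\lb\sigma_Q(q_1),\sigma_Q(q_2)\rb=\widetilde{R(q_1,q_2)}$, together with the bracket relations for linear and core vector fields on $B$ (see \S\ref{tangent_double} and \eqref{Lie_bracket_VF}): for a derivation $D$ of $\Gamma(B)$ one has $[\widehat D,b^\uparrow]=(Db)^\uparrow$, and $[\widehat{D_1},\widehat{D_2}]=\widehat{[D_1,D_2]}$.

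For (3), apply (CA4) to $e_1=\sigma_Q(q)$ and $e_2=\tau^\dagger$: the left-hand side is $\Theta\big((\Delta_q\tau)^\dagger\big)=(\partial_B\Delta_q\tau)^\uparrow$, while the right-hand side is $[\widehat{\nabla_q},(\partial_B\tau)^\uparrow]=(\nabla_q\partial_B\tau)^\uparrow$; since $(\cdot)^\uparrow$ is injective, this gives $\partial_B\circ\Delta_q=\nabla_q\circ\partial_B$, which is (3). For (1) and (2), apply (CA4) to $e_1=\sigma_Q(q_1)$ and $e_2=\sigma_Q(q_2)$. The right-hand side $[\widehat{\nabla_{q_1}},\widehat{\nabla_{q_2}}]$ is the linear vector field on $B$ associated to the commutator derivation $[\nabla_{q_1},\nabla_{q_2}]$, whose symbol on $M$ is $[\rho_Q(q_1),\rho_Q(q_2)]$; the left-hand side equals $\widehat{\nabla_{\lb q_1,q_2\rb_\sigma}}-\Theta(\widetilde{R(q_1,q_2)})$, and writing $\widetilde{R(q_1,q_2)}=\sum_i\ell_{\beta_i}\tau_i^\dagger$ one computes $\Theta(\widetilde{R(q_1,q_2)})=\sum_i\ell_{\beta_i}(\partial_B\tau_i)^\uparrow$, which is the linear vector field on $B$ covering $0\in\mx(M)$ that corresponds to $\partial_B\circ R(q_1,q_2)\in\Gamma(\operatorname{Hom}(B,B))$. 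Comparing symbols on $M$ yields $\rho_Q(\lb q_1,q_2\rb_\sigma)=[\rho_Q(q_1),\rho_Q(q_2)]$, i.e.\ (2) (equivalently, dualising, $\Delta_q(\rho_Q^*\dr f)=\rho_Q^*\dr(\rho_Q(q)f)$, as in Definition \ref{the_def}); subtracting the common term $\widehat{\nabla_{\lb q_1,q_2\rb_\sigma}}$ and comparing the remaining linear vector fields over $0$ yields $R_\nabla(q_1,q_2)=\partial_B\circ R(q_1,q_2)$, which is (1). Evaluating (CA4) on $e_1=\tau_1^\dagger$, $e_2=\tau_2^\dagger$ gives only $0=0$, so these three cases exhaust the generators.

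The routine but delicate point will be the double vector bundle bookkeeping on $TB$: identifying $\Theta(\widetilde{\phi})$ for $\phi\in\Gamma(\operatorname{Hom}(B,Q^*))$ with the linear vector field on $B$ covering zero that corresponds to $\partial_B\circ\phi$, and matching it with the curvature $R_\nabla$ with the correct signs and dual conventions (the same care being needed to read off that the symbol of $\widehat{\nabla_q}$ is exactly $\rho_Q(q)$). No extension argument is needed: since (1)--(3) are identities between the structure tensors $\partial_B,\Delta,\nabla,R$, checking (CA4) on the spanning set $\mathcal S$ of $\Gamma_B(\mathbb E)$ suffices.
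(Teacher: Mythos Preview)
Your proof is correct and follows essentially the same approach as the paper: you apply (CA4) to the generators $\sigma_Q(q)$ and $\tau^\dagger$ and read off (1)--(3) from the resulting equalities of linear vector fields on $B$. The only cosmetic difference is that the paper verifies the equality of the two linear vector fields $[\widehat{\nabla_{q_1}},\widehat{\nabla_{q_2}}]$ and $\widehat{\nabla_{\lb q_1,q_2\rb_\sigma}}-\widetilde{\partial_B\circ R(q_1,q_2)}$ by evaluating them on pullbacks $q_B^*f$ (which isolates (2)) and on linear functions $\ell_\beta$ (which isolates (1) via $R_{\nabla^*}=-(R_\nabla)^*$), whereas you compare the symbol and the remaining $\operatorname{Hom}(B,B)$-part directly; this is the same computation in slightly more structural language.
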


\begin{proof}
  We have $$\Theta \left\lb \sigma_Q(q_1), \sigma_Q(q_2)\right\rb=\left[\Theta(\sigma_Q(q_1)),
    \Theta(\sigma_Q(q_2))\right] =\left[\widehat{\nabla_{q_1}},
    \widehat{\nabla_{q_2}}\right]$$ and $$\Theta\left(\sigma_Q(\lb q_1,
    q_2\rb_\sigma)-\widetilde{R(q_1,q_2)}\right)=\widehat{\nabla_{\lb q_1,
      q_2\rb_\sigma}}-\widetilde{\partial_B\circ R(q_1,q_2)}.$$ Applying both
  derivations to a pullback function $q_B^*f$ for $f\in
  C^\infty(M)$ yields
\[\left[\widehat{\nabla_{q_1}}, \widehat{\nabla_{q_2}}\right] (q_B^*f)=q_B^*([\rho_Q(q_1), \rho_Q(q_2)]f).\]
and
\[ \left(\widehat{\nabla_{\lb q_1, q_2\rb_\sigma}}-\widetilde{\partial_B\circ
  R(q_1,q_2)}\right)(q_B^*f)=q_B^*(\rho_Q\lb q_1,
q_2\rb_\sigma(f))\] Applying both vector fields to a linear function
$\ell_\beta\in C^\infty(B)$, $\beta\in\Gamma(B^*)$, we get
\[\left[\widehat{\nabla_{q_1}}, \widehat{\nabla_{q_2}}\right]
(\ell_\beta)=\ell_{\nabla_{q_1}^*\nabla_{q_2}^*\beta-\nabla_{q_2}^*\nabla_{q_1}^*\beta}
\]
and 
\[ \left(\widehat{\nabla_{\lb q_1, q_2\rb_\sigma}}-\widetilde{\partial_B\circ
  R(q_1,q_2)}\right)(\ell_\beta)=\ell_{\nabla^*_{\lb q_1,
    q_2\rb_\sigma}\beta-R(q_1,q_2)^*\partial_B^*\beta}.\] Since
$R_{\nabla^*}(q_1,q_2)=-(R_\nabla(q_1,q_2))^*$, we find that
\[\Theta \left\lb \sigma_Q(q_1), \sigma_Q( q_2)\right\rb=[\Theta(\sigma_Q(q_1)), \Theta(\sigma_Q(q_2))]\]
for all $q_1,q_2\in\Gamma(Q)$
if and only if (1) and (2) are satisfied.

In the same manner we compute for $q\in\Gamma(Q)$ and
$\tau\in\Gamma(Q^*)$:
\[  \Theta\left(\left\lb \sigma_Q(q),
      \tau^\dagger\right\rb\right)=(\partial_B\Delta_q\tau)^\uparrow\]
and 
\[\left[ \Theta(\sigma_Q(q)),
  \Theta(\tau^\dagger)\right]=\left[\widehat{\nabla_q},
  (\partial_B\tau)^\uparrow\right]
=\left(\nabla_q(\partial_B\tau)\right)^\uparrow.
\]
Hence, $\Theta\left(\left\lb \sigma_Q(q),
    \tau^\dagger\right\rb\right)=\left[ \Theta(\sigma_Q(q)),
  \Theta(\tau^\dagger)\right]$ if and only if $\partial_B\circ
\Delta= \nabla\circ \partial_B$.
\end{proof}

\begin{proposition}\label{CA3}
  The condition (3) of Lemma \ref{useful_lemma} is equivalent to
\begin{enumerate}
\item
  $R(q_1,q_2)=-R(q_2,q_1)$
  and 
\item $\lb q_1, q_2\rb_\sigma+\lb q_2, q_1\rb_\sigma=0$
\end{enumerate}
for $q_1,q_2\in\Gamma(Q)$.
\end{proposition}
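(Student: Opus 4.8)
\textbf{Proof plan for Proposition \ref{CA3}.}

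The plan is to test condition (3) of Lemma \ref{useful_lemma}, namely
$\lb s_1,s_2\rb+\lb s_2,s_1\rb=\rho^*\dr\langle s_1,s_2\rangle$, on the
generating set $\mathcal S=\{\tau^\dagger\mid \tau\in\Gamma(Q^*)\}\cup\{\sigma_Q(q)\mid q\in\Gamma(Q)\}$, and to read off what the identity says for each of the three kinds of pairs. First, for two core sections $\tau_1^\dagger,\tau_2^\dagger$, the left-hand side vanishes by construction (see the last displayed bracket formula in \S\ref{construction_of_objects}), and the right-hand side vanishes because $\langle\tau_1^\dagger,\tau_2^\dagger\rangle=0$ for a linear metric; so this pair gives no condition. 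Second, for a linear section $\sigma_Q(q)$ and a core section $\tau^\dagger$, using the bracket formulas of \S\ref{construction_of_objects} one has $\lb\sigma_Q(q),\tau^\dagger\rb+\lb\tau^\dagger,\sigma_Q(q)\rb=(\Delta_q\tau)^\dagger+(-\Delta_q\tau+\rho_Q^*\dr\langle\tau,q\rangle)^\dagger=(\rho_Q^*\dr\langle\tau,q\rangle)^\dagger$, while $\langle\sigma_Q(q),\tau^\dagger\rangle=q_B^*\langle q,\tau\rangle$ and $\mathcal D(q_B^*\langle q,\tau\rangle)=(\rho_Q^*\dr\langle q,\tau\rangle)^\dagger$; so this pair is automatically satisfied too, and in fact it is exactly the footnote identity that \emph{defines} the bracket $\lb\tau^\dagger,\sigma_Q(q)\rb$. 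Hence the content of condition (3) is concentrated entirely in the pair of two linear sections.

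The main step is therefore the computation for $\sigma_Q(q_1),\sigma_Q(q_2)$. Using $\lb\sigma_Q(q_i),\sigma_Q(q_j)\rb=\sigma_Q(\lb q_i,q_j\rb_\sigma)-\widetilde{R(q_i,q_j)}$ we get
\[
\lb\sigma_Q(q_1),\sigma_Q(q_2)\rb+\lb\sigma_Q(q_2),\sigma_Q(q_1)\rb
=\sigma_Q\bigl(\lb q_1,q_2\rb_\sigma+\lb q_2,q_1\rb_\sigma\bigr)-\widetilde{R(q_1,q_2)+R(q_2,q_1)}.
\]
On the other side, $\langle\sigma_Q(q_1),\sigma_Q(q_2)\rangle=\ell_{\Lambda(q_1,q_2)}$ by \eqref{lambda_def}, but since the splitting is \emph{Lagrangian} this linear function is zero, so $\mathcal D\langle\sigma_Q(q_1),\sigma_Q(q_2)\rangle=0$. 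Thus condition (3) for this pair is equivalent to
\[
\sigma_Q\bigl(\lb q_1,q_2\rb_\sigma+\lb q_2,q_1\rb_\sigma\bigr)=\widetilde{R(q_1,q_2)+R(q_2,q_1)}.
\]
The left-hand side is a linear section of $\mathbb E\to B$ that projects under $\pi_Q$ to $\lb q_1,q_2\rb_\sigma+\lb q_2,q_1\rb_\sigma$, whereas the right-hand side is core-linear, hence projects to $0$ under $\pi_Q$; therefore the equality forces $\lb q_1,q_2\rb_\sigma+\lb q_2,q_1\rb_\sigma=0$, which is (2), and then the residual equality $\widetilde{R(q_1,q_2)+R(q_2,q_1)}=0$ is equivalent, by injectivity of $\phi\mapsto\widetilde\phi$, to $R(q_1,q_2)=-R(q_2,q_1)$, which is (1). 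Conversely, (1) and (2) together plainly give back condition (3) on all pairs from $\mathcal S$.

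The only mild subtlety — and the one place where it matters that we chose a \emph{Lagrangian} splitting rather than an arbitrary one — is that $\mathcal D\langle\sigma_Q(q_1),\sigma_Q(q_2)\rangle$ vanishes; for a general splitting one would instead pick up the term $\mathcal D(\ell_{\Lambda(q_1,q_2)})$, computed via Lemma \ref{formula_for_Dl}, and the resulting equivalence would be more complicated. So I expect no real obstacle here: the proof is a short case check on the generators, and I would just write out the three pairs, note that the first two are vacuous (the second being the defining relation for $\lb\tau^\dagger,\sigma_Q(q)\rb$), and extract (1) and (2) from the linear–linear pair by separating the linear part from the core-linear part.
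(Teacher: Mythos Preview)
Your proposal is correct and follows essentially the same approach as the paper's proof: both check condition (3) on the three types of pairs from $\mathcal S$, observe that the core--core and linear--core cases are automatic (the latter by the very definition of $\lb\tau^\dagger,\sigma_Q(q)\rb$), and extract (1) and (2) from the linear--linear case using that the Lagrangian splitting makes $\mathcal D\langle\sigma_Q(q_1),\sigma_Q(q_2)\rangle=0$. Your separation of the linear and core-linear parts via $\pi_Q$ is just a slightly more explicit version of what the paper does implicitly.
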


\begin{proof}
Choose $q_1,q_2$ in $\Gamma(Q)$.
Then we have 
\begin{equation*}\label{equation}
  \lb \sigma_Q(q_1), \sigma_Q(q_2)\rb+\lb
  \sigma_Q(q_2),\sigma_Q(q_1)\rb=\sigma_Q(\lb q_1,q_2\rb_\sigma+\lb q_2,
  q_1\rb_\sigma)
  -\widetilde{R(q_1,q_2)}-\widetilde{R(q_2,q_1)}.
\end{equation*}
By the choice of the splitting, we have $\mathcal D\langle \sigma_Q(q_1),
\sigma_Q(q_2)\rangle=\mathcal D(0)=0$.  Hence, (3) of Lemma
\ref{useful_lemma} is true for linear sections if and only if
$R(q_1,q_2)=-R(q_2,q_1)$ and $\lb q_1, q_2\rb+\lb q_2, q_1\rb=0$.
On one linear and one core section, we have $\lb \sigma_Q(q),
\tau^\dagger\rb=(\Delta_q\tau)^\dagger$ and $\lb
\tau^\dagger, \sigma_Q(q)\rb=(-\Delta_q\tau+\rho_Q^*\dr\langle \tau,
q\rangle)^\dagger$ by definition. On core sections (3) is trivially
satisfied since both the pairing and the bracket of two core sections
vanish.
\end{proof}

\begin{proposition}\label{CA2}
The derivation formula  (2) in Lemma  \ref{useful_lemma} 
is equivalent to 
\begin{enumerate}
\item $\Delta$ is dual to $\lb\cdot\,,\cdot\rb_\sigma$, that is $\lb\cdot\,,\cdot\rb_\sigma=\lb\cdot\,,\cdot\rb_\Delta$,
\item $\lb q_1, q_2\rb_\sigma+\lb q_2, q_1\rb_\sigma=0$
  for all $q_1, q_2\in\Gamma(Q)$ and 
\item $R(q_1,q_2)^*q_3=-R(q_1,q_3)^*q_2$
for all $q_1,q_2,q_3\in\Gamma(Q)$.
\end{enumerate}
\end{proposition}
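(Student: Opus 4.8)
\textbf{Proof proposal for Proposition \ref{CA2}.}

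The plan is to express condition (CA2) of Lemma \ref{useful_lemma},
\[
\rho(s_1)\langle s_2,s_3\rangle=\langle \lb s_1,s_2\rb, s_3\rangle+\langle s_2,\lb s_1,s_3\rb\rangle,
\]
on all triples of sections drawn from the generating set $\mathcal S=\{\tau^\dagger\}\cup\{\sigma_Q(q)\}$, and to read off which equalities among $\lb\cdot\,,\cdot\rb_\sigma$, $\Delta$ and $R$ are forced. First I would dispose of the easy cases: when two or more of $s_1,s_2,s_3$ are core sections, both sides vanish automatically because $\langle\tau_1^\dagger,\tau_2^\dagger\rangle=0$, the bracket of two core sections is zero, the anchor of a core section is vertical (so annihilates pullback functions), and $\langle\sigma_Q(q),\tau^\dagger\rangle=q_B^*\langle q,\tau\rangle$ is a pullback function killed by $\Theta(\tau^\dagger)=(\partial_B\tau)^\uparrow$. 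So the only informative configurations are $(s_1,s_2,s_3)=(\sigma_Q(q_1),\sigma_Q(q_2),\tau^\dagger)$ and its variants with the core section in a different slot, and $(\sigma_Q(q_1),\sigma_Q(q_2),\sigma_Q(q_3))$.

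For the mixed case I would plug in the formulas from \S\ref{construction_of_objects}: $\lb\sigma_Q(q_1),\sigma_Q(q_2)\rb=\sigma_Q(\lb q_1,q_2\rb_\sigma)-\widetilde{R(q_1,q_2)}$, $\lb\sigma_Q(q_i),\tau^\dagger\rb=(\Delta_{q_i}\tau)^\dagger$, together with $\langle\widetilde\phi,\sigma_Q(q)\rangle=\ell_{\langle\phi(\cdot),q\rangle}$ and $\Theta(\sigma_Q(q))=\widehat{\nabla_q}$ so that $\Theta(\sigma_Q(q))(q_B^*\langle\cdot\rangle)$ produces the expected anchor-derivative term. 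Taking $s_1=\sigma_Q(q_1)$, $s_2=\sigma_Q(q_2)$, $s_3=\tau^\dagger$ and using $\langle\sigma_Q(q_2),\tau^\dagger\rangle=q_B^*\langle q_2,\tau\rangle$, the left side becomes $q_B^*(\rho_Q(q_1)\langle q_2,\tau\rangle)$, the first term on the right is $\langle\sigma_Q(\lb q_1,q_2\rb_\sigma)-\widetilde{R(q_1,q_2)},\tau^\dagger\rangle=q_B^*\langle\lb q_1,q_2\rb_\sigma,\tau\rangle$, and the second term is $q_B^*\langle q_2,\Delta_{q_1}\tau\rangle$; comparing gives exactly $\langle\lb q_1,q_2\rb_\sigma,\tau\rangle=\rho_Q(q_1)\langle q_2,\tau\rangle-\langle q_2,\Delta_{q_1}\tau\rangle$, i.e. $\lb\cdot\,,\cdot\rb_\sigma=\lb\cdot\,,\cdot\rb_\Delta$ — this is (1). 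The variant with the core section as $s_1$, namely $(\tau^\dagger,\sigma_Q(q_2),\sigma_Q(q_3))$, uses $\lb\tau^\dagger,\sigma_Q(q_i)\rb=(-\Delta_{q_i}\tau+\rho_Q^*\dr\langle\tau,q_i\rangle)^\dagger$ and $\Theta(\tau^\dagger)=(\partial_B\tau)^\uparrow$ acting on the linear function $\langle\sigma_Q(q_2),\sigma_Q(q_3)\rangle=\ell_{\Lambda(q_2,q_3)}$; comparing coefficients of the resulting pullback functions should force the skew-symmetry $\lb q_2,q_3\rb_\sigma+\lb q_3,q_2\rb_\sigma=0$, which is (2). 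Finally the all-linear triple $(\sigma_Q(q_1),\sigma_Q(q_2),\sigma_Q(q_3))$: the left side is $\widehat{\nabla_{q_1}}(\ell_{\Lambda(q_2,q_3)})$, a linear function; the right side produces, besides core-linear contributions that pair trivially with $\sigma_Q$, the term $-\langle\widetilde{R(q_1,q_2)},\sigma_Q(q_3)\rangle-\langle\sigma_Q(q_2),\widetilde{R(q_1,q_3)}\rangle=-\ell_{R(q_1,q_2)^*q_3}-\ell_{R(q_1,q_3)^*q_2}$ plus a $\Lambda$-dependent piece coming from $\sigma_Q(\lb q_1,q_2\rb_\sigma)$ and from $\nabla_{q_1}$ differentiating $\Lambda$; here the Lagrangian hypothesis $\Lambda\equiv 0$ (so $\langle\sigma_Q(q),\sigma_Q(q')\rangle=0$) kills all $\Lambda$-terms, and we are left with $0=R(q_1,q_2)^*q_3+R(q_1,q_3)^*q_2$, which is (3). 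Conversely, granting (1)--(3) one checks (CA2) holds on all of $\mathcal S$ by running these computations backwards, and since $\mathcal S$ generates $\Gamma(\mathbb E)$ and (CA2) is tensorial in the $C^\infty(B)$-module structure in the appropriate sense, it extends.

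The main obstacle I anticipate is bookkeeping in the all-linear case: there the pairing $\langle\sigma_Q(q),\sigma_Q(q')\rangle$ is a priori $\ell_{\Lambda(q,q')}$, not zero, so one must be careful that every $\Lambda$-contribution genuinely cancels once the Lagrangian condition is imposed, and one must correctly track the core-linear sections $\widetilde{R(q_1,q_2)}$ under the pairing with $\sigma_Q(q_3)$ versus $\tau^\dagger$ (they pair to a linear function in one case and to zero in the other). A secondary subtlety is making sure the skew-symmetry conclusions (2) in this proposition and in Proposition \ref{CA3} are not double-counted — both (CA2) and (CA3) contribute the relation $\lb q_1,q_2\rb_\sigma+\lb q_2,q_1\rb_\sigma=0$, and the cleanest route is to extract it once from whichever configuration gives it most transparently and simply note consistency. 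Everything else is a routine substitution of the structure formulas and comparison of linear and pullback components of functions on $B$.
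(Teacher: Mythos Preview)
Your proposal is correct and follows essentially the same route as the paper: check (CA2) on all triples from $\mathcal S$, with the trivially satisfied cases (two or more core sections) disposed of first, and extract conditions (1), (2), (3) from the configurations $(\sigma_Q,\sigma_Q,\tau^\dagger)$, $(\tau^\dagger,\sigma_Q,\sigma_Q)$, and $(\sigma_Q,\sigma_Q,\sigma_Q)$ respectively. One simplification you can make: since the splitting in Theorem~\ref{main} is \emph{Lagrangian} by hypothesis, the tensor $\Lambda$ is identically zero from the outset, so there is no $\Lambda$-bookkeeping to track in the all-linear case --- the pairing $\langle\sigma_Q(q_i),\sigma_Q(q_j)\rangle$ simply vanishes, and the left-hand side of (CA2) for three linear sections is $0$ on the nose.
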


\begin{proof}
  We compute (CA2) for linear and core sections.  First of all, the
  equations \[\Theta(\tau_1^\dagger)\langle \tau_2^\dagger,
  \tau_3^\dagger\rangle =\langle \lb \tau_1^\dagger,
  \tau_2^\dagger\rb, \tau_3^\dagger\rangle +\langle
  \tau_2^\dagger, \lb
  \tau_1^\dagger,\tau_3^\dagger\rb\rangle,\]
\[\Theta(\tau_1^\dagger)\langle \tau_2^\dagger,
  \sigma_Q(q)\rangle =\langle \lb \tau_1^\dagger,
  \tau_2^\dagger\rb, \sigma_Q(q)\rangle +\langle 
  \tau_2^\dagger, \lb
  \tau_1^\dagger,\sigma_Q(q)\rb\rangle\]
and 
\[\Theta(\sigma_Q(q))\langle \tau_1^\dagger, \tau_2^\dagger\rangle
=\langle \lb \sigma_Q(q), \tau_1^\dagger\rb, \tau_2^\dagger\rangle
+\langle \tau_1^\dagger, \lb \sigma_Q(q), \tau_2^\dagger\rb\rangle\]
are trivially satisfied for all
$\tau_1,\tau_2,\tau_3\in\Gamma(Q^*)$ and $q\in\Gamma(Q)$.
Next we have for $q_1,q_2\in\Gamma(Q)$ and 
  $\tau\in\Gamma(Q^*)$:
\begin{align*}
  &\Theta(\sigma_Q(q_1))\langle \sigma_Q(q_2), \tau^\dagger\rangle -\langle \lb
  \sigma_Q(q_1), \sigma_Q(q_2)\rb, \tau^\dagger\rangle-\langle \sigma_Q(q_2), \lb
  \sigma_Q(q_1),
  \tau^\dagger\rb\rangle \\
  =\,\,&\widehat{ \nabla_{q_1}}(q_B^*\langle q_2,\tau\rangle)
  -q_B^*\langle \lb q_1, q_2\rb_\sigma, \tau\rangle-q_B^*\langle q_2,
  \Delta_{q_1}\tau\rangle\\
  =\,\,&q_B^*\Bigl(\rho_Q(q_1)\langle q_2,\tau\rangle -\langle \lb
  q_1, q_2\rb_\sigma, \tau\rangle-\langle q_2,
  \Delta_{q_1}\tau\rangle\Bigr)
\end{align*}
Hence
\[\Theta(\sigma_Q(q_1))\langle \sigma_Q(q_2), \tau^\dagger\rangle =\langle
\lb \sigma_Q(q_1), \sigma_Q(q_2)\rb, \tau^\dagger\rangle+\langle \sigma_Q(q_2), \lb
\sigma_Q(q_1), \tau^\dagger\rb\rangle
\]
for all $q_1,q_2\in\Gamma(Q)$ and $\tau\in\Gamma(Q^*)$ if and only
if $\Delta$ and $\lb\cdot\,,\cdot\rb_\sigma$ are dual to each other.

In the same manner, we compute
\begin{align*}
  &\Theta(\tau^\dagger)\langle \sigma_Q(q_1), \sigma_Q(q_2)\rangle -\langle \lb
  \tau^\dagger, \sigma_Q(q_1)\rb, \sigma_Q(q_2)\rangle-\langle \sigma_Q(q_1), \lb
  \tau^\dagger, \sigma_Q(q_2)
  \rb\rangle \\
  =\,\,&0-\langle -(\Delta_{q_1}\tau)^\dagger+(\rho_Q^*\dr\langle
  q_1, \tau\rangle)^\dagger, \sigma_Q(q_2)\rangle -\langle \sigma_Q(q_1),
  -(\Delta_{q_2}\tau)^\dagger+(\rho_Q^*\dr\langle q_2,
  \tau\rangle)^\dagger\rangle\\
  =\,\,&-q_B^*\langle \lb q_1, q_2\rb_\sigma+ \lb q_2,
  q_1\rb_\sigma,\tau\rangle.
\end{align*}
Thus, 
\[\Theta(\tau^\dagger)\langle \sigma_Q(q_1),  \sigma_Q(q_2)\rangle =\langle
\lb \tau^\dagger, \sigma_Q(q_1)\rb, \sigma_Q(q_2)\rangle+\langle
\sigma_Q(q_1), \lb \tau^\dagger, \sigma_Q(q_2) \rb\rangle \] for all
$q_1,q_2\in\Gamma(Q)$ and $\tau\in\Gamma(Q^*)$ if and only if
\[ 0=\lb q_1, q_2\rb_\sigma+\lb q_2, q_1\rb_\sigma.
\]

Finally we have $\Theta(\sigma_Q(q_1))\langle \sigma_Q(q_2), \sigma_Q(q_3)\rangle =0$ for all
$q_1,q_2,q_3\in\Gamma(Q)$, and $\langle \lb \sigma_Q(q_1), \sigma_Q(q_2)\rb,
\sigma_Q(q_3)\rangle=\ell_{-R(q_1,q_2)^*q_3}$.  This shows that
\[\Theta(\sigma_Q(q_1))\langle \sigma_Q(q_2), \sigma_Q(q_3)\rangle=\langle \lb \sigma_Q(q_1),
\sigma_Q(q_2)\rb, \sigma_Q(q_3)\rangle+\langle \sigma_Q(q_2), \lb \sigma_Q(q_1), \sigma_Q(q_3)\rb\rangle
\]
if and only if 
\[0=-R(q_1,q_2)^*q_3-R(q_1,q_3)^*q_2.
\]
\end{proof}

\begin{proposition}\label{Jacobi}
  Assume that $\Delta$ and $\lb\cdot\,,\cdot\rb_\sigma$ are dual to each
  other. The Jacobi identity in Leibniz form for sections in $\mathcal
  S$ is equivalent to
\begin{enumerate}
\item $R(q_1,q_2)\circ \partial_B=R_\Delta(q_1,q_2)$ and 
\item 
\begin{equation*}
\begin{split}
  &R(q_1, \lb q_2,q_3\rb_\Delta)-R(q_2, \lb q_1, q_3\rb_\Delta)-R(\lb q_1,q_2\rb_\Delta),q_3)\\
  &+\lozenge_{q_1}(R(q_2,q_3))-\lozenge_{q_2}(R(q_1,q_3))+\lozenge_{q_3}(R(q_1,q_2))=\,\nabla_\cdot^*\left(R(q_1,q_2)^*q_3\right)
\end{split}
\end{equation*}
\end{enumerate}
for all $q_1,q_2,q_3\in\Gamma(Q)$.
\end{proposition}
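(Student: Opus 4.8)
The plan is to verify the Leibniz Jacobiator
$\Jac(e_1,e_2,e_3):=\lb e_1,\lb e_2,e_3\rb\rb-\lb\lb e_1,e_2\rb,e_3\rb-\lb e_2,\lb e_1,e_3\rb\rb$
on every \emph{ordered} triple of sections drawn from $\mathcal S=\{\tau^\dagger\mid\tau\in\Gamma(Q^*)\}\cup\{\sigma_Q(q)\mid q\in\Gamma(Q)\}$, expanding each double bracket with the formulas of \S\ref{construction_of_objects} and simplifying with Lemmas \ref{formulas} and \ref{formula_for_Dl} and the identity $\lb\sigma_Q(q),\widetilde\phi\rb=\widetilde{\lozenge_q\phi}$ established just above. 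Because the bracket of two core sections vanishes, and the bracket of a linear section with a core section is a core section, every triple with at least two core sections has identically vanishing Jacobiator, so those cases impose no conditions.

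Next I would treat the triples with exactly one core section. For $(\sigma_Q(q_1),\sigma_Q(q_2),\tau^\dagger)$, expanding and using that by hypothesis $\lb\cdot\,,\cdot\rb_\sigma=\lb\cdot\,,\cdot\rb_\Delta$, together with $\lb\widetilde{R(q_1,q_2)},\tau^\dagger\rb=-(R(q_1,q_2)(\partial_B\tau))^\dagger$ from Lemma \ref{formulas}(1), one obtains $\Jac=\bigl(R_\Delta(q_1,q_2)\tau-R(q_1,q_2)(\partial_B\tau)\bigr)^\dagger$. Running the same expansion with the core section placed in the first or second slot produces the same expression once the $\rho_Q^*\dr$-terms cancel, which follows from condition (3) of Definition \ref{the_def} for $\Delta$ and from $\Delta^*=\lb\cdot\,,\cdot\rb_\Delta$. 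Hence the Jacobiator vanishes on all one-core-section triples if and only if $R(q_1,q_2)\circ\partial_B=R_\Delta(q_1,q_2)$, which is (1).

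It then remains to analyse $(\sigma_Q(q_1),\sigma_Q(q_2),\sigma_Q(q_3))$. Expanding each double bracket with $\lb\sigma_Q(q),\sigma_Q(q')\rb=\sigma_Q(\lb q,q'\rb_\Delta)-\widetilde{R(q,q')}$ and $\lb\sigma_Q(q),\widetilde\phi\rb=\widetilde{\lozenge_q\phi}$, the only nonstandard term is $\lb\widetilde{R(q_1,q_2)},\sigma_Q(q_3)\rb$, which I would compute from (CA3), $\lb e_1,e_2\rb+\lb e_2,e_1\rb=\mathcal D\langle e_1,e_2\rangle$, using $\langle\widetilde{R(q_1,q_2)},\sigma_Q(q_3)\rangle=\ell_{R(q_1,q_2)^*q_3}$ and the formula $\mathcal D(\ell_\beta)=\sigma_Q(\partial_B^*\beta)+\widetilde{\nabla_\cdot^*\beta}$ of Lemma \ref{formula_for_Dl}. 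Collecting the $\sigma_Q$-components of $\Jac$ yields $\sigma_Q\bigl(-\Jac_{\lb\cdot\,,\cdot\rb_\Delta}(q_1,q_2,q_3)+\partial_B^*(R(q_1,q_2)^*q_3)\bigr)$, which vanishes once (1) is known, since by \eqref{curv_dual_Jac} condition (1) forces $\Jac_{\lb\cdot\,,\cdot\rb_\Delta}(q_1,q_2,q_3)=\partial_B^*\bigl(R(q_1,q_2)^*q_3\bigr)$. Collecting the core-linear components then gives, after multiplying by $-1$, exactly identity (2), i.e. the $\dr_\lozenge$-closedness of $R$ corrected by $\nabla_\cdot^*(R(q_1,q_2)^*q_3)$. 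I expect the bookkeeping in this three-linear-section computation---keeping all signs straight while simultaneously invoking (CA3), Lemma \ref{formula_for_Dl}, and the identification $\omega_R(q_1,q_2,q_3)=R(q_1,q_2)^*q_3$---to be the main obstacle; everything else is short and mechanical. Assembling the three cases gives the claimed equivalence.
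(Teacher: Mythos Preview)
Your proposal is correct and follows essentially the same route as the paper: you partition triples by the number of core sections, dismiss the cases with at least two core sections, extract condition (1) from the one-core-section triples, and in the three-linear case use $\lb\sigma_Q(q),\widetilde\phi\rb=\widetilde{\lozenge_q\phi}$, Lemma~\ref{formulas}, (CA3), and Lemma~\ref{formula_for_Dl} to separate $\sigma_Q$-components (which reduce via \eqref{curv_dual_Jac} to condition (1)) from core-linear components (which give (2)). The paper organises the same computation and concludes with the same appeal to \eqref{curv_dual_Jac}; your only additional remark is the explicit check that the other orderings of the one-core-section case give nothing new, which the paper leaves implicit.
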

If $R$ is skew-symmetric as in (1) of Proposition \ref{CA3}, then the second
equation is the same as (D6) in Definition \ref{def_dorfman_2_conn}.

\begin{proof}
  The Jacobi identity is trivially satisfied on core sections since
  the bracket of two core sections is $0$.  Similarly, for
  $\tau_1,\tau_2\in\Gamma(Q^*)$ and $q\in\Gamma(Q)$, we find $\lb
  \sigma_Q(q),\lb \tau_1^\dagger, \tau_2^\dagger\rb\rb=0$ and $\lb \lb
  \sigma_Q(q), \tau_1^\dagger\rb, \tau_2^\dagger\rb +\lb
  \tau_1^\dagger, \lb \sigma_Q(q), \tau_2^\dagger\rb\rb=0 $.  We have
\begin{equation*}\begin{split}
 &   \left\lb \sigma_Q(q_1), \left\lb \sigma_Q(q_2),
        \tau^\dagger\right\rb\right\rb-\left\lb \sigma_Q(q_2), \left\lb
        \sigma_Q(q_1), \tau^\dagger\right\rb\right\rb\\
 &=\left\lb \sigma_Q(q_1),
      (\Delta_{q_2}\tau)^\dagger\right\rb
    -\left\lb \sigma_Q(q_2), (\Delta_{q_1}\tau)^\dagger\right\rb\\
    &=(\Delta_{q_1}\Delta_{q_2}\tau)^\dagger-
    (\Delta_{q_2}\Delta_{q_1}\tau)^\dagger,
\end{split}\end{equation*}
and 
\begin{align*}
  \left\lb \left\lb \sigma_Q(q_1), \sigma_Q(q_2)\right\rb, \tau^\dagger\right\rb
 & =\left\lb \sigma_Q(\lb q_1, q_2\rb_\Delta)-\widetilde{R(q_1,q_2)},
    \tau^\dagger\right\rb\\
&= (\Delta_{\lb q_1,
    q_2\rb_\Delta}\tau)^\dagger+(R(q_1,q_2)(\partial_B\tau))^\dagger
\end{align*}
by Lemma \ref{formulas}.
We now choose $q_1,q_2, q_3\in\Gamma(Q)$ and compute 
\begin{equation*}
\begin{split}
  &\left\lb\left\lb \sigma_Q(q_1), \sigma_Q(q_2)\right\rb, \sigma_Q(q_3)\right\rb\\
&=\left\lb
    \sigma_Q(\lb q_1, q_2\rb_\Delta)-\widetilde{R(q_1,q_2)}, \sigma_Q(q_3)
  \right\rb\\
  &=\sigma_Q(\lb \lb q_1, q_2\rb_\Delta, q_3\rb_\Delta)-\widetilde{R(\lb q_1, q_2\rb_\Delta,
  q_3)}-\mathcal D\ell_{\langle R(q_1,q_2)\cdot , q_3\rangle}+\widetilde{\lozenge_{q_3}R(q_1,q_2)}\\
  &=\sigma_Q(\lb \lb q_1, q_2\rb_\Delta, q_3\rb_\Delta)-\widetilde{R(\lb q_1, q_2\rb_\Delta,
  q_3)}\\
  &\qquad -\sigma_Q(\partial_B^*\langle R(q_1,q_2)\cdot ,
  q_3\rangle)-\widetilde{\nabla_\cdot^*\langle R(q_1,q_2)\cdot , q_3\rangle}
    +\widetilde{\lozenge_{q_3}R(q_1,q_2)}
\end{split}
\end{equation*}
and 
\begin{equation*}
\begin{split}
  \left\lb \sigma_Q(q_2),\left\lb \sigma_Q(q_1),
      \sigma_Q(q_3)\right\rb\right\rb&=\left\lb \sigma_Q(q_2),
    \sigma_Q(\lb q_1,
    q_3\rb_\Delta)-\widetilde{R(q_1,q_3)}\right\rb\\
  &=\sigma_Q(\lb q_2, \lb q_1,
  q_3\rb_\Delta\rb_\Delta)-\widetilde{R(q_2,\lb q_1,
    q_3\rb_\Delta)}-\widetilde{\lozenge_{q_2}R(q_1,q_3)}.
\end{split}
\end{equation*}
We hence find that 
\[\left\lb\left\lb \sigma_Q(q_1), \sigma_Q(q_2)\right\rb,
  \sigma_Q(q_3)\right\rb+\left\lb \sigma_Q(q_2),\left\lb \sigma_Q(q_1),
    \sigma_Q(q_3)\right\rb\right\rb=\left\lb \sigma_Q(q_1),\left\lb \sigma_Q(q_2),
    \sigma_Q(q_3)\right\rb\right\rb
\]
if and only if 
\[\left\lb\left\lb q_1, q_2\right\rb_\Delta, 
  q_3\right\rb_\Delta+\left\lb q_2,\left\lb q_1,
    q_3\right\rb_\Delta\right\rb_\Delta=\left\lb q_1,\left\lb q_2,
    q_3\right\rb_\Delta\right\rb_\Delta+\partial_B^*\langle
R(q_1,q_2)\cdot,q_3\rangle
\]
and 
\begin{equation*}
\begin{split}
  &R(\lb
  q_1,q_2\rb_\Delta,q_3)+\nabla_\cdot^*\langle
  R(q_1,q_2)\cdot,q_3\rangle-\lozenge_{q_3}R(q_1,q_2)
  +(q_2,\lb q_1,q_3\rb_\Delta)+\lozenge_{q_2}R(q_1,q_3)\\
  &=R(q_1,\lb q_2,q_3\rb_\Delta)+\lozenge_{q_1}R(q_2,q_3).
\end{split}
\end{equation*}
We conclude using \eqref{curv_dual_Jac}.
\end{proof}

A combination of Proposition \ref{anchor}, \ref{comp}, \ref{CA3},
\ref{CA2}, \ref{Jacobi} and Lemma \ref{useful_lemma} proves Theorem \ref{main}.

\section{On the LA-Courant algebroid condition}\label{appendix_LACA}
We give here Li-Bland's definition of an LA-Courant algebroid
\cite{Li-Bland12}, and we sketch the proof of Theorem \ref{LA-Courant}.

First consider a Lie algebroid $(q_A\colon A\to M, \rho_A,
[\cdot\,,\cdot])$. Then the dual $A^*$ is endowed with a linear
Poisson structure $\pi_A$, which defines a vector bundle morphism
$\pi_A^\sharp\colon T^*A^*\to TA^*$. We begin by studying this
morphism in decompositions of $T^*A^*$ and $TA^*$. Choose a connection
$\nabla\colon\mx(M)\times\Gamma(A)\to\Gamma(A)$.  
Then $\nabla$ defines a decomposition $\mathbb I_\nabla\colon A\times_MTM\times_M A\to TA$ by
$\mathbb I_\nabla(a_m,v_m,a_m')=T_ma
  v_m-\left.\frac{d}{dt}\right\an{t=0}a_m+t(\nabla_{v_m}a-a_m')$
for any $a\in\Gamma(A)$ such that $a(m)=a_m$.
In particular, the dual connection $\nabla^*\colon\mx(M)\times\Gamma(A^*)\to\Gamma(A^*)$ defines a decomposition of $TA^*$:
\begin{equation*}
\begin{xy}
\xymatrix{A^*\times_M TM\times_M A^*\ar[rr]\ar[dd]&&TM\ar[dd] &&TA^* \ar[rr]\ar[dd]&& TM\ar[dd]\\
                                    &A^*\ar[rd]&&\overset{\mathbb I_{\nabla^*}}{\longrightarrow}&&A^*\ar[dr]&\\
                  A^*\ar[rr]&& M&&A^*\ar[rr]&&M
}
\end{xy},
\end{equation*}
\[ \mathbb I_{\nabla^*} (\alpha(m), v_m, \alpha'_m)=T_m\alpha
  v_m-\left.\frac{d}{dt}\right\an{t=0}\alpha(m)+t(\nabla_{v_m}^*\alpha-\alpha'_m).
\]
Further, $\nabla$ defines a decomposition
\begin{equation*}
\begin{xy}
  \xymatrix{ A^*\times_MA\times_MT^*M\ar[rr]\ar[dd]&&A\ar[dd]&&T^*A^* \ar[rr]\ar[dd]&& A\ar[dd]\\
    &T^*M\ar[rd]&&\overset{\mathbb J_\nabla}{\longrightarrow}&&T^*M\ar[dr]&\\
    A^*\ar[rr]&& M&&A^*\ar[rr]&&M }
\end{xy}
\end{equation*}
of $T^*A^*$ by
\[ \mathbb J_\nabla(\alpha_m, a(m), \theta_m)=\dr\ell_a(\alpha_m)-(T_{\alpha_m}q_{A^*})^*(\langle
  \nabla_\cdot a, \alpha_m\rangle -\theta_m),
\]
for all $\alpha_m\in A^*$, $\theta_m\in T^*M$ and $a\in\Gamma(A)$, and 
$\nabla^*$ defines a decomposition $\mathbb J_{\nabla^*}\colon A\times_M A^*\times_M T^*M\to T^*A$
by $\mathbb J_\nabla(a_m,\alpha(m), \theta_m)=\dr\ell_\alpha(a_m)-(T_{a_m}q_{A})^*(\langle
  \nabla_\cdot^* \alpha, a_m\rangle -\theta_m)$.

  The map $\mathbb I_{\nabla^*}\inv\circ\pi_A^\sharp\circ\mathbb
  J_{\nabla}\colon A^*\times_M A\times_M TM\to A^*\times_M TM\times_M
  A^*$ is given by
\[ (\alpha_m,a_m,\theta_m)\mapsto (\alpha_m, \rho_A(a_m), -\langle
\alpha_m, \nabla_\cdot^{\rm bas}a\rangle-\rho_A^*\theta_m+\rho_A^*\langle
\nabla_\cdot a,\alpha_m\rangle)
\] (see \cite{Jotz13a}), where $\nabla^{\rm bas}$ is defined as in
Example \ref{tangent_double_al}. Recall that $T^*A^*$ is
isomorphic to $T^*A$ \cite{MaXu98}; in the decompositions $\mathbb
J_{\nabla}$ and $\mathbb J_{\nabla^*}$, of $T^*A^*$ and $T^*A$, the
isomorphism is given by
\[(\alpha_m,a_m,\theta_m)\in T^*A^*\leftrightarrow (\alpha_m,a_m,-\theta_m)\in T^*A.
\]
Note further that, via this isomorphism, $(T^*A^*)\duer A\simeq
(T^*A)\duer A=TA$ and also $TA^*\duer TM\simeq TA$.
In the decompositions, the pairing of $T^*A^*$ with its dual $TA$ is given by 
\[\langle (\alpha_m,a_m,\theta_m), (a_m,v_m,a_m')\rangle=\langle \alpha_m, a_m\rangle-\langle \theta_m, v_m\rangle
\]
and the pairing of $TA^*$ with $TA$ is given by
\[\langle (\alpha_m,v_m,\alpha'_m), (a_m,v_m,a_m')\rangle=\langle \alpha_m, a'_m\rangle+\langle \alpha'_m, a_m\rangle.
\]

In \cite{Li-Bland12} Li-Bland defines a relation
$\Pi_A\subseteq TA\times TA\simeq (T^*A^*)\duer A\,\times\,TA^*\duer TM$ 
by
\[ (V,W)\in \Pi_A \qquad \Leftrightarrow \qquad \langle V,\Phi\rangle_A=\langle W,\pi^\sharp(\Phi)\rangle_{TM}
\,\,\text{ for all }\,\, \Phi\in T^*A^*.
\]
In the decomposition $\mathbb I_{\nabla}$, the relation $\Pi_A$ is hence given by
\[ (v_m,a_m,b_m) \sim_{\Pi_A} (c_m,w_m, d_m)
\]
if and only if 
\begin{equation*}
\begin{split}
&\langle (v_m,a_m,b_m) , (\alpha_m,a_m,\theta_m)\rangle\\
&=\langle
(c_m,w_m, d_m), (\alpha_m, \rho_A(a_m), -\langle
\alpha_m, \nabla_\cdot^{\rm bas}a\rangle-\rho_A^*\theta_m+\rho_A^*\langle
\nabla_\cdot a,\alpha_m\rangle)\rangle
\end{split}
\end{equation*}
for all $(\alpha_m,a_m,\theta_m)\in A^*_m\times A_m\times T_m^*M$.  That
is, $ (v_m,a_m,b_m) \sim_{\Pi_A} (c_m,w_m, d_m)$ if and only if
$w_m=\rho_A(a_m)$, $b_m=d_m-\nabla_{c_m}^{\rm bas}a+
\nabla_{\rho_A(c_m)} a$ and $v_m=\rho_A(c_m)$.   In other words, we have
\[ (\rho_A(a_m),b_m,c_m-\nabla_{a_m}^{\rm bas}b+ \nabla_{\rho_A(a_m)}
b) \sim_{\Pi_A} (a_m,\rho_A(b_m), c_m)
\]
for all $a_m,b_m,c_m\in A_m$.
This leads to 
\begin{equation*}
\begin{split} 
&(\rho_A(a)(m),b(m),c(m)-[a,b](m)+ \nabla_{\rho_A(a)} b(m))\\
&\hspace*{3cm}\sim_{\Pi_A} (a(m),\rho_A(b)(m), c(m)+\nabla_{\rho_A(b)}a(m))
\end{split}
\end{equation*}
for all $a,b,c\in \Gamma(A)$.
But this is just 
\begin{equation*}
\begin{split} 
&T_mb\rho_A(a)(m)+_A\left.\frac{d}{dt}\right\an{t=0}b(m)+t([b,a]+c)(m)\\
&\hspace*{3cm}\sim_{\Pi_A}T_ma\rho_A(b)(m)+_A\left.\frac{d}{dt}\right\an{t=0}a(m)+tc(m)
\end{split}
\end{equation*}
for all $a,b,c\in\Gamma(A)$.

Given $a,b,c\in\Gamma(A)$, we write $(a,b,c)(m)$ for the pair 
\[ \left(
T_mb\rho_A(a)(m)+_A([b,a]+c)^\uparrow(b_m),
T_ma\rho_A(b)(m)+_Ac^\uparrow(a_m)\right)
\]
in $\Pi_A$ and we get 
\[\Pi_A=\{(a,b,c)(m)\mid a,b,c\in \Gamma(A),\, m\in M\}.
\]
Note that for $f\in C^\infty(M)$,  \[(a,b,fc)(m)=(a,b,f(m)c)(m),\] 
\[(fa,b,c)(m)=(f(m)a,b,c+\rho_A(b)(f)(m)a)(m),\]
and \[(a,fb,c)(m)=(a,f(m)b,c-\rho_A(a)(f)(m)b)(m)\]
for all $m\in M$.
As a consequence, one can easily check that given a family of sections
$\mathcal S\subseteq \Gamma(A)$ that spans point-wise $A$, we find
\begin{equation}\label{description_Pi}
  \Pi_A=\left\{\left(\left.\sum_{i}x_ia_i,\sum_jy_jb_j,\sum_kz_kc_k\right)(m)
\right\vert a_i,b_j,c_k\in\mathcal S, x_i,y_j,z_k\in\R, m\in M\right\}.
\end{equation}
Note further that $\Pi_A\subseteq TA\times TA$ can be described 
as follows.
\begin{proposition}\label{description_of_Pi_A}
A point $P\in TA\times TA$
 is an element of $\Pi_A$ if and only if 
\begin{enumerate}
\item $(q_A\circ p_A\circ\pr_1)(P)=(q_A\circ p_A\circ\pr_2)(P)$,
\end{enumerate}
i.e. $P\in TA\times_{q_A\circ p_A}TA$, 
and all the following elements of $C^\infty(TA\times_{q_A\circ p_A} TA)$ vanish on $P$:
\begin{enumerate}
\setcounter{enumi}{1}
\item
  $\ell_{\dr\ell_\alpha}\circ\pr_1-\ell_{\dr\ell_\alpha}\circ\pr_2+\dr_A\alpha\circ
  (p_A,p_A)$, for all $\alpha\in\Gamma(A^*)$,
\item $\ell_{q_A^*\dr f}\circ \pr_1-(p_A^*\ell_{\rho_A^*\dr f})\circ
  \pr_2$ and 
\item $\ell_{q_A^*\dr f}\circ \pr_2-(p_A^*\ell_{\rho_A^*\dr f})\circ
  \pr_1$  for all $f\in C^\infty(M)$.
\end{enumerate}
\end{proposition}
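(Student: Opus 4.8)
The plan is to prove Proposition \ref{description_of_Pi_A} by comparing the abstract defining relation of $\Pi_A$ with the explicit description obtained in the preceding discussion, working in the decomposition $\mathbb I_\nabla$ determined by a choice of connection $\nabla\colon\mx(M)\times\Gamma(A)\to\Gamma(A)$. The key point is that we already have, from the computations leading up to \eqref{description_Pi}, the explicit formula
\[
\Pi_A=\left\{\left(T_mb\,\rho_A(a)(m)+_A([b,a]+c)^\uparrow(b(m)),\ T_ma\,\rho_A(b)(m)+_Ac^\uparrow(a(m))\right)\ \middle|\ a,b,c\in\Gamma(A),\ m\in M\right\},
\]
so the proposition amounts to checking that a point $P\in TA\times TA$ satisfies (1)--(4) if and only if it lies in this set. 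The condition (1), $(q_A\circ p_A\circ \pr_1)(P)=(q_A\circ p_A\circ\pr_2)(P)$, is immediate for every element of $\Pi_A$: both components project to the point $m\in M$ over which the sections $a,b,c$ are evaluated. So the real content is that (2), (3), (4) cut out, inside $TA\times_{q_A\circ p_A}TA$, exactly the $\Pi_A$ locus.

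First I would fix $m\in M$ and parametrise a general element $P$ of $(TA\times_{q_A\circ p_A}TA)$ lying over $m$ by its image under $\mathbb I_\nabla\times\mathbb I_\nabla$, writing $\pr_1(P)\leftrightarrow (a_m,v_m,a'_m)$ and $\pr_2(P)\leftrightarrow (\tilde a_m,\tilde v_m,\tilde a'_m)$ with $a_m,a'_m,\tilde a_m,\tilde a'_m\in A_m$ and $v_m,\tilde v_m\in T_mM$. I would then evaluate each of the functions in (2)--(4) on $P$ in these coordinates, using the explicit formulas for $\mathbb I_\nabla$, for $\ell_{\dr\ell_\alpha}$, $\ell_{q_A^*\dr f}$, $p_A^*\ell_{\rho_A^*\dr f}$, and for the Lie algebroid differential $\dr_A\alpha$. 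The function in (3) evaluated on $P$ yields a linear expression whose vanishing for all $f$ is equivalent to $v_m=\rho_A(\tilde a_m)$; symmetrically (4) gives $\tilde v_m=\rho_A(a_m)$. Then (2), evaluated on $P$ and using $\dr_A\alpha(a_m,\tilde a_m)=\rho_A(a_m)\langle\alpha,\tilde a_m\rangle-\rho_A(\tilde a_m)\langle\alpha, a_m\rangle-\langle\alpha,[a_m,\tilde a_m]\rangle$ (interpreted via local sections extending $a_m,\tilde a_m$, with the tensoriality already checked around \eqref{description_Pi}), gives after simplification that $a'_m-\tilde a'_m$ is forced to equal $-[a,b](m)+\nabla_{\rho_A(a)}b(m)$ (in the notation of the explicit formula), i.e. precisely the relation distinguishing the two vertical parts in the description of $\Pi_A$. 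Conversely, feeding the explicit generator $(a,b,c)(m)$ into the three families of functions and using $[\mathcal Q,\cdot]$-type identities for $\dr_A$ and $\rho_A$ shows they all vanish, and (1) holds; this gives the ``only if'' inclusion of the explicit set in the $(1)$--$(4)$ locus.

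The main obstacle I expect is purely bookkeeping: carefully matching the $\pm$ signs and the $\nabla^{\rm bas}$ versus $\nabla$ corrections that appear when translating between the decompositions $\mathbb I_\nabla$ of $TA$, $\mathbb I_{\nabla^*}$ of $TA^*$, $\mathbb J_\nabla$ of $T^*A^*$, and $\mathbb J_{\nabla^*}$ of $T^*A$, together with the sign flip $(\alpha_m,a_m,\theta_m)\leftrightarrow(\alpha_m,a_m,-\theta_m)$ in the identification $T^*A^*\simeq T^*A$ and the pairing conventions recorded just above the definition of $\Pi_A$. A clean way to avoid redoing all of this is to observe that the functions in (2)--(4) are exactly the pullbacks under $\mathbb I_\nabla\times\mathbb I_\nabla$ of the affine functions on $A^*_m\times A_m\times T_mM$ whose common zero set, by the displayed formula for $\mathbb I_{\nabla^*}\inv\circ\pi_A^\sharp\circ\mathbb J_\nabla$, is precisely $\{(v,a,b)\sim_{\Pi_A}(c,w,d)\}$; one then notes that (2)--(4) are independent of the auxiliary choice of $\nabla$ (the $\nabla$-dependent terms in $\mathbb I_\nabla$ cancel against those in $\dr_A\alpha$ and $\ell_{\rho_A^*\dr f}$), so the characterisation is intrinsic. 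Finally I would invoke \eqref{description_Pi} to pass from ``generated by the $(a,b,c)(m)$ with $a,b,c$ ranging over all sections'' to ``generated by a point-wise spanning family $\mathcal S$'', which makes the argument local and finishes the proof.
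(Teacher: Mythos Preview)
Your approach is correct in spirit, but it is considerably more involved than the paper's. The paper does \emph{not} work in the connection decomposition $\mathbb I_\nabla$. Instead it parametrises an arbitrary point of $TA\times TA$ directly as
\[
P=\bigl(T_ma\,v_m+c^\dagger(a_m),\ T_nb\,w_n+d^\dagger(b_n)\bigr)
\]
using tangent lifts of sections $a,b\in\Gamma(A)$ and core sections $c^\dagger,d^\dagger$. Because the functions in (2)--(4) are built from $\ell_{\dr\ell_\alpha}$, $\ell_{q_A^*\dr f}$ and $p_A^*\ell_{\rho_A^*\dr f}$, their values on $Ta$ and on $c^\dagger$ are immediate: one reads off at once that (1) forces $m=n$, (3) forces $v_m=\rho_A(b(m))$, (4) forces $w_m=\rho_A(a(m))$, and (2) then reduces to $c=d+[a,b]$ at $m$. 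No connection, no $\nabla^{\rm bas}$ corrections, and no cancellation argument are needed.

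Your route via $\mathbb I_\nabla$ works, but all the ``bookkeeping obstacle'' you anticipate is self-inflicted: the $\nabla$-dependent terms you introduce in writing $P\leftrightarrow(a_m,v_m,a'_m)$ are exactly what you then have to cancel against the $\nabla$-terms appearing when you expand $\dr_A\alpha$ and $\ell_{\dr\ell_\alpha}$ in the decomposition. The paper's parametrisation bypasses this entirely. Note also a small slip in your sketch: in the sentence about condition (2) you write ``$a'_m-\tilde a'_m=-[a,b](m)+\nabla_{\rho_A(a)}b(m)$'', mixing your pointwise coordinates $a'_m,\tilde a'_m$ with sections $a,b$ that are not part of your chosen parametrisation; in your setup that relation should be expressed purely in terms of $a_m,\tilde a_m,a'_m,\tilde a'_m$ and extensions thereof, and the $\nabla$-term will appear with a matching companion from the other side. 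This is fixable, but it illustrates why the paper's choice of splitting is cleaner here.
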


\begin{proof}
  Any point $P\in TA\times TA$ can be written
  $P=(T_mav_m+c^\dagger(a_m), T_nbw_n+d^\dagger(b_n))$ with $v_m,
  w_n\in TM$ and $a,b,c,d\in\Gamma(A)$.  Condition (1) is
  $m=n$. Condition (4) is equivalent to $w_m=\rho(a(m))$, and
  Condition (3) to $v_m=\rho(b(m))$.  Condition (2) is satisfied for
  $P=(T_ma\rho(b(m))+c^\dagger(a(m)), T_mb\rho(a(m))+d^\dagger(b_n))$
  if and only if $c=d+[a,b]$.
\end{proof}

\medskip

Now consider  a double vector bundle 
\begin{equation*}
\begin{xy}
  \xymatrix{\mathbb E\ar[r]\ar[d]&B\ar[d]\\
    Q\ar[r]&M }
\end{xy}
\end{equation*} endowed with a VB-Lie algebroid structure $(\mathbf
b, [\cdot\,,\cdot])$ on $\mathbb E\to Q$ and a linear 
metric $\langle\cdot\,,\cdot\rangle$ on $\mathbb E\to B$ 
(hence, $\mathbb E$ has core $Q^*$). Let $\rho_B\colon B\to TM$ be the anchor 
of the induced Lie algebroid structure on $B$.

The relation $\Pi_{\mathbb E}$ defined as above by the Lie algebroid
structure on $\mathbb E$ over $Q$ is then a relation $\Pi_{\mathbb E}$ of
triple vector bundles \cite{Li-Bland12}
\begin{equation*}
\begin{xy}
  \xymatrix{T\mathbb E\ar[rr]\ar[dd]\ar[rd]&&TQ\ar[dd]\ar[rd]&\\
&\mathbb E\ar[dd]\ar[rr]&&Q \ar[dd]\\
TA\ar[rd]\ar[rr]&&TM \ar[rd]&\\
 &   A\ar[rr]&&M }
\end{xy}
\qquad 
\overset{\Pi_{\mathbb E}}{\longrightarrow}
\qquad 
\begin{xy}
  \xymatrix{T\mathbb E\ar[rr]\ar[dd]\ar[rd]&&\mathbb E\ar[dd]\ar[rd]&\\
&TQ\ar[dd]\ar[rr]&&Q \ar[dd]\\
TA\ar[rd]\ar[rr]&&A \ar[rd]&\\
 &   TM\ar[rr]&&M }
\end{xy}
\end{equation*}
Li-Bland's definition \cite{Li-Bland12} is the following.
\begin{definition}
Let $(\mathbb E;Q,B;M)$ be a double vector bundle with
a VB-Courant algebroid structure (over $B$) and a VB-algebroid structure
$ (\mathbb E\to Q,B\to M)$.
Then $(\mathbb E, B,Q,M)$ is an LA-Courant
algebroid if $\Pi_{\mathbb E}$ is a Dirac structure with support in
$\overline{T\mathbb E}\times T\mathbb E$.
\end{definition}

We choose a Lagrangian splitting $\Sigma\colon B\times_M Q\to \mathbb
E$. We denote as usual by $(\partial_Q\colon Q^*\to Q, \nabla^Q,
\nabla^{Q^*}, R\in \Omega^2(B,\operatorname{Hom}(Q,Q^*)))$ the induced
$2$-representation\footnote{As always, we omit the upper indices of
  the two linear connections when which one is used is clear from
  its index and term.} of the Lie algebroid $B$, and by
$(\partial_B\colon Q^*\to B,
\nabla,\Delta,R\in\Omega^2(Q,\Hom(B,Q^*)))$ the induced Dorfman
$2$-representation of the anchored vector bundle $(Q,\rho_Q)$.  We
begin with studying the vector bundle structure of $\Pi_{\mathbb E}$.
\begin{proposition}
The image of  $\Pi_{\mathbb E}$ under  $(T\pi_B\times T\pi_B)\colon T\mathbb E\times T\mathbb
E \to TB\times TB$ is $\Pi_B$.
\end{proposition}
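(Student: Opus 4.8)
The plan is to recognise $\pi_B\colon\mathbb E\to B$ as a fibrewise surjective morphism of Lie algebroids and then to transport the section description \eqref{description_Pi} of $\Pi_{\mathbb E}$ along it.

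First I would record the structural fact that $\pi_B$, viewed as a map of total spaces, covers $q_Q\colon Q\to M$ and is a morphism of Lie algebroids from $(\mathbb E\to Q)$ to $(B\to M)$. This is exactly the content of the VB-Lie algebroid axioms for $(\mathbb E\to Q,B\to M)$: a linear section $\chi\in\Gamma^\ell_Q(\mathbb E)$ over $\bar\chi\in\Gamma(B)$ satisfies $\pi_B\circ\chi=\bar\chi\circ q_Q$ and a core section $\tau^\dagger$ ($\tau\in\Gamma(Q^*)$) satisfies $\pi_B\circ\tau^\dagger=0^B\circ q_Q$; linearity of the bracket means $\pi_B$ sends $[\chi_1,\chi_2]$ to $[\bar\chi_1,\bar\chi_2]$ and $[\chi,\tau^\dagger]$, $[\tau_1^\dagger,\tau_2^\dagger]$ to zero; and the anchor $\mathbf b\colon\mathbb E\to TQ$ is linear over $\rho_B$, i.e.\ $Tq_Q\circ\mathbf b=\rho_B\circ\pi_B$. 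Moreover $\pi_B$ is fibrewise surjective, since $\mathbb E\to Q$ is a double vector bundle with side $B$; indeed the linear sections of $\mathbb E\to Q$ already span $\mathbb E$ pointwise.

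To prove $(T\pi_B\times T\pi_B)(\Pi_{\mathbb E})\subseteq\Pi_B$ I would apply \eqref{description_Pi} to $\mathbb E\to Q$ with $\mathcal S$ the set of linear sections: every point of $\Pi_{\mathbb E}$ is then $\bigl(\sum_ix_i\chi_i,\sum_jy_j\chi_j',\sum_kz_k\chi_k''\bigr)(q_m)$ with the $\chi$'s linear. Applying $T\pi_B$ to each slot and using $T\pi_B\circ T\chi=T(\pi_B\circ\chi)$, $Tq_Q\circ\mathbf b=\rho_B\circ\pi_B$, the fact that the vector bundle map $\pi_B\colon\mathbb E\to B$ carries vertical lifts of sections of $\mathbb E\to Q$ to vertical lifts of their $\pi_B$-images, and that $\pi_B$ sends $[\chi_j',\chi_i]$ to $[\bar\chi_j',\bar\chi_i]$, one reads off that the image is $\bigl(\sum_ix_i\bar\chi_i,\sum_jy_j\bar\chi_j',\sum_kz_k\bar\chi_k''\bigr)(m)$, which by \eqref{description_Pi} for $B$ lies in $\Pi_B$. (Equivalently one can certify membership via Proposition~\ref{description_of_Pi_A}: since $\pi_B$ intertwines the Chevalley--Eilenberg differentials of $B$ and of $\mathbb E\to Q$ and is anchor compatible, the functions (2)--(4) there for $B$ pull back under $T\pi_B\times T\pi_B$ to $C^\infty$-combinations of the analogous functions for $\mathbb E$, which vanish on $\Pi_{\mathbb E}$, and condition (1) is preserved because $q_B\circ\pi_B=q_Q\circ\pi_Q$.) For the reverse inclusion I would use the chosen Lagrangian splitting $\Sigma\colon B\times_MQ\to\mathbb E$, which gives a horizontal lift $\sigma_B\colon\Gamma(B)\to\Gamma^\ell_Q(\mathbb E)$ with $\pi_B\circ\sigma_B(b)=b\circ q_Q$: given a point $\bigl(\sum_ix_ib_i,\sum_jy_jb_j',\sum_kz_kb_k''\bigr)(m)$ of $\Pi_B$ and any $q_m\in Q_m$, the point $\bigl(\sum_ix_i\sigma_B(b_i),\sum_jy_j\sigma_B(b_j'),\sum_kz_k\sigma_B(b_k'')\bigr)(q_m)$ lies in $\Pi_{\mathbb E}$ and, by the computation just made, is sent to it by $T\pi_B\times T\pi_B$.

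The routine-but-delicate part will be the bookkeeping in the third paragraph: tracking how $T\pi_B$ acts on the tangent lifts $T\chi$ and on the various vertical lifts inside the triple tangent/double vector bundle $T\mathbb E$, and keeping the two additions (in $T\mathbb E\to\mathbb E$ versus in $TB\to B$) straight. No part of the argument uses the metric on $\mathbb E\to B$ or the Courant bracket; the only input is that $\pi_B$ is a fibrewise surjective Lie algebroid morphism of the VB-algebroid side $(\mathbb E\to Q,B\to M)$.
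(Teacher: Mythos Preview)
Your approach is correct and more conceptual than the paper's. The paper simply takes the spanning family $\mathcal S=\{\sigma_B(b)+\tau^\dagger\mid b\in\Gamma(B),\tau\in\Gamma(Q^*)\}$ for $\mathbb E\to Q$, invokes \eqref{description_Pi}, and observes in one line that $(T\pi_B,T\pi_B)$ sends $(\sigma_B(b_1)+\tau_1^\dagger,\sigma_B(b_2)+\tau_2^\dagger,\sigma_B(b_3)+\tau_3^\dagger)(q_m)$ to $(b_1,b_2,b_3)(m)$; both inclusions fall out of this identity. You instead isolate the underlying reason, namely that $\pi_B\colon(\mathbb E\to Q)\to(B\to M)$ is a fibrewise surjective Lie algebroid morphism, and argue functorially; your argument would prove the analogous statement for $\Pi$ under any such morphism.

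One small correction: your claim that the linear sections of $\mathbb E\to Q$ span $\mathbb E$ pointwise is false over the zero section $0^Q$. A linear section $\chi$ over $\bar\chi\in\Gamma(B)$ is a vector bundle morphism $Q\to\mathbb E$ over $\bar\chi$, so $\chi(0^Q_m)=0^{\mathbb E\to B}_{\bar\chi(m)}$ and the core direction is never reached. The fix is immediate: take $\mathcal S$ to be linear \emph{plus} core sections, exactly as the paper does; since $\pi_B\circ\tau^\dagger=0^B\circ q_Q$ and $[\chi,\tau^\dagger]$, $[\tau_1^\dagger,\tau_2^\dagger]$ project to zero, the core sections contribute nothing to the image and your computation goes through verbatim. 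Your alternative route via Proposition~\ref{description_of_Pi_A} avoids this issue entirely and is perhaps the cleanest way to get the forward inclusion.
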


\begin{proof}
  By \eqref{description_Pi}, it is sufficient to show that the set of
  points \[ (\sigma_B(b_1)+\tau_1^\dagger, \sigma_B(b_2)+\tau_2^\dagger,
  \sigma_B(b_3)+\tau_3^\dagger)(q_m)
\]
for $b_1,b_2,b_3\in\Gamma(B)$, $\tau_1,\tau_2,\tau_3\in\Gamma(Q^*)$
and $q_m\in Q$ projects under $(T\pi_B,T\pi_B)$ to $\Pi_B$.  It is
easy to see that $(T\pi_B,T\pi_B)((\sigma_B(b_1)+\tau_1^\dagger, \sigma_B(b_2)+\tau_2^\dagger,
  \sigma_B(b_3)+\tau_3^\dagger)(q_m))
=(b_1,b_2,b_3)(m)$.
\end{proof}

It seems at this point useful to enumerate the sections of $\mathbb
E\to B$, of $\mathbb E\to Q$, and of $T\mathbb E\to TB$ that we are
working with here.  Recall that after the choice of a Lagrangian
splitting $\Sigma\colon Q\times_M B\to \mathbb E$, the vector bundle
$\mathbb E\to Q$ is spanned by sections $\sigma_B(b)$ for all
$b\in\Gamma(B)$ and $\tau^\dagger$ for all $\tau\in\Gamma(Q^*)$. 
We have used those sections to define $\Pi_{\mathbb E}$, as the relation 
that the Lie algebroid $\mathbb E\to Q$ induces.

In the same manner
(and with a slight abuse of notation), the vector bundle
$\mathbb E\to B$ is spanned by sections $\sigma_Q(q)$ for all
$q\in\Gamma(Q)$ and $\tau^\dagger$ for all $\tau\in\Gamma(Q^*)$.

As a consequence, the vector bundle $T\mathbb E\to TB$ is spanned
by the sections 
\[T\sigma_Q(q), \qquad T\tau^\dagger, \qquad (\sigma_Q(q))^\times \quad\text{ and }\quad (\tau^\dagger)^\times
\]
for all $q\in\Gamma(Q)$ and $\tau\in\Gamma(Q^*)$. (Recall that for a
vector bundle $E\to M$, and a section $e\in\Gamma_M(E)$, the section
$e^\times$ is defined in \eqref{cross_sect_def}.)  Since $\mathbb E\to
B$ has a Courant algebroid structure, its tangent double has a natural
Courant algebroid structure, see the proof of Theorem
\ref{tangent_courant_double}.

\begin{proposition}\label{crucial}
For $b_1,b_2,b_3\in\Gamma(B)$, $\tau_1,\tau_2,\tau_3\in\Gamma(Q^*)$ and
$q\in\Gamma(Q)$, 
the pair 
\begin{align*}
  &T_{q(m)} \left(\sigma_B(b_1)+\tau_1^\dagger\right)\mathsf
  b\left(\left(\sigma_B(b_2)+\tau_2^\dagger\right)(q(m))\right)\\
  &\qquad+\left(\left[\sigma_B(b_1)+\tau_1^\dagger,\sigma_B(b_2)+\tau_2^\dagger\right]+\sigma_B(b_3)+\tau_3^\dagger\right)^\times(\sigma_B(b_1)+\tau_1^\dagger)(q(m))\\
  \sim_{\Pi_{\mathbb E}}&
  T_{q(m)}\left(\sigma_B(b_2)+\tau_2^\dagger\right)\mathsf
  b\left(\left(\sigma_B(b_1)+\tau_1^\dagger\right)(q(m))\right)\\
  &\qquad\qquad \qquad
  +\left(\sigma_B(b_3)+\tau_3^\dagger\right)^\times\left(\left(\sigma_B(b_2)+\tau_2^\dagger\right)(q(m))\right)
\end{align*}
is equal to
\begin{align*}
  &T_{b_1(m)}\left(\sigma_Q(q)+\tau_1^\dagger\right)\left(T_mb_1\rho_B(b_2)(m)+[b_1,b_2]^\uparrow(b_1(m))+b_3^\uparrow(b_1(m))\right)\\
  +\Bigl(-\sigma_Q&(\nabla_{b_2}q)-R(b_1,b_2)q^\dagger+\tau_3^\dagger+\nabla_{b_1}\tau_2^\dagger-\nabla_{b_2}\tau_1^\dagger+\sigma_Q(\partial_Q\tau_2)\Bigr)^\times\left(\sigma_Q(q)+\tau_1^\dagger\right)(b_1(m))\\
  \sim_{\Pi_{\mathbb E}}&
  T_{b_2(m)}\left(\sigma_Q(q)+\tau_2^\dagger\right)(T_mb_2\rho(b_1)(m)+b_3^\uparrow(b_2(m)))\\
  &\qquad
  +\left(-\sigma_Q(\nabla_{b_1}q)+\tau_3^\dagger+\sigma_Q(\partial_Q\tau_1)\right)^\times(\sigma_Q(q)+\tau_2^\dagger)(b_2(m)).
\end{align*}
\end{proposition}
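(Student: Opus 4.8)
The plan is to verify Proposition \ref{crucial} by a direct computation that translates both sides of the displayed $\sim_{\Pi_{\mathbb E}}$ relations into explicit coordinate expressions in terms of the Lagrangian splitting data, and then to apply the defining characterisation of $\Pi_{\mathbb E}$ from Proposition \ref{description_of_Pi_A} (with $A$ replaced by $\mathbb E\to Q$). First I would record the formulae that describe the Lie algebroid structure on $\mathbb E\to Q$ and the Courant algebroid structure on $\mathbb E\to B$ in the chosen Lagrangian splitting: the brackets $\lb \sigma_B(b_1)+\tau_1^\dagger, \sigma_B(b_2)+\tau_2^\dagger\rb$ and $\lb \sigma_Q(q), \tau^\dagger\rb$, $\lb \sigma_Q(q_1),\sigma_Q(q_2)\rb$ via Theorem \ref{rajan} and Theorem \ref{main} respectively, together with the core-anchors $\partial_B, \partial_Q$ and the two connections. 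This is where the mixed term $\sigma_Q(\partial_Q\tau_i)$ and the curvature term $R(b_1,b_2)q$ on the right-hand side will come from, and it is essential to keep the signs straight: the definition of $\Pi_{\mathbb E}$ uses $\mathbf b$ and the Lie bracket over $Q$, while the cross sections involve the vector bundle structure $\mathbb E\to B$.

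Next I would express the first $\sim_{\Pi_{\mathbb E}}$ relation — the one stated purely in terms of sections of $\mathbb E\to Q$ — simply by unwinding the general description $\Pi_A=\{(a,b,c)(m)\}$ from \eqref{description_Pi} applied to $\mathbb E\to Q$, with $a=\sigma_B(b_1)+\tau_1^\dagger$, $b=\sigma_B(b_2)+\tau_2^\dagger$, $c=\sigma_B(b_3)+\tau_3^\dagger$; this is essentially a definitional restatement and should require no work beyond bookkeeping. The substance of the proposition is the claimed \emph{equality} of this pair of tangent vectors with the pair written in the $\mathbb I_\nabla$-type decomposition adapted to $\mathbb E\to B$. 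So the core step is: compute $T(\sigma_B(b_i)+\tau_i^\dagger)$ and the cross section $\left(\lb\cdots\rb + \sigma_B(b_3)+\tau_3^\dagger\right)^\times$ as elements of $T\mathbb E$, then re-decompose them along the section frame $\{T\sigma_Q(q), T\tau^\dagger, (\sigma_Q(q))^\times, (\tau^\dagger)^\times\}$ of $T\mathbb E\to TB$. Here I would use the standard identities relating $T$ and $\times$ of linear and core sections under a change of which vector bundle structure on $\mathbb E$ we tangent-lift along (these are the double-vector-bundle analogues of \eqref{cross_sect_def} and the formulae in \S\ref{tangent_double}), plus the fact that $\Sigma$ is a \emph{Lagrangian} splitting so that $\Beta$ sends $\sigma_B(b)$ to $\sigma_B^\star(b)$ (Lemma \ref{lem_Lagr_split_sections}); the latter is what guarantees that the core-anchor contributions organise into $\sigma_Q(\partial_Q\tau_i)$ rather than into some uncontrolled term.

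Concretely, I expect the computation to split into four pieces according to whether each input is a linear section $\sigma_B(b_i)$ or a core section $\tau_i^\dagger$, so there are $2\times 2$ cases for the bracket term and it suffices to check each on the generators listed in Proposition \ref{description_of_Pi_A}(2)--(4), i.e.\ paired against $\ell_{\dr\ell_\alpha}$, $\ell_{q_{\mathbb E}^*\dr F}$ for $F\in C^\infty(Q)$ linear or pulled back, and against $\dr_{\mathbb E/Q}\alpha$. Since all four generating sections of $T\mathbb E\to TB$ act on these functions in a controlled way, each case reduces to an identity among the components $\partial_B,\partial_Q,\nabla^Q,\nabla,\Delta,R$, which are exactly the axioms (D1)--(D6) of a Dorfman $2$-representation, the axioms of the self-dual $2$-representation, and — crucially — the matched-pair conditions (M1)--(M5) from Definition \ref{matched_pairs}. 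I would organise the proof so that Proposition \ref{crucial} is stated and proved \emph{assuming} the matched pair, and the converse direction (that $\Pi_{\mathbb E}$ being Dirac forces the matched-pair equations) is extracted afterwards by reading the same case analysis backwards.

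The main obstacle will be the sheer combinatorial size of the re-decomposition step: converting a tangent-lifted section of one vector bundle structure on $\mathbb E$ into the frame adapted to the other structure produces many terms, and matching the mixed curvature/core-anchor contributions $-\sigma_Q(\nabla_{b_2}q)-\widetilde{R(b_1,b_2)q}+\nabla_{b_1}\tau_2^\dagger-\nabla_{b_2}\tau_1^\dagger+\sigma_Q(\partial_Q\tau_2)$ on the nose requires knowing precisely how the two $2$-representations interact — this is exactly the content that (M1)--(M5) encode, so the hard part is really bookkeeping the interplay rather than any single clever idea. To keep it manageable I would first treat the case $\tau_1=\tau_2=\tau_3=0$ (pure linear sections, recovering the $\Pi_B$ picture tensored up), then the case $b_1=b_2=b_3=0$ (pure core sections, where the bracket vanishes and only $\partial_Q$ survives), and finally assemble the general case by bilinearity and the Leibniz-type scaling rules for $(a,b,c)(m)$ recorded just before \eqref{description_Pi}. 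Throughout, the identity $\Beta\circ\sigma_B(b)=\sigma_B^\star(b)$ and the relation between $\sigma_Q^\star$ and $\sigma_Q$ in Lemma \ref{lemma_dual_splitting} will be the tools that convert ``dual'' contributions back into the frame we want.
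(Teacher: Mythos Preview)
There is a genuine misunderstanding of the role of the proposition. Proposition \ref{crucial} is \emph{not} conditional on the matched-pair axioms (M1)--(M5); it is a pure change-of-frame identity that holds for an arbitrary VB-algebroid structure on $(\mathbb E\to Q, B\to M)$ together with an arbitrary Lagrangian splitting. Its content is simply that the generic element $(a,b,c)(q(m))$ of $\Pi_{\mathbb E}$, written using sections $a=\sigma_B(b_1)+\tau_1^\dagger$ etc.\ of $\mathbb E\to Q$, can be re-expressed in the frame $\{T\sigma_Q(q),\, T\tau^\dagger,\, \sigma_Q(q)^\times,\, (\tau^\dagger)^\times\}$ adapted to $\mathbb E\to B$. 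The only structural input is the $2$-representation $(\partial_Q,\nabla^Q,\nabla^{Q^*},R)$ encoding $\mathbb E\to Q$; the Dorfman $2$-representation and the matched-pair conditions enter only \emph{after} this proposition, in Theorems \ref{theorem_isotropy}, the anchor theorem, and Proposition \ref{theorem_integrability}, where the nice sections \eqref{type1}--\eqref{type4} extracted from Proposition \ref{crucial} are fed into the Dirac-structure conditions. Your plan to ``organise the proof so that Proposition \ref{crucial} is stated and proved assuming the matched pair'' therefore misidentifies the logical structure of the appendix.

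Your proposed verification method is also off target. The functions in Proposition \ref{description_of_Pi_A}(2)--(4) are functions on $T\mathbb E\times_{q_{\mathbb E}\circ p_{\mathbb E}}T\mathbb E$ whose vanishing characterises \emph{membership in} $\Pi_{\mathbb E}$; they do not test equality of two given tangent vectors in $T\mathbb E$. What Proposition \ref{crucial} asserts is a literal equality of pairs of vectors in $T\mathbb E\times T\mathbb E$, and the paper proves it by the most direct route: evaluate each component of each pair on a spanning family of smooth functions on $\mathbb E$, namely $\ell_{\sigma_Q(q')}$, $\ell_{\tau^\dagger}$, $\pi_B^*\ell_\beta$, $\pi_B^*q_B^*f$. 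The key device that makes this tractable --- and which your proposal does not mention --- is the identification of these same functions with the linear and pullback functions coming from the \emph{other} vector bundle structure $\mathbb E\to Q$ via the dual lift $\sigma^\star_{Q^{**}}$ (Lemma \ref{lemma_dual_splitting}): one has $\ell_{\sigma_Q(q')}=\ell_{\sigma^\star_{Q^{**}}(q')}$, $\ell_{\tau^\dagger}=\pi_Q^*\ell_\tau$, $\pi_B^*\ell_\beta=\ell_{\beta^\dagger}$, $\pi_B^*q_B^*f=\pi_Q^*q_Q^*f$. These identities let you evaluate the first expression (built from sections of $\mathbb E\to Q$) and the second (built from sections of $\mathbb E\to B$) on the same four function types and compare directly. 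The computation is long but uses nothing beyond the definition of the $2$-representation data; no compatibility between the two sides is required.
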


\begin{proof}
  This proof is tedious, but straightforward. To show that two vectors
  on $\mathbb E$ are equal, we evaluate them on linear and pullback
  functions in $C^\infty(\mathbb E)$.  We identify $\mathbb E$ with
  $\mathbb E\duer B$ using the linear metric. We consider the four
  types of functions on $\mathbb E$: $\ell_{\sigma_Q(q)}$,
  $\ell_{\tau^\dagger}$, $\pi_B^*\ell_\beta$ and $\pi_B^*q_B^*f$ for
  all $q\in\Gamma(Q)$, $\tau\in\Gamma(Q^*)$, $\beta\in\Gamma(B^*)$ and
  $f\in C^\infty(M)$.  Recall that the horizontal lift
  $\sigma^\star_{Q^{**}}\colon \Gamma(Q^{**})\to\Gamma_Q^l(\mathbb
  E\duer Q)$ can be defined by $\langle\sigma_{Q^{**}}(q),\sigma_B(b)\rangle=0$ and
  $\langle \sigma_{Q^{**}}(q),\tau^\dagger\rangle= q_Q^*\langle q,
  \tau\rangle$ for all $q\in\Gamma(Q)$, $\tau\in\Gamma(Q^*)$ and
  $b\in\Gamma(B)$, where we identify $Q^{**}$ with $Q$ via the
  canonical isomorphism. The sections of $\mathbb E\duer Q$ over $Q$
  define linear functions on $\mathbb E$ and we have
  $\ell_{\sigma^\star_{Q^{**}}(q)}=\ell_{\sigma_Q(q)}$,
  $\ell_{\beta^\dagger}=\pi_B^*\ell_{\beta}$,
  $\pi_Q^*\ell_{\tau}=\ell_{\tau^\dagger}$ and
  $\pi_Q^*q_Q^*f=\pi_B^*q_B^*f$ for all $q\in\Gamma(Q)$,
  $\tau\in\Gamma(Q^*)$, $\beta\in\Gamma(B^*)$ and $f\in
  C^\infty(M)$. We use these equalities of functions to prove the
  equalities of vectors in Proposition \ref{crucial}. The details are
  left to the reader.
\end{proof}

The last proposition was quite technical, but it yields useful and
relatively simple sections of $\Pi_{\mathbb E}\to \Pi_B$:
\begin{equation}\label{type1}
  \left((\tau^\dagger)^\times\circ\pr_1,
    (\tau^\dagger)^\times\circ\pr_2\right),
\end{equation}
\begin{equation}\label{type2}
\left( T\tau^\dagger\circ\pr_1
    -(\nabla_{p_B\circ\pr_2}\tau^\dagger)^\times\circ\pr_1,
    \sigma_Q(\partial_Q\tau)^\times\circ\pr_2\right),
\end{equation}
\begin{equation}\label{type3}
\left( (\nabla_{p_B\circ
      \pr_1}\tau^\dagger+\sigma_Q(\partial_Q\tau))^\times\circ\pr_1,
    T\tau^\dagger\circ\pr_2\right)\end{equation}
 and 
\begin{equation}\label{type4}
\begin{split}
&\left(T\sigma_Q(q)\circ\pr_1 -(\sigma_Q(\nabla_{p_B\circ
        \pr_2}q)+\widetilde{R(p_B\circ \pr_1, p_B\circ \pr_2)q})^\times\circ\pr_1,\right.\\
&   \qquad \qquad \qquad\qquad \qquad \qquad \qquad \qquad \left. T\sigma_Q(q)\circ\pr_2-(\sigma_Q(\nabla_{p_B\circ
        \pr_1}q))^\times\circ\pr_2\right)
\end{split}
\end{equation} for all $q\in\Gamma(Q)$ and $\tau\in\Gamma(Q^*)$.
Now we study the compatibility of $\Pi_{\mathbb E}$ with the metric on $\mathbb E\to B$.
\begin{theorem}\label{theorem_isotropy}
The vector bundle $\Pi_{\mathbb E}\to \Pi_B$ is maximal isotropic in
$\overline{T\mathbb E}\times T\mathbb E\an{\Pi_B}\to \Pi_B$ 
if and only if 
\begin{enumerate}
\item $\partial_Q=\partial_Q^*$,
\item
  $(\nabla^Q)^*_b\tau=\nabla^{Q^*}_b\tau$
for all $b\in\Gamma(B)$ and $\tau\in\Gamma(Q^*)$, and 
\item $\langle R(b_1,b_2)q_1, q_2\rangle+\langle R(b_1,b_2)q_2, q_1\rangle
=0$
for all $b_1,b_2\in\Gamma(B)$ and $q_1,q_2\in\Gamma(Q)$.
\end{enumerate}
\end{theorem}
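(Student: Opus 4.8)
The plan is to work throughout with the explicit generating sections \eqref{type1}--\eqref{type4} of $\Pi_{\mathbb E}\to\Pi_B$ extracted from Proposition \ref{crucial}. Since the relations $(a,b,fc)(m)=(a,b,f(m)c)(m)$, $(fa,b,c)(m)=(f(m)a,b,c+\rho(b)(f)(m)a)(m)$, $(a,fb,c)(m)=(a,f(m)b,c-\rho(a)(f)(m)b)(m)$ recorded before \eqref{description_Pi} hold for the Lie algebroid $\mathbb E\to Q$ as well, and since analogous identities hold in the core directions, the four families \eqref{type1}--\eqref{type4} span $\Pi_{\mathbb E}$ as a vector bundle over $\Pi_B$, just as \eqref{description_Pi} describes $\Pi_B$ itself. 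Because $\rk(T\mathbb E\to TB)=\rk Q+\rk Q^{*}$, the ambient metric vector bundle $\overline{T\mathbb E}\times T\mathbb E$ has rank $2(\rk Q+\rk Q^{*})$ over each point of $\Pi_B$; I would first carry out the (metric-independent) dimension count showing that, at each point of $\Pi_B$, the generators \eqref{type1}--\eqref{type4} are subject to exactly enough linear relations to span a subbundle of rank $\rk Q+\rk Q^{*}$. Granting this, $\Pi_{\mathbb E}$ is maximal isotropic if and only if it is isotropic, and it then remains to compute the pairwise pairings of the generators, evaluated over $\Pi_B$, and to show they all vanish precisely when (1)--(3) hold.

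For the pairing computation I would use that the linear metric on $T\mathbb E\to TB$ is the tangent lift of the linear (Courant) metric on $\mathbb E\to B$, so by Example \ref{metric_connections} one has $\langle Te_1,Te_2\rangle_{T\mathbb E}=\ell_{\dr\langle e_1,e_2\rangle_{\mathbb E}}$, $\langle Te_1,e_2^{\times}\rangle_{T\mathbb E}=p_B^{*}\langle e_1,e_2\rangle_{\mathbb E}$ and $\langle e_1^{\times},e_2^{\times}\rangle_{T\mathbb E}=0$ for $e_1,e_2\in\Gamma_B(\mathbb E)$, while the pairing of $(V_1,W_1)$ with $(V_2,W_2)$ on $\overline{T\mathbb E}\times T\mathbb E$ is $\langle W_1,W_2\rangle_{T\mathbb E}-\langle V_1,V_2\rangle_{T\mathbb E}$. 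Since $\Sigma$ is Lagrangian, $\langle\sigma_Q(q_1),\sigma_Q(q_2)\rangle_{\mathbb E}=0$, $\langle\sigma_Q(q),\tau^{\dagger}\rangle_{\mathbb E}=q_B^{*}\langle q,\tau\rangle$ and $\langle\tau_1^{\dagger},\tau_2^{\dagger}\rangle_{\mathbb E}=0$; these three identities, pushed through the tangent lift, control everything. Pairings involving a generator of type \eqref{type1} vanish identically, since both of its components are cross sections and the only surviving term would pair a cross section in one factor against a tangent lift in the other, where the $\overline{\cdot}$ sign forces cancellation between the two factors. The pairing of a type-\eqref{type4} generator for $q_1$ against one for $q_2$, evaluated at a support point $(b_1,b_2,b_3)(m)\in\Pi_B$, collapses — after the symmetric $Te$–$Te$ contributions cancel against each other — to $\langle R(b_1,b_2)q_1,q_2\rangle+\langle R(b_1,b_2)q_2,q_1\rangle$, whose vanishing for all choices is exactly (3); the pairings of \eqref{type2} and \eqref{type3} against \eqref{type4} and against each other reduce to terms of the form $\langle\partial_Q\tau,q\rangle-\langle\tau,\partial_Q^{*}q\rangle$ (equivalent to (1)) and $\langle(\nabla^{Q})^{*}_b\tau-\nabla^{Q^{*}}_b\tau,\cdot\rangle$ (equivalent to (2)). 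For the converse, substituting (1)--(3) back into these formulas makes every pairing vanish, so once the forward direction is organised the equivalence follows. Note that (1)--(3) are precisely the conditions of Definition \ref{saruth}, so the theorem says $\Pi_{\mathbb E}$ is maximal isotropic exactly when the associated $2$-representation is self-dual.

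The main obstacle is keeping the bookkeeping under control in two specific places. First, the pairings must be evaluated at honest points of the support $\Pi_B$, which are sums $T_mb\,\rho(b')(m)+_B(\cdots)^{\uparrow}(b(m))$ of a tangent lift and a vertical lift of sections of $B$, rather than arbitrary points of $TB\times TB$; as in the proof of Proposition \ref{crucial}, one has to test equalities and pairings by feeding them the linear and pullback functions $\ell_{\sigma_Q(q)}$, $\ell_{\tau^{\dagger}}$, $\pi_B^{*}\ell_\beta$, $\pi_B^{*}q_B^{*}f$ on $\mathbb E$, which is where the verification becomes laborious. Second, the maximality half of the statement rests entirely on the dimension count mentioned above: one must confirm that, modulo the linear relations that the additivity of $(a,b,c)\mapsto(a,b,c)(m)$ and the shared vertical directions impose at each point of $\Pi_B$, the generators \eqref{type1}--\eqref{type4} really do cut the fibre down to rank $\rk Q+\rk Q^{*}$, and not more. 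I expect this last step to be the most delicate, and it is cleanest to settle it before the metric computation so that isotropy can then be read off directly as "all pairings vanish $\iff$ (1)--(3)".
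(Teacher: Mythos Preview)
Your overall strategy matches the paper's: reduce maximal isotropy to plain isotropy by a rank count, then compute all pairings of the spanning sections of $\Pi_{\mathbb E}$ over $\Pi_B$ and read off (1)--(3). The paper does the pairing step in one blow, pairing two fully general elements of $\Pi_{\mathbb E}$ as in Proposition \ref{crucial} and then specialising to extract each of (1), (2), (3); your plan to pair the four families \eqref{type1}--\eqref{type4} separately is an equivalent and perfectly workable organisation of the same computation.

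There is, however, a genuine error in your dimension count. The rank of $T\mathbb E\to TB$ is \emph{twice} the rank of $\mathbb E\to B$, i.e.\ $2(\rk Q+\rk Q^{*})$, not $\rk Q+\rk Q^{*}$: the tangent prolongation of a rank-$r$ bundle has rank $2r$. Hence the ambient $\overline{T\mathbb E}\times T\mathbb E\an{\Pi_B}\to\Pi_B$ has rank $4(\rk Q+\rk Q^{*})=8\rk Q$, and a maximal isotropic must have rank $4\rk Q$. On the other hand, the four families \eqref{type1}--\eqref{type4} are parametrised by three copies of $Q^{*}$ and one copy of $Q$, giving $\rk Q+3\rk Q^{*}=4\rk Q$ sections, and at a generic point of $\Pi_B$ they are linearly independent (type \eqref{type1} is purely $\times$--$\times$, types \eqref{type2} and \eqref{type3} have a $T$ in opposite factors, and type \eqref{type4} has a $T$ in both). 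So $\Pi_{\mathbb E}$ already has the correct half-rank $4\rk Q$, and there are \emph{no} further linear relations to find. Your anticipated step of ``cutting the fibre down to rank $\rk Q+\rk Q^{*}$'' would have you searching for relations that do not exist. Once you correct the two ranks (ambient $8\rk Q$, $\Pi_{\mathbb E}$ already $4\rk Q$), the ``most delicate'' step you flagged simply disappears, and the rest of your argument goes through as planned.
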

In other words, $\Pi_{\mathbb E}\to \Pi_B$ is maximal isotropic in
$\overline{T\mathbb E}\times T\mathbb E\an{\Pi_B}\to \Pi_B$ if and
only if the $2$-representation $(\partial_Q\colon Q^*\to Q, \nabla^Q,
\nabla^{Q^*},R)$ is self-adjoint. This yields the following result:
\begin{corollary}
  The VB-algebroid structure on $\mathbb E\to Q$ is compatible with
  the linear metric if and only if $\Pi_{\mathbb E}\to \Pi_B$ is
  maximal isotropic in $\overline{T\mathbb E}\times T\mathbb
  E\an{\Pi_B}\to \Pi_B$.
\end{corollary}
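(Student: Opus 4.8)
The plan is to read the corollary as a formal consequence of Theorem \ref{theorem_isotropy} together with Proposition \ref{metric_VBLA_via_sa_ruth}. Fix a Lagrangian decomposition $\Sigma\colon Q\times_M B\to\mathbb E$ of the underlying metric double vector bundle, which exists by Theorem \ref{symmetrization}, and let $(\partial_Q\colon Q^*\to Q,\nabla^Q,\nabla^{Q^*},R)$ be the $2$-representation of the Lie algebroid $B$ on $\partial_Q\colon Q^*\to Q$ that encodes the VB-algebroid $(\mathbb E\to Q, B\to M)$ via Theorem \ref{rajan}. Everything then comes down to identifying the three conditions of Theorem \ref{theorem_isotropy} with the three conditions of Definition \ref{saruth}, and invoking Proposition \ref{metric_VBLA_via_sa_ruth}.

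Concretely, the first step I would carry out is to check that conditions (1)--(3) in Theorem \ref{theorem_isotropy} are exactly the defining conditions of a self-dual $2$-representation: condition (1) is literally $\partial_Q=\partial_Q^*$; condition (2), $(\nabla^Q)^*_b\tau=\nabla^{Q^*}_b\tau$, says precisely that $\nabla^{Q^*}$ is the connection dual to $\nabla^Q$; and condition (3), using $\langle R(b_1,b_2)q_2,q_1\rangle=\langle R(b_1,b_2)^*q_1,q_2\rangle$ and the canonical identification $Q^{**}\cong Q$, reads $R(b_1,b_2)+R(b_1,b_2)^*=0$, i.e.\ $R^*=-R\in\Omega^2(B,\operatorname{Hom}(Q,Q^*))$. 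Hence Theorem \ref{theorem_isotropy} is the statement that $\Pi_{\mathbb E}\to\Pi_B$ is maximal isotropic in $\overline{T\mathbb E}\times T\mathbb E\an{\Pi_B}\to\Pi_B$ if and only if the $2$-representation attached to $\Sigma$ is self-dual. The second step is to apply Proposition \ref{metric_VBLA_via_sa_ruth}, which says that this same $2$-representation is self-dual if and only if $(\mathbb E\to Q, B\to M)$ is a metric VB-algebroid, i.e.\ the metric isomorphism $\Beta\colon\mathbb E\to\mathbb E\duer B$ is an isomorphism of VB-algebroids in the sense of Definition \ref{metric_VBLA}. Chaining the two equivalences proves the corollary; and since both the maximal isotropy of $\Pi_{\mathbb E}$ and the metric VB-algebroid property are intrinsic (independent of $\Sigma$), a single choice of Lagrangian decomposition suffices (this is also consistent with Proposition \ref{change_of_lift} and the splitting-independence already built into Proposition \ref{metric_VBLA_via_sa_ruth}).

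The genuine work here is already packaged inside Theorem \ref{theorem_isotropy}, which I would therefore treat as given: its proof requires writing $\Pi_{\mathbb E}\to\Pi_B$ in terms of the generating sections \eqref{type1}--\eqref{type4} over $\Pi_B$, computing their mutual pairings under the tangent metric on $T\mathbb E\to TB$ (the tangent of $\langle\cdot\,,\cdot\rangle$, via the identifications $T\mathbb E\simeq T\mathbb E\duer TB$), and reading off that these pairings vanish exactly when (1)--(3) hold, together with a dimension count for maximality. This computation is the main obstacle, but it lies strictly in Theorem \ref{theorem_isotropy}; given that theorem and Proposition \ref{metric_VBLA_via_sa_ruth}, the corollary is immediate.
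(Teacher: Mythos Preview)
Your proposal is correct and matches the paper's own reasoning exactly: the paper remarks immediately after Theorem \ref{theorem_isotropy} that conditions (1)--(3) there are precisely the self-duality conditions on the $2$-representation, and then states the corollary without further proof, the link to the metric VB-algebroid condition being Proposition \ref{metric_VBLA_via_sa_ruth}. Your write-up simply makes this two-step equivalence explicit.
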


\begin{proof}[Proof of Theorem \ref{theorem_isotropy}]
The vector bundle $T\mathbb E\to TB$ has rank twice the rank of 
$\mathbb E\to B$. Hence, we have 
$\operatorname{rank}(T\mathbb E\times T\mathbb E\to TB\times
TB)=2\cdot2\cdot(\operatorname{rank}(Q)+\operatorname{rank}(Q^*))=8\operatorname{rank}(Q)$. It is easy to see that 
$\Pi_{\mathbb E}\to\Pi_B$ has rank 
$\operatorname{rank}(Q)+3\operatorname{rank}(Q^*)=4\operatorname{rank}(Q)$. Hence,
it is sufficient to find when $\Pi_{\mathbb E}\to \Pi_B$
is isotropic in $\overline{T\mathbb E}\times T\mathbb E\an{\Pi_B}\to \Pi_B$.

\medskip

The pairing 
\begin{align*}
  \Bigl\langle &T\left(\sigma_Q(q_1)+\chi_1^\dagger\right)+\left(-\sigma_Q(\nabla_{b_2}q_1)-R(b_1,b_2)q_1^\dagger+\sigma_1^\dagger+\nabla_{b_1}\tau_1^\dagger-\nabla_{b_2}\chi_1^\dagger+\sigma_Q(\partial_Q\tau_1)\right)^\times,\\
  & T\left(\sigma_Q(q_2)+\chi_2^\dagger\right)+\left(-\sigma_Q(\nabla_{b_2}q_2)-R(b_1,b_2)q_2^\dagger+\sigma_2^\dagger+\nabla_{b_1}\tau_2^\dagger-\nabla_{b_2}\chi_2^\dagger+\sigma_Q(\partial_Q\tau_2)\right)^\times\Bigr\rangle\\
  &\qquad\qquad \qquad\qquad \qquad\qquad \qquad\qquad  \qquad\left(Tb_1+([b_1,b_2]+b_3)^\times\right)(\rho_B(b_2))\\
  -\Bigl\langle &T\left(\sigma_Q(q_1)+\tau_1^\dagger\right)
  +\left(-\sigma_Q(\nabla_{b_1}q_1)+\sigma_1^\dagger+\sigma_Q(\partial_Q\chi_1)\right)^\times,\\
&\qquad
  T\left(\sigma_Q(q_2)+\tau_2^\dagger\right)
  +\left(-\sigma_Q(\nabla_{b_1}q_2)+\sigma_2^\dagger+\sigma_Q(\partial_Q\chi_2)\right)^\times
  \Bigr\rangle (Tb_2+b_3^\times)(\rho_B(b_1))
\end{align*}
equals 
\begin{align}
  &\rho_B(b_2)\langle
  \chi_1,q_2\rangle+\rho_B(b_2)\langle\chi_2,q_1\rangle\label{pairing_long_condition}-\rho_B(b_1)\langle
  \tau_1,q_2\rangle-\rho_B(b_1)\langle\tau_2,q_1\rangle\\
  -&\langle \nabla_{b_2}q_1, \chi_2\rangle-\langle R(b_1,b_2)q_1, q_2\rangle
  +\langle\nabla_{b_1}\tau_1,q_2\rangle-\langle\nabla_{b_2}\chi_1,
  q_2\rangle\nonumber\\
  +& \langle \partial_Q\tau_1, \chi_2\rangle-\langle \nabla_{b_2}q_2,
  \chi_1\rangle-\langle R(b_1,b_2)q_2, q_1\rangle
  +\langle\nabla_{b_1}\tau_2,q_1\rangle-\langle\nabla_{b_2}\chi_2, q_1\rangle+
  \langle \partial_Q\tau_2, \chi_1\rangle\nonumber\\
  +&\langle \nabla_{b_1}q_1, \tau_2\rangle
  -\langle \partial_Q\chi_1,\tau_2\rangle+\langle \nabla_{b_1}q_2,
  \tau_1\rangle -\langle \partial_Q\chi_2,\tau_1\rangle.\nonumber
\end{align}
This vanishes for all $b_1,b_2\in\Gamma(B)$, $q_1,q_2\in\Gamma(Q)$,
$\chi_1,\chi_2,\tau_1,\tau_2\in\Gamma(Q^*)$
if and only if
\begin{itemize}
\item (setting $b_1=b_2=0$, $q_1=q_2=0$, $\chi_2=\tau_1=0$)
\[\langle \partial_Q\tau_2,\chi_1\rangle-\langle \partial_Q\chi_1,\tau_2\rangle=0\]
for all $\chi_1,\tau_2\in\Gamma(Q^*)$, which is equivalent to
$\partial_Q=\partial_Q^*$.
\item (setting $b_2=0$, $q_2=0$, $\chi_1=\chi_2=\tau_1=0$)
  \[-\rho_B(b_1)\langle\tau_2,q_1\rangle+\langle\nabla_{b_1}\tau_2,q_1\rangle+\langle
  \nabla_{b_1}q_1, \tau_2\rangle =0
\]
for all $\tau_2\in\Gamma(Q^*)$, $q_1\in\Gamma(Q)$ and $a\in\Gamma(B)$.
This is equivalent to $\nabla_{b_1}^*\tau=\nabla_{b_1}\tau$ for all
$\tau\in\Gamma(Q^*)$ and $b_1\in\Gamma(B)$.
\item Using the two equations found above, 
several terms in \eqref{pairing_long_condition}
cancel. This yields
$-\langle R(b_1,b_2)q_1, q_2\rangle-\langle R(b_1,b_2)q_2, q_1\rangle=0$
for all $b_1,b_2\in\Gamma(B)$ and $q_1,q_2\in\Gamma(Q)$.
\end{itemize}
\end{proof}

Hence we have found that the second condition in Lemma \ref{useful_for_dirac_w_support} is satisfied if and only if
$(\mathbb E\to Q; B\to M)$ is a metric VB-algebroid.  We study
the remaining two conditions in Lemma \ref{useful_for_dirac_w_support} on the sections \eqref{type1},
\eqref{type2}, \eqref{type3}, \eqref{type4}.

\begin{theorem}
The anchor $\rho_{\overline{T\mathbb E}\times T\mathbb E}$ sends $\Pi_{\mathbb
  E}$ to $T\Pi_B$ if and only if 
\begin{enumerate}
\item $\rho_B\circ\partial_B=\rho_Q\circ\partial_Q$,
\item
  $\partial_B(\nabla_b\tau)=[b,\partial_B\tau]+\nabla_{\partial_Q\tau}b$,
\item $[\rho_B(b), \rho_Q(q)]=\rho_Q(\nabla_bq)-\rho_B(\nabla_qb)$ and
\item
  $\partial_BR(b_1,b_2)q=-\nabla_q[b_1,b_2]+[\nabla_qb_1,b_2]+[b_1,\nabla_qb_2]+\nabla_{\nabla_{b_2}q}b_1-\nabla_{\nabla_{b_1}q}b_2$
\end{enumerate}
for all $b,b_1,b_2\in\Gamma(B)$, $q\in\Gamma(Q)$ and $\tau\in\Gamma(Q^*)$.
\end{theorem}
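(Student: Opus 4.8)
The plan is to verify condition (1) of Lemma \ref{useful_for_dirac_w_support} for the Courant algebroid $\overline{T\mathbb E}\times T\mathbb E$ over $TB\times TB$, the subbundle $\Pi_{\mathbb E}$, and the support $\Pi_B$. Recall that $\overline{T\mathbb E}\times T\mathbb E$ carries the product of the tangent Courant algebroid structures on $T\mathbb E\to TB$ (the pairing twisted by a sign on the first factor, but with the \emph{same} anchor), and that by the characterisation of the tangent Courant algebroid used in the proof of Theorem \ref{tangent_courant_double} (see also \cite{BoZa09}) the anchor $\rho_{T\mathbb E}\colon T\mathbb E\to TTB$ is determined on the generating sections $T\sigma_Q(q),\ T\tau^\dagger,\ \sigma_Q(q)^\times,\ \tau^{\dagger\times}$ by the formulas $\Theta(\sigma_Q(q))=\widehat{\nabla_q}\in\mx(B)$ and $\Theta(\tau^\dagger)=(\partial_B\tau)^\uparrow$ from Theorem \ref{main}, together with the usual rules relating the $T$- and $\times$-lifts of a section to the complete and vertical lifts of the corresponding vector field on $B$.

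First I would recall, from Proposition \ref{description_of_Pi_A} applied to the Lie algebroid $B\to M$, that $\Pi_B$ is precisely the common zero locus inside $TB\times_{q_B\circ p_B}TB$ of the functions
\[
\ell_{\dr\ell_\alpha}\circ\pr_1-\ell_{\dr\ell_\alpha}\circ\pr_2+\dr_B\alpha\circ(p_B,p_B),\qquad \alpha\in\Gamma(B^*),
\]
\[
\ell_{q_B^*\dr f}\circ\pr_1-(p_B^*\ell_{\rho_B^*\dr f})\circ\pr_2,\qquad \ell_{q_B^*\dr f}\circ\pr_2-(p_B^*\ell_{\rho_B^*\dr f})\circ\pr_1,\qquad f\in C^\infty(M).
\]
A vector $V\in T(TB\times TB)$ based at a point $s\in\Pi_B$ is tangent to $\Pi_B$ exactly when $\pr_1$ and $\pr_2$ of $V$ have the same image under $T(q_B\circ p_B)$ and $V$ annihilates the differentials of all the functions above. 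So the problem reduces to taking the anchor image of each section of $\Pi_{\mathbb E}$, a pair of vectors on $TTB$, and pairing it with these differentials.

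Since, by Lemma \ref{useful_for_dirac_w_support}, it suffices to check condition (1) on a family of sections spanning $\Pi_{\mathbb E}\to\Pi_B$, I would run this test on the four families \eqref{type1}, \eqref{type2}, \eqref{type3}, \eqref{type4}. The purely vertical family \eqref{type1} is automatically tangent to $\Pi_B$. Testing \eqref{type2} and \eqref{type3}, the sections built from $T\tau^\dagger$, $\sigma_Q(\partial_Q\tau)^\times$ and $\nabla_\bullet\tau^{\dagger\times}$, against the pullback functions $\ell_{q_B^*\dr f}$ yields $\rho_B\circ\partial_B=\rho_Q\circ\partial_Q$ (condition (1)), and against the linear functions $\ell_{\dr\ell_\alpha}$ yields $\partial_B(\nabla_b\tau)=[b,\partial_B\tau]+\nabla_{\partial_Q\tau}b$ (condition (2)). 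Testing the family \eqref{type4}, built from $T\sigma_Q(q)$, $\sigma_Q(\nabla_\bullet q)^\times$ and $\widetilde{R(\bullet,\bullet)q}^\times$, against the pullback functions gives $[\rho_B(b),\rho_Q(q)]=\rho_Q(\nabla_bq)-\rho_B(\nabla_qb)$ (condition (3)), and against the linear functions it gives, after using (2) and (3) to cancel the inessential terms, the curvature identity $\partial_BR(b_1,b_2)q=-\nabla_q[b_1,b_2]+[\nabla_qb_1,b_2]+[b_1,\nabla_qb_2]+\nabla_{\nabla_{b_2}q}b_1-\nabla_{\nabla_{b_1}q}b_2$ (condition (4)). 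Conversely, these four identities visibly make all of the pairings vanish on $\Pi_B$, so they are also sufficient; this settles the equivalence.

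The main obstacle will be the bookkeeping in this last step: the anchor of the tangent Courant algebroid sends the second-order sections $T\sigma_Q(q)$ and $\sigma_Q(q)^\times$ into the double tangent bundle $TTB$, so pairing them with the differential of a linear function such as $\ell_{\dr\ell_\alpha}$ (itself coming from a $1$-form on $B$) forces one to track the canonical involution $\kappa_B\colon TTB\to TTB$ and the interplay of the complete and vertical lifts of the \emph{linear} vector field $\widehat{\nabla_q}\in\mx(B)$. One must also keep the two factors $\overline{T\mathbb E}$ and $T\mathbb E$ and the fibre-product constraint $q_B\circ p_B\circ\pr_1=q_B\circ p_B\circ\pr_2$ straight, so that the genuinely new content — the four identities above, all of which live over $M$ — separates cleanly from the terms that cancel identically. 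Once the anchor formulas on the generators of $T\mathbb E\to TB$ are pinned down, each individual pairing is a short computation.
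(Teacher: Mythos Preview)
Your proposal is correct and follows essentially the same approach as the paper: reduce to the four spanning families \eqref{type1}--\eqref{type4}, characterise $\Pi_B$ via the functions in Proposition \ref{description_of_Pi_A}, and test the anchor images against those functions, with family \eqref{type1} automatic, families \eqref{type2}--\eqref{type3} yielding conditions (1)--(2), and family \eqref{type4} yielding (3) from the pullback functions and (4) from the linear functions (with the cross-terms cancelled by (3)). One small practical note: the paper avoids the canonical involution $\kappa_B$ you anticipate by computing the action of $\widehat{[\widehat{\nabla_q},\cdot]}$ on $\dr_B\beta\circ(p_B,p_B)$ via the flow $\Phi_t$ of $\widehat{\nabla_q}$, differentiating $\dr_B\beta(b_1^t,b_2^t)$ at $t=0$; this keeps the computation inside $B$ and is cleaner than tracking the double tangent.
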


\begin{proof}
First note that, by construction, the map
\[T(q_B\circ p_B\circ\pr_1)-T(q_B\circ p_B\circ\pr_2)\colon T(TB\times
TB)\to TM
\]
sends the images under $\rho_{\overline{T\mathbb E}\times T\mathbb E}$
of our four types of sections to zero.  That is, we have
$\rho_{\overline{T\mathbb E}\times T\mathbb E}(\Pi_{\mathbb
  E})\subseteq T(TB\times_{q_B\circ p_B}TB)$ by construction.  Hence,
the anchor $\rho_{\overline{T\mathbb E}\times T\mathbb E}$ sends
$\Pi_{\mathbb E}$ to $T\Pi_B$ if and only if $\rho_{\overline{T\mathbb
    E}\times T\mathbb E}$ applied to our four special types of
sections annihilate all the $\R$-valued functions in Proposition
\ref{description_of_Pi_A}.

The anchor of $\left((\tau^\dagger)^\times\circ\pr_1,
  (\tau^\dagger)^\times\circ\pr_2\right)$ is
\[\left((\Theta(\tau^\dagger))^\uparrow\circ\pr_1,
  (\Theta(\tau^\dagger))^\uparrow\circ\pr_2\right) =\left(
  (\partial_B\tau^\dagger)^\uparrow\circ\pr_1,
  (\partial_B\tau^\dagger)^\uparrow\circ\pr_2\right)\] for
$\tau\in\Gamma(Q^*)$. This vanishes on all the functions defining
$\Pi_B$.  The anchor
of \[\left(T\tau^\dagger-(\nabla_{b_2}\tau^\dagger)^\times,
  \sigma_Q(\partial_Q\tau)^\times\right)((b_1,b_2,b_3)(m))\]
is \[\left(\widehat{[\partial_B\tau^\uparrow,\cdot]}-(\partial_B\nabla_{b_2}\tau^\uparrow)^\uparrow,
  \widehat{\nabla_{\partial_Q\tau}\cdot}^\uparrow\right)((b_1,b_2,b_3)(m)).\]
This derivation, which we call here $X$ sends $\ell_{q_B^*\dr
  f}\circ\pr_1-(p_B^*\ell_{\rho_B^*\dr f})\circ \pr_2$ to $0$. We have
further
\begin{align*}
  X(\ell_{q_B^*\dr f}\circ\pr_2-(p_B^*\ell_{\rho_B^*\dr
    f})\circ \pr_1)&=
  \widehat{\nabla_{\partial_Q\tau}\cdot}(q_B^*
  f)(b_2(m))-(\partial_B\tau)^\uparrow(\ell_{\rho_B^*\dr f})(b_1(m))\\
  &=(\rho_Q\partial_Q\tau)(f)(m)-(\rho_B\partial_B\tau)(f)(m).
\end{align*} This vanishes for all $f\in C^\infty(M)$ and all
$m\in M$ if and only if $\rho_B\partial_B(\tau)=\rho_Q\partial_Q(\tau)$. Finally,
a computation yields
\begin{align*}
&X(\ell_{\dr\ell_\beta}\circ\pr_1-\ell_{\dr\ell_\beta}\circ\pr_2+\dr_B\beta\circ(p_B,p_B))\\
=\,&\langle \beta,
-\partial_B\nabla_{b_2}\tau+\nabla_{\partial_Q\tau}b_2+[b_2,\partial_B\tau]\rangle
+(\rho_B\partial_B\tau-\rho_Q\partial_Q\tau)\langle \beta, b_2\rangle(m).
\end{align*}
We find hence that $X$ vanishes on all the functions defining $\Pi_B$ as in
Proposition \ref{description_of_Pi_A} if and only if
$[b_2,\partial_B\tau]=\partial_B(\nabla_{b_2}\tau)-\nabla_{\partial_Q\tau}b_2$
and
$\rho_B\partial_B(\tau)=\rho_Q\partial_Q(\tau)$.

In a similar manner, the anchors of the elements of the type \[\left((\nabla_a\tau^\dagger+
  \sigma_Q(\partial_Q\tau))^\times
  ,T\tau^\dagger\right)((a,b,c)(m))\]
annihilate all the functions in Proposition \ref{description_of_Pi_A}
if and only if $\rho_Q\partial_Q=\rho_B\partial_B$ and
$\partial_B\nabla_b\tau=[b,\partial_B\tau]+\nabla_{\partial_Q\tau}b$
for all $b\in\Gamma(B)$ and $\tau\in\Gamma(Q^*)$.

The anchor of \begin{align*} \left( T\sigma_Q(q)-\left(\sigma_Q(\nabla_{b_2}q)+R(b_1,b_2)q^\dagger\right)^\times,
    T\tilde q-\sigma_Q(\nabla_{b_1}q)^\times\right)((b_1,b_2,b_3)(m))
\end{align*}
is 
\begin{align*}
  \left(
    \widehat{\left[\widehat{\nabla_q\cdot},\cdot\right]}
-\left(\widehat{\nabla_{\nabla_{b_2}q}\cdot}+\partial_BR(b_1,b_2)q^\uparrow\right)^\uparrow,
    \widehat{\left[\widehat{\nabla_q\cdot},\cdot\right]}
-\widehat{\nabla_{\nabla_{b_1}q}\cdot}^\uparrow\right)((b_1,b_2,b_3)(m)).
\end{align*}
We call this vector $Y$. Applying $Y$ to $\ell_{q_B^*\dr
  f}\circ\pr_1-(p_B^*\ell_{\rho_B^*\dr f})\circ\pr_2$
yields
\begin{align*}
&\ell_{\ldr{\widehat{\nabla_q\cdot}}q_B^*\dr f}\left( (Tb_1+([b_1,b_2]+c)^\times)\rho_B(b_2)(m)
\right)\\
&-\langle q_B^*\dr f, \widehat{\nabla_{\nabla_{b_2}q}\cdot}+\partial_BR(b_1,b_2)q^\dagger\rangle
(b_1(m))
-\widehat{\nabla_q\cdot}(\ell_{\rho_B^*\dr f})(b_2(m))\\
=\,&\ell_{q_B^*\dr(\rho_Q(q)(f))}\left( (Tb_1+([b_1,b_2]+c)^\times)\rho_B(b_2)(m)
\right)-\rho_Q(\nabla_{b_2}q)(f)(m)-\langle\nabla_q^*(\rho_B^*\dr f),
b_m\rangle\\
=\,&\rho_B(b_2(m))\rho_Q(q)(f)-\rho_Q(\nabla_{b_2}q)(f)(m)-\rho_Q(q(m))\langle
\rho_B^*\dr f, b_2\rangle+\langle\rho_B^*\dr f, \nabla_qb_2\rangle\\
=\,&\langle \dr f, [\rho_B(b_2), \rho_Q(q)]-\rho_Q(\nabla_{b_2}q)+\rho_B(\nabla_qb_2)\rangle(m)
\end{align*}
and applying $Y$ to $\ell_{q_B^*\dr f}\circ\pr_2-(p_B^*\ell_{\rho_B^*\dr f})\circ\pr_1$ yields
\[\langle \dr f, [\rho_B(b_1), \rho_Q(q)]-\rho_Q(\nabla_{b_1}q)+\rho_B(\nabla_qb_1)\rangle(m).\]
Now we apply $Y$ to
$\ell_{\dr\ell_\beta}\circ\pr_1-\ell_{\dr\ell_\beta}\circ\pr_2+\dr_B\beta\circ(p_B,p_B)$.
Let $\Phi_t$ be the flow of $\widehat{\nabla_q\cdot}\in\mx(B)$, and
$\phi_t$ the flow of $\rho_Q(q)\in\mx(M)$. Since for each $t$,
$\Phi_t$ is a vector bundle morphism $B\to B$ over $\phi_t$, we can
define for each section $b\in\Gamma(B)$ a new section
$b^t\in\Gamma(B)$ by $b^t(m)=\Phi_t(b(\phi_{-t}(m)))$ for all $m\in
M$.  We find that
$Y(\ell_{\dr\ell_\beta}\circ\pr_1-\ell_{\dr\ell_\beta}\circ\pr_2+\dr_B\beta\circ(p_B,p_B))$ equals 
\begin{equation*}
\begin{split}
&\ell_{\ldr{\widehat{\nabla_q\cdot}}\dr\ell_\beta}\left((Tb_1+([b_1,b_2]+b_3)^\times)\rho_B(b_2)(m)
\right)-\langle \dr\ell_\beta,
\widehat{\nabla_{\nabla_{b_2}q}\cdot}+\partial_BR(b_1,b_2)q^\uparrow\rangle(b_1(m))\\
&-\ell_{\ldr{\widehat{\nabla_q\cdot}}\dr\ell_\beta}\left((Tb_2+b_3^\times)\rho_B(b_1)(m)
\right)+\langle \dr\ell_\beta,
\widehat{\nabla_{\nabla_{b_1}q}\cdot}\rangle(b_2(m))\\
&+\left.\frac{d}{dt}\right\an{t=0}\dr_B\beta(b_1^t(m),
b_2^t(m)).
\end{split}
\end{equation*}
The first term is 
\[\ell_{\dr\ell_{\nabla_q^*\beta}}\left((Tb_1+([b_1,b_2]+b_3)^\times)\rho_B(b_2)(m)
\right)=\rho_B(b_2(m))\langle\nabla_q^*\beta, b_1\rangle+\langle \nabla_q^*\beta,
[b_1,b_2]+b_3\rangle.\]
The second term is
\[-(\ell_{\nabla_{\nabla_{b_2}q}^*\beta}+q_B^*\langle\beta,\partial_BR(b_1,b_2)q\rangle)(b_1(m))=-\langle
\nabla_{\nabla_{b_2}q}^*\beta,
b_1(m)\rangle-\langle\beta,\partial_BR(b_1,b_2)q\rangle(m).
\]
The third and fourth terms are 
\[-\ell_{\dr\ell_{\nabla_q^*\beta}}\left((Tb_2+b_3^\times)\rho_B(b_1)(m)
\right)=-\rho_B(b_1(m))\langle\nabla_q^*\beta, b_2\rangle-\langle \nabla_q^*\beta,
b_3\rangle(m)
\]
and 
\[\ell_{\nabla_{\nabla_{b_1}q}^*\beta}(b_2(m))=\langle \nabla_{\nabla_{b_1}q}^*\beta, b_2\rangle(m)
\]
and the fifth term is
\begin{align*}
&+\left.\frac{d}{dt}\right\an{t=0}\left(\rho_B(b_1^t)\langle\beta,b_2^t\rangle
-\rho_B(b_2^t)\langle \beta, b_1^t\rangle-\langle\beta, [b_1^t,b_2^t]\rangle
\right)\left(\phi_t(m)\right)\\
=\,
&-\rho_B(\nabla_qb_1(m))\langle \beta,
b_2\rangle-\rho_B(b_1(m))\langle\beta,\nabla_qb_2\rangle\\
&+\rho_B(\nabla_qb_2(m))\langle \beta,
b_1\rangle+\rho_B(b_2(m))\langle\beta,\nabla_qb_1\rangle+\langle\beta(m),
[\nabla_qb_1, b_2]+[b_1,\nabla_qb_2]\rangle\\
&+\rho_Q(q(m)) \left(\rho_B(b_1)\langle\beta,b_2\rangle
-\rho_B(b_2)\langle \beta, b_1\rangle-\langle\beta, [b_1,b_2]\rangle
\right).
\end{align*}
Hence, we get
\begin{align*}
&\left\langle \beta,
  -\nabla_q[b_1,b_2]+\nabla_{\nabla_{b_2}q}b_1-\partial_BR(b_1,b_2)q-\nabla_{\nabla_{b_1}q}b_2+[\nabla_qb_1,
  b_2]+[b_1,\nabla_qb_2]\right\rangle(m)\\
&+\left([\rho_B(b_2), \rho_Q(q)](m) -\rho_B(\nabla_{b_2}q(m))+\rho_B(\nabla_qb_2(m)) 
\right)\langle \beta, b_1\rangle\\
&+\left(-[\rho_B(b_1), \rho_Q(q)](m)+\rho_Q(\nabla_{b_1}q(m))-\rho_B(\nabla_qb_1(m))\right)\langle\beta,b_2\rangle
\end{align*}
Hence, we find that the anchors of sections of type \eqref{type4} vanish on all the functions defining $\Pi_B$
as in Proposition \ref{description_of_Pi_A} if and only if 
\[[\rho_B(b), \rho_Q(q)]=\rho_Q(\nabla_{b}q)-\rho_B(\nabla_q{b})
\]
and 
\[\partial_BR(b_1,b_2)q=-\nabla_q[b_1,b_2]+\nabla_{\nabla_{b_2}q}b_1-\nabla_{\nabla_{b_1}q}b_2+[\nabla_qb_1,
  b_2]+[b_1,\nabla_qb_2]
\]
for all $b,b_1,b_2\in\Gamma(B)$ and $q\in\Gamma(Q)$.
\end{proof}

\begin{proposition}\label{theorem_integrability}
  Assume that $\partial_Q=\partial_Q^*$.  The Courant brackets of
  extensions of any two of the special sections of $\Pi_{\mathbb
    E}\to\Pi_B$ restrict to a section of $\Pi_{\mathbb E}\to\Pi_B$ if
  and only if
\begin{enumerate}
\item
  $(\Delta_{\partial_Q\tau_1}\tau_2-\nabla_{\partial_B\tau_2}\tau_1)+(\Delta_{\partial_Q\tau_2}\tau_1-\nabla_{\partial_B\tau_1}\tau_2)=\rho_Q^*\dr\langle
  \tau_1, \partial_Q\tau_2\rangle$,
\item $\partial_Q(\Delta_q\tau)=\nabla_{\partial_B\tau}q+\lb
  q, \partial_Q\tau\rb+\partial_B^*\langle \tau, \nabla_\cdot q\rangle$,
\item
  $\partial_BR(b_1,b_2)q=-\nabla_q[b_1,b_2]+[\nabla_qb_1,b_2]+[b_1,\nabla_qb_2]+\nabla_{\nabla_{b_2}q}b_1-\nabla_{\nabla_{b_1}q}b_2$,
\item
  $R(q,\partial_Q\tau)b-R(b,\partial_B\tau)q=\Delta_q\nabla_b\tau-\nabla_b\Delta_q\tau+\Delta_{\nabla_bq}\tau-\nabla_{\nabla_qb}\tau-\langle\nabla_{\nabla_\cdot b}q, \tau\rangle$,
\item$\partial_QR(q_1,q_2)b=-\nabla_b\lb q_1,q_2\rb+\lb q_1,
  \nabla_{b}q_2\rb+\lb \nabla_bq_1,
  q_2\rb+\nabla_{\nabla_{q_2}b}q_1-\nabla_{\nabla_{q_1}b}q_2+\partial_B^*\langle
  R(\cdot, b)q_1, q_2\rangle$.
\item
\begin{equation*}
\begin{split}
  &\nabla_{b_2}R(q_1,q_2)b_1-\nabla_{b_1}R(q_1,q_2)b_2+R(q_1,q_2)[b_1,b_2]\\
  &+R(\nabla_{b_1}q_1,q_2)b_2+R(q_1,\nabla_{b_1}q_2)b_2-R(\nabla_{b_2}q_1,q_2)b_1-R(q_1,\nabla_{b_2}q_2)b_1\\
  &+\Delta_{q_1}R(b_1,b_2)q_2-\Delta_{q_2}R(b_1,b_2)q_1-R(b_1,b_2)\lb q_1,q_2\rb\\
  &-R(\nabla_{q_1}b_1,b_2)q_2-R(b_1,\nabla_{q_1}b_2)q_2+R(\nabla_{q_2}b_1,b_2)q_1-R(b_1,\nabla_{q_2}b_2)q_1\\
  =&\,\langle R(b_1,\nabla_\cdot b_2)q_1,q_2\rangle +\langle R(\nabla_\cdot
  b_1, b_2)q_1,q_2\rangle -\rho_Q^*\dr\langle R(b_1,b_2)q_1, q_2\rangle.
\end{split}
\end{equation*}
\end{enumerate}
for all $b,b_1,b_2\in\Gamma(B)$, $q,q_1,q_2\in\Gamma(Q)$, $\tau,\tau_1,\tau_2\in\Gamma(Q^*)$.
\end{proposition}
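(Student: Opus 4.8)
The plan is to verify the third condition of Lemma \ref{useful_for_dirac_w_support} for the subbundle $\Pi_{\mathbb E}\to\Pi_B$ sitting inside $\overline{T\mathbb E}\times T\mathbb E$. By the description \eqref{description_Pi} of $\Pi_B$ and the analogous pointwise description of $\Pi_{\mathbb E}$, the four families of sections \eqref{type1}, \eqref{type2}, \eqref{type3}, \eqref{type4} span $\Pi_{\mathbb E}$ as a vector bundle over $\Pi_B$; they are honest sections of $\overline{T\mathbb E}\times T\mathbb E\to TB\times TB$ (being built from $T\tau^\dagger$, $\sigma_Q(q)^\times$, etc.\ composed with $\pr_1,\pr_2$), so their restrictions to $\Pi_B$ span $\Pi_{\mathbb E}\to\Pi_B$ in the sense required by the lemma. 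Since Theorem \ref{theorem_isotropy} already settles the isotropy (condition (2) of Lemma \ref{useful_for_dirac_w_support}) under the standing hypothesis $\partial_Q=\partial_Q^*$, and the anchor theorem already settles that $\rho_{\overline{T\mathbb E}\times T\mathbb E}$ sends $\Pi_{\mathbb E}$ into $T\Pi_B$ (condition (1)), it remains only to compute the Courant brackets in $\overline{T\mathbb E}\times T\mathbb E$ of pairs of the special sections, and to read off exactly when each such bracket, restricted over $\Pi_B$, is again a combination of \eqref{type1}--\eqref{type4}.

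For the computation I would use the explicit model of the tangent Courant algebroid $T\mathbb E\to TB$ from \cite{BoZa09} (as in the proof of Theorem \ref{tangent_courant_double}): for $e,e'\in\Gamma_B(\mathbb E)$ one has $\lb Te,Te'\rb=T\lb e,e'\rb$, $\lb Te,e'^\times\rb=\lb e,e'\rb^\times$, $\lb e^\times,e'^\times\rb=0$, anchor $\Theta(Te)=\widehat{\ldr{\rho_{\mathbb E}(e)}}$, $\Theta(e^\times)=\rho_{\mathbb E}(e)^\uparrow$, and pairing $\langle Te,Te'\rangle=\ell_{\dr\langle e,e'\rangle}$, $\langle Te,e'^\times\rangle=p_B^*\langle e,e'\rangle$, $\langle e^\times,e'^\times\rangle=0$. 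Into this I would feed the description of the linear Courant algebroid $\mathbb E\to B$ in terms of the Dorfman $2$-representation $(\partial_B,\Delta,\nabla,R)$ from Theorem \ref{main} (that is $\lb\sigma_Q(q_1),\sigma_Q(q_2)\rb=\sigma_Q(\lb q_1,q_2\rb_\Delta)-\widetilde{R(q_1,q_2)}$, $\lb\sigma_Q(q),\tau^\dagger\rb=(\Delta_q\tau)^\dagger$, $\Theta(\sigma_Q(q))=\widehat{\nabla_q}$, $\Theta(\tau^\dagger)=(\partial_B\tau)^\uparrow$), together with the bracket identities of Lemma \ref{formulas}, the auxiliary lemmas at the start of Appendix \ref{appendix_proof_of_main}, the self-duality relations from Theorem \ref{theorem_isotropy}, and the elementary rules for $T(f\cdot e)$, $(f\cdot e)^\times$, $T\widetilde\phi$ and $\widetilde\phi{}^\times$ with $f\in C^\infty(B)$, $\phi\in\Gamma(\operatorname{Hom}(B,Q^*))$; these reduce every bracket to a combination of the four canonical generators of $T\mathbb E\to TB$.

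I would then run through the (ten) unordered pairs of generator types. The bracket of two sections of type \eqref{type1} vanishes, giving nothing; brackets of \eqref{type1} with \eqref{type2} or \eqref{type3}, and brackets among \eqref{type2} and \eqref{type3}, produce (modulo the already-available anchor compatibilities) conditions (1) and (2); the bracket of \eqref{type4} with \eqref{type1} produces (5); of \eqref{type4} with \eqref{type2} or \eqref{type3} produces (4); and of \eqref{type4} with itself produces simultaneously (3) — consistent with what the anchor computation gave — and the long identity (6). The Jacobi-type pieces in the last computations are repackaged using \eqref{curv_dual_Jac}, which rewrites $R_\Delta$ as the Jacobiator of $\lb\cdot\,,\cdot\rb_\Delta$. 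A combination of these checks with Lemma \ref{useful_for_dirac_w_support} then yields the proposition.

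The main obstacle is the $\lb\eqref{type4},\eqref{type4}\rb$ computation: it is the only one in which the curvature of the Dorfman $2$-representation and the curvature of the self-dual $2$-representation interact, it yields two of the six conditions at once, and the intermediate expressions collapse only after repeated use of $\partial_Q=\partial_Q^*$, of the self-duality of $\nabla^Q,\nabla^{Q^*}$, and of the already-established equations $\rho_Q\partial_Q=\rho_B\partial_B$ and $[\rho_B(b),\rho_Q(q)]=\rho_Q(\nabla_bq)-\rho_B(\nabla_qb)$. A secondary, essentially cosmetic, difficulty is keeping track of which equations recur across the isotropy, anchor and involutivity computations so that the final list is exactly (1)--(6); this is handled as in Remark \ref{remark_simplifications}, where several of these identities are shown to be equivalent modulo the anchor compatibilities.
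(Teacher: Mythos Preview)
Your overall strategy---compute the Courant brackets of the spanning sections and read off the conditions for closure---matches the paper's. However, two points of the plan would not go through as written.

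First, the claim that the sections \eqref{type2}, \eqref{type3}, \eqref{type4} are already ``honest sections of $\overline{T\mathbb E}\times T\mathbb E\to TB\times TB$'' is not correct. The terms of the form $(\nabla_{p_B\circ\pr_2}\tau^\dagger)^\times\circ\pr_1$ or $(R(p_B\circ\pr_1,p_B\circ\pr_2)q^\dagger)^\times\circ\pr_1$ are only well-defined over $\Pi_B$: the operator $(\cdot)^\times$ takes a \emph{section} of $\mathbb E\to B$ to a section of $T\mathbb E\to TB$, but $\nabla_{p_B(v_2)}\tau$ is a pointwise expression, not a section, so there is no canonical way to apply $(\cdot)^\times$ and then evaluate at $\pr_1$. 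The paper handles exactly this issue in Lemma~\ref{extensions}: one writes $\nabla_\cdot\tau=\sum_i\ell_{\xi_i}\cdot\tau_i$ (and similarly for $\nabla_\cdot q$ and $R(\cdot,\cdot)q$) and uses these decompositions to build explicit extensions as sums of products of pullback functions with $(\tau_i^\dagger)^\times$, $(\sigma_Q(q_i))^\times$, etc. The actual bracket computations all go through these extensions, and the Leibniz terms generated by the function coefficients are essential---they are precisely what reassembles into the $\nabla_{\nabla_\cdot b}q$ and $\partial_B^*\langle\cdot,\cdot\rangle$ contributions in conditions (2), (4), (5), (6).

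Second, your bookkeeping of which bracket yields which condition is off. Brackets of type~\eqref{type1} with anything land automatically in type~\eqref{type1} and give no conditions (so \eqref{type4} with \eqref{type1} does \emph{not} produce (5)). The bracket \eqref{type2} with \eqref{type3} gives condition~(1); the bracket \eqref{type4} with \eqref{type3} gives conditions~(2) \emph{and}~(4) simultaneously (you missed (2) here); and the bracket \eqref{type4} with \eqref{type4} gives conditions~(5) and~(6), not (3) and (6). Condition~(3) does not arise from the bracket computation at all---it coincides with one of the anchor compatibilities already established in the preceding theorem, and its presence in the list is for the purpose of assembling Definition~\ref{matched_pairs} in Theorem~\ref{LA-Courant}.
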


In order to prove this, we need to extend the four types of
sections \eqref{type1},\eqref{type2},\eqref{type3},\eqref{type4} to
sections of $T\mathbb E\times T\mathbb E\to TB\times TB$.

\begin{lemma}\label{extensions}
\begin{enumerate}
\item The sections of type \eqref{type1} are already restrictions to $\Pi_B$ 
of sections $\left((\tau^\dagger)^\times,
    (\tau^\dagger)^\times\right)\in\Gamma_{TB\times TB}(T\mathbb E\times T\mathbb E)$, $\tau\in\Gamma(Q^*)$.
\item For $\chi\in\Gamma(Q^*)$, the vector bundle morphism $\nabla_\cdot\chi\colon B\to Q^*$ 
can be written
\[\nabla_\cdot\chi=\sum_{i=1}^n\ell_{\xi_i}\cdot\chi_i
\]
with some $\xi_1,\ldots,\xi_n\in\Gamma(B^*)$ and $\chi_1,\ldots,\chi_n\in\Gamma(Q^*)$.
The section 
\[\left( T\chi^\dagger, 0\right)
    -\sum_{i=1}^n(p_B\circ\pr_2)^*\ell_{\xi_i}\cdot\left((\chi_i^\dagger)^\times,0\right)
+\left(0,\sigma_Q(\partial_Q\chi)^\times\right)\] of $T\mathbb E\times T\mathbb E\to TB\times TB$ restricts 
to $\left( T\chi^\dagger
    -(\nabla_{p_B\circ\pr_2}\chi^\dagger)^\times,
    \sigma_Q(\partial_Q\chi)^\times\right)$ on $\Pi_B$.
\item In the same manner, for $ \tau\in\Gamma(Q^*)$, the morphism $\nabla_\cdot\tau\colon B\to Q^*$ can be written
\[\nabla_\cdot\tau=\sum_{i=1}^k\ell_{\eta_i}\cdot\tau_i
\]
with some $\eta_1,\ldots,\eta_k\in\Gamma(B^*)$ and $\tau_1,\ldots,\tau_n\in\Gamma(Q^*)$.
The section \[\left( \sigma_Q(\partial_Q\tau)^\times, 0\right)
    +\sum_{i=1}^k(p_B\circ\pr_1)^*\ell_{\eta_i}\cdot\left((\tau_i^\dagger)^\times,0\right)
+\left(0,T\tau^\dagger\right)\] of $T\mathbb E\times T\mathbb E\to TB\times TB$ restricts 
to 
$\left( (\nabla_{p_B\circ
      \pr_1}\tau^\dagger+\sigma_Q(\partial_Q\tau))^\times,
    T\tau^\dagger\right)$ on $\Pi_B$.
\item Finally, for $q\in\Gamma(Q)$, the morphism
$\nabla_\cdot q\colon B\to Q$ can be written 
\[\nabla_\cdot q=\sum_{i=1}^l\ell_{\gamma_i}\cdot q_i
\]
with some $\gamma_1,\ldots,\gamma_l\in\Gamma(B^*)$ and $q_1,\ldots,q_n\in\Gamma(Q)$,
and the tensor $R(\cdot,\cdot)q\colon B\times B\to Q^*$ can be written 
\[R(b_1,b_2)q=\sum_{i=1}^m\sum_{j=1}^p\ell_{\alpha_i}(b_1)\ell_{\beta_j}(b_2)\tau_{ij}
\]
with some  $\alpha_1,\ldots,\alpha_m,\beta_1,\ldots,\beta_p\in\Gamma(B^*)$ and $\tau_{ij}\in\Gamma(Q^*)$, $i=1,\ldots,m$, $j=1,\ldots,p$.
The section
\begin{equation*}
\begin{split}
  & \left(T\sigma_Q(q) , T\sigma_Q(q)\right)-\sum_{i=1}^l(p_B\circ\pr_2)^*\ell_{\gamma_i}\cdot\left((\sigma_Q(q_i))^\times,0\right)-\sum_{i=1}^l(p_B\circ\pr_1)^*\ell_{\gamma_i}\cdot\left(0,(\sigma_Q(q_i))^\times\right)\\
  &-\sum_{i=1}^m\sum_{j=1}^p(p_B\circ\pr_1)^*\ell_{\alpha_i}(p_B\circ\pr_2)^*\ell_{\beta_j}\cdot\left((\tau_{ij}^\dagger)^\times,0\right)
\end{split}
\end{equation*}
restricts to 
\[\left(T\sigma_Q(q) -(\sigma_Q(\nabla_{p_B\circ
        \pr_2}q)+R(p_B\circ \pr_1, p_B\circ \pr_2)q^\dagger)^\times,
    T\sigma_Q(q)-(\sigma_Q(\nabla_{p_B\circ
        \pr_1}q))^\times\right)\]
on $\Pi_B$.
\end{enumerate}
\end{lemma}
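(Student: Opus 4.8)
The plan is to treat the four items in turn; item (1) is immediate and items (2)--(4) all follow one common pattern. For (1), the pairs $\bigl((\tau^\dagger)^\times,(\tau^\dagger)^\times\bigr)$ are honest core sections of $T\mathbb E\times T\mathbb E\to TB\times TB$: for any vector bundle $E\to M$ and $e\in\Gamma(E)$ the core section $e^\times$ of $TE\to TM$ is the one defined by \eqref{cross_sect_def}, and here the core section $\tau^\dagger\in\Gamma_B(\mathbb E)$ produces $(\tau^\dagger)^\times\in\Gamma_{TB}(T\mathbb E)$. Their restriction to $\Pi_B$ is by construction the section \eqref{type1}, so there is nothing to prove.

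For (2)--(4) the only obstruction to reading a global section directly off \eqref{type2}--\eqref{type4} is that the terms built from $\nabla_{p_B\circ\pr_k}$ and from $R(p_B\circ\pr_1,p_B\circ\pr_2)$ are a priori only defined pointwise on $\Pi_B$, since the ``direction'' fed to the connection or the curvature varies with the base point. The remedy I would use is tensoriality: write $\nabla_\cdot\chi\in\Gamma(\operatorname{Hom}(B,Q^*))=\Gamma(B^*\otimes Q^*)$ as a \emph{finite} sum $\sum_i\xi_i\otimes\chi_i$ (locally trivially, globally by a partition-of-unity argument as used elsewhere in the paper); then for $b_m\in B_m$ one has $\nabla_{b_m}\chi=\sum_i\ell_{\xi_i}(b_m)\chi_i(m)$, hence $(\nabla_{b_m}\chi^\dagger)^\times=\sum_i\ell_{\xi_i}(b_m)(\chi_i^\dagger)^\times$. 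The same device applies to $\nabla_\cdot q\in\Gamma(B^*\otimes Q)$ (write $\sum_i\gamma_i\otimes q_i$) and to the bilinear curvature term $R(\cdot,\cdot)q\in\Gamma(B^*\otimes B^*\otimes Q^*)$ (write $\sum_{i,j}\alpha_i\otimes\beta_j\otimes\tau_{ij}$). Substituting these expansions replaces each pointwise term by a finite $C^\infty(TB\times TB)$-linear combination of the genuine core sections $(\chi_i^\dagger)^\times$, $(\sigma_Q(q_i))^\times$, $(\tau_{ij}^\dagger)^\times$, with coefficients the pullbacks $(p_B\circ\pr_k)^*\ell_{\xi_i}$, respectively $(p_B\circ\pr_1)^*\ell_{\alpha_i}\cdot(p_B\circ\pr_2)^*\ell_{\beta_j}$. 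These combinations are exactly the sections displayed in (2)--(4), which are therefore bona fide sections of $T\mathbb E\times T\mathbb E\to TB\times TB$ (the extension off $\Pi_B$ is not canonical, which is harmless since only the restriction matters).

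It then remains to check that restricting these global sections to $\Pi_B$ returns \eqref{type2}--\eqref{type4}. Using the description \eqref{description_Pi}, a point of $\Pi_B$ has the form $(b_1,b_2,b_3)(m)$, and from the definition of $(a,b,c)(m)$ recorded above one reads off that $p_B\circ\pr_1$ and $p_B\circ\pr_2$ evaluate there to $b_2(m)$ and $b_1(m)$ (in the order dictated by the sign conventions fixed above). Hence at this point the coefficient $(p_B\circ\pr_k)^*\ell_{\xi_i}$ equals $\langle\xi_i(m),b_j(m)\rangle$, and re-summing $\sum_i\langle\xi_i(m),b_j(m)\rangle(\chi_i^\dagger)^\times=(\nabla_{b_j(m)}\chi^\dagger)^\times$ recovers precisely the term appearing in \eqref{type2}--\eqref{type4}; the tangent-lift pieces $T\tau^\dagger$, $T\sigma_Q(q)$ and the untouched core pieces $(\tau^\dagger)^\times$, $\sigma_Q(\partial_Q\tau)^\times$ restrict trivially. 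I do not anticipate a real obstacle: the only genuine care needed is bookkeeping -- attaching each pullback $(p_B\circ\pr_1)^*$ or $(p_B\circ\pr_2)^*$ to the correct slot of $T\mathbb E\times T\mathbb E$ so that restriction to $\Pi_B$ produces the intended $b_j$, and correctly splitting the bilinear term $R(\cdot,\cdot)q$ so that its two linear factors are pulled back from the two different projections. This is routine but must be carried out attentively.
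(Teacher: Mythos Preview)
Your proposal is correct and is exactly the natural argument the paper has in mind; the paper in fact states this lemma without proof and uses it directly in the subsequent computation. One small slip in your write-up: at a point $(b_1,b_2,b_3)(m)\in\Pi_B$ (as written out explicitly in the paper just before Proposition~\ref{crucial}), one has $p_B\circ\pr_1=b_1(m)$ and $p_B\circ\pr_2=b_2(m)$, not the reverse --- but you already flag this bookkeeping as the only delicate point, and the argument goes through verbatim once the indices are matched correctly.
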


\begin{proof}[Proof of Theorem \ref{theorem_integrability}]
We compute successively the Courant brackets of the
extensions of our four special types of sections.
The Courant bracket of the extension $S$ of
\begin{align*}
  &\Biggl(T\left(\sigma_Q(q_1)+\chi_1^\dagger\right)+\left(-\sigma_Q(\nabla_{p_B\circ\pr_2}q_1)-R(p_B\circ\pr_1,p_B\circ\pr_2)q_1^\dagger+\sigma_1^\dagger\right)^\times\\
&\hspace*{6cm}+\left(\nabla_{p_B\circ\pr_1}\tau_1^\dagger-\nabla_{p_B\circ\pr_2}\chi_1^\dagger+\sigma_Q(\partial_Q\tau_1)\right)^\times,\\
  &\hspace*{3cm}T\left(\sigma_Q( q_1)+\tau_1^\dagger\right)
  +\left(-\sigma_Q(\nabla_{p_B\circ\pr_1}q_1)+\sigma_1^\dagger+\sigma_Q(\partial_Q\chi_1)\right)^\times\Biggr)
\end{align*}
with $\left( (\sigma_2^\dagger)^\times,
  (\sigma_2^\dagger)^\times\right)$ equals
$ \left( (\Delta_{q_1}\sigma_2^\dagger)^\times,
  (\Delta_{q_1}\sigma_2^\dagger)^\times\right)$,
which restricts to a section of type \eqref{type1} of $\Pi_{\mathbb
  E}$ on $\Pi_B$.  Since
  $\left\langle S, \left( (\sigma_2^\dagger)^\times,
    (\sigma_2^\dagger)^\times\right)\right\rangle=p_B^*\left\langle
    \sigma_Q(q_1)+\chi_1^\dagger, \sigma_2^\dagger\right\rangle
  -p_B^*\left\langle\sigma_Q(q_1)+\tau_1^\dagger, \sigma_2^\dagger
  \right\rangle= p_B^*q_B^*(\langle q_1,\sigma_2\rangle-\langle
  q_1,\sigma_2\rangle)=0$,
the Courant bracket of $\left( (\sigma_2^\dagger)^\times,
  (\sigma_2^\dagger)^\times\right)$ with $S$ 
equals
$ -\left( (\Delta_{q_1}\sigma_2^\dagger)^\times,
  (\Delta_{q_1}\sigma_2^\dagger)^\times\right)$.
Hence the bracket of sections of type \eqref{type1} 
with the four types of sections have values in $\Pi_{\mathbb E}$ on $\Pi_B$.
\medskip

Choose $\chi_1,\chi_2,\tau_1,\tau_2\in\Gamma(Q^*)$.  We write
\[\left( T\chi_1^\dagger, 0\right)
    -\sum_{i=1}^n(p_B\circ\pr_2)^*\ell_{\xi^1_i}\cdot\left(({\chi^1_i}^\dagger)^\times,0\right)
+\left(0,\sigma_Q(\partial_Q\chi_1)^\times\right)\] for the extension of 
$ \left( T\chi_1^\dagger -(\nabla_{p_B\circ\pr_2}\chi_1^\dagger)^\times,
    \sigma_Q(\partial_Q\chi_1)^\times\right)$ as in Lemma \ref{extensions},
\[\left( T\chi_2^\dagger, 0\right)
    -\sum_{j=1}^m(p_B\circ\pr_2)^*\ell_{\xi^2_j}\cdot\left(({\chi^2_j}^\dagger)^\times,0\right)
+\left(0,\sigma_Q(\partial_Q\chi_2)^\times\right)\] for the extension of 
$ \left( T\chi_2^\dagger -(\nabla_{p_B\circ\pr_2}\chi_2^\dagger)^\times,
    \sigma_Q(\partial_Q\chi_2)^\times\right)$,
\[\left( \sigma_Q(\partial_Q\tau_1)^\times, 0\right)
    +\sum_{i=1}^k(p_B\circ\pr_1)^*\ell_{\eta^1_i}\cdot\left(({\tau^1_i}^\dagger)^\times,0\right)
+\left(0,T\tau_1^\dagger\right)\]for the extension of 
$\left( (\nabla_{p_B\circ
      \pr_1}\tau_1^\dagger+\sigma_Q(\partial_Q\tau_1))^\times,
    T\tau_1^\dagger\right)$  and 
\[\left( \sigma_Q(\partial_Q\tau_2)^\times, 0\right)
    +\sum_{j=1}^l(p_B\circ\pr_1)^*\ell_{\eta^2_i}\cdot\left(({\tau^2_i}^\dagger)^\times,0\right)
+\left(0,T\tau_2^\dagger\right)\]for the extension of 
$\left( (\nabla_{p_B\circ
      \pr_1}\tau_2^\dagger+\sigma_Q(\partial_Q\tau_2))^\times,
    T\tau_2^\dagger\right)$.
We find 
\begin{equation*}
\begin{split}
&\left\lb\left( T\chi_1^\dagger, 0\right)
    -\sum_{i=1}^n(p_B\circ\pr_2)^*\ell_{\xi^1_i}\cdot\left(({\chi^1_i}^\dagger)^\times,0\right)
+\left(0,\sigma_Q(\partial_Q\chi_1)^\times\right), \right.\\
&\qquad \qquad\left. \left( T\chi_2^\dagger, 0\right)
    -\sum_{j=1}^m(p_B\circ\pr_2)^*\ell_{\xi^2_j}\cdot\left(({\chi^2_j}^\dagger)^\times,0\right)
+\left(0,\sigma_Q(\partial_Q\chi_2)^\times\right)\right\rb=0,
\end{split}
\end{equation*}
\begin{align*}
\begin{split}
&\left\lb
\left( \sigma_Q(\partial_Q\tau_2)^\times, 0\right)
    +\sum_{j=1}^l(p_B\circ\pr_1)^*\ell_{\eta^2_i}\cdot\left(({\tau^2_i}^\dagger)^\times,0\right)
+\left(0,T\tau_2^\dagger\right),\right.\\
&\qquad \qquad\qquad \qquad\left.\left( \sigma_Q(\partial_Q\tau_1)^\times, 0\right)
    +\sum_{i=1}^k(p_B\circ\pr_1)^*\ell_{\eta^1_i}\cdot\left(({\tau^1_i}^\dagger)^\times,0\right)
+\left(0,T\tau_1^\dagger\right)\right\rb=0
\end{split}
\end{align*}
and
\begin{align*}
&\left\lb 
\left( T\chi^\dagger, 0\right)
    -\sum_{i=1}^n(p_B\circ\pr_2)^*\ell_{\xi_i}\cdot\left((\chi_i^\dagger)^\times,0\right)
+\left(0,\sigma_Q(\partial_Q\chi)^\times\right),\right.\\
&\qquad \qquad\qquad \qquad\left.\left( \sigma_Q(\partial_Q\tau)^\times, 0\right)
    +\sum_{i=1}^k(p_B\circ\pr_1)^*\ell_{\eta_i}\cdot\left((\tau_i^\dagger)^\times,0\right)
+\left(0,T\tau^\dagger\right)
\right\rb\\
=&\left((-\Delta_{\partial_Q\tau}\chi^\dagger+\mathcal D_{\mathbb
    E}q_B^*\langle
  \chi,\partial_Q\tau\rangle)^\times),0\right)+\sum_{i=1}^k(q_B\circ
p_B\circ
\pr_1)^*\langle \partial_B\chi,\eta_i\rangle\left((\tau_i^\dagger)^\times,0\right)\\
&+\sum_{i=1}^n(q_B\circ p_B\circ\pr_2)^*\langle\xi_i,
\chi\rangle\cdot\left((\chi_i^\dagger)^\times,0\right)
+\left(0,(\Delta_{\partial_Q\chi}\tau^\dagger)^\times\right)\\
=&\left( (-\Delta_{\partial_Q\tau}\chi+\rho_Q^*\dr\langle
  \chi,\partial_Q\tau\rangle+\nabla_{\partial_B\chi}\tau+\nabla_{\partial_B\tau}\chi)^\dagger)^\times,
  (\Delta_{\partial_Q\chi}\tau^\dagger)^\times\right).
\end{align*}
The restriction of this to $\Pi_B$ is a section (of type \eqref{type1}) of $\Pi_{\mathbb E}$
if and only if 
\[-\Delta_{\partial_Q\tau}\chi+\rho_Q^*\dr\langle
  \chi,\partial_Q\tau\rangle+\nabla_{\partial_B\chi}\tau+\nabla_{\partial_B\tau}\chi=\Delta_{\partial_Q\chi}\tau.
\]
Since using $\partial_Q=\partial_Q^*$
\begin{align*}
&\left\langle 
\left( T\chi^\dagger, 0\right)
    -\sum_{i=1}^n(p_B\circ\pr_2)^*\ell_{\xi_i}\cdot\left((\chi_i^\dagger)^\times,0\right)
+\left(0,\sigma_Q(\partial_Q\chi)^\times\right),\right.\\
&\qquad \qquad\qquad \qquad\left.\left( \sigma_Q(\partial_Q\tau)^\times, 0\right)
    +\sum_{i=1}^k(p_B\circ\pr_1)^*\ell_{\eta_i}\cdot\left((\tau_i^\dagger)^\times,0\right)
+\left(0,T\tau^\dagger\right)
\right\rangle\\
=&p_B^*q_B^*\left(\langle \partial_Q\tau,
      \chi\rangle- \langle \tau, \partial_Q\chi\rangle\right)=0,
\end{align*}
the bracket
\begin{align*}
&\left\lb \left( \sigma_Q(\partial_Q\tau)^\times, 0\right)
    +\sum_{i=1}^k(p_B\circ\pr_1)^*\ell_{\eta_i}\cdot\left((\tau_i^\dagger)^\times,0\right)
+\left(0,T\tau^\dagger\right)
,\right.\\
&\qquad \qquad\qquad \qquad\left.
 \left( T\chi^\dagger, 0\right)   -\sum_{i=1}^n(p_B\circ\pr_2)^*\ell_{\xi_i}\cdot\left((\chi_i^\dagger)^\times,0\right)
+\left(0,\sigma_Q(\partial_Q\chi)^\times\right)
\right\rb
\end{align*}
then also restricts to a section of $\Pi_{\mathbb E}$ on $\Pi_B$.
\medskip


We compute 
\begin{equation*}
\begin{split}
&\left\lb
\left(T\sigma_Q(q) , T\sigma_Q(q)\right)-\sum_{i=1}^l(p_B\circ\pr_2)^*\ell_{\gamma_i}\cdot\left((\sigma_Q(q_i))^\times,0\right)\right.\\
&\qquad\qquad-\sum_{i=1}^l(p_B\circ\pr_1)^*\ell_{\gamma_i}\cdot\left(0,(\sigma_Q(q_i))^\times\right)\\
&\qquad\qquad-\sum_{i=1}^m\sum_{j=1}^p(p_B\circ\pr_1)^*\ell_{\alpha_i}(p_B\circ\pr_2)^*\ell_{\beta_j}\cdot\left((\sigma_{ij}^\dagger)^\times,0\right),\\
&\qquad\qquad\qquad\qquad\left.\left( \sigma_Q(\partial_Q\tau)^\times, 0\right)
    +\sum_{i=1}^k(p_B\circ\pr_1)^*\ell_{\eta_i}\cdot\left((\tau_i^\dagger)^\times,0\right)
+\left(0,T\tau^\dagger\right)
\right\rb\\
=&\left( (\sigma_Q(\lb
    q, \partial_Q\tau\rb)-R(q,\partial_Q\tau)^\dagger)^\times,0\right)\\
& +\sum_{i=1}^k\left((p_B\circ\pr_1)^*\ell_{\nabla_q^*\eta_i}\cdot\left((\tau_i^\dagger)^\times,0\right)
+(p_B\circ\pr_1)^*\ell_{\eta_i}\cdot\left((\Delta_q\tau_i^\dagger)^\times,0\right)\right)\\
&+\left(0, T\Delta_q\tau^\dagger
\right)+\sum_{i=1}^l(q_B\circ
p_B\circ\pr_2)^*\langle\gamma_i, \partial_B\tau\rangle\cdot\left((\sigma_Q(q_i))^\times,0\right)\\
&+\sum_{i=1}^l(p_B\circ\pr_1)^*\ell_{\gamma_i}\cdot\left(0,((-\Delta_{q_i}\tau+\rho_Q^*\dr\langle
  q_i,\tau\rangle)^\dagger)^\times\right)\\
&-\mathcal D_{\overline{T\mathbb E}\times T\mathbb
  E}\sum_{i=1}^l(p_B\circ\pr_1)^*\ell_{\gamma_i}(q_B\circ
p_B\circ\pr_2)^*\langle q_i,\tau\rangle
\\
&+\sum_{i=1}^m\sum_{j=1}^p(p_B\circ\pr_1)^*\ell_{\alpha_i}(q_B\circ p_B\circ\pr_2)^*\langle\partial_B\tau,\beta_j\rangle\cdot\left((\sigma_{ij}^\dagger)^\times,0\right)\\
\end{split}
\end{equation*}
We have 
\begin{equation*}
\begin{split}
&\mathcal D_{\overline{T\mathbb E}\times T\mathbb
  E}\sum_{i=1}^l(p_B\circ\pr_1)^*\ell_{\gamma_i}(q_B\circ
p_B\circ\pr_2)^*\langle q_i,\tau\rangle\\
&=\sum_{i=1}^l(p_B\circ\pr_1)^*\ell_{\gamma_i}(0,\mathcal D_{T\mathbb E}p_B^*q_B^*\langle q_i,\tau\rangle)
-\sum_{i=1}^l(q_B\circ
p_B\circ\pr_2)^*\langle q_i,\tau\rangle(\mathcal D_{T\mathbb
  E}p_B^*\ell_{\gamma_i},0)\\
&=\sum_{i=1}^l(p_B\circ\pr_1)^*\ell_{\gamma_i}(0,(\Theta^*\dr q_B^*\langle q_i,\tau\rangle)^\times)
-\sum_{i=1}^l(q_B\circ
p_B\circ\pr_2)^*\langle q_i,\tau\rangle\left((\Theta^*\dr\ell_{\gamma_i})^\times,0\right)\\
&=\sum_{i=1}^l(p_B\circ\pr_1)^*\ell_{\gamma_i}(0,((\rho_Q^*\dr \langle q_i,\tau\rangle)^\dagger)^\times)\\
&\quad-\sum_{i=1}^l(q_B\circ
p_B\circ\pr_2)^*\langle q_i,\tau\rangle\left((\sigma_Q(\partial^*\gamma_i)+\widetilde{\nabla_\cdot^*\gamma_i}
)^\times,0\right).
\end{split}
\end{equation*}
Hence, the bracket equals
\begin{multline}
  \left( (\sigma_Q(\lb
    q, \partial_Q\tau\rb)-R(q,\partial_Q\tau)^\dagger)^\times,0\right)
  +\sum_{i=1}^k(p_B\circ\pr_1)^*\ell_{\nabla_q^*\eta_i}\cdot\left((\tau_i^\dagger)^\times,0\right)\\
    +\sum_{i=1}^k(p_B\circ\pr_1)^*\ell_{\eta_i}\cdot\left((\Delta_q\tau_i^\dagger)^\times,0\right)
  +\left(0, T\Delta_q\tau^\dagger \right)\\+\sum_{i=1}^l(q_B\circ
  p_B\circ\pr_2)^*\langle\gamma_i, \partial_B\tau\rangle\cdot\left((\sigma_Q(q_i))^\times,0\right)+\sum_{i=1}^l(p_B\circ\pr_1)^*\ell_{\gamma_i}\cdot\left(0,(-\Delta_{q_i}\tau^\dagger)^\times\right)\\
  +\sum_{i=1}^l(q_B\circ p_B\circ\pr_2)^*\langle
  q_i,\tau\rangle\left((\sigma_Q(\partial^*\gamma_i)+\widetilde{\nabla_\cdot^*\gamma_i}
    )^\times,0\right)\\
  +\sum_{i=1}^m\sum_{j=1}^p(p_B\circ\pr_1)^*\ell_{\alpha_i}(q_B\circ
  p_B\circ\pr_2)^*\langle\partial_B\tau,\beta_j\rangle\cdot\left((\sigma_{ij}^\dagger)^\times,0\right).
\end{multline}
On $TB\times_{q_B\circ p_B} TB$ we have 
\[\sum_{i=1}^l(q_B\circ
p_B\circ\pr_2)^*\langle\gamma_i, \partial_B\tau\rangle\cdot\left((\sigma_Q(q_i))^\times,0\right)=\left(\sigma_Q(\nabla_{\partial_B\tau}q)^\times,0\right)
\]
and 
\[\sum_{i=1}^l(q_B\circ
p_B\circ\pr_2)^*\langle
q_i,\tau\rangle\left(\left(\sigma_Q(\partial^*\gamma_i)\right)^\times,0\right)
=\left(\left(\sigma_Q(\partial_B^*\langle\nabla_\cdot q, \tau
      \rangle)\right)^\times, 0)\right).
\]
(Note that since $\langle\nabla_\cdot q, \tau
      \rangle$ is a section of $B^*$, $\partial_B^*\langle\nabla_\cdot q, \tau
      \rangle$ is a section of $Q$.)
Since 
\begin{equation*}
\begin{split}
\sum_{i=1}^k\langle\nabla_q^*\eta_i, b_1\rangle\cdot \tau_i
+\langle\eta_i,
b_1\rangle\cdot\Delta_q\tau_i
&=\sum_{i=1}^k-\langle\eta_i, \nabla_qb_1\rangle\cdot \tau_i
+\Delta_q\left(\langle\eta_i,
b_1\rangle\cdot\tau_i\right)\\
&=-\nabla_{\nabla_qb_1}\tau+\Delta_q\nabla_{b_1}\tau,
\end{split}
\end{equation*}
we find
\begin{equation*}
\begin{split}
&\sum_{i=1}^k\left((p_B\circ\pr_1)^*\ell_{\nabla_q^*\eta_i}\cdot\left((\tau_i^\dagger)^\times,0\right)
+(p_B\circ\pr_1)^*\ell_{\eta_i}\cdot\left((\Delta_q\tau_i^\dagger)^\times,0\right)\right)(b_1,b_2,b_3)(m)\\
&=\left( \left((-\nabla_{\nabla_qb_1}\tau+\Delta_q\nabla_{b_1}\tau)^\dagger\right)^\times,0\right)(b_1,b_2,b_3)(m)
\end{split}
\end{equation*}
 at $(b_1,b_2,b_3)(m)=((Tb_1+([b_1,b_2]+b_3)^\times)(\rho_B(b_2)(m)),
(Tb_2+b_3^\times)(\rho_B(b_1)(m)))\in\Pi_B$.
Note also that
\begin{equation*}
\begin{split}
\sum_{i=1}^l-\langle \gamma_i,b_1 \rangle\cdot\Delta_{q_i}\tau
=-\Delta_{\nabla_{b_1}q}\tau+\sum_{i=1}^l\langle q_i,\tau\rangle\rho_Q^*\dr\langle \gamma_i,b_1 \rangle
\end{split}
\end{equation*}
and 
\begin{equation*}
\begin{split}
\sum_{i=1}^l\langle
q_i,\tau\rangle\cdot\langle\nabla_\cdot^*\gamma_i,b_1\rangle
=-\langle\nabla_{\nabla_\cdot b_1}q,\tau\rangle+\sum_{i=1}^l\langle q_i,\tau\rangle\rho_Q^*\dr\langle \gamma_i,b_1 \rangle.
\end{split}
\end{equation*}
The bracket is hence at $(b_1,b_2,b_3)(m)\in\Pi_B$:
\begin{multline*}
  \Biggl( \sigma_Q(\lb
    q,\partial_Q\tau\rb)^\times+\sigma_Q(\nabla_{\partial_B\tau}q)^\times+\sigma_Q(\partial_B^*\langle\tau,
      \nabla_\cdot
      q\rangle)^\times
 +\left(-R(q,\partial_Q\tau)b_1^\dagger+R(b_1,\partial_B\tau)q^\dagger+\Delta_q\nabla_{b_1}\tau^\dagger\right)^\times\\
-\biggl(\nabla_{\nabla_qb_1}\tau^\dagger
    +\langle\tau,
    \nabla_{\nabla_\cdot
      b_1}q\rangle^\dagger-\sum_{i=1}^l\langle q_i,\tau\rangle\rho_Q^*\dr\langle \gamma_i,b_1 \rangle^\dagger\biggr)^\times, \\
 T\Delta_q\tau^\dagger+\biggl(-\Delta_{\nabla_{b_1}q}\tau^\dagger+\sum_{i=1}^l\langle q_i,\tau\rangle\rho_Q^*\dr\langle \gamma_i,b_1 \rangle^\dagger\biggr)^\times \Biggr)
 \bigl( (b_1,b_2,b_3)(m)\bigr).
\end{multline*}
This is an element of $\Pi_{\mathbb E}$ (of type
\eqref{type1}+\eqref{type3}) if and only if 
\[\partial_Q\Delta_q\tau=\lb q,\partial_Q\tau\rb+\nabla_{\partial_B\tau}q+\partial_B^*\langle\tau,
    \nabla_\cdot
    q\rangle
\]
and 
\[-R(q,\partial_Q\tau)b_1+R(b_1,\partial_B\tau)q+\Delta_q\nabla_{b_1}\tau
-\nabla_{\nabla_qb_1}\tau-\langle\tau, \nabla_{\nabla_\cdot b_1}q\rangle
=-\Delta_{\nabla_{b_1}q}\tau+\nabla_{b_1}\Delta_q\tau.
\]

\bigskip

Choose finally $q_1, q_2\in\Gamma(Q)$.  
We write 
\begin{multline}\label{q_1}
\left(T\sigma_Q(q_1), T\sigma_Q(q_1)\right) -\sum_i(p_B\circ \pr_2)^*\ell_{\xi_i}\left({\sigma_Q(
    q_i)}^\times,0\right)\\
-\sum_k\sum_l(p_B\circ
  \pr_1)^*\ell_{\alpha_k}(p_B\circ
  \pr_2)^*\ell_{\beta_l}\cdot\left({\tau_{kl}^\dagger}^\times,0\right)-\sum_i(p_B\circ \pr_1)^*\ell_{\xi_i}\left(0,{\sigma_Q(q_i)}^\times\right)
\end{multline}
for the extension of
\[\left(T\sigma_Q(q_1) -(\sigma_Q(\nabla_{p_B\circ
      \pr_2}q_1)+R(p_B\circ \pr_1, p_B\circ
  \pr_2)q_1^\dagger)^\times, T\sigma_Q(q_1)-(\sigma_Q(\nabla_{p_B\circ \pr_1}q_1))^\times\right)
\]
and 
\begin{multline}\label{q_2}
\left(T\sigma_Q(q_2), T\sigma_Q(q_2)\right)-\sum_j(p_B\circ \pr_2)^*\ell_{\eta_j}\left(\sigma_Q(r_j)^\times,0\right)\\
-\sum_s\sum_t(p_B\circ
  \pr_1)^*\ell_{\gamma_s}(p_B\circ
  \pr_2)^*\ell_{\delta_t}\left({\chi_{st}^\dagger}^\times,0\right)-\sum_j(p_B\circ
  \pr_1)^*\ell_{\eta_j}\left(0, \sigma_Q(r_j)^\times\right)
\end{multline}
for the extension of
\[\left(T\sigma_Q(q_2) -(\sigma_Q(\nabla_{p_B\circ
      \pr_2}q_2)+R(p_B\circ \pr_1, p_B\circ
  \pr_2)q_2^\dagger)^\times, T\sigma_Q(q_2)-\sigma_Q(\nabla_{p_B\circ \pr_1}q_2)^\times\right)
\]
The bracket of these two extensions equals
\begin{multline*}
\Bigl\lb \left(T\sigma_Q(q_1), T\sigma_Q(q_1)\right), \left(T\sigma_Q(q_2),
    T\sigma_Q(q_2)\right)\Bigr\rb\\
- \Bigl\lb \left(T\sigma_Q(q_1), T\sigma_Q(q_1)\right), \sum_j(p_B\circ \pr_2)^*\ell_{\eta_j}\left(\sigma_Q(r_j)^\times,0\right)
  \Bigr\rb\\
-\Bigl\lb \left(T\sigma_Q(q_1), T\sigma_Q(q_1)\right),\sum_s\sum_t(p_B\circ
  \pr_1)^*\ell_{\gamma_s}(p_B\circ
  \pr_2)^*\ell_{\delta_t}\left({\chi_{st}^\dagger}^\times,0\right)
\Bigr\rb\\
 -\Bigl\lb \left(T\sigma_Q(q_1), T\sigma_Q(q_1)\right), \sum_j(p_B\circ \pr_1)^*\ell_{\eta_j} \left(0,\sigma_Q(r_j)^\times\right)
    \Bigr\rb\\
-\Bigl\lb  \sum_i(p_B\circ \pr_2)^*\ell_{\xi_i}\left(\sigma_Q(q_i)^\times,0\right), \left( T\sigma_Q(q_2), T\sigma_Q(q_2)\right)
\Bigr\rb\\
-\Bigl\lb\sum_k\sum_l(p_B\circ
  \pr_1)^*\ell_{\alpha_k}(p_B\circ
  \pr_2)^*\ell_{\beta_l}\cdot\left({\tau_{kl}^\dagger}^\times,0\right),
  \left( T\sigma_Q(q_2), T\sigma_Q(q_2)\right)
\Bigr\rb\\
-\Big\lb\sum_i(p_B\circ
  \pr_1)^*\ell_{\xi_i}\left(0,\sigma_Q(q_i)^\times\right),
\left( T\sigma_Q(q_2), T\sigma_Q(q_2)\right)
\Bigr\rb.
\end{multline*}
(The remaining terms all vanish.) This is
\begin{multline*}
  \left(T(\sigma_Q(\lb q_1,q_2\rb)-\widetilde{R(q_1,q_2)}),
    T(\sigma_Q(\lb q_1,q_2\rb)-\widetilde{R(q_1,q_2)})\right)\\
 -\sum_j(p_B\circ \pr_2)^*\ell_{\eta_j}\left(\left(\sigma_Q(\lb q_1,
      r_j\rb)-\widetilde{R(q_1,r_j)}\right)^\times,0\right)\\
  -\sum_j(p_B\circ
  \pr_2)^*\ell_{\nabla_{q_1}^*\eta_j}\left(\sigma_Q(r_j)^\times,0\right)
  \\
  -\sum_s\sum_t(p_B\circ
  \pr_1)^*\ell_{\nabla_{q_1}^*\gamma_s}(p_B\circ
  \pr_2)^*\ell_{\delta_t}\left({\chi_{st}^\dagger}^\times,0\right)
  -\sum_s\sum_t(p_B\circ \pr_1)^*\ell_{\gamma_s}(p_B\circ
  \pr_2)^*\ell_{\nabla_{q_1}^*\delta_t}\left({\chi_{st}^\dagger}^\times,0\right)\\
  -\sum_s\sum_t(p_B\circ \pr_1)^*\ell_{\gamma_s}(p_B\circ
  \pr_2)^*\ell_{\delta_t}\left({\Delta_{q_1}\chi_{st}^\dagger}^\times,0\right)\\
  -\sum_j(p_B\circ \pr_1)^*\ell_{\nabla_{q_1}^*\eta_j}
  \left(0,\sigma_Q(r_j)^\times\right) -\sum_j(p_B\circ
  \pr_1)^*\ell_{\eta_j} \left(0,\left(\sigma_Q(\lb
      q_1,r_j\rb)-\widetilde{R(q_1,r_j)}\right)^\times\right)\\
  +\sum_i(p_B\circ
  \pr_2)^*\ell_{\nabla_{q_2}^*\xi_i}\left(\sigma_Q(q_i)^\times,0\right)+\sum_i(p_B\circ
  \pr_2)^*\ell_{\xi_i}\left(\left(\sigma_Q(\lb
      q_2,q_i\rb)-\widetilde{R(q_2,q_i)}\right)^\times,0\right)\\
  -\mathcal D_{\overline{T\mathbb E}\times T\mathbb E}\left\langle
    \sum_i(p_B\circ \pr_2)^*\ell_{\xi_i}\left(\sigma_Q(q_i)^\times,0\right), \left( T\sigma_Q(q_2), T\sigma_Q(q_2)\right)\right\rangle\\
  +\sum_k\sum_l(p_B\circ \pr_1)^*\ell_{\alpha_k}(p_B\circ
  \pr_2)^*\ell_{\nabla_{q_2}^*\beta_l}\cdot\left({\sigma_{kl}^\dagger}^\times,0\right)
  +\sum_k\sum_l(p_B\circ
  \pr_1)^*\ell_{\nabla_{q_2}^*\alpha_k}(p_B\circ
  \pr_2)^*\ell_{\beta_l}\cdot\left({\sigma_{kl}^\dagger}^\times,0\right)\\
  +\sum_k\sum_l(p_B\circ \pr_1)^*\ell_{\alpha_k}(p_B\circ
  \pr_2)^*\ell_{\beta_l}\cdot\left({\Delta_{q_2}\sigma_{kl}^\dagger}^\times,0\right)\\
  -\mathcal D_{\overline{T\mathbb E}\times T\mathbb
    E}\left\langle\sum_k\sum_l(p_B\circ
    \pr_1)^*\ell_{\alpha_k}(p_B\circ
    \pr_2)^*\ell_{\beta_l}\cdot\left({\sigma_{kl}^\dagger}^\times,0\right),
    \left( T\sigma_Q(q_2), T\sigma_Q(q_2)\right)
  \right\rangle\\
  +\sum_i(p_B\circ \pr_2)^*\ell_{\nabla_{q_2}^*\xi_i}\left(0,\sigma_Q(q_i)^\times\right)+\sum_i(p_B\circ
  \pr_2)^*\ell_{\xi_i}\left(0,\left(\sigma_Q(\lb
        q_2,q_i\rb)-\widetilde{R(q_2,q_i)}\right)^\times\right)\\
  -\mathcal D_{\overline{T\mathbb E}\times T\mathbb
    E}\left\langle\sum_i(p_B\circ \pr_1)^*\ell_{\xi_i}\left(0,\sigma_Q(
        q_i)^\times\right), \left( T\sigma_Q(q_2), T\sigma_Q(q_2)\right)
  \right\rangle.
\end{multline*}

We evaluate this at $(b_1,b_2,b_3)(m)=((Tb_1+([b_1,b_2]+b_3)^\times)(\rho_B(b_2)(m)),
(Tb_2+b_3^\times)(\rho_B(b_1)(m)))$.
We have 
\begin{equation*}
\begin{split}
-\sum_j\langle b_2, \eta_j\rangle\lb q_1,
    r_j\rb
-\sum_j\langle b_2, \nabla_{q_1}^*\eta_j\rangle r_j
&=\sum_j(-\lb q_1,
    \langle b_2, \eta_j\rangle\cdot r_j\rb+(\rho_Q(q_1)\langle b_2,
    \eta_j\rangle
-\langle b_2, \nabla_{q_1}^*\eta_j\rangle)\cdot r_j)\\
&=-\lb q_1, \nabla_{b_2} q_2\rb +\nabla_{\nabla_{q_1}b_2}q_2
\end{split}
\end{equation*}
and in the same manner
\begin{equation*}
\begin{split}
&\sum_s\sum_t\left(\langle \nabla_{q_1}^*\gamma_s, b_1\rangle\langle\delta_t, b_2\rangle\chi_{st}
+\langle b_1, \gamma_s\rangle\langle b_2,\nabla_{q_1}^*\delta_t\rangle\chi_{st}
+\langle b_1,\gamma_s\rangle\langle b_2,
\delta_t\rangle\Delta_{q_1}\chi_{st}\right)\\
=&\sum_s\sum_t\left(-\langle \gamma_s, \nabla_{q_1} b_1\rangle\langle\delta_t, b_2\rangle\chi_{st}
-\langle b_1, \gamma_s\rangle\langle \delta_t, \nabla_{q_1}b_2\rangle\chi_{st}
+\rho_Q(q_1)( \langle \gamma_s, b_1\rangle\cdot\langle\delta_t, b_2\rangle)+\langle b_1,\gamma_s\rangle\langle b_2,
\delta_t\rangle\Delta_{q_1}\chi_{st}\right)\\
=&-R(\nabla_{q_1} b_1, b_2)q_2-R(b_1,\nabla_{q_1}b_2)q_2+\Delta_{q_1}(R(b_1,b_2)q_2).
\end{split}
\end{equation*}

We also have $\mathcal D_{\overline{T\mathbb E}\times T\mathbb
  E}\left\langle \sum_i(p_B\circ
  \pr_2)^*\ell_{\xi_i}\left(\sigma_Q(q_i)^\times,0\right), \left(
    T\sigma_Q(q_2), T\sigma_Q(q_2)\right)\right\rangle=0$ and
\begin{equation*}
\begin{split}
&\mathcal D_{\overline{T\mathbb E}\times T\mathbb E}\left\langle\sum_k\sum_l(p_B\circ
  \pr_1)^*\ell_{\alpha_k}(p_B\circ
  \pr_2)^*\ell_{\beta_l}\cdot\left({\tau_{kl}^\dagger}^\times,0\right),
  \left( T\sigma_Q(q_2), T\sigma_Q(q_2)\right)
\right\rangle\\
&=-\mathcal D_{\overline{T\mathbb E}\times T\mathbb E}\left(\sum_k\sum_l(p_B\circ
  \pr_1)^*\ell_{\alpha_k}\cdot(p_B\circ
  \pr_2)^*\ell_{\beta_l}\cdot(q_B\circ p_B\circ
  \pr_1)^*\langle\tau_{kl}, q_2\rangle
\right)\\
&=\sum_k\sum_l(p_B\circ
  \pr_1)^*\ell_{\alpha_k}\cdot(p_B\circ
  \pr_2)^*\ell_{\beta_l}\cdot\left(\left(\rho_Q^*\dr\langle\tau_{kl},
      q_2\rangle^\dagger\right)^\times,0\right)\\
&+\sum_k\sum_l(p_B\circ
  \pr_2)^*\ell_{\beta_l}\cdot(q_B\circ p_B\circ
  \pr_1)^*\langle\tau_{kl},
  q_2\rangle\cdot\left(\left(\sigma_Q(\partial_B^*\alpha_k)+\widetilde{\nabla_\cdot^*\alpha_k}\right)^\times,0\right)\\
&-\sum_k\sum_l(p_B\circ
  \pr_1)^*\ell_{\alpha_k}\cdot(q_B\circ p_B\circ
  \pr_1)^*\langle\tau_{kl}, q_2\rangle\cdot\left(0,\left(\sigma_Q(\partial_B^*\beta_l)+\nabla_\cdot^*\beta_l^\dagger\right)^\times\right)
\end{split}
\end{equation*}
which is 
\begin{multline*}
\sum_k\sum_l\langle \alpha_k,b_1\rangle\cdot\langle\beta_l,b_2\rangle\cdot\left(\left(\rho_Q^*\dr\langle\tau_{kl},
      q_2\rangle^\dagger\right)^\times,0\right)
+\left(\left(\sigma_Q(\partial_B^*\langle R(\cdot,b_2)q_1,q_2\rangle)-\langle R(\nabla_\cdot b_1,b_2)q_1,q_2\rangle^\dagger\right)^\times,0\right)\\
+\sum_k\sum_l\langle\beta_l,b_2\rangle\cdot\langle\tau_{kl},
  q_2\rangle\cdot\left(\left(\rho_Q^*\dr\langle\alpha_k,b_1\rangle^\dagger\right)^\times,0\right)
-\left(0, \left(\sigma_Q(\partial_B^*\langle R(b_1, \cdot)q_1,q_2\rangle)-\langle R( b_1,\nabla_\cdot b_2)q_1,q_2\rangle^\dagger\right)^\times\right)\\
-\sum_k\sum_l\langle\alpha_k,b_1\rangle\cdot\langle\tau_{kl},
  q_2\rangle\cdot\left(0, \left(\rho_Q^*\dr\langle\beta_l,b_2\rangle^\dagger\right)^\times\right)
\end{multline*}
at $(b_1,b_2,b_3)(m)$.
We evaluate the bracket at $(b_1,b_2,b_3)(m)$ and reorganize the terms to get
\begin{multline*}
  \left(T\sigma_Q(\lb q_1,q_2\rb)-TR(q_1,q_2){b_1}^\dagger-\left(R(q_1,q_2)([{b_1},{b_2}]+c)^\dagger\right)^\times, T\sigma_Q(\lb q_1,q_2\rb)-TR(q_1,q_2){b_2}^\dagger-\left(R(q_1,q_2)c^\dagger\right)^\times\right)\\
  +\Bigl(- \sigma_Q(\lb
    q_1,\nabla_{b_2}q_2\rb)^\times+\sigma_Q(\nabla_{\nabla_{q_1}{b_2}}q_2)^\times+\sigma_Q(\lb
    q_2,\nabla_{b_2}q_1\rb)^\times-\sigma_Q(\nabla_{\nabla_{q_2}{b_2}}q_1)^\times
  -\sigma_Q(\partial_B^*\langle
    R(\cdot,{b_2})q_1,q_2\rangle)^\times,\\
  \qquad- \sigma_Q(\lb
    q_1,\nabla_{b_1}q_2\rb)^\times+\sigma_Q(\nabla_{\nabla_{q_1}{b_1}}q_2)^\times+\sigma_Q(\partial_B^*\langle
    R({b_1}, \cdot)q_1,q_2\rangle)^\times +\sigma_Q(\lb
    q_2,\nabla_{b_1}q_1\rb)^\times-\sigma_Q(\nabla_{\nabla_{q_2}{b_1}}q_1)^\times\Bigr)\\
   +\left((R(q_1,\nabla_{b_2}q_2){b_1}^\dagger)^\times+\left(R(\nabla_{q_1} {b_1}, {b_2})q_2^\dagger+R({b_1},\nabla_{q_1}{b_2})q_2^\dagger-\Delta_{q_1}(R({b_1},{b_2})q_2)^\dagger\right)^\times-(R(q_2,\nabla_{b_2}q_1){b_1}^\dagger)^\times, 0\right)\\
  +\left(0, (R(q_1,\nabla_{b_1}q_2){b_2}^\dagger)^\times\right)+\left(\left(-R(\nabla_{q_2} {b_1}, {b_2})q_1^\dagger-R({b_1},\nabla_{q_2}{b_2})q_1^\dagger+\Delta_{q_2}(R({b_1},{b_2})q_1)^\dagger\right)^\times, 0\right)\\
  -\sum_k\sum_l\langle
  \alpha_k,{b_1}\rangle\cdot\langle\beta_l,{b_2}\rangle\cdot\left(\left(\rho_Q^*\dr\langle\tau_{kl},
      q_2\rangle^\dagger\right)^\times,0\right)-\sum_k\sum_l\langle\beta_l,{b_2}\rangle\cdot\langle\tau_{kl},
  q_2\rangle\cdot\left(\left(\rho_Q^*\dr\langle\alpha_k,{b_1}\rangle^\dagger\right)^\times,0\right)\\
+\left(\left(\langle R(\nabla_\cdot {b_1},{b_2})q_1,q_2\rangle^\dagger\right)^\times,0
\right)
-\left(0, \left(\langle R( {b_1},\nabla_\cdot {b_2})q_1,q_2\rangle^\dagger\right)^\times\right)
-\left(0, \left(R(q_2,\nabla_{b_1}q_1){b_2}^\dagger\right)^\times\right)\\
+\sum_k\sum_l\langle\alpha_k,{b_1}\rangle\cdot\langle\tau_{kl},
q_2\rangle\cdot\left(0,
  \left(\rho_Q^*\dr\langle\beta_l,{b_2}\rangle^\dagger\right)^\times\right)\\
\end{multline*}

This is an element of $\Pi_{\mathbb E}$ if and only if 
\begin{equation*}
\begin{split}
&    -\nabla_{b_1}\lb q_1, q_2\rb-\partial_QR(q_1,q_2){b_1}\\
=&- \lb
    q_1,\nabla_{b_1}q_2\rb+\nabla_{\nabla_{q_1}{b_1}}q_2+\partial_B^*\langle
    R({b_1}, \cdot)q_1,q_2\rangle+\lb
    q_2,\nabla_{b_1}q_1\rb-\nabla_{\nabla_{q_2}{b_1}}q_1\\
    =&\,- \lb
    q_1,\nabla_{b_1}q_2\rb+\nabla_{\nabla_{q_1}{b_1}}q_2+\partial_B^*\langle
    R({b_1}, \cdot)q_1,q_2\rangle-\lb \nabla_{b_1}q_1, q_2\rb-\nabla_{\nabla_{q_2}{b_1}}q_1
\end{split}
\end{equation*}
and 
\begin{multline*}
   -\nabla_{b_1}R(q_1,q_2){b_2}+\nabla_{b_2}R(q_1,q_2){b_1}-R({b_1},{b_2})\lb q_1, q_2\rb-R(q_1,q_2)b_3\\
-\langle R( {b_1},\nabla_\cdot {b_2})q_1,q_2\rangle
-R(q_2,\nabla_{b_1}q_1){b_2}\\
+\sum_k\sum_l\langle\alpha_k,{b_1}\rangle\cdot\langle\tau_{kl},
q_2\rangle\cdot\rho_Q^*\dr\langle\beta_l,{b_2}\rangle+R(q_1,\nabla_{b_1}q_2){b_2}\\
=-R(q_1,q_2)([{b_1},{b_2}]+b_3)
  +R(q_1,\nabla_{b_2}q_2){b_1}+R(\nabla_{q_1} {b_1}, {b_2})q_2+R(b_1,\nabla_{q_1}{b_2})q_2\\
-\Delta_{q_1}(R({b_1},{b_2})q_2)-R(q_2,\nabla_{b_2}q_1){b_1}
\\
 -R(\nabla_{q_2} {b_1}, {b_2})q_1-R({b_1},\nabla_{q_2}{b_2})q_1+\Delta_{q_2}(R({b_1},{b_2})q_1)\\
  -\sum_k\sum_l\langle
  \alpha_k,{b_1}\rangle\cdot\langle\beta_l,{b_2}\rangle\cdot\rho_Q^*\dr\langle\tau_{kl},
      q_2\rangle\\
-\sum_k\sum_l\langle\beta_l,{b_2}\rangle\cdot\langle\tau_{kl},
  q_2\rangle\cdot\rho_Q^*\dr\langle\alpha_k,{b_1}\rangle+\langle
  R(\nabla_\cdot {b_1},{b_2})q_1,q_2\rangle.
\end{multline*}
The last equation can be rewritten
\begin{equation*}
\begin{split}
  &  -\nabla_{b_1}R(q_1,q_2){b_2}+\nabla_{b_2}R(q_1,q_2){b_1}-R({b_1},{b_2})\lb q_1, q_2\rb-\langle R( {b_1},\nabla_\cdot {b_2})q_1,q_2\rangle\\
  &-R(q_2,\nabla_{b_1}q_1){b_2}+R(q_1,\nabla_{b_1}q_2){b_2}\\
  =&-R(q_1,q_2)[{b_1},{b_2}]
  +R(q_1,\nabla_{b_2}q_2){b_1}+R(\nabla_{q_1} {b_1},
  {b_2})q_2+R({b_1},\nabla_{q_1}{b_2})q_2\\
&-\Delta_{q_1}(R({b_1},{b_2})q_2)-R(q_2,\nabla_{b_2}q_1){b_1} -R(\nabla_{q_2} {b_1},
  {b_2})q_1-R({b_1},\nabla_{q_2}{b_2})q_1\\
&+\Delta_{q_2}(R({b_1},{b_2})q_1)  -\rho_Q^*\dr\langle R({b_1},{b_2})q_1,q_2\rangle+\langle
  R(\nabla_\cdot {b_1},{b_2})q_1,q_2\rangle.
\end{split}
\end{equation*}
With (DS6), this is 
\begin{equation*}
\begin{split}
&R(q_1,q_2)[{b_1},{b_2}]-R({b_1},{b_2})\lb q_1, q_2\rb\\
 &  -\nabla_{b_1}R(q_1,q_2){b_2}+R(\nabla_{b_1}q_1, q_2){b_2}+R(q_1,\nabla_{b_1}q_2){b_2}\\
&+\nabla_{b_2}R(q_1,q_2){b_1}-R(\nabla_{b_2}q_1, q_2){b_1}-R(q_1,\nabla_{b_2}q_2){b_1}\\
&+\Delta_{q_1}(R({b_1},{b_2})q_2)-R(\nabla_{q_1} {b_1},
  {b_2})q_2-R({b_1},\nabla_{q_1}{b_2})q_2\\
&-\Delta_{q_2}(R({b_1},{b_2})q_1)+R(\nabla_{q_2} {b_1}, {b_2})q_1+R({b_1},\nabla_{q_2}{b_2})q_1\\
&=-\rho_Q^*\dr\langle R({b_1},{b_2})q_1,q_2\rangle+\langle R(\nabla_\cdot {b_1},{b_2})q_1,q_2\rangle+
\langle R( {b_1},\nabla_\cdot {b_2})q_1,q_2\rangle.
\end{split}
\end{equation*}
\end{proof}

\section{Proof of Theorem \ref{thm_CA_sd}}\label{proof_of_manin_ap}
Note that in the following computations, we will make extensive use of
the identity $\partial_Q=\partial_Q^*$ without always mentioning it.
We begin by proving the two following Lemmas.
\begin{lemma}\label{lemma_looks_like_basic}
  Consider an LA-Courant algebroid $(\mathbb E, Q, B,M)$.  The bracket
  $\lb\cdot\,,\cdot\rb_{Q^*}$ on sections of the core $Q^*$ satisfies
  the following equation:
\begin{equation}\label{bracket_on_Q*_basic}
\begin{split}
  R(\partial_B\tau_1,\partial_B\tau_2)q
  =&-\Delta_q\lb\tau_1,\tau_2\rb_{Q^*}+\lb \Delta_q\tau_1,\tau_2\rb_{Q^*}+\lb\tau_1,\Delta_q\tau_2\rb_{Q^*}\\
  &+\Delta_{\nabla_{\partial_B\tau_2}q}\tau_1-\Delta_{\nabla_{\partial_B\tau_1}q}\tau_2
-\rho_Q^*\dr\langle\tau_1,\nabla_{\partial_B\tau_2}q\rangle
\end{split}
\end{equation}
for all $q\in\Gamma(Q)$ and $\tau_1,\tau_2\in\Gamma(Q^*)$.
\end{lemma}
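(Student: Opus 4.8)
The plan is to prove the identity \eqref{bracket_on_Q*_basic} by expanding both sides using the definition of the bracket $\lb\cdot\,,\cdot\rb_{Q^*}$ from Theorem \ref{core_courant}, namely $\lb\tau_1,\tau_2\rb_{Q^*}=\Delta_{\partial_Q\tau_1}\tau_2-\nabla_{\partial_B\tau_2}\tau_1$, together with the matched-pair axioms of Definition \ref{matched_pairs} (and their reformulations in Remark \ref{remark_simplifications}) and the self-duality of the $2$-representation. The strategy is essentially the same one used to establish the tensoriality and curvature identities for basic connections in Example \ref{tangent_double_al}: rewrite the right-hand side as a sum of commutator-of-connections terms plus correction terms, recognise the curvature $R_\Delta$ of the Dorfman connection and the curvature $R_\nabla$, and then use (M3), (M4), (D4) and (M1) (in the form \eqref{(LC10)} and \eqref{almost_C}) to collapse everything to $R(\partial_B\tau_1,\partial_B\tau_2)q$.

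Concretely, I would first substitute the definition into $-\Delta_q\lb\tau_1,\tau_2\rb_{Q^*}+\lb\Delta_q\tau_1,\tau_2\rb_{Q^*}+\lb\tau_1,\Delta_q\tau_2\rb_{Q^*}$; this produces terms of the form $\Delta_q\Delta_{\partial_Q\tau_1}\tau_2$, $\Delta_{\partial_Q(\Delta_q\tau_1)}\tau_2$, $\nabla_{\partial_B\tau_2}\Delta_q\tau_1$, and so on. The key is to use (M1), which expresses $\partial_Q(\Delta_q\tau_1)$ in terms of $\nabla_{\partial_B\tau_1}q+\lb q,\partial_Q\tau_1\rb+\partial_B^*\langle\tau_1,\nabla_\cdot q\rangle$, so that $\Delta_{\partial_Q(\Delta_q\tau_1)}\tau_2$ becomes a sum of more tractable terms; similarly for the $\tau_1\leftrightarrow\tau_2$ symmetric piece. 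Then the remaining double-$\Delta$ terms assemble (after adding and subtracting $\Delta_{\lb\partial_Q\tau_1,\partial_Q\tau_2\rb_\Delta}\tau$) into the curvature $R_\Delta(\partial_Q\tau_1,\partial_Q\tau_2)$ of the Dorfman connection, which by (D4) equals $R(\partial_Q\tau_1,\partial_Q\tau_2)\circ\partial_B$; and using $\rho_Q\circ\partial_Q=\rho_B\circ\partial_B$ together with the skew-symmetry of $R$ in its $B$-arguments — which is exactly condition (3) of Theorem \ref{theorem_isotropy}, equivalently (D5)-type skew-symmetry for the self-dual representation — this is $R(\partial_B\tau_1,\partial_B\tau_2)\circ\partial_Q$ evaluated appropriately. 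The $\nabla$-terms coming from the $-\nabla_{\partial_B\tau}$ parts combine via (M4)/\eqref{(LC10)} (which relates $R(q,\partial_Q\tau)b-R(b,\partial_B\tau)q$ to $\Delta_q\nabla_b\tau-\nabla_b\Delta_q\tau+\Delta_{\nabla_bq}\tau-\nabla_{\nabla_qb}\tau-\langle\nabla_{\nabla_\cdot b}q,\tau\rangle$) and (M3) into the remaining curvature contributions and the correction terms $\Delta_{\nabla_{\partial_B\tau_2}q}\tau_1-\Delta_{\nabla_{\partial_B\tau_1}q}\tau_2-\rho_Q^*\dr\langle\tau_1,\nabla_{\partial_B\tau_2}q\rangle$ on the left.

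The main obstacle I anticipate is bookkeeping: there are many terms of similar shape (several involving $\rho_Q^*\dr$ of a pairing, several involving $\partial_B^*\langle\cdot,\nabla_\cdot\cdot\rangle$), and it will require care to pair them correctly and to verify that the $\rho_Q^*\dr$-correction on the left-hand side is precisely what is left over — this is where \eqref{almost_C} (the reformulation of (M1)) and the symmetry $\partial_Q=\partial_Q^*$ have to be invoked at exactly the right moment, since a sign error there propagates. A secondary subtlety is making sure that all evaluations of $R_\Delta$ versus $R$ use the correct one of the two arguments in (D4), i.e.\ $R(q_1,q_2)\circ\partial_B=R_\Delta(q_1,q_2)$ rather than $\partial_B\circ R(q_1,q_2)=R_\nabla(q_1,q_2)$, and then converting the $Q$-argument curvature into a $B$-argument curvature using $\partial_B\tau_i=\partial_B\tau_i$ and the already-established relation $\rho_Q\partial_Q=\rho_B\partial_B$. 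Once the terms are grouped, the identity should fall out as a direct consequence of the matched-pair axioms; no genuinely new input beyond Definition \ref{matched_pairs} and Definition \ref{saruth} is needed.
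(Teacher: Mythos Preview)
Your overall strategy---expand using the definition of $\lb\cdot\,,\cdot\rb_{Q^*}$ and then invoke the matched-pair axioms---is the same as the paper's, and the ingredients (M1), (D4), \eqref{(LC10)} you list are exactly the ones needed. However, there is a concrete misidentification in the middle of your plan. When you expand the three bracket terms, the only double-$\Delta$ combination that appears is $\Delta_q\Delta_{\partial_Q\tau_1}\tau_2-\Delta_{\partial_Q\tau_1}\Delta_q\tau_2$, so the curvature that shows up is $R_\Delta(q,\partial_Q\tau_1)\tau_2$, \emph{not} $R_\Delta(\partial_Q\tau_1,\partial_Q\tau_2)$ acting on some third section. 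There is no $\Delta_{\partial_Q\tau_2}$ anywhere in the expansion, so the step ``add and subtract $\Delta_{\lb\partial_Q\tau_1,\partial_Q\tau_2\rb_\Delta}\tau$'' does not make sense as stated.

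Consequently, your proposed conversion from a $Q$-argument curvature to the $B$-argument curvature $R(\partial_B\tau_1,\partial_B\tau_2)q$ via skew-symmetry and $\rho_Q\partial_Q=\rho_B\partial_B$ is not the mechanism at work: those relations concern anchors, not the two distinct tensors $R\in\Omega^2(Q,\Hom(B,Q^*))$ and $R\in\Omega^2(B,\Hom(Q,Q^*))$. The actual passage from $R(q,\partial_Q\tau_1)\partial_B\tau_2$ (obtained from $R_\Delta$ via (D4)) to $R(\partial_B\tau_2,\partial_B\tau_1)q$ is precisely \eqref{(LC10)} with $b=\partial_B\tau_2$ and $\tau=\tau_1$; the mixed $\Delta$--$\nabla$ terms from the expansion are exactly what absorb the right-hand side of \eqref{(LC10)}. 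After that, (M1) handles the remaining $\Delta_{\lb q,\partial_Q\tau_1\rb-\partial_Q\Delta_q\tau_1+\nabla_{\partial_B\tau_1}q}\tau_2$ term, and a final pairing argument using $\rho_Q\circ\partial_B^*=0$ and (D1) kills the residual $Q^*$-valued tail. Note also that (M3) plays no role here.
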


\begin{proof}
The proof is just a computation using (M1) and \eqref{(LC10)}.
We have 
\begin{equation*}
\begin{split}
  &\Delta_q\lb\tau_1,\tau_2\rb_{Q^*}-\lb
  \Delta_q\tau_1,\tau_2\rb_{Q^*}-\lb\tau_1,\Delta_q\tau_2\rb_{Q^*}
+\Delta_{\nabla_{\partial_B\tau_1}q}\tau_2\\
&-\Delta_{\nabla_{\partial_B\tau_2}q}\tau_1
+\rho_Q^*\dr\langle\tau_1,\nabla_{\partial_B\tau_2}q\rangle\\
  =&\Delta_q\Delta_{\partial_Q\tau_1}\tau_2-\Delta_q\nabla_{\partial_B\tau_2}\tau_1
-\Delta_{\partial_Q(\Delta_q\tau_1)}\tau_2+\nabla_{\partial_B\tau_2}\Delta_q\tau_1-\Delta_{\partial_Q\tau_1}\Delta_q\tau_2\\
  & +\nabla_{\partial_B(\Delta_q\tau_2)}\tau_1
  +\Delta_{\nabla_{\partial_B\tau_1}q}\tau_2-\Delta_{\nabla_{\partial_B\tau_2}q}\tau_1
+\rho_Q^*\dr\langle\tau_1,\nabla_{\partial_B\tau_2}q\rangle
\end{split}
\end{equation*}
Replacing $\Delta_q\Delta_{\partial_Q\tau_1}\tau_2-\Delta_{\partial_Q\tau_1}\Delta_q\tau_2$ 
by $R_\Delta(q,\partial_Q\tau_1)\tau_2+\Delta_{\lb q,\partial_Q\tau_1\rb}\tau_2$
and reordering the terms yields
\begin{equation*}
\begin{split}
  &R_\Delta(q,\partial_Q\tau_1)\tau_2+\Delta_{\lb
    q,\partial_Q\tau_1\rb-\partial_Q(\Delta_q\tau_1)+\nabla_{\partial_B\tau_1}q}\tau_2
  -\Delta_q\nabla_{\partial_B\tau_2}\tau_1+\nabla_{\partial_B\tau_2}\Delta_q\tau_1\\
  & +\nabla_{\partial_B(\Delta_q\tau_2)}\tau_1-\Delta_{\nabla_{\partial_B\tau_2}q}\tau_1
+\rho_Q^*\dr\langle\tau_1,\nabla_{\partial_B\tau_2}q\rangle.
\end{split}
\end{equation*}
Since
$R_\Delta(q,\partial_Q\tau_1)\tau_2=R(q,\partial_Q\tau_1)\partial_B\tau_2$
by (D4), we can now use \eqref{(LC10)} and
$\nabla_q\circ\partial_B=\partial_B\circ\Delta_q$ to replace \[
R_\Delta(q,\partial_Q\tau_1)\tau_2-\Delta_q\nabla_{\partial_B\tau_2}\tau_1
+\nabla_{\partial_B\tau_2}\Delta_q\tau_1-\Delta_{\nabla_{\partial_B\tau_2}q}\tau_1+\nabla_{\nabla_q\partial_B\tau_2}\tau_1\]
by
\[-\langle\nabla_{\nabla_\cdot\partial_B\tau_2 }q, \tau_1\rangle
+R(\partial_B\tau_2,\partial_B\tau_1)q.\]
We use (M1) to replace $\Delta_{\lb
  q, \partial_Q\tau_1\rb-\partial_Q(\Delta_q\tau_1)+\nabla_{\partial_B\tau_1}q}\tau_2$
by $-\Delta_{\partial_B^*\langle \tau_1, \nabla_\cdot
  q\rangle}\tau_2$.  These two steps yield that the right hand side
of our equation is
\begin{equation*}
\begin{split}
  &-\langle\nabla_{\nabla_\cdot\partial_B\tau_2 }q, \tau_1\rangle
+R(\partial_B\tau_2,\partial_B\tau_1)q-\Delta_{\partial_B^*\langle \tau_1, \nabla_\cdot
    q\rangle}\tau_2
  +\rho_Q^*\dr\langle\tau_1,\nabla_{\partial_B\tau_2}q\rangle.
\end{split}
\end{equation*}
To conclude, let us show  that 
\[-\langle\nabla_{\nabla_\cdot\partial_B\tau_2 }q, \tau_1\rangle
-\Delta_{\partial_B^*\langle
  \tau_1, \nabla_\cdot q\rangle}\tau_2
+\rho_Q^*\dr\langle\tau_1,\nabla_{\partial_B\tau_2}q\rangle\in\Gamma(Q^*)
\]
vanishes. On $q'\in\Gamma(Q)$, this is 
\begin{equation*}
\begin{split}
  &-\langle \nabla_{\nabla_{q'}(\partial_B\tau_2)}q,
  \tau_1\rangle+\langle \lb \partial_B^*\langle\nabla_\cdot q,
  \tau_1\rangle, q'\rb,\tau_2\rangle
  +\rho_Q(q')\langle\tau_1, \nabla_{\partial_B\tau_2}q\rangle\\
  =&-\langle \nabla_{\nabla_{q'}(\partial_B\tau_2)}q,
  \tau_1\rangle
  +\langle\Delta_{q'}\tau_2, \partial_B^*(\langle\nabla_\cdot q, \tau_1\rangle)\rangle\\
  =&-\langle \nabla_{\nabla_{q'}(\partial_B\tau_2)}q,
  \tau_1\rangle +\langle\nabla_{\partial_B(\Delta_{q'}\tau_2)} q,
  \tau_1\rangle=0.
\end{split}
\end{equation*}
We have used \eqref{rho_delta}$\rho_Q\circ\partial_B^*=0$ and the
duality of the Dorfman connection with the dull bracket in the first
line, as well as for the first equality. To conclude, we have used
$\partial_B\circ\Delta_{q'}=\nabla_{q'}\circ\partial_B$ by (D1).
\end{proof}

\begin{lemma}\label{bracket_and_pullback}
  The bracket on $Q^*$ satisfies
\begin{equation}\label{bracket_pullback}
\lb \rho_Q^*\dr f,\tau\rb_{Q^*}=0
\end{equation}
for all $f\in C^\infty(M)$ and $\tau\in\Gamma(Q^*)$.
\end{lemma}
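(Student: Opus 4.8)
The plan is to prove \eqref{bracket_pullback} by a direct computation, unwinding the definition of $\lb\cdot\,,\cdot\rb_{Q^*}$ from Theorem \ref{core_courant} and using the structural identities already available. Recall that $\mathcal D = \rho_Q^*\dr$ on the degenerate Courant algebroid $Q^*$, so \eqref{bracket_pullback} is exactly the statement that $\mathcal D(C^\infty(M))$ is a ``trivial'' subset for the bracket in the second slot — a standard phenomenon for Courant-type brackets, cf.\ Lemma \ref{roytenberg_useful}. Concretely, $\lb \rho_Q^*\dr f, \tau\rb_{Q^*} = \Delta_{\partial_Q(\rho_Q^*\dr f)}\tau - \nabla_{\partial_B(\rho_Q^*\dr f)}\tau$, so the first task is to understand $\partial_Q\circ\rho_Q^*$ and $\partial_B\circ\rho_Q^*$.

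First I would record that $\partial_B\circ\rho_Q^* = 0$: this is \eqref{rho_delta}, which is part of the data of a Dorfman $2$-representation (it follows from $\rho_Q\circ\partial_B^* = 0$ by dualising, since $\partial_Q = \partial_Q^*$ is not even needed here). This kills the second term, leaving $\lb \rho_Q^*\dr f,\tau\rb_{Q^*} = \Delta_{\partial_Q(\rho_Q^*\dr f)}\tau$. Next I would argue that $\partial_Q\circ\rho_Q^* = 0$ as well. The cleanest route is to invoke Remark \ref{remark_simplifications}(2), which gives $\rho_Q\circ\partial_Q = \rho_B\circ\partial_B$ for a matched pair; composing on the right with $\rho_Q^*\dr$ (or rather using $\partial_B\circ\rho_Q^* = 0$) shows $\rho_Q\partial_Q\rho_Q^* = 0$. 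However this only gives that $\partial_Q\rho_Q^*\dr f$ lies in $\ker\rho_Q$, not that it vanishes. The sharper statement $\partial_Q\circ\rho_Q^* = 0$ should come from the self-duality $\partial_Q = \partial_Q^*$ together with $\rho_Q\circ\partial_B^* = 0$ in a way I would need to check: $\langle \partial_Q\rho_Q^*\dr f, \tau\rangle = \langle \rho_Q^*\dr f, \partial_Q\tau\rangle = \rho_Q(\partial_Q\tau)(f)$, and this must be shown to vanish — which it does \emph{not} in general. So instead I expect $\partial_Q\rho_Q^*\dr f$ need not be zero, and the argument must be that $\Delta_{\partial_Q\rho_Q^*\dr f}\tau$ vanishes for another reason.

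The resolution I anticipate is this: write $q_0 := \partial_Q(\rho_Q^*\dr f) = \partial_Q(\mathcal D_{Q^*}f)$. From \eqref{partial_Q_preserves} or directly from (M1) with one argument a $\mathcal D$-image, one gets that $q_0$ is in the image of $\partial_B^*$ up to a $\partial_Q$-exact term, or more usefully that $\rho_Q(q_0) = 0$ (from Remark \ref{remark_simplifications}(2) as above) \emph{and} that $\Delta_{q_0}$ annihilates every $\tau$. The key point is Definition \ref{the_def}(3): $\Delta_q(\rho_Q^*\dr g) = \rho_Q^*\dr(\rho_Q(q)g)$, so when $\rho_Q(q_0)=0$ we get $\Delta_{q_0}(\rho_Q^*\dr g) = 0$; but I need this for arbitrary $\tau$, not just exact ones. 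The honest plan is therefore to compute $\langle \Delta_{q_0}\tau, q\rangle$ for arbitrary $q\in\Gamma(Q)$ using duality of $\Delta$ with $\lb\cdot\,,\cdot\rb_\Delta$: $\langle \Delta_{q_0}\tau, q\rangle = \rho_Q(q_0)\langle\tau,q\rangle - \langle\tau, \Delta_{q_0}^{\ast}q\rangle$ — wait, more precisely $\langle \Delta_{q_0}\tau, q\rangle = -\langle\tau, \lb q, q_0\rb_\Delta\rangle + \rho_Q(q_0)\langle\tau,q\rangle$, and then use skew-symmetry $\lb q,q_0\rb_\Delta = -\lb q_0,q\rb_\Delta$ together with $q_0 = \partial_Q(\mathcal D f)$ and Lemma \ref{formulas}-type identities (via $l_1 = -\partial_B^*$ and axiom (ii)/(D2)) to see that $\lb q_0, q\rb_\Delta$ is itself $\partial_Q$ of something, hence pairs trivially with... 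Actually the main obstacle is precisely pinning down which combination of (M1), (D1)--(D6), \eqref{rho_delta}, $\partial_Q=\partial_Q^*$ and skew-symmetry forces $\Delta_{\partial_Q\rho_Q^*\dr f}\tau = 0$; I expect a two-line computation once the right identity is isolated, most likely: $\partial_Q\rho_Q^*\dr f = l_1(\text{something})$ is false, but $\Delta_{\partial_Q\rho_Q^*\dr f}\tau = \rho_Q^*\dr(\text{something})$ which then maps to zero because the whole expression is being paired into $Q^*$ through $\partial_Q$ in the Courant bracket on $\mathbb B$ where \eqref{bracket_pullback} is actually used. Given the statement appears in Appendix \ref{proof_of_manin_ap} in service of Theorem \ref{thm_CA_sd}, the cleanest proof is to evaluate both sides against $\partial_Q q'$ for $q'\in\Gamma(Q)$ (the relevant pairing in $\mathbb B$ only sees $\partial_Q$-images) and show equality there, which reduces to $\rho_Q(\partial_B^*\cdot) = 0$ from \eqref{rho_delta}. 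I would carry it out by: (1) reduce via $\partial_B\rho_Q^* = 0$; (2) express $\Delta_{\partial_Q\rho_Q^*\dr f}\tau$ using (M1) applied with $\tau_1 = \mathcal D_{Q^*}f$ (noting $\partial_B(\mathcal D_{Q^*}f) = 0$), giving $\partial_Q\Delta_{\partial_Q(\mathcal D f)}\tau = \lb \partial_Q(\mathcal D f), \partial_Q\tau\rb_\Delta + \partial_B^*\langle\tau,\nabla_\cdot\partial_Q(\mathcal D f)\rangle + \nabla_{\partial_B\tau}\partial_Q(\mathcal D f)$; (3) simplify the right side using $\rho_Q\partial_Q\rho_Q^* = 0$ and Definition \ref{the_def}(3) to conclude the whole expression vanishes after pairing, hence $\lb\rho_Q^*\dr f,\tau\rb_{Q^*}$ has trivial $\partial_Q$-image and is therefore zero in the quotient that matters. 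The main obstacle, as flagged, is the bookkeeping in step (2)--(3); everything else is immediate from \eqref{rho_delta} and Definition \ref{the_def}.
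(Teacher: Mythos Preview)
There is a concrete error at the very first step. From Theorem \ref{core_courant} the bracket is
\[
\lb\tau_1,\tau_2\rb_{Q^*}=\Delta_{\partial_Q\tau_1}\tau_2-\nabla_{\partial_B\tau_2}\tau_1,
\]
so with $\tau_1=\rho_Q^*\dr f$ and $\tau_2=\tau$ the second term is $-\nabla_{\partial_B\tau}(\rho_Q^*\dr f)$, \emph{not} $-\nabla_{\partial_B(\rho_Q^*\dr f)}\tau$ as you wrote. Your appeal to $\partial_B\circ\rho_Q^*=0$ therefore does not kill the second term; it kills nothing in this expression. Neither $\Delta_{\partial_Q(\rho_Q^*\dr f)}\tau$ nor $\nabla_{\partial_B\tau}(\rho_Q^*\dr f)$ vanishes separately in general, which is why the rest of your argument ends up circling without landing. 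The closing idea that one only needs the image in ``the quotient that matters'' is also off: the lemma is stated and used in $\Gamma(Q^*)$, not in $\mathbb B$.

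The paper's fix is exactly the instinct you flagged but did not exploit: use \eqref{almost_C} (the (CA3)-type identity already established in Theorem \ref{core_courant}) to swap the arguments,
\[
\lb\rho_Q^*\dr f,\tau\rb_{Q^*}
=\nabla_{\partial_B(\rho_Q^*\dr f)}\tau-\Delta_{\partial_Q\tau}(\rho_Q^*\dr f)+\rho_Q^*\dr\bigl((\rho_Q\partial_Q\tau)f\bigr).
\]
Now $\partial_B$ \emph{does} land on $\rho_Q^*\dr f$ and vanishes by \eqref{rho_delta}, while $\Delta_{\partial_Q\tau}(\rho_Q^*\dr f)=\rho_Q^*\dr(\rho_Q(\partial_Q\tau)f)$ by Definition \ref{the_def}(3) cancels the last term. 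That is the whole proof. In short: your Lemma \ref{roytenberg_useful} intuition (``$\mathcal D$-images are trivial in the second slot'') is correct, and the way to realise it is to put $\rho_Q^*\dr f$ in the second slot via \eqref{almost_C}, rather than attacking the first-slot expression directly.
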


\begin{proof}
   We have $\Delta_{q}(\rho_Q^*\dr f)=\rho_Q^*\dr
(\rho_Q(q)f)$ by the definition of a Dorfman connection (Definition \ref{the_def}).

By \eqref{almost_C}, we have $\lb \rho_Q^*\dr
f,\tau\rb_{Q^*}=\nabla_{\partial_B\rho_Q^*\dr
  f}\tau-\Delta_{\partial_Q\tau}(\rho_Q^*\dr
f)+\rho_Q^*\dr((\rho_Q\partial_Q\tau)f)$.  Since
$\partial_B\rho_Q^*=0$ by \eqref{rho_delta}, and
$\Delta_{\partial_Q\tau}(\rho_Q^*\dr
f)=\rho_Q^*\dr(\rho_Q(\partial_Q\tau)(f))$, the bracket $\lb
\rho_Q^*\dr f,\tau\rb_{Q^*}$ is $0$.
\end{proof}

\medskip

Now we check that the bracket $\lb\cdot\,,\cdot\rb_{\mathbb B}$ in
Theorem \ref{thm_CA_sd} is well-defined. We have for all
$\upsilon\in\Gamma(U^\circ)$, $\tau\in\Gamma(Q^*)$ and $u\in\Gamma(U)$:
\begin{equation*}
\begin{split}
  \lb u\oplus \tau, (-\partial_Q\upsilon)\oplus\upsilon\rb&=
  \left(-\lb u,\partial_Q\upsilon\rb_U+\nabla_{\partial_B\tau}(-\partial_Q\upsilon)-\nabla_{\partial_B\upsilon}u\right)\\
  &\quad \oplus
  \left(\lb\tau,\upsilon\rb_{Q^*}+\Delta_u\upsilon-\Delta_{-\partial_Q\upsilon}\tau+\rho_Q^*\dr\langle\tau,-\partial_Q\upsilon\rangle\right).
\end{split}
\end{equation*}
By (M1), the properties of $2$-representations and \eqref{almost_C}, this is 
\begin{equation*}
\begin{split}
  &\left(-\partial_Q(\Delta_u\upsilon)+\partial_B^*\langle\upsilon,\nabla_\cdot
    u\rangle
    +\cancel{\nabla_{\partial_B\upsilon}u}-\partial_Q\nabla_{\partial_B\tau}\upsilon-\cancel{\nabla_{\partial_B\upsilon}u}\right)\\
  &\quad \oplus
  \left(-\cancel{\Delta_{\partial_Q\upsilon}\tau}+\nabla_{\partial_B\tau}\upsilon
    +\cancel{\rho_Q^*\dr\langle\tau,\partial_Q\upsilon\rangle}
    +\Delta_u\upsilon+\cancel{\Delta_{\partial_Q\upsilon}\tau}-\cancel{\rho_Q^*\dr\langle\tau,\partial_Q\upsilon\rangle}\right)\\
  =&\left(-\partial_Q(\Delta_u\upsilon)+\partial_B^*\langle\upsilon,\nabla_\cdot
    u\rangle-\partial_Q\nabla_{\partial_B\tau}\upsilon\right) \oplus
  \left(\nabla_{\partial_B\tau}\upsilon+\Delta_u\upsilon\right).
\end{split}
\end{equation*}
Since $\upsilon\in\Gamma(U^\circ)$ and $\nabla_b$ preserves $\Gamma(U)$
for all $b\in\Gamma(B)$, the section $\langle\upsilon,\nabla_\cdot
u\rangle$ of $B^*$ vanishes and we get
\begin{equation*}
\begin{split}
\lb u\oplus \tau, (-\partial_Q\upsilon)\oplus\upsilon\rb
=&\left(-\partial_Q(\Delta_u\upsilon+\nabla_{\partial_B\tau}\upsilon)\right) \oplus \left(\nabla_{\partial_B\tau}\upsilon+\Delta_u\upsilon\right).
\end{split}
\end{equation*}
Because $\Delta_u$ preserves as well $\Gamma(U^\circ)$, the sum
$\nabla_{\partial_B\tau}\upsilon+\Delta_u\upsilon$ is a section of
$U^\circ$, and so $\lb u\oplus \tau, (-\partial_Q\upsilon)\oplus\upsilon\rb$
is zero in $\mathbb B$.  \medskip 

We now check the Courant algebroid
axioms (CA1), (CA2) and (CA3).  The last one, (CA3), is immediate:
\begin{equation*}
\begin{split}
  &\lb u_1\oplus\tau_1, u_2\oplus\tau_2\rb_{\mathbb{B}}+\lb
  u_2\oplus\tau_2, u_1\oplus\tau_1\rb_{\mathbb{B}}\\
  &=0\oplus
  \left(\rho_Q^*\dr\langle\tau_1,\partial_Q\tau_2\rangle+\rho_Q^*\dr\langle\tau_1,u_2\rangle
    +\rho_Q^*\dr\langle\tau_2,u_1\rangle\right)
  =0\oplus\rho_Q^*\dr\langle u_1\oplus\tau_1, u_2\oplus\tau_2\rangle_{\mathbb B}\\
  &=\mathcal D_{\mathbb B}\langle u_1\oplus\tau_1,
  u_2\oplus\tau_2\rangle_{\mathbb B}.
\end{split}
\end{equation*}

Next we prove (CA2). We have, using \eqref{almost_C} to replace $\lb \tau_1,
\tau_2\rb_{Q^*}$ by
$-\Delta_{\partial_Q\tau_2}\tau_1+\nabla_{\partial_B\tau_1}\tau_2+\rho_Q^*\dr\langle\tau_1,\partial_Q\tau_2\rangle$:
\begin{equation*}\label{metric}
\begin{split}
  &\langle \lb u_1\oplus\tau_1, u_2\oplus\tau_2\rb, u_3\oplus\tau_3\rangle_{\mathbb B}\\
  &=\langle \lb u_1,
  u_2\rb_U+\nabla_{\partial_B\tau_1}u_2-\nabla_{\partial_B\tau_2}u_1, \tau_3\rangle\\
  &+\langle
  -\Delta_{\partial_Q\tau_2}\tau_1+\nabla_{\partial_B\tau_1}\tau_2+\rho_Q^*\dr\langle\tau_1,\partial_Q\tau_2\rangle+\Delta_{u_1}\tau_2-\Delta_{u_2}\tau_1+\rho_Q^*\dr\langle
  \tau_1, u_2\rangle, u_3+\partial_Q\tau_3\rangle\\
  &=\rho_Q(u_1)\langle u_2,\tau_3\rangle-\langle u_2, \Delta_{u_1}\tau_3\rangle+\langle\nabla_{\partial_B\tau_1}u_2-\nabla_{\partial_B\tau_2}u_1,\tau_3\rangle\\
  &-\langle\Delta_{\partial_Q\tau_2}\tau_1,u_3\rangle+\langle\nabla_{\partial_B\tau_1}\tau_2,
  u_3\rangle+\rho_Q(u_3)\langle\tau_1,\partial_Q\tau_2\rangle
  +\langle\Delta_{u_1}\tau_2, u_3\rangle-\langle\Delta_{u_2}\tau_1, u_3\rangle\\
  &+\rho_Q(u_3)\langle\tau_1,u_2\rangle-\langle\Delta_{\partial_Q\tau_2}\tau_1,\partial_Q\tau_3\rangle+\langle\nabla_{\partial_B\tau_1}\tau_2,\partial_Q\tau_3\rangle+\rho_Q(\partial_Q\tau_3)\langle\tau_1, \partial_Q\tau_2\rangle \\
  &+\langle\Delta_{u_1}\tau_2,\partial_Q\tau_3\rangle-\langle\Delta_{u_2}\tau_1,\partial_Q\tau_3\rangle+\rho_Q(\partial_Q\tau_3)\langle\tau_1,u_2\rangle.
\end{split}
\end{equation*}
We sum $\langle \lb u_1\oplus\tau_1, u_2\oplus\tau_2\rb,
u_3\oplus\tau_3\rangle_{\mathbb B}$ with $\langle u_2\oplus\tau_2,
\lb u_1\oplus\tau_1, u_3\oplus\tau_3\rb \rangle_{\mathbb B}$, and 
replace only in the first summand the term
$\langle\Delta_{u_1}\tau_2,\partial_Q\tau_3\rangle$ by
$\rho_Q(u_1)\langle\tau_2,\partial_Q\tau_3\rangle-\langle\tau_2,\lb
u_1,\partial_Q\tau_3\rb\rangle$.  This yields
\begin{equation*}
\begin{split}
  &\langle \lb u_1\oplus\tau_1, u_2\oplus\tau_2\rb, u_3\oplus\tau_3\rangle_{\mathbb B}+\langle u_2\oplus\tau_2, \lb u_1\oplus\tau_1, u_3\oplus\tau_3\rb \rangle_{\mathbb B}\\
  &=\rho_Q(u_1)\langle u_2,\tau_3\rangle-\cancel{\langle u_2, \Delta_{u_1}\tau_3\rangle}+\langle\nabla_{\partial_B\tau_1}u_2-\nabla_{\partial_B\tau_2}u_1,\tau_3\rangle\\
  &-\langle\Delta_{\partial_Q\tau_2}\tau_1,u_3\rangle+\langle\nabla_{\partial_B\tau_1}\tau_2,
  u_3\rangle+\rho_Q(u_3)\langle\tau_1,\partial_Q\tau_2\rangle
  +\cancel{\langle\Delta_{u_1}\tau_2, u_3\rangle}-\langle\Delta_{u_2}\tau_1, u_3\rangle\\
  &+\rho_Q(u_3)\langle\tau_1,u_2\rangle-\langle\Delta_{\partial_Q\tau_2}\tau_1,\partial_Q\tau_3\rangle+\langle\nabla_{\partial_B\tau_1}\tau_2,\partial_Q\tau_3\rangle+\rho_Q(\partial_Q\tau_3)\langle\tau_1, \partial_Q\tau_2\rangle\\
  & +\rho_Q(u_1)\langle\tau_2,\partial_Q\tau_3\rangle-\langle\tau_2,\lb u_1,\partial_Q\tau_3\rb\rangle-\langle\Delta_{u_2}\tau_1,\partial_Q\tau_3\rangle+\rho_Q(\partial_Q\tau_3)\langle\tau_1,u_2\rangle\\
  &+\rho_Q(u_1)\langle u_3,\tau_2\rangle-\cancel{\langle u_3, \Delta_{u_1}\tau_2\rangle}+\langle\nabla_{\partial_B\tau_1}u_3-\nabla_{\partial_B\tau_3}u_1,\tau_2\rangle\\
  &-\langle\Delta_{\partial_Q\tau_3}\tau_1,u_2\rangle+\langle\nabla_{\partial_B\tau_1}\tau_3,
  u_2\rangle+\rho_Q(u_2)\langle\tau_1,\partial_Q\tau_3\rangle
  +\cancel{\langle\Delta_{u_1}\tau_3, u_2\rangle}-\langle\Delta_{u_3}\tau_1, u_2\rangle\\
  &+\rho_Q(u_2)\langle\tau_1,u_3\rangle-\langle\Delta_{\partial_Q\tau_3}\tau_1,\partial_Q\tau_2\rangle+\langle\nabla_{\partial_B\tau_1}\tau_3,\partial_Q\tau_2\rangle+\rho_Q(\partial_Q\tau_2)\langle\tau_1, \partial_Q\tau_3\rangle\\
  &+\langle\Delta_{u_1}\tau_3,\partial_Q\tau_2\rangle-\langle\Delta_{u_3}\tau_1,\partial_Q\tau_2\rangle+\rho_Q(\partial_Q\tau_2)\langle\tau_1,u_3\rangle.
\end{split}
\end{equation*}
We reorder the remaining
terms and replace eight times sums like
$\rho_Q(\partial_Q\tau_2)\langle\tau_1,u_3\rangle-\langle\Delta_{\partial_Q\tau_2}\tau_1,
u_3\rangle$ by $\langle\lb \partial_Q\tau_2, u_3\rb,
\tau_1\rangle$, and, using ${\nabla^Q}^*=\nabla^{Q^*}$, three times sums like
$\langle\nabla_{\partial_B\tau_1}u_2,\tau_3\rangle+\langle
u_2,\nabla_{\partial_B\tau_1}\tau_3\rangle$ by
$\rho_B(\partial_B\tau_1)\langle
u_2,\tau_3\rangle$.  This leads to
\begin{equation*}
\begin{split}
  &\langle \lb u_1\oplus\tau_1, u_2\oplus\tau_2\rb, u_3\oplus\tau_3\rangle_{\mathbb B}+\langle u_2\oplus\tau_2, \lb u_1\oplus\tau_1, u_3\oplus\tau_3\rb \rangle_{\mathbb B}\\
  &=\rho_Q(u_1)\langle u_2\oplus\tau_2,u_3\oplus\tau_3\rangle_{\mathbb B}\\
&+\cancel{\langle \lb \partial_Q\tau_2, \partial_Q\tau_3\rb, \tau_1\rangle}
+\cancel{\langle \lb \partial_Q\tau_2, u_3\rb, \tau_1\rangle}
+\cancel{\langle \lb u_3, \partial_Q\tau_2\rb, \tau_1\rangle}
+\cancel{\langle \lb u_2, u_3\rb, \tau_1\rangle}\\
&+\cancel{\langle\lb u_3,u_2\rb, \tau_1\rangle}
+\cancel{\langle\lb \partial_Q\tau_3, u_2\rb, \tau_1\rangle}
+\cancel{\langle\lb u_2, \partial_Q\tau_3\rb,\tau_1\rangle}
+\cancel{\langle\lb\partial_Q\tau_3, \partial_Q\tau_2\rb,\tau_1\rangle}\\
&+\rho_B(\partial_B\tau_1)\langle u_2,\tau_3\rangle
+\rho_B(\partial_B\tau_1)\langle\tau_3,\partial_Q\tau_2\rangle
+\rho_B(\partial_B\tau_1)\langle u_3,\tau_2\rangle\\
&-\langle\nabla_{\partial_B\tau_2}u_1,\tau_3\rangle
  -\langle\tau_2,\lb u_1,\partial_Q\tau_3\rb\rangle-\langle\nabla_{\partial_B\tau_3}u_1,\tau_2\rangle
  +\langle\Delta_{u_1}\tau_3,\partial_Q\tau_2\rangle.
\end{split}
\end{equation*}
Because $\lb\cdot\,,\cdot\rb$ is skew-symmetric on $\Gamma(U)$,
$\langle \lb u_2, u_3\rb, \tau_1\rangle$ and $\langle\lb u_3,u_2\rb,
\tau_1\rangle$ cancel each other, etc. 
The four last terms cancel each other by (M1) and $\partial_Q=\partial_Q^*$.  This
yields
\begin{equation*}
\begin{split}
  &\langle \lb u_1\oplus\tau_1, u_2\oplus\tau_2\rb, u_3\oplus\tau_3\rangle_{\mathbb B}+\langle u_2\oplus\tau_2, \lb u_1\oplus\tau_1, u_3\oplus\tau_3\rb \rangle_{\mathbb B}\\
  &=(\rho_Q(u_1)+\rho_B\partial_B\tau_1)\langle
  u_2\oplus\tau_2,u_3\oplus\tau_3\rangle_{\mathbb B}=\rho_{\mathbb
    B}(u_1\oplus\tau_1)\langle
  u_2\oplus\tau_2,u_3\oplus\tau_3\rangle_{\mathbb B}.
\end{split}
\end{equation*}

\medskip Finally we check the Jacobi identity in Leibniz form
(CA1). We will check that
\[\operatorname{Jac}_{\lb\cdot\,,\cdot\rb}(u_1\oplus\tau_1,
u_2\oplus\tau_2, u_3\oplus\tau_3)=(-\partial_Q\upsilon)\oplus \upsilon
\]
with \[\upsilon=(R(\partial_B\tau_1,\partial_B\tau_2)u_3-R(u_1,u_2)\partial_B\tau_3)+\text{cyclic
  permutations}.
\] 
Since by Proposition \ref{char_sub_alg} $R(u_1,u_2)$ has image in
$U^\circ$ for all $u_1,u_2\in\Gamma(U)$ and $R(b_1,b_2)$ restricts to
a morphism $U\to U^\circ$ for all $b_1,b_2\in\Gamma(B)$ by Proposition
\ref{char_VB_dir}, $\upsilon$ is a section of $U^\circ$ and so
$\operatorname{Jac}_{\lb\cdot\,,\cdot\rb}(u_1\oplus\tau_1,
u_2\oplus\tau_2, u_3\oplus\tau_3)$ will be zero in $\mathbb B$.

We compute 
\begin{equation*}
\begin{split}
  &\lb\lb u_1\oplus\tau_1, u_2\oplus\tau_2\rb, u_3\oplus\tau_3\rb\\
  &=\bigl\lb(\lb u_1,
  u_2\rb_U+\nabla_{\partial_B\tau_1}u_2-\nabla_{\partial_B\tau_2}u_1
  ) \oplus (\lb \tau_1,
  \tau_2\rb_{Q^*}+\Delta_{u_1}\tau_2-\Delta_{u_2}\tau_1+\rho_Q^*\dr\langle
  \tau_1, u_2\rangle
  ), u_3\oplus\tau_3\bigr\rb\\
  &=\Bigl( \lb \lb u_1,
  u_2\rb_U, u_3\rb_U+\lb \nabla_{\partial_B\tau_1}u_2-\nabla_{\partial_B\tau_2}u_1, u_3\rb_U\\
  &\qquad+ \nabla_{\partial_B(\lb
    \tau_1,\tau_2\rb_{Q^*}+\Delta_{u_1}\tau_2-\Delta_{u_2}\tau_1+\rho_Q^*\dr\langle\tau_1,
    u_2\rangle )}u_3 -\nabla_{\partial_B\tau_3}(\lb u_1,
  u_2\rb_U+\nabla_{\partial_B\tau_1}u_2-\nabla_{\partial_B\tau_2}u_1)
  \Bigr)\\
  &\oplus\Bigl(\lb\lb \tau_1,
  \tau_2\rb_{Q^*}, \tau_3\rb_{Q^*}+\lb\Delta_{u_1}\tau_2-\Delta_{u_2}\tau_1+\rho_Q^*\dr\langle\tau_1, u_2\rangle, \tau_3 \rb_{Q^*}\\
  &\qquad+\Delta_{\lb u_1,
    u_2\rb_U+\nabla_{\partial_B\tau_1}u_2-\nabla_{\partial_B\tau_2}u_1}\tau_3-\Delta_{u_3}(\lb
  \tau_1,
  \tau_2\rb_{Q^*}+\Delta_{u_1}\tau_2-\Delta_{u_2}\tau_1+\rho_Q^*\dr\langle
  \tau_1, u_2\rangle)\\
  &\qquad+\rho_Q^*\dr\left\langle\lb \tau_1,
  \tau_2\rb_{Q^*}+\Delta_{u_1}\tau_2-\Delta_{u_2}\tau_1+\rho_Q^*\dr\langle
  \tau_1, u_2\rangle, u_3 \right\rangle \Bigr).
\end{split}
\end{equation*}
Using $\partial_B\lb
    \tau_1,\tau_2\rb_{Q^*}=[\partial_B\tau_1,\partial_B\tau_2]$,
   $\Delta_q(\rho_Q^*\dr f)=\rho_Q^*\dr(\rho_Q(q)(f))$, \eqref{bracket_pullback}
and \eqref{rho_delta}, this is
\begin{equation*}
\begin{split}
  &\Bigl( \lb \lb u_1,
  u_2\rb_U, u_3\rb_U+\lb \nabla_{\partial_B\tau_1}u_2-\nabla_{\partial_B\tau_2}u_1, u_3\rb_U\\
  &\qquad+ \nabla_{[\partial_B
    \tau_1,\partial_B\tau_2]+\partial_B(\Delta_{u_1}\tau_2-\Delta_{u_2}\tau_1)}u_3 -\nabla_{\partial_B\tau_3}(\lb u_1,
  u_2\rb_U+\nabla_{\partial_B\tau_1}u_2-\nabla_{\partial_B\tau_2}u_1)
  \Bigr)\\
  &\oplus\Bigl(\lb\lb \tau_1,
  \tau_2\rb_{Q^*}, \tau_3\rb_{Q^*}+\lb\Delta_{u_1}\tau_2-\Delta_{u_2}\tau_1, \tau_3 \rb_{Q^*}\\
  &\qquad+\Delta_{\lb u_1,
    u_2\rb_U+\nabla_{\partial_B\tau_1}u_2-\nabla_{\partial_B\tau_2}u_1}\tau_3-\Delta_{u_3}(\lb
  \tau_1,
  \tau_2\rb_{Q^*}+\Delta_{u_1}\tau_2-\Delta_{u_2}\tau_1)\\
  &\qquad+\rho_Q^*\dr\left\langle\lb \tau_1,
  \tau_2\rb_{Q^*}+\Delta_{u_1}\tau_2-\Delta_{u_2}\tau_1, u_3 \right\rangle \Bigr).
\end{split}
\end{equation*}
In the same manner, we compute 
\begin{equation*}
\begin{split}
  &\lb u_1\oplus\tau_1,\lb u_2\oplus\tau_2,
  u_3\oplus\tau_3\rb\rb\\
  &=\bigl\lb u_1\oplus\tau_1, (\lb u_2,
  u_3\rb_U+\nabla_{\partial_B\tau_2}u_3-\nabla_{\partial_B\tau_3}u_2
  ) \oplus (\lb \tau_2,
  \tau_3\rb_{Q^*}+\Delta_{u_2}\tau_3-\Delta_{u_3}\tau_2+\rho_Q^*\dr\langle
  \tau_2, u_3\rangle
  )\bigr\rb\\
  &=\Bigl(\lb u_1, \lb u_2, u_3\rb_U\rb_U+\lb u_1,
  \nabla_{\partial_B\tau_2}u_3-\nabla_{\partial_B\tau_3}u_2\rb_U\\
&\qquad+\nabla_{\partial_B\tau_1}(\lb u_2,
  u_3\rb_U+\nabla_{\partial_B\tau_2}u_3-\nabla_{\partial_B\tau_3}u_2)-\nabla_{[ \partial_B\tau_2,
  \partial_B\tau_3]+\partial_B(\Delta_{u_2}\tau_3-\Delta_{u_3}\tau_2)
 }u_1\Bigr)\\
&\oplus\Bigl(\lb \tau_1, \lb \tau_2,
  \tau_3\rb_{Q^*}\rb_{Q^*}+\lb\tau_1, \Delta_{u_2}\tau_3-\Delta_{u_3}\tau_2\rb_{Q^*}+\rho_Q^*\dr(\rho_Q(\partial_Q\tau_1)\langle
  \tau_2, u_3\rangle)\\
&\qquad +\Delta_{u_1}(\lb \tau_2,
  \tau_3\rb_{Q^*}+\Delta_{u_2}\tau_3-\Delta_{u_3}\tau_2)+\rho_Q^*\dr(\rho_Q(u_1)\langle
  \tau_2, u_3\rangle)-\Delta_{ \lb u_2,
  u_3\rb_U+\nabla_{\partial_B\tau_2}u_3-\nabla_{\partial_B\tau_3}u_2
  }\tau_1\\
&\qquad +\rho_Q^*\dr \langle\tau_1, \lb u_2,
  u_3\rb_U+\nabla_{\partial_B\tau_2}u_3-\nabla_{\partial_B\tau_3}u_2
\rangle 
\Bigr)
\end{split}
\end{equation*}
Since $\operatorname{Jac}_{\lb\cdot\,,\cdot\rb_U}(u_1,u_2,u_3)=0$ and
$\partial_B\circ\Delta_q=\nabla_q\circ\partial_B$ for all $q\in\Gamma(Q)$, the
$U$-term of
$\operatorname{Jac}_{\lb\cdot\,,\cdot\rb}(u_1\oplus\tau_1,u_2\oplus\tau_2,u_3\oplus\tau_3)$
equals
\begin{equation*}
\begin{split}
&\lb \nabla_{\partial_B\tau_1}u_2-\nabla_{\partial_B\tau_2}u_1, u_3\rb_U\\
  &+ \nabla_{[\partial_B
    \tau_1,\partial_B\tau_2]+\nabla_{u_1}\partial_B\tau_2-\nabla_{u_2}\partial_B\tau_1}u_3 -\nabla_{\partial_B\tau_3}(\lb u_1,
  u_2\rb_U+\nabla_{\partial_B\tau_1}u_2-\nabla_{\partial_B\tau_2}u_1)\\
&+\lb u_2,
  \nabla_{\partial_B\tau_1}u_3-\nabla_{\partial_B\tau_3}u_1\rb_U\\
&+\nabla_{\partial_B\tau_2}(\lb u_1,
  u_3\rb_U+\nabla_{\partial_B\tau_1}u_3-\nabla_{\partial_B\tau_3}u_1)-\nabla_{[ \partial_B\tau_1,
  \partial_B\tau_3]+\nabla_{u_1}\partial_B\tau_3-\nabla_{u_3}\partial_B\tau_1
 }u_2\\
& -\lb u_1,
  \nabla_{\partial_B\tau_2}u_3-\nabla_{\partial_B\tau_3}u_2\rb_U\\
&-\nabla_{\partial_B\tau_1}(\lb u_2,
  u_3\rb_U+\nabla_{\partial_B\tau_2}u_3-\nabla_{\partial_B\tau_3}u_2)+\nabla_{[ \partial_B\tau_2,
  \partial_B\tau_3]+\nabla_{u_2}\partial_B\tau_3-\nabla_{u_3}\partial_B\tau_2
 }u_1.
\end{split}
\end{equation*}
Note that since for any $b_1,b_2\in\Gamma(B)$,  $R(b_1,b_2)$ restricts to a section
of $\operatorname{Hom}(U,U^\circ)$ (see \S\ref{LA-Dirac}), the
last summand on the right hand side of (M4) vanishes on sections of $U$.
By sorting out the terms and using (M4) on sections of $U$, we get
\begin{equation*}
\begin{split}
  &-R_\nabla(\partial_B\tau_3, \partial_B\tau_1)u_2-R_\nabla(\partial_B\tau_1,\partial_B\tau_2)u_3
  -R_\nabla(\partial_B\tau_2,\partial_B\tau_3)u_1\\
&+\partial_QR(u_1,u_2)\partial_B\tau_3+\partial_QR(u_2,u_3)\partial_B\tau_1+\partial_QR(u_3,u_1)\partial_B\tau_2.
\end{split}
\end{equation*}
Since $R_\nabla=\partial_Q\circ R$, this is $-\partial_Q\upsilon$. 
We conclude by computing the $Q^*$-part of
$\operatorname{Jac}_{\lb\cdot\,,\cdot\rb}(u_1\oplus\tau_1,u_2\oplus\tau_2,u_3\oplus\tau_3)$.
Again, because
$\operatorname{Jac}_{\lb\cdot\,,\cdot\rb_{Q^*}}(\tau_1,\tau_2,\tau_3)=0$,
we get
\begin{equation*}
\begin{split}
  &\lb\Delta_{u_1}\tau_2-\Delta_{u_2}\tau_1, \tau_3
  \rb_{Q^*}+\Delta_{\lb u_1,
    u_2\rb_U+\nabla_{\partial_B\tau_1}u_2-\nabla_{\partial_B\tau_2}u_1}\tau_3-\Delta_{u_3}(\lb
  \tau_1,
  \tau_2\rb_{Q^*}+\Delta_{u_1}\tau_2-\Delta_{u_2}\tau_1)\\
  &+\rho_Q^*\dr\left\langle\lb \tau_1,
    \tau_2\rb_{Q^*}+\cancel{\Delta_{u_1}\tau_2}-\cancel{\Delta_{u_2}\tau_1}, u_3
  \right\rangle+\lb\tau_2,
  \Delta_{u_1}\tau_3-\Delta_{u_3}\tau_1\rb_{Q^*}+\rho_Q^*\dr(\rho_Q(\partial_Q\tau_2)\langle
  \tau_1, u_3\rangle)\\
  &+\Delta_{u_2}(\lb \tau_1,
  \tau_3\rb_{Q^*}+\Delta_{u_1}\tau_3-\Delta_{u_3}\tau_1)+\cancel{\rho_Q^*\dr(\rho_Q(u_2)\langle
  \tau_1, u_3\rangle)}-\Delta_{ \lb u_1,
    u_3\rb_U+\nabla_{\partial_B\tau_1}u_3-\nabla_{\partial_B\tau_3}u_1
  }\tau_2\\
  &+\rho_Q^*\dr \langle\tau_2, \cancel{\lb u_1,
  u_3\rb_U}+\nabla_{\partial_B\tau_1}u_3-\nabla_{\partial_B\tau_3}u_1
  \rangle -\lb\tau_1,
  \Delta_{u_2}\tau_3-\Delta_{u_3}\tau_2\rb_{Q^*}-\rho_Q^*\dr(\rho_Q(\partial_Q\tau_1)\langle
  \tau_2, u_3\rangle)\\
  &-\Delta_{u_1}(\lb \tau_2,
  \tau_3\rb_{Q^*}+\Delta_{u_2}\tau_3-\Delta_{u_3}\tau_2)-\cancel{\rho_Q^*\dr(\rho_Q(u_1)\langle
  \tau_2, u_3\rangle)}+\Delta_{ \lb u_2,
    u_3\rb_U+\nabla_{\partial_B\tau_2}u_3-\nabla_{\partial_B\tau_3}u_2
  }\tau_1\\
  &-\rho_Q^*\dr \langle\tau_1, \cancel{\lb u_2,
  u_3\rb_U}+\nabla_{\partial_B\tau_2}u_3-\nabla_{\partial_B\tau_3}u_2
  \rangle
 \end{split}
\end{equation*}
The six cancelling terms cancel by the duality of the dull bracket and
the Dorfman connection.  Reordering the terms, we get using Lemma
\ref{lemma_looks_like_basic}:
\begin{equation*}
\begin{split}
&-R_\Delta(u_3,
u_1)\tau_2-R_\Delta(u_2,u_3)\tau_1-R_\Delta(u_1,u_2)\tau_3\\
&+R(\partial_B\tau_2,\partial_B\tau_3)u_1+R(\partial_B\tau_1,\partial_B\tau_2)u_3
-R(\partial_B\tau_1,\partial_B\tau_3)u_2\\
&+\cancel{\rho_Q^*\dr\langle\tau_2,\nabla_{\partial_B\tau_3}u_1\rangle}
+\cancel{\rho_Q^*\dr\langle\tau_1,\nabla_{\partial_B\tau_2}u_3\rangle}-\cancel{\rho_Q^*\dr\langle\tau_1,\nabla_{\partial_B\tau_3}u_2\rangle}\\
&-\rho_Q^*\dr\langle
\tau_2,\partial_Q\Delta_{u_3}\tau_1\rangle
+\rho_Q^*\dr\left\langle\lb \tau_1,
  \tau_2\rb_{Q^*}, u_3\right\rangle+\rho_Q^*\dr(\rho_Q(\partial_Q\tau_2)\langle
  \tau_1, u_3\rangle)\\
&+\rho_Q^*\dr \langle\tau_2, \nabla_{\partial_B\tau_1}u_3-\cancel{\nabla_{\partial_B\tau_3}u_1}
\rangle -\rho_Q^*\dr(\rho_Q(\partial_Q\tau_1)\langle
  \tau_2, u_3\rangle)-\rho_Q^*\dr \langle\tau_1, \cancel{\nabla_{\partial_B\tau_2}u_3}-\cancel{\nabla_{\partial_B\tau_3}u_2}
\rangle.
 \end{split}
\end{equation*}
For the third use of Lemma \ref{lemma_looks_like_basic}, we had to
replace $-\lb \tau_2,\Delta_{u_3}\tau_1\rb_{Q^*}$ by $\lb
\Delta_{u_3}\tau_1,\tau_2\rb_{Q^*}-\rho_Q^*\dr\langle
\tau_2, \partial_Q\Delta_{u_3}\tau_1\rangle$. This is why we get the
first term on the fourth line. Six terms cancel immediately
pairwise. Using $R_\Delta=R\circ\partial_B$, we get
\[\upsilon+\rho_Q^*\dr f\]
with $f\in C^\infty(M)$ defined by  
\begin{equation*}
\begin{split}
  f=&-\langle
  \tau_2,\partial_Q\Delta_{u_3}\tau_1\rangle+\left\langle\Delta_{\partial_Q\tau_1}\tau_2-\nabla_{\partial_B\tau_2}\tau_1,
    u_3\right\rangle+\rho_Q(\partial_Q\tau_2)\langle
  \tau_1, u_3\rangle\\
  &+\langle\tau_2, \nabla_{\partial_B\tau_1}u_3 \rangle
  -\rho_Q(\partial_Q\tau_1)\langle
  \tau_2, u_3\rangle\\
  =&\langle \tau_2,
  -\partial_Q\Delta_{u_3}\tau_1+\lb\partial_Q\tau_1,u_3\rb+\nabla_{\partial_B\tau_1}u_3\rangle+\rho_Q(\partial_Q\tau_2)\langle
  \tau_1, u_3\rangle-\langle\nabla_{\partial_B\tau_2}\tau_1,
  u_3\rangle.
 \end{split}
\end{equation*}
By (M1), the first pairing equals 
\[-\langle\tau_1,\nabla_{\partial_B\tau_2}u_3\rangle.
\]
Hence, we find using ${\nabla^{Q}}^*=\nabla^{Q^*}$ and
$\rho_B\circ\partial_B=\rho_Q\circ\partial_Q$ that $f=0$, and so
\[\operatorname{Jac}_{\lb\cdot\,,\cdot\rb}(u_1\oplus\tau_1,u_2\oplus\tau_2,u_3\oplus\tau_3)=(-\partial_Q\upsilon)\oplus\upsilon.
\]

\def\cprime{$'$} \def\polhk#1{\setbox0=\hbox{#1}{\ooalign{\hidewidth
  \lower1.5ex\hbox{`}\hidewidth\crcr\unhbox0}}} \def\cprime{$'$}
\providecommand{\bysame}{\leavevmode\hbox to3em{\hrulefill}\thinspace}
\providecommand{\MR}{\relax\ifhmode\unskip\space\fi MR }
\providecommand{\MRhref}[2]{%
  \href{http://www.ams.org/mathscinet-getitem?mr=#1}{#2}
}
\providecommand{\href}[2]{#2}

 \end{document}